\newtheorem{theorem}{Theorem}[chapter]
\newtheorem{proposition}[theorem]{Proposition} 
\newtheorem{corollary}[theorem]{Corollary}     
\newtheorem{lemma}[theorem]{Lemma}
\theoremstyle{definition}
\newtheorem{definition}[theorem]{Definition}
\newtheorem{convention}[theorem]{Convention} 
\theoremstyle{remark}
\newtheorem{remark}[theorem]{Remark}
\numberwithin{section}{chapter}
\numberwithin{equation}{chapter}
\numberwithin{figure}{chapter}
\newenvironment{Proof}[1][Proof]{\begin{proof}[\sc{#1}]}{\end{proof}} 
\newcommand{\NLemma}[2] {
        \begin{lemma}[#1] \label{lmm:#1}
                #2
        \end{lemma}
        }
\newcommand{\NCorollary}[2] {
        \begin{corollary}[#1] \label{crl:#1}
                #2
        \end{corollary}
        }
\newcommand{\NDefinition}[2] {
        \begin{definition}[#1] \label{def:#1}
                #2
        \end{definition}
        }
\newcommand{\bels}[2] {
        \begin{equation} \label{#1} \begin{split} 
                #2 
        \end{split} \end{equation}
        }
\newcommand{\bea}[1]{
	\begin{align*}
		#1
	\end{align*}
	}
\newcommand{\chapterl}[1]    {\chapter{#1} \label{chp:#1}}
\newcommand{\sectionl}[1]    {\section{#1} \label{sec:#1}}
\newcommand{\subsectionl}[1] {\subsection{#1} \label{ssec:#1}}
\definecolor{olivegreen}{rgb}{0,0.6,0.1}
\newcommand{\Lp}[1]{\mathrm{L}^{\!#1}}					
\newcommand{\BB}{\mathscr{B}}						
\newcommand{\Pob}{Q}
\newcommand{\Ind} {\mathbbm{1}}
\newcommand{\vect}[1]{\mathbf{#1}}					
\newcommand{\1} {\mspace{1 mu}}
\newcommand{\2} {\mspace{2 mu}}
\newcommand{\msp}[1] {\mspace{#1 mu}}
\newcommand{\la} {\langle}
\newcommand{\ra} {\rangle}
\newcommand{\avg}[1] {\la #1 \ra}
\newcommand{\avgb}[1] {\bigl\la #1 \bigr\ra}
\newcommand{\avgB}[1] {\Bigl\la #1 \Bigr\ra}
\newcommand{\avgbb}[1] {\biggl\la #1 \biggr\ra}
\newcommand{\eps}{\varepsilon}
\newcommand{\dist} {\mrm{dist}}                   
\DeclareMathOperator*{\sign}{sign}						
\newcommand{\mrm}[1] {\mathrm{#1}}
\newcommand{\mcl}[1] {\mathcal{#1}}
\newcommand{\brm}[1] {\boldsymbol{\mathrm{#1}}}
\newcommand{\wti}[1] {\widetilde{#1}}
\newcommand{\wht}[1] {\widehat{#1}}
\newcommand{\ul}[1] {\underline{#1}}
\newcommand{\EE} {\mathbbm{E}}
\newcommand{\PP}  {\mathbbm{P}}
\newcommand{\mat}[1]{\begin{bmatrix} #1 \end{bmatrix}} 
\newcommand{\diag} {\mrm{diag}}
\DeclareMathOperator{\supp} {supp}
\newcommand{\trans}{\mathrm{T}}
\newcommand{\ins} {\msp{1}\in\msp{1}}
\newcommand{\Spec}{\mrm{Spec}}
\newcommand{\abs}[1]{\lvert #1 \rvert}
\newcommand{\absb}[1]{\big\lvert #1 \big\rvert}
\newcommand{\absB}[1]{\Bigl\lvert #1 \Bigr\rvert}
\newcommand{\absbb}[1]{\biggl\lvert #1 \biggr\rvert}
\newcommand{\norm}[1]{\lVert #1 \rVert}
\newcommand{\normb}[1]{\big\lVert #1 \big\rVert}
\newcommand{\normB}[1]{\Bigl\lVert #1 \Bigr\rVert}
\newcommand{\R} {\mathbb{R}}
\newcommand{\C} {{\mathbb{C}}}
\newcommand{\N} {\mathbb{N}}
\newcommand{\Cp} {\mathbb{H}}
\newcommand{\eCp} {\overline{\mathbb{H}}}
\newcommand{\D} {\mathbb{D}}
\newcommand{\Borel}{\mathcal{B}}
\newcommand{\sett}[1] { \{ {#1} \} }
\newcommand{\setb}[1] { \bigl\{ {#1} \bigl\} }
\newcommand{\setB}[1] { \Bigl\{ {#1} \Bigr\} }
\newcommand{\setbb}[1] { \biggl\{\, {#1} \,\biggr\} }
\newcommand{\cmpl} {\mathrm{c}}
\newcommand{\titem}[1] {\item[\emph{(#1)}]} 
\newcommand{\genarg} {{\,\cdot\,}}  
\newcommand{\dif} {\mathrm{d}}
\newcommand{\cI} {\mathrm{i}}
\newcommand{\nE} {\mathrm{e}}
\newcommand{\Ord} {\mathcal{O}}
\renewcommand{\Im}{\mathrm{Im}}
\renewcommand{\Re}{\mathrm{Re}}
\newcommand{\MM} {\mathbb{M}}
\newcommand{\MMe} {\mathbb{M}_{\1\eps}}
\newcommand{\DD} {\mathbb{D}}
\newcommand{\DDe} {\mathbb{D}_{\eps}}
\newcommand{\KK} {\mathbb{K}}
\newcommand{\LL} {\mathbb{L}}
\newcommand{\Sx} {\mathfrak{X}} 
\newcommand{\Px} {\pi}          
\newcommand{\am} {q} 
\newcommand{\tsfrac}[2] {{\textstyle \frac{#1}{#2}}}
\newcommand{\nnorms}[1]{{\left\vert\kern-0.25ex\left\vert\kern-0.25ex\left\vert #1 
    \right\vert\kern-0.25ex\right\vert\kern-0.25ex\right\vert}}
\newcommand{\nnorm}[1]{{\vert\kern-0.25ex\vert\kern-0.25ex\vert #1 
    \vert\kern-0.25ex\vert\kern-0.25ex\vert}}
\DeclareRobustCommand\widecheck[1]{{\mathpalette\@widecheck{#1}}}
\def\@widecheck#1#2{%
    \setbox\z@\hbox{\m@th$#1#2$}%
    \setbox\tw@\hbox{\m@th$#1%
       \widehat{%
          \vrule\@width\z@\@height\ht\z@
          \vrule\@height\z@\@width\wd\z@}$}%
    \dp\tw@-\ht\z@
    \@tempdima\ht\z@ \advance\@tempdima2\ht\tw@ \divide\@tempdima\thr@@
    \setbox\tw@\hbox{%
       \raise\@tempdima\hbox{\scalebox{1}[-1]{\lower\@tempdima\box
\tw@}}}%
    {\ooalign{\box\tw@ \cr \box\z@}}}
\begin{document}

\renewcommand{\thefootnote}{\fnsymbol{footnote}}
\title{\huge \bf Quadratic vector equations on complex upper half-plane 
}

\author{
\begin{minipage}{0.3\textwidth}
 \begin{center}
Oskari H. Ajanki\footnotemark[1]\\
\footnotesize 
{IST Austria}\\
{\url{oskari.ajanki@iki.fi}}
\end{center}
\end{minipage}
\begin{minipage}{0.3\textwidth}
\begin{center}
L\'aszl\'o Erd{\H o}s\footnotemark[2]  \\
\footnotesize {IST Austria}\\
{\url{lerdos@ist.ac.at}}
\end{center}
\end{minipage}
\begin{minipage}{0.3\textwidth}
 \begin{center}
Torben Kr\"uger\footnotemark[3]\\
\footnotesize 
{IST Austria}\\
{\url{torben.krueger@ist.ac.at}}
\end{center}
\end{minipage}
}

\date{} 

\maketitle
\thispagestyle{empty} 
	
\footnotetext[1]{Partially supported by ERC Advanced Grant RANMAT No.\ 338804, and SFB-TR 12 Grant of the German Research Council.}
\footnotetext[2]{Partially supported by ERC Advanced Grant RANMAT No.\ 338804.}
\footnotetext[3]{Partially supported by ERC Advanced Grant RANMAT No.\ 338804, and SFB-TR 12 Grant of the German Research Council}

\vspace{-0.5cm}
\begin{abstract}
We consider the  nonlinear equation $-\frac{1}{m}=z+Sm$ with a parameter $z$ in the complex upper half plane $\mathbb{H} $, where $S$ is a positivity preserving symmetric linear operator acting on bounded functions. The solution  with values in $ \mathbb{H}$ is unique and its $z$-dependence is conveniently described as the Stieltjes transforms of a family of measures $v$ on $\mathbb{R}$. In~\cite{AEK1cpam} we   qualitatively identified  the possible singular behaviors of $v$: under suitable conditions on $S$ we showed  that in the density of $v$  only algebraic singularities of degree two or three may occur. In this paper we give a comprehensive analysis of these singularities with uniform quantitative controls. We also find a universal shape describing the transition regime between the square root and cubic root singularities. Finally, motivated by random matrix applications in the companion paper~\cite{AEK2}, we  present a complete stability analysis of the equation for any $z\in \mathbb{H}$, including the vicinity of the singularities.
\end{abstract}
\vspace{0.8cm}
{\bf Keywords:} Stieltjes-transform,
Algebraic singularity,
Density of states, \\
Cubic cusp, Wigner-type random matrix.
\\
{\bf AMS Subject Classification (2010):} \texttt{45Gxx}, \texttt{46Txx}, \texttt{60B20}, \texttt{15B52}.

\pagenumbering{roman}
\setcounter{page}{1}
\tableofcontents

\chapterl{Introduction}
\pagenumbering{arabic}

One of the basic problems in the theory of large random matrices is to compute the asymptotic density of eigenvalues as the dimension of the matrices goes to infinity. For several prominent ensembles this question is ultimately related to the solution of a system of nonlinear equations of the form
\bels{NSCE}{
-\,\frac{1}{m_i\!}\;=\;z\,+a_i+\,\sum_{j=1}^N s_{ij} \1 m_j
\,, 
\qquad  i =1,\dots, N
\,,
}
where  the complex parameter $z$ and the unknowns $ m_1,\dots,m_N$ lie in the complex upper half-plane $\Cp := \sett{z\in \C :\, \Im \,z\,>\,0} $.  
The given vector $ \brm{a} = (a_i)_{i=1}^N $ has real components, and the matrix $\brm{S}=(s_{ij})_{i,j=1}^N$ is  symmetric with non-negative entries  and it is determined by  the second moments of the matrix ensemble. 

The simplest example for the emergence of \eqref{NSCE}  are the  \emph{Wigner-type matrices}, defined as follows. 
Let $ \brm{H}=(h_{ij})$ be an $N\times N$ real symmetric or complex hermitian matrix with expectations $\EE\,h_{ij}= -\1a_i\delta_{ij}$  and variances $\EE\2\abs{h_{ij}}^2=s_{ij}$.
We assume that the matrix elements are independent up to the symmetry constraint, $ h_{ji}= \overline{h}_{ij} $. 
Let $ \brm{G}(z)=(\brm{H}-z)^{-1}$ be the resolvent of $\brm{H}$ with a spectral parameter $ z \in \Cp $.
Second order perturbation theory indicates that for the diagonal matrix elements $ G_{ii}=G_{ii}(z) $ of the resolvent we have
\bels{Gii}{
- \frac{1}{G_{ii}\!} \,\approx\,  z +a_i+ \sum_{j=1}^N s_{ij} G_{jj}
\,,
}
where the error is due to fluctuations that vanish in the large $N$ limit.
In particular, if the system of equations \eqref{NSCE} is stable, then $ G_{ii} $ is close to $ m_i $ and the average $N^{-1}\sum_i m_i$ approximates the normalized trace of the resolvent, $ N^{-1}\mrm{Tr}\, \brm{G}$.
Being determined by $N^{-1} \Im\,\mrm{Tr}\, \brm{G}$, as $ \Im\,z \to 0 $, the empirical spectral measure of  $\brm{H}$
approaches the non-random measure with density
\bels{DOS}{
\rho(\tau) \,:=\, 
\lim_{\eta\1\downarrow\1 0}
\frac{1}{\pi N}
\sum_{j=1}^N
\Im\,m_j\msp{-1}(\tau+\cI\1\eta)
\,, \qquad \tau\in \R
\,,
}
as $ N $ goes to infinity, see \cite{ShlyakhtenkoGBM,Guionnet-GaussBand,AZind}.
Apart from a few specific cases, this procedure via \eqref{NSCE} is the only known method to determine
the limiting density of eigenvalues for large Wigner-type random matrices.

When $ \brm{S} $ is doubly stochastic, i.e., $ \sum_j s_{ij} = 1 $ for each row $ i $, then it is easy to see that the only solution to  \eqref{NSCE} is the constant vector, $m_i =m_{\mrm{sc}} $ for each $i$, where $ m_{\mrm{sc}} =m_{\mrm{sc}}(z) $  is the Stieltjes transform of Wigner's semicircle law, 
\bels{SC}{
m_{\mrm{sc}}(z)  =  
\int_\R \frac{\rho_{\mrm{sc}}(\tau)\1\dif \tau\!}{\tau-z}
\,, 
\qquad\text{with}\quad 
\rho_{\mrm{sc}}(\tau)\2:=\frac{1}{2\1\pi} \sqrt{\max\sett{\20\2,4-\tau^{\12}}}
\,.
}
The system of equations \eqref{NSCE} thus reduces to the simple scalar equation
\bels{sceq}{
-\frac{1}{\1m_{\mrm{sc}}\msp{-8}} \,=\1 z + m_{\mrm{sc}}
\,.
}
Comparing \eqref{Gii} and \eqref{NSCE}, we see from \eqref{DOS} that the density of the eigenvalues in the large $N$ limit is given by the semicircle law.
The corresponding random matrix ensemble was called \emph{generalized Wigner ensemble} in \cite{EYY}.

Besides Wigner-type matrices and certain random matrices with translation invariant dependence structure \cite{AEK3}, the equation \eqref{NSCE} has previously appeared in at least two different contexts. First, in \cite{AZdep} the limiting density of eigenvalues for a certain class of random matrix models with dependent entries was determined by the so-called \emph{color equations} (cf. equation (3.9)  in \cite{AZdep}), which can be rewritten in the form \eqref{NSCE}. For more details on this connection we refer to Subsection 3.4 of \cite{AEK1cpam}. 
The second application of \eqref{NSCE} concerns the Laplace-like operator, 
\[
(Hf)(x) \,=\, \sum_{y\2\sim\2 x} t_{xy}\, (\2f(x) - f(y)\1)\,,   \qquad f: V \to \C
\]
on rooted tree graphs $\Gamma$ with vertex set $V$  (see \cite{KLW} for a review article and references therein).
Set $ m_x = (H_x-z)^{-1}(x,x)$, where $H_x$ is the operator $H$ restricted to the forward subtree with root $ x $. A simple resolvent formula then shows that \eqref{NSCE} holds with $ a =0 $ and $s_{xy} = \abs{\1t_{xy}}^2 \Ind\sett{x<y} $, where $ x<y $ indicates that $ x $ is closer to the root of $\Gamma$ than $y$. In this example  $ (s_{xy}) $ is not a symmetric matrix, but in a related model it may be chosen symmetric (rooted trees of finite cone types associated with a substitution matrix $S$, see \cite{Sadel-Tree}). In particular, real analyticity of the density of states (away from the spectral edges) in this model follows from our analysis (We thank C. Sadel for pointing out this connection).

The central role of \eqref{NSCE} in  the context of random matrices has been recognized by many authors, see, e.g. \cite{Ber, WegnerNorb, Girko-book, KhorunzhyPastur94, ShlyakhtenkoGBM, AZdep, Guionnet-GaussBand} and some basic properties of the solution, such as existence, uniqueness and regularity in $ z $ away  from  the real axis have been established, see e.g. \cite{Girko-book, Helton2007-OSE, PasturShcerbinaAMSbook} and further  references therein. 
The existence of the limit in \eqref{DOS} has been shown but no description of the limiting density $ \rho $ was given.

Motivated by this problem, in \cite{AEK1cpam} we initiated a comprehensive study of a general class of nonlinear equations of the form 
\[
-\frac{1}{m} = z + a + Sm
\,, 
\] 
in a possibly infinite dimensional setup. 
Under suitable conditions on  the linear operator  $ S $, we gave a \emph{qualitative}  description of the possible singularities of $m$ as $z$ approaches the real axis. We showed that singularities can occur at most at finitely many points and  that  they are algebraic of order two or three. 
The solution $m$ is conveniently represented as the Stieltjes transforms of a family of  probability measures.
The singularities of $ m $ occur at points where the densities of these measures approach to zero
and the type of singularity depends on how the densities vanish.
We found that  the  densities behave  like  a square root near the edges of  their  support and, additionally, they may exhibit a cubic root cusp singularity inside the  interior of the  support; no other singularity type occurs. 

All these results translate into statements  about the spectral densities of large random matrices on the macroscopic scale.
Recent developments in the theory of random matrices, however, focus on \emph{local laws}, i.e. precise description of the eigenvalue density down to  very small scales almost comparable  with the eigenvalue spacing. This requires understanding the solution $m $ and the stability of \eqref{NSCE} with  an  effective \emph{quantitative} control as $ z $ approaches the real line. 
In particular, a detailed description of the singular behavior of the solution close to the spectral edges is necessary.

The current paper is an extensive generalization of the qualitative singularity analysis of \cite{AEK1cpam}. Here we give a precise description (cf. Theorem~\ref{thr:Shape of generating density near its small values} below)  of the density around the singularities in a neighborhood of order one  with effective error bounds, while in  \cite{AEK1cpam} we only proved the limiting behavior as $z$ approached the singularities  without uniform control. We analyze the density  around  all local minima inside the interior of the support, even when the value of the density is small but non-zero. We demonstrate that a \emph{universal} density shape emerges  also at these points, which are far away from any singularity. In Subsection~\ref{sec:Relationship between theorems} we demonstrate the strength of the current bounds over the qualitative results in \cite{AEK1cpam} by considering a one-parameter family of operators $ S $. 
By varying the parameter, this family exhibits all possible shapes of the density in the regime where 
the density is close to zero.
This example illustrates how these density shapes are realized as rescalings of two universal shape functions. 
Furthermore, in the current work we impose weaker conditions on  $ a $ and $ S $ than in  \cite{AEK1cpam}.
Especially, when $ a = 0 $ our assumptions on $ S $ are essentially optimal.
Finally, we also give a detailed stability analysis against small perturbations; the structure of our stability bounds  is directly motivated by their application in random matrices.

The uniform control of the solution near the  singularities, as well as the  quantitative  stability estimates are not only natural mathematical questions on their own. 
They are also indispensable for establishing local laws and universality of local spectral statistics of Wigner type random matrices. 
We heavily use them in the companion papers to prove such results for  Wigner-type matrices with independent entries \cite{AEK2}, as well as for matrices with correlated Gaussian entries \cite{AEK3}. 
Random matrices, however, will not appear in the main body of this work. In Chapter~\ref{chp:Local laws for large random matrices} we only illustrate how our analysis of \eqref{NSCE} is  used to prove  a simple version of the local law.

While the current work is a generalization of \cite{AEK1cpam}, it is essentially self-contained; only very few auxiliary results will be taken over from \cite{AEK1cpam}. To achieve the required uniform control, we need to restart the analysis from its beginning. 
After establishing a priori bounds on the solution $ m $ and on the stability of the linearization of \eqref{NSCE} in Chapters~\ref{chp:Existence and uniqueness}--\ref{chp:Uniform bounds}, there are two main steps.
First, in Chapter~\ref{chp:Perturbations when generating density is small} we derive an approximate cubic equation to determine the leading behavior of the density in the regime where it is very small and, second, we analyze this cubic equation. The first step is much more involved in this paper than in \cite{AEK1cpam} since we also need to analyze points where the density is small but nonzero and we require all bounds to be effective in terms of a small number of model parameters. The second step   follows a completely new argument. In \cite{AEK1cpam} the correct roots of the cubic equation have been selected locally and by using a proof by contradiction which cannot give any effective control. In the current paper we select the roots by matching the solutions  at neighboring singularities to ensure the effective control  in an order one neighborhood. 
This procedure takes up Chapter~\ref{chp:Behavior of generating density where it is small}, the most technical part of our work. 
The  nonlinear stability analysis is presented in Chapter~\ref{chp:Stability around small minima of generating density}; this is the strongest version needed in the random matrix analysis in \cite{AEK2}. 
Finally, in Chapter~\ref{chp:Examples} we present  examples illustrating various aspects of the main results and the necessity of the assumptions on $ a $ and $ S $.

\medskip
\emph{Acknowledgement.}  We are grateful to Zhigang Bao and Christian Sadel for several comments and 
suggestions along this work.   A special thanks goes to Johannes Alt for carefully proofreading the entire manuscript.

\medskip
\emph{Recent developments.} After completing this manuscript, 
the systematic study of the quadratic vector equation \eqref{NSCE} initiated
in the current work has been substantially extended.
Wigner type matrices $\brm{H}$
can  be further generalised to allow for (i) correlations among the matrix entries as well as  (ii) non-zero
expectation for any matrix elements. Both extensions require to solve the \emph{Matrix Dyson equation (MDE)},
a matrix version of \eqref{NSCE}  of the form 
\[- M^{-1}  = z-A + {\mathcal S}[M]\,.\] 
Here
$A$ is an arbitrary deterministic Hermitian matrix
and ${\mathcal S}$ is a positivity preserving linear map on the space of matrices; in applications to random 
matrix theory we set $A=\EE {\brm{H}}$ and ${\mathcal S} [R]= \EE ({\brm{H}}-A)R (\brm{H}-A)$.

The stability analysis of the MDE was performed in \cite{Ajanki2019} where the local law
and spectral universality in the bulk spectrum for correlated random matrices with fast correlation decay have also
been proven. The case of slow correlation decay \cite{erdos_kruger_schroder_2019} required a very different
probabilistic technique, but the deterministic component of the proof  relied on the same MDE analysis.
The effective shape analysis of the density \eqref{DOS} for the MDE, including the precise description of the singularities,
has been completed in \cite{arXiv:1804.07752}. The main features of the density
in the more general MDE setup are the same as for the vector equation \eqref{NSCE}; only  square root
singularities at the regular edges and cusp singularities with cubic root behaviour in the interior of the spectrum may occur. Compared with  \eqref{NSCE}, however,
the non-commutative setup of the MDE posed major difficulties in the proofs.
The shape analysis for the MDE was one of the main ingredients in proving local laws and spectral universality 
for general correlated matrices at the regular edges \cite{2018arXiv180407744A}, as well as in the proof 
of the cusp universality  for Wigner type matrices \cite{arXiv:1809.03971,cipolloni2019}.

The general theory of the MDE has also been successfully  applied to various concrete random matrix models.
Local and global laws for \emph{Kronecker matrices} with block correlation structure have been proven in \cite{alt2019}. This was
further specialised to linearizations of polynomials in random matrices \cite{2018arXiv:1804.11340}. Via standard
hermitization, even non-Hermitian random matrices can be studied by the MDE; this has led to the
local law for Gram matrices \cite{alt2017,Gramalt2017} and to the local version of the inhomogeneous circular law
in the bulk \cite{alt2018} as well as at the edge  \cite{arXiv:1907.13631}. 
The decay rate of the solution to a large system of linear differential equations with random coefficients,
a standard model in the dynamics of neural networks, have also been studied via the analysis of the corresponding MDE
\cite{doi:10.1137/17M1143125,arXiv:1908.05178}. All these developments have been inspired by the key ideas of the current book.

\chapterl{Set-up and main results}

In this chapter we formulate a generalized  version of the equation \eqref{NSCE} which allows us to treat all dimensions $ N $, including the limit $ N \to \infty $, in a unified manner. After introducing three assumptions {\bf A1-3} on $ a $ and $ S $ we state our main results. 

Let $ \Sx $ be an abstract set of labels. 
We introduce the Banach space, 
\bels{def of BB}{
\BB
\,:=\,
\setB{
w: \Sx \to \C:\;  \sup_{x\ins \Sx}\,|\1w_x|\,<\,\infty
}
\,,
}
of bounded complex valued functions on  $\Sx$, equipped with the norm 
\bels{def of BB-norm}{
\norm{w} := \sup_{x\ins \Sx}\,\abs{\1w_x}
\,.
}
We also define the subset
\bels{def of BB_+}{
\BB_+\,:=\,\setB{ w\in\BB : \;\Im\, w_x\1>\10 \;\text{ for all } x \in \Sx  \2}
\,,
}
of functions with values in the complex upper half-plane $ \Cp $. 
 
Let $ S : \BB \to \BB $ be a non-zero bounded linear operator,  and $ a \in \BB $ a real valued bounded function.
The main object of study in this paper is the equation,
\bels{QVE}{
-\2\frac{1}{m(z)\msp{-1}}
\,=\,
z\2+\1a\1+\2S\1m(z)
\,, 
\qquad \forall \; z \in \Cp
\,, 
} 
and its solution $ m: \Cp \to \BB_+$. Here we view $ m: \Sx\times\Cp \to \Cp, \,(x,z) \mapsto m_x(z)$ as a function of the two variables $ x $ and $ z $, but we will often suppress the $ x $ and/or $ z $ dependence of $m$ and other related functions.
The symbol $ m $ will always refer to a solution of \eqref{QVE}. We will refer to  \eqref{QVE} as the {\bf Quadratic Vector Equation} ({\bf QVE}).

We assume that $ \Sx $ is equipped with a probability measure $ \Px $ and a $ \sigma$-algebra $ \mcl{S} $  such that $ (\Sx,\mcl{S},\pi) $ constitutes a probability space. 
We will denote the space of measurable functions $ u : \Sx \to \C $, satisfying $ \norm{u}_p := (\1\int_\Sx \abs{u_x}^p\Px(\dif x)\1)^{1/p} < \infty $, as $ \Lp{p}=\Lp{p}(\Sx;\C) $, $ p \ge 1 $. The usual $ \Lp{2}$-inner product, and the averaging are denoted by
\bels{def of L2-inner product and avg}{
\avg{\1u,w\1} \2:=\! \int_\Sx \overline{\1 u_x\msp{-7}}\msp{8}w_x \Px(\dif x)
\,,\qquad\text{and}\qquad
\avg{\1w\1} := \avg{\11,w\1}
\,,\qquad
u,w \in \Lp{2}
\,,
}
respectively.
For a linear operator $ A $, mapping a Banach space $ X $ to another Banach space $ Y $, we denote the corresponding operator norm by $ \norm{A}_{X \to Y} $. 
However, when $ X = Y = \BB $ we use the shorthand $ \norm{A} = \norm{A}_{\BB\to\BB} $.
Finally, if $ w $ is a function on $\Sx$ and $ T $ is a linear operator acting on such functions then $ w + T $ denotes the linear operator $ u \mapsto w\1 u + Tu $, i.e., we interpret $ w $ as a multiplication operator when appropriate.

In the entire paper we assume that the bounded linear operator $ S : \BB \to \BB $ in \eqref{QVE} is:
\begin{enumerate}
\item[{\bf A1}]
\label{item-A1} 
\emph{Symmetric and positivity preserving}, i.e.,
for every $ u,w \in \BB $ and every real valued and non-negative $ p \in \BB $: 
\bea{
\avg{\1u,Sw\1} \,=\, \avg{Su,w\1}
\,,\qquad\text{and}\qquad
\inf_x\, (Sp)_x \,\ge\, 0
\,.
}
\end{enumerate}

For the existence and uniqueness no other assumptions on $ a $ and $ S $ are needed.
\begin{theorem}[Existence and uniqueness]
\label{thr:Existence and uniqueness}
Assume {\bf A1}. Then for each $ z \in \Cp $, 
\bels{QVE for fixed z}{
-\2\frac{1}{m} \,=\, z +a + S \1 m \,,
}
has a unique solution $ m = m(z) \in \BB_+ $.
The solutions for different values of $ z$ constitute an analytic function $ z \mapsto m(z) $ from $\Cp$ to $\BB_+$. 
Moreover, for each $ x \in \Sx $ there exists a positive measure $ v_x $ on $ \R $, with 
\bels{bound on supp v}{
\supp v_x \1\subset\2 
[-\Sigma\1,\2\Sigma\,]
\,,\quad\text{where}\quad
\Sigma \1:=\1 \norm{a} + 2\1\norm{S}^{1/2}
\,,
}
and $v_x(\R)=\pi$, such that
\bels{m as stieltjes transform}{
m_x(z) \,=\,\frac{1}{\pi}\int_\R \frac{\1v_x(\dif \tau)\!}{\tau\1-\1z}
\,, 
\qquad \forall \; z \in \Cp 
\,.
}
The measures $ v_x $ constitute a measurable function $ v := (x \mapsto v_x) : \Sx \to \mcl{M}(\R) $, where $\mcl{M}(\R) $ denotes the space of finite Borel measures on $ \R $ equipped with the weak topology. 

Furthermore, if $ a = 0 $, then the solution $m(z)$ is in $\Lp{2} $ whenever $ z \neq 0 $,
\bels{L2 bound on m when a=0}{
\norm{m(z)}_2 \,\leq\, \frac{2}{|z|}\,, \qquad \forall \; z \in \Cp
\,,
}
and the measures $ v_x $ are symmetric, in the sense that $ v_x(-A)=v_x(A) $ for any measurable set $ A \subset \R $.
\end{theorem}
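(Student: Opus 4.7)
The plan is to establish the statement piece by piece: existence and uniqueness of $m(z)\in\BB_+$ for each fixed $z\in\Cp$, then analyticity in $z$ and the Stieltjes representation, then the support and measurability claims, and finally the symmetry and $L^2$ bound when $a=0$.

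For existence and uniqueness I would analyze the self-map $\Phi_z\colon\BB_+\to\BB_+$ defined by $\Phi_z(w):=-(z+a+Sw)^{-1}$. It is well-defined since the positivity-preservation of $S$ in~{\bf A1} gives $\Im(z+a+Sw)=\Im z+S\,\Im w\geq\Im z>0$, along with the pointwise bound $|\Phi_z(w)|\leq 1/\Im z$. A direct subtraction yields
\begin{equation*}
\Phi_z(w_1)-\Phi_z(w_2)\;=\;\Phi_z(w_1)\,\Phi_z(w_2)\,S(w_1-w_2),
\end{equation*}
and combined with $|Sw|\leq S|w|$ (again from positivity-preservation) this gives $|\Phi_z(w_1)-\Phi_z(w_2)|\leq(\Im z)^{-2}\,S|w_1-w_2|$. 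For $\Im z>\|S\|^{1/2}$, $\Phi_z$ is then a strict contraction on $\BB$ and the Banach fixed-point theorem produces a unique $m(z)\in\BB_+$. For the remaining $z\in\Cp$ I would extend existence and uniqueness by analytic continuation: the same identity applied to two candidate solutions gives $m-\tilde m=m\tilde m\,S(m-\tilde m)$, and the implicit function theorem (with the linearization $1/m^2-S$ invertible along the continuation) propagates both holomorphy and uniqueness throughout $\Cp$.

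For the Nevanlinna representation, each scalar $m_x\colon\Cp\to\Cp$ is Herglotz, and iterating the QVE at infinity gives $m_x(z)=-1/z+a_x/z^2+O(|z|^{-3})$, so $z\,m_x(z)\to-1$ nontangentially as $|z|\to\infty$. The Nevanlinna representation theorem then produces the positive measure $v_x$ on $\R$ with $v_x(\R)=\pi$ and the integral formula~\eqref{m as stieltjes transform}. For the support bound, for real $z_0$ with $|z_0|>\Sigma$ I would iterate $\Phi_{z_0}$ on the closed real ball of radius $\|S\|^{-1/2}$ in $\BB$; the inequality $|z_0|>\|a\|+2\|S\|^{1/2}$ makes this ball $\Phi_{z_0}$-invariant and the map a strict contraction, producing a real-valued bounded solution which by uniqueness agrees with $\lim_{\eta\downarrow0}m(z_0+\cI\eta)$. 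Consequently $z\mapsto m(z)$ extends analytically across $\R\setminus[-\Sigma,\Sigma]$ and $\supp v_x\subset[-\Sigma,\Sigma]$. Measurability of $x\mapsto v_x$ follows from Stieltjes inversion combined with the measurability of $x\mapsto m_x(z)$ preserved by the fixed-point iteration.

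The $a=0$ claims use the symmetry $(z,m)\mapsto(-\bar z,-\bar m)$ of the QVE: $z\mapsto -\overline{m(-\bar z)}$ also solves the equation, so by uniqueness equals $m(z)$, and translating to Stieltjes form yields $v_x(-A)=v_x(A)$. For the bound~\eqref{L2 bound on m when a=0} I would cover $\Cp$ with several overlapping estimates: $\supp v_x\subset[-\Sigma,\Sigma]$ directly gives $\norm{m}_\BB\leq 2/|z|$ for $|z|\geq 2\Sigma$; the pointwise bound $|m_x|\leq 1/\Im z$ gives $\norm{m}_2\leq 2/|z|$ for $|z|\leq 2\,\Im z$; and taking $L^2$-norms in the pointwise consequence $|z|\,|m|\leq 1+|m|\,|Sm|$ of $-1=zm+mSm$ combined with $\norm{Sm}_2\leq\|S\|\,\norm{m}_2$ gives $\norm{m}_2\leq 2/|z|$ whenever $|z|\,\Im z\geq 2\|S\|$. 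Any narrow intermediate region is then covered by the symmetry-induced representation $m_x(z)=(z/\pi)\int v_x(\dif\tau)/(\tau^2-z^2)$, via Cauchy--Schwarz and the Stieltjes transform of the pushforward of $\langle v\rangle/\pi$ under $\tau\mapsto\tau^2$, which is a probability measure on $[0,\Sigma^2]$.

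The main obstacle I foresee is the uniform $L^2$ bound with the precise constant~$2$: none of the easy estimates handles all of $\Cp$, so one must patch them together and exploit the symmetry-based representation in the intermediate region, with care to keep the constant fixed throughout.
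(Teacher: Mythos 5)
Your proposal breaks at the step where you pass from large $\Im z$ to all of $\Cp$. The contraction constant you obtain from the trivial bound is $(\Im z)^{-2}\norm{S}$, so the Banach fixed-point argument only covers $\Im z>\norm{S}^{1/2}$, and the proposed ``analytic continuation + implicit function theorem'' cannot supply the rest. Uniqueness is not a property that continues analytically: a competing solution $\tilde m\in\BB_+$ of \eqref{QVE for fixed z} at a fixed $z$ with small $\Im z$ is just a point in $\BB_+$ and need not lie on any analytic branch, so the identity $m-\tilde m=m\,\tilde m\,S(m-\tilde m)$ only forces $m=\tilde m$ if $1-m\tilde m S$ is invertible on $\BB$, which your bounds give only for large $\Im z$. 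Even for existence, the IFT continuation needs invertibility of $1-m(z)^2S$ as an operator on $\BB$ along the path; under {\bf A1} alone (no smoothing, no primitivity) there is no way to get this from the $\Lp{2}$ theory, and the paper itself only controls such inverses on $\BB$ under {\bf A2--3}. The paper avoids all of this by running the fixed-point argument in the hyperbolic metric on $\Cp$ (Earle--Hamilton): the map $\Phi(\genarg;z)$ is a strict contraction on $\BB_{\eta_0}$ with respect to $\sup_x d_{\Cp}(u_x,w_x)$ uniformly for $z\in\Cp_{\eta_0}$, with contraction factor $(1+\eta_0^2/\norm{S})^{-2}$ independent of how small $\Im z$ is, which yields existence, \emph{global} uniqueness in $\BB_+$, and (by working on the space of holomorphic $\BB_{\eta_0}$-valued functions) analyticity in one stroke.

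The second gap is the uniform $\Lp{2}$ bound \eqref{L2 bound on m when a=0} in your ``intermediate region'' ($|z|\lesssim\Sigma$, $\Im z$ small). Applying Cauchy--Schwarz to the symmetrized representation $m_x(z)=\frac{z}{\pi}\int v_x(\dif\tau)/(\tau^2-z^2)$ produces $\int v_x(\dif\tau)/|\tau^2-z^2|^2$, i.e.\ the \emph{square} of the singular kernel; without a priori regularity of the measures $v_x$ (none is available under {\bf A1}, and indeed atoms or singular behaviour cannot be excluded at this stage) this quantity is not bounded by $C/|z|^4$ uniformly as $z$ approaches $\supp v$, so the patching scheme does not close and certainly not with the constant $2$. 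The paper's route is structural and covers all of $\Cp$ at once: writing $-(z+a)m=1+m\,Sm$ and taking $\Lp{2}$ norms gives $\norm{m(z)}_2\le(1+\norm{F(z)}_{\Lp{2}\to\Lp{2}})/\dist(z,\{a_x\})$ with $F(z)w=|m(z)|\,S(|m(z)|\,w)$, and then $\norm{F(z)}_{\Lp{2}\to\Lp{2}}<1$ for every $z\in\Cp$ follows from the imaginary part of the QVE (which exhibits $v/|m|$ as a positive function with $F(v/|m|)\le(1-\eps)\,v/|m|$) together with the subcontraction lemma. If you want to keep your outline, you would need to replace both the continuation step and the intermediate-region estimate by arguments of this type.
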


The existence and uniqueness part of Theorem~\ref{thr:Existence and uniqueness} is considered standard knowledge in the literature \cite{AZind,Girko-book,Helton2007-OSE,KLW2,KhorunzhyPastur94}. 
A proof of existence and uniqueness that is tailored to the current setup, including the Stieltjes transform representation \eqref{m as stieltjes transform}, was presented in \cite{AEK1cpam}. The novelty in the statement of Theorem~\ref{thr:Existence and uniqueness} in this paper is the $\Lp{2}$-bound \eqref{L2 bound on m when a=0} for the case $ a = 0 $, which is proven in Chapter~\ref{chp:Properties of solution}.

We remark that if the solution space $ \BB_+ $ is replaced by $ \BB $, then the  equation \eqref{QVE for fixed z} in general may have multiple, even infinitely many, solutions. 
Since $ v $ generates the solution $ m $ through \eqref{m as stieltjes transform} we call the $x$-dependent family of measures $ v=(v_x)_{x \in \Sx} $ the {\bf generating measure}.

In order to  prove results beyond the existence and uniqueness we need to additionally assume that $ S $ is:
\begin{itemize}
\item[{\bf A2}] 
\emph{Smoothing}, in the sense that it extends to a bounded operator from $ \Lp{2}$ to $ \BB $ that is represented by a symmetric non-negative measurable kernel function 
$ (x,y) \mapsto S_{xy}:\Sx^2 \to [\10\1,\infty) $, i.e., $ \norm{S}_{\Lp{2}\to\BB} < \infty $, and  
\bels{}{
(Sw)_x = \int_{\Sx} \!S_{xy}w_y\Px(\dif y)
\,.
} 
\item[{\bf A3}] 
\emph{Uniformly primitive}, i.e., 
there exist an integer $ L\in \N $, and a constant $ \rho > 0 $, such that
\bels{uniform primitivity}{
u \in \BB\,,\;u \ge 0 \quad\implies\quad 
(S^{L}u)_x \,\ge\, \rho\2\avg{\1u\1}
\qquad\forall\,x\in \Sx
\,.
}
\end{itemize}
The finiteness of the norm $\norm{S}_{\Lp{2} \to \BB}$ in  condition {\bf A2} means that the integral kernel $S_{xy}$ representing the operator $S$ satisfies
\bels{2 to infty bound on S}{
\!
\norm{S}_{\Lp{2}\to\BB} \;=\,\sup_{x\ins \Sx}\,\Bigl(\,\int_\Sx (S_{x y})^2\Px(\dif y)\,\Bigr)^{1/2}
<\;
\infty
\,.
}
In particular, $ S $ is a Hilbert-Schmidt operator on $ \Lp{2} $. 
The condition {\bf A3} is an effective lower bound on the coupling between the components $ m_x $ in  the QVE.
In the context of matrices with non-negative entries this property is known as \emph{primitivity} - hence our terminology. 

\begin{remark}[Scaling and translation]
\label{rmk:Scaling and translation}
If we replace the pair $ (a,S) $ in the QVE with $ (a',S') := (\lambda^{1/2}a+\1\tau\1,\lambda\1S) $, for some constants $ \lambda > 0 $ and $ \tau \in \R $, then the modified QVE is solved by $ m' : \Cp \to \BB$, where $ m'_x(z) := \lambda^{-1/2}m_x(\lambda^{-1/2}(z-\tau\1)\1) $.
By this basic observation, we may assume, without loss of generality, that $ S $ is normalized and $ a $ is  centered, i.e., $ \norm{S} = 1 $ and $ \avg{\1a\1} = 0 $. 
\end{remark}

All important estimates in this paper are quantitative in the sense that they depend on $ a $ and $ S $ only through a few special  parameters (see also Section~\ref{sec:Relationship between theorems}). The following convention makes keeping track of this dependence easier.

\begin{convention}[Comparison relations, model parameters and constants] 
\label{conv:Comparison relations, model parameters and constants}
For brevity we introduce the concept of {\bf comparison relations}: If $\varphi=\varphi(u) $ and  $ \psi=\psi(u) $  are non-negative functions on some set $ U $, then the notation $ \varphi \lesssim \psi $, or equivalently, $ \psi \gtrsim \varphi $, means that there exists a constant $ 0 < C < \infty $ such that $ \varphi(u) \leq C\1\psi(u) $ for all $ u \in U $. If $ \psi \lesssim \varphi \lesssim \psi $ then we write $ \varphi \sim \psi $, and say that $\varphi$ and $\psi$ are {\bf comparable}.
Furthermore, we use $ \psi = \phi + \Ord_X(\xi) $ as a shorthand for $ \norm{\psi-\phi}_X \lesssim \norm{\1\xi\1}_X $, where $ \xi, \psi, \varphi  \in X $ and $X$ is a normed vector space. For $X=\C$ we simply write $\Ord$ instead of $\Ord_\C$.
When the  implicit constants $ C $ in the comparison relations depend on some parameters $ \Lambda $ we say that the {\bf comparison relations depend on $ \Lambda $}. 
Typically, $ \Lambda $ contains the parameters appearing in the hypotheses, and we refer to them as {\bf model parameters}.

We denote by $ C,C',C_1,C_2,\dots $ and $ c,c',c_1,c_2,\dots $, etc., generic constants that depend only on the model parameters.
The constants $ C,C',c,c' $ may change their values from one line to another, while the enumerated constants, such as  $ c_1,C_2 $, have constant values within an argument or a proof. 
\end{convention}

We usually express the dependence on the variable $ z $ explicitly in the statements of theorems, etc. However, in order to avoid excess clutter we often suppress the variable $ z $ within the proofs e.g., we write $ m $ instead of $ m(z) $,  when $ z $ is considered fixed.

\sectionl{Generating density}

This section contains our main results, Theorem~\ref{thr:Regularity of generating density} and Theorem~\ref{thr:Shape of generating density near its small values}, concerning the generating measure, when $ S $ satisfies {\bf A1-3}, and the solution of the QVE is uniformly bounded.
Sufficient conditions on $ S $ and $ a $ that guarantee the uniformly boundedness of $ m $ are also given (cf. Theorem~\ref{thr:Qualitative uniform bounds}).     

For any $ I \subseteq \R $ we introduce the seminorm on functions $ w : \Cp \to \BB $:
\bels{def of nnorm of m}{
\nnorm{w}_{I} \,:=\, \sup \setb{\2\norm{w(z)} : \Re\,z \in I\,,\; \Im\,z \in (0,\infty)}
\,.
}

\begin{theorem}[Regularity of generating density]
\label{thr:Regularity of generating density}
Suppose $ S $ satisfies {\bf A1-3}, and the solution $ m $ of \eqref{QVE} is uniformly bounded everywhere, i.e., 
\[ 
\nnorm{m}_\R \,\leq\, \Phi
\,,
\]
for some constant $ \Phi < \infty $. 
Then the following hold true: 
\begin{itemize}
\titem{i}
The generating measure has a Lebesgue density (also denoted by $ v $), i.e., $ v_x(\dif \tau) = v_x(\tau)\1\dif \tau $.
The components of the {\bf generating density} are comparable, i.e.,
\[ 
\qquad
v_x(\tau) \,\sim\, v_y(\tau)\,,
\quad\forall\,\tau \in \R\,,\;\forall\2x,y \in \Sx
\,.
\]
In particular, the support of $ v_x $ is independent of $ x $, and hence we write $ \supp v  $ for this common support. 
\titem{ii}
$ v(\tau) $ is real analytic in $ \tau $, everywhere except at points $ \tau \in \supp v $ where $ v(\tau) = 0 $. More precisely, there exists $ C_0 \sim 1 $, such that the derivatives satisfy the bound
\[
\norm{\1\partial_\tau^k v(\tau)} 
\,\leq\,k!\, 
\Bigl(\frac{C_0}{\avg{v(\tau)}^3}\Bigr)^k
\,,\qquad\forall\,k\in\N\,,
\]
whenever $ \avg{v(\tau)} > 0 $.
\titem{iii} The density is uniformly $ 1/3$-H\"older-continuous everywhere, i.e.,
\[
\norm{v(\tau_2)-v(\tau_1)} \,\lesssim\, \abs{\1\tau_2-\tau_1\1}^{1/3}\,,
\qquad \forall\; \tau_1,\, \tau_2 \in \R
\,.
\]
\end{itemize}
The comparison relations in these statements depend on the model parameters $\rho$, $L$, $\norm{S}_{\Lp{2}\to\BB}$, $ \norm{a} $ and $\Phi $. 
\end{theorem}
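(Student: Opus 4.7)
For part (i), taking imaginary parts of \eqref{QVE} yields $u := \Im\,m(\tau+\cI\eta) = |m|^2(\eta+Su)$. The uniform bound $\nnorm{m}_\R\le\Phi$ combined with $|1/m_x|\le|z|+\norm{a}+\norm{S}\Phi$ shows $|m|^2\sim 1$ on any bounded spectral window. Iterating $u\ge c\,Su$ exactly $L$ times and applying uniform primitivity \textbf{A3} yields the pointwise lower bound $u_x\ge c'\avg{u}$. For the matching upper bound I exploit \textbf{A2} to estimate $\norm{Su}_\BB\le\norm{S}_{\Lp{2}\to\BB}\,\norm{u}_\BB^{1/2}\avg{u}^{1/2}$, insert this into $\norm{u}_\BB\le\Phi^2(\eta+\norm{Su}_\BB)$, and solve a scalar quadratic inequality to conclude $\norm{u}_\BB\lesssim\eta+\avg{u}$. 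Combining both directions produces the uniform comparability $u_x(z)\sim u_y(z)$ for $z\in\Cp$ in any bounded spectral window; after the $\eta\downarrow 0$ limit supplied by (ii), Stieltjes inversion transfers this to $v_x\sim v_y$ and simultaneously produces a continuous Lebesgue density.

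For part (ii), differentiating \eqref{QVE} in $z$ gives $(1-m^2 S)\,\partial_z m=m^2$, singling out the stability operator $\mathcal L:=1-m^2 S$. After a conjugation by $|m|$, the nontrivial part becomes the self-adjoint positivity-preserving operator $|m|\,S\,|m|$, for which \textbf{A3} supplies a Perron--Frobenius structure with an order-one spectral gap separating its leading eigenvalue from the rest of the spectrum. A quantitative analysis of the imaginary part of \eqref{QVE} shows that this leading eigenvalue detaches from unity with a gap proportional to a positive power of $\avg{v}$; combined with the comparability from (i), this yields $\norm{\mathcal L^{-1}}\lesssim\avg{v(\tau)}^{-3}$. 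The implicit function theorem then extends $m$ analytically across the real axis into a disk of radius $R\sim\avg{v(\tau)}^3$ about each $\tau$ with $\avg{v(\tau)}>0$. Cauchy's inequality on this disk delivers the stated bound $\norm{\partial_\tau^k v(\tau)}\le k!\,(C_0/\avg{v(\tau)}^3)^k$, and the same analytic extension also provides the uniform $\eta\downarrow 0$ limit used in (i). I expect the principal obstacle to be precisely this quantitative spectral-gap estimate with constants depending only on the model parameters, which requires a careful Perron decomposition of $|m|\,S\,|m|$ and effective control of the $z$-dependence of its leading eigenvalue.

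For part (iii), I combine the derivative bound of (ii) with the structural description of $v$ near its zeros. By Theorem~\ref{thr:Shape of generating density near its small values}, $v$ vanishes at only finitely many points $\tau^*\in\supp v$, and near each such point the density exhibits either a cubic cusp $v\sim|\cdot-\tau^*|^{1/3}$ or a square-root edge $v\sim d^{1/2}$, both of which are uniformly $1/3$-H\"older with constants depending only on the model parameters. Away from this finite collection of singularities $\avg{v}$ is bounded below, so (ii) provides a local Lipschitz estimate, hence $1/3$-H\"older continuity, on any compact subinterval avoiding the zeros. Patching the two regimes according to whether a zero of $v$ lies in or close to $[\tau_1,\tau_2]$ produces the uniform bound $\norm{v(\tau_2)-v(\tau_1)}\lesssim|\tau_2-\tau_1|^{1/3}$ on all of $\R$.
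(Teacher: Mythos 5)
Your treatment of the comparability of $\Im\,m(z)$ for $\eta=\Im\,z>0$ in part (i) is sound and matches the paper's Lemma~\ref{lmm:Constraints on solution}, but the core of your part (ii) rests on a mechanism that is false precisely where it is needed. You propose that the Perron eigenvalue of the self-adjoint operator $F=\abs{m}S\abs{m}$ "detaches from unity with a gap proportional to a positive power of $\avg{v}$". It does not: by the identity \eqref{F and alpha}, $1-\norm{F(z)}_{\Lp{2}\to\Lp{2}}=\avg{f\abs{m}}\,\eta/\alpha$, so on $\supp v$ the leading eigenvalue converges to exactly $1$ as $\eta\downarrow 0$ (cf.\ \eqref{norm of F on supp v is at least one}), no matter how large $\avg{v(\tau)}$ is. The invertibility of the stability operator on the support is a genuinely non-self-adjoint effect: after conjugation by $\abs{m}$ one must invert $B=\nE^{-2\1\cI\1\am}-F$, and the bound comes from the fact that the unitary phase rotates the Perron direction $f$ by an angle $\sim\avg{v}$, quantified through Lemma~\ref{lmm:Norm of B^-1-type operators on L2} via $\absb{1-\norm{F}\avg{f\,\nE^{2\cI\am}f}}\gtrsim\norm{f\sin\am}_2^2\gtrsim\avg{v}^2$, which yields $\norm{B^{-1}}\lesssim\avg{v}^{-2}$ (Lemma~\ref{lmm:Bounds on B-inverse}). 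Without this input your analytic continuation across the real axis, and hence the derivative bounds of (ii), never get started.

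Part (iii) has two further problems. First, it is circular: Theorem~\ref{thr:Shape of generating density near its small values} is proved in Chapters 7--8 \emph{using} Theorem~\ref{thr:Regularity of generating density} (in particular the uniform $1/3$-H\"older continuity and the extension of $m$ to $\R$ enter the proof of Lemma~\ref{lmm:Cubic for shape analysis}), so the edge/cusp shapes cannot be invoked here. Second, even away from the zeros of $v$, the bound you would extract from (ii), namely $\abs{\partial_\tau v}\lesssim\avg{v}^{-3}$, only makes $v^4$ Lipschitz and thus gives $1/4$-H\"older continuity, not $1/3$; the exponent $1/3$ needs $\abs{\partial_z m}\lesssim\avg{v}^{-2}$, which is exactly what $\partial_z m=\abs{m}B^{-1}\abs{m}$ together with $\norm{B^{-1}}\lesssim\avg{v}^{-2}$ provides. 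This is why the paper proves (iii) \emph{first} (Proposition~\ref{prp:Holder regularity in z and extension to real line}), then uses the H\"older bound to produce a zero-free interval of radius $\sim\avg{v(\tau_0)}^3$ and obtains (ii) from Corollary~\ref{crl:Real analyticity of generating density} plus Cauchy's estimate. The same ordering also fixes your part (i): constructing the boundary values of $v$ only where $\avg{v}>0$ via the analytic extension leaves the existence, continuity and pointwise comparability of the density unproved at and near the points where $v$ vanishes; in the paper these follow from the uniform H\"older continuity of $m$ up to the real axis, not from the shape analysis.
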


Here we assumed an a priori uniform bound on $ \nnorm{m}_\R$. 
We remark that without such a bound a regularity result weaker than Theorem~\ref{thr:Regularity of generating density} can still be proven (cf. Corollary~\ref{crl:Regularity of mean generating density}).

For simplicity we assume here that $ \nnorm{m}_\R $ is bounded. In fact, all the results in this paper can be localized on any real interval $ [\alpha,\beta]$, i.e., the statements apply for $ \tau \in [\alpha,\beta] $ provided $ \nnorm{m}_{[\alpha-\eps,\beta+\eps]} $ is bounded for some $ \eps > 0 $.  The straightforward details are left to the reader.

The next theorem describes the behavior of the generating density in the regime where the average generating density $ \avg{v} $ is small.
We start with defining two universal shape functions.

\begin{definition}[Shape functions] 
\label{def:Shape functions}
Define $ \Psi_{\msp{-2}\mrm{edge}} : [\10,\infty) \to [\10,\infty) $, and $ \Psi_{\msp{-2}\mrm{min}} : \R \to [\10,\infty) $, by
\begin{subequations}
\label{defs of shape functions}
\begin{align}
\label{def of Psi_edge}
&\Psi_{\msp{-2}\mrm{edge}}(\lambda) 
\\
\notag
&\;:=\;
\frac{\sqrt{(1+\lambda\1)\2\lambda\,}}
{\bigl(\21+2\1\lambda \,+\,2\sqrt{(1+\lambda)\2\lambda\,}\,\bigr)^{2/3}\!+\,\bigl(\21+2\1\lambda \,-\,2\sqrt{(1+\lambda)\2\lambda\,}\,\bigr)^{2/3}+\,1\,}\,,
\\
\label{def of Psi_min}
&\Psi_{\msp{-2}\mrm{min}}\msp{-1}(\lambda) 
\;:=\; 
\frac{\sqrt{\21\2+\2\lambda^{\12}\,}}{(\sqrt{1+\lambda^2\,}+\lambda\1)^{2/3}+(\sqrt{1+\lambda^2\,}-\lambda\1)^{2/3}-1\,}
\,-\,1
\,.
\end{align}
\end{subequations}   
\end{definition}

\hspace{-0.03\textwidth}
\begin{minipage}{0.32\textwidth}
\hspace{15pt}As the names suggest, the appropriately rescaled versions of the shape functions $\Psi_{\msp{-2}\mrm{edge}}$ and $\Psi_{\msp{-2}\mrm{min}}$
will describe how $ v_x(\tau_0+\omega) $ behaves when $ \tau_0 $ is an {\bf edge} of $ \supp v $, i.e., $\tau_0 \in \partial\supp v$,
and when $ \tau_0 $ is a local minimum of $ \avg{v} $ with $ \avg{v(\tau_0)}>0 $ sufficiently small, respectively.
\end{minipage}
\hspace{2pt}
\begin{minipage}{0.66\textwidth}
\begin{figure}[H]
\centering
\includegraphics[width=1\textwidth]{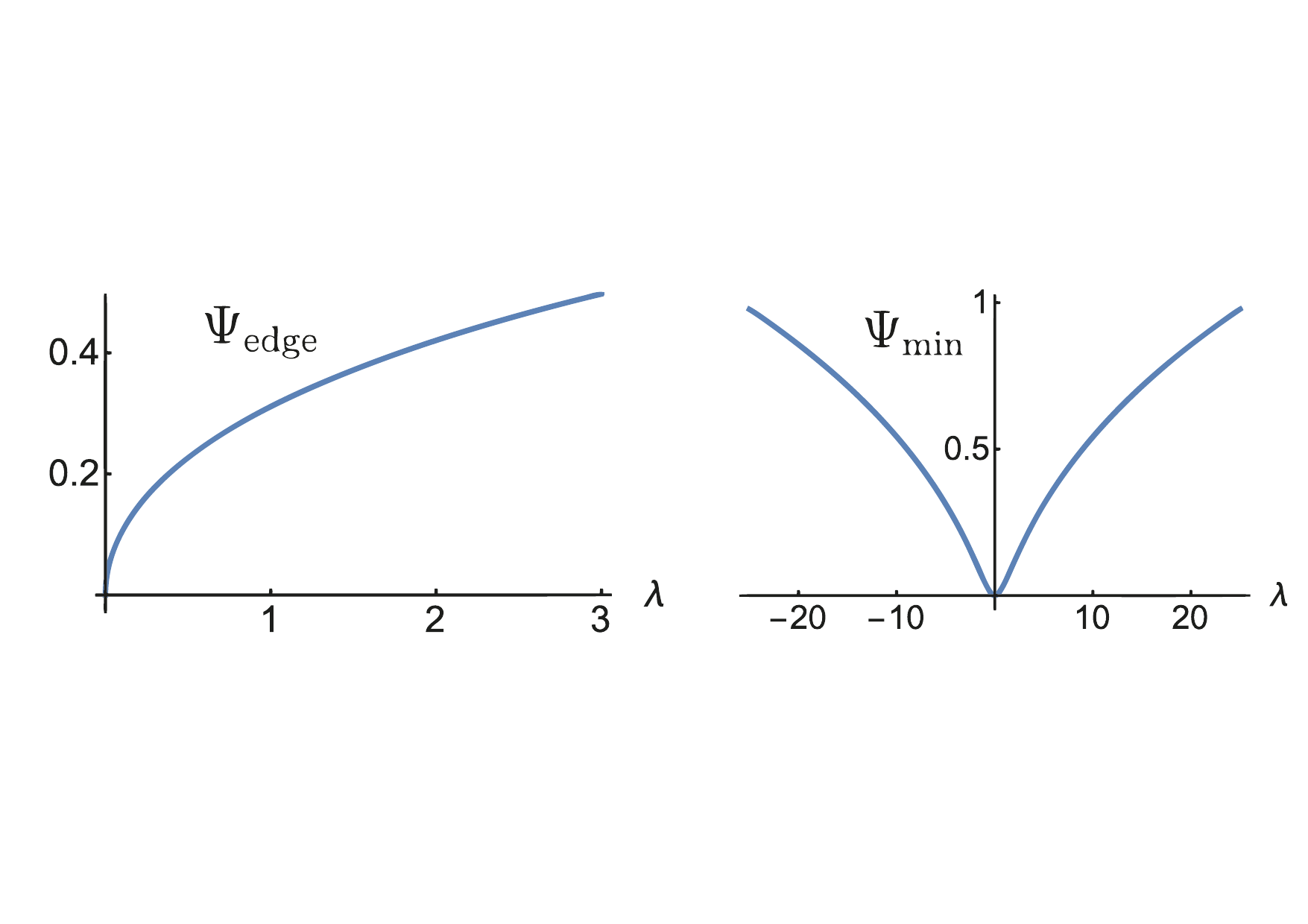}
\caption{The two shape functions $\Psi_{\rm edge}$ and $\Psi_{\rm min}$.}
\label{Fig:ShapeFunctions}
\end{figure}
\end{minipage}

The next theorem is our main result. Together with Theorem~\ref{thr:Regularity of generating density} it classifies the behavior of the generating density of a general bounded solution of the QVE.
The theorem generalizes Theorem 2.6 from \cite{AEK1cpam}. For more details on how these two results compare, we refer to Section~\ref{sec:Relationship between theorems}.

\begin{theorem}[Shape of generating density near its small values]
\label{thr:Shape of generating density near its small values}
Assume  {\bf A1-3}, and 
\[ 
\nnorm{m}_\R\, \leq\,  \Phi 
\,,
\]
for some $ \Phi < \infty $. 
Then the support of the generating measure consists of $K' \sim 1 $ disjoint intervals, i.e., 
\bels{defining property of of alpha_i and beta_i}{
\supp v \,=\, \bigcup_{i=1}^{K'}\, [\1\alpha_i,\beta_i]
\,,\quad\text{where}\quad
\beta_i -\alpha_i \sim 1
\,,\quad
\text{and}\quad 
\alpha_i < \beta_i< \alpha_{i+1}
\,.
}
Moreover, for all $ \eps > 0 $ there exist $ K'' = K''(\eps) \sim 1 $ points $ \gamma_1,\dots,\gamma_{K''} \in \supp v $ such that $ \tau \mapsto \avg{v(\tau)} $ has a local minimum at $ \tau = \gamma_k $ with $ \avg{v(\gamma_k)} \leq \eps $, $ 1\leq k \leq K'' $.  
These minima are well separated from  each other and from the edges, i.e.,
\bels{eps-disjointness of DD}{
\abs{\gamma_i-\gamma_j} \,\sim\, 1\,, 
\quad\forall\,i\neq j\,,
\msp{10}\text{and}\quad
\abs{\gamma_i-\alpha_j}
\,\sim\, 1\,,
\quad
\,\abs{\gamma_i-\beta_j}
\,\sim\, 1 
\,,\quad\forall\,i,j
\,.
}
Let $\MM$ denote the set of edges and these internal local minima,
\bels{def of MM}{
\MM \,:=\, \sett{\alpha_i} \cup \sett{\beta_j} \cup \sett{\gamma_k}
\,,
}
then small neighborhoods of $\MM$
cover the entire domain where $ 0 < \avg{v} \leq \eps $, i.e., there exists $ C \sim 1 $ such that 
\bels{avg-v < eps contained in nn}{
&\setb{\2\tau \in \supp\,v : \avg{\1v(\tau)\1}\leq \eps\2}  
\\
&\quad\subseteq\;
\bigcup_i \,[\2\alpha_i,\alpha_i+C\1\eps^2\2]  
\;\cup\; 
\bigcup_j\, [\2\beta_j-C\1\eps^2,\beta_j\1] 
\;\cup\; 
\bigcup_k \,[\2\gamma_k-C\1\eps^3,\gamma_k+C\1\eps^3\2]
\,.
}
The generating density is described by expansions around the points of $\MM$, i.e. for any $ \tau_0 \in \MM $ we have
\bels{expansion of v around tau_0}{
v_x(\tau_0+\omega\1) \,=\, v_x(\tau_0) \,+\, h_x\1 \Psi(\omega) \,+\, \Ord\Bigl(\,v_x(\tau_0)^2+\Psi(\omega)^2\Bigr)
\,,
\qquad
\omega \in I
\,,
}
where $ h_x \sim 1 $ depends on $ \tau_0 $. 
The interval $ I = I(\tau_0) $ and the function $ \Psi : I \to [\10,\infty) $ depend only on the type of $ \tau_0 $ according to the following list:
\begin{subequations}
\label{cases for shapes}
\begin{itemize}
\item Left edge: If $ \tau_0 =\alpha_i $, then \eqref{expansion of v around tau_0} holds with $  v_x(\tau_0) = 0\2 $, $ I = [\10,\infty\1) $, and 
\bels{left edge}{
\qquad\Psi(\omega) \,=\, 
(\alpha_i-\beta_{i-1})^{1/3}\,
\Psi_{\msp{-2}\mrm{edge}}\biggl(\frac{\omega}{\1\alpha_i-\msp{-1}\beta_{i-1}\!}\biggr)
\,,
}
with the convention $ \beta_0-\alpha_1 = 1 $.
\item 
Right edge: If $ \tau_0 =\beta_j $, then \eqref{expansion of v around tau_0} holds with $ v_x(\tau_0) = 0\2 $, $ I = (-\infty\1,0\1]$ , and 
\bels{right edge}{
\qquad\Psi(\omega) \,=\, 
(\1\alpha_{j+1}\msp{-2}-\beta_j)^{1/3}\,
\Psi_{\msp{-2}\mrm{edge}}
\biggl(\frac{\msp{-6}-\,\omega\,}{\1\alpha_{j+1}\msp{-2}-\beta_j}\biggr)
\,,
}
with the convention $ \alpha_{K'+1}-\beta_{K'} = 1 $.
\item 
Minimum: If $ \tau_0 = \gamma_k $, then \eqref{expansion of v around tau_0} holds with $ I = \R $, and 
\bels{minimum}{
\qquad\Psi(\omega) \,=\, \rho_k\,
\Psi_{\msp{-2}\mrm{min}}\msp{-2}\biggl(\frac{\,\omega\,}{\msp{-1}\rho_k^{\13}\!}\biggr)
\,,\quad\text{where}\quad
\rho_k \,\sim\, \avg{\1v(\gamma_k)}
\,.
} 
In case $\avg{v(\gamma_k)}=0$ we interpret  \eqref{minimum} as its $\rho_k\to 0$ limit, i.e.,  $\Psi(\omega)=2^{-2/3}\abs{\omega}^{1/3}$.
\end{itemize}
\end{subequations}
All  comparison relations depend only on the model parameters $\rho$, $L$, $\norm{S}_{\Lp{2}\to\BB}, \norm{a}$ and $  \Phi $. 
\end{theorem}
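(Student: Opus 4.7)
The plan is to reduce the QVE in a neighborhood of each $\tau_0\in\MM$ to an approximate scalar cubic equation for an "amplitude" parameter, solve this cubic exactly, and then match the correct branch continuously across intervals of $\supp v$ in order to recover the universal shape functions $\Psi_{\msp{-2}\mrm{edge}}$ and $\Psi_{\msp{-2}\mrm{min}}$.

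First I would exploit Theorem~\ref{thr:Regularity of generating density} to fix the topological structure of $\supp v$. Real analyticity on $\{\avg{v}>0\}$ together with uniform $1/3$-H\"older continuity forces zeros of $\avg{v}$ inside $\supp v$ to be isolated, and the derivative bound $\norm{\partial_\tau^k v(\tau)}\lesssim k!\,\avg{v(\tau)}^{-3k}$ combined with the a priori support bound $\supp v\subset[-\Sigma,\Sigma]$ yields a uniform lower bound on the length of each connected component and on the separation between consecutive small values. This proves \eqref{defining property of of alpha_i and beta_i}, \eqref{eps-disjointness of DD}, and the finiteness $K',K''\sim 1$. The containment \eqref{avg-v < eps contained in nn} will then follow a posteriori from the shape expansion \eqref{expansion of v around tau_0}, once we show $\avg{v}$ grows like $\Psi_{\msp{-2}\mrm{edge}}^{1/2}$ near edges and like $\rho_k + \Psi_{\msp{-2}\mrm{min}}(\omega/\rho_k^{3})$ near minima.

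The heart of the argument is to derive the cubic. Fix $\tau_0$ and let $M := \lim_{\eta\downarrow 0} m(\tau_0+\cI\eta)$; this limit exists by the Stieltjes representation and the boundedness $\nnorm{m}_\R\le\Phi$. Writing $m(\tau_0+\omega+\cI\eta)=M+\Delta$ and inserting into \eqref{QVE}, the linearization is governed by the stability operator $F=\abs{M}\1 S\1\abs{M}$. By \textbf{A1}--\textbf{A3}, the Perron--Frobenius theorem applies to $F$ and yields a unique leading eigenvalue whose distance to $1$ is controlled by $\avg{v(\tau_0)}$; its gap to the rest of the spectrum is of order one. Decomposing $\Delta=\theta\2 b + \Delta_\perp$ along the Perron eigenvector $b$ and its orthogonal complement, the orthogonal component $\Delta_\perp$ is controlled quadratically in $\abs{\theta}$ via the a priori stability estimate from Chapter~\ref{chp:Uniform bounds}. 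Projecting QVE onto $b$ produces, up to higher-order remainders, a scalar cubic
\bels{approxcubic}{
\mu_3\2\theta^3\2+\2\mu_2\2\theta^2\2+\2\mu_1\2\theta \,=\, \omega\1 h_x\1/\1 b_x \,+\, \cI\1 \eta\1(\dots),
}
whose coefficients $\mu_j$ are moments involving $M$, $S$ and $b$.

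The analysis of \eqref{approxcubic} produces the shape functions: at an edge, $\Im M=0$ forces $\mu_2$ to be of a definite sign and $\mu_1\to 0$, so Cardano's formula gives precisely $\Psi_{\msp{-2}\mrm{edge}}$ after scaling by the gap $\alpha_i-\beta_{i-1}$ (which controls the relative magnitudes of $\mu_1,\mu_2,\mu_3$); at an internal minimum, a symmetry argument using A3 forces $\mu_2 = 0$ and $\mu_1\sim\rho_k^2$, $\mu_3\sim 1$, yielding $\Psi_{\msp{-2}\mrm{min}}$ on the natural scale $\omega\sim\rho_k^3$. The \textbf{main obstacle} will be the branch-selection: the cubic \eqref{approxcubic} has three roots in a neighborhood of $\tau_0$, and locally there is no way to single out the physical one with only a qualitative argument. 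To avoid the non-effective contradiction proof of \cite{AEK1Short}, I would anchor the branch at the edges, where $v_x=0$ uniquely pins down $\theta$, and then propagate this choice along $\supp v$ by continuity of $m$, using that the discriminant of \eqref{approxcubic} vanishes only at points of $\MM$. This matching, combined with the nonlinear stability results from Chapter~\ref{chp:Stability around small minima of generating density} that absorb all perturbative errors into the $\Ord(v_x(\tau_0)^2+\Psi(\omega)^2)$ term, yields the expansion \eqref{expansion of v around tau_0} uniformly in $x$ and with all constants depending only on $\rho,L,\norm{S}_{\Lp{2}\to\BB},\norm{a},\Phi$.
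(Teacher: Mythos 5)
Your skeleton --- reduce to a scalar cubic for the amplitude in the unstable direction, solve it by Cardano, and select the physical branch --- is indeed the paper's strategy, but three of your steps fail as stated. First, the structural claims \eqref{defining property of of alpha_i and beta_i} and \eqref{eps-disjointness of DD} do not follow from Theorem~\ref{thr:Regularity of generating density}: analyticity and the derivative bounds there are \emph{upper} bounds that degenerate as $\avg{v}\to0$, so they cannot rule out arbitrarily short components, arbitrarily many components, or small internal minima accumulating near an edge. In the paper these facts are outputs of the shape analysis itself: the quantitative \emph{lower} bounds $v_x(\tau_0+\omega)\gtrsim\abs{\omega}^{1/2}$ at edges and $v_x(\tau_0+\omega)\gtrsim v_x(\tau_0)+\abs{\omega}^{1/3}$ at small internal minima, with constants $\sim1$ (Lemma~\ref{lmm:Simple edge}, Propositions~\ref{prp:Non-zero local minimum} and~\ref{prp:Vanishing local minimum}, Corollary~\ref{crl:Location of non-zero minima}), are what force each component to have length $\sim1$ and the minima to be well separated. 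Second, at an internal minimum there is no symmetry or {\bf A3} argument giving $\mu_2=0$. What is true is only $\abs{\sigma(\gamma_k)}\lesssim\avg{v(\gamma_k)}^2$, and it is a consequence of the monotonicity statement (Lemma~\ref{lmm:Monotonicity}): were $\abs{\sigma}$ much larger than $\avg{v}^2$, the density would be strictly monotone at $\gamma_k$, so $\gamma_k$ could not be a minimum. Moreover $\mu_2$ and $\mu_1$ carry essential imaginary parts of order $\alpha$, namely $\mu_2=\sigma+\cI\1(3\psi-\sigma^2)\2\alpha+\dots$ and $\mu_1=2\1\cI\1\sigma\2\alpha-2\1(\psi-\sigma^2)\2\alpha^2+\dots$; with your real coefficients ($\mu_2=0$, $\mu_1\sim\rho_k^2$, $\mu_3\sim1$) the cubic with real right-hand side $\propto\omega$ is strictly increasing on the reals, so the branch through $\Theta(0)=0$ stays real and predicts no density at all; the regularized profile $\rho_k\1\Psi_{\mrm{min}}(\omega/\rho_k^{3})$ arises precisely from those order-$\alpha$ imaginary parts through the complex shift in the normal-form reduction.

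Third, your branch selection (``anchor at an edge, propagate by continuity since the discriminant vanishes only on $\MM$'') breaks down exactly where you anchor: at an edge two roots of the cubic coincide ($\Lambda=\pm1$ in the normal form), so continuity does not propagate your choice from the support side across the edge into the gap, and continuity together with $\Im\,\Theta\ge0$ does not exclude the middle root $\wht{\Omega}_0$, which also has (small) nonnegative imaginary part there. The paper needs the extra selection principle {\bf SP-4} (Lemma~\ref{lmm:Growth condition}): if $\avg{v}$ grows slower than a square root in some direction, it vanishes on an interval there and the real part is monotone; this is what eliminates $\wht{\Omega}_0$, and it itself rests on Lemma~\ref{lmm:Monotonicity} and Proposition~\ref{prp:Non-zero local minimum}, quantitative inputs your outline does not supply. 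You also never relate the local coefficient $\sigma(\tau_0)$ to the geometric gap $\alpha_i-\beta_{i-1}$ that appears in \eqref{left edge}; this requires determining the root beyond the far edge of the gap (Lemma~\ref{lmm:Choice of roots}) and the comparison $\Delta(\tau_0)=(1+\Ord(\sigma))\2\wht{\Delta}(\tau_0)$ with $\wht{\Delta}\sim\abs{\sigma}^3$ (Lemma~\ref{lmm:Size of small gap}) --- the matching at neighboring singularities that replaces the non-effective argument of \cite{AEK1Short} --- and this is absent from your plan. Finally, invoking Chapter~\ref{chp:Stability around small minima of generating density} to absorb the error terms is circular: those refined stability results are proved \emph{using} the shape analysis; at this stage only the rough stability (Lemma~\ref{lmm:Stability when m and inv-B bounded}) and the bounds on $B^{-1}Q$ from Lemma~\ref{lmm:Expansion of B in bad direction} are available.
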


Figure \ref{Fig:AllShapes} shows an average generating measure which exhibits each of the possible singularities described by \eqref{expansion of v around tau_0} and \eqref{cases for shapes}. 
Note that the expansions \eqref{expansion of v around tau_0} become useful for the non-zero minima $ \tau_0 = \gamma_k $  only when $ \eps > 0 $ is chosen to be so small that the term $h_x\Psi(\omega)$ dominates $v_x(\tau_0)^2$ which itself is smaller than
$\eps^2$.

\begin{remark}[Universality of shapes]
The function $ \Delta^{1/3} \Psi_{\msp{-2}\mrm{edge}}(\omega/\Delta) $ describing the edge shape interpolates between a square root and a cubic root growth with the switch in the growth rate taking place when its argument becomes of the size $ \Delta $.
Similarly, the function $ \rho\,\Psi_{\msp{-2}\mrm{min}}(\omega/\rho^3)$ can be seen as a cubic root cusp $ \omega \mapsto \abs{\omega}^{1/3} $ regularized at  scale $ \rho^3 $.  

Suppose $ \tau_0 $ is an internal edge with a gap of size $ \Delta>0  $ to the left. 
As $ \Delta $ becomes small, the function $ \lambda \mapsto \Delta^{\!-1/3} v(\tau_0+\Delta\lambda\2) $ approaches the universal shape function $ \Psi_{\msp{-2}\mrm{edge}} $ up to a $ \lambda$-independent scaling factor.
More precisely, consider a family of data $ (a^{(\Delta)},S^{(\Delta)}) $, $ \Delta \in (0,c) $ parameterized by $ \Delta \in (0,c) $, such that the supports of the corresponding generating densities $ v = v^{(\Delta)} $ have gaps of size $ \Delta $ between opposing internal edges $ \tau_0 = \tau^{(\Delta)}_0 $ and $ \tau_0 -\Delta $.
If the hypotheses of Theorem~\ref{thr:Shape of generating density near its small values} hold uniformly in $ \Delta $, then 
\[
\qquad
\lim_{\Delta \downarrow 0}\,
\frac{\,v(\tau_0+\Delta\lambda_1)}{v(\tau_0+\Delta\lambda_2)} 
\,=\,
\frac{\Psi_{\msp{-2}\mrm{edge}}(\lambda_1)}{\Psi_{\msp{-2}\mrm{edge}}(\lambda_2)}
\,,
\qquad\forall\2\lambda_1,\lambda_2 > 0
\,.
\]
An analogous statement holds for non-zero local minima and the associated universal shape function $ \Psi_{\!\mrm{min}}$. 
%
A simple example of a family of QVEs where the gap closes and then becomes a small minima is given in Section~\ref{sec:Simple example that exhibits all universal shapes}. 
\end{remark}

\begin{figure}[h]
\centering
\hspace{-0.025\textwidth}
\includegraphics[width=1.02\textwidth]{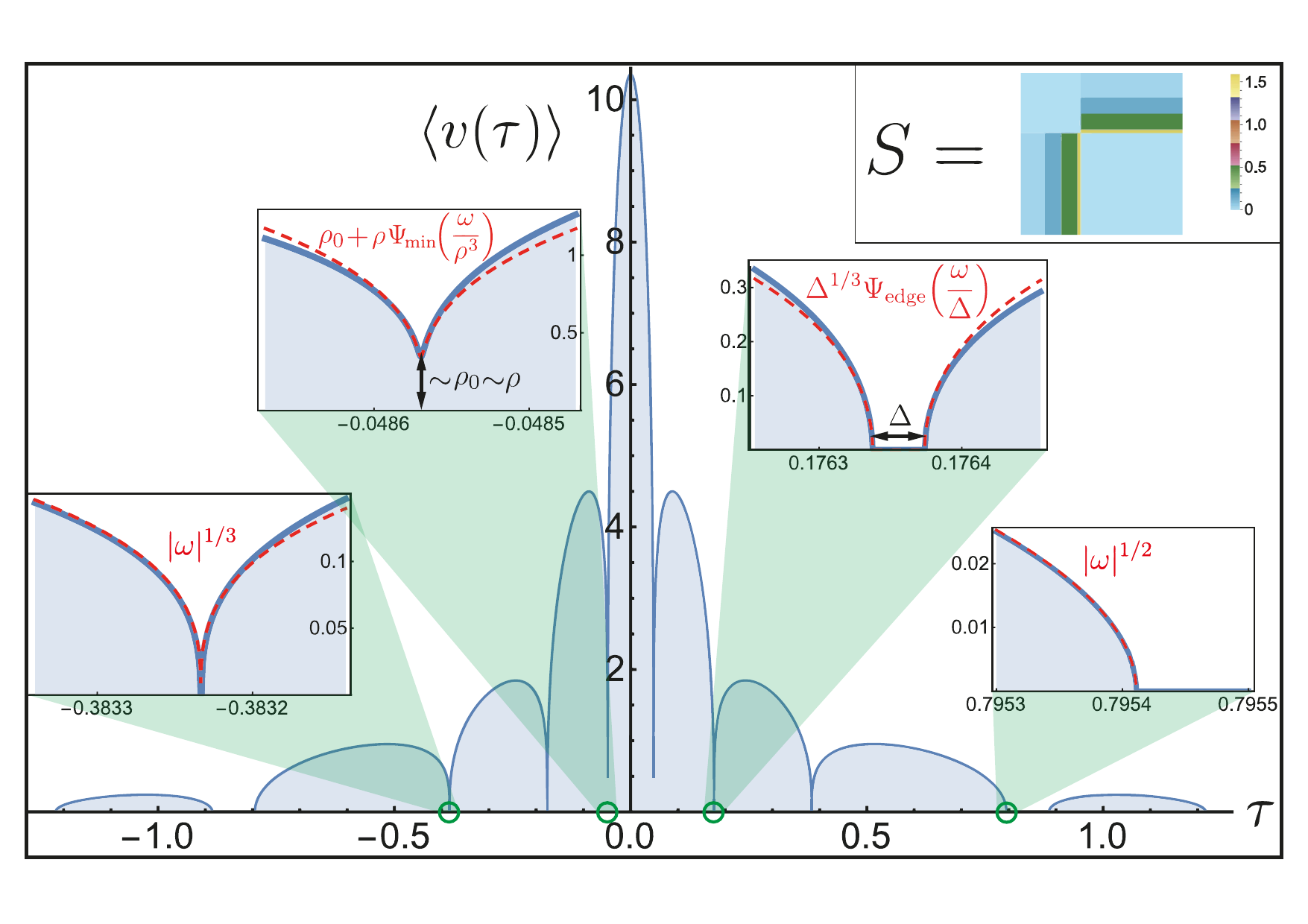}
\caption{
Average generating density $ \avg{v} $ when $ a = 0 $ and the kernel $ S_{xy} $ is a block-constant function as specified with greyscale encoding in the upper right corner. All the possible shapes appear in this example.
At the qualitative level each component $ v_x $ looks similar.
If $ a $ is non-zero the $ v(\tau) $ is not necessarily a symmetric function of $ \tau $ any more.
}
\label{Fig:AllShapes}
\end{figure}

\begin{remark}[Choice of non-zero minima]
\label{rmk:Choice of non-zero minima}
We formulated Theorem~\ref{thr:Shape of generating density near its small values} for an arbitrary threshold parameter $ \eps $, but it is easy to see that only  small values of $\eps$ are relevant. In fact, without loss of generality one may assume that $ \eps \sim 1 $ is so small that the intervals on the right hand side of \eqref{avg-v < eps contained in nn} are disjoint. In this case the internal minima where $\langle v\rangle$ vanishes, i.e., the edges $\alpha_i$, $\beta_j$ and those $\gamma_k$'s that correspond to cusps, turn out to be the unique minima within the corresponding intervals. However,  the local minima of $ \avg{v} $ where  $\avg{v} \neq 0 $, i.e., the non-cusp elements of $\{\gamma_1, \ldots, \gamma_{K''} \}$ might not be unique even for small $\eps$. 
In fact, along the proof of Theorem~\ref{thr:Shape of generating density near its small values} we also show (Corollary~\ref{crl:Location of non-zero minima}) that these nonzero local minima are either tightly clustered or well separated from each other in the following sense: If $ \gamma,\gamma' \in \supp v \backslash \partial \supp v $ are two local minima of $ \avg{v} $, then either 
\[
\abs{\1\gamma-\gamma'} \,\lesssim\, \min\setb{\avg{v(\gamma)},\avg{v(\gamma')}}^4\,,
\qquad\text{or}\quad\qquad
\abs{\1\gamma-\gamma'} \,\sim\, 1\,.
\] 
In particular, for small $ \eps \sim 1 $, each interval in \eqref{avg-v < eps contained in nn} contains at most one such cluster of local minima.
Within each cluster we may choose an arbitrary representative $ \gamma_k $; Theorem~\ref{thr:Shape of generating density near its small values} will hold for any such choice.
\end{remark}

We will now discuss two sufficient and checkable conditions that together with {\bf A1-3} imply $ \nnorm{m}_\R < \infty $, a key input of Theorems \ref{thr:Regularity of generating density} and \ref{thr:Shape of generating density near its small values}.
The first one involves a regularity assumption on $ a $ and the family of row functions, or simply {\bf rows}, of $S $, 
\bels{def of S_x}{
S_x: \Sx \to [\20,\infty),\, y \mapsto S_{xy}
\,,\qquad
x \in \Sx\,,
}
as elements of $ \Lp{2} $.
It expresses that the set of pairs $ \sett{(a_x,S_x):x \in \Sx} $ should not have  outliers in the sense that,
\bels{qualitative B1}{
\lim_{\eps\1\downarrow\1 0}\inf_{x \in \Sx}\int_\Sx \frac{\Px(\dif y)}{\,\eps  +(a_x-a_y)^2 +\norm{S_x-S_y}_2^2}\;=\; \infty
\,,
}
holds.
In other words, this means that no $(a_x,S_x) $ is too different from all the other pairs $ (a_y,S_y) $, $ y \neq x $.
We will see that in case $ a = 0 $, the property \eqref{qualitative B1} alone implies a bound for $ m(z) $ when $ z $ is away from zero. 
When $ a = 0 $ the point $ z = 0 $ is special, and an extra structural condition is needed to ensure that $ m(0) $ is also bounded.  
In order to state this additional condition we need the following definitions.

\NDefinition{Full indecomposability}{
A $ K\times K $ matrix $ \brm{T} $ with non-negative elements $ T_{ij} \ge 0 $, is called {\bf fully indecomposable} (FID) provided that for any subsets $ I,J \subset \sett{1,\dots,K} $, with $ \abs{\1I\1} + \abs{J} \ge K $, the submatrix $(T_{ij})_{i \in I, j \in J}$ contains a non-zero entry.

The integral operator $ S : \BB \to \BB $ is {\bf block fully indecomposable} if there exist
an integer $K$, a fully indecomposable matrix $ \brm{T} = (T_{ij})_{i,j=1}^K $ and a measurable partition $ \mcl{I} := \sett{I_j}_{j=1}^K $ of $ \Sx$,  such that 
\bels{B2:S bound}{
\Px(I_i) = \frac{1}{K}\,,\qquad\text{and}\qquad
S_{xy} \,\ge\, T_{ij}\,,\quad\text{ whenever}\quad(x,y) \in I_i \times I_j
\,,
}
for every $ 1\leq i,j\leq K $.
}
The FID property is standard for matrices with non-negative entries \cite{BR97book}. 
The most useful properties of FID matrices are listed in  Proposition~\ref{prp:Properties of FID matrices} and Appendix~\ref{sec:Scalability of matrices with non-negative entries} below.
With these definitions we have the following qualitative result on the boundedness of $ m $.

\begin{theorem}[Qualitative uniform bounds]
\label{thr:Qualitative uniform bounds}
Suppose that in addition to {\bf A1}, {\bf A2} and \eqref{qualitative B1}, either of the following holds:
\begin{itemize}
\titem{i} 
$ a = 0 $ and $ S $ is block fully indecomposable;
\titem{ii} 
$ S$  satisfies {\bf A3}, and
\bels{qualitative strong diagonal condition}{
\inf\setbb{\frac{\avg{w,Sw}}{\avg{w}^2}: w \in \BB\,,\; w_x\ge 0}  \;>\;0\,.
}
\end{itemize}
Then the solution of the QVE is uniformly bounded,  $ \nnorm{m}_{\R} < \infty $, and in the case \emph{(i)} $ S $ has the property {\bf A3}. In particular, the conclusions of both Theorem~\ref{thr:Regularity of generating density} and Theorem~\ref{thr:Shape of generating density near its small values} hold.
\end{theorem}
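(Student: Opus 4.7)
The plan is to show $\nnorm{m}_\R<\infty$ by first reducing the problem to producing a uniform positive lower bound on $Sv$, where $v:=\Im m$, and then extracting such a bound from the structural hypotheses. Splitting \eqref{QVE for fixed z} into real and imaginary parts (using that $S$ has a real symmetric kernel) yields the pointwise identities
\bea{
\frac{v_x}{|m_x|^2}\,=\,\Im z+(Sv)_x\,,\qquad -\,\frac{\Re m_x}{|m_x|^2}\,=\,\Re z+a_x+(S\Re m)_x\,.
}
The first one, combined with the elementary inequality $v_x\le|m_x|$, yields two a priori bounds valid for every $z\in\Cp$ and $x\in\Sx$:
\bea{
|m_x(z)|\,\le\,\frac{1}{(Sv)_x+\Im z}\,,\qquad v_x(Sv)_x\,\le\,1\,.
}
Hence the whole task reduces to a uniform positive lower bound on $(Sv)_x+\Im z$; this is automatic in the interior of $\Cp$, so the critical regime is $z$ approaching the real axis while inside the spectrum of $v$.

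For case \emph{(ii)}, integrating $v(Sv)\le 1$ against $\Px$ gives $\avg{v,Sv}\le 1$, and combining with \eqref{qualitative strong diagonal condition} produces the mean bound $\avg v\le C$. The key technical step is to bootstrap this mean bound into a pointwise bound $\|v\|_\infty\le C_1$ using \textbf{A3}. The idea is to iteratively substitute $v_y=|m_y|^2(\Im z+(Sv)_y)$ into $(S^Lv)_x\ge\rho\avg v$ so as to express a lower bound for $(Sv)_x$ in terms of $L$-fold integral averages involving $|m|$; the no-outliers hypothesis \eqref{qualitative B1} then rules out concentration of $v$ on exceptional rows of $S$ and converts the mean bound into the desired pointwise bound. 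A uniform bound for the real part $\Re m$ follows similarly from the second identity above and the smoothing hypothesis \textbf{A2}.

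For case \emph{(i)}, I would first verify \textbf{A3}: the block full indecomposability of $S$ together with standard Perron--Frobenius theory for FID matrices (Proposition~\ref{prp:Properties of FID matrices} and Appendix~\ref{sec:Scalability of matrices with non-negative entries}) gives that $\brm{T}^K$ has strictly positive entries, which lifts via the block bound \eqref{B2:S bound} to the uniform primitivity \eqref{uniform primitivity}. The genuinely delicate point is $z=0$: there $\avg{v(0)}$ may vanish due to the symmetry $v_x(-\tau)=v_x(\tau)$ from Theorem~\ref{thr:Existence and uniqueness}, so the case-(ii) argument does not apply directly. However, at $z=0$ one has $m(0)=\cI\tilde m$ with real $\tilde m>0$, and the QVE collapses to the rigid pointwise product identity $\tilde m_x(S\tilde m)_x=1$. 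Using the $\Lp{2}$ bound \eqref{L2 bound on m when a=0} on a neighborhood of $0$ together with the smoothing \textbf{A2} gives $\|S\tilde m\|_\infty<\infty$, whence $\tilde m_x\ge 1/\|S\tilde m\|_\infty$ is bounded \emph{below}; feeding this lower bound back into \textbf{A3} via $(S^L\tilde m)_x\ge\rho\avg{\tilde m}$ yields an upper bound on $\tilde m$ as well. Continuity of $m$ in $z$ extends the bound to a neighborhood of $0$, and the case-(ii) argument (now that \textbf{A3} is available) covers the remaining values of $z$.

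The principal technical obstacle is the bootstrap step in case \emph{(ii)}: turning the integral estimate $\avg v\le C$ into the pointwise estimate $\|v\|_\infty\le C$ without circularity. This is exactly where \eqref{qualitative B1} is indispensable -- by excluding isolated rows of $S$ it prevents $v$ from spiking at an exceptional index while still satisfying the mean constraint. The analogous difficulty in case \emph{(i)} at $z=0$ is bypassed entirely by exploiting the Sinkhorn-type identity $\tilde m(S\tilde m)=1$, which is a two-sided rigid constraint rather than a one-sided inequality.
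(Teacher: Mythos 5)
Your reduction of the problem to a uniform positive lower bound on $(Sv)_x+\Im z$ cannot work, and this flaw propagates through case \emph{(ii)}. At an edge of $\supp v$ (or at a cusp) all components of $v=\Im\,m$ vanish, so $(Sv)_x\to 0$ as $z$ approaches such a point with $\Im z\to0$, while $\dist(z,\supp v)\to 0$ as well; nevertheless $m$ stays bounded there. Hence the bound $|m_x|\le 1/((Sv)_x+\Im z)$ is unusable exactly in the critical regime, and no argument that only produces upper bounds on $v$ (your $\avg{v,Sv}\le1$, $\avg{v}\lesssim1$, and the proposed bootstrap to $\norm{v}_\infty\lesssim1$) can control $\norm{m}$: the danger is a large \emph{real} part where $\Im\,m$ is small, and your sketch for $\Re\,m$ via {\bf A2} only gives an upper bound on $\norm{S\1\Re\,m}$, i.e.\ a \emph{lower} bound on $|m_x|$, not an upper one. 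The paper's mechanism is different and this difference is essential: one first proves a uniform-in-$z$ $\Lp{2}$ bound on $m$ itself (Lemma~\ref{lmm:Quantitative L2-bound}, using \eqref{qualitative strong diagonal condition} through the operator $F$ and $\norm{F}_{\Lp{2}\to\Lp{2}}\le1$, or the structural bound \eqref{L2 bound on m when a=0} when $a=0$ and $|z|\gtrsim1$), and then converts it into a $\BB$-bound by comparing rows: from the QVE, $1/|m_y|\le 1/|m_x|+|a_x-a_y|+\norm{S_x-S_y}_2\norm{m}_2$, and integrating $|m_y|^2$ over $y$ turns the $\Lp{2}$ bound into an upper bound on $|m_x|$ via the function $\Gamma$ (Proposition~\ref{prp:Converting L2-estimates to uniform bounds}); this is the precise place where \eqref{qualitative B1} is used, not as a vague exclusion of spikes of $v$. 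Your proposal contains neither an $\Lp{2}$ bound on $m$ in case \emph{(ii)} nor any mechanism bounding $|m_x|$ from above where $\Im\,m$ is small.

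In case \emph{(i)} the verification that {\bf B1} implies {\bf A3} is fine, but the treatment of $z=0$ is circular. The bound \eqref{L2 bound on m when a=0} reads $\norm{m(z)}_2\le 2/|z|$ and degenerates as $z\to0$, so it gives no control of $\norm{S\1\Im\,m}$ near zero; moreover writing $m(0)=\cI\1\tilde m$ with a finite $\tilde m$ solving $\tilde m\,(S\tilde m)=1$ presupposes exactly the boundedness at $z=0$ that has to be proven (the examples of Section~\ref{sec:Blow-up at z=0 when a=0 and assumption B1} show that without {\bf B1} the generating measure can carry a point mass at $0$, i.e.\ $\norm{m(\cI\eta)}_2\to\infty$). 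Invoking a "Sinkhorn-type identity" does not bypass this: solvability and boundedness of the scaling problem is precisely where full indecomposability must enter quantitatively. The paper works at $z=\cI\eta$, characterizes $v(\cI\eta)$ as the minimizer of the functional $J_\eta$ (Lemma~\ref{lmm:Characterization as minimizer}), discretizes it over the blocks of {\bf B1} and uses the FID matrix $\brm{Z}$ to bound the block averages (Lemma~\ref{lmm:Uniform bound on discrete minimizer}), upgrades $\avg{v}\lesssim1$ to $\norm{v}\lesssim1$ via $S^2$ and {\bf A2}, and finally extends the bound off the imaginary axis using the stability Lemma~\ref{lmm:Stability when m and inv-B bounded} together with a bound on $\norm{B(\cI\eta)^{-1}}$ — mere continuity of $m$ in $z$, as you invoke, does not give a neighborhood of uniform size in $\eta$. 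These two steps (the $\Lp{2}$-to-$\BB$ conversion via $\Gamma$, and the variational/FID argument at $z=\cI\eta$ with a quantitative stability extension) are the core of the proof and are missing from your proposal.
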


When $ a = 0 $ and $\Sx $ is discrete the full indecomposability of $ S $ is not only a sufficient but also a necessary condition for the boundedness of $ m $ in $ \BB $.  More precisely, in Theorem~\ref{thr:Scalability and full indecomposability} we will show that in the discrete setup the QVE is stable and has a bounded solution if and only if $ S $ is a fully indecomposable matrix.

We also remark that {\bf A3} and the condition \eqref{qualitative strong diagonal condition} imply that $ S $ is block fully indecomposable in the discrete setup. In general, neither implies the other however. 
In Chapter \ref{chp:Uniform bounds} we present quantitative versions of Theorem~\ref{thr:Qualitative uniform bounds}: Theorem~\ref{thr:Quantitative uniform bounds when a = 0} and Theorem~\ref{thr:Quantitative uniform bound for general a} correspond to the parts (i) and (ii) of Theorem~\ref{thr:Qualitative uniform bounds}, respectively.

In the prominent example $ (\Sx,\Px(\dif x)) = ([0,1],\dif x) $ the condition \eqref{qualitative B1} is satisfied if the  map $ x \mapsto (a_x,S_x) : \Sx \mapsto \R \times \Lp{2} $ is piecewise $1/2$-H\"older continuous, in the sense that for some finite partition $ \sett{I_k} $ of $ [0,1] $ into non-trivial intervals, the bound 
\bels{def of PW-1/2-Holder}{
\abs{\1a_x-a_y}
+\norm{S_x-S_y}_2 \,\leq\, C_1\2\abs{x-y}^{1/2}\,,\qquad\forall\,x,y \in I_k  
\,,
} 
holds for every $ k $. 
Furthermore, if $ S $ has a positive diagonal, such that 
\bels{fulldiag}{
S_{xy} \,\ge\, \eps\2\Ind\sett{\2\abs{x-y}\leq \delta\1}
\,,\qquad\forall\,x,y\in [\10,1\1]
\,,
}
for some $ \eps,\delta > 0 $,  then it is easy to see that $ S $ is block fully indecomposable and also satisfies \eqref{qualitative strong diagonal condition}, as well as its quantitative version \eqref{quantitative strong diagonal condition} (cf. Chapter \ref{chp:Uniform bounds}).

Next we discuss the special situation in which the generating measure is supported on a single interval.
A sufficient condition for this to hold is that the pairs $(a_x,S_x) $, $ x\in \Sx $, can not be  split into two well separated subsets in a sense specified by the inequality \eqref{xi_S(0) leq xi_ast} below.
The following result is a quantitative version of Theorem 2.8 in \cite{AEK1cpam}. 

\begin{theorem}[Generating density supported on single interval]
\label{thr:Generating density supported on single interval}
Assume $ S $ satisfies {\bf A1-3}, and  $ \nnorm{m}_\R \leq \Phi $. 
Then there exists a threshold $ \xi_\ast \sim 1 $ such that under the assumption
\bels{xi_S(0) leq xi_ast}{ 
\sup_{A \1\subset\1 \Sx}\, \inf_{\substack{x \,\in\, A\\y \,\notin\, A}}
\Bigl(\, \abs{\1a_x-a_y}+ \norm{S_x-S_y}_1\Bigr)
\,\leq\, \xi_\ast
\,
}
the generating density is supported on a single interval, i.e.
$ \supp v =[\alpha, \beta]$, with $ \beta-\alpha \sim 1 $, and $ \abs{\alpha},\abs{\beta} \leq \Sigma $.
Moreover, for every $ 0< \delta<(\beta-\alpha)/2 $, we have
\begin{subequations}
\label{single interval}
\begin{align}
\label{single interval: bulk}
\qquad v_x(\tau) \;&\gtrsim\; \delta^{\11/2}\,,\qquad\qquad \tau \in [\1\alpha+\delta\1,\2\beta-\delta\2]
\\
\label{single interval: edges1}
v_x(\alpha + \omega)
\,&=\, 
h_x\2\omega^{1/2} + \Ord(\omega)
\qquad \omega \in [\10\1,\1\delta\2],
\\
\label{single interval: edges2}
v_x(\beta - \omega)
\,&=\, 
h_x'\2\omega^{1/2} + \Ord(\omega)
\qquad \omega \in [\10\1,\1\delta\2],
\end{align}
\end{subequations}
where $ h,h' \in \BB $ with $ h_x,h'_x \sim 1 $. Furthermore, $ v(\tau) $ is uniformly $ 1/2$-H\"older continuous in $ \tau $.
Here  $\rho$, $L$, $\norm{S}_{\Lp{2}\to\BB}$, $ \norm{a} $ and $\Phi $ are considered the model parameters.
\end{theorem}

\hspace{-0.07\textwidth}
\begin{minipage}{0.51\textwidth}
\begin{figure}[H]
\centering
\includegraphics[width=1\textwidth]{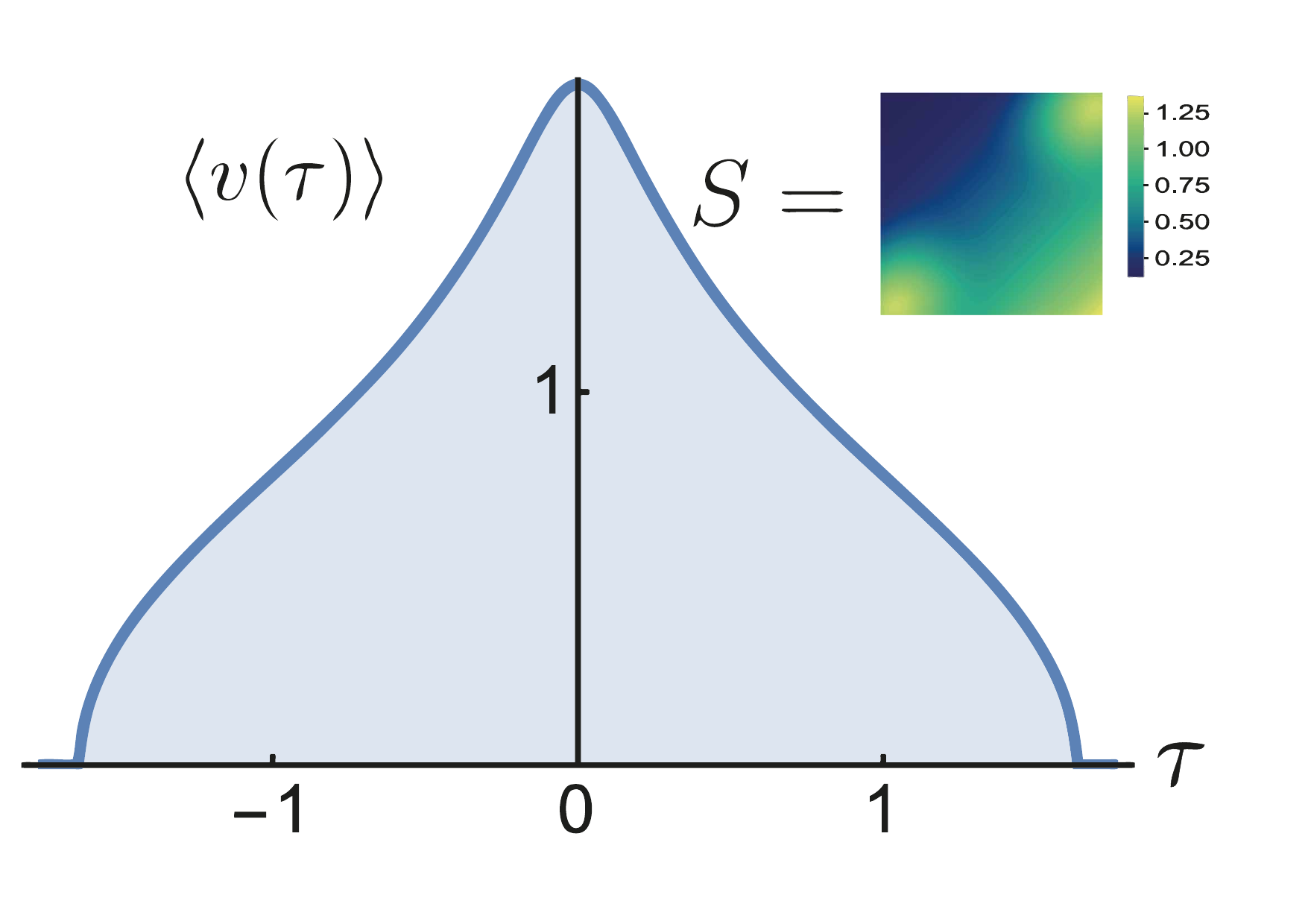}
\caption{The smooth profile of $S$ leads to a generating density that is supported on a single interval.}
\label{Fig:SingleIntervalSupport}
\end{figure}
\end{minipage}
\hspace{0.01\textwidth}
\begin{minipage}{0.48\textwidth}
Combining the last two theorems we proved that under the conditions of Theorem~\ref{thr:Qualitative uniform bounds} on $ (a,S) $ in addition to \eqref{xi_S(0) leq xi_ast} all conclusions of Theorem~\ref{thr:Generating density supported on single interval} hold.
For example, if $\Sx=[0,1]$, $ a = 0 $,  and $ S $ satisfies {\bf A1} and {\bf A2}, it is block fully indecomposable, and the row functions $ S_x $ are $ 1/2$-H\"older continuous on the whole set $ [0,1] $, then the conclusions \eqref{single interval} of Theorem~\ref{thr:Generating density supported on single interval} hold true. Figure \ref{Fig:SingleIntervalSupport} shows an average generating density corresponding to an integral operator $S $ with a smooth kernel when $ a = 0 $. 
\end{minipage}

\sectionl{Stability}

Now we discuss the stability properties of the QVE \eqref{QVE}. 
These results are the cornerstone of the proof of the local law for \emph{Wigner-type} random matrices proven in \cite{AEK2},  
 see Chapter~\ref{chp:Local laws for large random matrices} for more details.
Fix $ z\in \eCp $, and suppose $ g \in \BB $ satisfies
\bels{perturbed QVE - 1st time}{ 
-\frac{1}{g} \;=\; z +a+ Sg +d
\,.
}
This equation is viewed as a perturbation of the QVE \eqref{QVE} by a "small" function $ d\in \BB $. Our final result provides a bound on the difference between $ g $ and the unperturbed solution $ m(z) $. 
The difference will be measured both in strong sense (in $ \BB $-norm) and in weak sense (integrated against a fixed bounded function). 
\begin{theorem}[Stability]
\label{thr:Stability}
Assume $ S $ satisfies  {\bf A1-3} and $ \nnorm{m}_\R \leq \Phi $, for some $ \Phi < \infty $. 
Then there exists $ \lambda \sim 1 $ such that if $ g,d \in \BB $ satisfy the perturbed QVE \eqref{perturbed QVE - 1st time} for some fixed $ z \in \eCp $, then the following holds: 
\begin{itemize}
\titem{i} 
{\bf Rough stability:} 
Suppose  that for some $ \eps \in (0,1) $,
\bels{condition for being away from critical points}{ 
\avg{\1v(\Re\,z)}
\,\ge\,\eps\,,
\qquad\text{or}\qquad
\dist(z,\supp v\1) 
\,\ge\, \eps
\,,
}
and $ g $ is sufficiently close to $ m(z) $, 
\bels{bulk maximum deviation}{
\norm{\1g-m(z)} \,\leq\, \lambda\2\eps\,. 
}  
Then their distance is bounded in terms of $ d $ as
\begin{subequations}
\label{rough perturbation-bounds}
\begin{align}
\label{rough perturbation-bound: sup}
\norm{g-m(z)} 
\;&\lesssim\; 
\eps^{-2}
\norm{d}
\\
\label{rough perturbation-bound: w-average}
\qquad\abs{\avg{w,g-m(z)}} \,&\lesssim\,
\eps^{-6}
\norm{w}\norm{d}^2
\,+\,
\eps^{-2}
\abs{\1\avg{\1J(z)w,d\2}}
\,,\qquad\forall\,w \in \BB\,,
\end{align}
\end{subequations}
for some $ z $-dependent family of linear operators $ J(z) :  \BB \to \BB $, that depends only on $ S $ and $ a $, and satisfies $ \norm{J(z)} \lesssim 1 $.
\titem{ii} {\bf Refined stability:} There exist $ z $-dependent families $ t^{(k)}(z) \in \BB $, $ k=1,2 $, depending only on $ S$, and satisfying $ \norm{\1t^{(k)}(z)} \lesssim 1 $, such that the following holds. Defining
\begin{subequations}
\label{defs of varpi, rho, and delta}
\begin{align}
\label{def of varpi}
\varpi(z) \,&:=\, \dist(z,\supp\,v|_\R)
\\
\label{def of rho}
\rho(z) \,&:=\, \avg{\1v(\Re\,z)\1}
\\
\label{def of delta}
\delta(z,d) \,&:=\, \norm{d\1}^2\,+\,\abs{\avg{\1t^{(1)}(z),d\2}}\,+\, \abs{\avg{\1t^{(2)}(z),d\2}}
\,,
\end{align}
\end{subequations} 
assume $ g $ is close to $ m(z) $, in the sense that
\bels{refined perturbation-bounds: a priori estimate for g-m}{
\norm{\1g-m(z)} 
\,\leq\, 
\lambda\2
\varpi(z)^{2/3}+
\lambda\2\rho(z)
\,.
}
Then their distance is bounded in terms of the perturbation as
\begin{subequations}
\label{refined perturbation-bounds}
\begin{align}
\label{refined perturbation-bound: sup}
\norm{g-m(z)} 
\;&\lesssim\; 
\Upsilon(z,d)
\,+\,
\norm{d} 
\\
\label{refined perturbation-bound: w-average}
\abs{\avg{\1w,g-m(z)}} \,&\lesssim\,
\Upsilon(z,d)\2
\norm{w} 
\,+\,\abs{\avg{\1T(z)w,d\2}}
\,,\qquad\forall\,w \in \BB\,,
\end{align}
\end{subequations}
for some $ z $-dependent family of linear operators $ T(z) :  \BB \to \BB $, that depends only on $ S $ and $ a $, and satisfies $ \norm{T(z)} \lesssim 1 $.
Here the key control parameter is
\bels{refined stability: def of Upsilon}{
\Upsilon(z,d) 
\,:=\, 
\min\setbb{\!
\frac{\delta(z,d)}{\rho(z)^2\!}
\,,
\frac{\delta(z,d)}{\,\varpi(z)^{2/3}\!}
\,,
\delta(z,d)^{1/3}\!
}
\,.
} 
\end{itemize}
The comparison relations depend on $\rho$, $L$, $\norm{S}_{\Lp{2}\to\BB}$, $ \norm{a} $  and $\Phi $. 
\end{theorem}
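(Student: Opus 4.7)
The plan is to subtract \eqref{perturbed QVE - 1st time} from \eqref{QVE} and derive a closed fixed-point equation for $u := g - m(z)$. Using $1/m - 1/g = u/(mg)$ and the decomposition $mg = m^2 + mu$, the subtraction becomes
\[
L\,u \;=\; m\,u\,Su \,+\, m(m+u)\,d\,,\qquad L \,:=\, 1 - m^2\,S\,,
\]
so the analysis reduces to inverting the stability operator $L$ on $\BB$ with quantitative control and running a Banach contraction against the quadratic remainder $m u Su$. The spectral analysis of $L$ is driven by the self-adjoint operator $F := |m|\,S\,|m|$ on $\Lp{2}$: under \textbf{A1} and \textbf{A3}, $F$ has a simple Perron eigenvalue $\norm{F}$ with strictly positive eigenvector $b$, and the near-null spectral data of $L$ are recovered from those of $F$ through conjugation by $e^{i\arg m}$.

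\textbf{Part (i).} Under either alternative in \eqref{condition for being away from critical points}, Theorems~\ref{thr:Regularity of generating density} and~\ref{thr:Shape of generating density near its small values} combined with $\nnorm{m}_\R \leq \Phi$ yield a quantitative spectral gap $1-\norm{F} \gtrsim \eps^2$, hence $\norm{L^{-1}} \lesssim \eps^{-2}$. The smallness assumption \eqref{bulk maximum deviation} then makes $u = L^{-1}(m(m+u)d + m u Su)$ a contraction on the ball of radius $\lambda\eps$ and gives \eqref{rough perturbation-bound: sup}. For \eqref{rough perturbation-bound: w-average} I pair this identity with an arbitrary $w\in\BB$: the quadratic term is absorbed into $\eps^{-6}\norm{w}\norm{d}^2$ via \eqref{rough perturbation-bound: sup}, while the linear-in-$d$ piece is rewritten as $\eps^{-2}\avg{J(z)w,d}$ by absorbing the $\eps^{-2}$ blow-up of $L^{-1}$ into the prefactor, leaving $J(z) := \eps^2\,m^2\,(L^{-1})^\ast$ with $\norm{J(z)}\lesssim 1$.

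\textbf{Part (ii).} Near the edges and internal cusps of $\supp v$, the top eigenvalue of $F$ pinches to $1$ and $L$ acquires a single approximately-zero direction; let $P$ denote the associated rank-one spectral projection and $Q := 1-P$. The shape analysis of Theorem~\ref{thr:Shape of generating density near its small values} provides the two quantitative facts that drive the rest of the argument, namely $\norm{L^{-1}Q} \lesssim 1$ and $L|_{\mrm{Ran}\,P}$ acting as a scalar of size $\sim \varpi(z)^{2/3} + \rho(z)^2$. Decomposing $u = \Theta\,\hat b + u_\perp$ with $\hat b \propto b$ and $u_\perp := Qu$, projecting $Lu = m u Su + m(m+u)d$ onto $\mrm{Ran}\,P$, and estimating $u_\perp$ via $L^{-1}Q$, one obtains a scalar cubic equation
\[
c_1\,\Theta \,+\, c_2\,\Theta^2 \,+\, c_3\,\Theta^3 \;=\; \avg{t^{(0)}(z),\,d} \,+\, \mcl{R}\,,
\]
with $c_1 \sim \varpi^{2/3}+\rho^2$, $c_2 \sim \rho$, $c_3 \sim 1$ obtained directly from the shape expansions in Chapter~\ref{chp:Behavior of generating density where it is small}, $t^{(0)}(z) \propto m^2 b^\ast$, and $\mcl{R}$ purely quadratic in $(d, u_\perp)$. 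Expanding $\mcl{R}$ to second order in $d$ by one further substitution of $u_\perp$ produces two new isotropic linear-in-$d$ terms, and these define $t^{(1)}, t^{(2)}$; the rest of $\mcl{R}$ is absorbed into $\Ord(\norm{d}^2)$. Solving the scalar cubic under the a priori bound \eqref{refined perturbation-bounds: a priori estimate for g-m}, which pins down the small root, yields $\abs{\Theta}\lesssim \Upsilon(z,d)$ with one branch of the $\min$ in \eqref{refined stability: def of Upsilon} per dominant monomial on the left-hand side. Combining with $\norm{u_\perp}\lesssim \norm{d}$ and choosing $T(z) := m^2(L^{-1}Q)^\ast$ plus a rank-one correction along $t^{(0)}$ delivers \eqref{refined perturbation-bound: sup}--\eqref{refined perturbation-bound: w-average}.

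\textbf{Main obstacle.} The hard step is the scalar cubic in part (ii): I must show that $c_1, c_2, c_3$ come out with the sharp dependences $\varpi^{2/3}+\rho^2$, $\rho$ and $1$ respectively, and that the second-order expansion of $\mcl{R}$ in $d$ splits into \emph{exactly} the two isotropic contributions $t^{(1)}, t^{(2)}$ plus a genuinely quadratic-in-$d$ remainder. These estimates are essentially those driving the shape analysis of Theorem~\ref{thr:Shape of generating density near its small values}, so the technical input is imported from Chapter~\ref{chp:Behavior of generating density where it is small}; the branch selection for the cubic dictated by \eqref{refined perturbation-bounds: a priori estimate for g-m} is what makes $\Upsilon$ in \eqref{refined stability: def of Upsilon} sharp across all three singular regimes simultaneously.
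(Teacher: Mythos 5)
Your overall architecture (invert the stability operator away from the singular set, and near it split off the one bad direction and derive a scalar cubic for its coefficient $\Theta$, selecting the small root via the a priori bound) is the same as the paper's, but two of your key quantitative claims are wrong, and they are exactly the steps carrying the difficulty. In part (i), the asserted mechanism "$1-\norm{F}\gtrsim\eps^2$, hence $\norm{L^{-1}}\lesssim\eps^{-2}$" fails in the bulk alternative of \eqref{condition for being away from critical points}: by \eqref{norm of F on supp v is at least one} one has $\norm{F(z)}_{\Lp{2}\to\Lp{2}}=1$ for $\Re\,z\in\supp v$ as $\Im\,z\downarrow0$ (more precisely $1-\norm{F}=\avg{f\1\abs{m}}\,\Im z/\alpha$ by \eqref{F and alpha}, which is tiny), so $F$ has no gap below $1$ there. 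The invertibility of $1-m^2S$ in the bulk does not come from a spectral gap of $F$ but from the phase $\nE^{-2\1\cI\1\am}=(\abs{m}/m)^2$ rotating the Perron direction: in the paper this is Lemma~\ref{lmm:Bounds on B-inverse}, where the denominator in \eqref{v^2+uv bound} is saved by $\norm{f\sin \am}_2^2\sim\avg{v}^2\ge\eps^2$ (plus the $\sigma$-term), yielding \eqref{B-inv norm bound on BB at E-line}; the contraction step is then Lemma~\ref{lmm:Stability when m and inv-B bounded}. Your argument as stated only covers the second alternative $\dist(z,\supp v)\ge\eps$.

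In part (ii) the claimed coefficient scalings $c_1\sim\varpi^{2/3}+\rho^2$, $c_2\sim\rho$, $c_3\sim1$ are not correct, because they omit the parameter $\sigma=\avg{pf^3}$ that measures proximity to an edge and the size of the adjacent gap. The correct behavior is \eqref{scaling of pi_1}--\eqref{scaling of pi_2}: $\abs{\pi_1}\sim\rho^2+\abs{\sigma}\rho+\Im z/\rho$ and $\abs{\pi_2}\sim\rho+\abs{\sigma}$, and only the combination $\abs{\mu_2}+\abs{\mu_3}\sim1$ in \eqref{stability of shape cubic} holds (near an edge of a gap of length $\Delta\sim1$ one has $\abs{\sigma}\sim1$, so the quadratic coefficient is order one, not order $\rho$, and $\mu_3$ alone need not be $\sim1$). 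With your scalings the domination of the linear term under \eqref{refined perturbation-bounds: a priori estimate for g-m} and the lower bound on the linear coefficient cannot be justified in the edge regime; the paper gets $\abs{\pi_1}\gtrsim\rho^2+\varpi^{2/3}$ precisely by combining $\abs{\sigma}\rho\gtrsim\Delta^{1/6}(\omega+\eta)^{1/2}$ (using the edge relation \eqref{abs-sigma: edge} and the H\"older continuity of $\sigma$, cf. \eqref{abs-sigma alpha bounded from below}) with the scaling relations of Corollary~\ref{crl:Scaling relations}, and off the support by $\eta/\rho\gtrsim\varpi^{2/3}$. Likewise, two-sided comparability $c_1\sim\varpi^{2/3}+\rho^2$ is false (e.g.\ at a sharp edge with $\eta=0$ one has $\abs{\pi_1}\sim\omega^{1/2}\gg\omega$), though only the lower bound is actually needed. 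So the missing ingredient is the $\sigma$-dependence of the cubic's coefficients and its quantitative link to gap lengths and to $\rho$, which is the content of Proposition~\ref{prp:Cubic perturbation bound around critical points} together with Corollary~\ref{crl:Scaling relations}; importing "the shape analysis" wholesale does not substitute for these specific comparisons.
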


Note that the  existence of $ g $ solving \eqref{perturbed QVE - 1st time} for a given $ d $ is part of the assumptions of Theorem~\ref{thr:Stability}.
In Proposition~\ref{prp:Analyticity} we will actually prove the existence and uniqueness of $ g $ close to $ m $ provided $ d $ is sufficiently small.
An important aspect of the estimates \eqref{rough perturbation-bounds} and \eqref{refined perturbation-bounds} is that the upper bounds depend only on the unperturbed problem, i.e., on $ z $, $ a $ and $ S $, possibly through $ m(z) $,  apart from the explicit dependence of $ d $. 
They do not depend on $g$.

The condition \eqref{refined perturbation-bounds: a priori estimate for g-m} of (ii) in the preceding theorem becomes increasingly restrictive  when $ z $ approaches points in $\supp v$ where $v$ takes small values.
A stronger but less transparent perturbation estimate is given as Proposition~\ref{prp:Cubic perturbation bound around critical points} below.

The guiding principle behind these estimates is that the norm bounds \eqref{rough perturbation-bound: sup} and \eqref{refined perturbation-bound: sup} are linear in $ \norm{d} $, while the bounds \eqref{rough perturbation-bound: w-average} and \eqref{refined perturbation-bound: w-average} for the average of $ g-m $  are quadratic in $ \norm{d} $ and linear in a specific average in $ d $. 
The motivation behind  the average bounds  is that in the random matrix theory (cf. Chapter~\ref{chp:Local laws for large random matrices}) the perturbation $ d $ will be random.
In fact,  $ d $ will be subject to the \emph{fluctuation averaging} mechanism, i.e., its (weighted) average is typically comparable to  $ \norm{d}^2 $ in size. 
In the part (ii) of the theorem we see how the stability estimates deteriorate as $ z $ approaches the part of the real line where $ \avg{v} $ becomes small, in particular near the edges of $ \supp v $.

Another trivial application of our general stability result is to show that the QVE \eqref{QVE} is stable under perturbations of $ a $ and $ S $.

\begin{remark}[Perturbations of $ a$ and $S$]
\label{rmk:Perturbations on a and S}
Suppose $ S $ and $ T $ are two integral operators satisfying {\bf A1-3} and $ a,b\in \BB $ are real valued. Let $ m $ and $ g $ be the unique solutions of the two QVE's 
\[
-\frac{1}{m} = z  + a  + Sm
\quad\text{and}\quad
-\frac{1}{g} = z  + b  + Tg 
\,.
\]
Then $ g $ can be considered as a solution of the perturbed QVE \eqref{perturbed QVE - 1st time}, with 
\[ 
d \,:=\,  (b-a) +  (T-S)\1g 
\,.
\] 
Thus if $ \nnorm{m}_\R < \infty $, then Theorem~\ref{thr:Stability}  may be used to control ${g-m}$  in terms of $ b-a $ and $T-S$.  
\end{remark}

\newpage


\section{Relationship between Theorem~2.6 and Theorem 2.6 of [{\bf AEK16b}]}
\label{sec:Relationship between theorems}

\begin{minipage}{0.65\textwidth} 

Theorem~\ref{thr:Shape of generating density near its small values} is a quantitative generalization of 
Theorem 2.6 of \cite{AEK1cpam}.  We comment on the differences between the two results. The main novelty in Theorem~\ref{thr:Shape of generating density near its small values} is that it provides a precise description of the generating density around the expansion points $ \tau_0 $
in an environment whose size 
is comparable to $ 1 $. 
Moreover, its statement is uniform in the operator $S$ and the function $ a $, given the model parameters. 
In \cite{AEK1cpam}, on the other hand, the operator $S$ is fixed and only asymptotically small expansion environments are considered.  Theorem~\ref{thr:Shape of generating density near its small values} also provides explicit quantitative error bounds in terms of the model parameters.  

\hspace{15pt}
To illustrate the distinction between the two results we consider a fixed $ a $, and a continuous one-parameter family of operators $S=S^{(\delta)}$ with the following properties: 
\begin{enumerate}
\item The family $S^{(\delta)}$ satisfies {\bf A1-3} uniformly in $\delta$.
\item The corresponding solutions $m^{(\delta)}$ are uniformly bounded, $\sup_\delta\nnorm{m^{(\delta)}}_\R \leq \Phi $.
\item There is an expansion point $\tau_0(\delta)$, depending continuously on $\delta$, such that  (cf. Figure \ref{Fig:ShapeFamily}) 
\begin{enumerate}
\item At a critical value $\delta=\delta_c$ the generating density corresponding to $S^{(\delta_c)}$ has a cubic root cusp at $\tau_0=\tau_0(\delta_c)$, i.e.,  the expansion point  $\tau_0$ is a minimum in the sense of \eqref{minimum} and $\avg{v(\tau_0)}=0$.
\item For $\delta>\delta_c$ the expansion point  $\tau_0=\tau_0(\delta)$ is a minimum in the sense of \eqref{minimum} of the generating density corresponding to $S^{(\delta)}$ with $\avg{v(\tau_0)}>0$.
\item For $\delta<\delta_c$ the expansion point  $\tau_0=\tau_0(\delta)$ is a left edge in the sense of \eqref{left edge} of the generating density corresponding to $S^{(\delta)}$.
\end{enumerate}
\end{enumerate}
\end{minipage}
\hspace{0.01\textwidth}
\begin{minipage}{0.34\textwidth}
\begin{figure}[H]
\vspace{-0.5cm}
\includegraphics[width=\textwidth]{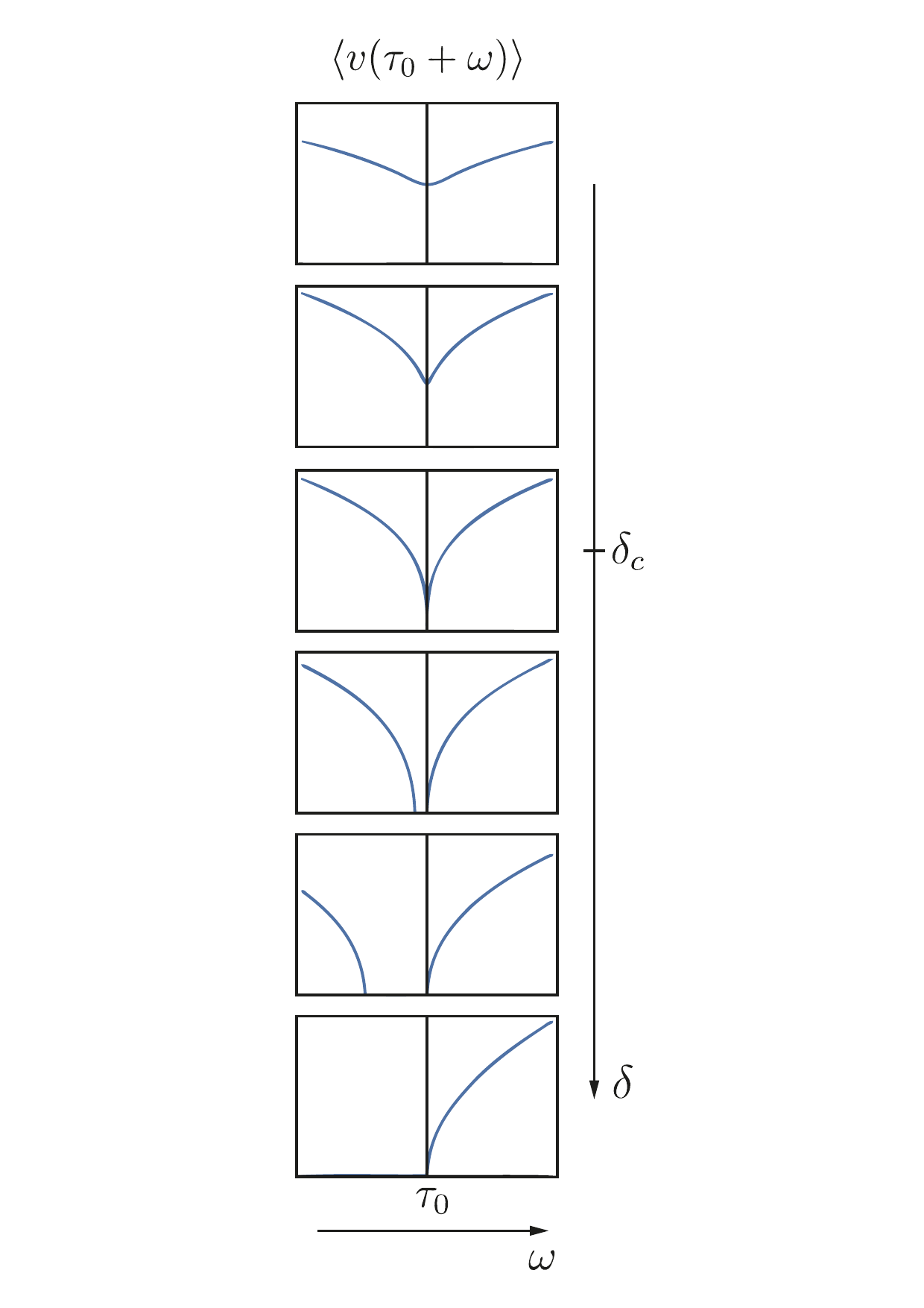}
\caption{\label{Fig:ShapeFamily} Different singularity shapes emerge at $\tau_0$ by varying $\delta$.}
\end{figure}
\end{minipage}
\vspace{0.1cm}

We refer to Section \ref{sec:Simple example that exhibits all universal shapes} for an explicit example of such a family of operators $S=S^{(\delta)}$. The results of \cite{AEK1cpam} analyze the situation only for a fixed value of the parameter $\delta$ and they are restricted to a description of the generating density in asymptotically small expansion environments. In other words, in each case \eqref{left edge}, \eqref{right edge} and \eqref{minimum} only the limiting behavior as $\omega \to 0$ of the function $\Psi$ is tracked. 
Indeed, Theorem~\ref{thr:Shape of generating density near its small values} reduces to Theorem~2.6 in \cite{AEK1cpam}  containing the following statements:  
\begin{itemize}
\item[(a)]  At the critical value $\delta=\delta_c$ we have $v_x(\tau_0+\omega)=2^{-2/3}\1h_x\1 \abs{\omega}^{1/3}(1+o(1))$ as $\omega \to 0$.
\item[(b)]  For  any fixed  $\delta>\delta_c$ we have $v_x(\tau_0+\omega)=v_x(\tau_0)(1+o(1))$ as $\omega \to 0$.
\item[(c)]  For   any fixed  $\delta<\delta_c$ we have $v_x(\tau_0+\omega)=\frac{1}{\,3 \Delta^{1/6}\msp{-6}}\;h_x\2\omega^{1/2}(1+o(1))$ as $\omega \downarrow 0$. Here, $\Delta>0$ is the length of the gap in the support of the generating density whose right boundary is $\tau_0$.
\end{itemize}
In particular, the statement (b) does not contain any interesting information and thus expansion points $\tau_0$ of the minimum type with $\avg{v(\tau_0)}>0$  were even  not considered in Theorem 2.6 of \cite{AEK1cpam}.
In Theorem~\ref{thr:Shape of generating density near its small values} of the current paper, however, the description is uniform in $\delta$ and covers an expansion neighborhood around $\tau_0$ whose size is comparable to $1$, i.e., it describes the shape of $\Psi(\omega)$ for all $\abs{\omega}\leq c$ for some constant $c\sim 1$. Thus, the new  result resolves the two universal shape functions from \eqref{defs of shape functions} and reveals how these functions give rise to a continuous one-parameter family of shapes interpolating between them.
In particular, it shows how the cusp singularity emerges  when a gap closes or when the value of $v$ at a local minimum drops down to zero. 
Indeed, as the length of the gap $\Delta$ in the support of the generating density at $\tau_0$ shrinks (as $\delta\uparrow \delta_c$ for the example family $S^{(\delta)}$)  the shape function $\Psi(\omega)=\Delta^{1/3}\Psi_{\mathrm{edge}}(\Delta^{-1}\omega)$ approaches the cusp shape $\Psi(\omega)=2^{-4/3}\abs{\omega}^{1/3}$. 
On the other hand, as the value $\rho\sim \avg{v(\tau_0)}$ in the shape function $\Psi(\omega)=\rho\1\Psi_{\mathrm{min}}(\rho^{-3}\omega)$ at  a local minimum  approaches zero  ($\delta\downarrow \delta_c$ for the family $S^{(\delta)}$), the cubic root cusp emerges as well. The following table summarizes the differences between the current Theorem~\ref{thr:Shape of generating density near its small values} and Theorem 2.6 of \cite{AEK1cpam}.
\begin{center}
\begin{tabular}{l|c|c}
&  
Theorem 2.6 in \cite{AEK1cpam} 
& 
Current Theorem~\ref{thr:Shape of generating density near its small values}
\\
\hline
Input parameters:	
&	
$a,S$ fixed		
&	 
model parameters
\\
\hline
Expansion points $\tau_0$:		& $\sett{\alpha_i} \cup \sett{\beta_j} $	 & $\sett{\alpha_i} \cup \sett{\beta_j} \cup \sett{\gamma_k}$
\\
\hline
Expansion environment: 
&	
$\abs{\omega} \ll 1$	
& 
$\abs{\omega} \lesssim 1$
\end{tabular}
\end{center}

\section{Outline of proofs}

In this section we will explain and motivate the basic steps leading to our main results. 
%

\medskip
\noindent{\scshape Stieltjes transform representation, $\Lp{2}$- and uniform bounds:}
It is a structural property of the QVE that its solution admits a representation as the Stieltjes transform of some generating measure  on the real line (cf. \eqref{m as stieltjes transform}). This representation implies that $m$ can be fully reconstructed from its own imaginary part near the real line. 

From the Stieltjes transform representation of $m$ a trivial bound, $ \abs{m_x(z)}\leq (\Im\2z)^{-1}$, directly follows. 
A detailed analysis of the QVE near the real axis, however, requires bounds that are independent of $ \Im\,z $ as its starting point. 
When $ a = 0 $ and $ z $ is bounded away from zero the $ \Lp{2} $-bound \eqref{L2 bound on m when a=0} meets this criterion. The estimate \eqref{L2 bound on m when a=0} is a structural property of the QVE as well in the sense that it follows from positivity and symmetry of $S$ alone, and therefore quantitative assumptions such as {\bf A2} and {\bf A3} are not needed. 
This $ \Lp{2}$-bound is derived from spectral information about a specific operator $F=F(z)$, constructed from the solution $m=m(z)$, that appears naturally when taking the imaginary part on both sides of the QVE. Indeed, \eqref{QVE} implies 
\bels{Im of QVE and F}{
\frac{\Im\2m}{|m|}\,=\, |m|\2\Im\2z +F \2\frac{\Im\2m}{|m|} 
\,,\qquad
Fu\,:=\, |m|S(|m|u)\,.
}
As $ \Im\,z $ approaches zero we may view this as an eigenvalue equation for the positive symmetric linear operator $F$. In the limit this eigenvalue equals $ 1 $ and $f=\Im\, m/|m|$ is the corresponding eigenfunction, provided $ \Im\,m $ does not vanish. 
The Perron-Frobenius theorem, or more precisely, its generalization to compact operators,  the Krein-Rutman theorem, implies that this eigenvalue coincides with the spectral radius of $F$. This, in turn, implies the $\Lp{2}$-bound on $m$, when $ a = 0 $. These steps are carried out in detail at the end of Chapter~\ref{chp:Existence and uniqueness}. In fact, the norm of $F(z)$, as an operator on $\Lp{2}$, approaches $ 1 $ if and only if $ z $ approaches the support of the generating measure. Otherwise it stays below $1$.
When $ a \neq 0 $ this spectral bound still holds for $ F $, however, it does not automatically yield useful $ \Lp{2}$-estimates on $ m(z) $ when $ \abs{z} \leq \norm{a} $. In order, to obtain an $ \Lp{2}$-bound in this case as well, we need to assume more about $ S $. In Chapter \ref{chp:Uniform bounds} it is shown that the condition \eqref{qualitative strong diagonal condition}, or its quantitative version {\bf B2} on p. \pageref{def of B2}, together with the spectral bound on $ F $, yield an $ \Lp{2} $-bound on $ m $.    

Requiring the additional regularity condition \eqref{qualitative B1} on $ S $ enables us to improve the $\Lp{2}$-bound on $m$ to a uniform bound (Proposition~\ref{prp:Converting L2-estimates to uniform bounds}).
When $ a = 0 $ the point $z=0$ requires a special treatment, because the structural bound $ \norm{m(z)}_2 \leq 2/\abs{z} $  becomes ineffective. 
The block fully indecomposability condition is an essentially optimal condition (Theorem~\ref{thr:Scalability and full indecomposability})  to ensure the uniform boundedness of $ m $ in a vicinity of $z=0$ when $ a = 0 $.
The uniform bounds are a prerequisite for most of our results concerning regularity and stability of the solution of the QVE. 
We consider finding quantitative uniform bounds on $ m $ as an independent problem, that is addressed in Chapter~\ref{chp:Uniform bounds}.

\medskip
\noindent{\scshape Stability in the region where $ \Im\,m $ is large:}
%
Stability properties of the QVE under small perturbations are essential, not just for applications in random matrix theory (cf. Chapter \ref{chp:Local laws for large random matrices}), but also as tools to analyze the regularity of the solution $ m(z)\in \BB_+$ as a function of $ z $.
Indeed, the stability of the QVE translates directly to regularity properties of the generating measure as described by Theorem~\ref{thr:Regularity of generating density}. 
The stability of the solution deteriorates as $ \Im\,m $ becomes small. This happens around the expansion points in $ \MM $ from Theorem~\ref{thr:Shape of generating density near its small values}

In order to see this deterioration of the stability, let us suppose that for a small perturbation $d \in \BB$, the perturbed QVE has a solution $ g(d) $ which depends smoothly on $ d $,
\bels{eq for g(d)}{
-\2\frac{1}{g(d)\!}\,=\, z + a + Sg(d) + d
\,.
}
Indeed, the existence and uniqueness of such a function $ d \mapsto g(d) $ is shown in Proposition \ref{prp:Analyticity} as long as both $ d $ and $ g-m $ are sufficiently small.
For $d=0$ we get back our original solution $ g(0) = m $, with $ m = m(z) $.
We take the functional derivative with respect to $ d $ on both sides of the equation. 
In this way we derive a formula for the (Fr\'echet-)derivative $ Dg(0) $, evaluated on some $w \in \BB$:
\bels{delta m equation}{
(\11-m^2S)Dg(0)w\,=\, m^2w
\,.
}
This equation shows that the invertibility of the linear operator $1-m^2S$ is relevant to the stability of the QVE. Assuming uniform lower and upper bounds on $ \abs{m} $, the invertibility of $1-m^2S$ is equivalent to the invertibility of the following related operator:
\[
B\,:=\, U-F\,=\, \frac{|m|}{\2m^2\!}\,(\11-m^2 S\1)|m|\,,\qquad Uw\2:=\2\frac{|m|^2}{m^2} \2w\2.
\]
Here, $ \abs{m} $ on the right of $ S $ is interpreted as a multiplication operator by $ \abs{m} $. Similarly, $U$ is a unitary multiplication operator and $F$ was introduced in \eqref{Im of QVE and F}. 
Away from the support of the generating measure the spectral radius of $F$ stays below $1$ and the invertibility of $B$ is immediate. On the support of the generating measure the spectral radius of $F$ equals $1$. Here, the fundamental bound on the inverse of $B$ is
\bels{inverse B generic bound}{
\norm{B^{-1}} \,\lesssim\, \avg{\2\Im\, m\1}^{-1}\,,
}
apart from some special situations (cf. Lemma~\ref{lmm:Bounds on B-inverse}). 

Let us understand the mechanism that leads to this bound in the simplest case, namely when $x\mapsto m_x(z)$ is a constant function, e.g., when $ a =0 $ and $ \avg{S_x} =1 $, so that each component equals $ m_{\mrm{sc}}(z) $ from \eqref{SC}. In this situation, the operator $U$ is simply multiplication by a complex phase, $U=\nE^{\cI\2\varphi}$ with $\varphi \in (-\pi,\pi]$.  The uniform bounds on $m$ ensure that the operator $F$ inherits certain properties from $S$. Among these are the conditions {\bf A2} and {\bf A3}. From these two properties we infer a spectral gap $ \eps > 0 $,
\[
\Spec(F) \,\subseteq \, [-1+\eps,1-\eps\1]\cup\{\11\1\}\,,
\]
on the support of the generating measure.
We readily verify the following bound on the norm of the inverse of $B$:
\[
\norm{B^{-1}}_{\Lp{2} \to \Lp{2}}\,\leq\, 
\begin{cases}
|\1\nE^{\1\cI\1\varphi}-1|^{-1}\2\sim \2 \avg{\2\Im\,m\1}^{-1}	\quad &\text{if }\; \varphi \in [-\varphi_*,\varphi_*]\,;
\\
\;\eps^{-1} 				&\text{ otherwise.}
\end{cases}
\]
Here, $\varphi_* \in [0,\pi/2]$ is the threshold defined through $\cos \varphi_* =1-\eps/2$, where the spectral radius $ \norm{F}_{\Lp{2}\to\Lp{2}} $ becomes more relevant for the bound than the spectral gap (cf. Lemma~\ref{lmm:Spectral gap for positive bounded operators}).
Similar bounds for the special case, when  $ m_x = m_{\mrm{sc}} $ is constant in $x$ and equals the Stieltjes transform $m_{\mrm{sc}}$ of the semicircle law in every component first appeared in \cite{EYY}.

The bound \eqref{inverse B generic bound} on the inverse of $B$ implies a bound on the derivative $ Dg(0) $ from \eqref{delta m equation}. For a general perturbation $d$ this means that the QVE is stable wherever the average generating measure is not too small. 
If $ d $ is chosen to be a constant function $ d_x = z'-z $ then this argument yields the bound for the difference $ m(z')-m(z) $, as $ g(z'-z) = m(z') $.  This can be used to estimate the derivative of $m(z)$ with respect to $z$ and to prove existence and H\"older-regularity of the Lebesgue-density of the generating measure.
In particular, the regularity is uniform in $ \Im\,z $ and hence we can  extend the solution of the QVE to the real axis. 
This analysis is carried out in Chapters \ref{chp:Properties of solution} and \ref{chp:Regularity of solution}.

\medskip
\noindent{\scshape Stability in the regime where $ \Im\,m $ is small:}
%
The bound \eqref{inverse B generic bound} becomes ineffective when $ \avg{\2\Im\,m\1} $ approaches zero.
In fact, the norm of $ B^{-1} $ diverges owing to  a single isolated eigenvalue, $\beta \in \C$, close to zero. 
This point is associated to the spectral radius of $F$, and the corresponding eigenvector, $B\1b = \beta\2 b$, is close to the Perron-Frobenius eigenvector of $F$, i.e., $b= f +\Ord(\1\avg{\1\Im\,m})$, with $Ff=f$. 
The special direction $b$, in which $B^{-1}$ becomes unbounded, is treated separately in Chapter \ref{chp:Behavior of generating density where it is small}. It is split off from the derivative $Dg(0)$ in the stability analysis. 
The coefficient of the component $ g(d)-m $ in the bad direction $b$, is given by the formula
\[
\Theta(d) \,:=\,\frac{\avgb{\2\overline{b},g(d)-m\1}}{\avg{\2b^{\12}}}
\,. 
\]
Chapter \ref{chp:Perturbations when generating density is small} is concerned with deriving a cubic equation for $ \Theta(d) $  and expanding its coefficients in terms of $\avg{\1\Im\,m}\ll 1$ at the edge.

\medskip
\noindent{\scshape Universal shape of $ v $ near its small values:}
In this regime understanding the dependence of the solution $g(d)$ of \eqref{eq for g(d)}, is essentially reduced to understanding the  scalar quantity $\Theta(d)$. This quantity satisfies a cubic equation (cf. Proposition~\ref{prp:General cubic equation}), in which the coefficients of the non-constant terms depend only on the unperturbed solution $ m $.
In particular, we can follow the dependence of $m_x(z)$ on $ z \in \R $ by analyzing the solution of this equation by choosing $ z := \tau_0 \in \R $ and $d_x :=\tau-\tau_0 $, a real constant function. 
The special structure of the coefficients of the cubic equation, in combination with specific selection principles, based on the properties of the solution of the QVE, allows only for a few possible shapes that the solution $ \tau \mapsto \Theta(\tau-\tau_0)$ of the cubic equation may have. This is reflected in the universal shapes that describe the growth behavior of the generating density at the boundary of its support. In Chapter \ref{chp:Behavior of generating density where it is small} we will analyze the three branches of solutions for the cubic equation in detail and select the one that coincides with $\Theta$. 
This will complete the proof of Theorem~\ref{thr:Shape of generating density near its small values}. 

\medskip
\noindent{\scshape Optimal Stability around small minima of $ \avg{v}$:}
For the random matrix theory we need  optimal stability properties of the perturbation $ g(d) $ around $ g(0) = m $ for a random perturbation (cf. Chapter~\ref{chp:Local laws for large random matrices}).
This is achieved in Chapter~\ref{chp:Stability around small minima of generating density} by describing the coefficients of the cubic more explicitly based on the shape analysis. 
All the necessary results are collected in Proposition~\ref{prp:Cubic perturbation bound around critical points}. These technical results generalize Theorem~\ref{thr:Stability}.

\chapterl{Local laws for large random matrices}
 
The QVE plays a fundamental role in the theory of large random matrices.
First, it provides the only known effective way to determine the asymptotic eigenvalue density for prominent matrix ensembles 
as described in the introduction (cf. Section 3 of \cite{AEK1cpam} for details).
Second, the QVE theory is essential when establishing \emph{local laws} for the distribution of the eigenvalues at the scale comparable to the individual eigenvalue spacings for so-called \emph{Wigner-type} matrices.
Here we explain how our results can be utilized for this purpose. 
Since all  technical details are already carried out in \cite{AEK2} we highlight the structure of the proofs in the simplest possible setup by showing how the probabilistic estimates and the stability properties of the QVE can be turned into very precise probabilistic bounds on the resolvent elements of the random matrix.

Let us recall from \cite{AEK2} the following definition.
\NDefinition{Wigner-type random matrix}{
A real symmetric or complex hermitian $ N\times N$ random matrix $ \brm{H} = (h_{ij})_{i,j=1}^N $ is called {\bf Wigner-type}, if it has 
\begin{itemize}
\item[(i)]
Centred entries: $ \EE\,h_{ij} = 0 $;
\item[(ii)] 
Independent entries: $ (\1h_{ij}:1\leq i\leq j\leq N) $ are independent;
\item[(iii)] 
Mean-field property: The {\bf variance matrix}
$ \brm{S} = (s_{ij})_{i,j=1}^N $, $ s_{ij} := \EE\1\abs{h_{ij}}^2 $, 
satisfies
\bels{mean-field property}{
\qquad(\brm{S}^L)_{ij} \,\ge\,\frac{\rho}{N}
\qquad
\text{and}\qquad
s_{ij} \leq \frac{\2S_\ast\!}{N}
\,,
\qquad 
1 \leq i,j \leq N\,,
}
for some parameters $ \rho,L,S_\ast < \infty $.
\end{itemize}
}

If in addition to (i)-(iii) the variance matrix is doubly stochastic, i.e., $ \sum_j s_{ij} = 1 $ for each $ i$, and  \eqref{mean-field property} holds with $ L =1 $, then $ \brm{H} $ is called a \emph{generalized Wigner matrix} (first introduced in \cite{EYY}).

A given variance matrix $ \brm{S} $ defines a QVE through 
\bels{RM setup}{
\Sx := \sett{1,2,\dots,N}\,,
\quad
\Px(A) := \frac{\abs{A}}{N\msp{-1}}\,,
\quad
a = 0\,,
\quad
(Sw)_i := \sum_{j=1}^N s_{ij}
w_j   
\,,
}
where the subset $ A \subset \sett{1,\dots,N} $ and the function $ w : \Sx \to \C $ are arbitrary. 
The kernel of the operator $ S :\BB\to\BB $ is related to the variances by $ S_{ij} :=  Ns_{ij}$.
In particular, if \eqref{mean-field property} is assumed, then the operator $ S $ satisfies {\bf A1}, as well as {\bf A2} and {\bf A3} with parameters $ \rho,L$ and $ \norm{S}_{\Lp{2}\to\BB} \leq S_\ast $, respectively.

A local law for $ \brm{H} $ roughly states that the density of the eigenvalues $ \lambda_1 \leq \ldots \leq \lambda_N $ of $ \brm{H} $ is predicted by the associated QVE through
\bels{density of states for z from Im m}{
\rho(z) \,:=\, \frac{1}{\pi}\avg{\2\Im\,m(z)\1}
\,,
}
all the way down to the optimal scale $ \Im\,z \gg N^{-1} $, just above the typical eigenvalue spacing. 
Moreover, the local law implies that the eigenvectors are completely \emph{delocalized}, i.e., no component of an $ \ell^2$-normalized eigenvector of $ \brm{H} $ is much larger than $ N^{-1/2} $ with very high probability (cf. Corollary 1.14 of \cite{AEK2}).
A local law is most generally stated in term of the entries of the resolvent 
\bels{def of brm-G}{
\brm{G}(z) := (\1\brm{H} -z\1)^{-1}
\,,\quad
z \in \Cp
\,.
}
The following is a simplified version of the main local law theorem of \cite{AEK2}. It states that $ \brm{G}(z) $ approaches the diagonal matrix  determined by the solution $ m(z) $ of the QVE, provided the imaginary part of the  spectral parameter $ z $ is slightly larger than the eigenvalue spacing, $ N^{-1}$, inside the bulk of the spectrum. 
Indeed, denoting
\bels{}{
\DD^{(N)}_\gamma
:= \setb{z \in \C : N^{\gamma-1}< \Im\,z \leq \Sigma\1}
\,.
}
where  $ \gamma > 0 $ and $ \Sigma > 0 $ is from \eqref{bound on supp v}, the theorem reads:

\begin{theorem}[Entrywise local law from \cite{AEK2}]
\label{thr:Entrywise local law from AEK2}
Let $ \brm{H} $ be a Wigner-type random matrix, and suppose the associated QVE \eqref{RM setup} has a bounded solution $ m $, with $ \nnorm{m}_\R \leq \Phi $. 
If additionally, the moments of $\brm{H} $ are bounded by the variances,
\bels{RM moment conditions}{ 
\qquad
\EE\,\abs{h_{ij}}^{2p} \leq \mu_p\1
s_{ij}^{\2p}
\,,
\qquad \forall\2p \ge 2
\,,
}
then the entries $ G_{ij}(z) $ of the resolvent \eqref{def of brm-G} satisfy for every $ \phi, \gamma,p > 0 $, 
\bea{
\PP\Biggl\{\, 
\exists\1 
z \in \DD^{(N)}_\gamma
\text{ s.t. }
\absb{\1G_{ij}(z)-\delta_{ij}\1m_i(z)} 
\,>
\frac{N^{\phi}\!}{\sqrt{N\1\Im\, z}}\,
\Biggr\}
\,\leq\, \frac{C(\phi,\gamma,p\1;\brm{\xi}_S,\ul{\mu})}{\,N^p\!}
\,,
}
where the function $ C(\genarg,\genarg,\genarg;\brm{\xi}_S,\ul{\mu}\1)< \infty $ depends on $ \brm{H} $ only through the parameters $ \brm{\xi}_S := (\rho,L,S_\ast,\Phi) $ and $ \ul{\mu} = (\mu_p:p\ge 1) $.
\end{theorem}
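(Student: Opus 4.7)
The plan is to combine two ingredients: a probabilistic one that derives an approximate QVE for the diagonal resolvent entries with explicitly bounded error, and a deterministic stability input, namely the rough stability part of Theorem~\ref{thr:Stability}, which converts smallness of the error into smallness of $G_{ii}-m_i$. First I would fix $z\in \DD^{(N)}_\gamma$ and apply the Schur complement formula to each diagonal element $G_{ii}(z)$. Writing $\brm{H}^{(i)}$ for the minor obtained by deleting row and column $i$ and $\brm{h}^{(i)}$ for the $i$-th column with the diagonal entry removed, one obtains
\[
-\frac{1}{G_{ii}(z)} \;=\; z + \sum_{j,k\neq i} h^{(i)}_j\, G^{(i)}_{jk}(z)\,h^{(i)}_k \;-\; h_{ii}\,,
\]
and after subtracting the expectation in $\brm{h}^{(i)}$,
\[
-\frac{1}{G_{ii}(z)} \;=\; z + \sum_{j=1}^N s_{ij}\, G_{jj}(z) + d_i(z)\,,
\]
where $d_i$ collects the fluctuation of the quadratic form, the difference between $G^{(i)}_{jj}$ and $G_{jj}$ (handled by a standard rank-one perturbation identity), and the diagonal term $-h_{ii}$. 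Thus $g := (G_{ii})_{i=1}^N$ solves exactly the perturbed QVE \eqref{perturbed QVE - 1st time} with perturbation $d=(d_i)_{i=1}^N$.

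Next I would estimate $\|d\|$ and the relevant weighted averages of $d$ using the moment bounds \eqref{RM moment conditions} together with standard large-deviation inequalities for quadratic forms in independent centred entries (a Marcinkiewicz--Zygmund / Hanson--Wright type statement). Combined with Markov's inequality and a union bound over $i$, these give that on an event of probability at least $1-C(\phi,p)N^{-p}$ one has $\|d(z)\|\le N^{\phi/2}/\sqrt{N\,\Im z}$ as well as the sharper fluctuation-averaging bound $|\langle w,d(z)\rangle|\lesssim N^{\phi}\|w\|/(N\,\Im z)$ for the fixed bounded vectors $w = t^{(k)}(z)$ appearing in Theorem~\ref{thr:Stability}; these vectors depend only on $S$ (hence are deterministic), so the large-deviation estimate applies directly.

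Then I would invoke the rough stability estimate \eqref{rough perturbation-bound: sup}. Because the hypothesis $\nnorm{m}_\R \le \Phi$ is assumed, the condition \eqref{condition for being away from critical points} holds for some $\eps \sim 1$ uniformly in $\Re z \in [-\Sigma,\Sigma]$ by a standard exhaustion of the real axis (at any point the average density is either bounded below or one is at macroscopic distance from $\supp v$). Plugging in $\|d\|\le N^{\phi/2}/\sqrt{N\,\Im z}$ then gives $\|G_{ii}-m_i\|\lesssim N^{\phi}/\sqrt{N\,\Im z}$ provided the a priori closeness \eqref{bulk maximum deviation} is known. This a priori closeness is the principal obstacle and is handled by the usual continuity (bootstrap) argument: at $\Im z = \Sigma$ the trivial bound $|G_{ii}-m_i|\lesssim 1/\Im z$ already gives closeness; one then decreases $\Im z$ continuously, on a polynomial grid on which one takes a union bound, using the Lipschitz continuity in $z$ of both $G$ and $m$ to propagate the bound between grid points. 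At each new value of $z$ one uses the previous bound as the a priori input of Theorem~\ref{thr:Stability}(i), and then upgrades it via the stability estimate using the current $\|d(z)\|$. Iterating all the way down to $\Im z = N^{\gamma-1}$ yields the claimed bound on diagonal entries.

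Finally, I would treat the off-diagonal entries $G_{ij}$ ($i\neq j$) via the standard resolvent identity $G_{ij} = -G_{ii}G^{(i)}_{jj}\bigl(h_{ij}-\sum_{k,l\neq i,j} h_{ik}G^{(ij)}_{kl}h_{lj}\bigr)$. The bracket is again a centred quadratic/linear form in independent entries, bounded by the same large-deviation machinery by $N^{\phi/2}/\sqrt{N\,\Im z}$ with overwhelming probability, while the prefactor $|G_{ii}G^{(i)}_{jj}|$ is $\Ord(1)$ by the diagonal control just obtained. A final union bound over the $N^{O(1)}$-grid in $z$ and the at most $N^2$ entrywise estimates absorbs the losses into the arbitrary $N^\phi$ factor, completing the proof.
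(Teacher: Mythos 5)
In the bulk your argument is exactly the one the paper itself sketches (Schur complement to get the perturbed QVE, the large--deviation bound of Lemma~\ref{lmm:Probabilistic part for simplified local law}, the rough stability \eqref{rough perturbation-bound: sup}, and the grid-plus-continuity bootstrap started at large $\Im\,z$), so that portion is fine. The genuine gap is the claim that the hypothesis $\nnorm{m}_\R\leq\Phi$ forces the bulk condition \eqref{condition for being away from critical points} to hold with some $\eps\sim 1$ uniformly in $\Re\,z$. This dichotomy is false: by Theorem~\ref{thr:Shape of generating density near its small values} the generating density vanishes \emph{continuously} at every edge (square-root behavior), and may in addition have cubic cusps and arbitrarily small internal minima, so there are real points $\tau\in\supp v$ with $\avg{v(\tau)}$ arbitrarily small and $\dist(\tau,\supp v)=0$ simultaneously. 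Hence no order-one $\eps$ exists near the set $\MM$, the constant in \eqref{rough perturbation-bound: sup} degenerates like $\eps^{-2}$, and the a priori closeness \eqref{bulk maximum deviation} with $\eps\sim\rho(z)$ cannot be propagated to the optimal scale: at an extreme edge with $\eta=\Im\,z$ one has $\rho(z)\sim\eta^{1/2}$ while $\norm{d(z)}\sim (N\eta)^{-1/2}$, so closing the bootstrap would require $(N\eta)^{-1/2}\lesssim \eta^{3/2}$, i.e.\ $\eta\gtrsim N^{-1/4}$, far above the target scale $\eta\sim N^{\gamma-1}$. This is precisely why the paper states that its illustrative proof covers only spectral parameters satisfying \eqref{condition for being away from critical points} and that the full statement needs the refined machinery.

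What is actually required near $\MM$ is the cubic stability analysis: Theorem~\ref{thr:Stability}(ii), or rather Proposition~\ref{prp:Cubic perturbation bound around critical points}, which replaces the linear bound $\eps^{-2}\norm{d}$ by the control parameter built from the cubic inequality \eqref{perturbations: cubic scaling relation} for $\Theta$, combined with the fluctuation-averaging improvement for the weighted averages $\avg{t^{(k)}(z),d(z)}$; this is the content of Section~4 of \cite{AEK2} and cannot be reduced to the rough bound. Relatedly, your assertion that $\abs{\avg{w,d(z)}}\lesssim \norm{w}/(N\Im\,z)$ ``follows directly'' from a large-deviation estimate because $t^{(k)}(z)$ is deterministic is too quick: the components $d_i$ involve the minors $\brm{G}^{(i)}$, which vary with $i$ and correlate the entries of $d$, so the fluctuation averaging is a separate, genuinely more involved argument (Theorem~3.5 of \cite{AEK2}), not a one-line Hanson--Wright application. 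With these two ingredients supplied, your outline would match the actual proof; without them it only establishes the theorem in the bulk regime.
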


We stress that the error bound in the local law does not depend on the variance matrix through anything else than the parameters $ \rho $, $ L $, $ S_\ast $, and $ \Phi $.
If the operator $ S $ also satisfies the quantitative versions of the assumptions (i) of Theorem~\ref{thr:Qualitative uniform bounds}, then the implicit constant $ \Phi $ can also be effectively bounded in terms of the variance matrix using a few additional model parameters appearing in the hypotheses of Theorem~\ref{thr:Quantitative uniform bounds when a = 0} below.

It is also shown in \cite{AEK2} that under the conditions of the previous theorem an  \emph{averaged local law} holds with an improved error bound. More precisely, for any non-random weights $ w_k $, and $ \phi > 0 $, we have
\bels{averaged local law}{
\frac{1}{N}\absbb{\sum_k w_k\,(G_{kk}(z)-m_k(z)\1)}
\;\leq\; 
N^\phi \max_i \abs{w_i}\,
\frac{\mcl{E}_N(z)}{\2N\1\Im\, z}
\,,
}
with very high probability for sufficiently large $ N $. Here the error term $ \mcl{E}_N(z) $ is $ \Ord(1)$, except when $ z $ approaches an asymptotically small non-zero minimum or an asymptotically small gap in $ \supp v $ (cf. formulas (1.21) and (1.23)-(1.25) in \cite{AEK2} for details). 
In particular, choosing $ w_k = 1 $ in \eqref{averaged local law} and considering the spectral parameters $ z $ in the bulk of the spectrum, so that $ \avg{v(\Re\,z)} > 0 $, we find for every $ \phi > 0 $,
\bels{LL for empirical DOS}{
\absB{\frac{1}{N}\mrm{Tr}\,\brm{G}(z)-\avg{\1m(z)}} \,\leq\,  \frac{N^\phi}{N\1\Im z}
\,,
}
with very high probability.
This estimate is the starting point for proving the \emph{local bulk universality} for eigenvalues of $ \brm{H} $. 
For more details see Theorem 1.7 of \cite{AEK2}.

All these result have been originally obtained for generalized Wigner matrices in a sequence of papers \cite{EYY,EYYber,EYYrigi}, see \cite{EKYY} for a summary. The main difference is that for generalized Wigner matrices the limiting density is given by the explicit Wigner semicircle law \eqref{SC}, while  Wigner-type matrices have a quite general density profile that is known only implicitly from the solution of the QVE using \eqref{density of states for z from Im m}. 
In particular, the density may have cubic root singularities (cf. Theorem~\ref{thr:Shape of generating density near its small values}), as opposed to two square root singularities of the semicircle law, and these new kind of singularities require a new proof for the local law.

\sectionl{Proof of local law inside bulk of the spectrum}

In order to see why Theorem~\ref{thr:Entrywise local law from AEK2} should hold we first apply the Schur complement formula for the diagonal entries of the resolvent \eqref{def of brm-G} to get 
\bels{Schur 0}{
-\frac{1}{G_{kk}(z)} 
\,=\, 
z \1- h_{kk} + \sum_{i,j}^{(k)} h_{ki}\1G^{(k)}_{ij}\msp{-2}(z)\2h_{jk} 
\,,
}
where $ \sum^{(k)}_{i,j} $ denotes the sum over all indices $ i,j $ not equal to $ k $, and $ G^{(k)}_{ij}(z) $ are the entries of the resolvent of the matrix obtained by setting the $ k$-th row and the $ k$-th column of $ \brm{H} $ equal to zero.
Replacing the terms on the right hand side of \eqref{Schur 0} by their partial averages w.r.t. the $ k $-th row and column, and regarding the rest as perturbations, we arrive at a perturbed QVE,
\bels{Schur}{
-\frac{1}{G_{kk}(z)} 
\,&=\, 
z \2- \EE\,h_{kk} 
+ \frac{1}{N}\sum_{i=1}^N S_{ki}
G_{ii}(z) + d_k(z)
\,,
}
for  the diagonal entries of the resolvent $ G_{kk}(z) $.
Here the random error is given by 
\begin{equation}
\label{form of perturbation d}
\begin{split}
d_k(z) \;=\; 
&\sum_{i\neq j}^{(k)} h_{ki}\1G^{(k)}_{ij}\msp{-2}(z)\2h_{jk} 
\,+\, \sum_i^{(k)} 
\bigl(\2\abs{h_{ki}}^2-\EE\1\abs{h_{ki}}^2\2\bigr)
\2 G^{(k)}_{ii}\msp{-2}(z) 
\\
&+\,\frac{1}{N}\sum_i^{(k)} S_{ki}\1(\1G^{(k)}_{ii}\msp{-2}(z)-G_{ii}(z)\1)
\,-\, 
(\1h_{kk}-\EE\,h_{kk}\1) 
\,-\, \frac{S_{kk}\!}{N}\2
G_{kk}(z)
\,.
\end{split}
\end{equation}
Setting $ a_k = -\1\EE\,h_{kk} $ we identify \eqref{Schur} with the perturbed QVE \eqref{perturbed QVE - 1st time}.
For the sake of simplicity, we consider only the case $ \EE\,h_{kk} = 0 $ here. 

Since $ \brm{G}^{\msp{-1}(k)}\msp{-1}(z) $, by definition does not depend on the $ k$-th row/column of $ \brm{H} $, the centered terms $ h_{ki}$, $h_{jk} $ and $ (\abs{h_{ki}}^2-\EE\2\abs{h_{ki}}^2\1) $  are independent of $  \brm{G}^{\msp{-1}(k)}\msp{-1}(z) $ in \eqref{form of perturbation d}.
Therefore the first term on the right hand side of \eqref{form of perturbation d} can be controlled by the standard large deviation estimate (cf. Appendix B of \cite{EKYY13}) of the form
\bels{RM:LD}{
\qquad \PP\Biggl\{ \,\absB{\,\sum_{i\neq j} a_{ij}X_iX_j}^2 \ge N^\kappa \sum_{i\neq j}\, \abs{a_{ij}}^2
\Biggr\}
\,\leq\, \frac{C(\kappa,q)}{N^q\!}
\,.
}
Here $ X_i$'s are independent and centered random variables with finite moments, and the exponents $ \kappa,q > 0 $ are arbitrary.
A similar bound holds for the second term on the right hand side of \eqref{form of perturbation d}.

Lemma 2.1 in \cite{AEK2}, states that if
\bels{RM: def of Lambda}{
\Lambda(z) \,:=\, 
\max_{i,j=1}^N \absb{G_{ij}(z)-\delta_{ij}m_i(z)}
}
satisfies a rough a priori estimate, then the perturbation $ d_k(z) $ can be shown to be very small using standard large deviation estimates, such as \eqref{RM:LD}, and standard resolvent identities. A simplified version of this lemma is formulated as follows:

\begin{lemma}[Probabilistic part for simplified local law]
\label{lmm:Probabilistic part for simplified local law}
Under the assumptions of Theorem~\ref{thr:Entrywise local law from AEK2}, there exist a threshold $ \lambda_1 > 0 $ and constants $ C_1(\kappa,q) < \infty $, for any $ \kappa, q > 0 $, such that for any $ z \in \Cp $
\bels{RM: prob bound for fixed z}{
\!\PP\Biggl\{\,
\biggl(
\norm{d(z)}
+ \max_{\substack{i,j=1\\ i\1\neq\1 j}}^N\abs{G_{ij}(z)}\biggr)\1
\Ind\setb{\1\Lambda(z) \leq \lambda_1}
\;>
N^\kappa \delta_N(z)\,
\Biggr\}
\,\leq\, \frac{C_1(\kappa,p)\!}{\,N^q\!}
\,,
}
where $ \norm{\genarg} $ denotes the supremum norm, and 
\bels{def of error delta_N}{
\delta_N(z) := \frac{1}{\sqrt{N\1\Im\,z\1}\1} + \frac{1}{\sqrt{N}}
\,.
}
Here $ \lambda_1 $ and the constants $C_1(\kappa,p) $ are independent of $ z $. They depend on the random matrix $ \brm{H} $ only through the parameters $ \brm{\xi}_S,\ul{\mu} $ defined in Theorem~\ref{thr:Entrywise local law from AEK2}.
\end{lemma}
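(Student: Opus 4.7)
The plan is to control each of the five terms comprising $d_k(z)$ in \eqref{form of perturbation d} together with the off-diagonal resolvent entries $G_{ij}(z)$, $i\neq j$, and then take a union bound over $k=1,\dots,N$. The union bound is harmless since the target decay rate $N^{-q}$ is arbitrary. On the a priori event $\{\Lambda(z)\leq \lambda_1\}$, combined with $\nnorm{m}_\R\leq \Phi$, we obtain $|G_{ii}(z)|\sim 1$ and $\Im G_{ii}(z)\lesssim 1$, provided $\lambda_1$ is chosen small enough in terms of $\Phi$ and a uniform lower bound on $|m_i|$ (the latter inferred from \eqref{QVE}). The resolvent identity $G^{(k)}_{ii}=G_{ii}-G_{ik}G_{ki}/G_{kk}$ transfers both bounds to the minor resolvent $\brm{G}^{(k)}$.

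Since $\brm{G}^{(k)}(z)$ is independent of the $k$-th row of $\brm{H}$, the leading quadratic fluctuation is controlled by combining the large deviation estimate \eqref{RM:LD} with the Ward identity $\sum_j |G^{(k)}_{ij}(z)|^2 = (\Im z)^{-1}\Im G^{(k)}_{ii}(z)$:
\begin{equation*}
\Bigl|\sum_{i\neq j}^{(k)} h_{ki}\1G^{(k)}_{ij}\1h_{jk}\Bigr|^2
\,\leq\, N^\kappa \sum_{i,j}^{(k)} s_{ki}\1s_{jk}\1|G^{(k)}_{ij}|^2
\,\lesssim\, \frac{N^\kappa S_\ast^2}{N^2}\cdot \frac{N}{\Im z}
\,\lesssim\, \frac{N^\kappa}{N\,\Im z}\,,
\end{equation*}
with probability at least $1-C/N^q$. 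The diagonal fluctuation $\sum_i^{(k)}(|h_{ki}|^2-s_{ki})\1G^{(k)}_{ii}$ is bounded analogously, by $N^\kappa/N$; a Chebyshev bound using \eqref{RM moment conditions} yields $|h_{kk}-\EE h_{kk}|\lesssim N^{\kappa/2}/\sqrt{N}$; the deterministic term $S_{kk}\1G_{kk}/N$ is $\Ord(1/N)$; and the minor-correction term rewrites, via the identity above, as $\frac{1}{N\1G_{kk}}\sum_i^{(k)}S_{ki}\1G_{ik}\1G_{ki}$, which is $\lesssim (N\Im z)^{-1}$ by another application of the Ward identity, now to $G$ itself. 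After relabelling $\kappa$ to absorb the square root, these combine to give $|d_k(z)|\lesssim N^\kappa\1\delta_N(z)$.

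For the off-diagonal entries $G_{ij}(z)$ with $i\neq j$, I would apply the two-body Schur complement formula
\begin{equation*}
G_{ij}(z) \,=\, -G_{ii}(z)\1G^{(i)}_{jj}\msp{-2}(z)\Bigl(\2h_{ij}-\!\!\sum_{k,l}^{(ij)}\! h_{ik}\1G^{(ij)}_{kl}\msp{-2}(z)\1h_{lj}\Bigr)\,,
\end{equation*}
whose prefactor is $\sim 1$ on the a priori event, whose linear term satisfies $|h_{ij}|\lesssim N^{\kappa/2}/\sqrt{N}$ by Chebyshev applied to \eqref{RM moment conditions}, and whose quadratic form is controlled exactly as above by \eqref{RM:LD} and Ward, yielding $|G_{ij}(z)|\lesssim N^\kappa\1\delta_N(z)$.

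The subtle point I expect as the main obstacle is an \emph{apparent} circularity: the minor-correction in $d_k$ asks for a bound on $\sum_i|G_{ik}|^2$, while the off-diagonal argument requires $|G_{kk}|\sim 1$. Both are resolved without iteration: the Ward identity expresses $\sum_i|G_{ik}|^2 = \Im G_{kk}/\Im z$ purely through a \emph{diagonal} entry, and $|G_{kk}|\sim 1$ is already available from the a priori event $\{\Lambda\leq \lambda_1\}$; neither input references $d_k$. A final union bound over $k=1,\dots,N$ absorbs a factor $N$ into $C_1(\kappa,q)$, upgrades the componentwise bound on $d_k$ to the supremum norm, and concludes the proof.
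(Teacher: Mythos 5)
Your argument is correct and is essentially the proof the paper has in mind: the paper does not prove this lemma itself but defers to Lemma~2.1 of \cite{AEK2}, which proceeds exactly along your route — the Schur-complement representation \eqref{form of perturbation d} and its two-row analogue for $G_{ij}$, the large deviation bound \eqref{RM:LD} applied conditionally on the minor resolvents, the Ward identity, the deterministic transfer of the a priori event $\{\Lambda(z)\leq\lambda_1\}$ to the minors via $G^{(k)}_{ii}=G_{ii}-G_{ik}G_{ki}/G_{kk}$, and a union bound over rows and index pairs. Two cosmetic repairs are needed but do not affect the conclusion: the diagonal fluctuation $\sum_i^{(k)}(\2|h_{ki}|^2-s_{ki})\1G^{(k)}_{ii}$ is of size $N^{\kappa}/\sqrt{N}$ rather than $N^{\kappa}/N$ (still dominated by $\delta_N(z)$), and the step $(N\1\Im z)^{-1}\lesssim\delta_N(z)$ used for the minor-correction term fails when $\Im z<1/N$ — a regime where, however, $\delta_N(z)\geq 1$ and the claimed bound holds trivially on the event $\{\Lambda(z)\leq\lambda_1\}$, just as the regime of very large $|z|$ (where the lower bound $|G_{kk}|\gtrsim 1$ coming from $|m_k|\gtrsim 1$ degenerates) is handled by elementary deterministic estimates.
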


We we will now show how to prove the entrywise local law, Theorem~\ref{thr:Entrywise local law from AEK2}, in the special case where the spectral parameter $ z $ satisfies the \emph{bulk assumption} \eqref{condition for being away from critical points} for some $ \eps > 0 $. 
The proof demonstrates the general philosophy of how the non-random stability results for the QVE, such as Theorem~\ref{thr:Stability}, are used together with probabilistic estimates, such as Lemma~\ref{lmm:Probabilistic part for simplified local law} above.
Our estimates will deteriorate as the lower bound $ \eps $ in the bulk assumption approaches zero.
In order to get the local law uniformly in $ \eps $ a different and much more complicated  argument (cf. Section~4 of \cite{AEK2}) is needed.
In particular, Theorem \ref{thr:Stability} must be replaced by its more involved version, Proposition~\ref{prp:Cubic perturbation bound around critical points}.

In order to  obtain the averaged local law \eqref{averaged local law}, under the bulk assumption, the componentwise estimate \eqref{rough perturbation-bound: sup} must be replaced by the averaged estimate \eqref{rough perturbation-bound: w-average}, which bounds $ G_{kk}(z)-m_k(z) $, in  terms of a weighted average of $ d_k(z) $.
The improvement comes from the \emph{fluctuation averaging} mechanism introduced in~\cite{EKYY,EYYber}.
In fact, Theorem 3.5 of \cite{AEK2} shows that $ \avg{w,d(z)} $, for any non-random $ w \in \BB $, is typically of size $ \norm{w}\2\norm{d(z)}^2 $, and hence much smaller than the trivial bound $ \norm{w}\norm{d} $ used in the entrywise local law. %
For the averaged bounds, the bulk assumption  \eqref{condition for being away from critical points} can be removed as well by using Proposition~\ref{prp:Cubic perturbation bound around critical points} in place of Theorem \ref{thr:Stability}. 

\begin{Proof}[Proof of Theorem~\ref{thr:Entrywise local law from AEK2} in the bulk]
Let us fix $ \tau_0 \in \R $ such that \eqref{condition for being away from critical points} holds for all $ z $ on the line
\bels{RM: def of LL}{
\LL \,:=\, \tau_0 \2+\2 \cI\,[\1N^{\gamma-1}\msp{-6},N\2]
\,.
} 
We will also fix an arbitrary $ \gamma > 0 $.
Clearly, it suffices to prove the local law only when $ N $ is larger than some threshold $ N_0 = N_0(\phi,\gamma,p) < \infty $, depending only on $ \brm{\xi}_S,\ul{\mu} $, in addition to the arbitrary exponents $ \phi,\gamma,p > 0 $. 

Combining \eqref{RM: prob bound for fixed z} with the stability of the QVE  under the perturbation $ d(z) $, Theorem~\ref{thr:Stability}, we obtain 
\bels{RM:QVE stability}{
\biggl(
\max_{k=1}^N\2 \abs{G_{kk}(z)-m_k(z)}
\biggr)\1
\Ind\setb{\Lambda(z) \leq \lambda\1\eps}
\,&\leq\,
\frac{C_2}{\eps^{2}\!}\1
\norm{d(z)}\,
\,.
}
Here the indicator function guarantees that the part (i) of Theorem~\ref{thr:Stability} is applicable. 
The constant $ \lambda  \sim 1 $ is taken from that theorem, while $ C_2 \sim 1 $ is the hidden constant in \eqref{rough perturbation-bound: sup}. 

Combining \eqref{RM:QVE stability} with \eqref{RM: prob bound for fixed z} we see that for every $ \kappa,q > 0 $, and every fixed $ z \in \Cp $, there exists an event $ \Omega_{\1\kappa,q}(z)$, of very high probability
\bels{RM:def of good event at z}{
\PP(\2\Omega_{\1\kappa,q}(z)\1) \,\ge\, 1-C_3(\kappa,q)N^{-q},
}
such that for a sufficiently large threshold $ N_0 $ and every $ N \ge N_0 $ we get 
\bels{RM:z-wise Lambda bound for a single z}{
\Lambda(z;\omega)
\Ind\sett{\1\Lambda(z;\omega) \leq 2\lambda_\ast}
\,&\leq\,
N^{2\1\kappa}\delta_N(z)
\,,\qquad \forall\1\omega \in \Omega_{\kappa,q}(z)
\,,
}
where $ 2\lambda_\ast := \min\sett{\lambda_1,\lambda\1\eps} $.

The event $ \Omega_{\kappa,q}(z) $ depends on the spectral point $ z \in \LL $.
As a next step we replace the uncountable family of events $  \Omega_{\kappa,q}(z) $, $ z \in \LL $, in \eqref{RM:z-wise Lambda bound for a single z} by a single event, that covers all $ z \in \LL $.
To this end, we use the regularity of the resolvent elements and of the solution to the QVE in the spectral variable $ z $. 
Indeed, they are both Stieltjes transforms of probability measures (cf. \eqref{m as stieltjes transform}), and thus their derivatives are uniformly bounded by $ (\Im\,z)^{-2} \leq N^2 $ when $ z \in \LL $. 
In particular, it follows that
\bels{Lipschitz for Lambda}{
\abs{\1\Lambda(z')-\Lambda(z)} \,\leq\, 2\1N^2\abs{z'-z}
\,,\qquad 
z,z' \in \LL
\,.
} 
Let $ \LL_N $ consist of $ N^5 $ evenly spaced points on $ \LL $, such that the $ N^{-4} $-neighborhood of $ \LL_N $ covers $ \LL $.
Combining \eqref{Lipschitz for Lambda} and \eqref{RM:z-wise Lambda bound for a single z} we see that for any $ \phi, p > 0$, the intersection event,
\bels{def of good event for all z}{
\Omega_{\phi,p} \,:= \bigcap_{z \ins \LL_N} \Omega_{\1\phi/3\1,\1p\1+5}(z)
\,,
}
has the properties 
\begin{subequations}
\label{RM:good event}
\begin{align}
\label{RM:good event:HP}
\PP(\2\Omega_{\phi,p}) \,&\ge\, \11\2-\1
C_1(\phi,p)\1N^{-p}
\\
\label{RM:good event:LL}
\qquad\Lambda(z;\omega)\1\Ind\sett{\2\Lambda(z;\omega) \leq \lambda_\ast}
\,&\leq\,
N^{\phi}\delta_N(z)
\,,\qquad\quad \forall\1(z,\omega) \in \LL \times \Omega_{\phi,p}
\,.
\end{align}
\end{subequations}
Here $ C_1(\phi,p) := C_3(\phi/3,p+5) $, with $ C_3(\genarg,\genarg) $ taken from \eqref{RM:def of good event at z}.
In order to prove \eqref{RM:good event:LL} pick an arbitrary pair $ (z,\omega) \in \LL \times \Omega_{\phi,p} $, and set $ \kappa := \phi/3 $ and $ q := p + 5 $.
If $ z \in \LL_N $, then the claim follows directly from \eqref{RM:z-wise Lambda bound for a single z} and \eqref{def of good event for all z}.
In the case $ z \notin \LL_N $, let $ z' \in \LL_N $ be such that $ \abs{z'-z} \leq N^{-4} $.
Suppose now that $ \Lambda(z;\omega) \leq \lambda_\ast $. By the continuity \eqref{Lipschitz for Lambda} we see that $ \Lambda(z';\omega) \leq 2\lambda_\ast $, and thus \eqref{RM:z-wise Lambda bound for a single z} yields $ \Lambda(z';\omega) \leq N^{2\kappa}\delta_N(z') $.
Using  \eqref{Lipschitz for Lambda} together with $ \delta_N(z) \ge N^{-1/2}$ and $ \abs{\delta_N(z)-\delta_N(z')} \leq N^{1/2}\abs{z-z'} $ we get  $ \Lambda(z;\omega) \leq N^{3\kappa}\delta_N(z) $. 
 This proves \eqref{RM:good event:LL}.

The proof of the local law is now completed by showing that the indicator function is identically equal to one in \eqref{RM:good event:LL} for $ (z,\omega) \in \LL \times \Omega_{\phi,p} $, provided $ \phi < \gamma/2 $.
Indeed, if $ N_0 $ is so large that $ N_0^{\phi-\gamma/2} < \lambda_\ast/2 $, then $ N^\phi \delta_N(z) < \lambda_\ast/2 $, for $ N \ge N_0 $, and thus the bound \eqref{RM:good event:LL} implies
\[
\qquad
\Lambda(z;\omega) \notin \biggl[\frac{\2\lambda_\ast\!}{2}\2,\lambda_\ast\biggr]
\,,\qquad \forall\1(z,\omega) \in \LL \times \Omega_{\1\phi,p}
\,.
\]
Fix $ \omega \in \Omega_{\phi,p} $.
Since $ z \mapsto \Lambda(z;\omega) $ is continuous, the set $ \Lambda(\1\LL;\omega) $ is simply connected. 
Therefore it is contained either in $ [\10\1,\lambda_\ast/2] $, or in $ [\1\lambda_\ast\1,\1\infty\1) $. 
The latter possibility is excluded by considering the point $ z_0 := \tau_0 + \cI\1N \in \LL $. Indeed, from \eqref{def of brm-G} and the Stieltjes transform representations it follows that
\[
\abs{\1G_{ij}(z_0;\omega)-\delta_{ij}\1m_i(z_0)} \leq \frac{2}{\2\Im\,z_0} = \frac{2}{N\msp{-1}}
\,,
\qquad i,j =1,\dots,N
\,.
\]
Assuming that $ N_0  $ is so large that $ 2/N_0 <\lambda_\ast/2 $, we see $ \Lambda(z_0;\omega) <\lambda_\ast/2 $. 
This completes the proof of Theorem \ref{thr:Entrywise local law from AEK2} for spectral parameters $z$ satisfying the bulk condition \eqref{condition for being away from critical points}.
\end{Proof}

\chapter{Existence, uniqueness and $\Lp{2}$-bound}
\label{chp:Existence and uniqueness}

This chapter contains the proof of Theorem~\ref{thr:Existence and uniqueness}. Namely assuming,
\begin{itemize}
\item \emph{$ S $ satisfies} {\bf A1},
\end{itemize}
we show that the QVE \eqref{QVE} has a unique solution, whose components $ m_x $ are Stieltjes transforms (cf. \eqref{m as stieltjes transform}) of $ x-$dependent probability measures, supported on the interval  $[-\Sigma,\1\Sigma\2] $. 
We also show that if $ a = 0 $, then $ m(z) \in \Lp{2} $, whenever $ z \neq 0 $ (cf. \eqref{L2 bound on m when a=0}). 
The existence and uniqueness part of Theorem~\ref{thr:Existence and uniqueness} is proven by considering the QVE as a fixed point problem in the space $\BB_+$.
The choice of an appropriate metric on $\BB_+$ is suggested by the general theory of Earle and Hamilton \cite{EarleHamilton70}.  
A similar line of reasoning for the proof of existence and uniqueness results that are close to the one presented here has appeared before (see e.g. \cite{AZind,Helton2007-OSE,KLW2,FHS2006}).
The structural $ \Lp{2}$-estimate in Section~\ref{sec:Operator F and structural L2-bound} is the main novelty of this chapter. 

For the purpose of defining the correct metric on  $\BB_+$ we use the standard hyperbolic metric $d_\Cp$ on the complex upper half plane $\Cp$.
This metric has the additional benefit of being invariant under $z\mapsto -z^{-1}$, which enables us to exchange the numerator and denominator on the left hand side of the QVE. 

We start by summarizing a few basic properties of $d_\Cp$. These will be expressed through the function 
\bels{cDefinition}{
D(\zeta,\omega)\,:=\,\frac{\abs{\1\zeta-\omega\1}^2}{(\1\Im\,\zeta\1)\1(\1\Im\,\omega)}\,,
\qquad \forall\; \zeta, \,\omega \in \Cp
\,,
}
which is related to the hyperbolic metric through the formula
\bels{dAndc}{
D(\zeta,\omega)\,=\,2\2(\1\cosh d_\Cp(\zeta,\omega)-1\,)
\,.
}

\begin{lemma}[Properties of hyperbolic metric]
\label{lmm:Properties of hyperbolic metric} 
The following three properties hold for $D$:
\begin{enumerate}
\item \label{cProp1}
Isometries: If $ \psi : \Cp \to \Cp $, is a linear fractional transformation, of the form 
\[
\psi(\zeta) \,=\,\frac{\alpha \1 \zeta+\beta}{\gamma \1 \zeta+\mu}
\,,\qquad
\mat{\alpha & \beta \\ \gamma &\mu} \in \mrm{SL}_2(\R)\,, 
\]
then 
\[
D\big(\psi(\zeta),\psi(\omega)\big) \,=\, D(\zeta,\omega) 
\,.
\]
\item \label{cProp2}
Contraction: If $ \zeta $, $\omega \in \Cp $ are shifted in the positive imaginary direction by $ \lambda> 0$ then  
\bels{}{    
D(\1\zeta + \cI\1\lambda\1, \omega + \cI\1\lambda)
\,=\,
\Bigl(1+\frac{\lambda}{\Im\,\zeta}\Bigr)^{\!-1}
\Bigl(1+\frac{\lambda}{\Im\,\omega}\Bigr)^{\!-1}D(\zeta,\omega)
\,.}
\item \label{cProp3}
Convexity:
Suppose $ 0 \neq \phi \in \BB^* $ is a bounded non-negative linear functional on $\BB$, i.e., $\phi(u)\geq 0$ for all $u\in \BB$ with $u\geq0$. Let $u,w \in \BB_+$ with imaginary parts bounded away from zero, $ \inf_x \Im \,u_x$, $\inf_x\Im\, w_x>0 $. 
Then
\bels{cConvexity}{
D\big(\1\phi(u),\2 \phi(w) \big)
\,\leq\,
\sup_{x \in \Sx} D(\1u_x,w_x)
\,.
}
\end{enumerate}
\end{lemma}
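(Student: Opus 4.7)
The plan is to verify the three parts essentially by direct computation, reducing each to an algebraic identity for $D$.

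\textbf{Parts (1) and (2).} For the isometry property, I would use the two standard identities for $\psi \in \mathrm{SL}_2(\R)$: since $\alpha\mu-\beta\gamma=1$, a direct calculation gives
\[
\psi(\zeta)-\psi(\omega)\,=\,\frac{\zeta-\omega}{(\gamma\zeta+\mu)(\gamma\omega+\mu)}\,,\qquad \Im\,\psi(\zeta)\,=\,\frac{\Im\,\zeta}{\abs{\gamma\zeta+\mu}^2}\,.
\]
Plugging these into the definition \eqref{cDefinition} the factors $\abs{\gamma\zeta+\mu}^2$ and $\abs{\gamma\omega+\mu}^2$ cancel, yielding $D(\psi(\zeta),\psi(\omega))=D(\zeta,\omega)$. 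For the contraction property, the numerator $\abs{\zeta-\omega}^2$ is unchanged by the common shift $\cI\lambda$, while $\Im(\zeta+\cI\lambda)=\Im\,\zeta+\lambda$, so
\[
D(\zeta+\cI\lambda,\omega+\cI\lambda)
\,=\,\frac{\abs{\zeta-\omega}^2}{(\Im\,\zeta+\lambda)(\Im\,\omega+\lambda)}
\,=\,\frac{\Im\,\zeta}{\Im\,\zeta+\lambda}\cdot\frac{\Im\,\omega}{\Im\,\omega+\lambda}\cdot D(\zeta,\omega)\,,
\]
which is exactly the claimed identity. These two parts are essentially bookkeeping.

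\textbf{Part (3).} This is the nontrivial step. I would use the sharp reformulation
\[
D(\zeta,\omega)\,\leq\, M\quad\Longleftrightarrow\quad \abs{\zeta-\omega}^2\,\leq\, M\,\Im\,\zeta\cdot\Im\,\omega
\]
together with the hypothesis in the form of the pointwise bound $\abs{u_x-w_x}\leq M^{1/2}(\Im\,u_x)^{1/2}(\Im\,w_x)^{1/2}$, where $M:=\sup_x D(u_x,w_x)$. Applying $\phi$ and then three standard facts about non-negative bounded linear functionals on $\BB$, namely
\begin{enumerate}
\item[(a)] $\abs{\phi(h)}\leq \phi(\abs{h})$ for complex $h\in\BB$ (rotate $\phi(h)$ to be non-negative and apply positivity to the real part);
\item[(b)] Cauchy--Schwarz: $\phi(fg)\leq \phi(f^2)^{1/2}\phi(g^2)^{1/2}$ for non-negative $f,g$, obtained by expanding $\phi((f-tg)^2)\geq 0$ and optimising over $t\in\R$;
\item[(c)] $\phi$ commutes with $\Im$ because $\phi$ is real on real inputs;
\end{enumerate}
I would chain
\[
\abs{\phi(u)-\phi(w)}\,\leq\, \phi(\abs{u-w})\,\leq\, M^{1/2}\phi\bigl((\Im\,u)^{1/2}(\Im\,w)^{1/2}\bigr)\,\leq\, M^{1/2}\,(\Im\,\phi(u))^{1/2}(\Im\,\phi(w))^{1/2}\,.
\]
Squaring and recalling the sharp reformulation of $D\leq M$ yields \eqref{cConvexity}. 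The hypotheses $\inf_x\Im\,u_x,\,\inf_x\Im\,w_x>0$ and $\phi\neq 0$ are used only to ensure $\Im\,\phi(u)$ and $\Im\,\phi(w)$ are strictly positive so that the quotient defining $D(\phi(u),\phi(w))$ makes sense.

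\textbf{Main obstacle.} The only subtlety is justifying Cauchy--Schwarz (b) for an abstract bounded non-negative functional on $\BB$, since $\phi$ is not a priori an integral. As noted above this is resolved by the standard quadratic trick $0\leq \phi((f-tg)^2)=\phi(f^2)-2t\phi(fg)+t^2\phi(g^2)$ and optimisation in $t\in\R$, which uses only linearity and positivity. Once this is in hand, part (3) is immediate from the pointwise inequality.
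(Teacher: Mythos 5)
Your proposal is correct and follows essentially the same route as the paper: parts (1) and (2) are the direct computations the paper treats as immediate consequences of the definition, and part (3) rests, exactly as in the paper, on the positivity of $\phi$ (via $\abs{\phi(h)}\le\phi(\abs{h})$ and $\Im\,\phi(u)=\phi(\Im\,u)$) combined with a Cauchy--Schwarz-type inequality for positive functionals established by the quadratic trick. The only cosmetic difference is the order of operations: you pull $\sup_x D(u_x,w_x)$ out at the pointwise level and then apply Cauchy--Schwarz once, whereas the paper keeps $D(u,w)$ inside the functional, applies a Jensen-type inequality, and finishes with $2\,\phi(g^{1/2}h^{1/2})\le\phi(g)+\phi(h)$; the two chains are equivalent.
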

\begin{Proof}
Properties \ref{cProp1} and \ref{cProp2} follow immediately from \eqref{dAndc} and \eqref{cDefinition}. It remains to prove Property~ \ref{cProp3}. 
The functional $\phi$ is non-negative. Thus, $\abs{\phi(w)}\leq \phi(\abs{w})$ for all $w \in \BB$. Therefore,
\bels{D first bound}{
D\bigl(\1\phi(u),\2 \phi(w) \bigr)
\,\leq\, 
\frac{\phi(\abs{u-w})^2}{\phi(\1\Im\2 u)\2\phi(\1\Im \2w)}
\,=\, 
\frac{\phi\bigl(\1
(\Im \2 u)^{1/2} (\Im \2 w)^{1/2} D(u,w)^{1/2}\2
\bigr)^2}{\phi(\1\Im\2 u)\2\phi(\1\Im \2w)}\,,
}
where we used the definition of $D$ from \eqref{cDefinition} two times. 
We apply a version of Jensen's inequality for bounded linear, non-negative and normalized functionals on $\BB$ to estimate further,
\bels{D second bound}{
\frac{
\2\phi\bigl(\1
(\Im \2 u)^{1/2} (\Im \2 w)^{1/2} D(u,w)^{1/2}\1
\bigr)^2}{\phi\bigl(\1(\Im \2 u)^{1/2} (\Im \2 w)^{1/2}\bigr)}
\,\leq\, 
\phi\bigl(\1(\Im \2 u)^{1/2} (\Im \2 w)^{1/2} D(u,w)\1\bigr)
\,.
}
We combine \eqref{D first bound} with \eqref{D second bound} and use the non-negativity of $\phi$ to estimate $ D(u,w)\leq \sup_{x \in \Sx}D(u_x,w_x)$ inside its argument,
\bels{D final inequality}{
D\big(\1\phi(u),\2 \phi(w) \big)
\,\leq\, 
\frac{\1\phi\bigl(\1(\Im \2 u)^{1/2} (\Im \2 w)^{1/2}\1\bigr)^2}{\phi(\1\Im\2 u)\2\phi(\1\Im \2w)}\,
\sup_{x \in \Sx}D(\1u_x,w_x)
\,.
}
Finally we use $2\1\phi(\1g^{1/2}h^{1/2})\leq \phi(g) + \phi(h)$ for the choice $ g:=\Im\1 u/\phi(\Im\1 u)$ and $ h := \Im\1 w/\phi(\Im\1 w)$ to show that the fraction on the right hand side of \eqref{D final inequality} is not larger than $1$.
This finishes the proof of \eqref{cConvexity}.
\end{Proof}

In order to show existence and uniqueness of the solution of the QVE for given $ S $ and $ a $, we see that for any fixed $z  \in \Cp$, a solution $m=m(z)\in \BB_+$ of \eqref{QVE for fixed z} is a fixed point of the map
\bels{}{
\Phi(\genarg;z) : \BB_+\to\BB_+\,,\qquad\Phi(u\1;z) \,:=\, -\,\frac{1}{z+a+Su}
\,.
}
Let us fix a constant $ \eta_0 \in (\10\1, \min\sett{1,1/\norm{a}} \1) $ such that $z$ lies in the domain 
\bels{}{
\Cp_{\eta_0} \,:=\, \setb{ z \in \Cp:\, |z| \1<\1 \eta_0^{-1} \, , \; \Im \2 z \1 > \1 \eta_0 }
\,.
}
We will now see that $\Phi(\genarg;z)$ is a contraction on the subset
\bels{}{
\BB_{\eta_0} \,:=\,
\setbb{ u \in \BB_+: \, \norm{u} \1\leq\1 \frac{1}{\eta_0\!}\,, \; \inf_{x \in \Sx} \Im \2 u_x \1\geq\1 \frac{\eta_0^{\13}\!}{(\12+\norm{S})^2\msp{-6}}
\2}\,,
}
equipped with the metric 
\bels{}{
d(u,\1w) \,:=
\sup_{x \in \Sx} d_{\Cp}(u_x ,\1w_x)
\,,
\qquad u,w \in \BB_{\eta_0}
\,.
}
On $ \BB_{\eta_0} $ the metric $ d $ is equivalent to the metric induced by the uniform norm \eqref{def of BB-norm} of $ \BB $. 
Since  $\BB_{\eta_0}$ is closed in the uniform norm metric it is a complete metric space with respect to $ d $.  

\begin{lemma}[$ \Phi $ is contraction]
\label{lmm:Phi as contraction} 
For any $z \in \Cp_{\eta_0}$, the function $\Phi(\genarg;z)$ maps $\BB_{\eta_0}$ into itself and satisfies
\bels{contraction property}{
\sup_{x\in \Sx}
D\Bigl( \bigl(\Phi(u\1;z)\bigr)_x\2,\2 \bigl(\Phi(w\1;z)\bigr)_x \Bigr)
\,\leq\, 
\Big(\2 1+ \frac{\eta_{\10}^{\12}\!}{\norm{S}} \2\Big)^{\!-2}
\sup_{x \in \Sx} D( u_x, w_x)
\,,
}
for any $ u,w \in \BB_{\eta_0} $, and $D$ defined in \eqref{cDefinition}. 
\end{lemma}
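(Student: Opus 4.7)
My plan has two parts: first verify that $\Phi(\cdot;z)$ sends $\BB_{\eta_0}$ into itself, then derive the contraction \eqref{contraction property} by chaining the three properties of $D$ from Lemma~\ref{lmm:Properties of hyperbolic metric}. For the first part, I would set $\zeta_x := z + a_x + (Su)_x$ and note that, since $a$ is real and $S$ preserves positivity, $\Im\zeta_x \geq \Im z \geq \eta_0$; thus $\Phi(u;z)_x = -1/\zeta_x \in \Cp$ with $\norm{\Phi(u;z)} \leq 1/\eta_0$. The lower bound on $\Im\Phi(u;z)_x$ then follows from the crude estimate $\abs{\zeta_x} \leq \abs{z}+\norm{a}+\norm{S}\norm{u} \leq (2+\norm{S})/\eta_0$ (using $\eta_0 < 1/\norm{a}$ and $\norm{u}\leq 1/\eta_0$), which yields $\Im\Phi(u;z)_x = \Im\zeta_x/\abs{\zeta_x}^2 \geq \eta_0^3/(2+\norm{S})^2$, exactly the threshold in the definition of $\BB_{\eta_0}$.

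For the contraction I would factor $\Phi(\cdot;z)_x$ as $v \mapsto (Sv)_x \mapsto z + a_x + (Sv)_x \mapsto -1/(z+a_x+(Sv)_x)$. The outer map $\zeta \mapsto -1/\zeta$ and the translation by the real number $\Re z + a_x$ are both linear fractional transformations in $\mathrm{SL}_2(\R)$, and hence $D$-isometries by Property~1. This reduces the task to bounding
\[
D\bigl((Su)_x + \cI\,\Im z,\; (Sw)_x + \cI\,\Im z\bigr).
\]
Property~2 with $\lambda = \Im z$ converts this quantity into $(1+\Im z/\Im(Su)_x)^{-1}(1+\Im z/\Im(Sw)_x)^{-1} D((Su)_x,(Sw)_x)$, while Property~3 applied to the non-negative linear functional $\phi(v) := (Sv)_x$ gives $D((Su)_x,(Sw)_x) \leq \sup_y D(u_y,w_y)$. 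The bounds $\Im z \geq \eta_0$ and $(S\Im v)_x \leq \norm{S}\norm{v} \leq \norm{S}/\eta_0$ show each prefactor is at most $(1+\eta_0^2/\norm{S})^{-1}$, producing the asserted rate $(1+\eta_0^2/\norm{S})^{-2}$.

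The only subtlety I foresee is that Property~2 requires $\Im(Sv)_x > 0$ and Property~3 requires $\phi$ to be nonzero. Both conditions hold unless $S_{xy}=0$ for $\pi$-a.e.\ $y$ at the particular $x$; in that exceptional case $(Su)_x = (Sw)_x = 0$, so $\Phi(u;z)_x = \Phi(w;z)_x$ and the inequality is trivial at $x$. A clean alternative that avoids the case split is to prove the contraction directly from the definition of $D$ by applying Cauchy-Schwarz to $(Su)_x - (Sw)_x = \int S_{xy}(u_y-w_y)\,\pi(dy)$ together with the pointwise identity $\abs{u_y-w_y}^2 = D(u_y,w_y)\,\Im u_y\,\Im w_y$; the same prefactor emerges.
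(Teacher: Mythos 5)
Your proof is correct and follows essentially the same route as the paper: the same self-mapping estimates (upper bound via $\Im z\ge\eta_0$, lower bound on $\abs{\Phi}$, then $\Im\Phi = \Im\zeta/\abs{\zeta}^2$), and the same chain of Properties 1--3 of $D$ (isometries, shift contraction with $\lambda=\Im z$, convexity applied to the functional $v\mapsto (Sv)_x$), including the identical treatment of the degenerate case where that functional vanishes. The only cosmetic caveat is that at this point only \textbf{A1} is assumed, so $S$ need not have an integral kernel $S_{xy}$; the exceptional case should be phrased as the functional $S_x$ vanishing identically (as the paper does), and the kernel-based Cauchy--Schwarz alternative you sketch is not available in this generality, though your main argument does not need it.
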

\begin{Proof}
First we show that $\BB_{\eta_0}$ is mapped to itself. For this let $u \in \BB_{\eta_0}$
be arbitrary. We start with the upper bound 
\[
\abs{\Phi(u\1;z)} \,\leq\, \frac{1}{\Im (z + a+ Su)} \,\leq\, \frac{1}{\Im \2 z} \,\leq\, \frac{1}{\eta_0}
\,,
\]
where in the second inequality we employed the non-negativity property of $S$ and that $\Im \1 u \geq 0$. S
Since $ \abs{z} \leq \eta_0^{-1}$ and $ \eta_0 \leq 1/\norm{a} $, we also find a lower bound, 
\[
|\Phi(u;z)| 
\,\ge\, \frac{1}{\abs{z}+\abs{a}+\abs{Su}} 
\,\ge\, \frac{1}{\eta_0^{-1}+\norm{a}+\norm{S}\eta_0^{-1}}
\,\ge\,  
\frac{\eta_0\!}{\,2+\norm{S}}
\,.
\]
Now we use this as an input to establish the lower bound on the imaginary part,
\[
\Im \, \Phi(u;z) \,=\, \frac{\2\Im (z+a+Su)\1}{\abs{z+a+Su}^2} \,\geq\, |\Phi(u;z)|^2 \2\Im \2 z \,\geq\, \frac{\eta_0^{\13}\!}{(\12+\norm{S})^2\msp{-6}}
\,.
\]

We are left with establishing the inequality in \eqref{contraction property}. For that we use the three properties of $D$ in Lemma~\ref{lmm:Properties of hyperbolic metric}. By Property \ref{cProp1}, the function $D$ is invariant under the isometries $\zeta \mapsto -1/\zeta$ and $\zeta \mapsto \zeta- a_x-\Re\1 z$ of $\Cp$.
Therefore for any $u,w\in \BB_{\eta_0} $ and $ x \in \Sx$:
\bels{c calculation 1}{
D\Bigl(\2 \big(\Phi(u\1;z)\big)_x\2,\2 \big(\Phi(w\1;z)\big)_x \Bigr)
\,&=\, 
D\big(\2z+a_x+(Su)_x\1,\2 z+a_x+(Sw)_x \big)
\\
&=\,
D\big(\2\cI\2 \Im\2 z+(Su)_x\1,\2 \cI\2\Im\2 z+(Sw)_x \big)
\,.
}
In case the non-negative functional $S_x\in \BB^*$, defined through $S_x(u):=(Su)_x$, vanishes identically, the expression in \eqref{c calculation 1} vanishes as well. Thus we may assume that 
$S_x\neq 0$. 
In view of Property \ref{cProp2} we estimate
\bea{
&D\big(\2\cI\2 \Im\2 z+(Su)_x\1,\2 \cI\2\Im\2 z+(Sw)_x \big)
\\
&\leq\,
\Big(\2 1+ \frac{\Im \2 z}{\Im \1(S u)_x} \2\Big)^{\!-1} 
\Big(\2 1+ \frac{\Im \2 z}{\Im \1(S w)_x\!} \2\Big)^{\!-1} 
D\big((Su)_x\1,(Sw)_x \big)
\,.
}
Plugging this back into \eqref{c calculation 1} and recalling $ \Im\1 z \ge \eta_0$ and $ \norm{Sw} \leq \norm{S}\eta_0^{-1} $, for $z \in \Cp_{\eta_0}$ and $ w \in \BB_{\eta_0} $, respectively, we obtain
\[
D\Big( \big(\Phi(u\1;z)\big)_x\2,\2 \big(\Phi(w\1;z)\big)_x \Big)
\,\leq\, 
\Big(\2 1+ \frac{\eta_0^{\12}\!}{\norm{S}} \2\Big)^{\!-2}
\; D \big((Su)_x\1,\2(Sw)_x \big)
\,.
\]
Using Property \ref{cProp3} in Lemma~\ref{lmm:Properties of hyperbolic metric} we find 
\[
D \big((Su)_x\1,\2(Sw)_x \big) \,\leq \sup_{x \in\Sx}D(u_x,\1w_x)
\,.
\]
This finishes the proof of the lemma.
\end{Proof}

Lemma~\ref{lmm:Phi as contraction} shows that the sequence of iterates $ (u^{(n)})_{n=0}^\infty $, with $ u^{(n+1)}:=\Phi(u^{(n)};z) $, is Cauchy for any initial function $u^{(0)} \in \BB_{\eta_0}$ and any $z \in \Cp_{\eta_0}$.
Therefore, $(u^{(n)})_{n \in \N}$ converges to the unique fixed point $ m = m(z) \in \BB_{\eta_0} $ of $ \Phi(\genarg;z)$.  We have therefore shown existence and uniqueness of \eqref{QVE for fixed z} for any given $z \in \Cp_{\eta_0}$ and thus, since $\eta_0$ was arbitrary, even for all $z \in \Cp$.

\section{Stieltjes transform representation}

In order to show that $m_x$ can be represented as a Stieltjes transform 
(cf. \eqref{m as stieltjes transform}), we will first prove that $m_x$ is a holomorphic function on $\Cp$. 
We can use the same argument as above on a space of function which are also $ z $ dependent. 
Namely, we consider the complete metric space, obtained by equipping the set
\bels{holomorphic function space}{
\mathfrak{B}_{\msp{-2}\eta_0} 
\,:=\,
\setb{\2 \mathfrak{u}:\Cp_{\eta_0} \to \BB_{\eta_0} :\, \mathfrak{u} \text{ is holomorphic}}
\,,
}
of $ \BB_{\eta_0}$-valued functions $\mathfrak{u}$ on $\Cp_{\eta_0}$, with the metric
\bels{}{
d_{\eta_0}(\mathfrak{u},\mathfrak{w}) 
\,:= 
\sup_{z \in \Cp_{\eta_0}} 
d(\mathfrak{u}(z), \mathfrak{w}(z))
\,,
\qquad \mathfrak{u},\mathfrak{w} \in \mathfrak{B}_{\msp{-2}\eta_0}\,.
}
Here the holomorphicity of $ \mathfrak{u} $ means that the map $z \mapsto \phi(\mathfrak{u}(z))$ is holomorphic on $\Cp_{\eta_0}$ for any element $ \phi $ in the dual space of $\BB$.
%
Since the constant $(1+\eta_0^2/\norm{S})^{-2}$ in \eqref{contraction property} only depends on $\eta_0$, but not on $z$, we see that the function $ \mathfrak{u} \mapsto \Phi(\mathfrak{u}) $, defined by
\bels{}{
(\Phi(\mathfrak{u}))(z) \,:=\, \Phi(\1\mathfrak{u}(z)\1;z)\,, \qquad \forall \; \mathfrak{u} \in  \mathfrak{B}_{\msp{-2}\eta_0} ,
}
inherits the contraction property from $\Phi(\genarg;z)$. Thus the iterates $ \mathfrak{u}^{(n)}:=\Phi^n(\mathfrak{u}^{(0)}) $ for any initial function $\mathfrak{u}^{(0)} \in \mathfrak{B}_{\msp{-2}\eta_0}$ converge to the unique holomorphic function $ m: \Cp_{\eta_0}\to \BB_{\eta_0}$, which satisfies $ m(z)=(\Phi(m))(z)$ for all $ z \in \Cp_{\eta_0}$. 
Since $ \eta_0>0 $ was arbitrary and by the uniqueness of the solution on $ \Cp_{\eta_0} $, we see that there is a holomorphic function $ m:\Cp \to \BB_+$ which satisfies $ m(z) = (\Phi(m))(z)=\Phi(m(z);z) $, for all $z \in \Cp$. This function $ z \mapsto m(z)  $ is the unique holomorphic solution of the QVE.

Now we show the representation \eqref{m as stieltjes transform} for $ m(z) $.
We use that  a holomorphic function $ \phi: \Cp \to \Cp $ on the complex upper half plane $ \Cp $ is a Stieltjes transform of a probability measure on the real line if and only if $\abs{\1\cI\1\eta \2 \phi(\cI \1 \eta)+1\1} \to 0$ as $ \eta \to \infty$ (cf. Theorem 3.5 in \cite{Garnett-BA2007}).
In order to see that
\bels{condition for S-transform representation}{
\lim_{\eta \to \infty}\sup_x\,\absb{\2\cI \1 \eta \,m_x(\cI\1\eta)+1\1} \,=\,0\,, 
}
we write the QVE in the form
\[
z\1m_x(z)+1 \,=\, 
-\2m_x(z)\,(a+Sm(z))_x
\,.
\]
We bound the right hand side by taking the uniform norms,
\[
\abs{\1z\1m_x(z)+1\1} \,\leq\, \norm{a}\norm{m(z)}+\norm{S}\norm{m(z)}^2
\,.
\]
We continue by using $ \Im\,m(z) \ge 0 $ and the fact that $ S $ preserves positivity:
\bels{m trivial bound}{
\abs{m(z)} \,=\, \frac{1}{\abs{z+a+Sm(z)}}\,\leq\, \frac{1}{\Im(z + a + Sm(z))} \,\leq \, \frac{1}{\2\Im \2z\2}\,, \qquad 
\forall\; z \in \Cp\,.
}
Choosing $ z = \cI\1\eta $, we get 
\[ 
\abs{\2\cI \1 \eta \,m(\cI\1\eta)+1\1} \,\leq\,  \norm{a}\1\eta^{-1}\!+\norm{S}\1\eta^{-2} 
\,,
\] 
and hence \eqref{condition for S-transform representation} holds true. 
This completes the proof of the Stieltjes transform representation \eqref{m as stieltjes transform}.

As the next step we show that the measures $ v_x $, $ x \in \Sx $, in \eqref{m as stieltjes transform} are supported on an interval $ [-\Sigma,\Sigma\1] $, where $ \Sigma = \norm{a} +2\1\norm{S}^{1/2} $. 
We start by extending these measures to functions on the complex  upper-half plane.

\NDefinition{Extended generating density}{ 
Let $ m $ be the solution of the QVE. Then we define 
\bels{def of v_x(z)}{
v_x(z) \,:=\, \Im \, m_x(z)\,, \qquad \forall\; x\in\Sx, \, z \in \Cp
\,.
}
The union of the supports of the generating measures \eqref{m as stieltjes transform} on the real line is denoted by:
\bels{def of supp v}{
\supp v \,:=\, \bigcup_{x\in\Sx} \supp v_x|_\R 
\,.
}
}
This extension is consistent with the generating measure $ v_x $ appearing in \eqref{m as stieltjes transform} since $ v_x(z) $, $ z \in \Cp$, is obtained by regularizing the generating measure with the Cauchy-density at the scale $ \eta > 0 $. Indeed, \eqref{def of v_x(z)} is equivalent to
\bels{v_x(z) as eta-regularization of measure v_x}{
\quad
v_x(\tau +\cI\1\eta) \,=\, 
\int_{-\infty}^\infty \frac{1}{\eta}\2\Pi\Bigl(\frac{\tau-\omega}{\eta}\Bigr)\,v_x(\dif\omega)
\,,\qquad
\Pi(\lambda) := \frac{1}{\pi}\frac{1}{1+\lambda^2}
\,,
} 
for any $ \tau \in \R $ and $ \eta > 0 $.

We will now show that the support of the generating measure $ v $ lies inside an interval with endpoints $ \pm\1\Sigma $, with $ \Sigma = \norm{a} +2\1\norm{S}^{1/2} $. 
To this end, suppose that 
\bels{assumption on m bound}{
\norm{m(z)} <\frac{\abs{z}-\norm{a}}{2\1\norm{S}}
\;,\quad
\text{for some}\quad
\abs{z} \,>\, \Sigma
\,,
}
where we have used $ \norm{S} > 0 $.
Feeding \eqref{assumption on m bound} into the QVE we obtain a slightly better bound:
\[
\norm{m(z)} 
\,\leq\, 
\frac{1}{\2\abs{z}-\norm{a}-\norm{S}\norm{m(z)}}
\,\leq\, 
\frac{2}{\abs{z}-\norm{a}\!} 
\,.
\]
Denoting 
\[ 
\D_\eps \,:=\, \setB{z\in\Cp: \abs{z} \ge \norm{a} + 2\1\norm{S}^{1/2}(\11\msp{-1}+\msp{-1}\eps\1)} 
\,,
\] 
for an arbitrary $ \eps \in (0,1/4) $, we have shown that the range of the restriction of the norm function $ \norm{m} $ to ${\D_\eps} $ is a union of two disjoint sets, i.e., 
\bels{norm-function on Deps}{
z \,\mapsto\, \norm{S}^{1/2}\norm{m(z)} \,:\,\DD_\eps \,\to\, \bigl[\20\1,(\11+\eps\1)^{-1}\bigr] \2\cup\2 \bigl[\21\1+\2\eps\1,\infty\2\bigr) 
\,.
}  
From the Stieltjes transform representation \eqref{m as stieltjes transform} we see that  \eqref{norm-function on Deps} is a continuous function.
The bound \eqref{condition for S-transform representation} implies $ \norm{m(\cI\1\eta)} \leq (1+\eps)^{-1} $ for sufficiently large $ \eta > 0 $. 
For large $ \eta $ we also have $ \cI\1\eta \in \D_\eps $. 
As  $\D_\eps $ is  a connected set, the continuity of \eqref{norm-function on Deps} implies that for any $ \eps > 0 $
\bels{upper bound on m outside half-ball}{
\norm{S}^{1/2}\norm{m(z)} \leq (1+\eps)^{-1} \,,
\quad\text{when}\quad\abs{z} \ge \Sigma +2\1\norm{S}^{1/2}\,\eps
\,.
}

Now we take the imaginary part of the QVE to get
\bels{imaginary part of QVE 1}{
\frac{v(z)}{|m(z)|^2} \,=\, -\,\Im\,\frac{1}{m(z)} \,=\, \Im\, z + Sv(z)
\,,
}
where $ v(z) $ is from \eqref{def of v_x(z)}.
Taking the norms in this formula and rearranging it, we obtain 
\bels{final support bound}{
\Bigl(\21-\bigl(\2\norm{S}^{1/2}\norm{m(z)}\bigr)^2\Bigr)\,\norm{v(z)} \,\leq\, \norm{m(z)}^2\,\Im\, z
\,.
}
Consider $z := \tau + \cI\1\eta $, with $ \abs{\tau}> \Sigma $ and $ \eta > 0 $.
Then the coefficient in front of $ \norm{v(z)} $ is larger than $ (1-(1+\eps)^{-1}) > 0 $, with $ \eps := (\abs{\tau}-\Sigma)/(2\1\norm{S}^{1/2}) > 0 $. In particular, this bound is uniform in $ \eta $. 
We estimate $ \norm{m}$ on the right hand side of \eqref{final support bound} by \eqref{upper bound on m outside half-ball}. 
Thus we see that $ v(\tau+\cI\1\eta) \to 0 $ by taking the limit $ \eta \to 0 $ locally uniformly for $ \abs{\tau} > \Sigma $.

\section{Operator $ F $ and structural $\Lp{2}$-bound}
\label{sec:Operator F and structural L2-bound}

In this section we finish the proof of Theorem \ref{thr:Existence and uniqueness} by considering the remaining the special case $ a = 0 $.
First we note that the real and imaginary parts of the solution $ m $ of the QVE are odd and even functions of $ \Re\,z $ with fixed $ \Im\,z$, respectively when $ a = 0 $, i.e., 
\bels{m symmetry when a=0}{
m(-\2\overline{z\1}\1) \,=\, -\overline{\,m(z)} \,, \qquad \forall\; z \in \Cp
\,.
}
Combining this with \eqref{def of supp v} we obtain the symmetry of the generating measure. 

The proof of the upper bound \eqref{L2 bound on m when a=0} on the $\Lp{2} $-norm of $ m(z) $ relies on the analysis of the following symmetric positivity preserving operator $F(z)$, generated by $ m(z) $. 

\begin{definition}[Operator $ F $]
\label{def:Operator F in the general setting}
The operator $ F(z) : \BB \to \BB $ for $ z \in\Cp$, is  defined by
\bels{F operator}{
F(z)\1w := \abs{m(z)}\2S(\1\abs{m(z)}\2w\1)
\,,
\qquad w \in \BB\,,
}
where $ m(z) $ is the solution of the QVE at $ z $.
\end{definition}
The operator $ F(z) $ will play a central role in the upcoming analysis. 
In particular, using $ F(z) $ we prove the structural $ \Lp{2}$-bound for the solution. 

\begin{lemma}[Structural $ \Lp{2}$-bound]
\label{lmm:Structural L2-bound}
Assuming {\bf A1}, we have 
\bels{L2-bound for a}{
\norm{m(z)}_2 \,\leq\, \frac{2}{\dist(\1z,\sett{a_x:x\in\Sx})}
\,,\qquad \forall\,z\in \Cp
\,.
}
\end{lemma}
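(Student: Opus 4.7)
The plan is to use the polar decomposition of $m(z)$ to rewrite the QVE in a form that exposes the symmetric positivity preserving operator $F = |m|\1S\1|m|$ of Definition~\ref{def:Operator F in the general setting}, and then to exploit the bound $\norm{F}_{\Lp{2}\to\Lp{2}} \leq 1$ together with the $\Lp{2}$-triangle inequality.

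First, I introduce $\phi_x := m_x(z)/|m_x(z)|$, well-defined since $m \in \BB_+$ forces $m_x \neq 0$, and satisfying $|\phi_x| \equiv 1$, so in particular $\norm{\phi}_2 = 1$ (as $\Px$ is a probability measure). Multiplying the QVE \eqref{QVE for fixed z} pointwise by $|m|$, the left-hand side becomes $-|m|/m = -\bar m/|m| = -\bar\phi$, and the right-hand side becomes $|m|(z+a) + |m|\,Sm = |m|(z+a) + F\phi$, using that $|m|\,Sm = |m|\,S(|m|\phi) = F\phi$. Thus the QVE is equivalent to
\[
F\phi + \bar\phi \,=\, -\1|m|(z+a).
\]

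The key structural input is the spectral bound $\norm{F}_{\Lp{2}\to\Lp{2}}\leq 1$. Taking the imaginary part of the QVE gives $v/|m|^2 = \Im\,z + Sv$ with $v := \Im\,m$; setting $f := v/|m| \in (0,1]$ and multiplying by $|m|$ yields $(1-F)f = |m|\,\Im\,z \geq 0$, so $Ff \leq f$ pointwise and $(Ff)_x/f_x = (Sv)_x/(\Im\,z + (Sv)_x) \leq 1$. Since $F$ is a symmetric positivity preserving operator on $\Lp{2}$, admitting a non-negative kernel through $S$ and the bounded multiplication by $|m|$, a Collatz--Wielandt type Cauchy--Schwarz argument then gives $\norm{F}_{\Lp{2}\to\Lp{2}} = \rho(F) \leq \sup_x (Ff)_x/f_x \leq 1$.

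Finally, taking the $\Lp{2}$-norm in the displayed identity and combining with the spectral bound,
\[
\norm{|m|(z+a)}_2 \,\leq\, \norm{F\phi}_2 + \norm{\bar\phi}_2 \,\leq\, \bigl(\1\norm{F}_{\Lp{2}\to\Lp{2}} + 1\1\bigr)\,\norm{\phi}_2 \,\leq\, 2.
\]
On the other hand, $\norm{|m|(z+a)}_2^2 = \int_\Sx |z+a_x|^2 |m_x|^2 \,\Px(\dif x) \geq (\inf_x |z+a_x|)^2 \,\norm{m}_2^2$, which yields $\norm{m(z)}_2 \leq 2/\inf_x |z+a_x|$. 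The quantity $\inf_x |z+a_x|$ is precisely the distance from $z$ to the poles of the $S=0$ solutions $m_x = -(z+a_x)^{-1}$, which is the distance in \eqref{L2-bound for a} (up to the natural sign convention on $a$); in particular it reduces to $|z|$ when $a=0$, consistent with \eqref{L2 bound on m when a=0}.

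The main obstacle is the spectral bound $\norm{F}_{\Lp{2}\to\Lp{2}}\leq 1$: self-adjointness of $F$ gives $\norm{F}_{\Lp{2}\to\Lp{2}} = \rho(F)$, but converting the pointwise superharmonicity $Ff \leq f$ into a genuine operator-norm bound requires a careful Cauchy--Schwarz manipulation on the non-negative kernel of $F$, combined with the symmetry $\avg{u,Fw} = \avg{Fu,w}$ to close the estimate in $\Lp{2}$ without invoking compactness of $F$ (which would require \textbf{A2}).
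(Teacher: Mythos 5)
Your proposal is correct and follows the same skeleton as the paper's proof: isolate the operator $F = \abs{m}\1S(\abs{m}\,\cdot\,)$, prove $\norm{F}_{\Lp{2}\to\Lp{2}}\leq 1$ from the imaginary part of the QVE (which gives the positive super-solution $Ff\leq f$ with $f=\Im\,m/\abs{m}$), and then take $\Lp{2}$-norms of an algebraically rearranged QVE. Your rearrangement $F\phi+\bar\phi=-\abs{m}(z+a)$ with $\phi=m/\abs{m}$ is in fact a slightly cleaner version of the paper's step (the paper writes $-(z+a)\1m=1+m\1Sm$ and bounds $\abs{m\1Sm}\leq\abs{m}\1S\abs{m}=F\1e$), and both give the same factor $1+\norm{F}_{\Lp{2}\to\Lp{2}}\leq 2$ over $\inf_x\abs{z+a_x}$; the sign issue you flag is present in the paper's statement as well.

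The only genuine divergence is the device for converting $Ff\leq f$ into the operator norm bound. The paper uses its Subcontraction lemma: a power-iteration contradiction argument ($\avg{h,u}\geq\avg{h,T^nu}\geq\avg{h,u}\avg{u,Tu}^n$) that needs neither a kernel nor compactness, only symmetry, positivity preservation and a positive super-solution. Your "Collatz--Wielandt/Schur test" route can be made to work, but as written it leans on $F$ "admitting a non-negative kernel through $S$", which is exactly what assumption {\bf A2} provides and is \emph{not} available under {\bf A1} alone. The fix is the kernel-free version you hint at: for each $x$ the map $u\mapsto(Fu)_x$ is a positive linear functional, so the pointwise Cauchy--Schwarz inequality $F(\sqrt{f}\,u/\sqrt{f}\1)\leq (Ff)^{1/2}\1(F(u^2\!/f))^{1/2}$ together with the symmetry $\avg{h,Fw}=\avg{Fh,w}$ closes the quadratic-form estimate $\avg{u,Fu}\leq\norm{u}_2^2$. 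Two small supporting facts you use implicitly and should supply: $F$ is a bounded self-adjoint operator on $\Lp{2}$ (the paper gets $\norm{S}_{\Lp{2}\to\Lp{2}}\leq\norm{S}$ from symmetry plus Riesz--Thorin, and $\abs{m}\leq(\Im\,z)^{-1}$), and the function $f$ is not merely pointwise positive but bounded away from zero (via the lower bound $\Im\,m_x(z)\gtrsim\Im\,z/(\Sigma+\abs{z})^2$ from the Stieltjes representation), which is what makes either the Subcontraction argument or your Schur-type argument rigorous without an extra limiting step.
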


\begin{Proof}
We start by writing the QVE in the form
\bels{}{
-\1(z+a\1)\1 m(z) \,=\, 1 + m(z) \1 Sm(z)\,.
}
Taking the $\Lp{2}$-norm on both sides yields 
\bels{L2 bound on m step 1}{
\norm{m(z)}_2 
\,&\leq\, 
\bigl(\21 + \norm{m(z)\2 Sm(z)}_2\2\bigr)\,\norm{\1(z+a)^{-1}}
\,\leq\,
\frac{\11+\norm{F(z)}_{2\to2}\!}{\dist(\1z,\sett{a_x:x\in\Sx})}
\,.
}
Here the last bound follows by writing $ \abs{\1m(z)\1Sm(z)} \leq \abs{m(z)}S\abs{m(z)} = F(z)\1e $, where $ e \in \BB $ stands for the constant function equal to one, and then estimating:   
\bels{mSm to L2-norm of F}{ 
\norm{m(z)\1Sm(z)}_2 \,=\, \norm{F(z)\1e\1}_2 \,\leq\, \norm{F(z)}_{\Lp{2}\to\Lp{2}}
\,. 
}
The bound \eqref{L2-bound for a} now follows by bounding $ F(z) $ as an operator on $ \Lp{2} $. In fact, we now show that 
\bels{F general uniform bound}{
\norm{F(z)}_{\Lp{2} \to \Lp{2}} \,<\, 1
\,, 
\qquad \forall\; z \in \Cp
\,.
}

The operator $ S $ is bounded on $ \BB $. Therefore $ \norm{S}_{\Lp{\infty}\to\Lp{\infty}} \leq \norm{S} < \infty $. Since $ S $ is symmetric we have $ \norm{S}_{\Lp{1}\to\Lp{1}} = \norm{S}_{\Lp{\infty}\to\Lp{\infty}} $. 
Using the Riesz-Thorin interpolation theorem we hence see that $ \norm{S}_{\Lp{p}\to\Lp{p}} \leq \norm{S}$, for every $ p \in [1,\infty] $.

For each $ z \in \Cp$ the operator $F(z)$ is also bounded on $\Lp{q} $, as $ \abs{m(z)} $ is trivially bounded by $(\Im\,z)^{-1} $ (cf. \eqref{m trivial bound}). 
Furthermore, from the Stieltjes transform representation \eqref{m as stieltjes transform} it follows that $ m_x(z) $ is also  bounded away from zero:
\bels{trivial lower bound on v}{
\Im \,m_x(z) \,\geq\, \frac{1}{\pi}\frac{\Im\, z}{(\1\Sigma +\abs{z})^2}\,,
\qquad \forall\; x \in \Sx
\,.
}

The  estimate \eqref{F general uniform bound} is obtained by considering the imaginary part \eqref{imaginary part of QVE 1} of the QVE. 
Rewriting this equation in terms of $ F = F(z) $ we get
\bels{v equation}{
\frac{v}{\abs{m}} \,=\,  \abs{m}\,\Im \2 z  \,+\, F \frac{v}{\abs{m}}
\,.
}
In order to avoid excess clutter we have suppressed the dependence of $ z $ in our notation.
The trivial lower bound \eqref{trivial lower bound on v} on $v(z)$ and the trivial upper bound $|m(z)|\leq (\Im\2z)^{-1}$ imply that there is a scalar function $\eps:\Cp \to (0,1)$, such that
\bels{}{
\qquad F \frac{v}{\abs{m}}  \,\leq\, (\11-\eps\1)\2\frac{v}{\abs{m}}
\,,\qquad
\eps \,:=\, (\1\Im \2z)\2\inf_x \frac{\abs{\1m_x}^2}{\!v_x} \in (\10,1\1] \,. 
}
The fact that $\eps \in (\10,1\1]$ follows from \eqref{v equation}, the strict pointwise positivity of $v$ and the positivity preserving property of $F$.
If $ \eps = 1 $ we have nothing to show since $ F = 0 $ in this case. 
If $ \eps <1 $, then we apply Lemma~\ref{lmm:Subcontraction} below with the choices,
\[ 
T \,:=\, 
\frac{F}{1-\eps}\,,
\qquad\text{and}\qquad
h \,:=\,\frac{v}{\abs{m}} \,\gtrsim\,\frac{(\Im\,z)^2}{1+\abs{z}^2} 
\;,
\] 
to conclude $\norm{F}_{\Lp{2} \to \Lp{2}} < 1$. 
\end{Proof}

\NLemma{Subcontraction}{
Let $ T $ be a bounded symmetric operator on $\Lp{2} $ that preserves non-negative functions, i.e., if $ u \ge 0 $ almost everywhere, then also $ Tu \ge 0 $ almost everywhere.
If there exists an almost everywhere positive function $ h \in \Lp{2} $, such that almost everywhere $ Th \leq h $, then $\norm{T}_{\Lp{2} \to \Lp{2}} \leq 1$.
}

The proof of Lemma~\ref{lmm:Subcontraction} is postponed to Appendix \ref{sec:Proofs of auxiliary results in Chapter:Existence and uniqueness}

\chapterl{Properties of solution}

In this chapter we prove various technical estimates for the solution $m$ of the QVE and the associated operator $ F $ (cf. \eqref{def:Operator F in the general setting}). In the second half of the chapter we start analyzing the stability of the QVE under small perturbations.
For the stability analysis, we introduce the concept of the (spectral) gap of an operator.

\NDefinition{Spectral gap}{
Let $ T:\Lp{2} \to \Lp{2} $ be a compact self-adjoint operator. The spectral gap $ \mrm{Gap}(T) $ is the difference between the two largest eigenvalues of $ \abs{T} $. If $ \norm{T}_{\Lp{2}\to\Lp{2}}$ is a degenerate eigenvalue of $ \abs{T} $ then $ \mrm{Gap}(T) = 0 $. 
}

We will frequently use comparison relations $ \sim $, $ \lesssim $ in the sequel that depend on a certain set of model parameters (c.f. Convention~\ref{conv:Comparison relations, model parameters and constants}).
This set may be different in various lemmas and propositions. In order to avoid constantly listing them we extend Convention~\ref{conv:Comparison relations, model parameters and constants} as follows: 

\begin{convention}[Standard model parameters]
\label{conv:Standard model parameters}
The norm $ \norm{a} $ is always considered a model parameter. 
If the property {\bf A2} of $ S $ is assumed in some statement, then the associated constant $ \norm{S}_{\Lp{2}\to\BB} $ is automatically a model parameter.
Similarly, if {\bf A3} is assumed, then $ \rho $ and $ L $ are considered  model parameters.
Any additional model parameters will be declared explicitly.
Naturally, inside a proof of a statement the comparison relations depend on the model parameters of that  statement.
\end{convention}

\section{Relations between components of $ m $ and $ F $}

The following proposition collects the most important estimates in the special case when the solution is uniformly bounded. 

\begin{proposition}[Estimates when solution is bounded]
\label{prp:Estimates when solution is bounded}
Suppose $ S $ satisfies \emph{\bf A1-3}. 
Additionally, assume that for some $ I \subseteq \R $, and $ \Phi < \infty $ the uniform bound 
\[ 
\nnorm{m}_I \leq \Phi 
\,,
\] 
applies.
Then, considering $ \Phi $ an additional model parameter, the following estimates apply for every $ z \in \Cp $, with $ \Re\,z \in I $:
\begin{itemize}
\titem{i} The solution $m$ of the QVE satisfies the bounds
\bels{m scaling for regular S}{
|m_x(z)| \,\sim\, \frac{1}{1+|z|} \,, \qquad \forall \; x \in \Sx\,.
}
\titem{ii} The imaginary part is comparable to its average, i.e.
\bels{v_x sim avg-v}{
v_x(z) \,\sim\, \la v(z) \ra \,, \qquad \forall \; x \in \Sx\,.
}
\titem{iii} The largest eigenvalue $ \lambda(z) $ of $ F(z) $ is single, and satisfies $ \lambda(z) \leq 1 $, and
\bels{scaling of L2-norm of F in abs-z}{
\lambda(z) \,=\, \norm{F(z)}_{\Lp{2} \to \Lp{2}} 
\,\sim\, 
\frac{1}{1+|z|^2}
\,.
} 
\titem{iv} The operator $F(z)$ has a uniform spectral gap, i.e., 
\bels{scaling of the gap for bounded m}{
\mrm{Gap}(F(z)) \,\sim\, \norm{F(z)}_{\Lp{2}\to\Lp{2}}
\,.
}
\titem{v} The unique eigenvector $ f(z) \in \BB $, satisfying 
\bels{defining properties of vector f(z)}{
F(z)f(z) = \lambda(z)f(z)\,,\qquad
f_x(z) \ge 0\,,
\quad\text{and}\quad
\norm{f(z)}_2=1
\,,
}
is comparable to $1$, i.e.
\bels{regular S: f comparable to 1 comp.wise}{
f_x(z) \,\sim\, 1 \,, \qquad \forall \; x \in \Sx\,.
}
\end{itemize}
\end{proposition}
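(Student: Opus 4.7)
My plan is to prove (i) first by direct estimates from the QVE, then use (i) to show that the operator $F(z) = |m(z)|\,S\,|m(z)|$ inherits quantitative versions of {\bf A1-3} from $S$ with $z$-dependent constants of order $(1+|z|)^{-k}$. The spectral statements (iii)--(v) will then follow from the Krein--Rutman theorem combined with primitivity, and (ii) emerges from the resulting spectral decomposition.

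Part (i) is elementary: for the upper bound I combine $|m_x(z)| \leq \Phi \lesssim 1$ for bounded $|z|$ (directly from the hypothesis) with $|m_x(z)| \lesssim 1/|z|$ for $|z| \gtrsim \Sigma$, using \eqref{upper bound on m outside half-ball} proved in Chapter~\ref{chp:Existence and uniqueness}. The lower bound is immediate from the QVE:
\[ |m_x(z)|^{-1} \,=\, |z + a_x + (Sm(z))_x| \,\leq\, |z| + \norm{a} + \norm{S}\Phi \,\lesssim\, 1+|z|\,. \]

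Next, using (i), the operator $F = |m|S|m|$ satisfies $\norm{F}_{\Lp{2}\to\BB} \lesssim \norm{m}_\BB^2 \norm{S}_{\Lp{2}\to\BB} \lesssim (1+|z|)^{-2}$, so $F$ is compact and symmetric on $\Lp{2}$. I would then verify the inherited primitivity
\[ (F^L u)_x \,\gtrsim\, (1+|z|)^{-2L}\,\avg{u}\,,\qquad u \in \BB,\; u \geq 0\,, \]
by expanding $F^L = |m|(S|m|^2)^{L-1}S|m|$ and applying the pointwise lower bound on $|m|$ together with {\bf A3}. The Krein--Rutman theorem then supplies a unique positive normalized eigenvector $f \in \Lp{2}$ with eigenvalue $\lambda(z) = \norm{F(z)}_{\Lp{2}\to\Lp{2}}$, which is simple; this gives the first parts of (iii) and (v). The upper bound $\lambda \lesssim (1+|z|)^{-2}$ is then immediate, and for the matching lower bound I would use the Rayleigh quotient with the test function $u\equiv 1$: $\lambda \geq \avg{|m|, S|m|} \gtrsim (1+|z|)^{-2}\avg{1, S 1}$, where $\avg{1, S 1} = \norm{S^{1/2}1}_2^2 \gtrsim 1$ is deduced from $\avg{1, S^L 1} \geq \rho$ combined with the bound $\norm{S}_{\Lp{2}\to\Lp{2}} \lesssim \norm{S}_{\Lp{2}\to\BB} \lesssim 1$. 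The bound $\lambda \leq 1$ arises from pairing the imaginary part of the QVE, $v/|m| = |m|\Im z + F(v/|m|)$, with $f$: this yields $(1-\lambda)\avg{f, v/|m|} = \Im z\,\avg{f, |m|} > 0$.

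For (v), the upper bound $f_x \lesssim 1$ follows from $\lambda f_x = (Ff)_x \leq \norm{F}_{\Lp{2}\to\BB}\norm{f}_2$ together with the scaling of $\lambda$; the lower bound $f_x \gtrsim \avg{f} \geq \norm{f}_2^2/\norm{f}_\BB \gtrsim 1$ comes from iterating $Ff = \lambda f$ $L$ times and using the primitivity of $F^L$. The spectral gap (iv) I would extract by converting the componentwise lower bound on $F^L$ into a quantitative separation between $\lambda^L$ and the rest of $\mathrm{Spec}(F^L)$, yielding $\mathrm{Gap}(F) \sim \lambda$. Finally, for (ii) I decompose $u := v/|m| = \alpha f + w$ with $w \perp f$: projecting the equation $u = |m|\Im z + Fu$ onto $\{f\}$ and its orthogonal complement gives $\alpha = \avg{f, |m|}\Im z/(1-\lambda)$ and $w$ satisfies a bounded inversion equation controlled by the spectral gap, so that $u_x \sim \alpha f_x \sim \avg{u}$ and hence $v_x = |m_x|u_x \sim \avg{v}$ by (i). The hard part will be extracting the quantitative spectral gap (iv) with sharp scaling from the componentwise primitivity of $F^L$, and then propagating it through the decomposition to obtain (ii) with comparison constants that are genuinely uniform in $z$, especially in the regime of large $|z|$ where both $\lambda$ and the gap are small and must cancel cleanly in the ratio $v_x/\avg{v}$.
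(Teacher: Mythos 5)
Your parts (i), (iii) and (v), and your plan for (iv), essentially coincide with the paper's route: the paper packages them into Lemma~\ref{lmm:Operator F} (the identity \eqref{F and alpha}, i.e.\ your pairing of the imaginary part of the QVE with $f$, which gives $\lambda<1$) and Lemma~\ref{lmm:Maximal eigenvalue of scaled S} applied to $\wht{F}(\abs{m})$ with $r_\pm\sim(1+\abs{z})^{-1}$ (Krein--Rutman, the inherited kernel bound $(F^L)_{xy}\ge r_-^{2L}\rho$, the two-sided bound on $\lambda$, and the eigenvector bounds). Two caveats there: the step you describe as ``converting the componentwise lower bound on $F^L$ into a quantitative separation'' is a genuine lemma, not bookkeeping — the paper proves it separately (Lemma~\ref{lmm:Spectral gap for positive bounded operators}: for symmetric positivity preserving $T$ one has $\mathrm{Gap}(T)\ge(\norm{h}_2/\norm{h})^2\inf_{x,y}T_{xy}$), applies it to $T=(F/\lambda)^L$ (which needs the upper bound $\norm{f}\lesssim1$ from (v) as input), and then uses $\mathrm{Gap}(F)/\lambda\ge\mathrm{Gap}(T)/L$; you must supply this. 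Also your claim $\avg{1,S1}\gtrsim1$ follows from $\rho\le\avg{1,S^L1}\le\norm{S}^{L-1}\avg{1,S1}$ with the $\BB\to\BB$ norm, not quite as you wrote it, but this is minor.

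The genuine gap is (ii). In your decomposition $u:=v/\abs{m}=\alpha f+w$ one has $\alpha=\avg{f,\abs{m}}\,\Im z/(1-\lambda)$ and $w=\Im z\,(1-F)^{-1}Q^{(0)}\abs{m}$, and the best available bound is $\norm{w}\lesssim \Im z\,(1+\abs{z})^{-1}$, which is of the \emph{same order} as $\alpha\inf_x f_x$ whenever $1-\lambda(z)\sim1$ — precisely the regime where $\Re z\in I$ lies off (or far from) $\supp v$, or $\Im z$ is not small, or $\abs{z}$ is large. There the orthogonal part is not subordinate to $\alpha f$, so no pointwise two-sided comparison $u_x\sim\avg{u}$, hence no $v_x\sim\avg{v}$, follows from the spectral gap; the decomposition is effective only where $\Im z/\alpha\ll1$, which is exactly how the paper uses it later, in the proof of Lemma~\ref{lmm:Bounds on B-inverse}. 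Statement (ii), however, must hold for \emph{all} $z$ with $\Re z\in I$. The paper's proof (Lemma~\ref{lmm:Constraints on solution}, estimate \eqref{v compared to avg-v}) uses positivity instead of the gap: dropping $\Im z\ge0$ and iterating $v\ge\abs{m}^2Sv$ $L$ times with {\bf A3} gives the lower bound $v\ge\phi^{2L}\rho\,\avg{v}$; the upper bound comes from $v=\abs{m}^2(\Im z+Sv)$ together with $\Im z\le\avg{v}/\phi^2$ (obtained by averaging $v\ge\phi^2\,\Im z$) and the uniform kernel bound $(S^2)_{xy}\le\norm{S}_{\Lp{2}\to\BB}^2$; the regime $\abs{z}\ge2\Sigma$ is handled directly from the Stieltjes representation \eqref{m as stieltjes transform}. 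Within your own set-up the cheapest repair is the same idea: note $u\ge Fu$ pointwise, iterate to $u\ge F^Lu\gtrsim r_-^{2L}\avg{u}$ for the lower bound, and get the upper bound from one or two applications of $F$ using the kernel bound on $F^2$ — i.e.\ replace the gap argument for (ii) by a primitivity/positivity iteration.
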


For a complete proof of Proposition~\ref{prp:Estimates when solution is bounded} (cf. p. \pageref{proof of prp:Estimates when solution is bounded}) we first prove various auxiliary results, under the standing assumption in this chapter:
\begin{itemize}
\item \emph{$ S $ satisfies} {\bf A1-3}.
\end{itemize}

We start by pointing out a few simple properties of $ S $ that we need in the following.
The smoothing condition {\bf A2} implies that for every $x \in \Sx$ the linear functional $S_x: \Lp{2} \to \R,\2 w \mapsto (Sw)_x$ is bounded. Hence, the row-function $ y \mapsto S_{x y} $ is in $ \Lp{2}$. The family of functions satisfies  $\sup_x \norm{S_x}_2=\norm{S}_{\Lp{2} \to \BB}$.  The bound \eqref{2 to infty bound on S} implies that $S$ is a Hilbert-Schmidt operator.

The uniform primitivity condition {\bf A3} guarantees that norms of the row functions $ S_x $ as well as various operator norms of $ S $ are comparable to one. Indeed, letting $ x \in \Sx $ be fixed and choosing the constant function $ u =1 $ in \eqref{uniform primitivity}, we obtain 
\[
\rho 
\;\leq
\int (S^L)_{xy}\Px(\dif y) 
\;\leq\,
\biggl(\int S_{xu}\2\Px(\dif u)\biggr)
\biggl(\sup_t \int (S^{L-1})_{ty}\2\Px(\dif y)\biggr)
\,\leq\,
\norm{S^{L-1}}\1\avg{\1S_x}
\,.
\]
Since $ \norm{S^{L-1}} \leq \norm{S}^{L-1}$ this  yields the first inequality of
\bels{averaged row is comparable to BB-norm of S}{ 
\rho\2\norm{S}^{-(L-1)}
\leq\,
\avg{\1S_x} 
\,\leq\,\norm{S}
\,,
\qquad x \in \Sx
\,.
}
The last bound is trivial since $ \norm{S} = \sup_x \avg{\1S_x} $.
By Riesz-Thorin interpolation theorem (cf. proof of Lemma \ref{lmm:Structural L2-bound}) we have $ \norm{S}_{\Lp{p}\to\Lp{p}} \leq \norm{S} $. On the other hand, letting $ S $ act on the constant $ 1 $ function, we have  
\[
\norm{S}_{\Lp{p}\to\Lp{p}} \,\ge\,\inf_x\, \avg{\1S_x}
\,.
\]
Combining this with \eqref{averaged row is comparable to BB-norm of S}, the trivial bound  $ \norm{S} \leq \norm{S}_{\Lp{2}\to\BB} $, and the fact that $ \norm{S}_{\Lp{2}\to\BB} $ is a model parameter (cf. Convention~\ref{conv:Standard model parameters}), we thus conclude 
\bels{operator norms of S are comparable}{
\avg{\1S_x} \2\sim\2 1\,,\qquad
\norm{S} \2\sim\2 1
\,,\qquad\text{and}\qquad
\norm{S}_{\Lp{p}\to\Lp{p}} \sim\, 1
\,,\quad
p \in [\21\1,\infty\1]
\,.
}

The  following lemma shows that a component $ \abs{m_x(z)} $ may diverge only when $ \norm{m(z)}_2 = \infty $ or $ \avg{v(z)} = 0 $.
Furthermore, the lemma implies that if a component $ \abs{m_x(z)} $, for some  $ x \in \Sx $, approaches zero while $ z $ stays bounded, then another component $ \abs{m_y(z)} $ will always diverge at the same time.

\begin{lemma}[Constraints on solution] 
\label{lmm:Constraints on solution}
If $ S $ satisfies \emph{\bf A1-3}, then:
\begin{itemize}
\titem{i} 
The solution $ m $ of the QVE satisfies for every $ x \in \Sx $ and $ z \in \Cp $: 
\bels{unif bound of m}{
&\min\setbb{\!\frac{1}{1+\abs{z\1}}\,,\,
\inf_y \1\abs{\1z-a_y}
+ \frac{1}{\1\norm{m(z)}_2\!}} 
\\
&\msp{80}\lesssim\;
\absb{\1m_x(z)} 
\;\lesssim\;
\min\setbb{\!\frac{1}{\inf_y \abs{\1m_y(z)}^{2L-2}\2\avg{\1v(z)}}\,,\frac{1}{\dist(z, \supp v \1)}\!}
\,.
}
\titem{ii} 
The imaginary part, $v_x(z) $ is comparable to its average, such that for every $ x \in \Sx $ and $ z \in \Cp $ with $ |z|\leq 2\1\Sigma $:
\bels{v compared to avg-v}{
\inf_y \absb{m_y(z)}^{2L}
\;\lesssim\; 
\frac{v_x(z)}{\avg{v(z)}} 
\;\lesssim\; 
\biggl(1+\frac{1}{\inf_y \abs{m_y(z)}}\biggr)^{\!2}\norm{m(z)}^4
\,.
}	
For $\abs{z}\ge 2\1\Sigma $ the function $v$ satisfies $v_x(z)\sim \avg{v(z)}$. 
\end{itemize}
\end{lemma}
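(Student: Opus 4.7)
My plan is to extract all four bounds from two identities: the QVE itself, $-1/m = z+a+Sm$, and its imaginary part, $v/|m|^2 = \Im z + Sv$, combined with the Stieltjes representation \eqref{m as stieltjes transform}, the structural $\Lp{2}$-bound of Lemma~\ref{lmm:Structural L2-bound}, and the primitivity {\bf A3}. Throughout, I write $m_* := \inf_y|m_y(z)|$ and make repeated use of the Cauchy--Schwarz estimate $|(Sw)_x| \le \|S_x\|_2\|w\|_2 \lesssim \|w\|_2$ available from {\bf A2}.

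For part (i), the upper bound $|m_x| \le \dist(z,\supp v)^{-1}$ is immediate from \eqref{m as stieltjes transform}, since $\pi^{-1}v_x$ is a probability measure. For the second upper bound I will use $1/|m_x| \ge \Im z + (Sv)_x \ge (Sv)_x$ (from the imaginary-part identity) together with the pointwise inequality $v \ge |m|^2 Sv \ge m_*^2 Sv$, which after iterating with $S$ gives $(Sv)_x \ge m_*^{2(L-1)}(S^L v)_x \ge \rho\, m_*^{2L-2}\langle v\rangle$ by {\bf A3}. For the lower bound, the QVE plus Cauchy--Schwarz yields $1/|m_x| \le |z+a_x| + C\|m\|_2 \lesssim (1+|z|) + \|m\|_2$, so $|m_x| \gtrsim \min\{(1+|z|)^{-1}, \|m\|_2^{-1}\}$. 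The structural bound $\inf_y|z-a_y| \lesssim \|m\|_2^{-1}$ from Lemma~\ref{lmm:Structural L2-bound} then shows that the advertised expression $\inf_y|z-a_y| + \|m\|_2^{-1}$ is of order $\|m\|_2^{-1}$, closing (i).

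For part (ii), the lower bound follows by iterating $v \ge m_*^2 Sv$ all the way to $v_x \ge m_*^{2L}(S^L v)_x \ge \rho\, m_*^{2L}\langle v\rangle$. For the upper bound when $|z| \le 2\Sigma$ I bootstrap the sup-norm of $v$: starting from $v_x = |m_x|^2(\Im z + (Sv)_x)$, Cauchy--Schwarz gives $(Sv)_x \lesssim (\|v\|_\infty\langle v\rangle)^{1/2}$, and averaging the trivial inequality $v_y \ge |m_y|^2 \Im z$ yields $\Im z \le \langle v\rangle/m_*^2$. Taking the sup in $x$ produces a quadratic inequality for $\|v\|_\infty^{1/2}$ that solves to $\|v\|_\infty \lesssim \|m\|^2(\|m\| + 1/m_*)^2\langle v\rangle$. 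Crucially, in the range $|z| \le 2\Sigma$ the QVE forces $\|m\| \gtrsim 1$ (via rearranging $1/|m_x| \le C(1 + \|m\|)$), which lets me replace $\|m\|^2(\|m\| + 1/m_*)^2$ by the claimed $(1 + 1/m_*)^2\|m\|^4$. For $|z|\ge 2\Sigma$, the support $[-\Sigma,\Sigma]$ is uniformly far from $z$, so $|\tau-z|\sim |z|$ on $\supp v$ and the Poisson-type representation $v_x(z) = \frac{1}{\pi}\int \Im z\,/|\tau-z|^2\, v_x(\dif\tau)$ immediately gives $v_x(z) \sim \Im z/|z|^2 \sim \langle v(z)\rangle$ uniformly in $x$.

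The main obstacle will be the upper bound in (ii): coordinating the Cauchy--Schwarz sup-to-$\Lp{2}$ step, the averaged bound on $\Im z$, the resolution of the quadratic inequality, and finally using the a priori lower bound $\|m\|\gtrsim 1$ to absorb the extra factor are the only place where several ingredients must be combined in a non-routine way. The remaining bounds reduce cleanly to the primitivity-iteration $v \ge m_*^{2k}S^k v$ together with Cauchy--Schwarz on the row functions $S_x$.
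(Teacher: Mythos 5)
Your proposal is correct and follows essentially the same route as the paper: the lower bound via the QVE, the row-wise Cauchy--Schwarz from {\bf A2} and the structural $\Lp{2}$-bound; the upper bounds via the Stieltjes representation and the iterated inequality $v\ge m_*^2\,Sv$ with {\bf A3}; and the $\abs{z}\ge 2\1\Sigma$ case via the Poisson-type representation. The only (harmless) deviation is in the upper bound of (ii), where the paper applies $S$ once more and uses the uniform kernel bound on $S^2$ to get $Sv\lesssim\avg{v}$, whereas you interpolate $\norm{v}_2\le(\norm{v}\avg{v})^{1/2}$ and solve a quadratic inequality in $\norm{v}^{1/2}$ -- both exploit {\bf A2} in the same way and yield the identical estimate.
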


These bounds simplify considerably when $ m = m(z) $ is uniformly bounded for every $ z $ (cf. Proposition~\ref{prp:Estimates when solution is bounded}). 

\begin{Proof} 
We start by proving the lower bound on $ \abs{m} $.  
This is done by establishing an upper bound on $ 1/\abs{m} $. 
Using the QVE we find
\bels{lower bound on m_x}{
\frac{1}{\abs{m}\!} 
\,=\,
\abs{\1z+a+Sm} 
\,\leq\, 
\abs{z}+\norm{a}+ \norm{S}_{\Lp{2}\to\BB}\norm{m}_2 
\,\lesssim\,
1+\abs{z} + \norm{m}_2
\,.
}
Taking the reciprocal on both sides yields $ \abs{m} \gtrsim \min\setb{\2(1+\abs{z})^{-1}\!, \norm{m_2}^{-1}} $. 
Combining the $ \Lp{2}$-norm with \eqref{L2-bound for a} yields the lower bound in \eqref{unif bound of m}.

Now we will prove the upper bound on $\abs{m}$.
To this end, recall that
\[
m_x(z) = \frac{1}{\pi}\int_\R \frac{v_x(\dif\tau)\!}{\tau-z}
\,,
\]
where $ v_x/\pi $ is a probability measure. 
Bounding the denominator from below by $ \dist(z,\supp v) $, with $\supp v = \cup_x \supp v_x$,  we obtain  one of the upper bounds of \eqref{unif bound of m}:
\[
\abs{m_x(z)} \,\leq\, \frac{1}{\dist(z,\supp v\1) }\,.
\]

For the derivation of the second upper bound we rely on the positivity of the imaginary part of $ m $:
\bels{m upper bound: start}{
\abs{m} \,=\, \frac{1}{\abs{\2\Im\,(z+a+ Sm)\1}} \,\leq\, \frac{1}{Sv}
\,.
}
In order to continue we will now bound $ Sv $ from below. 
This is achieved by estimating $ v $ from below by $ \avg{v} $. Indeed, writing the imaginary part of the QVE, as  
\[
\frac{v}{\abs{m}^2\msp{-6}} \;=\, -\2\Im\,\frac{1}{m} \,=\, \Im\,z + Sv\,,
\]
and ignoring $ \Im\,z > 0 $, yields
\bels{v lower bounded by Sv}{
v\,\ge\, \abs{m}^2Sv \,\ge\, \phi^2Sv\,,
}
where we introduced the abbreviation
\[
\phi\,:=\,\inf_x\2 \abs{\1m_x}\,.
\]
Now we make use of the uniform primitivity {\bf A3} of $ S $ and of \eqref{v lower bounded by Sv}. In this way we get the lower bound on $Sv$,
\[
Sv\,\ge\,\phi^2S^2v\,\,\ge\, \dots \,\ge\, \phi^{2L-2}\,S^L v\,\ge\, \phi^{2L-2}\rho\,\avg{v},
\]
Plugging this back into \eqref{m upper bound: start} finishes the proof of the upper bound on $\abs{m}$.

We continue by showing the claim concerning $ v/\avg{v} $. We start with the lower bound. 
We use \eqref{v lower bounded by Sv} in an iterative fashion and employ assumption {\bf A3},
\bels{v lower bounded by avg-v after iteration}{
v\,\ge\, \phi^2Sv \,\ge\, \dots\,\ge\, \phi^{2L}\,S^L v \,\ge\, \phi^{2L}\rho\,\avg{v} 
\,.
}
This proves the lower bound  $ v/\avg{v} \gtrsim \phi^{2L} $. 

In order to derive upper bounds for the ratio $ v/\avg{v} $, we first write
\bels{v from QVE}{
v \,=\, \abs{m}^2\1(\1\Im\,z + Sv\1) \,\leq\, \norm{m}^2\,(\1\Im\,z+Sv\1)
\,.
}
We will now bound $ \Im\,z $ and $ Sv $ in terms of $ \avg{v} $. We start with $\Im\,z $. By dropping the term $ Sv$ from \eqref{v from QVE}, and estimating $ \abs{m} \ge \phi$, we get $ v \ge \phi^2\,\Im\,z $.
Averaging this yields 
\bels{Im z bounded by avg-v}{
\Im\,z \,\leq\, \frac{\avg{v}}{\phi^{\12}\msp{-5}}
\;.
}
In order to bound $ Sv $, we apply $ S$ on both sides of \eqref{v from QVE}, and use the bound on $ \Im\,z $, to get
\bels{bound for Sv}{
Sv \,\leq\,
\biggl(
\frac{\avg{v}}{\2\phi^{\12}\!}+ 
S^2v
\biggr) \,\norm{m}^2
\,.
}
The expression involving $ S^2 $ is useful, as we may now estimate the kernel $ (S^2)_{xy} $ uniformly:
\bels{uniform bound on S^2 kernel}{ 
(S^2)_{xy} 
\,\leq\, \avg{S_x,S_y} 
\,\leq\, \norm{S_x}_2\norm{S_y}_2 
\,\leq\, \sup_x \norm{S_x}_2^2 
\,=\, \norm{S}_{\Lp{2}\to \BB}^2 \,\sim\,1
\,.
}
In particular, $ S^2v \leq \norm{S}_{\Lp{2}\to\BB}^2 \avg{v} \sim \avg{v} $, and thus
\[ 
Sv \,\lesssim\,\Bigl(1+\frac{1}{\2\phi^2\!}\,\Bigr)\norm{m}^2\,\avg{v}
\,.
\]
With this and \eqref{Im z bounded by avg-v} plugged back into \eqref{v from QVE} we get the upper bound of \eqref{v compared to avg-v}: 
\[
v 
\,\lesssim\, 
\Bigl(1+\frac{1}{\phi}\,\Bigr)^{\!2}\norm{m}^4\,\avg{v}\,.
\]
Here we have also used the lower bound $ \norm{m(z)} \gtrsim 1 $ to replace $ \norm{m}^2 $ by $ \norm{m}^4 $ in the regime $ \abs{z} \leq 2\1\Sigma $, where $ \Sigma = \norm{a} + 2\1\norm{S}^{1/2} \sim 1  $ by \eqref{operator norms of S are comparable}.
The lower bound on $ \norm{m} $ follows directly from the QVE and $ \norm{S} \sim 1 $:
\bea{
1 \,&=\, \abs{\1(z + a + Sm)\,m\1} 
\,\lesssim\,
\bigl(\2\abs{z}+\norm{a} + \norm{S}\1\norm{m}\1\bigr)\,\norm{m}
\,.
}
On the other hand, if $|z|\ge 2\2\Sigma $, then $ v(z) \sim \avg{v(z)} $ holds because  $v_x(z)$ is the harmonic extension \eqref{m as stieltjes transform} of the measure $v_x(\dif \tau)$ which is supported inside the interval with endpoints $ \pm\1\Sigma $.
\end{Proof}

Since the solution $ m(z) $ for $ z \in \Cp$ of the QVE is bounded by the trivial bound (cf. \eqref{m trivial bound}), the operator $ F(z) $ introduced in Definition~\ref{def:Operator F in the general setting} is  a Hilbert-Schmidt operator. Consistent with the notation for $ S $ we write $ F_{xy}(z)$ for the symmetric non-negative measurable kernel representing this operator. The largest eigenvalue and the corresponding eigenvector of $ F(z) $ will play a key role when we analyze the sensitivity of $ m(z) $ to changes in $ z $, or more generally, to any perturbations of the QVE. 
The following lemma provides an exact formula for this eigenvalue.

\begin{lemma}[Operator $ F $]
\label{lmm:Operator F}
Assume that $ S $ satisfies {\bf A1-3}. Then for every $ z \in \Cp $ the operator $ F(z) $, defined in \eqref{F operator}, is a Hilbert-Schmidt integral operator on $\Lp{2} $, with the integral kernel 
\bels{integral kernel F_xy(z)}{
F_{xy}(z) \,=\, \abs{m_x(z)}\2S_{xy}\2\abs{m_y(z)}
\,.
}
The norm $ \lambda(z) := \norm{F(z)}_{\Lp{2}\to\Lp{2}} $ is a single eigenvalue of $ F(z) $, and it satisfies:
\bels{F and alpha}{
\norm{F(z)}_{\Lp{2}\to\Lp{2}} 
\;=\; 
1\,-\, \frac{\Im\, z}{\alpha(z)}\,\big\la f(z) \2|m(z)| \big\ra
\;<\;1
\,,
\qquad z \in \Cp\,.
}
Here the positive eigenvector $ f: \Cp \to \BB $ is defined by \eqref{defining properties of vector f(z)}, while $ \alpha: \Cp \to (0,\infty)$ is the size of the projection of $ v/\abs{m} $ onto the direction $ f $:
\bels{def of alpha(z)}{
\alpha(z)\,:=\,\avgB{ f(z)\1, \frac{v(z)}{\abs{m(z)}}}
\,.
}
\end{lemma}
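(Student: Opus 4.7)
\medskip
\noindent\textbf{Proof plan.}

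The kernel formula $F_{xy}(z) = \abs{m_x(z)}\,S_{xy}\,\abs{m_y(z)}$ is immediate from the definition of $F(z)$ combined with the integral representation of $S$ in {\bf A2}. The trivial bound $\abs{m(z)} \leq (\Im z)^{-1}$ (see \eqref{m trivial bound}) together with the fact that {\bf A2} implies $\iint S_{xy}^{\,2}\,\Px(\dif x)\,\Px(\dif y) \leq \norm{S}_{\Lp{2}\to\BB}^{2} < \infty$ shows that $F(z)$ is Hilbert--Schmidt on $\Lp{2}$, hence compact; by construction it is also self-adjoint and positivity preserving.

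To identify $\lambda(z) := \norm{F(z)}_{\Lp{2}\to\Lp{2}}$ as a simple eigenvalue with a strictly positive eigenvector I would apply the Krein--Rutman theorem to $F(z)^{L}$. The key ingredient is the pointwise lower bound $\inf_x \abs{m_x(z)} \geq \inf_x v_x(z) \gtrsim \Im z/(\Sigma+\abs{z})^{2}$ from \eqref{trivial lower bound on v}, which is strictly positive for any \emph{fixed} $z\in\Cp$. Combining this with the uniform primitivity {\bf A3} of $S$ and iterating the elementary inequality $Fu \geq (\inf_x \abs{m_x(z)})^{2}\,Su$ (valid for $u\geq 0$) yields
\[
F(z)^{L}u \;\gtrsim\; \bigl(\inf_x \abs{m_x(z)}\bigr)^{2L}\,S^{L}u \;\geq\; \bigl(\inf_x \abs{m_x(z)}\bigr)^{2L}\,\rho\,\avg{u}, \qquad u\geq 0,
\]
so $F(z)^{L}$ is strictly positivity improving. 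The Krein--Rutman theorem then gives that $\lambda(z)^{L}$ is a simple eigenvalue of $F(z)^{L}$ with strictly positive eigenvector $f(z)\in\BB$, normalizable by $\norm{f(z)}_{2} = 1$; since simplicity and positivity lift from $F(z)^{L}$ to $F(z)$, this proves the claims in \eqref{defining properties of vector f(z)}.

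The formula for $\lambda(z)$ comes from taking the imaginary part of the QVE and dividing by $\abs{m(z)}$, producing the identity already recorded as \eqref{v equation},
\[
\frac{v}{\abs{m}} \;=\; \abs{m}\,\Im z \,+\, F\,\frac{v}{\abs{m}}.
\]
Pairing this in $\Lp{2}$ with $f(z)$ and using the self-adjointness of $F$ together with $F f = \lambda f$ yields
\[
\alpha \;=\; \avg{f,\,v/\abs{m}} \;=\; \Im z\,\avg{f\,\abs{m}} \,+\, \lambda\,\alpha,
\]
which rearranges to $(1-\lambda(z))\,\alpha(z) = \Im z\,\avg{f(z)\,\abs{m(z)}}$. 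Division by $\alpha(z) > 0$ (positive because $f$ and $v/\abs{m}$ are both strictly positive) gives the announced formula, and the strict inequality $\lambda(z) < 1$ is then immediate from $\Im z > 0$ and $\avg{f\,\abs{m}} > 0$, reproducing without the $a = 0$ restriction the structural bound \eqref{F general uniform bound}. The main subtlety is the primitivity step: $\abs{m(z)}$ has no uniform (in $z$) positive lower bound, so the argument must fix $z$ first and exploit the harmonic-extension lower bound \eqref{trivial lower bound on v} to transfer uniform primitivity from $S$ to $F(z)$ pointwise in $x$.
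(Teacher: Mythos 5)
Your proposal is correct and follows essentially the same route as the paper: the paper obtains the simple top eigenvalue and positive eigenvector by invoking Lemma~\ref{lmm:Maximal eigenvalue of scaled S} with $r=\abs{m(z)}$ (whose proof is exactly your Krein--Rutman-plus-primitivity argument, using the fixed-$z$ lower bound on $\abs{m}$), and then derives \eqref{F and alpha} by pairing \eqref{v equation} with $f$ and using symmetry, just as you do. The only difference is that you re-derive the Perron--Frobenius input inline rather than citing the auxiliary lemma, which changes nothing of substance.
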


\begin{Proof}
The existence and uniqueness of $ \norm{F(z)}_{\Lp{2}\to\Lp{2}} $ as a non-degenerate eigenvalue and $ f(z) $ as the corresponding eigenvector satisfying \eqref{defining properties of vector f(z)} follow from Lemma~\ref{lmm:Maximal eigenvalue of scaled S} below by choosing $ r := \abs{m(z)} $, using the trivial bound $ \norm{m(z)} \lesssim (\Im\, z)^{-1} $ to argue (using \eqref{unif bound of m}) that also $ r_{\!-}:=\inf_x\abs{m_x} > 0 $.

In order to obtain \eqref{F and alpha} we take the inner product of \eqref{v equation} with $f=f(z) $. Since  $F(z)$ is symmetric, we find
\bels{}{
\Big\la \1 \frac{f\2 v}{|m|} \1 \Big\ra
\,=\,
\big\la f \2 |m| \big\ra\,\Im \2 z
+
\norm{F}_{\Lp{2}\to\Lp{2}} \2\Big\la \1 \frac{f\1 v}{|m|} \1 \Big\ra\,.
}
Rearranging the terms yields the identity \eqref{F and alpha}. 
\end{Proof}

The following lemma demonstrates how the spectral gap, $\mrm{Gap}(F(z))$, the norm and the associated eigenvector of $ F(z) $ depend on the component wise estimates of $ \abs{m_x(z)} $. Since we will later need this result for a general positive function $ r : \Sx \to (0,\infty) $ in the role of $ \abs{m(z)} $ we state the result for a  general operator $ \wht{F}(r) $ below.     
        
\begin{lemma}[Maximal eigenvalue of scaled $ S $] 
\label{lmm:Maximal eigenvalue of scaled S}
Assume $ S $ satisfies {\bf A1-3}.
Consider an integral operator $ \wht{F}(r) : \Lp{2} \to \Lp{2} $, parametrized by $ r \in \BB $, with $ r_x \ge 0 $ for each $ x $, and defined through the integral kernel 
\bels{def of wht-F}{
\wht{F}_{xy}(r) \,:=\, r_rS_{xy}\1r_y
\,.
}
If there exist upper and lower bounds, $ 0<r_{\!-}\leq r_{\!+}< \infty $, such that  
\[
 r_{\!-}\leq r_x \leq r_{\!+}\,,\qquad \forall\1x\in \Sx
\,,
\]
then  $ \wht{F}(r) $ is Hilbert-Schmidt, and $ \wht{\lambda}(r) \,:=\, \norm{\wht{F}(r)}_{\Lp{2}\to \Lp{2}} $ is a single eigenvalue satisfying the upper and lower bounds
\bels{bounds for wht-lambda}{
r_{\!-}^2
\,\lesssim\,
\wht{\lambda}(r) 
\,\lesssim\,
r_{\!+}^2
\,.
} 
Furthermore, there is a spectral gap,
\bels{Gap for wht-F}{
\mrm{Gap}(\wht{F}(r)) 
\,\gtrsim\, 
r_{\!-}^{\12L}\,r_{\!+}^{-8}\;\wht{\lambda}(r)^{-L+5}
\,,
}
and the unique eigenvector, $ \wht{f}(r) \in \Lp{2} $, satisfying 
\bels{defining properties of vector wht-f}{
\wht{F}(r)\wht{f}(r) = \wht{\lambda}(r)\wht{f}(r)\,,\qquad
\wht{f}_x(r) \ge 0\,,
\quad\text{and}\quad
\norm{\wht{f}(r)}_2=1
\,,
}
is comparable to its average in the sense that
\bels{wht-f(r) bounds}{
\biggl(\!\frac{r_{\!-}^{\12}}{\,\wht{\lambda}(r)}\biggr)^{\msp{-7}L}
\;\lesssim\;
\frac{\wht{f}_x(r)}{\avg{\wht{f}(r)}}
\;\lesssim\; 
\frac{r_{\!+}^4}{\,\wht{\lambda}(r)^{\12}\!} 
\;.
}
If $\wht{F}$ is interpreted as a bounded operator on $\BB$, then the following relationship between the norm of the $\Lp{2}$-resolvent and the $\BB$-resolvent holds
\bels{resolvent of F on L2 and BB}{
\norm{\1(\wht{F}(r)-\zeta\1)^{-1}}
\,\lesssim \, \frac{1}{\abs{\zeta}}\biggl(\,1\,+\,r_+^2\norm{(\wht{F}(r)-\zeta)^{-1}}_{\Lp{2} \to\Lp{2}}\biggr)
\,,
}
for every $ \zeta \not \in \Spec(\wht{F}(r))\cup\{0\} $. 

\end{lemma}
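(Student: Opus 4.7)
The plan is to deduce all six claims from the Perron--Frobenius/Krein--Rutman theory applied to $\wht{F}(r) = rSr$ (with the factors of $r$ interpreted as multiplication operators), once the uniform primitivity \textbf{A3} of $S$ is promoted to a quantitative primitivity of $\wht{F}(r)^L$. First, the Hilbert--Schmidt property follows from the kernel formula \eqref{def of wht-F} together with \textbf{A2} and $r \leq r_+$. The decisive input is the pointwise minorization
\[
(\wht{F}(r)^L u)_x \,\geq\, r_-^{2L}(S^L u)_x \,\geq\, r_-^{2L}\rho\,\avg{u}\,,\qquad u\in\BB,\; u \geq 0\,,
\]
which follows by peeling out the $2L$ factors of $r$ in $\wht{F}(r)^L$ and then applying \textbf{A3}. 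Since $\wht{F}(r)$ is compact and symmetric and $\wht{F}(r)^L$ is positivity improving in the quantitative sense above, Krein--Rutman yields that $\wht{\lambda}(r) = \norm{\wht{F}(r)}_{\Lp{2}\to\Lp{2}}$ is a simple eigenvalue with strictly positive eigenvector $\wht{f}(r)$.

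The scalar bounds \eqref{bounds for wht-lambda} follow immediately: the upper bound $\wht{\lambda}(r) \leq r_+^2\norm{S}_{\Lp{2}\to\Lp{2}}\sim r_+^2$ from \eqref{operator norms of S are comparable}, and the lower bound $\wht{\lambda}(r)^L \geq r_-^{2L}\rho$ from applying the minorization to the constant function. For the eigenvector bounds \eqref{wht-f(r) bounds}, the lower bound is the minorization applied to $\wht{f}(r)$ itself. For the upper bound I would first use the eigenvalue equation together with \textbf{A2} to get $\sup_x \wht{f}_x(r) \leq r_+^2\norm{S}_{\Lp{2}\to\BB}/\wht{\lambda}(r) \lesssim r_+^2/\wht{\lambda}(r)$, and then extract $\avg{\wht{f}(r)} \gtrsim \wht{\lambda}(r)/r_+^2$ from the normalization $1 = \avg{\wht{f}(r)^2} \leq (\sup_x \wht{f}_x(r))\avg{\wht{f}(r)}$, whence the claimed ratio $r_+^4/\wht{\lambda}(r)^2$.

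The spectral gap \eqref{Gap for wht-F} is the main obstacle. I would use a Doeblin-type argument: for any eigenfunction $g$ of $\wht{F}(r)$ with eigenvalue $\mu$ satisfying $g \perp \wht{f}(r)$ in $\Lp{2}$, decompose $g = g_+ - g_-$; the pointwise cancellation between the minorization contributions to $(\wht{F}(r)^L g_+)_x$ and $(\wht{F}(r)^L g_-)_x$ sharpens the trivial $|\wht{F}(r)^L g|_x \leq (\wht{F}(r)^L|g|)_x$ to
\[
\absb{(\wht{F}(r)^L g)_x} \,\leq\, (\wht{F}(r)^L|g|)_x \,-\, 2\2r_-^{2L}\rho\,\min\sett{\avg{g_+},\avg{g_-}}\,.
\]
Squaring and integrating while using $\norm{\wht{F}(r)^L}_{\Lp{2}\to\Lp{2}} = \wht{\lambda}(r)^L$ on the main term gives $\norm{\wht{F}(r)^L g}_2^2 \leq \wht{\lambda}(r)^{2L}\norm{g}_2^2 - c\min\sett{\avg{g_+},\avg{g_-}}\norm{g}_1$ with $c \sim r_-^{4L}$. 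The orthogonality $g \perp \wht{f}(r)$ combined with the two-sided bounds \eqref{wht-f(r) bounds} on $\wht{f}(r)$ converts into a lower bound on $\min\sett{\avg{g_+},\avg{g_-}}$ in terms of $\norm{g}_1$, and the pointwise bound $\norm{g}_\BB \lesssim r_+^2\norm{g}_2/\wht{\lambda}(r)$ (immediate from the eigenvalue equation and \textbf{A2}) controls $\norm{g}_1$ from below by $\norm{g}_2^2$. Concavity of $t \mapsto t^{1/(2L)}$ then turns the resulting $|\mu|^{2L} \leq \wht{\lambda}(r)^{2L}(1-\varepsilon)$ into $\wht{\lambda}(r) - |\mu| \gtrsim \varepsilon\wht{\lambda}(r)/L$, and careful bookkeeping of the powers of $r_\pm$ and $\wht{\lambda}(r)$ through these three nested estimates produces \eqref{Gap for wht-F}. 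This bookkeeping is the central technical difficulty.

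Finally, \eqref{resolvent of F on L2 and BB} is a direct application of the resolvent identity. Setting $u = (\wht{F}(r)-\zeta)^{-1}v$, we have $u = \zeta^{-1}(\wht{F}(r)u - v)$, hence $\norm{u}_\BB \leq |\zeta|^{-1}(\norm{\wht{F}(r)u}_\BB + \norm{v}_\BB)$. The smoothing \textbf{A2} gives $\norm{\wht{F}(r)u}_\BB \leq r_+^2\norm{S}_{\Lp{2}\to\BB}\norm{u}_2 \lesssim r_+^2\norm{u}_2$, and $\norm{u}_2 \leq \norm{(\wht{F}(r)-\zeta)^{-1}}_{\Lp{2}\to\Lp{2}}\norm{v}_2 \leq \norm{(\wht{F}(r)-\zeta)^{-1}}_{\Lp{2}\to\Lp{2}}\norm{v}_\BB$ (since $\pi$ is a probability measure) then yields the claim.
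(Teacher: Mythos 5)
Most of your proposal is correct and runs parallel to the paper's proof: the Hilbert--Schmidt property, the Krein--Rutman step based on the minorization $(\wht{F}(r)^Lu)_x\ge r_-^{2L}\rho\avg{u}$ (the paper phrases this as strict positivity of the kernel of $\wht{F}^L$ via $\inf_{x,y}(S^L)_{xy}\ge\rho$), the two-sided bounds \eqref{bounds for wht-lambda}, the lower bound in \eqref{wht-f(r) bounds}, and the resolvent estimate \eqref{resolvent of F on L2 and BB} are all fine; your derivation of the upper bound in \eqref{wht-f(r) bounds} (eigenvalue equation plus $1=\avg{\wht{f}^{\,2}}\le\norm{\wht{f}}\,\avg{\wht{f}}$) is a legitimate small variation of the paper's route through the kernel bound on $\wht{F}^2$.

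The genuine gap is the spectral gap \eqref{Gap for wht-F}. You do not prove it: the "careful bookkeeping'' that you defer is exactly the content of the claim, and when one carries it out your scheme falls short of the stated bound. Concretely, your subtracted term is $\gtrsim r_-^{4L}\min\sett{\avg{g_+},\avg{g_-}}\norm{g}_1$; the orthogonality step costs a factor $f_-/f_+\gtrsim (r_-^2/\wht{\lambda})^L\,\wht{\lambda}^2 r_+^{-4}$ from \eqref{wht-f(r) bounds}, and the $\Lp{1}$-to-$\Lp{2}$ conversion costs $\norm{g}_1\gtrsim\wht{\lambda}\1\norm{g}_2^2/r_+^2$. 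Tracking these powers, the deficiency $1-(\abs{\mu}/\wht{\lambda})^{2L}$ you obtain is of order $(r_-^2/\wht{\lambda})^{3L}\wht{\lambda}^4r_+^{-8}$, hence $\mrm{Gap}(\wht{F}(r))\gtrsim r_-^{6L}r_+^{-8}\wht{\lambda}^{5-3L}$, which is weaker than \eqref{Gap for wht-F} by the factor $(r_-^2/\wht{\lambda})^{2L}\lesssim 1$ (strictly smaller whenever $r_-\ll r_+$); the essential inefficiency is that the minorization constant enters your argument squared and the $\Lp{1}$-norm detour loses additional powers of $\wht{\lambda}/r_+^2$. The paper avoids this by applying Lemma~\ref{lmm:Spectral gap for positive bounded operators} to $T:=(\wht{F}/\wht{\lambda})^L$: for $u\perp\wht{f}$ one writes $\avg{u,(1\pm T)u}$ as a double integral of $T_{xy}\bigl(u_x\sqrt{\wht{f}_y/\wht{f}_x}\pm u_y\sqrt{\wht{f}_x/\wht{f}_y}\,\bigr)^2$ and uses $T_{xy}\ge (\inf T)\,\wht{f}_x\wht{f}_y/\norm{\wht{f}}^2$, which works directly with $\norm{u}_2^2$ and pays $\inf_{x,y}T_{xy}$ only once and $\norm{\wht{f}}^{-2}$ once, giving $\mrm{Gap}(T)\gtrsim (r_-^2/\wht{\lambda})^L\wht{\lambda}^4r_+^{-8}$, and then converts to $\wht{F}$ via $1-(1-\mrm{Gap}(T))^{1/L}\ge\mrm{Gap}(T)/L$. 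You should either adopt this quadratic-form argument or accept a weaker exponent (which would then propagate into the constants of Lemma~\ref{lmm:Bounds on B-inverse}, part (i)). A further minor point: for a subleading eigenfunction $g$ with $\wht{F}g=\mu\2g$ the bound from the eigenvalue equation is $\norm{g}\lesssim r_+^2\norm{g}_2/\abs{\mu}$, not $/\wht{\lambda}$; you must treat the regime $\abs{\mu}\le\wht{\lambda}/2$ separately (there the claimed gap holds trivially, but this needs to be said).
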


Feeding \eqref{bounds for wht-lambda} into \eqref{wht-f(r) bounds} yields $ \Phi^{-2L}\avg{\wht{f}(r)}\lesssim \wht{f}(r) \lesssim \Phi^{\14}\avg{\wht{f}(r)} $, where $ \Phi := r_{\!+}/r_{\!-} $. 
For the proof of Lemma~\ref{lmm:Maximal eigenvalue of scaled S} we need a simple on the spectral gap that is well known in various forms. 
For the convenience of the reader we include a proof in Appendix \ref{sec:Proofs of auxiliary results in Chapter:Properties of solution}.

\begin{lemma}[Spectral gap for positive bounded operators]
\label{lmm:Spectral gap for positive bounded operators}
Let $T$ be a symmetric compact integral operator on $\Lp{2}(\Sx) $ with a non-negative integral kernel $ T_{xy} = T_{yx} \ge 0 $. 
Then
\[
\mathrm{Gap}(T) \,\ge\, 
\biggl(\!\frac{\norm{h}_{\Lp{2}}\!}{\!\norm{h}}\!\biggr)^{\!2}\!\inf_{x,y \ins \Sx}T_{x y}
\,,
\]
where $ h $ is an eigenfunction with $ Th = \norm{T}_{\Lp{2}\to \Lp{2}}\1h$. 
\end{lemma}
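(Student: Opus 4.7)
The plan is to derive the gap estimate from a Doeblin-style contraction after conjugating $T$ with multiplication by the Perron vector $h$. Write $\lambda:=\norm{T}_{\Lp{2}\to\Lp{2}}$ and $\delta:=\inf_{x,y\ins\Sx}T_{x y}$. The inequality is homogeneous in $h$, so I may normalize $\norm{h}_{\Lp{2}}=1$ and reduce the claim to $\mrm{Gap}(T)\ge \delta/\norm{h}^{2}$. If $\delta=0$ or $\norm{h}=\infty$ the bound is vacuous, so assume otherwise. Non-negativity of the kernel implies (via the standard Perron argument $T|h|\ge \lambda|h|$ combined with the variational characterization of $\lambda$) that $|h|$ is also an eigenvector with eigenvalue $\lambda$, so I may replace $h$ by $|h|$ and take $h\ge 0$; the eigenvalue equation $\lambda\1 h_x\ge \delta\norm{h}_1$ then forces $h>0$ pointwise.

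Next I perform the ground-state transformation. Set $\mu:=h^{2}\Px$, a probability measure, and let $V:\Lp{2}(\mu)\to\Lp{2}(\Px)$, $Vv:=h\1v$, be the canonical isometry. A direct computation yields $V^{-1}TV=\wti{T}$, where $\wti{T}$ is the self-adjoint integral operator on $\Lp{2}(\mu)$ with symmetric kernel $\kappa(x,y):=T_{xy}/(h_x h_y)$. The identity $Th=\lambda h$ translates to $\int\kappa(x,y)\,\dif\mu(y)=\lambda$, equivalently $\wti{T}\mathbf{1}=\lambda\mathbf{1}$. Being a unitary intertwining, $V$ makes $T$ and $\wti{T}$ spectrally identical and identifies $h^{\perp}\subset\Lp{2}(\Px)$ with $\mathbf{1}^{\perp}\subset\Lp{2}(\mu)$, so it suffices to bound the operator norm of $\wti{T}$ on $\mathbf{1}^{\perp}$.

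Let $c:=\delta/\norm{h}^{2}$, so that $\kappa(x,y)-c\ge 0$ pointwise. For $v\in\Lp{2}(\mu)$ with $\int v\,\dif\mu=0$ the constant $c$ may be subtracted off, giving $(\wti{T}v)(x)=\int (\kappa(x,y)-c)\2 v(y)\,\dif\mu(y)$. Applying Cauchy--Schwarz against the non-negative measure $(\kappa(x,y)-c)\dif\mu(y)$, whose total $y$-mass is $\lambda-c$, yields
\[
\abs{(\wti{T}v)(x)}^{2}\,\le\,(\lambda-c)\int(\kappa(x,y)-c)\,\abs{v(y)}^{2}\,\dif\mu(y)
\,,
\]
and integrating in $x$ against $\dif\mu(x)$ using the symmetry $\kappa(x,y)=\kappa(y,x)$ contracts this to $\norm{\wti{T}v}_{\Lp{2}(\mu)}\le(\lambda-c)\norm{v}_{\Lp{2}(\mu)}$. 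By compactness and self-adjointness of $\wti{T}$, the left-hand side over unit $v\perp\mathbf{1}$ is precisely the second-largest eigenvalue of $|\wti{T}|=|T|$, giving $\mrm{Gap}(T)\ge c=\delta\,(\norm{h}_{\Lp{2}}/\norm{h})^{2}$ after reverting the normalization.

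The only mildly delicate step is ensuring that $h$ can indeed be taken strictly positive, so that the conjugation $V$ and the kernel $\kappa$ are well defined; this is where the non-triviality hypothesis $\delta>0$ is used essentially. Everything else reduces to the classical Doeblin contraction estimate for symmetric operators with a uniform positivity lower bound on the kernel.
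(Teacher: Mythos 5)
Your proof is correct, and it reaches the paper's bound by a related but differently executed argument. Both proofs ultimately rest on the same minorization $T_{xy}\ge (\delta/\norm{h}^2)\,h_x h_y$ together with the eigenvalue relation $Th=\lambda h$, and both reduce the gap to a norm bound for $T$ on $h^\perp$; the difference is in the mechanics. The paper stays in $\Lp{2}(\Px)$ and, for $u\perp h$, writes the quadratic forms $\avg{u,(1\pm T)u}$ as an explicit sum of squares with weights $\sqrt{h_y/h_x}$ (the ground-state transform appearing only inside the completed square), then inserts $T_{xy}\ge \eps\, h_xh_y/\Phi^2$ to get $\abs{\avg{u,Tu}}\le(1-\eps/\Phi^2)\norm{u}_2^2$, which controls both ends of the spectrum on $h^\perp$ at once. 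You instead conjugate explicitly by $h$ to obtain the $\mu$-symmetric kernel $\kappa$ with $\wti{T}\mathbf{1}=\lambda\mathbf{1}$, subtract the Doeblin constant $c$, and use Cauchy--Schwarz twice to get the operator-norm contraction $\norm{\wti{T}v}\le(\lambda-c)\norm{v}$ on $\mathbf{1}^\perp$ --- an equivalent conclusion for a self-adjoint operator, since $\mathbf{1}^\perp$ is invariant. Your route is somewhat more careful about the preliminary reduction (choosing $h=\abs{h}\ge0$ via the Perron/variational argument, positivity and boundedness away from zero of $h$, and dismissing the vacuous cases $\delta=0$, $\norm{h}=\infty$), points the paper's proof takes for granted; the paper's route avoids the change of measure and is shorter. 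One small remark: your final step implicitly uses that the extra eigenvector would lie in $\mathbf{1}^\perp$ if the top eigenvalue of $\abs{T}$ were degenerate, so your estimate in fact also shows non-degeneracy when $c>0$, which is consistent with the paper's convention that $\mrm{Gap}(T)=0$ in the degenerate case.
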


\begin{Proof}[Proof of Lemma~\ref{lmm:Maximal eigenvalue of scaled S}]
Since $ S $ is compact, and $ r \leq r_{\!+} $ also $ \wht{F} = \wht{F}(r) $ is compact. 
The operator $ \wht{F} $ preserves the cone of non-negative functions $ u \ge 0 $. Hence by the Krein-Rutman theorem $ \wht{\lambda} = \norm{\wht{F}}_{\Lp{2} \to \Lp{2}}$ is an eigenvalue, and there exists a non-negative normalized eigenfunction $ \wht{f} \in \Lp{2}(\Sx)$ corresponding to $ \wht{\lambda} $.
The smoothing property  {\bf A2} and the uniform primitivity assumption {\bf A3} combine to 
\[
\inf_{x,y\ins\Sx}(S^{\2 L})_{xy} \,\geq\, \rho
\,.
\]
Since $ r_{\!-} > 0  $, it follows that  the integral kernel of $ \wht{F}^{\2L}$ is also strictly positive everywhere. 
In particular, $ \wht{F}$ is irreducible, and thus the eigenfunction $ \wht{f} $ is unique. 

Now we derive the upper bound for $ \wht{\lambda} $.
Since $ \norm{w}_p \leq \norm{w}_q $, for $ p \leq q $, we obtain
\[
\wht{\lambda}^2 =\, 
\norm{\wht{F}}^2_{\Lp{2}\to \Lp{2}} 
= 
\norm{\wht{F}^2}_{\Lp{2}\to \Lp{2}} 
\leq\, 
\norm{\wht{F}^2}_{\Lp{1}\to\BB} 
\,=\, 
\sup_{x,y}\, 
(\wht{F}^2)_{xy}
\leq\, 
r_+^4\norm{S}_{\Lp{2}\to\BB}^2
\,,
\]
which implies $ \wht{\lambda} \lesssim r_+^2 $. Here we have used $ (S^2)_{xy} = \avg{S_x,S_y} \leq \norm{S_x}_2\norm{S_y}_2 $, and $ \sup_x\norm{S_x}_2 = \norm{S}_{\Lp{2}\to\BB} $ to estimate:
\bels{kernel upper bound for why-f}{
(\wht{F}^2)_{xy} \,\leq\, r_+^4(S^2)_{xy} \,\leq\, r_+^4 \norm{S}_{\Lp{2}\to\BB}^2
\,.
}
For the lower bound on $ \wht{\lambda} $, we use first \eqref{operator norms of S are comparable} and \eqref{operator norms of S are comparable} to get $ \iint\Px(\dif x)\2\Px(\dif y)\1 S_{xy} \sim 1 $. Therefore
\bels{lower bound for lambda}{
\wht{\lambda} 
\,=\, 
\norm{\wht{F}}_{\Lp{2}\to \Lp{2}} 
\,\ge\, 
\avg{\1e,\wht{F}\1e\1} 
\,\ge\, r_{\!-}^2 \iint \Px(\dif x)\1\Px(\dif y)\1S_{xy} 
\,\sim\, 
r_{\!-}^2
\,,
}
where $ e \in \BB $ is a function equal to one $ e_x = 1 $.

Now we show the upper bound for the eigenvector. Applying \eqref{kernel upper bound for why-f}, and $ \avg{\1\wht{f}\2} = \norm{\wht{f}}_1 \leq  \norm{\wht{f}}_2 = 1 $, yields
\[
\wht{\lambda}^2 \wht{f}_x 
\,=\, (\wht{F}^2\wht{f})_x 
\,\lesssim\, 
r_{\!+}^4 \avg{\1\wht{f}\,} 
\,\leq\, 
r_{\!+}^4 
\,.
\]
This shows the upper bound on $\wht{f}_x /\avg{\1\wht{f}\2}$ and, in addition, $\wht{f}_x\lesssim r_{\!+}^4/\wht{\lambda}^2$.

In order to estimate the ratios $ \wht{f}_x/\avg{\wht{f}} $, $ x \in \Sx $, from below, we consider the operator
\bels{auxiliary operator T}{
T := \Bigl(\frac{\wht{F}}{\wht{\lambda}}\Bigr)^L
\,.
}
Using $ \inf_{x,y}(S^L)_{xy} \ge \rho $, we get
\[
\inf_{x,y} T_{xy} \,\ge\, \frac{r_-^{2L}}{\wht{\lambda}^L}\,(S^L)_{xy} 
\,\gtrsim\,  
\Bigl(\frac{r_{\!-}^2}{\wht{\lambda}}\Bigr)^{\!L}
\,.
\]
Hence, we find a lower bound on $\wht{f}$ through
\bels{wht-f_x bounded by avg-wht-f}{
\wht{f}_x = (T\wht{f})_x 
\,\gtrsim\,  
\Bigl(\frac{r_{\!-}^2}{\wht{\lambda}}\Bigr)^{\!L}\avg{\1\wht{f}\,}
\,.
}

In order to prove \eqref{Gap for wht-F}, we apply Lemma~\ref{lmm:Spectral gap for positive bounded operators} to the operator $ T $, to get
\[
\mrm{Gap}(T) \ge \frac{\inf_{x,y}T_{xy}}{\norm{\wht{f}}^2}
\,\gtrsim\, 
\frac{(r_{\!-}^2/\wht{\lambda})^L}{(r_{\!+}^4/\wht{\lambda}^2)^2} 
\,=\, 
r_{\!-}^{\12L}r_{\!+}^{-8}\wht{\lambda}^{-(L-4)}
\,.
\]
Since $ L \sim 1 $, this implies,
\[
\frac{\mrm{Gap}(\wht{F})}{\wht{\lambda}} \,=\, 1-\bigl(1-\mrm{Gap}(T)\bigr)^{1/L} \,\ge\, 
\frac{\mrm{Gap}(T)}{L} 
\,\sim\, 
r_{\!-}^{\12L}r_{\!+}^{-8}\wht{\lambda}^{-(L-4)}
\,.
\]

Finally, we show the bound \eqref{resolvent of F on L2 and BB}.
Here the smoothing condition {\bf A2} on $ S $ is crucial.
Let $d,w \in \BB$ satisfy $(\wht{F}-\zeta)^{-1} w = d $. For $\zeta \notin \Spec(\wht{F})\cup\{0\}$, we have
\bels{BB-resolvent bound from L2-resolvent bound}{
\norm{d}_2\,\leq\, \norm{(\wht{F}-\zeta)^{-1}}_{\Lp{2} \to \Lp{2}}\1\norm{w}_2\,\leq\, \norm{(\wht{F}-\zeta)^{-1}}_{\Lp{2} \to \Lp{2}}\1\norm{w}\,.
}
Now, using $\norm{S}_{\Lp{2} \to \BB}\lesssim 1$, we bound the uniform norm of $ d $ from above by the corresponding $ \Lp{2}$-norm:
\[
\abs{\zeta}\2\norm{d} 
\,=\, 
\norm{\wht{F} d -w}
\,\leq 
\norm{\wht{F}}_{\Lp{2} \to \BB}\norm{d}_2+\norm{w}
\,\leq\, 
r_+^2 \norm{S}_{\Lp{2}\to\BB}\norm{d}_2+\norm{w}
\,.
\] 
The estimate \eqref{resolvent of F on L2 and BB} now follows by using the operator norm on $\Lp{2}$ for the resolvent, i.e., the inequality \eqref{BB-resolvent bound from L2-resolvent bound} to estimate $ \norm{d}_2 $ by $ \norm{w} $.  
\end{Proof}

\begin{Proof}[Proof of Proposition~\ref{prp:Estimates when solution is bounded}]
\label{proof of prp:Estimates when solution is bounded}
All the claims follow by combining Lemma~\ref{lmm:Constraints on solution}, Lemma~\ref{lmm:Operator F} and Lemma~\ref{lmm:Maximal eigenvalue of scaled S}.
Indeed, let $ z \in I +\cI\2(0,\infty) $, so that $  \norm{m(z)} \leq \Phi \sim 1 $.  
Since $ \supp\,v \subset [-\Sigma,\Sigma] $, with $ \Sigma = \norm{a} + 2\1\norm{S}^{1/2} \sim 1 $ (cf. \eqref{operator norms of S are comparable}), the upper bound of \eqref{unif bound of m} yields
$\norm{m(z)} \leq 2\1\abs{z}^{-1} $ for $ \abs{z} \ge 2\1\Sigma $. Thus $ \norm{m(z)} \lesssim (1+\abs{z})^{-1} $ for all $ \Re\,z \in I $. 
Using this upper bound in the first estimate of \eqref{unif bound of m} yields the part (i) of the proposition: 
\bels{abs-m_x size}{
\quad \abs{m_x(z)} \,\sim\, (\11+\abs{z}\1)^{-1}
\,,\qquad x \in \Sx\,,\;\Re\,z \in I
\,.
}

When $ \abs{z} \leq 2\1\Sigma $ the comparison relation $ v_x(z) \sim \avg{v(z)} $ follows by plugging \eqref{abs-m_x size} into \eqref{v compared to avg-v}. If $ \abs{z} > 2\1\Sigma $, then $ v $ and its average are comparable due to the Stieltjes transform representation \eqref{m as stieltjes transform} and the bound \eqref{bound on supp v} for the support of $ v|_\R $. This completes the proof of the part (ii).

For the claims concerning the operator $ F(z) $ we  use the formula \eqref{integral kernel F_xy(z)} to identify $ F(z) = \wht{F}(\abs{m(z)}) $, where $ \wht{F}(r) $ for $ r \in \BB$  satisfying $ r \ge  0 $, is the operator from Lemma~\ref{lmm:Maximal eigenvalue of scaled S}. 

The parts (iii-v) follow from Lemma~\ref{lmm:Maximal eigenvalue of scaled S} with the choice $  r_{\!-} := \inf_x \abs{m_x} $ and $ r_{\!+} := \sup_x \abs{m_x} $, since  $ r_{\!\pm} \sim (1+\abs{z})^{-1} $ by \eqref{abs-m_x size}.
\end{Proof}

\section{Stability and operator $ B $}
\label{sec:Stability and operator B}

The next lemma introduces the operator $ B $ that plays a central role in the stability analysis of the QVE. At the end of this section (Lemma~\ref{lmm:Stability when m and inv-B bounded}) we present the first stability result for the QVE which is effective when $ m $ is uniformly bounded and $ B^{-1} $ is bounded as operator on $ \BB $. 
Subtracting the QVE from \eqref{perturbed QVE - 2nd time} an elementary algebra yields the following lemma.

\NLemma{Perturbations}{
Suppose $ g,d \in \BB $, with $ \inf_x \abs{g_x} > 0 $, satisfy the perturbed QVE,
\bels{perturbed QVE - 2nd time}{ 
-\frac{1}{g} \;=\; z + a  + Sg +d
\,,
}
at some fixed $ z \in \Cp $ and suppose $ m = m(z) $ solves the unperturbed QVE. 
Then 
\bels{}{
u \,:=\, \frac{\2g\1-\1m(z)}{\abs{\1m(z)}}
\,,
}
satisfies the equation
\bels{eq. for u}{
B\1u \;=\; \nE^{-\cI\1\am}\1u\2Fu \,+\, \abs{m}\1d \,+\, \abs{m}\2\nE^{-\cI \1\am}u\1d
\,,
} 
where the operator $ B = B(z) $, and the function  $ \am = \am(z) : \Sx \to [\10,2\1\pi) $ are given by 
\bels{def of B and a}{
B \,:=\, \nE^{-\cI\12\1\am} \1-\1 F
\,,\qquad\text{and}\qquad
\nE^{\1\cI\1q} :=\, \frac{m}{\abs{m}}
\,.
} 
}
\qed

Lemma~\ref{lmm:Perturbations} shows that the inverse of the non-selfadjoint operator $ B(z) $ plays an important role in the stability of the QVE against perturbations. In the next lemma we estimate the size of this operator in terms of the solution of the QVE.

\begin{lemma}[Bounds on $ B^{-1} $] 
\label{lmm:Bounds on B-inverse}
Assume {\bf A1-3}, and consider $ z \in \Cp $ such that $ \abs{z} \leq 2\1\Sigma $. Then the following estimates hold:
\begin{itemize}
\titem{i} 
If $ \norm{m(z)}_2 \leq \Lambda $, for some $ \Lambda < \infty $, then
\bels{m L2-bounded: B-inv norm bound on L2 and BB}{
\norm{B(z)^{-1}}_{\Lp{2}\to\Lp{2}}
\,\lesssim\,  \2\avg{v(z)}^{-12}
\,,\quad\text{and }\quad
\norm{B(z)^{-1}}
\,\lesssim\,  \2\avg{v(z)}^{-14}
\,,
}
with $ \Lambda $ considered an additional model parameter.
\titem{ii}
If $ \norm{m(z)} \leq \Phi $, for some $ \Phi < \infty $, then  
\begin{subequations}
\label{m BB-bounded: B-inv norm bound on L2 and BB}
\begin{align}
\label{m BB-bounded: inv-B from L2 to BB}
\norm{B(z)^{-1}}  
\,&\lesssim\,
1+\norm{B(z)^{-1}}_{\Lp{2}\to \Lp{2}} 
\\
\label{B-inv norm bound on BB at E-line}
&\lesssim\, 
(\2\abs{\sigma(z)}+\avg{v(z)})^{-1}\avg{v(z)}^{-1}
\,,
\end{align}
\end{subequations}
with the function $\sigma :\Cp \to \R$, defined by
\bels{def of sigma(z)}{
\sigma(z) \,:=\, \avgb{ \1 f(z)^3 \sign \Re \2 m(z) \1}
\,,
}
and $ \Phi $ considered  an additional model parameter.
\end{itemize}
\end{lemma}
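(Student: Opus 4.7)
\emph{Setup.} Write $B = U - F$ with $U := e^{-2\cI\am}$ a unitary multiplication operator and $F=F(z)$ the symmetric compact integral operator of Lemma~\ref{lmm:Operator F}; denote $\lambda = \norm{F}_{\Lp{2}\to\Lp{2}} \le 1$, let $f \ge 0$ be the normalized Perron eigenvector ($\norm{f}_2 = 1$), and let $\mrm{Gap}(F)$ be the spectral gap from Lemma~\ref{lmm:Maximal eigenvalue of scaled S}. The plan is to decompose $\Lp{2} = \mrm{span}(f) \oplus f^\perp$ via the orthogonal projections $P = \avg{f,\genarg}f$ and $Q = 1-P$, invert $B$ by Schur complement, and exploit the key observation that $U-1 = \Ord(v)$ componentwise — indeed $U_x - 1 = -2v_x^2/|m_x|^2 - 2\cI v_x \Re m_x/|m_x|^2$ — so that the mixing block $PBQ = P(U-1)Q$ is of order $\norm{v}$. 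On $f^\perp$ one has $\norm{QFQ}_{\Lp{2}\to\Lp{2}} \le \lambda - \mrm{Gap}(F)$ and $\norm{QUQ} \le 1$, so that a short Neumann/perturbation argument gives $\norm{(QBQ)^{-1}}_{\Lp{2}\to\Lp{2}} \lesssim \mrm{Gap}(F)^{-1}$. The inversion then reduces to a lower bound on the scalar Schur complement $\beta := \avg{f,Bf} - \avg{f,BQ(QBQ)^{-1}QBf}$.

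\emph{The scalar matrix element.} Expanding $U_x = \bar m_x^2/|m_x|^2$ in polar form and using $\norm{f}_2 = 1$ gives
\[
\avg{f, Bf} \,=\, (1-\lambda) \,-\, 2\avgB{\tfrac{f^2 v^2}{|m|^2}} \,-\, 2\cI\,\avgB{\tfrac{f^2 v\,\Re m}{|m|^2}}.
\]
From \eqref{F and alpha}, $(1-\lambda) = (\Im z/\alpha)\avg{f|m|} \ge 0$. The imaginary part of the QVE, $v/|m| = |m|\,\Im z + F(v/|m|)$, together with the spectral gap, forces $v/|m| = \alpha f + r$ with $r \in f^\perp$ and $\norm{r}_2 \lesssim \Im z\,\mrm{Gap}(F)^{-1}$, so that to leading order $v \approx \alpha f |m|$ and $\alpha \sim \avg{v}$. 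Substituting this leading piece yields
\[
\Im \avg{f, Bf} \,=\, -2\alpha\,\sigma(z) + \Ord(\avg{v}^2), \qquad \Re \avg{f, Bf} \,\leq\, -c\avg{v}^2 + (1-\lambda),
\]
from which $|\avg{f, Bf}| \gtrsim \avg{v(z)}(\,|\sigma(z)| + \avg{v(z)}\,)$. Under hypothesis (ii), Proposition~\ref{prp:Estimates when solution is bounded} guarantees $|m|, f, \mrm{Gap}(F) \sim 1$, so the off-diagonal contribution in $\beta$ is at most $\Ord(\norm{v}^2)$ and hence subdominant; this gives the $\Lp{2}$-bound $\norm{B^{-1}}_{\Lp{2}\to\Lp{2}} \lesssim (|\sigma| + \avg{v})^{-1}\avg{v}^{-1}$ in \eqref{m BB-bounded: B-inv norm bound on L2 and BB}. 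The $\BB$-bound \eqref{m BB-bounded: inv-B from L2 to BB} follows by writing $B^{-1} = -(F - U)^{-1}$, factoring out the unitary multiplication $U$, and applying the general resolvent comparison \eqref{resolvent of F on L2 and BB} of Lemma~\ref{lmm:Maximal eigenvalue of scaled S}.

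\emph{Case (i) and the main obstacle.} The same architecture proves (i), but without the uniform $\BB$-bound on $m$ the constants $r_\pm$, $\mrm{Gap}(F)$, and $\wht f/\avg{\wht f}$ in Lemma~\ref{lmm:Maximal eigenvalue of scaled S} are no longer $\Ord(1)$: Lemma~\ref{lmm:Constraints on solution} replaces them by explicit powers of $\avg{v}$ through the lower bound $v \gtrsim \inf_y|m_y|^{2L}\avg{v}$ and the $\Lp{2}$-derived upper bound $\norm{m} \lesssim \avg{v}^{-c}$. Moreover the $\sigma$-refinement in the scalar bound is lost (since $\Re m$ need not have a controlled sign), leaving only $|\avg{f, Bf}| \gtrsim \avg{v}^2$. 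Propagating these polynomial factors through the Schur-complement analysis yields the stated exponent $\avg{v}^{-12}$ for the $\Lp{2}$-norm and the additional $\avg{v}^{-2}$ for the $\BB$-norm via \eqref{resolvent of F on L2 and BB}. I expect the main technical obstacle to be precisely this power-counting step together with the verification that the remainder $PBQ(QBQ)^{-1}QBP$ stays strictly subdominant to $\avg{f, Bf}$ even as $\mrm{Gap}(F)$ degrades: the competing estimates $U - 1 = \Ord(v)$, $\norm{(QBQ)^{-1}} \lesssim \mrm{Gap}(F)^{-1}$, and $|\avg{f,Bf}| \gtrsim \avg{v}(|\sigma|+\avg{v})$ must be balanced carefully, and any loss in the gap must be absorbed by the smallness of $U-1$ on $f^\perp$.
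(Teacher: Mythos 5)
There is a genuine gap, and it sits exactly in the regime part (ii) is needed for. Your Schur-complement pivot is the diagonal element $\avg{f,Bf}=\avg{f,Uf}-\norm{F}_{\Lp{2}\to\Lp{2}}$, whose real part is $(1-\norm{F}_{\Lp{2}\to\Lp{2}})-2\avg{f^2\sin^2\am}$, i.e.\ a \emph{difference} of two non-negative quantities. By \eqref{F and alpha} the first equals $(\Im z/\alpha)\avg{f\abs{m}}$, while the second is $2\alpha^2\avg{f^4}+\Ord(\alpha\,\Im z)$; near a cusp or a small internal minimum one has $\Im z\sim\alpha^3$ (cf.\ Corollary~\ref{crl:Scaling relations}(d)), so both terms are of order $\avg{v}^2$ and nothing prevents them from cancelling, whereas the imaginary part is only $-2\sigma\alpha+\Ord(\alpha^3+\Im z)$, which is far below $\avg{v}^2$ when $\abs{\sigma}\ll\avg{v}$ (a case that does occur, cf.\ \eqref{abs-sigma: not edge}). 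Hence the asserted bound $\abs{\avg{f,Bf}}\gtrsim\avg{v}(\abs{\sigma}+\avg{v})$ does not follow from your two displayed estimates (an upper bound on the real part together with $\Im\avg{f,Bf}=-2\alpha\sigma+\Ord(\avg{v}^2)$ yields no lower bound on the modulus), and this is precisely where the off-diagonal Schur correction matters: for $\abs{\sigma}\lesssim\avg{v}$ the target lower bound is itself of size $\avg{v}^2$, which is exactly the size of the term $\avg{f,BQ(QBQ)^{-1}QBf}$ you discard as subdominant. The expansion \eqref{beta expanded} makes this quantitative: the true small eigenvalue of $B$ carries the second-order coefficient $2(\psi-\sigma^2)$ with $\psi=\mcl{D}(pf^2)\ge 0$, while the diagonal element alone produces $-2\avg{f^4}=-2\sigma^2-2\norm{Q^{(0)}(pf^2)}_2^2$; the difference, an order-one positive coefficient when $\abs{\sigma}$ is small, is exactly the off-diagonal contribution, so it cannot be dropped — you would effectively have to redo the $\beta^{(2)}$ computation of Lemma~\ref{lmm:Expansion of B in bad direction}. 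The paper avoids all of this: its pivot, via Lemma~\ref{lmm:Norm of B^-1-type operators on L2}, is $1-\norm{F}_{\Lp{2}\to\Lp{2}}\avg{f,Uf}$, whose real part is $(1-\norm{F}_{\Lp{2}\to\Lp{2}})+2\norm{F}_{\Lp{2}\to\Lp{2}}\avg{f^2\sin^2\am}$ — a \emph{sum} of non-negative terms — and this no-cancellation structure is what gives the additive lower bound \eqref{v^2+uv bound} on which the whole Case~2 power counting rests.

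A second problem concerns part (i): you invert $QBQ$ by a Neumann series around $Q(1-F)Q$, which requires $\norm{U-1}\lesssim\norm{\sin\am}$ to be small relative to $\mathrm{Gap}(F)$. With only an $\Lp{2}$ bound on $m$ this is unavailable: $\sin\am_x=v_x/\abs{m_x}$ may be of order one on some components, while Lemma~\ref{lmm:Maximal eigenvalue of scaled S} combined with Lemma~\ref{lmm:Constraints on solution} only provides $\mathrm{Gap}(F)\gtrsim\avg{v}^{8}$ (and even in case (ii) the Neumann step needs $\avg{v}$ small, so $\avg{v}\sim 1$ would have to be treated separately). Lemma~\ref{lmm:Norm of B^-1-type operators on L2} needs none of this; its proof exploits the unitarity of $U$ directly and never inverts the compression $QBQ$. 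Your reduction of the $\BB$-norm to the $\Lp{2}$-norm via \eqref{resolvent of F on L2 and BB}, i.e.\ the analogue of \eqref{inv-B-norm: from L2toL2 to BBtoBB}, is correct and coincides with the paper's argument.
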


We remark that \eqref{B-inv norm bound on BB at E-line} improves on the analogous bound $\norm{B^{-1}}\lesssim \avg{v}^{-2}$ that was proven in \cite{AEK1cpam}.
We will see below that \eqref{B-inv norm bound on BB at E-line} is sharp in terms of powers of $ \avg{v} $. On the other hand, the exponents in \eqref{m L2-bounded: B-inv norm bound on L2 and BB} may be improved.
For the proof of Lemma~\ref{lmm:Bounds on B-inverse} we need the following auxiliary result which was provided as Lemma~5.8 in \cite{AEK1cpam}. Since it plays a fundamental role in the analysis its proof is reproduced in Appendix~\ref{sec:Proofs of auxiliary results in Chapter:Properties of solution}.

\begin{lemma}[Norm of $ B^{-1} $-type operators on $ \Lp{2}$] 
\label{lmm:Norm of B^-1-type operators on L2}
Let $ T $ be a compact self-adjoint and $U$ a unitary operator on $ \Lp{2}(\Sx) $. 
Suppose that $ \mrm{Gap}(T)> 0 $ and $ \norm{T}_{\Lp{2}\to\Lp{2}} \leq 1 $.
Then there exists a universal positive constant $C$ such that
\bels{Norm of B^-1-type operators on L2}{
\norm{\1(\1U-T\1)^{-1}\1}_{\Lp{2}\to \Lp{2}} 
\,\leq\, 
\frac{C}{\1\mrm{Gap}(T)\,\abs{\21 - \norm{T}_{\Lp{2}\to \Lp{2}}\2\avg{\1 h\1, Uh\1}\1}}
\,,
}
where $ h $ is the $ \Lp{2}$-normalized eigenvector of $T$, corresponding to the non-degenerate eigenvalue 
$\norm{T}_{\Lp{2}\to \Lp{2}}$.
\end{lemma}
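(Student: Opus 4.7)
The plan is to decompose $\Lp{2} = \Span\{h\}\oplus h^\perp$ and invert $U-T$ via the Schur complement in this decomposition. Set $\lambda := \norm{T}_{\Lp{2}\to\Lp{2}}$, $a := \avg{h, Uh}$, $Q := I - hh^*$, $b := QU^*h$, $c := QUh$, $U_0 := QUQ$ and $T_0 := QTQ$. Unitarity of $U$ gives $\norm{b}_2 = \norm{c}_2 = \sqrt{1-\abs{a}^2} \le 1$, while the spectral-gap hypothesis combined with $\lambda \le 1$ gives $\norm{T_0}_{\Lp{2}\to\Lp{2}} \le \lambda - \mrm{Gap}(T) \le 1 - \mrm{Gap}(T)$. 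Since $U-T = U(I-U^*T)$ and $U$ is unitary, $\norm{(U-T)^{-1}}_{\Lp{2}\to\Lp{2}} = \norm{(I - U^*T)^{-1}}_{\Lp{2}\to\Lp{2}}$, and the second form admits the block representation
\[
I - U^*T \;=\; \begin{pmatrix} 1-\lambda\bar a & -\avg{T_0 c,\,\cdot\,}\\ -\lambda b & D \end{pmatrix}, \qquad D := I - U_0^* T_0.
\]

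The lower-right block $D$ is invertible with $\norm{D^{-1}}_{\Lp{2}\to\Lp{2}} \le 1/\mrm{Gap}(T)$, since $\norm{U_0^*T_0} \le \norm{T_0} \le 1-\mrm{Gap}(T) < 1$. The associated scalar Schur complement is
\[
s \;:=\; (1-\lambda\bar a) - \lambda\,\avg{c, T_0 D^{-1} b},
\]
and the explicit LDU factorization of the block matrix reduces the lemma to two quantitative steps: (i) a block-wise norm bound of $(I - U^*T)^{-1}$ in terms of $\abs{s}^{-1}$, $\norm{D^{-1}}$, $\norm{b}_2$ and $\norm{c}_2$, where the crucial feature is that $\norm{b}_2 = \norm{c}_2 = \sqrt{1-\abs{a}^2}$ vanishes as $\abs{a} \to 1$ and thereby cancels the worst $\norm{D^{-1}}$-factors; and (ii) the sharp lower bound
\[
\abs{s} \;\gtrsim\; \mrm{Gap}(T)\,\abs{1 - \lambda\avg{h, Uh}}.
\]

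I expect step (ii) to be the main obstacle. The direct Cauchy--Schwarz estimate $\abs{\avg{c, T_0 D^{-1}b}} \le \norm{c}_2\norm{T_0 D^{-1}}_{\Lp{2}\to\Lp{2}}\norm{b}_2 \lesssim (1-\abs{a}^2)/\mrm{Gap}(T)$ loses a full $1/\mrm{Gap}(T)$ factor and fails in the regime $\abs{a} \to 1$ with $\mrm{Gap}(T)$ small. To restore the required bound I would exploit the identities $T_0 b = Tb$ and $TU^*h = \lambda\bar a\, h + T_0 b$, together with the self-consistent identity $\avg{h, (I-U^*T)^{-1} h} = 1/s$ read off from the LDU factorization. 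Expanding the latter as $1/s = \sum_{n\ge 0}\avg{h, (U^*T)^n h}$ and unfolding the two-line recursion
\[
x_{n+1} \,=\, \lambda\bar a\, x_n + \avg{c, T_0 w_n},\qquad w_{n+1} \,=\, \lambda b\, x_n + U_0^*T_0\, w_n,
\]
for the $h$- and $h^\perp$-components of $(U^*T)^n h$, summation yields a closed algebraic identity for $1/s$ whose leading term is $1/(1-\lambda\bar a)$. Careful bookkeeping of the deviations of the summed series from this leading geometric part produces the missing $\mrm{Gap}(T)$-factor in the lower bound on $\abs{s}$; substituting this bound into the block estimate of step (i) then completes the proof.
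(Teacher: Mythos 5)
Your Schur-complement set-up is internally consistent (the block entries, $\norm{b}_2=\norm{c}_2=\sqrt{1-\abs{a}^2}$, $\norm{T_0}\leq \lambda-\mrm{Gap}(T)$ and $\norm{D^{-1}}\leq 1/\mrm{Gap}(T)$ are all correct), but the proof has a genuine gap at precisely the step you flag, and the sketched repair cannot work as stated. First, the route to the lower bound on $\abs{s}$ via the expansion $1/s=\sum_{n\geq 0}\avg{h,(U^*T)^n h}$ is unsound: it needs $\norm{U^*T}_{\Lp{2}\to\Lp{2}}=\norm{T}_{\Lp{2}\to\Lp{2}}<1$, whereas the lemma must cover $\norm{T}_{\Lp{2}\to\Lp{2}}=1$ — exactly the case used in the paper, where $T=F(z)$ has norm one for $\Re\,z\in\supp v$ — so the series need not converge, and the identity $\avg{h,(I-U^*T)^{-1}h}=1/s$ already presupposes the invertibility you are trying to quantify. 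Second, the bound you propose to prove, $\abs{s}\gtrsim \mrm{Gap}(T)\,\abs{1-\lambda\avg{h,Uh}}$, is sharp and cannot be strengthened: on a two-point probability space take $T=\mrm{diag}(1,-(1-g))$, $h=e_1$, and $U$ the rotation by an angle $\phi$; then $\lambda=1$, $a=\cos\phi$, and a direct computation gives $s=g\,(1-\cos\phi)/(1+(1-g)\cos\phi)\approx \tfrac12\,\mrm{Gap}(T)\,\theta$ with $\theta:=\abs{1-\lambda a}$, while $1-\abs{a}^2=\sin^2\phi\sim\theta\gg \mrm{Gap}(T)^2$.

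This sharpness is fatal for the bookkeeping in your step (i): the lower-right block of the inverse is $D^{-1}+\lambda\,s^{-1}D^{-1}b\,\avg{T_0c,D^{-1}\genarg}$, and with the only ingredients you list (Cauchy--Schwarz, $\norm{D^{-1}}\leq 1/\mrm{Gap}(T)$, $\norm{b}_2=\norm{c}_2=\sqrt{1-\abs{a}^2}$, plus $\abs{s}\gtrsim\mrm{Gap}(T)\,\theta$) it is bounded only by $\mrm{Gap}(T)^{-1}+(1-\abs{a}^2)/(\abs{s}\,\mrm{Gap}(T)^2)$; since $1-\abs{a}^2\leq 2\1\theta$, this is of order $\mrm{Gap}(T)^{-3}$, overshooting the target $C/(\mrm{Gap}(T)\,\theta)$ whenever $\theta\gg\mrm{Gap}(T)^2$, e.g. $\theta\sim 1$ with a small gap. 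The cancellation you invoke ($\sqrt{1-\abs{a}^2}$ absorbing the $\norm{D^{-1}}$-factors) operates only when $1-\abs{a}^2\lesssim\mrm{Gap}(T)^2$; in the complementary regime any rescue must come from finer-than-norm control of $D^{-1}b$ and $D^{-1}T_0c$ in relation to $s$ (ruling out that $\abs{s}\sim\mrm{Gap}(T)\,\theta$ and $\norm{D^{-1}b}_2\sim\norm{b}_2/\mrm{Gap}(T)$ occur simultaneously), and nothing in the proposal provides this — it is in effect the hard content of the lemma. The paper sidesteps all of this by never inverting $D$: it proves the lower bound $\norm{(U-T)w}_2\geq c\,\mrm{Gap}(T)\,\theta\,\norm{w}_2$ directly, splitting $w$ into its $h$-component and the rest and treating three regimes according to the relative sizes of $\norm{Pw}_2^2$, $\theta$ and $\norm{PUh}_2^2$, which is both more elementary and immune to the $\norm{T}_{\Lp{2}\to\Lp{2}}=1$ issue.
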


\begin{Proof}[Proof of Lemma~\ref{lmm:Bounds on B-inverse}]
We will prove the estimates \eqref{m L2-bounded: B-inv norm bound on L2 and BB} and \eqref{m BB-bounded: B-inv norm bound on L2 and BB} partly in parallel. 
Depending on the case, $ z $ is always assumed to lie inside the appropriate domain, i.e., either $ z $ is fixed such that $ \norm{m(z)}_2 \leq \Lambda $, or $ \nnorm{m}_{\sett{\tau}} \leq \Phi $, with $ \Re\,z = \tau $.
Besides this, we consider $ z $ to be fixed.
Correspondingly, the comparison relations in this proof depend on either $ (\rho,L, \norm{a}, \norm{S}_{\Lp{2}\to\BB}, \Lambda) $ or $ (\rho,L,\norm{a}, \norm{S}_{\Lp{2}\to\BB},\Phi) $ (cf. Convention~\ref{conv:Standard model parameters}).  
We will also drop the explicit $ z $-arguments in order to make the following formulas more transparent. In both  cases the lower bound $\abs{m_x(z)} \gtrsim 1$ follows from \eqref{unif bound of m}.

We start the analysis by noting that it suffices to consider only the norm of $ B^{-1} $ on $ \Lp{2} $, since
\bels{inv-B-norm: from L2toL2 to BBtoBB}{
\norm{B^{-1}} \;\lesssim\, 1 \,+ \norm{m}^2\norm{B^{-1}}_{\Lp{2}\to\Lp{2}}
\,.
}
In order to see this, we use the smoothing property {\bf A2} of $S$ as in the proof of \eqref{resolvent of F on L2 and BB} before. In fact, besides replacing the complex number $\zeta $ with the function $ \nE^{2\1\cI\1\am} $, the proof of \eqref{BB-resolvent bound from L2-resolvent bound} carries over without further changes.

By the general property \eqref{F and alpha} of $ F $ we know that $ \norm{F}_{\Lp{2}\to\Lp{2}} \leq 1 $.
Furthermore, it is immanent from the definition of $F$ and \eqref{averaged row is comparable to BB-norm of S} that $\norm{F}_{\Lp{2}\to \Lp{2}}\gtrsim \inf_x|m_x|^2\gtrsim 1$ in both of the considered cases. 
This shows that the hypotheses of Lemma~\ref{lmm:Norm of B^-1-type operators on L2} are met, and hence
\bels{the norm term}{
\norm{B^{-1}}_{\Lp{2}\to\Lp{2}} 
\;\lesssim\; 
\mrm{Gap}(F)^{-1}\,\absb{\,1 \,- \norm{F}_{\Lp{2}\to\Lp{2}}\2\avg{\1\nE^{\1\cI\12\am}f^{\12}\1}\1}^{-1}
\,,
}
where we have also used $ \norm{F}_{\Lp{2}\to\Lp{2}} \sim 1 $. Now, by basic trigonometry,
\[
\avg{\1\nE^{\1\cI\12\am}f^{\12}\1} \;=\; \avgb{\1(1-2\sin^2 \am\1)\1f^{\12}} \,+\, \cI\12\2\avgb{f^{\12} \sin \am \cos \am }
\,,
\]
and therefore we get
\bels{v^2+uv bound}{
&\absb{\21 \,-\, \norm{F}_{\Lp{2}\to\Lp{2}}\2\avgb{\1\nE^{\1\cI\12\am}f^{\12}\1}\1}
\\
&\gtrsim\,
1-\norm{F}_{\Lp{2}\to\Lp{2}}
+
\norm{f \sin \am}^2_2
\,+\; 
\absb{\avgb{f^{\12}\sin \am \cos \am}}
\,.
}
Here,  we have again used $1 \lesssim \norm{F}_{\Lp{2}\to\Lp{2}} \leq 1 $. Substituting this back into \eqref{the norm term}  yields
\bels{final general bound for Gamma_2}{
\norm{B^{-1}}_{\Lp{2}\to\Lp{2}}
\,\leq\, \frac{1}{\mrm{Gap}(F)}\, \frac{1}{\,1\,-\norm{F}_{\Lp{2}\to\Lp{2}}\!+
\norm{f \sin \am}^2_2
+\abs{\1\avg{f^{\12}\sin \am \cos \am\1}}\,}
\,.
}

\medskip
\noindent{\scshape Case 1 }($ m $ with $ \Lp{2}$-bound): 
In this case we drop the $ \avg{f^{\12}\sin \am \cos \am} $ term and estimate
\bels{bound for sin^2-term}{
\norm{f \sin \am}_2
\,\geq\, 
\norm{f}_2\,
\inf_x \sin  \am_x
\,=\,
\inf_x\frac{v_x}{|m_x|}
\,\gtrsim\,
\avg{\1v\1}^2
\,,
}
where the bounds  $ \norm{m} \lesssim \Lambda^C\avg{v}^{-1} \sim \avg{v}^{-1} $ and $ v \gtrsim \Lambda^{-C}\avg{v} \sim \avg{v} $ from Lemma~\ref{lmm:Constraints on solution} were used in the last inequality.  
Plugging \eqref{bound for sin^2-term} back into \eqref{final general bound for Gamma_2}, and using \eqref{Gap for wht-F} to estimate $ \mrm{Gap}(F) = \mrm{Gap}(\wht{F}(\abs{m})) \gtrsim \Lambda^{-C}\norm{m}^{-8} \gtrsim \avg{v}^{8} $ yields the desired bound:
\bels{final estimate for Gamma_2 when S not regular}{ 
\norm{B^{-1}}_{\Lp{2}\to\Lp{2}}
\,\lesssim\,
\mrm{Gap}(F)^{-1}\,
\norm{f \sin \am}_2^{-2}
\;\lesssim\;
\avg{v}^{-8}\avg{v}^{-4}
\sim\,
\avg{\2v\2}^{-12}\,.
}
The operator norm bound on $ \BB $ follows by combining this estimate with \eqref{inv-B-norm: from L2toL2 to BBtoBB}, and then using \eqref{unif bound of m} to estimate $ \norm{m} \lesssim \Lambda^{-2L+2}\avg{v}^{-1} \sim \avg{v}^{-1} $.
 
\medskip
\noindent{\scshape Case 2 }($ m $ uniformly bounded):
Now we assume $ \norm{m} \leq \Phi \sim 1 $, and thus all the bounds of Proposition~\ref{prp:Estimates when solution is bounded} are at our disposal.
This will allow us to extract useful information from the term $ \abs{\avg{f^{\12}\sin \am \,\cos \am\1}} $ in \eqref{final general bound for Gamma_2} that was neglected in the derivation of \eqref{final estimate for Gamma_2 when S not regular}.
Clearly, $ \abs{\avg{f^{\12}\sin \am \,\cos \am\1}} $ can have an important effect on \eqref{final general bound for Gamma_2} only when the term $ \norm{f \sin \am\1}_2 $ is small. Moreover, using $ |m_x| \sim 1 $ we see that this is equivalent to $ \sin \am_x  = v_x/|m_x| \sim \avg{v} $ being small.
Since $ \avg{v} \gtrsim \Im\,z $, for $ \abs{z} \leq 2\1\Sigma \sim 1 $, 
the imaginary part of $z$ will also be small in the relevant regime.

Writing the imaginary part of the QVE in terms of $ \sin \am = v/\abs{m} $, we get
\bels{QVE for sin a}{
\sin \am = \abs{m}\,\Im\,z \,+\,F \sin \am 
\,.
}
Since we are interested in a regime where $ \Im\,z $ is  small, this implies, recalling $ Ff = f $, that $ \sin \am $ will then almost lie in the span of $ f $. To make this explicit, we decompose
\bels{sin a = alpha f + eta t}{
\sin \am = \alpha\2f + (\Im\,z)\2t
\,,
\quad\text{with}\quad
\alpha = \avg{\1f,\sin \am\1}
\,,
}
for some $ t \in \BB $ satisfying  $ \avg{f,t\1} = 0 $. 
Let $ Q^{(0)} $ denote the orthogonal projection $ Q^{(0)}w := w-\avg{f,w}\1f $. Solving for $ t $ in \eqref{QVE for sin a} yields: 
\bels{expression for t}{
t \,=\, (\Im\,z)^{-1}Q^{(0)} \sin \am \,=\, (\11-F\1)^{-1}Q^{(0)}\abs{m}
\,.
}
Proposition~\ref{prp:Estimates when solution is bounded} implies $ \mrm{Gap}(F) \sim 1 $. Therefore we have 
\[
\norm{Q^{(0)}(1-F)^{-1}Q^{(0)}}_{\Lp{2}\to\Lp{2}} \,\lesssim \,\mrm{Gap}(F)^{-1}\, \sim \,1\,.
\]
In fact, since $f_x \sim 1$, a formula analogous to \eqref{inv-B-norm: from L2toL2 to BBtoBB} applies, and thus we find
\[ 
\norm{Q^{(0)}(\11-F\1)^{-1}Q^{(0)}}\, \lesssim \,1\,  .
\] 
Applying this in \eqref{expression for t} yields $ \norm{\1t\1} \lesssim 1 $, and therefore
\bels{final expansion for sin a}{
\sin \am \,=\, \alpha\1 f \,+\, \Ord_\BB(\1\Im\,z)\,.
}
Moreover, since we will later use the smallness of $ \avg{v} \sim \sin \am_x \sim \alpha $, we may expand
\bels{expansion of cos a}{
\cos \am 
\,=\, 
(\mrm{sign} \cos \am\2)\,(\21-\sin^2 \am\,)^{1/2} 
\,=\,
\sign \Re\,m \,+\, \Ord_\BB(\alpha^2)
\,.
}
Combining this with \eqref{final expansion for sin a} yields  
\bels{final expansion for sin a cos a -term}{
\avgb{\1f^{\12} \sin \am \cos \am\2} 
\,&=\,
\avgB{\1f^{\12}\2\bigl(\alpha\1f+\Ord_\BB(\Im\,z)\2\bigr)\bigl(\2\sign \Re\,m \,+\, \Ord_\BB(\alpha^2)\1\bigr)}
\\
&=\,  
\sigma\,\alpha \,+\, \Ord\big(\1\avg{\1v\1}^3+\Im\2z\big)
\,,
}
where we have again used $ \alpha \sim \avg{\1v\1} $, and used the definition,  $ \sigma = \avg{\1f^{\13}\2\sign(\Re\,m\1)} $, from the statement of the lemma. 

For the term
$1-\norm{F}_{\Lp{2}\to\Lp{2}}$ in the denominator of the r.h.s.
of the main estimate \eqref{final general bound for Gamma_2} we make use of the explicit formula \eqref{F and alpha} for the spectral radius of $F$,
\bels{F and alpha 2}{
1-\norm{F}_{\Lp{2}\to\Lp{2}}
\,=\, 
\frac{\Im\2 z}{\alpha}\,\la\1 f \2|m| \1\ra  
\,.
}

By Proposition~\ref{prp:Estimates when solution is bounded} we have $f_x \sim 1$, $|m_x|\sim 1 $ and $\mrm{Gap}(F) \sim 1$. Using this knowledge in combination with \eqref{final expansion for sin a cos a -term}, \eqref{F and alpha 2} and $\alpha \sim \la \1 v \1\ra$ we estimate the r.h.s. of 
\eqref{final general bound for Gamma_2} further:
\bels{bound for Gamma_2 end}{
\norm{B^{-1}}_{\Lp{2}\to\Lp{2}}
\;&\lesssim\;
\frac{\avg{\1v\1}}{\,\avg{\1v\1}^3 +\2\avg{\1f\1\abs{m}\1}\,\Im\, z\,+\,\absb{\1\sigma\avg{\1v\1}^2 \,+\,\Ord\big(\1\avg{\1v\1}^4+\avg{v}\,\Im\2z\big)}\,}
\,.
}

Let us now see how from this and \eqref{inv-B-norm: from L2toL2 to BBtoBB} the claim \eqref{B-inv norm bound on BB at E-line} follows.
Clearly, it suffices to consider only the case where $ \avg{v} \leq \eps $ for some $ \eps \sim 1 $. 
If $\avg{\1v\1} \ge \abs{\sigma}$, then the $\avg{\1v\1}^3$-term in the denominator is alone suffices for the final result. We may therefore assume that $\la \1v\1\ra \leq \abs{\sigma} $.
We are also done if $\Im \2z \ge |\sigma|\la \1v\1\ra^2$ since then we may use the second summand on the r.h.s. of \eqref{bound for Gamma_2 end} to get the $|\sigma|\la \1v\1\ra$-term we need for \eqref{B-inv norm bound on BB at E-line}. In particular, we can assume that the error term in \eqref{bound for Gamma_2 end} is $\Ord\bigl(|\sigma|\la \1v\1\ra^3\bigr)$. 
The bound \eqref{B-inv norm bound on BB at E-line} thus follows by choosing $ \eps \sim 1 $ small enough.
\end{Proof}

We will now show that the perturbed QVE \eqref{perturbed QVE - 2nd time} is stable as long as a priori bound on $ m $ and $ B^{-1} $ is available.

\begin{lemma}[Stability when $ m $ and $ B^{-1}	$ bounded] 
\label{lmm:Stability when m and inv-B bounded}
Assume {\bf A1}. Suppose  $ g,d \in \BB $, with $ \inf_x \abs{g_x} > 0 $,  satisfy the perturbed QVE \eqref{perturbed QVE - 2nd time} at some point $ z \in \Cp $. 
Assume 
\bels{quantitative bulk stability assumptions}{
\norm{m(z)} \,\leq\, \Phi\,,
\qquad\text{and}\qquad
\norm{B(z)^{-1}} \,\leq\, \Psi 
\,,
} 
for some constants $\Phi,\Psi \geq 1$.
There exists  a linear operator $ J(z) $ acting on $ \BB $,  and depending only on $ S $ and $ a $ in addition to $ z $, with  
$ \norm{J(z)}\leq 1 $,  such that if
\bels{small bulk perturbation}{
\norm{g-m(z)}
\,\leq\, ¨
\frac{1}{2\2 \max\sett{\11\1,\norm{S}}\2\Phi\1\Psi\1}
\,,
}
then the correction $ g-m(z) $ satisfies 
\begin{subequations}
\label{bulk perturbations}
\begin{align}
\label{bulk perturbations: BB-bound}
\qquad
\norm{g-m(z)} 
\;&\leq\; 
3\2\Psi\2\Phi^2\norm{\1d\1}
\\
\label{bulk perturbations: w-avg-bound}
\abs{\avg{w,g-m(z)}} \;&\leq\;
12\2 \max\sett{\11\1,\norm{S}}\2\Psi^3\Phi^5
\norm{w}_1\norm{\1d\1}^2
\\
\notag
&\;\quad+\,
\Psi\2\Phi^2\abs{\1\avg{J(z)\1w,d\2}}
\,,
\end{align}
\end{subequations}
for any $w \in \BB $. 
\end{lemma}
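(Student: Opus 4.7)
The plan is to use Lemma~\ref{lmm:Perturbations} to convert the perturbed and unperturbed QVEs into a single self-consistent equation for the difference $\Delta := g-m(z)$, and to solve this equation by elementary iteration in which the quadratic-in-$\Delta$ terms are absorbed into the left-hand side under the smallness hypothesis. Setting $u := \Delta/\abs{m}$ and applying $\abs{m}\1B^{-1}$ to the identity provided by Lemma~\ref{lmm:Perturbations}, using $\abs{m}\1u = \Delta$ and $u\1Fu = \Delta\1(S\Delta)$, yields
\begin{equation*}
\Delta \,=\, \abs{m}\2 B^{-1}\bigl[\2\Delta\2\nE^{-\cI\1\am} S\Delta \,+\, \abs{m}\1 d \,+\, \Delta\2\nE^{-\cI\1\am} d\2\bigr]\,.
\end{equation*}
Both bounds of the lemma will be extracted from this identity by taking appropriate norms or pairings.

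For \eqref{bulk perturbations: BB-bound}, I would take the $\BB$-norm of both sides, using $\norm{B^{-1}}\leq\Psi$, $\norm{m}\leq\Phi$ and the trivial bound $\norm{F}\leq\norm{S}\norm{m}^2$, to arrive at the quadratic inequality
\begin{equation*}
\norm{\Delta} \,\leq\, \Phi\2\Psi\bigl(\2\norm{S}\,\norm{\Delta}^2 \,+\, \Phi\,\norm{d} \,+\, \norm{\Delta}\,\norm{d}\2\bigr)\,.
\end{equation*}
The smallness hypothesis \eqref{small bulk perturbation} is tailored so that $\Phi\Psi\norm{S}\norm{\Delta}\leq\tfrac{1}{2}$ and $\Phi\Psi\norm{\Delta}\leq\tfrac{1}{2}\max\sett{1,\norm{S}}^{-1}$; consequently the two $\norm{\Delta}$-dependent pieces on the right are absorbed into $\tfrac{1}{2}\norm{\Delta}$ and $\tfrac{1}{2}\norm{d}$ respectively, and solving the remaining linear inequality together with $\Phi,\Psi\geq 1$ gives $\norm{\Delta}\leq 3\1\Phi^2\1\Psi\norm{d}$.

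For \eqref{bulk perturbations: w-avg-bound}, I would pair both sides of the identity for $\Delta$ with $w$ via $\avg{\genarg,\genarg}$. The two quadratic-in-$\Delta$ contributions are estimated by $\abs{\avg{w,v}}\leq\norm{w}_1\norm{v}$ combined with $\norm{\abs{m}B^{-1}[\cdots]}\leq\Phi\Psi(\norm{S}\norm{\Delta}^2 + \norm{\Delta}\norm{d})$ and the just-proved sup-norm bound on $\norm{\Delta}$; tracking constants this contributes at most $(9\1\norm{S}+3)\1\Phi^5\1\Psi^3\1\norm{w}_1\norm{d}^2 \leq 12\1\max\sett{1,\norm{S}}\1\Psi^3\Phi^5\norm{w}_1\norm{d}^2$. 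The linear-in-$d$ piece is $\avg{w,T\1d}$ where $T:=\abs{m}B^{-1}\abs{m}$, and the key algebraic observation is that $B = \nE^{-\cI 2\1\am}-F$ is \emph{symmetric} as a bilinear form on $\Lp{2}$, because both the multiplication operator $\nE^{-\cI 2\am}$ and $F=\abs{m}S\abs{m}$ have symmetric kernels (the latter by {\bf A1}). Hence $B^{-1}$ and $T$ also have symmetric kernels, so the $\avg{\genarg,\genarg}$-adjoint $T^{*}$ is simply the pointwise complex conjugate of $T$, and in particular $\norm{T^{*}} = \norm{T}\leq\Phi^2\Psi$. Defining $J(z) := T^{*}/(\Phi^2\Psi)$ then yields a linear operator on $\BB$ with $\norm{J(z)}\leq 1$ that depends only on $m(z)$ and $B(z)$, hence only on $S$, $a$ and $z$, and satisfies $\Phi^2\Psi\2\avg{J(z)w,d} = \avg{w,\abs{m}B^{-1}(\abs{m}d)}$; combining this identity with the quadratic estimates gives \eqref{bulk perturbations: w-avg-bound}.

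The only subtle point in the argument is the construction of $J(z)$ with unit operator norm. Without the observation that $B$ has a symmetric kernel, the norm of $T^{*}$ on $\BB$ would correspond to column-sums of $T$ rather than row-sums and could not be controlled by $\norm{T}$; since no smoothing assumption like {\bf A2} is available in the hypotheses of this lemma, there is no obvious alternative route. Once this symmetry observation is in place, the remainder of the proof consists of the iteration of a quadratic inequality and the bookkeeping of constants indicated above.
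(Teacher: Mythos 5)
Your proof is correct and follows essentially the same route as the paper: the same identity $g-m = \abs{m}\,B^{-1}\bigl[\1\nE^{-\cI\am}(g-m)\2S(g-m)+(\abs{m}+\nE^{-\cI\am}(g-m))\1d\1\bigr]$, the same absorption of the quadratic terms under \eqref{small bulk perturbation} with the same constants, and the same operator $J = (\Phi^2\Psi)^{-1}\bigl(\abs{m}\1B^{-1}\abs{m}\bigr)^{\!\ast}$, whose unit norm the paper likewise obtains from the observation that $B^\ast$ is $B$ with $\am$ replaced by $-\am$, i.e.\ from the same conjugation symmetry you invoke. The only cosmetic caveat is that speaking of "symmetric kernels" is not literally available under {\bf A1} alone (no integral kernel is assumed there); the identical conclusion $\norm{(B^\ast)^{-1}}=\norm{B^{-1}}\leq\Psi$ follows because $S$, hence $F$, is real and self-adjoint, so that $B^\ast w = \overline{B\,\overline{w}}$, which is exactly the content of the paper's one-line remark.
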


\begin{Proof}
Expressing  \eqref{eq. for u} in terms of $ h = g-m = \abs{m}\1u $, and re-arranging we obtain
\bels{exact formula for h}{
h \,=\, \abs{m}\2B^{-1}\bigl[\1\nE^{-\cI\1\am}\1h\2Sh \,+\, (\1\abs{m} + \2\nE^{-\cI \1\am}h\1)\1d\,\bigr]
\,.
}
Taking the $ \BB$-norm of \eqref{exact formula for h} yields
\[
\norm{h} 
\;\leq\; 
\Phi\2\Psi\1\norm{S}\norm{h}^2 \,+\,(\1\Phi^2\Psi+\Phi\1\Psi\1\norm{h})\2\norm{d}
\,.
\]
Under the hypothesis \eqref{small bulk perturbation} the two summands on the right hand side are less than $ (1/2)\norm{h} $ and $ (3/2)\2\Phi^2\Psi\1\norm{d} $, respectively. 
Rearranging thus yields \eqref{bulk perturbations: BB-bound}.

In order to prove \eqref{bulk perturbations: w-avg-bound} we apply the linear functional $ u \mapsto \avg{w,u} $ on \eqref{exact formula for h}, and get
\bels{bulk main bound for w-average of d}{
\abs{\1\avg{w,h\1}} 
\;&\leq\; 
\absb{\avgb{\1w,\abs{m}B^{-1}(\nE^{\cI\1\am}h\2Sh)\1}} 
\,+\,
\absb{\avgb{\1w,\abs{m}B^{-1}(\1\nE^{\cI\1\am}h\1d)\1}} 
\\
&\quad+\,
\Psi\1\Phi^2
\absb{\avg{\1J\1w,d\1}} 
\,,
}
where we have identified the operator $ J \,:=\, (\Psi\Phi^2)^{-1}\abs{m}\,(B^{-1})^\ast(\1\abs{m}\,\genarg) $ from the statement.
Clearly, $ B^\ast $ is like $ B $ except the angle function $ \am $ is replaced by $ - \am $ in the definition \eqref{def of B and a}. In particular, $ \norm{(B^\ast)^{-1}} \leq \Psi $, and thus $ \norm{J} \leq 1 $.
The estimate \eqref{bulk perturbations: w-avg-bound} now follows by bounding the first two term on the right hand side of \eqref{bulk main bound for w-average of d} separately:
\bels{bulk w-avg terms}{
\absb{\avgb{\1w,\abs{m}B^{-1}(\nE^{\cI\1\am}h\2Sh)\1}} 
\,&\leq\,
\norm{w}_1\,\normb{\1\abs{m}\2B^{-1}(\nE^{\cI\1\am}h\2Sh)} 
\\
&\leq\,
9\2\norm{S}\1\Phi^5\Psi^3\norm{w}_1\norm{d}^2
\\ 
\absb{\avgb{\1w,\abs{m}B^{-1}(\1\nE^{\cI\1\am}h\1d)\1}} 
\,&\leq\,
\norm{w}_1\,\normb{\1\abs{m}\2B^{-1}(\1\nE^{\cI\1\am}h\1d)\1} 
\,\leq\,
3\1\Phi^3\Psi^2\1\norm{w}_1\norm{d}^2
\,.
}
For the rightmost estimates we have used \eqref{bulk perturbations: BB-bound} to get $ \norm{h\2Sh} \leq \norm{S}\norm{h}^2 \leq 9\2\norm{S}\Phi^4\Psi^2\norm{d}^2 $, and $ \norm{h\1d} \leq 3\1\Phi^2\Psi\1\norm{d}^2 $, respectively. Now plugging \eqref{bulk w-avg terms} into \eqref{bulk main bound for w-average of d} and recalling $ \Phi,\Psi \ge 1 $ yields \eqref{bulk perturbations: w-avg-bound}.
\end{Proof}

\chapter{Uniform bounds}
\label{chp:Uniform bounds}

Our main results, such as Theorem~\ref{thr:Shape of generating density near its small values} rely on the assumption that the solution $ m $ of the QVE is  uniformly bounded. In other words, we assume that there is an upper bound $ \Phi < \infty $, such that 
\bels{nnorm-m on R bounded by Phi}{
\nnorm{m}_\R \leq \Phi
\,, 
}
and our results deteriorate as $ \Phi $ becomes larger.
In this chapter we introduce two sufficient quantitative conditions, {\bf B1} and {\bf B2} on $ a $ and $ S $ that make it possible to to construct a constant $ \Phi < \infty $ in \eqref{nnorm-m on R bounded by Phi} that depend on $ S $ and $ a $ only through a few  model parameters.
These extra conditions will always be assumed in conjunction with the properties {\bf A1} and {\bf A2}. 

To this end, we introduce a strictly increasing auxiliary function $ \Gamma : [\10,\infty) \to  [\10,\infty) $, determined by $ a $ and $ S $: 
\bels{def of Gamma}{
\Gamma(\tau) \,:=\, \inf_{x \2\in\2 \Sx}
\sqrt{\int_\Sx\,
\Bigl(\,\frac{1}{\tau} + \abs{a_y-a_x} +\norm{S_y-S_x}_2\Bigr)^{\!-2}\msp{-8}\Px(\dif y)\;}
\;.
}
We also define the upper limit on the range of $ \Gamma $,
\bels{def of Gamma(infty)}{
\Gamma(\infty) \,:=\, \lim_{\tau\to\infty} \Gamma(\tau)
\,.
}
As a strictly increasing function $ \Gamma $ has an inverse $ \Gamma^{-1} $ defined on $ (0,\Gamma(\infty)) $. This inverse satisfies $ \Gamma^{-1}(\lambda) > \lambda $, for $ 0< \lambda < \infty $, and we extend it to $ (\20\1,\infty\1) $ by setting $ \Gamma^{-1}(\lambda) := \infty $, when $ \lambda \ge \Gamma(\infty) $. 

The function $ \Gamma(\tau) $ will be used to convert $ \Lp{2} $ bounds on $ m(z) $ into  uniform bounds.
We will consider the cases $ a = 0 $ and $ a \neq 0 $ separately. 

When $ a = 0 $ Lemma~\ref{lmm:Structural L2-bound} implies $ \norm{m(z)}_2 \leq 2\1\abs{z}^{-1} $, and hence we only need to obtain an additional $ \Lp{2}$-estimate for $ m(z) $ around $ z = 0 $.
To this end, we introduce the following condition:
\begin{itemize}
\item[{\bf B1}]
\label{def of B1}
\emph{Quantitative block fully indecomposability:} 
There exist two constants $ \varphi > 0 $, $ K \in \N $, a fully indecomposable matrix $ \brm{Z} = (Z_{ij})_{i,j=1}^K $, with $Z_{ij} \in \sett{0,1} $, and a measurable partition $ \mcl{I} := \sett{I_j}_{j=1}^K $ of $ \Sx $, such that for every $ 1 \leq i,j \leq K $ the following holds:
\bels{B2: Quantitative block FID condition}{
\Px(I_j) \,=\,\frac{1}{K}\,,
\qquad\text{and}\qquad
S_{xy} \,\ge\,\varphi\1 Z_{ij}\,,\quad\text{whenever}\quad(x,y) \in I_i \times I_j
\,.
}
\end{itemize}
Here the constants $ \varphi, K $ are the model parameters associated to {\bf B1}.
The property {\bf B1} amounts to a quantitative way of requiring $ S $ to be a block fully indecomposable operator (cf. Definition~\ref{def:Full indecomposability}).
We also remark that {\bf B1} implies {\bf A3} by the part (iii) of Proposition~\ref{prp:Properties of FID matrices} and the estimate \eqref{Z FID implies S satisfies A3} below.

Our main result concerning the uniform boundedness in the case $ a = 0 $ is the following:

\begin{theorem}[Quantitative uniform bounds when $a =0$]
\label{thr:Quantitative uniform bounds when a = 0}
Suppose $ a = 0 $, and assume $ S $ satisfies {\bf A1} and {\bf A2}. 
Then the following uniform bounds hold:
\begin{itemize}
\titem{i}
{\bf Neighborhood of zero:} If additionally {\bf B1} holds, then there are constants $ \delta > 0  $ and $  \Phi < \infty $, both depending only on $ S $ only through the parameters $ \varphi,K $, s.t.,   
\bels{bound around tau=0}{
\norm{\1m(z)} \,\leq\,\Phi
\,,\qquad\text{for}\qquad
\abs{z} \leq \delta
\,.
}
\titem{ii}
{\bf Away from zero:} 
\bels{uniform bound outside z=0}{
\norm{m(z)}
\,\leq\,
\frac{\abs{z}}{2}\,\Gamma^{-1}\msp{-2}\Bigl(\frac{4}{\2\abs{z}^2\!}\Bigr)
\,,\qquad\text{for}\quad
\abs{z} >\2 \frac{\,2}{\!\sqrt{\2\Gamma(\infty)}}
\,.
}
\end{itemize}

In particular, if $ S $ satisfies {\bf B1} and $ \Gamma(\infty) > 4\2\delta^{\1-2} $, then
\bels{uniform bound everywhere when a=0}{
\nnorm{m}_\R
\,\leq\,
\max\setbb{\Phi\2,\2\frac{\1\delta\1}{2}\2\Gamma^{-1}\msp{-1}\Bigl( \frac{4}{\2\delta^{\12}\!}\Bigr)\!}
\,,
}
where $ \delta $ and $ \Phi $ are from \eqref{bound around tau=0}.  
\end{theorem}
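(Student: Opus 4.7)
The plan splits naturally into three pieces: an $\Lp{2}$-to-$\Lp{\infty}$ conversion for part~(ii), a scaling-theoretic argument for part~(i) on the imaginary axis, and a perturbative extension plus combination to reach the global bound \eqref{uniform bound everywhere when a=0}.

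For part~(ii), I would fix any $x_0 \in \Sx$ and set $\tau := 2|m_{x_0}(z)|/|z|$. Since $a=0$, subtracting the QVE at $x_0$ from the QVE at $y$ gives
\[
\frac{1}{m_y(z)}-\frac{1}{m_{x_0}(z)}\,=\,\avgb{\1 S_{x_0}-S_y,\, m(z)\1}\,.
\]
Combining this with Cauchy--Schwarz and the structural bound $\norm{m(z)}_2\leq 2/|z|$ yields
\[
|m_y(z)|\,\geq\,\frac{|z|/2}{1/\tau+\norm{S_y-S_{x_0}}_2}\,.
\]
Squaring, integrating against $\Px$ in $y$, and again bounding $\norm{m(z)}_2^2\leq 4/|z|^2$ produces
\[
\frac{4}{|z|^2}\,\geq\,\frac{|z|^2}{4}\!\int_\Sx\!\Bigl(\tfrac{1}{\tau}+\norm{S_y-S_{x_0}}_2\Bigr)^{\!-2}\!\Px(\dif y)\,\geq\,\frac{|z|^2}{4}\,\Gamma(\tau)^2\,,
\]
where the last inequality is the definition \eqref{def of Gamma} specialized to $x=x_0$ and to $a=0$. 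Hence $\Gamma(\tau)\leq 4/|z|^2$, and in the range $4/|z|^2<\Gamma(\infty)$ inversion gives $|m_{x_0}(z)|\leq (|z|/2)\,\Gamma^{-1}(4/|z|^2)$. Taking the supremum over $x_0$ proves \eqref{uniform bound outside z=0}.

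For part~(i) the $\Lp{2}$-bound degenerates as $|z|\to 0$ and must be replaced by the scaling structure supplied by {\bf B1}. Using the symmetry \eqref{m symmetry when a=0}, on the imaginary axis $z=\cI\eta$ with $\eta>0$ the solution is purely imaginary, $m(\cI\eta)=\cI\, v(\cI\eta)$ with $v>0$, and the QVE reduces to the pointwise identity $v_x(\eta+(Sv)_x)=1$. In the limit $\eta\downarrow 0$ this says exactly that the symmetric kernel $\wti{S}_{xy}:=v_x S_{xy}v_y$ has row-sums equal to $1$, i.e.\ it is doubly stochastic, and $v$ realizes the Sinkhorn-type scaling of $S$. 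Hypothesis {\bf B1} is a quantitative block version of full indecomposability of $S$, and the classical scaling theorem for FID matrices, referenced through Proposition~\ref{prp:Properties of FID matrices} and Theorem~\ref{thr:Scalability and full indecomposability}, guarantees existence and uniqueness of such a $v$ together with componentwise upper and lower bounds depending only on $\varphi$ and $K$. This furnishes $\norm{m(\cI\eta)}=\norm{v(\cI\eta)}\leq \Phi_0$ uniformly for small $\eta>0$, with $\Phi_0$ depending only on the FID parameters.

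To promote this axis estimate into a complex neighborhood of $0$, I would deploy the stability machinery. At $z_0=\cI\eta_0$ we have $\norm{m(z_0)}\leq\Phi_0$, and by Lemma~\ref{lmm:Bounds on B-inverse}(ii) (applicable because {\bf B1} implies {\bf A3}) also $\norm{B(z_0)^{-1}}\leq\Psi_0$, so Lemma~\ref{lmm:Stability when m and inv-B bounded} applied with the constant perturbation $d=z-z_0$ yields $\norm{m(z)-m(z_0)}\lesssim|z-z_0|$ on a disk around $z_0$ of radius $\sim 1/(\Phi_0\Psi_0)$. Covering $\{|z|\leq\delta\}\cap\Cp_{\eta_0}$ by finitely many such disks and then sending $\eta_0\downarrow 0$ produces \eqref{bound around tau=0}. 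The main obstacle of the whole proof is precisely this extension step: the scaling theorem only controls $m$ on the imaginary axis, whereas part~(i) demands uniformity over a full two-dimensional neighborhood of the origin in which neither the $\Lp{2}$-bound nor the trivial bound $1/\Im z$ is useful; the entire burden therefore falls on propagating the FID bound quantitatively through $B^{-1}$ and the stability lemma. The global bound \eqref{uniform bound everywhere when a=0} then follows by juxtaposing the two regimes: for $|z|\leq\delta$ part~(i) gives $\norm{m(z)}\leq\Phi$, while for $|z|>\delta$ the hypothesis $\Gamma(\infty)>4/\delta^2$ makes part~(ii) applicable, and monotonicity of $|z|\mapsto(|z|/2)\Gamma^{-1}(4/|z|^2)$ in the relevant range places its supremum at the boundary value $|z|=\delta$.
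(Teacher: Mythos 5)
Your part~(ii) is correct and is the paper's own argument: it is exactly the combination of the structural bound $\norm{m(z)}_2\leq 2/\abs{z}$ with the conversion mechanism of Proposition~\ref{prp:Converting L2-estimates to uniform bounds} (you simply re-derive the conversion instead of citing it), and your closing observation that $s\mapsto (s/2)\,\Gamma^{-1}(4/s^2)$ is non-increasing (equivalently $\Gamma(\tau)^2/\tau^2$ is non-increasing, which is clear from \eqref{def of Gamma}) is a valid way to get \eqref{uniform bound everywhere when a=0}; the paper instead just reuses the weaker $\Lp{2}$-bound $\Lambda=2/\delta$ for all $\abs{z}\ge\delta$. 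The extension step off the imaginary axis via $\norm{B(\cI\eta)^{-1}}\lesssim 1$ and Lemma~\ref{lmm:Stability when m and inv-B bounded} is also the paper's Step~2, modulo the (standard, but unstated) continuation argument needed because that lemma only applies under the a priori hypothesis \eqref{small bulk perturbation}.

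The genuine gap is the core of part~(i): the uniform bound $\norm{v(\cI\eta)}\leq\Phi_0$ with $\Phi_0$ depending only on $(\varphi,K)$. You obtain it by saying that at $\eta=0$ the function $v$ realizes a Sinkhorn scaling of $S$ and that "the classical scaling theorem for FID matrices" (Proposition~\ref{prp:Properties of FID matrices}, Theorem~\ref{thr:Scalability and full indecomposability}, Theorem~\ref{thr:General scalability}) supplies existence, uniqueness and componentwise bounds depending only on $\varphi$ and $K$. It does not. Those results concern finite square matrices, whereas here $S$ is an operator on a general probability space and {\bf B1} only says that $S$ \emph{dominates} a block-constant FID structure with $K$ blocks; and even in the discrete case the scaling theorems are purely qualitative (existence/uniqueness), with bounds that, as the paper itself remarks, may deteriorate with the dimension. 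The quantitative statement you need is precisely what the paper has to prove from scratch: it characterizes $v(\cI\eta)$, for each $\eta>0$ (not only in the limit $\eta\downarrow 0$, which sidesteps the question of whether the limit exists and whether the bound is uniform in $\eta$), as the minimizer of the functional $J_\eta$ in Lemma~\ref{lmm:Characterization as minimizer}, compares with the constant test function to get $J_\eta(v)\leq 3$, pushes this through Jensen's inequality onto the $K$-dimensional discretized functional $\wti{J}$ built from the {\bf B1} blocks, and only then uses full indecomposability of $\brm{Z}$ quantitatively via Lemma~\ref{lmm:Uniform bound on discrete minimizer} to bound the block averages $\avg{v}_i$, finally upgrading $\avg{v}\lesssim 1$ to $\norm{v}\lesssim 1$ through the Jensen/$S^2$-kernel estimate \eqref{v at zero: L1 bound implies uniform bound}. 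Without some substitute for this variational (or an equivalent quantitative continuous-scaling) argument, your part~(i), and hence \eqref{bound around tau=0} and \eqref{uniform bound everywhere when a=0}, is not established.
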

The condition in (i) for the bound around $ z = 0 $ is optimal for block operators by  Theorem~\ref{thr:Scalability and full indecomposability} below. 
In Section~\ref{sec:Blow-up at z=0 when a=0 and assumption B1} we have collected simple examples that demonstrate how the solution can become unbounded around $ z = 0 $ when the condition {\bf B1} does not hold. 
In order to demonstrate the role of $ \Gamma $ in the part (ii) of the theorem we demonstrate in Section~\ref{sec:Divergences for special x-values: Outlier rows} that some components of the solution of the QVE may blow up even when {\bf A1-3} hold uniformly.

\begin{remark}[Piecewise $ 1/2$-H\"older continuous rows when $a=0$]
\label{rmk:Piecewise 1/2-Holder continuous rows when a=0}
Consider the setup $ (\Sx,\Px) = (\1[\10\1,1],\dif x) $ with $ a = 0 $.  
Assume $ S $ satisfies {\bf A1-2}, and that its rows $ x \mapsto S_x  \in \Lp{2} $ are piecewise $1/2$-H\"older continuous, such that \eqref{def of PW-1/2-Holder} holds for some finite partition $ \sett{I_k} $ of $[0,1]$ with $ \min_k \abs{I_k}>0$. 
Since the function $ \tau \mapsto \abs{\tau}^{-1} $ is not integrable around $ \tau = 0 $ the range of $ \Gamma $ is unbounded, i.e., $ \Gamma(\infty) = \infty $. 
Therefore applying the part (ii) of Theorem~\ref{thr:Quantitative uniform bounds when a = 0} we obtain for any $ \delta > 0 $ the uniform bound
\[
\norm{m}_{\R\backslash[-\delta,\delta\1]} 
\,\leq\, 
\frac{\delta \exp(\12\1C_1^2\delta^{-4})}{C_1\sqrt{ \min_k\abs{I_k} \,}}
\,,
\]
where the constant $ C_1 $ is from \eqref{def of PW-1/2-Holder}.
\end{remark}

The next remark gives a simple example of a block fully indecomposable $ S $. 
  
\begin{remark}[Positive diagonal when $ a = 0$]
\label{rmk:Positive diagonal when a=0}
The part (i) of Theorem~\ref{thr:Quantitative uniform bounds when a = 0} implies that for any $ S $ with a positive diagonal the solution of the QVE is bounded around $ z = 0 $, e.g., if $ (\Sx,\Px) = (\1[\10\1,1],\dif x) $, and there are constants $ \eps,\lambda > 0 $ such that
\bels{example:S has pos.diagonal}{
S_{xy} \ge \eps\,\Ind\sett{\2\abs{x-y}\leq \lambda\2}
\,,
}
then $ m(z) $ is bounded on a neighborhood of $ z = 0 $, because  $ S $  satisfies {\bf B1}, with $ K $ and $ \varphi $ depending only on $ \eps $ and $ \lambda $.
\end{remark} 

Now we consider the uniform boundedness in the case $ a \neq 0 $. 
In this case the structural $ \Lp{2}$-estimate from Lemma~\ref{lmm:Structural L2-bound} covers only the regime $ \abs{z} > \norm{a} $. 
In order to get $ \Lp{2}$-bounds also in the remaining regime $ \abs{z} \leq \norm{a}$, we introduce a weaker version of the assumption (2.4) used in \cite{AEK1cpam}:
\begin{itemize}
\item[{\bf B2}]
\label{def of B2}
\emph{Strong diagonal:} There is a constant $ \psi > 0 $, such that 
\bels{quantitative strong diagonal condition}{
\avg{w,Sw} \,\ge\,  \psi
\1 \avg{w}^2
\,,
\qquad \forall\,w \in \BB, \text{ s.t. }w_x \ge 0\,.
}
\end{itemize}
Here $ \psi $ is considered  a model parameter.
Since \eqref{qualitative strong diagonal condition} implies {\bf B2} for some $ \psi > 0 $, the property {\bf B2} constitutes a quantitative version of \eqref{qualitative strong diagonal condition}.

The following result is a quantitative version of the part (ii) of Theorem~\ref{thr:Qualitative uniform bounds}.

\begin{theorem}[Quantitative uniform bound for general $a$]
\label{thr:Quantitative uniform bound for general a}
Assume {\bf A1-3} and {\bf B2}. 
Then there exists a constant $ \Omega_\ast \ge 1 $, depending only on the model parameters $ \norm{S}_{\Lp{2}\to\BB}, \rho,L,\psi $, such that if 
\bels{Range of Gamma is larger than L2-bound from B2}{
\Gamma(\infty)
\,>\, \Omega_\ast
\,,
}
then
\bels{uniform bound for general a}{
\nnorm{m}_\R
\leq 
\frac{\,\Gamma^{-1}\msp{-1}(\1\Omega_\ast)}{\Omega_\ast^{1/2}\!}
\,.
}
\end{theorem}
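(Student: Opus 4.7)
The plan is to combine a uniform $ \Lp{2}$-bound on $ m(z) $ coming from the strong diagonal assumption {\bf B2} with an $ \Lp{2}$-to-uniform conversion based on the function $ \Gamma $. This two-step structure mirrors the proof of Theorem~\ref{thr:Quantitative uniform bounds when a = 0}, except that the structural bound of Lemma~\ref{lmm:Structural L2-bound} (which is only effective for $ \abs{z} > \norm{a} $ in the general case) is replaced by a $ z $-independent $ \Lp{2}$-bound derived from {\bf B2}.

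\emph{Step 1 ($ \Lp{2}$-bound $ \norm{m(z)}_{\12} \leq 1/\sqrt{\psi} $).} By the general spectral estimate \eqref{F general uniform bound}, the operator $ F(z) $ from \eqref{F operator} satisfies $ \norm{F(z)}_{\Lp{2}\to\Lp{2}} \leq 1 $. Testing this on the non-negative function $ \abs{m(z)} \in \Lp{2} $ and using $ F\,\abs{m} = \abs{m}\1S\,\abs{m}^{\12} $ gives
\[
\avgb{\1\abs{m}^{\12},\,S\2\abs{m}^{\12}\1}
\,=\, \avgb{\1\abs{m},\,F(z)\abs{m}\1}
\,\leq\, \norm{m(z)}_{\12}^{\12}\,.
\]
On the other hand, {\bf B2} applied to the non-negative function $ w := \abs{m}^{\12} \in \BB $ gives the lower bound $ \avg{\2\abs{m}^{\12},S\2\abs{m}^{\12}\2} \geq \psi\,\avg{\abs{m}^{\12}}^{\12} = \psi\,\norm{m(z)}_{\12}^{\14} $. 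Combining the two inequalities yields $ \Lambda := \norm{m(z)}_{\12} \leq 1/\sqrt{\psi} $ uniformly in $ z $.

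\emph{Step 2 (Conversion via $ \Gamma $).} Let $ \Phi := \norm{m(z)} $ and pick $ x_\ast \in \Sx $ with $ \abs{m_{x_\ast}(z)} $ arbitrarily close to $ \Phi $ (so that $ 1/\abs{m_{x_\ast}} = 1/\Phi $ up to a vanishing error). Subtracting the QVE at $ x_\ast $ from the QVE at an arbitrary $ y $ and using assumption {\bf A2} together with the Cauchy--Schwarz estimate $ \absb{(Sm)_y - (Sm)_{x_\ast}} = \absb{\avg{S_y-S_{x_\ast},m}} \leq \norm{S_y-S_{x_\ast}}_{\12}\,\norm{m}_{\12} $ yields
\[
\abs{m_y(z)}
\,\geq\,
\bigl(\,1/\Phi\,+\,\abs{a_y-a_{x_\ast}}\,+\,\Lambda\,\norm{S_y-S_{x_\ast}}_{\12}\,\bigr)^{-1}\,.
\]
Squaring and integrating against $ \Px $ and invoking Step 1 gives
\[
\Lambda^{\12}
\,\geq \int_\Sx\,\bigl(\1 1/\Phi\,+\,\abs{a_y-a_{x_\ast}}\,+\,\Lambda\,\norm{S_y-S_{x_\ast}}_{\12}\,\bigr)^{-2}\Px(\dif y)\,.
\]
Assuming $ \Lambda \geq 1 $, the identity $ 1/\Phi + \abs{a_y-a_{x_\ast}} + \Lambda\norm{S_y-S_{x_\ast}}_{\12} = \Lambda\bigl(\21/(\Lambda\Phi) + \abs{a_y-a_{x_\ast}}/\Lambda + \norm{S_y-S_{x_\ast}}_{\12}\2\bigr) $ combined with $ \abs{a_y-a_{x_\ast}}/\Lambda \leq \abs{a_y-a_{x_\ast}} $ bounds the integral from below by $ \Lambda^{-2}\Gamma(\Lambda\Phi)^{\12} $. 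Hence $ \Lambda^{\14} \geq \Gamma(\Lambda\Phi)^{\12} $, and provided $ \Lambda^{\12} $ lies in the range of $ \Gamma $, applying $ \Gamma^{-1} $ produces
\[
\Phi \,\leq\, \frac{\Gamma^{-1}(\Lambda^{\12})}{\Lambda}\,.
\]

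\emph{Step 3 (Choice of $ \Omega_\ast $).} I would set $ \Omega_\ast := \max\sett{1\1,1/\psi} $, enlarged if necessary by a constant depending on $ \norm{S}_{\Lp{2}\to\BB},\rho,L $ to absorb any residual normalization, and put $ \Lambda := \Omega_\ast^{1/2} $. Then $ \Lambda \geq 1 $ and $ \norm{m(z)}_{\12} \leq \Lambda $ by Step 1, and the hypothesis $ \Gamma(\infty) > \Omega_\ast $ makes $ \Gamma^{-1}(\Omega_\ast) $ finite; Step 2 then delivers $ \nnorm{m}_\R \leq \Gamma^{-1}(\Omega_\ast)/\Omega_\ast^{1/2} $. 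The main obstacle I anticipate is the precise bookkeeping of $ \Omega_\ast $ so that its dependence on all declared model parameters is transparent, and a minor technical point is justifying the choice of $ x_\ast $ by a standard $ \eps $-approximation when the supremum defining $ \Phi $ is not attained.
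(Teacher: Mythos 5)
Your proposal is correct, and it follows the same two-step skeleton as the paper ($\Lp{2}$-bound plus conversion through $\Gamma$), but the way you obtain the $\Lp{2}$-bound is genuinely different from the paper's. The paper invokes Lemma~\ref{lmm:Quantitative L2-bound}, whose proof splits into the regimes $\abs{z}\ge \norm{a}+\kappa$ (structural bound of Lemma~\ref{lmm:Structural L2-bound}) and $\abs{z}\le\norm{a}+\kappa$ (a Jensen-inequality argument using {\bf A2}, {\bf A3} via $\avg{S_x}\gtrsim 1$, and an $\Lp{1}$-bound from {\bf B2} applied to $\abs{m}$), producing the constant on the right of \eqref{L2-bound from B2}, which then defines $\Omega_\ast$. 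You instead apply {\bf B2} to $w=\abs{m}^2\in\BB$ and use the identity $\avg{\abs{m}^2,S\abs{m}^2}=\avg{\abs{m},F\abs{m}}$ together with the structural bound $\norm{F(z)}_{\Lp{2}\to\Lp{2}}\le 1$ from \eqref{F general uniform bound}, which gives $\psi\,\norm{m(z)}_2^4\le \norm{m(z)}_2^2$, i.e.\ $\norm{m(z)}_2\le\psi^{-1/2}$ uniformly in $z\in\Cp$ — a one-line bound that needs neither the case distinction nor {\bf A3}, and yields an $\Omega_\ast=\max\{1,\psi^{-1}\}$ depending only on $\psi$ (in fact cleaner than the paper's threshold, which also carries $\norm{S}_{\Lp{2}\to\BB},\rho,L,\norm{a}$). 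Your Step 2 is then exactly the proof of Proposition~\ref{prp:Converting L2-estimates to uniform bounds} (the simplified estimate \eqref{simplified L2 to BB conversion}), rederived rather than cited; the $\eps$-approximation of the point $x_\ast$ is harmless, though you could avoid it entirely by running the difference argument for every fixed $x$, as the paper does, to get $\Gamma_{\Lambda,x}(\abs{m_x(z)})\le\Lambda$ pointwise and then taking the supremum at the end. Since $\nnorm{m}_\R$ is by definition a supremum over $z\in\Cp$, the uniform bound for $z\in\Cp$ indeed suffices, so the argument is complete.
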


The threshold $ \Omega_\ast $ is determined explicitly in \eqref{def of Omega_ast} below. The following remark provides a simple example in which this theorem is applicable.

\begin{remark}[Positive diagonal and $1/2$-H\"older regularity]
\label{rmk:Positive diagonal and 1/2-Holder regularity}
Consider the QVE in the setup $ (\Sx,\Px) = (\1[\10\1,1],\dif x) $. 
Assume {\bf A1-2}. 
If the map $ x \mapsto (a_x,S_x) : [0,1] \to \R \times \Lp{2} $ is piecewise $1/2$-H\"older continuous in the sense of \eqref{def of PW-1/2-Holder}, then similarly as in Remark~\ref{rmk:Piecewise 1/2-Holder continuous rows when a=0} we see that $ \Gamma(\infty) = \infty $. 
If $ S $ also has a positive diagonal \eqref{example:S has pos.diagonal}, then {\bf A3} and {\bf B2} hold with $ L $, $ \rho $, and $ \psi $ depending only on $ \eps $ and $ \lambda $. 
Hence an application of Theorem~\ref{thr:Quantitative uniform bound for general a} yields a bound $ \nnorm{m}_\R \leq \Phi $, where $ \Phi $ depends  only on the constants $ C_1 $ and $ \min_k \abs{I_k} $ from \eqref{def of PW-1/2-Holder} and the constants $ \lambda $ and $ \eps $ from \eqref{example:S has pos.diagonal}, in addition to the model parameters $ \norm{S}_{\Lp{2}\to\BB} $, $\norm{a} $ from {\bf A2}.
\end{remark}

\section{Uniform bounds from $ \Lp{2}$-estimates}

The next result shows that for a fixed $ x $ the corresponding component $ m_x $ of an $ \Lp{2}$-solution $ m $ of the QVE may diverge only if the pair $ (a_x,S_x) \in \R \times \Lp{2} $ is sufficiently far away from most of the other pairs $(a_y,S_y) $, $ y \neq x$.
In order to state this result we introduce the refined versions of the auxiliary function \eqref{def of Gamma},
\bels{def of Gamma_Lambda,x}{
\Gamma_{\!\Lambda,x}(\tau) \,:=\, 
\sqrt{\int_\Sx\,
\Bigl(\,\frac{1}{\tau} + \abs{a_y-a_x} +\norm{S_y-S_x}_2\2\Lambda\2\Bigr)^{\!-2}\msp{-8}\Px(\dif y)\;}
\;,
}
where $ \Lambda \in (0,\infty) $ and $ x \in \Sx $ are considered parameters. 
We remark that \eqref{def of Gamma} is related to this operator by $ \Gamma(\tau) := \inf_x \Gamma_{\msp{-2}1\1,x}(\tau) $.

\begin{proposition}[Converting $ \Lp{2}$-estimates to uniform bounds]
\label{prp:Converting L2-estimates to uniform bounds}
Assume {\bf A1} and {\bf A2}. Suppose the solution of the QVE satisfies an $ \Lp{2}$-bound,
\[ 
\norm{m(z)}_2 \,\leq\, \Lambda 
\,,
\]
for some $ \Lambda < \infty $ and $ z \in \Cp $. Then  
\bels{abs-m_x bounded using Gamma_x and L2-bound}{
\abs{\1m_x(z)} 
\;\leq\;
(\2\Gamma_{\!\Lambda,\1x})^{-1}\msp{-1}(\1\Lambda)
\,,
\qquad x\in \Sx
\,,
}
with the convention that the right hand side if $ \infty $ if $ \Lambda $ is out of the range of $ \Gamma_{\!\Lambda,x} $. 

In particular, if $ a = 0 $ or $ \Lambda \ge 1 $, then the simplified estimate holds:
\bels{simplified L2 to BB conversion}{
\norm{m(z)} \,\leq\, \frac{\,\Gamma^{-1}\msp{-1}(\1\Lambda^{\msp{-1}2})}{\Lambda}
\,.
}
\end{proposition}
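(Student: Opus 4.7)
The plan is to derive a pointwise lower bound on $\abs{m_y(z)}$ in terms of $\abs{m_x(z)}$, $\abs{a_y-a_x}$, and $\norm{S_y-S_x}_2$, then integrate over $y$ against $\Px$ to produce an $\Lp{2}$-lower bound that forces $\abs{m_x(z)}$ to lie below $\Gamma_{\!\Lambda,x}^{-1}(\Lambda)$. The key algebraic step is to subtract the QVE \eqref{QVE for fixed z} at two different labels $x$ and $y$. Using {\bf A2} to write $(Sm)_x=\avg{S_x,m}$, this yields
\[
\frac{1}{m_y(z)}-\frac{1}{m_x(z)}\;=\;(a_x-a_y)+\avg{S_x-S_y,\,m(z)}.
\]
Applying the triangle inequality together with Cauchy--Schwarz in $\Lp{2}$ and the hypothesis $\norm{m(z)}_2\leq\Lambda$, one obtains
\[
\abs{m_y(z)}^{-1}\;\leq\;\abs{m_x(z)}^{-1}+\abs{a_x-a_y}+\Lambda\,\norm{S_x-S_y}_2,
\]
and hence, by squaring and taking reciprocals,
\[
\abs{m_y(z)}^{\12}\;\geq\;\bigl(\abs{m_x(z)}^{-1}+\abs{a_x-a_y}+\Lambda\,\norm{S_x-S_y}_2\bigr)^{\msp{-1}-2}.
\]

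Integrating this bound against $\Px(\dif y)$ and comparing with $\norm{m(z)}_2^{\12}\leq\Lambda^2$, the right-hand side is recognized as $\Gamma_{\!\Lambda,x}(\abs{m_x(z)})^2$ by the definition \eqref{def of Gamma_Lambda,x}. Thus
\[
\Gamma_{\!\Lambda,x}(\abs{m_x(z)})\;\leq\;\Lambda.
\]
Since $\tau\mapsto\Gamma_{\!\Lambda,x}(\tau)$ is strictly increasing, inverting yields the main estimate \eqref{abs-m_x bounded using Gamma_x and L2-bound}, with the stated convention that the right-hand side is infinite whenever $\Lambda$ is outside the range of $\Gamma_{\!\Lambda,x}$.

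For the simplified estimate \eqref{simplified L2 to BB conversion}, the plan is to verify the bound $\Gamma_{\!\Lambda,x}^{-1}(\Lambda)\leq\Gamma^{-1}(\Lambda^2)/\Lambda$ by evaluating $\Gamma_{\!\Lambda,x}$ at $\tau:=\Gamma^{-1}(\Lambda^2)/\Lambda$ and factoring $\Lambda$ out of the denominator in the defining integral \eqref{def of Gamma_Lambda,x}. When $a=0$ this factorization is exact, and when $\Lambda\geq1$ one uses the trivial inequality $\abs{a_y-a_x}\leq\Lambda\abs{a_y-a_x}$ to absorb the $a$-term as well. Denoting by $\Gamma_x$ the $x$-fixed version of \eqref{def of Gamma} (so that $\Gamma=\inf_x\Gamma_x$ and therefore $\Gamma_x\geq\Gamma$ pointwise), one finds in either case
\[
\Gamma_{\!\Lambda,x}(\tau)\;\geq\;\Lambda^{-1}\Gamma_x\bigl(\Gamma^{-1}(\Lambda^2)\bigr)\;\geq\;\Lambda^{-1}\Gamma\bigl(\Gamma^{-1}(\Lambda^2)\bigr)\;=\;\Lambda.
\]
Inverting this inequality and taking the supremum over $x\in\Sx$ gives \eqref{simplified L2 to BB conversion}. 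I do not anticipate any serious obstacle: the only structural inputs beyond elementary manipulations are the symmetric kernel representation of $S$ afforded by {\bf A2}, which makes $(Sm)_x-(Sm)_y=\avg{S_x-S_y,m}$ meaningful in $\Lp{2}$ and amenable to Cauchy--Schwarz, and the monotonicity of $\Gamma_{\!\Lambda,x}$ needed to invert the final inequality.
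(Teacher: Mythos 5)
Your proposal is correct and follows essentially the same route as the paper: subtracting the QVE at two labels, applying Cauchy--Schwarz with the $\Lp{2}$-bound, integrating in $y$ to recognize $\Gamma_{\!\Lambda,x}(\abs{m_x})\leq\Lambda$, and inverting by monotonicity. Your treatment of \eqref{simplified L2 to BB conversion} is only a minor repackaging of the paper's step of factoring $\Lambda$ out of the defining integral when $a=0$ or $\Lambda\geq1$, so there is no substantive difference.
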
 

\begin{Proof}
Since $ m $ solves the QVE we have
\bea{
\absbb{\frac{1}{\1m_y\!}} 
\,&=\, 
\absbb{\frac{1}{\1m_x\!} - \frac{1}{\1m_x\!}+\frac{1}{\1m_y\!}\,}
\,=\, 
\absbb{\frac{1}{m_x\!} + a_x-a_y +\avg{\1S_x-S_y,\1m\2}}
\\
&\leq\, 
\absbb{\frac{1}{\1m_x\!}} + \abs{\1a_y-a_x} + \norm{S_y-S_x}_2\1\norm{m}_2
\,,
}
for any $ x,y \in \Sx $. Using $ \norm{m}_2 \leq \Lambda $, we obtain  
\bels{the main line of abs-m_x from L2-bound}{
\Lambda^{\msp{-1}2} 
\,&\ge\, 
\int_\Sx \abs{\1m_y}^2\Px(\dif y)
\,\ge\, \int_\Sx\, \biggl(\frac{1}{\abs{\1m_x}} + \abs{a_y-a_x} + \norm{S_y-S_x}_2\,\Lambda\2\biggr)^{\!-2}\!\Px(\dif y)
\\
&=\; \Gamma_{\!\Lambda,x}(\1\abs{\1m_x}\1)^2
\,.
}
As $ \Gamma_{\!\Lambda,x}(\tau) $ is strictly increasing in $ \tau $ we see from the definition \eqref{def of Gamma_Lambda,x} that this is equivalent to \eqref{abs-m_x bounded using Gamma_x and L2-bound}.

If $ a=0$ or $ \Lambda \ge 1 $, then we can take the factor $ \Lambda^{\!-2} $ outside from last integral on the first line of \eqref{the main line of abs-m_x from L2-bound}. This yields the estimate 
\[
\Lambda^{\msp{-2}-1}\2 \Gamma_{\!1,x}(\1\Lambda\2\abs{\1m_x}) 
\,\leq\, \Gamma_{\!\Lambda,x}(\abs{\1m_x}) 
\,\leq\, 
\Lambda
\,.
\] 
Multiplying by $ \Lambda $ and taking the infimum over $ x $ as a parameter of $ \Gamma_{\!\Lambda,x} $, the left most expression reduces to $ \Gamma(\1\Lambda\abs{m_x}) \leq \Lambda^2 $. 
This is equivalent to \eqref{simplified L2 to BB conversion}.
\end{Proof} 

\begin{Proof}[Proof of the part (ii) and \eqref{uniform bound everywhere when a=0} of Theorem~\ref{thr:Quantitative uniform bounds when a = 0}]
Since $ a = 0 $ the structural $ \Lp{2} $-bound \eqref{L2 bound on m when a=0} reads $ \norm{m(z)}_2 \leq 2/\abs{z} $.  
If $ \Gamma(\infty) > (2/\abs{z})^2 $ then we may use the estimate \eqref{simplified L2 to BB conversion} of Proposition~\ref{prp:Converting L2-estimates to uniform bounds} to convert this $ \Lp{2} $-estimate into an uniform bound, and we obtain \eqref{uniform bound outside z=0}.
The bound \eqref{uniform bound everywhere when a=0} follows by combining this estimate with the part (i) of the theorem.
\end{Proof}

In order to prove Theorem~\ref{thr:Quantitative uniform bound for general a} we need an $ \Lp{2} $-bound also when $ \abs{z} \leq \norm{a} $. 
For this purpose we introduce the following estimate that relies on the property {\bf B2}. 

\begin{lemma}[Quantitative $ \Lp{2} $-bound]
\label{lmm:Quantitative L2-bound}
If {\bf A1-3} and {\bf B2} hold, then 
\bels{L2-bound from B2}{
\;
\sup_{z \2\in\2 \Cp} \;\norm{\1m(z)}_2 
\;\leq\; 
\frac{\norm{S}^{2L-2}\norm{S}_{\Lp{2}\to\BB}}{\rho^{\12}}
\biggl(
\psi^{-1/2}\norm{S}_{\Lp{2}\to\BB} +\2 2\2\norm{a} \1+\sqrt{2\1\norm{S}\1}\,
\biggr)
\,.
}
\end{lemma}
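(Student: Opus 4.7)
The plan is to combine the imaginary part of the QVE with \textbf{A3} and \textbf{B2} to derive an $\Lp{2}$-bound, closed against the structural $\Lp{2}$-bound of Lemma~\ref{lmm:Structural L2-bound}. First, I will extract an averaged density bound from \textbf{B2}. Taking the imaginary part of the QVE gives the pointwise identity $v_x/|m_x|^2 = \Im z + (Sv)_x$. Multiplying by $v_x$ and averaging yields
\begin{equation*}
\avg{v^2/|m|^2} \;=\; \Im z\,\avg{v} \,+\, \avg{v,Sv}.
\end{equation*}
Since $v_x = \Im m_x \leq |m_x|$ componentwise, the left side is bounded by $1$. Applying \textbf{B2} to $w = v \geq 0$ gives $\avg{v,Sv} \geq \psi\,\avg{v}^2$, and we conclude $\avg{v(z)} \leq \psi^{-1/2}$ uniformly in $z \in \Cp$.

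Next, I will derive a pointwise lower bound on $Sv$ via \textbf{A3}. From $v = |m|^2(\Im z + Sv) \geq \phi^2 Sv$ with $\phi := \inf_x|m_x|$, iterating $L-1$ times yields $v \geq \phi^{2(L-1)}\,S^{L-1}v$, and applying $S$ once more together with \textbf{A3} produces $(Sv)_x \geq \phi^{2(L-1)}\rho\,\avg{v}$ pointwise. Combined with $|m|^2 = v/(\Im z + Sv) \leq v/Sv$, averaging gives
\begin{equation*}
\norm{m(z)}_2^2 \;\leq\; \avg{v/Sv} \;\leq\; \frac{\avg{v}}{\phi^{2(L-1)}\rho\,\avg{v}} \;=\; \frac{1}{\phi^{2(L-1)}\rho},
\end{equation*}
the favourable cancellation of $\avg{v}$ being essential. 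To close the argument I bound $\phi$ from below using the QVE and Cauchy--Schwarz: $1/|m_x| = |z+a_x+(Sm)_x| \leq |z|+\norm{a}+\norm{S_x}_2\norm{m}_2 \leq |z|+\norm{a}+\norm{S}_{\Lp{2}\to\BB}\norm{m}_2$, hence $1/\phi \leq |z|+\norm{a}+\norm{S}_{\Lp{2}\to\BB}\norm{m}_2$.

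Substituting produces the self-consistent polynomial inequality
\begin{equation*}
\rho\,\norm{m(z)}_2^2 \;\leq\; \bigl(|z|+\norm{a}+\norm{S}_{\Lp{2}\to\BB}\norm{m(z)}_2\bigr)^{2(L-1)}.
\end{equation*}
For $|z| > \Sigma := \norm{a} + 2\,\norm{S}^{1/2}$, the Stieltjes transform representation \eqref{m as stieltjes transform} together with $\supp v \subset [-\Sigma,\Sigma]$ yields the a priori bound $\norm{m(z)}_\infty \leq 1/(|z|-\Sigma)$; the structural estimate \eqref{L2-bound for a} provides additional control outside a neighbourhood of $\{-a_x\}_{x\in\Sx}$. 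Using these as initial input, continuity of $z\mapsto \norm{m(z)}_2$ on $\Cp$ together with the polynomial self-consistency allows one to propagate the $\Lp{2}$-bound inward to the compact region $|z|\leq \Sigma$. The main technical obstacle is the closure of this bootstrap when $L>1$: the polynomial inequality does not close by algebra alone, and one must carefully track constants through the propagation to extract the explicit prefactor $\norm{S}^{2L-2}\norm{S}_{\Lp{2}\to\BB}/\rho^2$ (arising from the iterated use of \textbf{A3}) and the inhomogeneity $\psi^{-1/2}\norm{S}_{\Lp{2}\to\BB}+2\norm{a}+\sqrt{2\,\norm{S}}$ (collecting the \textbf{B2}-contribution via $\avg{v}\leq\psi^{-1/2}$, the bound on $\norm{a}$, and the support radius $\Sigma$) that appear in \eqref{L2-bound from B2}.
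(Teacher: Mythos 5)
Your preparatory steps are all individually correct (the identity $v=|m|^2(\Im z+Sv)$, the iteration $Sv\ge \phi^{2(L-1)}\rho\,\avg{v}$ from {\bf A3}, the bound $1/\phi\le |z|+\norm{a}+\norm{S}_{\Lp{2}\to\BB}\norm{m}_2$), but the argument has a genuine gap at exactly the point you flag as "the main technical obstacle": the self-consistent inequality
\[
\rho\,\norm{m(z)}_2^2 \;\leq\; \bigl(\2|z|+\norm{a}+\norm{S}_{\Lp{2}\to\BB}\norm{m(z)}_2\bigr)^{2(L-1)}
\]
cannot be closed by any bootstrap or continuity argument, because it does not exclude large values of $\norm{m}_2$. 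For $L\ge 3$ the right-hand side has strictly higher degree in $\norm{m}_2$ than the left, so the inequality is automatically satisfied for all sufficiently large $\norm{m}_2$; for $L=2$ it reduces to $\sqrt{\rho}\,t\le |z|+\norm{a}+\norm{S}_{\Lp{2}\to\BB}\,t$, which holds for \emph{every} $t\ge 0$ since $\rho\le\norm{S}^{\1L}\le\norm{S}_{\Lp{2}\to\BB}^{\12}$ (take $u=1$ in {\bf A3}). A continuity/propagation argument of the type used in the paper around \eqref{norm-function on Deps} needs the self-consistent relation to create a forbidden interval separating a small-value branch from a large-value branch; your inequality creates no such gap, so nothing can be propagated inward from $|z|>\Sigma$. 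A second symptom of the same problem: the bound $\avg{v}\le\psi^{-1/2}$ you extract from {\bf B2} cancels out of your chain (the $\avg{v}$'s in $\avg{v/Sv}$ divide out), so $\psi$ never enters your final relation — yet {\bf B2} is precisely the hypothesis that controls the regime $|z|\lesssim\norm{a}$, and it must appear in any proof of \eqref{L2-bound from B2}.

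The paper avoids the bootstrap entirely and uses {\bf B2} on $|m|$ rather than on $v$. For $|z|\le\norm{a}+\kappa$ it first notes $\inf_x(S|m|)_x\,\norm{m}_2\le\norm{\1|m|\1S|m|\1}_2\le\norm{F}_{\Lp{2}\to\Lp{2}}\le 1$, then applies Jensen's inequality to the normalized rows $P_x=S_x/\avg{S_x}$ to convert this into $\norm{m}_2\le \sup_x\bigl(\norm{S_x}_2/\avg{S_x}^2\bigr)\norm{1/m}_2$ (a bound on $\norm{m}_2$ by $\norm{1/m}_2$, with {\bf A3} entering only through $\avg{S_x}\ge\rho\,\norm{S}^{-L+1}$); the QVE gives $\norm{1/m}_2\le|z|+\norm{a}+\norm{Sm}_2$, and the term $\norm{Sm}_2\le\norm{S}_{\Lp{2}\to\BB}\avg{|m|}$ is controlled by {\bf B2} applied to $w=|m|$: $\psi\,\avg{|m|}^2\le\avg{|m|,S|m|}\le\norm{F}_{\Lp{2}\to\Lp{2}}\le 1$, so $\avg{|m|}\le\psi^{-1/2}$. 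This closes in one pass; the large-$|z|$ regime is handled by the structural bound $\norm{m}_2\le 2/\kappa$ and the choice $\kappa=\sqrt{2\norm{S}}$. To repair your proof you would need to replace the pointwise quantity $\phi=\inf_x|m_x|$ (which cannot be bounded below without already knowing $\norm{m}_2$, and which produces the wrong-degree self-consistency) by an averaged mechanism of this kind.
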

\begin{Proof}[Proof of Theorem~\ref{thr:Quantitative uniform bound for general a}]
Using Lemma \ref{lmm:Quantitative L2-bound} we obtain an $ \Lp{2} $-bound \eqref{L2-bound from B2}. We define the threshold,  
\bels{def of Omega_ast}{
\Omega_\ast := \max\setB{\21\1,\text{RHS\eqref{L2-bound from B2}}^2}
}
Applying the simplified estimate \eqref{simplified L2 to BB conversion} of Proposition~\ref{prp:Converting L2-estimates to uniform bounds} yields \eqref{uniform bound for general a}.
\end{Proof}

\begin{Proof}[Proof of Lemma~\ref{lmm:Quantitative L2-bound}]
Let $ \kappa > 0 $ be a parameter to be fixed later. 
We will consider the two regimes $ \abs{z} \leq \norm{a} + \kappa $ and $ \abs{z} \ge \norm{a} + \kappa $, separately. Using the structural $ \Lp{2}$-estimate from Lemma~\ref{lmm:Structural L2-bound}, we see that 
\bels{L2-bound when z is large}{
\norm{m(z)}_2 \,\leq\, \frac{2}{\kappa}\,,\qquad \abs{z} \ge \norm{a}+\kappa 
\,.
}

Let us now consider the regime $ \abs{z} \leq \norm{a} + \kappa $.
Similarly as in \eqref{mSm to L2-norm of F} we estimate the $ \Lp{2} $-norm of $ \abs{m}\1S\abs{m} $ by the spectral norm of the operator $ F = F(z) $, 
\bels{L2-bound:1}{
\inf_x \2(S\abs{m})_x\, \norm{m}_2  \,\leq\, \norm{\1\abs{m}\1S\abs{m}\1}_2 \,\leq\,\norm{F\1e\1}_2 \,\leq\, \norm{F}_{\Lp{2} \to \Lp{2}}
\,,
}
where $ e\in\BB $ with $ e_x = 1 $ for every $ x $. From \eqref{F and alpha} we know that $ \norm{F}_{\Lp{2} \to \Lp{2}} \leq 1 $. 
Let us write $ S_{xy} = \avg{\1S_x\1}\,P_{xy} $, so that $ P_{xy}\pi(\dif y) $, is a probability measure for every fixed $ x $. 
By using  \eqref{L2-bound:1} and Jensen's inequality we get
\bels{L2-bound:2}{
\norm{m}_2 
\,&\leq\,
\sup_x  \frac{1}{\avg{\1S_x}\2\avg{\1P_x,\abs{m}\1}} 
\,\leq\,
\sup_x 
\frac{1}{\avg{\1S_x}} \avgB{P_x,\frac{1}{\abs{m}}}
\,\leq\,
\sup_x 
\frac{1}{\avg{\1S_x}^2}\avgB{S_x,\frac{1}{\abs{m}}}
\\
&\leq\,
\sup_x 
\frac{\norm{\1S_x}_2}{\avg{S_x}^2} \normB{\frac{1}{m}}_2
\,.
}
By writing the last term in terms of the QVE, and using \eqref{averaged row is comparable to BB-norm of S} to estimate $ \avg{\1S_x} \ge \norm{S}^{-L+1}\rho $, we obtain
\bels{L2-bound:3}{
\norm{m}_2 \,\leq\, 
\frac{\norm{S}^{2L-2}\norm{S}_{\Lp{2}\to\BB}}{\rho^{\12}}
\biggl(
\abs{z} + \norm{a} + \norm{Sm}_2
\biggr)
\,.
}
The last term inside the parenthesis can be bounded using the $ \Lp{1}$-norm of $ m $,
\bels{L2-bound:3c}{
\norm{Sm}_2^2 &= \avg{\2m\1,S^2m\1} 
\,\leq\, 
\sup_{x,y} (S^2)_{xy} \abs{\avg{\1m\1}}^2 
\,\leq\, 
\norm{S}_{\Lp{2}\to\BB}^2\avg{\1\abs{m}\1}^2 
\,.
}
Here we have used \eqref{uniform bound on S^2 kernel} for the last inequality.
In order to bound the $ \Lp{1}$-norm, we use the property {\bf B2} to obtain 
\bels{L1-bound for m using B3}{
\avg{\1\abs{m}\1}^2 \,\leq\, \frac{\avg{\1\abs{m},S\abs{m}\1}}{\psi} \leq \frac{\norm{F}_{\Lp{2}\to\Lp{2}}\!}{\psi} 
\,\leq\,
\psi^{-1}
\,.
}
Here we have again expressed the norm of $ m $ in terms of $ F $ and used $ \norm{F}_{\Lp{2}\to\Lp{2}} \leq 1 $.
Using \eqref{L1-bound for m using B3} in \eqref{L2-bound:3c}, and plugging the resulting bound into \eqref{L2-bound:3}, we see that
\bels{final L2 bound for small z}{
\norm{m(z)}_2 
\,\leq\,
\frac{\norm{S}^{2L-2}\norm{S}_{\Lp{2}\to\BB}\!}{\rho^{\12}}\,
\biggl(
\psi^{-1/2}\norm{S}_{\Lp{2}\to\BB} +\2 2\2\norm{a} \1+\2\kappa\,
\biggr)
\,,
}
for every $ \abs{z} \leq \norm{a} +\kappa $.
Choosing $ \kappa := \sqrt{2\2\norm{S}} $ and using \eqref{averaged row is comparable to BB-norm of S} we see that \eqref{L2-bound when z is large} and \eqref{final L2 bound for small z} yield \eqref{L2-bound from B2}.
\end{Proof}

\section{Uniform bound around $z=0 $ when $ a=0$}
\label{sec:Uniform bound around z=0 when a=0}

In this section we prove the part (i) of Theorem~\ref{thr:Quantitative uniform bounds when a = 0}.
It is clear from Lemma~\ref{lmm:Constraints on solution} and \eqref{L2 bound on m when a=0} that $ \Re\,z = 0 $ is a special point for the QVE when $ a = 0 $. 
From \eqref{m symmetry when a=0} we read that in this case the real and imaginary parts of the solution $ m $ of the QVE are odd and even functions of $ \Re\,z $ with fixed $ \Im\,z$, respectively. 
In particular, $\Re\2m(\cI\1\eta) =0 $ for $\eta>0 $, and therefore the QVE becomes an equation for $ v = \Im\,m $ alone,  
\bels{QVE on imaginary axis}{
\frac{1}{v(\cI \2\eta)}\,=\, \eta \,+\,Sv(\cI \2\eta)\,,
\qquad \forall \; \eta \,>\,0
\,.
}
It is therefore not surprising that there is a connection between the well posedness of the QVE at $ z = 0 $ and the question of whether $S$ is {\bf scalable}. 
We call $ S $  scalable if there exists a positive measurable function $h$ on $\Sx$, such that 
\bels{DAD problem for S}{
h_x \2(Sh\1)_x \,=\, 1 \,, \qquad\forall \; x \in \Sx
\,.
}
In other words, there exists a positive diagonal operator $ H $ such that $ HSH $ is doubly stochastic.
In the discrete setup this scalability has been widely studied, see for example Theorem~\ref{thr:General scalability} borrowed from \cite{sinkhorn1967}.
The continuous setup has been considered in \cite{Borwein1994}. 
Here we will show that $ \norm{h} \lesssim 1 $, where the comparison relation is defined w.r.t. the model parameters $ (\norm{S}_{\Lp{2}\to\BB},\varphi,K) $, with $ \varphi $ and $ K $ given in {\bf B1}. 
In order to prove the assertion (i) of Theorem~\ref{thr:Quantitative uniform bounds when a = 0} we use the fact that the solution of the QVE at $ \Re\,z = 0 $ is a minimizer of a functional on positive integrable functions $ \Lp{1}_+$, where
\bels{def of Lp_+}{ 
\Lp{p}_+ := \setb{w \in \Lp{p} : \text{ $w_x > 0 $, for $ \Px$-a.e. $x \in \Sx$}}
\,,\qquad p \in [1,\infty]
\,.
}

\NLemma{Characterization as minimizer}{
Suppose $ S $ satisfies {\bf A1-2} and $ \eta > 0 $. Then the imaginary part $ v(\cI\1\eta) = \Im\,m(\cI\1\eta)$ of the solution of the QVE is $ \Px $-almost everywhere on $\Sx$ equal to the unique minimizer of the functional $ J_\eta : \Lp{1}_+ \to \R $,  
\bels{def of functional J_eta}{
J_\eta(w) \,:=\, \avg{w,Sw} \2-\22\2\avg{\2\log w} \2+\2 2\2\eta \2\avg{w}\,,
}
i.e.,
\[ 
J_\eta(v(\cI\1\eta)) 
\,=\, 
\inf_{w \ins \Lp{1}_+} J_\eta(w)
\,. 
\]
}
The characterization of the solution of the continuous scalability problem as a minimizer has been used with $ \eta = 0 $ in \cite{Borwein1994}. 

We will use the following well known properties of FID matrices. 

\begin{proposition}[Properties of FID matrices \cite{BR97book}]
\label{prp:Properties of FID matrices}
Let $ \brm{T} = (T_{ij})_{i,j=1}^K $ be a symmetric FID matrix. Then the following holds:
\begin{enumerate}
\titem{i} 
If $ \brm{P} $ is a permutation matrix, then $ \brm{P}\brm{T} $ and $ \brm{T}\brm{P} $ are FID; 
\titem{ii} 
There exists a permutation matrix $ \brm{P} $ such that $ (\brm{T}\brm{P})_{ii} > 0 $ for every $ i =1,\dots, K$;
\titem{iii} 
$ (\1\brm{T}^{K-1})_{ij} > 0 $, for every $ 1 \leq i,j \leq K$.
\end{enumerate}
\end{proposition}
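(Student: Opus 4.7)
The plan is to handle the three assertions in sequence, with (i) being immediate, (ii) reducing to a classical combinatorial theorem, and (iii) requiring a careful graph-theoretic argument that leverages both the FID hypothesis and the symmetry of $\brm{T}$. For (i), the observation is that a permutation $\brm{P} = \brm{P}_\sigma$ merely relabels indices, so that the submatrix $(\brm{P}\brm{T})_{I \times J}$ coincides with $\brm{T}_{\sigma(I) \times J}$; since $|\sigma(I)| = |I|$, the FID hypothesis on $\brm{T}$ applied to the pair $(\sigma(I), J)$ supplies a nonzero entry, which is also a nonzero entry of $(\brm{P}\brm{T})_{I \times J}$. The case $\brm{T}\brm{P}$ is handled symmetrically by permuting columns.

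For (ii), I will invoke the Frobenius-K\"onig theorem (the matrix form of Hall's marriage theorem). Contrapositively the FID condition forbids any all-zero $s \times t$ submatrix of $\brm{T}$ with $s + t \geq K$, in particular with $s + t > K$, which is exactly the hypothesis of Frobenius-K\"onig; the conclusion is a transversal, i.e., a permutation $\sigma$ with $T_{i\sigma(i)} > 0$ for every $i$. Taking $\brm{P}$ to be the permutation matrix defined by $P_{ki} := \delta_{k, \sigma(i)}$ then yields $(\brm{T}\brm{P})_{ii} = T_{i\sigma(i)} > 0$.

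For (iii), view $\brm{T}$ as the weighted adjacency matrix of an undirected graph $G$ on $\{1, \ldots, K\}$ with self-loops allowed, so that $(T^n)_{ij} > 0$ if and only if $G$ admits a walk of length exactly $n$ from $i$ to $j$. The argument proceeds in three steps. First, $G$ is connected: any disconnection $\{V, V^c\}$ would produce an all-zero $|V| \times |V^c|$ block with $|V| + |V^c| = K$, contradicting FID. Second, $G$ is non-bipartite: a bipartition into classes of sizes $a$ and $K - a$ would give an all-zero $a \times a$ block and an all-zero $(K-a) \times (K-a)$ block, forcing both $2a < K$ and $2(K - a) < K$, which is impossible; hence $G$ contains an odd cycle. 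Third, using part (ii) together with the symmetry $T_{i\sigma(i)} = T_{\sigma(i)i}$, one has $(T^2)_{ii} \geq T_{i\sigma(i)}^2 > 0$ for each $i$, so every vertex lies on a closed walk of length $2$. For arbitrary $i, j$, a shortest $i$-to-$j$ path has length at most $K - 1$ by connectedness; this path will be padded out to length exactly $K - 1$ using closed $2$-walks at $i$ and, if the parity does not match, a single detour through the odd cycle.

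The main obstacle will be making this final padding argument precise enough to hit length exactly $K - 1$, rather than merely "all sufficiently large lengths" as in the standard Wielandt bound $(K - 1)^2 + 1$ valid for general primitive non-negative matrices. Achieving the tight value $K - 1$ requires the full strength of FID beyond mere primitivity, in particular quantitative control on the minimum odd-cycle length in $G$ and on the placement of closed $2$-walks along the shortest $i$-to-$j$ path; both of these can be extracted from iterated applications of the FID condition to carefully chosen pairs $(I, J)$.
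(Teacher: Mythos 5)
Parts (i) and (ii) of your proposal are correct and are exactly the arguments the paper has in mind when it declares these parts trivial: a permutation only relabels rows or columns, so the defining zero-block condition is preserved, and Frobenius--K\"onig applies because FID excludes $s\times t$ zero submatrices with $s+t\ge K$, hence in particular with $s+t\ge K+1$. Note also that the paper does not prove (iii) at all; it refers to Theorem~2.2.1 of \cite{BR97book}, so the comparison for (iii) is really with that classical proof.

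For (iii) there is a genuine gap, and it sits precisely where you place it yourself: the entire content of the statement is the exact exponent $K-1$, and you only assert that the needed "quantitative control on the minimum odd-cycle length and on the placement of closed $2$-walks" can be extracted from FID, without doing it. Worse, the padding mechanism you sketch cannot work as stated: appending a tour around an odd cycle to a shortest path generically overshoots $K-1$, and walks can only be lengthened (in steps of $2$), never shortened. For the triangle ($K=3$, zero diagonal, which is FID) and adjacent $i\neq j$, the shortest path has length $1$ and the only odd cycle has length $3$, so your recipe produces a walk of length $4>2=K-1$; the required length-$2$ walk $i\to k\to j$ exists, but not by your construction. Likewise the odd cycle $C_K$ is FID with odd girth $K$, so no useful bound on the odd-cycle length is available and the parity repair must come from re-routing, not from appending a cycle. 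Since symmetry plus primitivity alone only yields exponent $\le 2(K-1)$ (sharp, e.g.\ a path with a loop at one end), any correct argument must invoke FID exactly at the step you skip. The standard way to close this---and essentially the proof behind the cited Theorem~2.2.1---avoids parity entirely: if $y\ge 0$ is nonzero with support $J\subsetneq\{1,\dots,K\}$ and $I:=\{i:(\brm{T}y)_i=0\}$, then $(T_{ij})_{i\in I,j\in J}=0$, so FID forces $\abs{I}+\abs{J}<K$, i.e.\ $\brm{T}y$ has strictly more positive entries than $y$; applying this $K-1$ times to $y=e_j$ gives $\brm{T}^{K-1}e_j>0$ for every $j$. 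This argument needs neither symmetry nor any graph-theoretic detour, and shows (iii) for arbitrary, not necessarily symmetric, FID matrices.
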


The first two properties are trivial. The property (iii) is equivalent to Theorem~2.2.1 in \cite{BR97book}. For more information on FID matrices and their relationship to some other classes of matrices see Appendix~\ref{sec:Scalability of matrices with non-negative entries}.

\begin{Proof}[Proof of the part (i) of Theorem~\ref{thr:Quantitative uniform bounds when a = 0}]
Since $ \brm{Z}$ is a $ K$-dimensional FID matrix with $\sett{0,1}$-entries it follows from the part (iii) of Proposition~\ref{prp:Properties of FID matrices} that $ \min_{i,j}(\brm{Z}^{K-1})_{ij} \ge 1 $.
This implies that $ S $ is uniformly primitive, 
\bels{Z FID implies S satisfies A3}{
(S^{\2K-1})_{xy} \,\geq\, 
\varphi^{\1K-1}
\sum_{i,j=1}^K (\brm{Z}^{\1K-1})_{i j} \,\Ind\sett{x \in I_i, \,y \in I_j}
\,.
}

Showing the uniform bound \eqref{bound around tau=0} on $m$ is somewhat involved and hence we split the proof into two parts. 
First we consider the case $ \Re \2z = 0 $ and show that the solution of the QVE, $ m(\cI\1\eta) = \cI\1v(\cI\1\eta) $, is uniformly bounded.
Afterwards we use a perturbative argument, which allows us to extend the uniform bound on $ m $ to a neighborhood of the imaginary axis. 

Because of the trivial bound $v(\cI \1\eta) \leq \norm{m(\cI\1\eta)} \leq \eta^{-1} $, we restrict ourselves to the case $\eta \leq 1$.

\medskip
\noindent{\scshape Step 1 (Uniform bound at $ \Re\2 z = 0$): } 
Here we will prove
\bels{uniform bound at E=0}{
\sup_{\eta > 0}\,\norm{v(\cI \1\eta)} \,\lesssim\, 1
\,,
} 
where by Convention~\ref{conv:Standard model parameters} the constants $ \varphi $ and $ K $ are considered as additional model parameters. 
As the first step we show that it suffices to bound the average of $ v $ only, since
\bels{BB-bound of v is bounded by avg-v}{
\norm{v(\cI \1\eta)} \,\lesssim\,\avg{\1v(\cI \1\eta)\1}\,, \qquad \forall \2 \eta \in (\10,1\1]\,.
} 
In order to obtain \eqref{BB-bound of v is bounded by avg-v} we recall \eqref{operator norms of S are comparable} and use Jensen's inequality similarly as in \eqref{L2-bound:2},
to get
\[
\frac{1}{\int_\Sx S_{xy}v_y \Px(\dif y)} \,\lesssim\, \int_\Sx \frac{S_{x y}}{v_y}\2 \Px(\dif y)
\,.
\]
This is used for $v=v(\cI\1\eta)$ together with the QVE on the imaginary axis (cf. \eqref{QVE on imaginary axis}) in the chain of inequalities,
\bels{v at zero: L1 bound implies uniform bound}{
v \,=\, \frac{1}{\eta+ Sv} 
\,\leq\,
\frac{1}{Sv} 
\,\lesssim\, 
S\Big(\2\frac{1}{v} \2\Big)
\,=\, S(\eta + Sv)
\,\leq\, \eta+ S^2v
\,\lesssim\, \eta +  \avg{v}
\,.
}
In the last inequality we used the uniform upper bound \eqref{uniform bound on S^2 kernel} on the integral kernel of $S^{\12}$. This establishes \eqref{BB-bound of v is bounded by avg-v}.

In order to bound $ \avg{v} $ we argue as follows:
First we note that
\bels{avg-v bounded by local averages}{
\avg{\1v\1} \,\leq\, \max_{i=1}^K\,\avg{\1v\1}_i 
\,.
}
Here we defined local averages,
\bels{def of avg-w_i}{
\avg{w}_i \,:=\, K\int_{I_i} \!w_x\1\Px(\dif x) \,, \qquad \forall\; i=1, \dots, K
\,,
}
for any $ w \in \Lp{1} $, noting $ \pi(I_i) = K^{-1} $.
Let us also introduce a discretized version $ \wti{J} : (0,\infty)^K \to \R $ of the functional $ J_\eta $ by
\bels{def of discretized minimizer}{
\wti{J}(\brm{w}) \,:=\,\frac{\varphi}{K}\sum_{i,j=1}^K  w_i Z_{ij}w_j -2\sum_{i=1}^K \log w_i
\,,
\qquad \vect{w} \1=\1 (w_i)_{i=1}^K \in (0,\infty)^K\,,
}
where the matrix $ \brm{Z} $ and the model parameter $ \varphi > 0 $ are from {\bf B1}.
The discretized functional is smaller than $ J_\eta $, in the following sense:
\bels{wti-J bounded by J}{
\wti{J}(\2\avg{w}_1,\dots,\avg{w}_K)
\,\lesssim\, J_\eta(w)
\,,\qquad
\forall\,w \in \BB\,,\; w > 0 
\,.
}
To see this we use {\bf B1} to estimate $ S_{xy} \ge \varphi\1Z_{ij}$, $ (x,y) \in I_i \times I_j $, for the quadratic term in the definition \eqref{def of functional J_eta} of $ J_\eta $.
Moreover, we use Jensen's inequality to move the local average inside the logarithm. In other words, \eqref{wti-J bounded by J} follows, since 
\bels{approximation of J_eta by wti-J}{
J_\eta(w) \,&\ge\, \varphi\sum_{i,j=1}^K \pi(I_i)\avg{w}_i Z_{ij}\,\pi(I_j) \avg{w}_j
-2 \sum_{i=1}^K \pi(I_i)\, \avg{ \1\log w}_i
\\
&\ge\,
\frac{1}{K}
\Biggl\{
\frac{\varphi}{K}\!\sum_{i,j=1}^K \avg{w}_iZ_{ij} \avg{w}_j
-2 \sum_{i=1}^K  \,\log \avg{w}_i
\Biggr\}
\\
&=\, 
\frac{1}{K}\,
\wti{J}(\2\avg{w}_1,\dots,\avg{w}_K)
\,,
}
for an arbitrary $ w \in \Lp{1}_+ $.
Since $ K \in \N $ is considered  a model parameter in the statement (ii) of Theorem~\ref{thr:Quantitative uniform bounds when a = 0} the estimate \eqref{wti-J bounded by J} follows. 

Now, by Lemma~\ref{lmm:Characterization as minimizer} the solution  $ v = v(\cI\1\eta) $ of the QVE at $ z = \cI\1\eta $ is the (unique) minimizer of the functional $ J_\eta : \Lp{1}_+ \to \R $. 
In particular, it yields a smaller value of the functional than the constants function, and thus
\[
J_\eta(v) \,\leq\, J_\eta(1) \,=\, 1+2\1\eta \,\leq\, 3
\,.
\]
Combining this with \eqref{wti-J bounded by J} we see that 
\bels{upper bound on wti-J on local v-avgs}{
\wti{J}(\2\avg{v}_1,\dots,\avg{v}_K)
\,\leq\, 
3\1K\,\sim\, 1 \,. 
}
Now we apply the following lemma which relies on $ \brm{Z} $ being FID. The lemma is proven in Appendix \ref{ssec:Variational bounds when Re z = 0}.

\begin{lemma}[Uniform bound on discrete minimizer]
\label{lmm:Uniform bound on discrete minimizer}
Assume $ \brm{w}:=(w_i)_{i=1}^K \in (0,\infty)^K $ satisfies  
\[
\wti{J}(\brm{w}) \,\leq\, \Psi\,,
\]
for some $ \Psi < \infty $, where $ \wti{J} : (0,\infty)^K \to \R $ is defined in \eqref{def of discretized minimizer}.
Then there is a constant $ \Phi < \infty $ depending only on $ (\Psi,\varphi,K) $, such that
\bels{}{
\max_{k=1}^K w_k \leq \Phi
\,.
}
\end{lemma}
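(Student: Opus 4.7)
My plan is to work in log coordinates $u_i:=\log w_i$ and exploit that $\wti J$ becomes a continuous convex function of $\brm u\in\R^K$, being a positively weighted sum of exponentials $e^{u_i+u_j}$ plus the linear term $-2\sum u_i$. An elementary first observation, obtained by dropping the non-negative quadratic part from $\wti J(\brm w)\leq\Psi$, is that $-2\sum u_i\leq\Psi$, so $\prod_i w_i\geq e^{-\Psi/2}$. This provides a soft lower bound on the $w_i$'s; the real task is the matching upper bound on $\max_i w_i$.

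The core step is to prove that $\wti J$ is coercive in $\brm u$, since together with convexity and continuity this forces every sublevel set $\{\wti J\leq\Psi\}$ to be bounded and thus yields the desired $\Phi(\Psi,\varphi,K)$. I would analyze a general ray $\brm u_0+t\brm v$ with $t\to+\infty$ and $\|\brm v\|_\infty=1$, splitting into cases: if some edge $(i,j)$ (meaning $Z_{ij}=1$) satisfies $v_i+v_j>0$, the term $e^{t(v_i+v_j)}$ in the quadratic drives $\wti J\to+\infty$ exponentially; otherwise $v_i+v_j\leq 0$ for every edge, and I would apply Proposition~\ref{prp:Properties of FID matrices}(ii) to choose a permutation $\sigma$ with $Z_{i\sigma(i)}=1$ for all $i$ and sum the inequality along the matching to obtain $2\sum_i v_i=\sum_i(v_i+v_{\sigma(i)})\leq 0$. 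If this sum is strictly negative, the linear part $-2t\sum v_i$ alone provides the required unbounded growth.

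The delicate degenerate direction is $\sum_i v_i=0$, where the quadratic remains bounded and the linear part vanishes along the ray. In this case equality is forced on every matching edge, i.e.\ $v_{\sigma(i)}=-v_i$, so $\sigma$ bijects $A:=\{i:v_i>0\}$ with $B:=\{i:v_i<0\}$ and in particular $|A|=|B|$. The constraint $v_i+v_j\leq 0$ rules out any edge between two vertices of $A$, and it also rules out any edge from $i\in A$ to a vertex $k$ with $v_k\geq 0$, since $v_k\leq -v_i<0$ would then be violated; consequently every edge leaving $A$ lands in $B$, that is $Z|_{A\times(A\cup V'')}=0$ with $V'':=\{1,\dots,K\}\setminus(A\cup B)$. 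Since $|A|+|A\cup V''|=|A|+K-|B|=K$, the full indecomposability of $\brm Z$, used directly in its defining form, forces that submatrix to contain a non-zero entry unless $A=\emptyset$. Hence $A=\emptyset$, which combined with $\sum v_i=0$ and $v_i\leq 0$ gives $\brm v=0$, contradicting $\|\brm v\|_\infty=1$.

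Coercivity of $\wti J$ then bounds $\max_k w_k$ in terms of $\Psi$, $\varphi$ and the specific FID matrix $\brm Z$; since for each fixed $K$ there are only finitely many symmetric $\{0,1\}$-valued FID matrices of size $K\times K$, taking the maximum of these bounds delivers the claimed constant $\Phi(\Psi,\varphi,K)$. The main obstacle in the plan is the zero-sum recession analysis: the matching in Proposition~\ref{prp:Properties of FID matrices}(ii) combined with the definitional FID property applied to the $(A,A\cup V'')$ submatrix handles it cleanly, so that the stronger primitivity statement (iii) of Proposition~\ref{prp:Properties of FID matrices} is not needed for this lemma.
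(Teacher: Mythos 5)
Your proof is correct, but it follows a genuinely different route from the paper's. You pass to log coordinates, observe that $\wti{J}$ becomes a finite convex function (positive exponentials plus a linear term), and prove coercivity by a recession analysis: along any ray either some edge gives $v_i+v_j>0$ and an exponential term blows up, or the matching from Proposition~\ref{prp:Properties of FID matrices}(ii) gives $\sum_i v_i\leq 0$ and the linear term blows up, and the degenerate case $\sum_i v_i=0$ is excluded by applying the defining property of full indecomposability to the zero block $\brm{Z}|_{A\times(A\cup V'')}$ with $\abs{A}+\abs{A\cup V''}=K$ — this computation is right. The paper instead argues directly and quantitatively: using the same permutation from (ii) it shows $w_k\1w_{\sigma(k)}\sim 1$ from the lower-bounded function $\Lambda(\tau)=\tfrac{\varphi}{K}\tau+\log\tfrac1\tau$, then bounds all products $M_{ij}=w_i\wti{Z}_{ij}w_{\sigma(j)}\lesssim 1$, and finally uses the primitivity $(\wti{\brm{Z}}^{K-1})_{ij}\ge 1$ from Proposition~\ref{prp:Properties of FID matrices}(iii) on the $(K-1)$-st power of $\brm{M}$ to extract $w_i^2\lesssim 1$ with explicit constants. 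What each approach buys: the paper's argument is effective, producing a computable $\Phi(\Psi,\varphi,K)$ in the spirit of the paper's uniform quantitative bounds; yours is structurally cleaner, needs only (ii) and the bare FID definition (not (iii)), and fits the convex variational framework of Lemma~\ref{lmm:Characterization as minimizer}, but the constant is obtained non-constructively — via the standard convex-analysis fact that a finite convex function diverging along every ray has bounded sublevel sets (which does use convexity essentially, as you note, and deserves a citation or a line of proof), and via taking a maximum over the finitely many symmetric $\{0,1\}$-valued FID matrices of size $K$. Since the lemma only asks for some $\Phi<\infty$ depending on $(\Psi,\varphi,K)$, this is acceptable.
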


From \eqref{upper bound on wti-J on local v-avgs} we see that we can apply Lemma~\ref{lmm:Uniform bound on discrete minimizer} to the discretized vector $ \brm{v} := (\2\avg{v}_1,\dots,\avg{v}_K\1) $, with $ \Psi := 3\1K \sim 1 $, and obtain:
\[ 
\max_{i=1}^K\,\avg{v}_i
\,\lesssim\, 1 \,.
\]
Plugging this into \eqref{avg-v bounded by local averages} and the resulting inequality for $ \avg{v} $ into \eqref{BB-bound of v is bounded by avg-v} yields the chain of bounds, $ \norm{v} \lesssim \avg{v} \leq \max_k \avg{v}_k  \lesssim 1  $. This completes the proof of \eqref{uniform bound at E=0}

\medskip
\noindent{\scshape Step 2 (Extension to a neighborhood):}
It remains to show that there exists $ \delta \sim 1 $, such that
\bels{Stability at zero}{
\norm{m(\tau+\cI\1\eta\1)-m(\cI\1\eta)} \;\lesssim\; \abs{\tau}\;,
\qquad
\text{when}\quad
\abs{\tau} \leq \delta
\,.
}
Here $ \Phi := \sup_\eta \norm{m(\cI\1\eta)} < \infty $ is considered a model parameter. In particular, the bound \eqref{unif bound of m} on $|m(\cI\1\eta)|=v(\cI\1\eta)$ implies $v(\cI\1\eta)\sim 1$. 
By \eqref{B-inv norm bound on BB at E-line} of Lemma~\ref{lmm:Bounds on B-inverse} we find $\norm{B(\cI\1\eta)^{-1}}\lesssim 1$.
The bound \eqref{Stability at zero} follows now from Lemma~\ref{lmm:Stability when m and inv-B bounded} by choosing $ z = \cI\1\eta $ and $ d_x = \tau $. 
Indeed, the lemma states that with the abbreviation 
\[ 
h(\tau) \,:=\, m(\1\cI\1\eta + \tau\1)-\2\cI\1v(\1\cI\1\eta)\,, 
\] 
the following holds true.
If $ \norm{h(\tau)} \leq c_0 $ for a sufficiently small constant $c_0\sim 1$, then actually $ \norm{h(\tau)} \leq C_1\1 \abs{\tau} $ for some large constant $C_1 $ depending only on $\Phi$ and the other model parameters. 

The Stieltjes transform representation \eqref{m as stieltjes transform} implies that 
$h(\tau)$
is a continuous function in $ \tau $. 
As $ h(0) = 0 $, by definition, the bound $ \norm{h(\tau)} \leq C_1\1\abs{\tau} $ applies as long as $ C_1\1\abs{\tau}\leq c_0 $ remains true.
With the choice $\delta := c_0/C_1$ we finish the proof of \eqref{bound around tau=0}.
\end{Proof}

\chapter{Regularity of solution}
\label{chp:Regularity of solution}

We will now estimate the complex derivative $\partial_z m$ on the upper half plane $ \Cp $. When $ \nnorm{m}_\R < \infty $ these bounds turn out to be uniform in $ z $. This makes it possible to extend the domain of the map $ z \mapsto m(z) $ to the closure $\eCp=\Cp\cup\R $.
Additionally, we prove that the solution and its generating density are $ 1/3$-H\"older continuous (Proposition~\ref{prp:Holder regularity in z and extension to real line}), and analytic (Corollary~\ref{crl:Real analyticity of generating density}) away from the special points $ \tau \in \supp\,v $ where $ v(\tau) = 0 $. 
Combining these two results we prove Theorem~\ref{thr:Regularity of generating density} at the end of this chapter. 
Even if the uniform bound, $ \nnorm{m}_\R < \infty $, is not available we still obtain weaker regularity for the averaged solution $ \avg{m} $.
The analyticity of the solution of the QVE is not restricted to the variable $ z $ alone. In Proposition~\ref{prp:Analyticity} we show that the QVE perturbed by a small element $ d \in \BB$ still has a unique solution $ g = g(z,d) $ close to $ m(z) $ that depends analytically on $ d $ provided $ z $ is not close to a point $ \tau \in \supp\,v $ with $ v(\tau) = 0 $.

At the technical level, the proofs of both the H\"older-continuity and the analyticity of $ m $ boil down to considering 
small, in fact infinitesimally small, perturbations of the QVE and then applying the estimates from Section~\ref{sec:Stability and operator B}.

\begin{proposition}[H\"older regularity in $ z $ and extension to real line]
\label{prp:Holder regularity in z and extension to real line}
Assume \\ \emph{\bf A1-3}. For an interval $ I \subset \R $ and a constant $ \eps > 0 $, set
\bels{holder-domain}{
\DD \,:= \setb{z \in \Cp : \dist(\2z\1,[-2\1\Sigma,\22\1\Sigma\2]\1\backslash\1I\,) \ge \eps \2}
\,.
}
Then the following hold:
\begin{itemize}
\titem{i} 
If there is $ \Lambda < \infty $, such that 
\bels{L2-bound for Re z in I}{
\qquad
\norm{m(z)}_2 \leq \Lambda\,,
\qquad
\Re\,z \in  I
\,,
}
then the averaged solution of the QVE is uniformly H\"older-continuous, 
\bels{generic m: 1/13-Holder continuity of avg-m}{
\abs{\1\avg{\1m(z_1)} - \avg{\1m(z_2)}} 
\,\lesssim\,
\abs{\1z_1-z_2}^{1/13},
\qquad
z_1,z_2 \in \DD
\,,
}
where $ \eps $ and $ \Lambda $ are considered additional model parameters.
\titem{ii}
If \eqref{L2-bound for Re z in I} is replaced by the uniform bound,  $ \nnorm{m}_I \leq \Phi < \infty $, then the H\"older continuity is improved to
\bels{bounded m: 1/3-Holder continuity of m}{
\norm{m(z_1)-m(z_2)} 
\,\lesssim\,
\abs{z_1-z_2}^{1/3},
\qquad
z_1,z_2 
\in \DD
\,,	
}
where $\eps$ and $\Phi $ are considered additional model parameters.
\end{itemize}
\end{proposition}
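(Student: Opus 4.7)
My plan is to derive both H\"older estimates from a uniform bound on the complex derivative $\partial_z m$ using the $B$-operator machinery of Section~\ref{sec:Stability and operator B}. Differentiating the QVE in $z$ and performing the change of variable that appears in the definition \eqref{def of B and a} of $B$ yields the identity $\partial_z m(z) = \abs{m(z)}\1B(z)^{-1}\abs{m(z)}$, where $\abs{m(z)}$ acts as a multiplication operator on each side. This identity can be justified rigorously by applying Lemma~\ref{lmm:Perturbations} and Lemma~\ref{lmm:Stability when m and inv-B bounded} to $g=m(z+\eps)$ with constant perturbation $d=\eps$ and sending $\eps\to 0$; the finiteness of $\norm{B(z)^{-1}}$ on $\DD\cap\Cp$ needed for this limit is supplied by Lemma~\ref{lmm:Bounds on B-inverse}.

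For part (ii), the hypothesis $\nnorm{m}_I\leq\Phi$ together with Proposition~\ref{prp:Estimates when solution is bounded} yields $\abs{m_x}\sim 1$, so Lemma~\ref{lmm:Bounds on B-inverse}(ii) gives $\norm{\partial_z m(z)}\lesssim\avg{v(z)}^{-2}$ on $\DD$. For part (i), with only the $\Lp 2$-bound $\norm{m}_2\leq\Lambda$, I would exploit the symmetric structure $\partial_z m = \abs{m}\1 B^{-1}\abs{m}$ to write $\abs{\partial_z\avg{m(z)}}=\absb{\avgb{\abs{m},\1 B^{-1}\abs{m}}}\leq\norm{B^{-1}}_{\Lp 2\to\Lp 2}\norm{m}_2^2\lesssim\Lambda^2\2\avg{v(z)}^{-12}$, the last step being the $\Lp 2$-bound from Lemma~\ref{lmm:Bounds on B-inverse}(i). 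This produces the tight exponent $p=12$ needed for $1/13$-H\"older continuity of the averaged quantity.

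The key step is then a self-improving differential inequality. Along a straight segment $z(s):=z_0+s\1\nu$ with $\abs{\nu}=1$ and $z_0\in\DD$, set $w(s):=\norm{m(z(s))-m(z_0)}$ (or $\abs{\avg{m(z(s))}-\avg{m(z_0)}}$ in part (i)). Since $\abs{\avg{v(z(s))}-\avg{v(z_0)}}\leq w(s)$, the previous step produces $w'(s)\leq C\1(\avg{v(z_0)}-w(s))_+^{-p}$, with $p=2$ in part (ii) and $p=12$ in part (i). Introducing $W(s):=\avg{v(z_0)}-w(s)$ transforms this into $(W^{p+1})'\geq -C(p+1)$, hence $W(s)^{p+1}\geq\avg{v(z_0)}^{p+1}-C(p+1)s$. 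Splitting into the cases $s\leq\avg{v(z_0)}^{p+1}/(C(p+1))$ and its complement, a short calculation yields $w(s)\lesssim s^{1/(p+1)}$ uniformly in $\avg{v(z_0)}\geq 0$. Connecting arbitrary $z_1,z_2\in\DD$ by a piecewise-linear path in $\DD$ of length $\lesssim\abs{z_1-z_2}$ (possible thanks to the $\eps$-margin in \eqref{holder-domain}) and chaining this one-step estimate then establishes the claimed H\"older bounds; uniformity as $\Im z\downarrow 0$ finally yields the continuous extension to $\overline{\DD}\cap\R$.

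The main obstacle is exactly this self-improving step: a naive integration of $\norm{\partial_z m}\lesssim\avg{v}^{-p}$ would blow up precisely where $\avg{v}$ is small, which is the regime of interest near cusps and edges. The change of variable $W^{p+1}$ converts this apparent singularity into a linear control, exploiting the crucial fact that the very quantity we are estimating, $w$, also bounds how fast $\avg{v}$ can decrease along the segment. A secondary technical point in part (i) is to control $\partial_z \avg{m}$ without a uniform $\BB$-bound on $m$; the symmetric representation of $\partial_z m$ combined with the $\Lp 2$-resolvent estimate of Lemma~\ref{lmm:Bounds on B-inverse}(i) is what makes this possible with the precise exponent $12$.
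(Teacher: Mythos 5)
Your first half coincides with the paper's: differentiating the QVE gives $\partial_z m = \abs{m}\,B^{-1}\abs{m}$, and Lemma~\ref{lmm:Bounds on B-inverse} yields $\abs{\partial_z\avg{m}}\lesssim \avg{v}^{-12}$ under the $\Lp{2}$-bound and $\norm{\partial_z m}\lesssim\avg{v}^{-2}$ under the uniform bound. The gap is in your integration step. The comparison function $w(s)$ obeys $w'(s)\le C\,(\avg{v(z_0)}-w(s))_+^{-p}$ only while $w(s)<\avg{v(z_0)}$; as soon as $w$ reaches $\avg{v(z_0)}$, i.e.\ for $s$ beyond $s_\ast:=\avg{v(z_0)}^{p+1}/(C(p+1))$, the right-hand side is infinite and the inequality is vacuous, so the "complementary case" of your splitting is not covered by any short calculation. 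Restarting at $z(s_\ast)$ and chaining does not repair this: the new base point may have a much smaller value of $\avg{v}$, and a sum of increments $\sum_i \delta_i^{1/(p+1)}$ with $\sum_i\delta_i = s$ is in general much larger than $s^{1/(p+1)}$.

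More fundamentally, no argument using only the differential inequality (even together with analyticity) can give H\"older continuity of $m$, as opposed to $v$: the inequality constrains the real part only through the size of the imaginary part, and it is consistent with $\Im\,m\equiv\delta$ while $\Re\,m$ drifts at speed $\sim\delta^{-p}$ over an interval of length much larger than $\delta^{\,p+1}$ (e.g.\ $m(z)=\cI\1\delta+\delta^{-p}z$ satisfies $\abs{\partial_z m}\le(\Im\,m)^{-p}$ but violates any uniform H\"older bound). What does integrate correctly is the inequality for the density itself, $\abs{\partial_z\avg{v}}\lesssim\avg{v}^{-12}$, resp.\ $\abs{\partial_z v_x}\lesssim v_x^{-2}$: there the controlled quantity appears inversely on the right-hand side, so $\avg{v}^{13}$, resp.\ $v_x^{3}$, is Lipschitz and the generating density is $1/13$-, resp.\ $1/3$-H\"older on $I$. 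This is exactly how the paper proceeds, and it then transfers the regularity from the density to $m$ through the Stieltjes transform representation: split $\nu=\nu_1+\nu_2$ with a cutoff supported well inside $I$, apply Lemma~\ref{lmm:Stieltjes transform inherits regularity} to the regular part $\nu_1$, and use $\dist(z,[-2\1\Sigma,2\1\Sigma]\backslash I)\ge\eps$ for $z\in\DD$ to bound the derivative of the Stieltjes transform of $\nu_2$. This structural step, which controls the real (Hilbert-transform) part of $m$ and is the actual role of the $\eps$-margin in \eqref{holder-domain}, is missing from your proposal, where the margin is only used to draw connecting paths.
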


We remark that if $ S $ satisfies {\bf B2} (cf. Chapter~\ref{chp:Uniform bounds}), in addition to {\bf A1-3}, then 
Lemma~\ref{lmm:Quantitative L2-bound} provides an effective upper bound $ \Lambda $ for the $ \Lp{2}$-norm of $ m(z) $, with $ I = \R $.
Similarly, quantitative uniform bounds can be obtained using Theorem~\ref{thr:Quantitative uniform bounds when a = 0} and Theorem~\ref{thr:Quantitative uniform bound for general a}. 
In a slightly different setup a qualitative version of the $1/3$-H\"older continuity \eqref{bounded m: 1/3-Holder continuity of m} was established in \cite{AEK1cpam} as Proposition~5.1.

\begin{convention}[Extension to real axis] 
\label{conv:Extension to real axis}
When $ m $ is uniformly bounded everywhere, i.e., $ \nnorm{m}_\R < \infty $, then \eqref{bounded m: 1/3-Holder continuity of m} guarantees that $ m $ can be extended to the real axis. 
We will then automatically consider $ m $, and all the related quantities as being defined on the extended upper half plane $ \eCp = \Cp \cup \R $. 
\end{convention}

In the proof of the part (ii) of Proposition~\ref{prp:Holder regularity in z and extension to real line} we actually show the following estimate on the derivative of $ m(z) $.

\NCorollary{Bound on derivative}{
In Proposition~\ref{prp:Holder regularity in z and extension to real line} the inequality \eqref{bounded m: 1/3-Holder continuity of m} can be replaced by a stronger bound,
\[
\qquad
\bigl(\2\abs{\sigma}\1\avg{\2\Im\,m\1}+ \avg{\2\Im\,m\1}^2\2\bigr)\1\norm{\1\partial_z m} \,\leq\, C_0
\,,\quad
\text{on}\quad\DD
\,.
\]
Here the function $ \sigma $ is from \eqref{def of sigma(z)} and $ C_0 $  depends on the model parameters from the part (ii) of Proposition~\ref{prp:Holder regularity in z and extension to real line}.
}

The proof of Proposition~\ref{prp:Holder regularity in z and extension to real line} also yields a regularity result for the mean generating measure when $ a = 0 $.

\NCorollary{Regularity of mean generating density}{
Assume {\bf A1-3}, and suppose $ a = 0 $.
Then the  normalized mean generating measure 
\bels{normalized mean generating measure}{  
\nu(\dif \tau) \,:=\, \frac{1}{\pi}\avg{\1v(\dif \tau)} 
\,,
}
has the representation 
\bels{nu-representation}{
\nu(\dif \tau) \;=\; \wti{\nu}(\tau) \2 \dif \tau \2+\2 \nu_0\2\delta_0(\dif \tau)
\,,
}
where $ 0\leq \nu_0 \leq 1 $, and the Lebesgue-absolutely continuous part $\wti{\nu}(\tau)$ is symmetric in $ \tau $, and locally H\"older-continuous on $\R\backslash\sett{0} $. 
More precisely, for every $\eps>0$, 
\bels{nu is 1/13-Holder}{
\abs{\2\wti{\nu}(\tau_2)-\1\wti{\nu}(\tau_1)\1} 
\,\lesssim\,
\abs{\tau_2-\tau_1}^{1/13}\,,
\qquad \forall\;\tau_1,\tau_2 \in \R \backslash (-\1\eps\1,\2\eps)
\,,
}
where $ \eps $ is an additional model parameter. 

If additionally, {\bf B1} holds then $ \nu_0=0 $ in \eqref{nu-representation} and \eqref{nu is 1/13-Holder} holds for all $ \tau_1,\tau_2 \in \R $ with $ C_3 $ depending only on the model parameters from {\bf A1-2} and {\bf B1}.
}

\begin{Proof}
As an intermediate step of the proof of Proposition~\ref{prp:Holder regularity in z and extension to real line} below, we identify $ \wti{\nu}|_I $ as the uniformly $ 1/13$-H\"older continuous extension of $ \avg{v} $ to any real interval $ I $ such that \eqref{L2-bound for Re z in I} holds.

Let us now assume {\bf A1-3}, and fix some $ \eps > 0 $. By \eqref{L2 bound on m when a=0} we have the uniform $ \Lp{2}$-estimate $ \norm{m(z)}_2 \leq 2\2\eps^{-1} $, for $ z \in \Cp $ satisfying $ \abs{\1\Re\,z} \ge \eps $. In other words, the hypothesis \eqref{L2-bound for Re z in I} of Proposition~\ref{prp:Holder regularity in z and extension to real line} holds with $ \Lambda = 2\2\eps^{-1} $ and $ I := \R\backslash (-\eps,\,\eps\1) $, and thus both \eqref{nu-representation} and \eqref{nu is 1/13-Holder} follow.

If {\bf B1} is assumed in addition to {\bf A1-3}, then the part (i) of Theorem~\ref{thr:Quantitative uniform bounds when a = 0} implies that $ \norm{m(z)} \leq \Phi $ when $ \abs{\Re\,z} \leq \delta $, for some $ \Phi,\delta \sim 1 $. 
Combining this with the $ \Lp{2}$-estimate $ \norm{m(z)}_2 \leq 2/\delta $ valid for $ \abs{\Re\,z} \ge \delta $, we see that Proposition~\ref{prp:Holder regularity in z and extension to real line} is applicable with $ I = \R $ and $ \Lambda := \max\sett{\2\Phi,2\2\delta^{-1}} $.
\end{Proof}

\begin{Proof}[Proof of Proposition~\ref{prp:Holder regularity in z and extension to real line}]
The solution $ m $ is a holomorphic function from $\Cp$ to $\BB$ by Theorem~\ref{thr:Existence and uniqueness}. 
In particular, if $ \abs{z} > 2\1\Sigma $, then the claims of the proposition follow trivially from \eqref{m as stieltjes transform} and \eqref{bound on supp v}. 
Thus we will assume  $ \abs{z} \leq 2\1\Sigma $ here.

Taking the derivative with respect to $z$ on both  sides of \eqref{QVE} yields
\[
(\1 1-m(z)^2\msp{-1}S\1)\1 \partial_z m(z) \,=\, m(z)^2\,,
\qquad \forall \; z \in \Cp
\,.
\]
Expressing this in terms of the operator $ B = B(z)$ from \eqref{def of B and a}, and suppressing the explicit $ z $-dependence, we obtain 
\bels{equation for derivative of v}{
\cI\12 \2 \partial_z v \,=\, \partial_zm\,=\,\abs{m}\2B^{\1-1}\msp{-1}\abs{m}
\,.
}
Here we have also used the general property $\partial_z \phi = \cI\12\1\partial_z \Im\,\phi $, valid for all  analytic functions $ \phi : \KK \to \C $, $ \KK \subset \C $, to replace $ m $ by $ v = \Im\,m$.

\medskip
\noindent{\scshape Case 1 }(No uniform bound on $ m $): 
Consider $ z \in \Cp $ satisfying $ \abs{z} \leq 2\1\Sigma $ and $ \Re\,z \in I $. 
Taking the average of \eqref{equation for derivative of v} yields 
\[ 
2\1\cI\1\partial_z\avg{v} =  \avg{\2\abs{m}\1,\1B^{-1}\abs{m}\2} 
\,,
\]
where $ v = \Im\,m $ by \eqref{v_x(z) as eta-regularization of measure v_x}, and thus
\bels{Holder continuity for avg-v emerges}{
\absb{\partial_z\avg{v}} 
\,\leq\,
2^{-1} 
\norm{m}_2\,\norm{B^{-1}}_{\Lp{2}\to \Lp{2}}\norm{m}_2
\,\lesssim\,
\avg{\1v\1}^{-12}
\;,  
\qquad 
\Re\,z \in I
\,. 
} 
In the last step we used \eqref{m L2-bounded: B-inv norm bound on L2 and BB} to get $ \norm{B^{-1}}_{\Lp{2}\to \Lp{2}} \lesssim \avg{v}^{-12} $. This is where the assumption \eqref{L2-bound for Re z in I} was utilized. 
The bound \eqref{Holder continuity for avg-v emerges} implies that $z \mapsto \avg{v(z)}$ is uniformly $ 1/13$-H\"older-continuous when $ \Re\,z \in I $. 
Consequently, the probability measure $ \nu $  has a Lebesgue-density on $ I $, 
\bels{Leb-density for nu on I}{
\wti{\nu}(\tau) \,=\,
\frac{\nu(\dif \tau)}{\dif \tau} \,=\, \frac{1}{\pi}\lim_{\eta \downarrow 0}\,\avg{\1v(\tau \msp{-2}+\msp{-1} \cI\1\eta\1)\1}\,,
\qquad\tau \in I\,,
}
and this density inherits the uniform H\"older continuity from \eqref{Holder continuity for avg-v emerges}. 

It remains to extend this regularity from the mean generating measure $ \nu|_I $ to its Stieltjes transform $ \avg{m}|_\DD $.
To this end, let us denote the left and right end points of the real interval $ I $ by $ \tau_- $ and $ \tau_+ $, respectively.  
Let us split $ \nu $, into two non-negative measures, 
\[
\nu\,=\,\nu_1+\nu_2
\,.
\]
Here the first measure is defined by  $ \nu_1(\dif \tau) := \varphi(\tau)\2\nu(\dif \tau)$, with the function $ \varphi : \R \to [0,1] $, being a piecewise linear such that, $ \varphi(\tau) = 0 $ for $ \tau\in  \R\1\backslash \1[\2\tau_-\!+\eps/3\1,\tau_+\!-\eps/3\2] $, $ \varphi(\tau) = 1 $ when $ \tau_- +(2/3)\1\eps  \leq \tau \leq \tau_+-(2/3)\1\eps $, and linearly interpolating in between.
It follows, that $\nu_1$ has a Lebesgue-density $\wti{\nu}_1$ and is supported in $[-\Sigma,\2\Sigma\2]$, since 
$\supp v \subseteq [-\Sigma,\2\Sigma\2]$ by Theorem~\ref{thr:Existence and uniqueness}. 
Furthermore,
\bels{}{
\abs{\2\wti{\nu}_1(\tau_1)-\wti{\nu}_1(\tau_2)} 
\,\lesssim\,
\abs{\tau_1-\tau_2}^{1/13}\,,
\qquad \forall\; \tau_1,\tau_2 \in \R\,.
}
For the measure $ \nu_2$ we know that $\nu_2(\R) \leq \nu(\R) = 1 $, and 
\[
\supp\,\nu_2 
\,\subseteq\, 
\bigl[-\Sigma,\tau_- \!+\tsfrac{2}{3}\1\eps\2\bigr]
\,\cup\,
\bigl[\2\tau_+ \!-\tsfrac{2}{3}\1\eps\2,\Sigma\2\bigr]
\,,
\]
where one of the intervals may be empty, i.e., $ [\1\tau',\tau''] := \emptyset $, for $ \tau' > \tau'' $.
The Stieltjes transform 
\[
\avg{\1m(z)} \,=\, \int_\R \frac{\,\nu(\dif \tau)}{\tau-z}
\,,
\]
is a sum of the Stieltjes transforms of $ \nu_1 $ and $  \nu_2 $.
The Stieltjes transform of $\nu_1$ is H\"older-continuous with H\"older-exponent $1/13$ since this regularity is preserved under the Stieltjes transformation. For the convenience of the reader, we state this simple fact as Lemma~\ref{lmm:Stieltjes transform inherits regularity} in the appendix. 
On the other hand, since $ \Re\,z $ is away from the support of $ \nu_2 $, the Stieltjes transform of $\nu_2$ satisfies
\[
\absB{\partial_z \int_\Sx \frac{\nu_2(\dif \tau)}{\tau-z}}
\,\leq\,\frac{9}{\eps^{\12}\msp{-5}} \;\lesssim\, 1\;,
\qquad\text{when}\quad
z \in \DD
\,,
\]
and hence \eqref{generic m: 1/13-Holder continuity of avg-m} follows.

\medskip
\noindent{\scshape Case 2 }(solution uniformly bounded): 
Now we make the extra assumption $ \nnorm{m}_I \leq \Phi \sim 1 $, $ I := [\tau_-,\tau_+] \subseteq \R$.
Taking the $ \BB$-norm of \eqref{equation for derivative of v}  immediately yields
\bels{1/3-Holder formula for derivative of v}{
\abs{\1\partial_zv_x(z)}
\,\leq\, 
\norm{m(z)}^2\norm{B(z)^{-1}}
\,\lesssim\,
\avg{v(z)}^{-2} \,\sim\, v_x(z)^{-2} 
\,.
}
Here we used \eqref{B-inv norm bound on BB at E-line} to estimate the norm of $ B^{-1} $, and the part (ii) of Proposition~\ref{prp:Estimates when solution is bounded} to argue that $ v(z) \sim \avg{\1v(z)} $.
We see that $ z \mapsto v_x(z) $ is $ 1/3$-H\"older continuous uniformly in $ z \in I + \cI\2(0,\infty) $ and $ x \in \Sx $.
Repeating the localization argument used to extend the regularity of $ \wti{\nu} = \pi^{-1}\avg{\1v\1} $ to the corresponding Stieltjes transform yields \eqref{bounded m: 1/3-Holder continuity of m}.
\end{Proof}

\begin{Proof}[Proof of Corollary~\ref{crl:Bound on derivative}]
Using all the terms of \eqref{B-inv norm bound on BB at E-line} for the second bound of \eqref{1/3-Holder formula for derivative of v} and using \eqref{equation for derivative of v} to estimate $ \abs{\partial_zm} \sim \abs{\partial_zv} $ yields the derivative bound of the corollary.
\end{Proof}

Next we show that the perturbed QVE \eqref{perturbed QVE - 1st time} has a unique solution.
For the statements of this result we introduce a shorthand 
\[ 
\DD_\BB(h,\rho) := \setb{g \in \BB: \norm{g-h} < \rho} 
\,,
\] 
for the open $ \BB $-ball centred at $ h $ with radius $ \rho > 0 $. 
We also recall that for complex Banach spaces $ X $ and $ Y $, a map $ \phi : U  \to Y $ is called \emph{holomorphic} on an open set $ U \subset X $ if for every $ x_0 \in U $, every $ x_1 \in X $, 
and every bounded linear functional $ \gamma \in X' $ the map $ \zeta \mapsto \avg{\gamma,\phi(x_0+\zeta\2x_1\1)} : \C \to \C $ defines a holomorphic function in a neighborhood of  $ \zeta = 0 $. This is equivalent (cf. Section 3.17 of \cite{HillePhillips}) to the existence of a  Fr\'echet-derivative of $ \phi $ on $ U $, i.e., for every $ x \in U $ there exists a bounded complex linear operator $ D\phi(x) : X \to Y $, such that
\[
\frac{\norm{\1\phi(x+d)-\phi(x) - D\phi(x)\1d\2}_Y\!}{\,\norm{d}_X\!} \,\to\, 0
\,,
\quad\text{as}\quad\norm{d}_X \to 0
\,.
\] 

\begin{proposition}[Analyticity]
\label{prp:Analyticity}
Assume {\bf A1-3}, and consider a fixed $ z \in \overline{\Cp} $ satisfying $ \abs{z} \leq 2\2\Sigma $, where $ \Sigma := \norm{a}+ 2\1\norm{S}^{1/2} $,  such that
\bels{holo:m and Binv norm}{
\norm{m(z)} \,\leq\, \Phi\,,
\qquad\text{and}\qquad
\norm{B(z)^{-1}} \,\leq\, \Psi 
\,,
} 
for some constants $\Phi< \infty $ and $ \Psi \ge 1 $.
Let us define
\bels{condition for delta in terms of eps}{
\eps \,:=\, \frac{1}{\13\2\Sigma + 9\1\norm{S}\Phi\Psi}
\,,\qquad\text{and}\qquad
\delta \,:=\,\frac{\eps}{8\2\Phi^2\Psi\1}
\,.
}

Then there exists a holomorphic map $ d \mapsto g(z,d) : \DD_\BB(0,\delta) \to \DD_\BB(m(z),\eps) $, where $ g = g(z,d) $ is the unique solution of the perturbed QVE,
\bels{g of d and z QVE}{
-\frac{1}{g}\,=\, z + a  + Sg + d
\,,
}
in $ \DD_\BB(m(z),\eps) $. 
The Fr\'echet-derivative $ Dg(z,d) $ of $ g(z,d) $ w.r.t. $ d $ is uniformly bounded, $ \norm{Dg(z,d)} \leq  8\1\Psi\1\Phi^2/\norm{S} $. In particular, 
\bels{Lipschitz bound in d}{
\norm{g(z,d)-m(z)} \,\leq\, \frac{8\1\Psi\Phi^2}{\norm{S}}\,\norm{d}
\,,
\qquad \forall\,d \in \DD_\BB(0,\delta)
\,.
} 
\end{proposition}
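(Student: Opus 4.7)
The plan is to set up the perturbed QVE as a contraction fixed point problem on the deviation $h := g - m(z)$, exploiting the a priori bounds \eqref{holo:m and Binv norm} on $\|m(z)\|$ and $\|B(z)^{-1}\|$. First, I would subtract the unperturbed QVE from \eqref{g of d and z QVE}, multiply through by $gm$, and rewrite the resulting identity $(1 - m^2 S)h = m^2 d + m h\, Sh + mh\, d$ in terms of the operator $B$ from \eqref{def of B and a} exactly as in Lemma~\ref{lmm:Perturbations}. This gives the fixed point equation
\begin{equation*}
h = \Phi_d(h) := |m|\,B^{-1}\bigl[\,\nE^{-\cI\1\am}\2 h\,Sh \,+\, |m|\,d \,+\, \nE^{-\cI\1\am}\2 h\,d\,\bigr].
\end{equation*}

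Second, I would check that for $\|d\| \leq \delta$ the map $\Phi_d$ sends the closed ball $\overline{\DD_\BB(0,\eps)}$ into itself and is a strict contraction on it. Direct estimation using $\|m\|\leq \Phi$, $\|B^{-1}\| \leq \Psi$, and $|\nE^{\pm\cI\1\am}|=1$ yields
\begin{equation*}
\|\Phi_d(h)\| \leq \Phi\Psi\bigl(\|S\|\,\|h\|^2 + \Phi\,\|d\| + \|h\|\,\|d\|\bigr),
\end{equation*}
and an analogous bound for $\|\Phi_d(h_1)-\Phi_d(h_2)\|$ with Lipschitz factor $\Phi\Psi(\|S\|(\|h_1\|+\|h_2\|)+\|d\|)$. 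The specific choice of $\eps$ and $\delta$ in \eqref{condition for delta in terms of eps} is tailored so that $\Phi\Psi\|S\|\eps \leq 1/9$ and $\Phi^2\Psi\delta \leq \eps/8$, from which the self-map property and a Lipschitz constant $\leq 1/2$ follow. Banach's fixed point theorem then delivers the unique $h=h(d)\in \overline{\DD_\BB(0,\eps)}$, equivalently the unique $g=m+h\in \DD_\BB(m(z),\eps)$ solving \eqref{g of d and z QVE}.

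Third, for the holomorphicity of $d \mapsto g(z,d)$, I would apply the holomorphic implicit function theorem to the map $(d,h)\mapsto h-\Phi_d(h)$. This map is a $\BB$-valued polynomial of total degree two in $(d,h)$, hence jointly holomorphic; its $h$-Fr\'echet-derivative $I - D_h\Phi_d(h)$ is invertible on the relevant ball since $\|D_h\Phi_d(h)\|\leq 1/2$ by the contraction estimate above. An equivalent route is to iterate $h_0:=0$, $h_n:=\Phi_d(h_{n-1})$: each $h_n$ is a polynomial in $d$, and the uniform convergence $h_n \to h(d)$ on $\DD_\BB(0,\delta)$ then promotes the limit to a holomorphic function of $d$.

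Finally, feeding the fixed point identity back into the first estimate and using $\Phi\Psi\|S\|\eps \leq 1/9$ to absorb the quadratic term in $h$ gives $\|h(d)\| \leq \tfrac{9}{8}\Phi\Psi(\Phi+\eps)\|d\|$, from which the Fr\'echet-derivative bound and the Lipschitz estimate \eqref{Lipschitz bound in d} with the stated explicit constant follow; the derivative of $g$ can alternatively be read off by differentiating \eqref{g of d and z QVE} directly, which gives $(1-g^2 S)Dg(z,d)w = g^2 w$, and then bounding the resolvent through its relation to $B^{-1}$. The main technical obstacle is not conceptual but bookkeeping: the constants $3\Sigma + 9\|S\|\Phi\Psi$ and $8\Phi^2\Psi$ in \eqref{condition for delta in terms of eps} must be carried through the two estimates above so that the self-map and contraction inequalities close tightly, and all heavy lifting has already been done in Lemma~\ref{lmm:Perturbations} and Lemma~\ref{lmm:Bounds on B-inverse}.
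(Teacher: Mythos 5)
Your proposal is correct, but it constructs the solution by a genuinely different route than the paper. You run a Banach fixed-point argument directly on the quadratic identity for $h=g-m(z)$ from Lemma~\ref{lmm:Perturbations}, using $\norm{B(z)^{-1}}\leq\Psi$ to make the map $h\mapsto |m|\1B^{-1}[\2\nE^{-\cI\1\am}h\,Sh+|m|\1d+\nE^{-\cI\1\am}h\1d\2]$ a self-map and a $\tfrac12$-contraction of the $\eps$-ball, and you get holomorphicity from the holomorphic implicit function theorem (or as a uniform limit of the polynomial iterates). The paper instead builds $g$ by integrating the Banach-space ODE $\partial_\lambda g(\lambda)=(1-g(\lambda)^2S)^{-1}(g(\lambda)^2d)$ from $g(0)=m$ to $\lambda=1$; its key technical input is the resolvent bound $\norm{(1-u\1w\1S)^{-1}h}\leq 2\1\Phi\Psi\norm{h/m}$ for $u,w\in\DD_\BB(m,\eps)$, obtained from $\norm{B^{-1}}\leq\Psi$ by a Neumann-type expansion, and it then gets uniqueness and the Lipschitz bound from the two-solution identity $(1-g\1g'S)(g'-g)=g\1g'(d'-d)$ and analyticity by an explicit difference-quotient computation of $Dg(z,d)$. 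Your approach is more elementary and delivers existence, uniqueness, analyticity and the Lipschitz bound in one stroke; the paper's approach has the advantage that the bound on $(1-g^2S)^{-1}$ is produced explicitly and immediately yields the closed formula $Dg(z,d)\1w=(1-g^2S)^{-1}(g^2w)$. Two small bookkeeping points you should make explicit when writing this up: the cross term $\Phi\Psi\1\eps\1\delta$ in your self-map estimate is controlled by the $\Phi^2\Psi\1\delta$ term only after noting $\eps\leq\Phi$, which follows from the QVE via $1\leq\norm{m}(\1\abs{z}+\norm{a}+\norm{S}\norm{m})\leq\Phi\1(3\1\Sigma+\norm{S}\Phi)$; and to pass from a fixed point of the polynomial equation back to \eqref{g of d and z QVE} you need $\inf_x\abs{g_x}>0$, which holds because $\inf_x\abs{m_x}\ge (3\1\Sigma+\norm{S}\Phi)^{-1}>\eps$. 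Finally, your feedback estimate gives $\norm{h(d)}\leq\tfrac{9}{4}\Phi^2\Psi\norm{d}$, which matches what the paper's own proof actually produces ($8\1\Phi^2\Psi\norm{d}$, without the factor $1/\norm{S}$ appearing in \eqref{Lipschitz bound in d}), so the discrepancy with the stated constant is an artifact of the paper's statement rather than a gap in your argument.
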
 

Before proving Proposition~\ref{prp:Analyticity} we consider its applications. First we show that apart from a set of special points the generating measure $ v $ has an analytic density on the real line. 

\begin{corollary}[Real analyticity of generating density]
\label{crl:Real analyticity of generating density}
Assume {\bf A1-3}, and consider a fixed $ \tau \in \R $. If additionally, either of the following three sets of conditions are assumed,
\begin{itemize}
\titem{i} $\avg{v(\tau)} > 0 $, and {\bf B2} holds;
\titem{ii} $ \abs{\tau} > \norm{a} $ and $ \avg{v(\tau)} > 0 $;
\titem{iii} $ \tau = 0 $, $ a = 0 $, and {\bf B1} holds, 
\end{itemize}
then the generating density $ v $ is real analytic around $ \tau $.
\end{corollary}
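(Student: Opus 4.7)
The plan is to produce, in each of the three cases, a holomorphic extension of $ m $ across the real axis at $ \tau $ by applying Proposition~\ref{prp:Analyticity} at spectral parameter $ z = \tau $ with the perturbation $ d = (\zeta-\tau)\2e $, where $ e \in \BB $ denotes the constant function equal to $ 1 $ and $ \zeta $ is a complex parameter near $ \tau $. Real analyticity of each component $ v_x $ near $ \tau $ will then follow from the convergent complex Taylor expansion of the resulting extension.

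The main step is to verify the two hypotheses $ \norm{m(\tau)} \leq \Phi $ and $ \norm{B(\tau)^{-1}} \leq \Psi $ of Proposition~\ref{prp:Analyticity} in each of the three cases. In case (i), Lemma~\ref{lmm:Quantitative L2-bound} provides a uniform $ \Lp{2}$-bound $ \norm{m(z)}_2 \leq \Lambda $; the lower bound in \eqref{unif bound of m} then yields $ \inf_y \abs{\2m_y(\tau)} \gtrsim 1 $, while the upper bound in \eqref{unif bound of m} combined with the hypothesis $ \avg{v(\tau)} > 0 $ produces $ \norm{m(\tau)} \lesssim \avg{v(\tau)}^{-1} < \infty $; Lemma~\ref{lmm:Bounds on B-inverse}(i) then bounds $ \norm{B(\tau)^{-1}} \lesssim \avg{v(\tau)}^{-14} $. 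Case (ii) is analogous, except that the $ \Lp{2}$-bound is replaced by the structural estimate $ \norm{m(\tau)}_2 \leq 2/(\abs{\tau}-\norm{a}) $ from Lemma~\ref{lmm:Structural L2-bound}. Case (iii) is the most delicate: Theorem~\ref{thr:Quantitative uniform bounds when a = 0}(i) gives $ \norm{m(0)} \leq \Phi $ directly, the symmetry \eqref{m symmetry when a=0} forces $ m(0) $ to be purely imaginary so that $ v_x(0) = \abs{\2m_x(0)} $, and the lower bound in \eqref{unif bound of m} at $ z = 0 $, $ a = 0 $ yields $ v_x(0) \gtrsim 1/\norm{m(0)}_2 \gtrsim 1/\Phi > 0 $; hence $ \avg{v(0)} > 0 $, allowing Lemma~\ref{lmm:Bounds on B-inverse} to bound $ \norm{B(0)^{-1}} $. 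This deduction of $ \avg{v(0)} > 0 $, rather than its assumption, is the principal technical subtlety of the argument.

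With these bounds established, Proposition~\ref{prp:Analyticity} supplies $ \delta, \eps > 0 $ and a holomorphic map $ d \mapsto g(\tau, d) : \DD_\BB(0,\delta) \to \DD_\BB(m(\tau),\eps) $ solving \eqref{g of d and z QVE} at $ z = \tau $. Substituting $ d = (\zeta-\tau)\2e $ rewrites that equation as the unperturbed QVE at spectral parameter $ \zeta $, so $ h(\zeta) := g(\tau,(\zeta-\tau)\2e) $ defines a holomorphic $ \BB $-valued function on the complex disc $ \set{\abs{\zeta-\tau} < \delta} $. For $ \zeta \in \Cp $ sufficiently close to $ \tau $, continuity of $ h $ together with $ \Im\2 m_x(\tau) = v_x(\tau) > 0 $ (which was established uniformly in $ x $ as a byproduct of the previous paragraph) ensures $ h(\zeta) \in \BB_+ $, and the uniqueness in Theorem~\ref{thr:Existence and uniqueness} then forces $ h(\zeta) = m(\zeta) $. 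Hence $ h $ is the desired holomorphic extension of $ m $ across $ \R $ at $ \tau $, and the componentwise expansion $ h_x(\tau+s) = \sum_{k\ge 0} c^{(x)}_k\2 s^k $ yields $ v_x(\tau+s) = \sum_{k\ge 0} \Im(c^{(x)}_k)\2 s^k $ for real $ s $ near $ 0 $, which is the claimed real analyticity.
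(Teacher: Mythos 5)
Your proposal is correct and follows essentially the same route as the paper: in each case you verify $\norm{m(\tau)}<\infty$ and $\avg{v(\tau)}>0$ using the same ingredients (the quantitative $\Lp{2}$-bound, the two inequalities of \eqref{unif bound of m}, the symmetry \eqref{m symmetry when a=0} together with Theorem~\ref{thr:Quantitative uniform bounds when a = 0} at $\tau=0$), bound $\norm{B(\tau)^{-1}}$ via Lemma~\ref{lmm:Bounds on B-inverse}, and then apply Proposition~\ref{prp:Analyticity} with the constant perturbation $d=(\zeta-\tau)\1e$ to obtain the holomorphic extension, exactly as the paper intends. The only point worth tightening is your claim in cases (i)--(ii) that $v_x(\tau)>0$ holds uniformly in $x$ "as a byproduct": this requires one additional appeal to \eqref{v compared to avg-v}, which your established bounds $\inf_x\abs{m_x(\tau)}\gtrsim 1$ and $\avg{v(\tau)}>0$ immediately justify.
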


\begin{Proof} 
Since $ \partial_zm(z) = Dg(z,0\1)\1e $, where $ e_x = 1 $ for all $ x \in \Sx $, the result follows immediately from Proposition~\ref{prp:Analyticity} once we have shown that both $ \norm{m(\tau)} < \infty $ and $ \norm{B(\tau)^{-1}} < \infty $ hold.
Actually, it suffices to only prove $ \norm{m(\tau)} < \infty $ and $ \avg{v(\tau)} > 0 $ in all the three cases (i)-(iii). Indeed, with these estimates at hand, the bound \eqref{B-inv norm bound on BB at E-line} of Lemma~\ref{lmm:Bounds on B-inverse} yields $ \norm{B(\tau)^{-1}} < \infty $. 

In the case (i) we first use Lemma~\ref{lmm:Quantitative L2-bound} to obtain $ \sup_{z \in \Cp}\norm{m(z)}_2 \leq \Lambda $, for $ \Lambda \sim 1 $. We then plug this $ \Lp{2}$-bound in the lower bound of the part (i) of Lemma~\ref{lmm:Constraints on solution} to get a uniform lower bound $ \inf_x \abs{m_x(\tau)} \gtrsim \Lambda^{-1} $. Using this in the upper bound of the part (i) of Lemma~\ref{lmm:Constraints on solution} yields $ \norm{m(\tau)} \lesssim \Lambda^{-C}\avg{v}^{-1} \sim 1 $.

In the case (ii)
we first note that $ \abs{\tau} > \norm{a} $ implies $ \dist(\tau,\sett{a_y}) > 0 $, and thus the first inequality of \eqref{unif bound of m} yields $ \inf_x\abs{m_x(\tau)} > 0 $. Plugging this into the second inequality of \eqref{unif bound of m}, and using the assumption $ \avg{v(\tau)} > 0 $, we obtain an uniform bound for $ m(\tau) $. 

In the case (iii), we use the part (ii) of Theorem \ref{thr:Quantitative uniform bounds when a = 0} to get the uniform bound. 
From the symmetry \eqref{m symmetry when a=0} we see that $ m(0) = \cI\1v(0) $. Hence \eqref{unif bound of m} yields  $ \inf_x v_x(0) > 0 $. Feeding this into \eqref{v compared to avg-v} yields $ \avg{v(0)} \sim 1 $.
\end{Proof}

Combining the analyticity and the H\"older regularity we prove  Theorem~\ref{thr:Regularity of generating density}.

\begin{Proof}[Proof of Theorem~\ref{thr:Regularity of generating density}]
Here we assume $ \nnorm{m}_\R \leq \Phi $, with $\Phi < \infty $ considered as a model parameter.
The assertion (i) follows from (ii) of Proposition~\ref{prp:Estimates when solution is bounded}. 

Using the bound \eqref{bounded m: 1/3-Holder continuity of m} of Proposition~\ref{prp:Holder regularity in z and extension to real line}, with $ I = \R $, we see that $ m $ can be extended as a $ 1/3$-H\"older continuous function to the real line. 
Hence, from \eqref{v_x(z) as eta-regularization of measure v_x} we read off that the generating measure must have a Lebesgue-density equal to $ \Im\,m|_\R $. In particular, this density function inherits the H\"older regularity from  $ m|_\R $, i.e., for some  $ C_1 \sim 1 $: 
\bels{v is 1/3-Holder explicit}{
\abs{v_x(\tau')-v_x(\tau)} \,\leq\, C_1\1\abs{\1\tau'-\tau}^{1/3}\,,\qquad \forall\,\tau,\tau' \in \R\,.
}
This proves the part (iii) of the theorem.

Since $ \nnorm{m}_\R \sim 1 $ using Lemma~\ref{lmm:Constraints on solution} we see that   $ v_x(z) \sim v_y(z)$ for $ z \in \eCp $. 

Let $ \tau_0 \in \R $ be such that $v(\tau_0) > 0  $. In order to bound the derivatives of $ v $ at $ \tau_ 0 $ we use \eqref{v is 1/3-Holder explicit} to estimate
\[
\inf \setb{\,\abs{\omega} : v(\tau_0+\omega) = 0,\;\omega \in \R}
\;\ge\, 
C_1^{-3}\avg{v(\tau_0)}^3 =:\,\varrho \,>\, 0 
\,.
\]
By Corollary~\ref{crl:Real analyticity of generating density} this implies that $ v $ is analytic on the ball of radius $ \varrho $ centered at $ \tau_0  $. The Cauchy-formula implies that the $ k $-th derivative of $ v $ at $ \tau_0 $ is bounded by $ k!\,\varrho^{\1-k} $. This proves the assertion (ii) of the theorem.
\end{Proof}

\begin{Proof}[Proof of Proposition~\ref{prp:Analyticity}]
As $ z $ is fixed, we write $ m = m(z) $. 
We start with general $ \eps $ and $ \delta $, i.e., \eqref{condition for delta in terms of eps} is not assumed.
Since $ \abs{z} < 2\1\Sigma $, we see directly from the QVE that
$
1/\abs{m}
\leq  \abs{z} + \norm{a}+\norm{S}\norm{m} 
\leq 3\1\Sigma + \norm{S}\Phi\,.
$
Writing $ \abs{w/m} \leq 1 + \norm{1/m}\norm{w-m} $, we thus find that 
\bels{condition 1 for eps}{
\absB{\frac{w}{m}} \2\leq\2 2
\,,\quad \forall\2w \in \DD_\BB(m,\eps)\,,
\quad\text{provided}\quad
\eps \,\leq\, \frac{1}{3\1\Sigma+\norm{S}\1\Phi\2}
\,. 
}
We will assume below that $ \eps $ satisfies the above condition.

Consider now $ z $ and $ d \in \DD_\BB(0,\delta)$ fixed. We will first construct a function $ \lambda \mapsto g(\lambda) : [0,1] \to \DD_\BB(m,\eps) $, such that $ g(\lambda) $ solves \eqref{g of d and z QVE} with $ \lambda\1d $ in place of $ d$, i.e., $ Z(\lambda,g(\lambda)) = 0 $, where 
\bels{holo:def of F}{
Z(\lambda,w) \,:=\, w \,+\, \frac{1}{z+a+Sw+\lambda\1d\1}
\,.
}
Let us define $ R : \DD_\BB(m,\eps) \to \BB $ by 
\bels{RHS of ODE}{
R(w) \,:=\, (\11-w^2S\1)^{-1}(w^2d\1)
\,.
}
The function $ \lambda \mapsto g(\lambda) $ is obtained by solving the  Banach-space valued ODE
\bels{Banach ODE flow}{
\partial_\lambda g(\lambda) \,&=\, R(\1g(\lambda))\,,
\qquad \lambda \ins [\10\1,1]\,,
\\
g(0) \,&=\, m
\,,
}
where $ m = m(z)$.
Indeed, a short calculation shows that if $ \lambda \mapsto g(\lambda) $ solves the ODE, then 
\[
\frac{\dif}{\dif \lambda}Z(\lambda,g(\lambda)) \,=\, 0 
\,.
\]
As $ Z(0,g(0)) = 0 $ by the definition of $ m(z)$, this implies that also $ Z(1,g(1)) = 0 $, which is equivalent to $ g = g(1) $ solving  \eqref{g of d and z QVE}.

We will now find $ \eps,\delta \sim 1 $ such that $ \norm{R(w)} \leq \eps $, for $ w \in \DD_\BB(m,\eps) $ and $ \norm{d}_\BB \leq \delta $. 
Under this condition the elementary theory of ODEs (cf. Theorem 9.1 of \cite{ColemanBanachODE}) yields the unique solution $ g(\lambda) \in \DD_\BB(m,\eps) $ to \eqref{Banach ODE flow}.
We start by estimating the the norm of the following operator
\bels{holo:inverse of 1-uvS}{
(1-u\1w\1S)^{-1} \,=\, (\21+\abs{m}\1B^{-1}D\1)^{-1}\abs{m}B^{-1}\Bigl(
\Bigl(\frac{\abs{m}}{m}\Bigr)^2\frac{\cdot}{\abs{m}}\Bigr)
\,,
}
for arbitrary $u,w \in \DD_\BB(m,\eps) $.
Here, $ D := (\abs{m}/m)^2\2m^{-1}\1(m^2-uw)\1S $.
Since $ m^2-u\1w = m\2(m-u)+ u\2(m-w) $ we get $ \norm{D} \leq 3\2\norm{S}\2\eps $ using \eqref{condition 1 for eps}.
Thus requiring $ \norm{\1\abs{m}B^{-1}D} \leq 3\2\Phi\1\Psi\1\norm{S}\2\eps $ to be less than $ 1/2 $, we see that
\bels{condition 2 for eps}{
\norm{(\21+\abs{m}\1B^{-1}D\1)^{-1}} \,\leq\, 2\,,
\quad\text{provided}\quad
\eps\,\leq\,\frac{1}{6\2\Phi\1\Psi\1\norm{S}}
\,.
}
Using this bound for the first factor on the right hand side of \eqref{holo:inverse of 1-uvS} yields
\bels{bound on the inverse of 1-uvS}{
\norm{(1-u\1w\1S)^{-1}h} \,\leq\, 2\2\Phi\1\Psi\,\norm{h/m}
\,,\qquad
\forall\,h \in \BB
\,,\quad
\forall\,u,w \in \DD_\BB(m,\eps)
\,,
}
provided the condition for $ \eps $ in \eqref{condition 2 for eps} holds.
In order to estimate $ \norm{R(w)} $ for $ w \in \DD_\BB(m,\eps) $ we choose $ u = w $ and $ h = w^2d $ in \eqref{bound on the inverse of 1-uvS}, and get
\bels{norm of R(w)}{
\norm{R(w)} 
\,\leq\, 
2\2\Phi\1\Psi\,\norm{w^2\!/m}\norm{d} 
\,\leq\,
8\2\Phi^2\Psi\2\delta
\,, 
}
where $ \norm{d} \leq \delta $ and  $ \norm{w^2\!/m} = \norm{w/m}^2\norm{m} \leq 4\1\Phi $ were used for the last bound.
With the choice \eqref{condition for delta in terms of eps} for $ \delta $ we see that the rightmost expression in \eqref{norm of R(w)} is less than $ \eps $.
Moreover, if $ \eps $ is chosen according to \eqref{condition for delta in terms of eps}, then the conditions from the estimates \eqref{condition 1 for eps} and \eqref{condition 2 for eps} are both satisfied as  $ \Psi \ge 1 $.
We conclude that the ODE \eqref{Banach ODE flow} has a unique solution in $ \DD_\BB(m,\eps) $ if we choose $ \eps $ and $ \delta $ to satisfy \eqref{condition for delta in terms of eps}.

In order to show that not only the ODE but the perturbed QVE \eqref{g of d and z QVE} in general has a unique solution in $ \DD_\BB(m,\eps) $, we establish a more general stability result.
To this end, assume that $ g,g' \in \DD_\BB(m(z),\eps) $ and $ d,d' \in \DD_\BB(0,\delta) $ are such that $ g $ solves \eqref{g of d and z QVE}, while $ g' $ solves the same equation with $ d $ replaced by $ d' $. Then by definition,  
\bels{g' and g difference}{
(\11-g\1g'S\1)(g'-g)\,=\, g\1g'\1(d'-d)
\,.
}
Applying \eqref{bound on the inverse of 1-uvS} to \eqref{g' and g difference} and recalling $ \abs{\1g/m},\abs{\1g'\msp{-2}/m} \leq 2 $ we obtain 
\bels{holo: bound g'-g using d'-d}{
\norm{g'\msp{-2}-g\1} \,\leq\, 8\2 \Phi^2\Psi\1\norm{\1d'\msp{-2}-d\1} 
\,.
}

The uniqueness of the solution to \eqref{g of d and z QVE} follows now from \eqref{g' and g difference}.
In particular, this implies that the map $ d \mapsto g(z,d) : \DD_\BB(0,\delta) \to \DD_\BB(m(z),\eps) $ is uniquely defined with $ g(z,d) := g(1) $, where $ g(1)$ is the value of the solution of the ODE \eqref{Banach ODE flow} at $ \lambda =1 $.

It remains to show that $ g(z,d) $ is analytic in $ d $. 
To this end, let $ h \in \BB $ be arbitrary, and consider \eqref{g' and g difference} with $ g = g(z,d) $, $ g' = g(z,d') $, where $ d'= d + \xi\2h $ for some sufficiently small $ \xi \in \C $. Using the stability bound \eqref{holo: bound g'-g using d'-d} we argue that the differences $  g-g' $ vanish in the limit $ \xi \to 0 $. Therefore we obtain from  \eqref{g' and g difference}
\[
Dg(z,d)\1h 
\,:=\,  
\lim_{\xi \1\to\1 0} \frac{g(z,d+\xi\2h)-g(z,d)}{\xi} 
\,=\,
(\11-g(z,d)^2S\1)^{-1}(\2g(z,d)^2h\1)
\,,
\]
where $ Dg(z,d) : \BB \to \BB $ is the Fr\'echet-derivative of $ g(z,d) $ w.r.t. $ d $ at $(z,d) $.
\end{Proof}

\chapterl{Perturbations when generating density is small}

In this chapter we analyze the stability of the QVE \eqref{QVE} in the neighborhood of parameters $z$ with a small value of the average generating density $\avg{v(z)}$, against adding a perturbation $d \in \BB$ to the right hand side. In the special case when $d$ is a real constant function, i.e., when $m(z)$ is compared to $m(z+\omega)$, and when $z \in \supp \avg{v}$ with $\avg{v(z)}=0$,  this analysis has been carried out in \cite{AEK1cpam}. 
In that special case the upcoming proofs simplify considerably for the following three reasons. 
First, an expansion in $\alpha$ (cf. Lemma~\ref{lmm:Expansion of B in bad direction}) is not needed. 
Second, we do not need to show that the expansions are uniform in the model parameters. 
Third, the complicated selection process of the roots in Subsection \ref{ssec:Two nearby edges} is avoided as we do not have to consider very small gaps in the support of the generating density.

We will assume in this and the following chapters that $ S $ satisfies {\bf A1-3} and that the solution is uniformly bounded everywhere $ \nnorm{m}_\R \leq \Phi < \infty$. In particular, all the comparison relations (Convention~\ref{conv:Comparison relations, model parameters and
constants}) will depend on:
\begin{equation}
\label{Model parameters for the shape analysis}
\text{'The model parameters'} \,:=\, (\2\rho\1,L,\norm{a}, \norm{S}_{\Lp{2}\to\BB},\Phi\1)
\,.
\end{equation}
Due to the uniform boundedness, $ m $ and all the related quantities are extended to $ \eCp $ (cf. Proposition~\ref{prp:Holder regularity in z and extension to real line}).
Furthermore, these  standing assumptions also imply that Proposition~\ref{prp:Estimates when solution is bounded} is effective, i.e., 
\bels{Phi:basic bounds}{ 
\abs{m_x(z)}\1,\,f_x(z)\1,\,\mrm{Gap}(F(z)) \,\sim\, 1
\,,\quad\text{and}\quad
v_x(z) \,\sim\, \avg{v(z)} \,\sim\, \alpha(z) 
\,,
} 
for every $ \abs{z} \leq 2\1\Sigma $ and $ x \in \Sx $.
In particular, the three quantities $ v, \avg{v},\alpha = \avg{\1f,\sin \am\1}$, can be interchanged at will, as long as only their sizes up to constants depending on the model parameters matter. 

The stability of the QVE against perturbations deteriorates when the generating density becomes small. This can be seen from the explosion in the estimate 
\bels{scaling of norm of inv-B in v}{
\avg{v(\tau)}^{-1} \,\lesssim\; \norm{B(\tau)^{-1}} \lesssim\, \avg{v(\tau)}^{-2} 
\,,
\qquad
\tau \in \supp v|_\R
\,,
}
(cf. \eqref{B-inv norm bound on BB at E-line} and \eqref{beta expanded} below) for the inverse of the operator $B$, introduced in \eqref{def of B and a}. 
This norm appears in the estimates \eqref{bulk perturbations} relating the norm of the rescaled difference,
\bels{def of u - 2nd time}{
u = \frac{g-m}{\abs{m}}
\,,
}
of the two solutions $ g $ and $ m $ of the perturbed and the unperturbed QVE,
\[
-\frac{1}{g} = z + a+ Sg +d
\qquad\text{and}\qquad
-\frac{1}{m} = z + a+ Sm
\,,
\]
respectively, to the size of the perturbation $ d $.

The unboundedness of $ B^{-1} $ in \eqref{scaling of norm of inv-B in v}, as $ \avg{v} \to 0 $, is caused by the vanishing of $ B $ in a  one-dimensional subspace of $ \Lp{2}$ corresponding to the eigendirection of the smallest eigenvalue of $ B $. 
Therefore, in order to extend our analysis to the regime $ \avg{v} \approx 0 $ we decompose the perturbation \eqref{def of u - 2nd time} into two parts:
\bels{def of u again}{
u \,=\, \Theta\2b\,+\,r 
\,.
} 
Here, $ \Theta $ is a scalar, and $ b $ is the eigenfunction corresponding to the smallest eigenvalue of  $ B $.
The remaining part $ r \in \BB $ lies inside a subspace where $ B^{-1} $ is bounded due to the spectral gap of $ F $ (cf. Figure~\ref{Fig:B as perturbation of 1-F}). 
As $ B $ is not symmetric, $ r $ and $ b $ are not orthogonal w.r.t. the standard inner product \eqref{def of L2-inner product and avg} on $ \Lp{2}$. 
The main result of this chapter is Proposition~\ref{prp:General cubic equation} which shows that for sufficiently small $ \avg{v} \leq \eps_\ast $, the $ b $-component $ \Theta $ of $ u $ satisfies a cubic equation, and we identify its coefficients up to the leading order in the small parameters $ \avg{v} $ and $ d $.
We will use the symbol $ \eps_\ast \sim 1 $ as the upper threshold for $ \avg{v} $ and its value will be reduced along the proofs.

\section{Expansion of operator $ B $}
\label{sec:Expansion of operator B}

In this section we collect necessary information about the operator $ B : \BB \to \BB $ defined in \eqref{def of B and a}. 
Recall, that the {\bf spectral projector} $ P_\lambda $ corresponding to an isolated eigenvalue $ \lambda $ of a compact operator $ T $ acting on a Banach space $ X $ is obtained (cf. Theorem 6.17 in Chapter 3 of \cite{Kato-PT}) by integrating the resolvent of $ T $ around a loop $ \Gamma  $ encircling only the eigenvalue $ \lambda  $:
\bels{spectral projector from resolvent}{
P_\lambda \,:=\, \frac{\!-1}{2\1\pi\1\cI}\oint_{\Gamma} (T-\zeta)^{-1}\dif \zeta
\,.
}

\begin{lemma}[Expansion of $ B $ in bad direction]
\label{lmm:Expansion of B in bad direction}
There exists $ \eps_\ast \sim 1 $ such that, uniformly in $z \in \eCp$ with $\abs{z}\leq 2\1\Sigma $, the following holds true:  If 
\[
\alpha \,=\,\alpha(z) \:=\, \avgB{f(z)\1,\frac{\Im\,m(z)}{\abs{m(z)}}} 
\,\leq\, \eps_\ast
\,,
\]
then the operator $ B= B(z) $ has a unique single eigenvalue $ \beta = \beta(z) $ of smallest modulus, so that $ \abs{\beta'}-\abs{\beta} \gtrsim 1 $, $ \forall\,\beta' \in \Spec(B)\backslash \{\beta\} $. 
The corresponding eigenfunction $ b = b(z) $, satisfying $ Bb = \beta\1b $, has the properties
\bels{properties of b}{
\avg{\1f,\1b\2}\,=\,1\,,
\qquad\text{and}\qquad
\abs{\1b_x} \,\sim\, 1\,,\quad\forall\2x \in \Sx 
\,.
}
The spectral projector $ P = P(z) : \BB \to \mrm{Span}\sett{b(z)} $ corresponding to $ \beta $, is given by
\begin{equation}
\label{def of P}
P\1w \,=\, \frac{\avg{\2\overline{b}\1,w\1}}{\avg{\2b^{\12}}}\,b
\,.
\end{equation}
Denoting, $ Q := 1 -P $, we have  
\bels{Bounds on inverse of B}{ 
\norm{B^{-1}} \,\lesssim\, \alpha^{-2},
\qquad\text{but}\qquad
\norm{B^{-1}\Pob} + \norm{(B^{-1}\Pob)^\ast}\,\lesssim\, 1
\,, 
} 
where  $ (B^{-1}Q)^\ast $ is the  $ \Lp{2}$-adjoint of $ B^{-1}Q $.

Furthermore, the following expansions in $ \eta = \Im\,z $ and $ \alpha $ hold true:
\begin{subequations}
\label{B, beta and b expanded in alpha and eta}
\begin{align}
\label{B expanded}
B \;&=\; 1\,-\,F 
\,-\,
2\1\cI\2pf\2\alpha - 2f^2\1\alpha^2 \,+\,\Ord_{\BB\to\BB}(\1\alpha^3+ \eta\2)\,,
\\
\label{beta expanded}
\beta \;&=\; \avg{\1f\1\abs{m}}\frac{\eta}{\alpha} 
\,-\,\cI\12\1\sigma\2\alpha 
\,+\, 2\2(\1\psi-\sigma^2)\2\alpha^2
\,+\,\Ord(\1\alpha^3 + \1\eta\2)\,,
\\
\label{b expanded}
b \;&=\; f 
\,+\, \cI\22\2(1-F)^{-1}\Pob^{(0)}(pf^2)\2\alpha
\,+\,\Ord_{\msp{-2}\BB}(\1\alpha^2 +\1\eta\2) 
\,.
\end{align}
\end{subequations}
If $ z \in \R $, then the ratio $ \eta/\alpha $ is defined through its limit $ \eta \downarrow 0 $.
The real valued auxiliary functions $ \sigma = \sigma(z) $ and $ \psi = \psi(z) \ge 0 $ in \eqref{B, beta and b expanded in alpha and eta}, are defined by
\bels{defs of sigma and psi}{
\sigma \,:=\, \avg{\2pf^{\13}\1}
\qquad\text{and}\qquad
\psi \,:=\, \mcl{D}(\1pf^{\12}) 
\,,  
} 
where the sign function $ p = p(z) $, and the positive quadratic form $ \mcl{D} = \mcl{D}(\genarg;z) $, are given by
\bels{def of p}{
p \,:=\, \sign \Re\,m
}
and 
\bels{def of mcl-D}{
\mcl{D}(w) \,&:=\, 
\avgB{\Pob^{(0)}w,\Bigl[\1(1+\norm{F}_{\Lp{2} \to \Lp{2}})\1(\11-F)^{-1}-1\1\Bigr]\1\Pob^{(0)}w}
\\
&\ge\;
\frac{\mrm{Gap}(F)}{2}\2\norm{Q^{(0)}w}_2^2
\,,
}
respectively.
The orthogonal projector $ Q^{(0)} = Q^{(0)}(z) := 1 - f(z)\2\avg{\1f(z),\genarg} $ is the leading order term of $ Q $, i.e.,  $ Q = Q^{(0)} + \Ord_{\Lp{2}\to\Lp{2}}(\alpha) $. Furthermore, $ \mrm{Gap}(F) \sim 1 $. 

Finally, $ \lambda(z) = \norm{F(z)}_{\Lp{2}\to\Lp{2}} $, $ \beta(z),\sigma(z),\psi(z) $, as well as the vectors $ f(z), b(z) \in \BB  $, are all uniformly $ 1/3$-H\"older continuous functions of $ z $ on connected components of the domain 
\[ 
\setb{z \in \eCp: \alpha(z) \leq \eps_\ast\1,\2 \abs{z}\leq 2\1\Sigma } 
\,,
\] 
where $ \Sigma \sim 1 $ is from \eqref{bound on supp v}. The function $ p $ stays constant on these connected components.
\end{lemma}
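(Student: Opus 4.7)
The starting point is the expansion of the angle function $\am$ using the imaginary part of the QVE. Writing $\sin\am = v/\abs{m}$ and taking the imaginary part of \eqref{QVE} gives $\sin\am = \abs{m}\,\Im z + F\sin\am$, which, combined with the fact that $f$ is the Perron–Frobenius eigenvector of $F$ with eigenvalue $\lambda(z) = 1 - (\eta/\alpha)\avg{f\abs{m}}$ (cf.\ \eqref{F and alpha}), yields the decomposition $\sin\am = \alpha f + (\Im z)\1 t$ with $t = (1-F)^{-1}Q^{(0)}\abs{m}$ of bounded norm (this is already recorded in the proof of Lemma~\ref{lmm:Bounds on B-inverse}). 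Combining with $\cos\am = p + \Ord_{\BB}(\alpha^2)$, where $p = \sign\Re\,m$ is locally constant by Proposition~\ref{prp:Estimates when solution is bounded}, the trigonometric identity $\nE^{-2\cI\am} = 1 - 2\sin^2\am - 2\cI\sin\am\cos\am$ produces \eqref{B expanded} with an $\Ord_{\BB\to\BB}(\alpha^3+\eta)$ remainder.

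The next step is the spectral analysis. The unperturbed operator $1-F$ is self-adjoint on $\Lp{2}$, has $0$ as a simple eigenvalue with eigenvector $f$, and has a spectral gap of size $\sim 1$ by \eqref{scaling of the gap for bounded m}; above the gap, the norm of $(1-F)^{-1}Q^{(0)}$ is $\sim 1$ on $\Lp{2}$, and the bound \eqref{resolvent of F on L2 and BB} promotes this to an $\Ord(1)$ bound on $\BB$. By \eqref{B expanded}, the additive perturbation $V := B - (1-F) = -2\cI\1pf\alpha - 2f^2\alpha^2 + \Ord_{\BB\to\BB}(\alpha^3+\eta)$ has norm $\lesssim \alpha + \eta/\alpha$, which for sufficiently small $\eps_\ast$ is dominated by the spectral gap of $1-F$. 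Standard Kato perturbation theory (contour integration of the resolvent of $B$ around a small loop enclosing only $0$, as in \eqref{spectral projector from resolvent}) then produces a single isolated eigenvalue $\beta(z)$ of $B$ close to $0$, with an $\Ord(1)$ gap to the rest of $\Spec(B)$, a one-dimensional spectral projector $P$, and an eigenvector $b$; the normalization $\avg{f,b}=1$ is fixed by the requirement $\avg{f,Pf}=\avg{f,b}$ at leading order (since $Pf = f + \Ord(\alpha+\eta/\alpha)$), and the pointwise bound $\abs{b_x}\sim 1$ follows from \eqref{b expanded} together with $f_x \sim 1$. Writing $P$ as a rank-one operator of the form $(Pw)_x = c_x\avg{\ell, w}$ with $\ell$ the left eigenvector and identifying $\ell$ (up to normalization) with $\overline{b}$ via the explicit structure of $B$ — namely $B^\ast$ has the same form with $\am\to -\am$, so $\overline{b}$ is the corresponding left eigenvector — produces \eqref{def of P}.

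Once $\beta$ and $b$ are established as analytic (indeed $1/3$-H\"older) functions of the perturbation parameters via the contour formula and the H\"older regularity of $m$ from \eqref{bounded m: 1/3-Holder continuity of m}, the expansion \eqref{beta expanded} is obtained by Rayleigh–Schr\"odinger perturbation theory. The leading term $(\eta/\alpha)\avg{f\abs{m}}$ is the unperturbed eigenvalue $1-\lambda(z)$ of $1-F$ read off from \eqref{F and alpha}; the first-order term $\avg{f, Vf} = -2\cI\alpha\avg{pf^3} - 2\alpha^2\avg{f^4} + \Ord(\alpha^3+\eta)$ identifies the coefficient $-2\cI\sigma\alpha$ with $\sigma = \avg{pf^3}$; and the second-order correction
\[
-\avg{f, V\1(1-F)^{-1}Q^{(0)}Vf\1} \;=\; -4\alpha^2 \avgB{Q^{(0)}(pf^2), (1-F)^{-1}Q^{(0)}(pf^2)} \,+\, \Ord(\alpha^3+\eta)
\]
combined with the $\alpha^2$ contribution from $-2f^2\alpha^2$ assembles into $2(\psi-\sigma^2)\alpha^2$ after using the definition of $\mcl{D}$ in \eqref{def of mcl-D} and the identity $\norm{Q^{(0)}(pf^2)}_2^2 = \avg{p^2f^4}-\sigma^2 = \avg{f^4}-\sigma^2$ together with $1+\norm{F}_{\Lp{2}\to\Lp{2}} = 2 + \Ord(\eta/\alpha)$. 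The expansion of $b$ in \eqref{b expanded} is read off from $b = Pf/\avg{f,Pf}$ using $Pf = f + (1-F)^{-1}Q^{(0)}Vf/(\text{gap factor}) + \Ord(\alpha^2+\eta)$.

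For the norm bounds \eqref{Bounds on inverse of B}, the decomposition $B^{-1} = \beta^{-1}P + B^{-1}Q$ is used. The second summand is controlled uniformly by $\norm{B^{-1}Q} \lesssim 1$ since on $\mrm{Ran}(Q)$ the operator $B$ is an $\Ord(\alpha+\eta/\alpha)$ perturbation of $1-F$ restricted to a subspace where it has gap $\sim 1$; the analogous bound for $(B^{-1}Q)^\ast$ is immediate as $B^\ast$ has the same structure with $\am\to-\am$. The first summand contributes $\norm{\beta^{-1}P} \lesssim \abs{\beta}^{-1} \lesssim \alpha^{-2}$ using \eqref{beta expanded} and the lower bound $\abs{\beta} \gtrsim \alpha^2$ that follows from either the real part $2(\psi-\sigma^2)\alpha^2$ (with $\psi \gtrsim \norm{Q^{(0)}(pf^2)}_2^2$ by \eqref{def of mcl-D}) or the imaginary part $2\sigma\alpha$ whenever one of them is nonvanishing; the regime where both are small is precisely where the $\eta/\alpha$ term dominates. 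Finally the $1/3$-H\"older continuity of $\beta, b, \sigma, \psi, \lambda$ follows from the Cauchy-type contour formulas \eqref{spectral projector from resolvent} applied to $B$ and $F$, together with the H\"older continuity of $m(z)$ already established in Proposition~\ref{prp:Holder regularity in z and extension to real line}.

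The main obstacle will be the bookkeeping of the second-order Rayleigh–Schr\"odinger expansion to extract precisely the coefficient $2(\psi-\sigma^2)$; in particular, tracking the $\sigma^2$ correction that arises from projecting $Vf$ onto $\mrm{Ran}(Q^{(0)})$ and the factor $(1+\norm{F}_{\Lp{2}\to\Lp{2}})$ hidden in the definition of $\mcl{D}$. The rest is a routine combination of Kato perturbation theory with the uniform bounds of Proposition~\ref{prp:Estimates when solution is bounded}.
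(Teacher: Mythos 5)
You follow essentially the same route as the paper: the expansions $\sin\am=\alpha f+\Ord_\BB(\eta)$ and $\cos\am=p+\Ord_\BB(\alpha^2)$ give \eqref{B expanded}; $B$ is then treated as an $\Ord(\alpha)$ perturbation of $1-F$, whose isolated lowest eigenvalue $1-\norm{F}_{\Lp{2}\to\Lp{2}}=(\eta/\alpha)\avg{f\abs{m}}$ is separated from the rest of the spectrum by $\mrm{Gap}(F)\sim1$, so contour integration of the resolvent yields $\beta$, $b$, $P$, the bounds on $B^{-1}Q$ and its adjoint, and the H\"older continuity, while the Rayleigh--Schr\"odinger formulas give \eqref{beta expanded}--\eqref{b expanded} and the complex symmetry of the kernel of $B$ identifies the left eigenvector with $\overline{b}$, exactly as in the paper. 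One sign slip: $-\avg{f,V(1-F)^{-1}Q^{(0)}Vf}=+4\alpha^2\avgB{Q^{(0)}(pf^2),(1-F)^{-1}Q^{(0)}(pf^2)}+\Ord(\alpha^3+\eta)$, not $-4\alpha^2\avg{\cdots}$; with your sign the assembly would produce $-2\avg{f^4}-4\avg{\cdots}\neq 2(\psi-\sigma^2)$, so the plus sign is what makes your (correct) final coefficient come out. Two smaller imprecisions: the perturbation $D=B-(1-F)$ has norm $\lesssim\alpha$ (the extra $\eta/\alpha$ you quote need not be small under the sole hypothesis $\alpha\leq\eps_\ast$, but it is also not needed, since only $\norm{D}\ll\mrm{Gap}(F)$ matters), and your derivation of $\norm{B^{-1}}\lesssim\alpha^{-2}$ via $\abs{\beta}\gtrsim\alpha^2$ requires the three-term case analysis ($\eta/\alpha$ versus $\abs{\sigma}\alpha$ versus $\alpha^2$, using $\psi+\sigma^2\gtrsim1$) that you only gesture at, whereas the paper simply has this bound from \eqref{B-inv norm bound on BB at E-line}.
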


Although, $ P $ is not an orthogonal projection (unless $ \overline{b} = b $), it follows from \eqref{properties of b} and \eqref{def of P} that
\bels{P is bounded as a map from L2 to BB}{
\norm{P},\,\norm{P^\ast}\;\lesssim\; 1
\,.
}
Here $ P^\ast = \overline{b}\,\avg{\2b,\genarg}/\avg{\1\overline{b}^{\22}} $ is the Hilbert space adjoint of $ P $.

\begin{Proof}  
Recall that $ \sin \am =  (\Im\,m)/\abs{m} $ (cf. \eqref{def of B and a}), and 
\bels{}{
B \,=\, \nE^{-\cI\12\am}-F 
\,=\, (1-F) + D
\,,
}
where $ D $ is the multiplication operator
\bels{D: from 1-F to B}{
D = -\1\cI\1 2 \cos \am \,\sin \am -2 \sin^2 \am
\,.
}
From the definition of $\alpha = \avg{f\2 \Im\,m/\abs{m}} $, and $ f,\abs{m} \sim 1 $, we see that $ \abs{\sin \am\1} \sim \alpha $, and thus 
\bels{D bound}{
\norm{D}_{\Lp{2}\to \Lp{2}} + \norm{D} \,\leq\, C_0\1\alpha 
\,,
}
for some $ C_0 \sim 1 $.
The formula \eqref{B expanded} for $ B $ follows by expanding $ D $ in $ \alpha $ and $ \eta $ using the representations \eqref{sin a = alpha f + eta t} and \eqref{expansion of cos a} of $ \sin \am $ and $ \cos \am $, respectively.
In particular, from \eqref{expression for t} we know that $ \norm{t} \lesssim 1 $, and thus $ \sin \am = \alpha f + \Ord_\BB(\eta) $. 

Let us first consider the operators acting on the space $ \Lp{2} $. By Proposition~\ref{prp:Estimates when solution is bounded} the operator $ 1-F $ has an isolated single eigenvalue of smallest modulus equal to
\bels{beta_0}{
1 -\norm{F}_{\Lp{2}\to\Lp{2}} \,=\, \frac{\eta}{\alpha}\avg{\1\abs{m}f\1}
\,,
}
and the $ \Lp{2}$-spectrum of $ 1-F $ lies inside the set
\bels{def of LL}{ 
\LL \,:=\, 
\setb{1 -\norm{F}_{\Lp{2}\to\Lp{2}}}\,\cup\,
\bigl[\21 -\norm{F}_{\Lp{2}\to\Lp{2}}+\mrm{Gap}(F)\1,\,2\2\bigr]
\,.
} 
Here the upper spectral gap of $ F $ satisfies $ \mrm{Gap}(F) \sim 1 $ by (iv) of Proposition~\ref{prp:Estimates when solution is bounded}.

The properties of $ \beta $ and $ b $, etc., are deduced from the resolvent of $ B $ by using the analytic perturbation theory (cf. Chapter 7 of \cite{Kato-PT}).
To this end denote $ R(\zeta) := (1-F-\zeta)^{-1}$, so that
\[
(B-\zeta\1)^{-1} =\, (\11+R(\zeta)D\1)^{-1}R(\zeta)\,. 
\]  
We will now bound  $ R(\zeta) = -(\1\wht{F}(\abs{m})-(1-\zeta)\1)^{-1} $ as an operator on $ \BB $, using the property \eqref{resolvent of F on L2 and BB} of the resolvent of the $ F $-like operators $ \wht{F}\, $ (cf. \eqref{def of wht-F})
\bels{BB bound of resolvent of 1-F}{
\norm{R(\zeta)}  
\,\leq\,
 \frac{1+\Phi^2\norm{R(\zeta)}_{\Lp{2}\to \Lp{2}}}{\abs{\1\zeta-1\2}}
\,.
}
Thus there exists a constant $ \delta \sim 1 $, 
\[
\norm{R(\zeta)} \,\lesssim\, 1
\,,\qquad
\dist(\1\zeta\1,\LL)\ge \delta
\,.
\]
Here we have used the fact that the set $ \LL $ contains both the $ \Lp{2}$-spectrum of $ 1-F $, and the point $ \zeta = 1 $. 
Thus \eqref{BB bound of resolvent of 1-F} shows that $ \LL $ contains also the $ \BB$-spectrum of $ 1-F $.
By requiring $ \eps_\ast $ to be sufficiently small it follows from \eqref{D bound} that $ \norm{(1+R(\zeta)D)^{-1}} \lesssim 1 $ provided $ \zeta $ is at least a distance $ \delta $ away from $ \LL $, and thus 
\bels{BB-bound on B-resolvent}{
\norm{\1(B-\zeta\2)^{-1}} \,\lesssim\, 1 
\,,\qquad
\dist(\1\zeta\1,\LL)\ge \delta
\,.
}
\begin{flushleft}
\begin{minipage}{\textwidth}
By (iv) in Proposition~\ref{prp:Estimates when solution is bounded} we see that $ \mrm{Gap}(F) \gtrsim 1 $.
By taking $ \eps_\ast $ sufficiently small the perturbation $ \norm{D} $ becomes so small that we may take $ \delta  \leq \mrm{Gap}(F)/3 $. It then follows that the eigenvalue $ \beta $ is separated from the rest of the $ \BB$-spectrum of $ B $ by a gap of size $ \delta \sim 1 $.
\begin{wrapfigure}{h}{0.67\textwidth}
\includegraphics[width=0.67\textwidth]{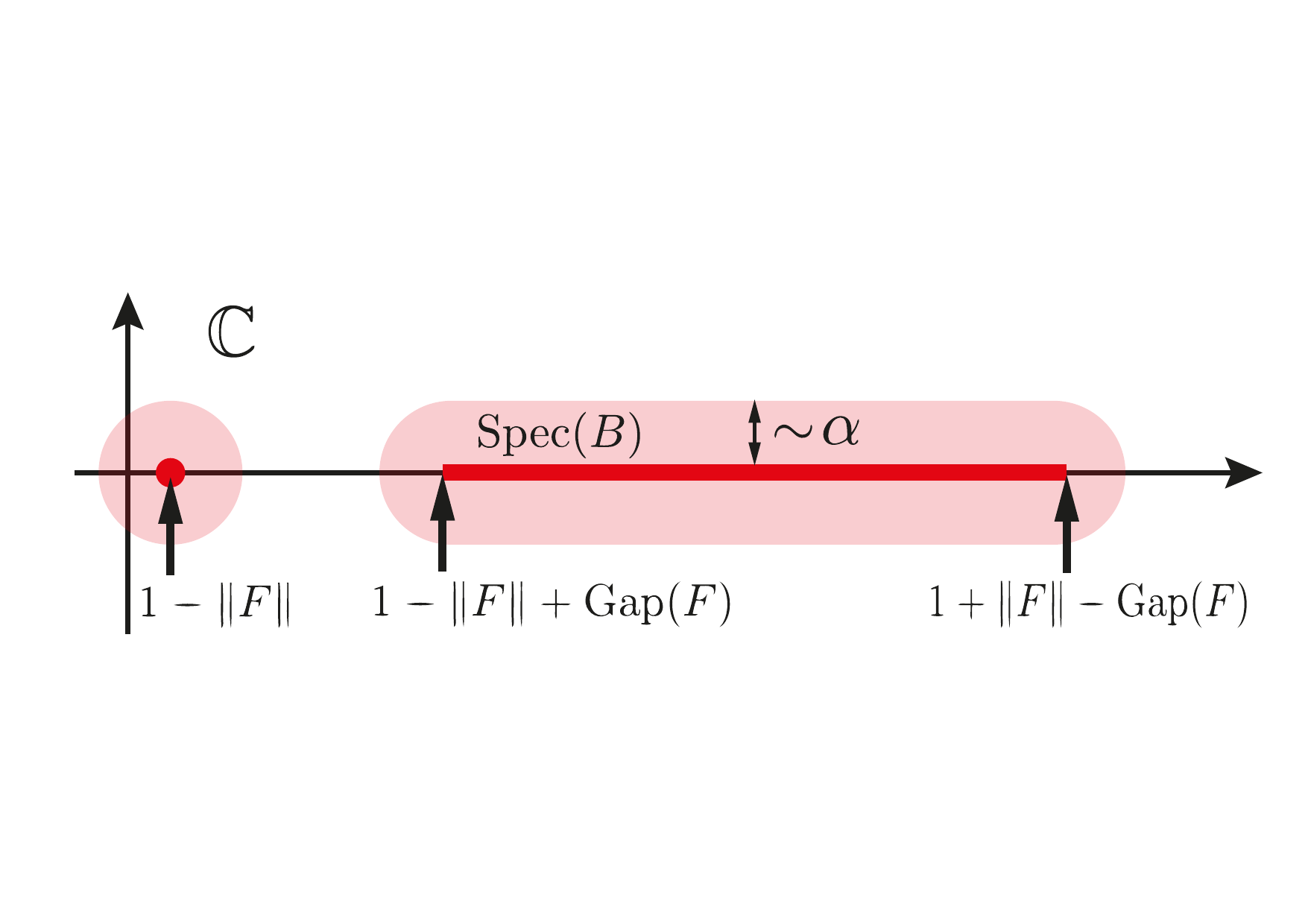}
\caption{The spectrum of $1-F$ lies inside the union of an interval with one isolated point. The perturbation $B$ of $1-F$ has spectrum in the indicated area.}
\label{Fig:B as perturbation of 1-F}
\end{wrapfigure}
Knowing this separation, the standard resolvent contour integral representation formulas (cf. \eqref{spectral projector from resolvent}) imply that $ \norm{b} \lesssim 1 $ and $ \norm{P} \lesssim 1 $, $ \norm{B^{-1}Q} \lesssim 1 $, etc.,  provided the threshold $ \eps_\ast \sim 1 $ for $ \alpha $ is sufficiently small.
Similar bounds hold for the adjoints, e.g.,  $ \norm{(B^{-1}Q)^\ast} \lesssim 1 $. For an illustration how the spectrum of the perturbation $B$ differs from the spectrum of $1-F$, see Figure~\ref{Fig:B as perturbation of 1-F}.
\end{minipage}
\end{flushleft}

Setting $ \beta^{(0)} = 1-\norm{F}_{\Lp{2}\to\Lp{2}} $ and $ b^{(0)} = f $, the formulas \eqref{beta expanded} and \eqref{b expanded} amount to determining the subleading order terms of 
\bels{}{
\beta \,&=\, 
\beta^{(0)} + \beta^{(1)}\alpha + \beta^{(2)}\alpha^2 + \Ord(\alpha^3 + \eta)
\\
b \,&=\, 
b^{(0)} + b^{(1)}\alpha + \Ord_{\!\BB}(\alpha^2 + \eta)
\,,
}
using the standard perturbation formulas.
Writing \eqref{B expanded} as
\[
B = B^{(0)} + \alpha\2B^{(1)} + \alpha^2\1B^{(2)} +\, \Ord_{\!\BB\to\BB}(\1\alpha^3+ \eta\2)
\,,
\]
with $ B^{(0)} = 1-F $, $ B^{(1)} = - 2\1\cI\2pf $, $ B^{(2)} := - 2f^2 $, we obtain 
\bels{beta^(1,2) derived}{
\beta^{(1)} 
\;&=\; \avgb{b^{(0)},B^{(1)}b^{(0)}} 
=\;
-\cI\12\avg{\1p\1f^3} \,,
\\
\beta^{(2)} 
\;&=\;
\avgb{\1b^{(0)},B^{(2)}b^{(0)}} 
\,-\, 
\avgB{\1b^{(0)},\, B^{(1)}Q^{(0)}(B^{(0)}-\beta^{(0)})^{-1}Q^{(0)}B^{(1)}b^{(0)}\1}
\\
&\msp{-50}=\; 
2\2\bigl(\11+\norm{F}_{\Lp{2} \to \Lp{2}}\bigr)\,\avgB{Q^{(0)}(\1pf^2),(1-F)^{-1}Q^{(0)}(\1pf^2)}	
\,-\,
2\,\avgb{f^4}
\,+\,\Ord\Bigl(\2\frac{\eta}{\alpha}\2\Bigr)
\,.
}
These expressions match \eqref{B, beta and b expanded in alpha and eta}.
To get the last expression of $ \beta^{(2)}$ in \eqref{beta^(1,2) derived} we have used $ \norm{Q^{(0)}\!R(\zeta)\1 Q^{(0)}}_{\Lp{2}\to \Lp{2}}  \sim 1 $, $ \zeta \in [\20,\beta^{(0)}] $, and $ \beta^{(0)} \sim \eta/\alpha $, to approximate
\[
(B^{(0)}-\beta^{(0)})^{-1}Q^{(0)}
\;=\;
(1-F)^{-1}Q^{(0)}\,+\,\Ord_{\!\BB\to\BB}\Bigl( \1\frac{\eta}{\alpha} \1\Bigr)
\,.
\]
The formula \eqref{b expanded} follows similarly 
\bea{
b^{(1)} \;&=\,-\2 (B^{(0)}-\beta^{(0)})^{-1}Q^{(0)}B^{(1)}b^{(0)} \;=\; 
\cI\22\2 (\11-F)^{-1}Q^{(0)}(\1pf^2)  \,+\,\Ord_{\!\BB}\Bigl(\frac{\eta}{\alpha}\Bigr)
\,.
}

In order to see that $ \psi \ge 0 $, we use $ \norm{F}_{\Lp{2}\to\Lp{2}} \leq 1 $ to estimate
\bea{
(\11+\norm{F}_{\Lp{2} \to \Lp{2}})\2(\11-F\1)^{-1} 
\,
\ge\, 
1+\frac{\mrm{Gap}(F)}{2} 
\,.
}
This yields the estimate in \eqref{def of mcl-D}. 

It remains to prove the $ 1/3$-H\"older continuity of the various quantities in the lemma. To this end we write
\bels{}{
B(z) \,=\, \nE^{-2\1\am(z)}-\wht{F}(\abs{m(z)})
\,,
}
where the operator $ \wht{F}(r) : \BB \to \BB $ is defined in \eqref{def of wht-F}.
Since $ \norm{S} \leq \norm{S}_{\Lp{2}\to\BB} \sim 1 $ it is easy to see from   \eqref{def of wht-F} that the map $ r \mapsto \wht{F}(r) $ is uniformly continuous when restricted on the domain of arguments $ r \in \BB $ satisfying $ c/\Phi \leq r_x \leq \Phi $.
Furthermore, the exponent $ \nE^{-\cI\12\am} = (\abs{m}/m)^2 $, has the same regularity as $m $ because  $ \abs{m} \sim 1 $. 
Since $ m(z) $ is uniformly $ 1/3 $-H\"older continuous in $ z $ (cf. \eqref{bounded m: 1/3-Holder continuity of m}) we thus have 
\bels{z,z' B-difference}{
\norm{B(z')-B(z)} \,\lesssim\, \abs{z'-z}^{1/3}
\,,
}
for any sufficiently close points $ z $ and $ z' $.
The resolvent $ (B(z)-\zeta)^{-1} $ inherits this regularity in $ z $.

The continuity of $ \beta(z),b(z),P(z) $ in $ z $ is proven by representing them as contour integrals of the resolvent $ (B(z)-\zeta)^{-1} $ around a contour enclosing the isolated eigenvalue $ \beta(z) $.
The functions $ \sigma $ and $ \psi $ inherit the $ 1/3$-H\"older regularity from their building blocks, $ 1-\norm{F}_{\Lp{2}\to\Lp{2}} $, $ f $, $ Q^{(0)} $, and the function $ p $. 
The continuity of the first three follows similarly as that of $ \beta $, $ b $ and $ Q $, using the continuity of the resolvent of $ 1-F(z) $ in $ z $. Also the continuity of the largest eigenvalue $ \lambda(z)$ of $ F(z)$ is proven this way. 
In particular, we see from \eqref{beta_0} that the limit $ \eta/\alpha(z) $ exists as $ z $ approaches the real line.

The function  $ z \mapsto p(z) = \sign \Re \2m(z) $, on the other hand, is handled differently. We show that if $ \eps_\ast > 0 $ sufficiently small, then the restriction of $ p $ to a connected component $ J $ of the set $ \sett{z: \alpha(z)\leq \eps_\ast } $ is a constant, i.e., $ p(z') = p(z) $, for any $ z,z' \in J $.
Indeed, since $ \inf_x \abs{\1m_x(z)} \ge c_0 $, and $ \sup_x \Im\,m_x(z) \leq C_1\1\eps_\ast $, for some $ c_0,C_1 \sim 1 $, we get  
\bels{lower bound on Re m on DDe}{
(\Re\,m_x)^2 \,=\, \abs{m_x}^2-(\1\Im\,m_x)^2 \,\ge\, c_0^{\12}-(C_1\eps_\ast)^2\,,\qquad\forall\1x \in \Sx
\,.
}
Clearly, for a sufficiently small $ \eps_\ast $ the real part $ \Re\,m_x(z) $ cannot vanish. Consequently, the continuity of $ m : \eCp \to \BB $ means that the components $ p_x(z) = \sign\,\Re\,m_x(z) \in \sett{-1,+1} $, may change values only when $ \alpha(z) > \eps_\ast $. 

The explicit representation \eqref{def of P} of the spectral projector $ P $ follows from an elementary property of compact integral operators: If the integral kernel $ (T^\ast)_{xy} $ of the Hilbert space adjoint of an operator $ T :\Lp{2} \to \Lp{2} $, defined by $ (Tw)_y = \int T_{xy}w_y\Px(\dif y) $, has the symmetry $ (T^\ast)_{xy} = \overline{T_{xy}\!}\, $, then the right and left eigenvectors $ v $ and $ v' $ corresponding to the right and left eigenvalues $\lambda $ and $ \overline{\lambda} $, respectively, are also related by the simple component wise complex conjugation: $ (v')_x = \overline{v_x\!}\; $. 
\end{Proof}

\sectionl{Cubic equation}

We are now ready to show that the projection of $ u $ in the $ b $-direction satisfies a cubic equation (up to the leading order) provided $  \alpha $ and $ \eta $ are sufficiently small. Recall, that $ T^\ast $ denotes the $ \Lp{2} $-adjoint of a linear operator $ T $ on $ \Lp{2}$.  

\begin{proposition}[General cubic equation]
\label{prp:General cubic equation}
Suppose $ g \in \BB $ solves the perturbed QVE \eqref{perturbed QVE - 2nd time} at $z \in \eCp$ with $\abs{z}\leq 2\1\Sigma $. 
Set
\begin{subequations}
\label{defs of u, Theta, and r} 
\bels{def of u}{
u \,:=\, \frac{g-m}{\abs{m}}
\,,
}
and define $ \Theta \in \C $ and $ r \in \BB $ by
\bels{defs of Theta and r}{
\Theta := \frac{\avg{\1\overline{b}\1,u\1}}{\avg{\2b^{\12}}}
\qquad\text{and}\qquad
r := Qu\,.
}
\end{subequations}
There exists $ \eps_\ast \sim 1 $ such that if \bels{conditions for general cubic equation}{ 
\avg{\1v\1}\leq \eps_\ast\,,\qquad\text{and}\qquad\norm{g-m} \leq \eps_\ast
\,,
}
then the following holds: 
The component $ r$  is controlled by $ d $ and $ \Theta $, 
\bels{r to leading order}{
r \,&=\,R\1d \,+\, \Ord_{\!\BB}\bigl(\2\abs{\Theta}^2+\norm{d}^2\1\bigr) 
\,,
}
where $ R = R(z) $ denotes the bounded linear operator $ w \mapsto B^{-1}Q(\1\abs{m}\1w\1) $ satisfying
\bels{bound on R and its adjoint on BB}{ 
\norm{R\1} +\, \norm{R^\ast\msp{-1}} 
\;\sim\; 1
\,.
}    

The coefficient $ \Theta $ in \eqref{defs of u, Theta, and r} is a root of the complex cubic polynomial,
\bels{general cubic}{
\mu_3\1 \Theta^{\13} \,+\, \mu_2\1\Theta^{\12} \,+\, \mu_1\1\Theta \,+\, 
\avg{\1\abs{m}\1\overline{b},d\2}
\;=\; 
\kappa(u,d)
\,,
}
perturbed by the function $ \kappa(u,d) $ of sub-leading order. This perturbation satisfies
\bels{kappa bounds}{
\abs{\1\kappa(u,d)}
\;\lesssim\; 
\abs{\Theta}^4 \,+\, \norm{d}^2 \,+\,\abs{\Theta}\2\abs{\avg{\1e,d\1}}
\,,
}
where $ e : \eCp \to \BB $ is a uniformly bounded function, $ \norm{e(z)} \lesssim 1 $, determined by $ S $ and $ a $.
The coefficient functions $ \mu_k :  \eCp \to \C $ are determined by $ S $ and $ a $ as well. They satisfy
\begin{subequations}
\label{mu_k's expanded}
\begin{align}
\label{mu_3 expanded}
\mu_3 \,&:=\; 
\Bigl(1-  \avg{\1f\1\abs{m}}\1\frac{\eta}{\alpha}\2 \Bigr)\2\psi\,+\, \Ord(\alpha)
\\
\label{mu_2 expanded}
\mu_2 \,&:=\; 
\Bigl( 1-\avg{f\1\abs{m}}\1\frac{\eta}{\alpha}\2\Bigr)\2\sigma \,+\, \cI\2(\13\1\psi \,-\,\sigma^2)\2\alpha\,+\,\Ord\bigl(\alpha^2+\eta\1\bigr)
\\
\label{mu_1 expanded}
\mu_1 \,&:=\; 
-\2\avg{f\1\abs{m}}\frac{\eta}{\alpha}\,+\,
\cI\12\1\sigma\2\alpha \,-\, 2\1(\1\psi\1-\1\sigma^2)\2\alpha^2 
\,+\,\Ord\bigl(\alpha^3 + \eta\,\bigr)
\,.
\end{align}
\end{subequations}
If $ z \in \R $, then the ratio $ \eta/\alpha $ is defined through its limit as $ \eta \to 0 $.

Finally, the cubic is {\bf stable} in the sense that
\bels{stability general cubic}{ 
\abs{\mu_3(z)} +\abs{\mu_2(z)} \,\sim\, 1 
\,.
} 
\end{proposition}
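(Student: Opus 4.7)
My plan is to project the exact evolution equation for $u = (g-m)/\abs{m}$ from Lemma~\ref{lmm:Perturbations},
\[
Bu \,=\, \nE^{-\cI\,\am}\2 u\, Fu \,+\, \abs{m}\,d \,+\, \abs{m}\,\nE^{-\cI\,\am}\2 u\,d\,,
\]
onto the one-dimensional near-kernel of $B$ spanned by $b$ and onto its complement, using the Riesz projectors $P$ and $Q = 1 - P$ from Lemma~\ref{lmm:Expansion of B in bad direction}. First, I solve the $Q$-equation $Br = Q[\text{RHS}]$ for $r$: the bound $\norm{B^{-1}Q}\lesssim 1$ from \eqref{Bounds on inverse of B} combined with the smallness of $(u,d)$ via \eqref{conditions for general cubic equation} allows a short fixed-point argument to yield \eqref{r to leading order} with $R = B^{-1}Q(\abs{m}\,\genarg)$. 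The bounds $\norm{R},\norm{R^\ast}\sim 1$ in \eqref{bound on R and its adjoint on BB} follow from $\abs{m}\sim 1$ and the corresponding two-sided estimates on $B^{-1}Q$ and $(B^{-1}Q)^\ast$, the lower bounds being immediate since $B$ is uniformly bounded on $\mrm{range}(Q)$.

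Next, I project the exact equation with $P$ to obtain the scalar identity $\Theta\beta\avg{b^2} = \avg{\overline b,\nE^{-\cI\,\am}uFu} + \avg{\abs{m}\overline b,d} + \avg{\overline b,\abs{m}\nE^{-\cI\,\am}ud}$, substitute $u = \Theta b + r$ into the trilinear $uFu$ term, and read off the $\Theta^k$-coefficients. The key observation is that the cubic coefficient $\mu_3$ arises only through iteration: the $\Theta^2$-contribution of $r$, namely $r_\Theta = \Theta^2 B^{-1}Q(\nE^{-\cI\,\am}bFb)$, inserted back into the mixed term $\Theta\avg{\overline b,\nE^{-\cI\,\am}(bFr+rFb)}$, produces $\mu_3\Theta^3$. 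The coefficients are then $\mu_1 = -\beta\avg{b^2}$, $\mu_2 = \avg{\overline b,\nE^{-\cI\,\am}bFb}$, and $\mu_3 = \avg{\overline b,\nE^{-\cI\,\am}(bFw + wFb)}$ with $w = B^{-1}Q(\nE^{-\cI\,\am}bFb)$. All remaining contributions are absorbed into $\kappa(u,d)$: the quartic-in-$\Theta$ terms, the quadratic-in-$d$ terms, and the mixed terms of the form $\Theta\avg{e,d}$ with explicit bounded weights $e$ built from $\abs{m}$, $\overline b$, $B^{-1}Q$ and $R$, giving \eqref{kappa bounds}.

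The third step is to insert the expansions \eqref{beta expanded}--\eqref{b expanded} of $\beta$ and $b$, along with $\avg{b^2} = 1 + \Ord(\alpha^2)$ (since the $O(\alpha)$-correction to $b$ is imaginary and $\Lp{2}$-orthogonal to $f$) and $\nE^{-\cI\,\am} = p - \cI\alpha f + \Ord(\alpha^2 + \eta)$, into the expressions for $\mu_1,\mu_2,\mu_3$ and expand to first order in $\alpha$. For $\mu_1 = -\beta\avg{b^2}$ this reproduces \eqref{mu_1 expanded} at once. For $\mu_3$ at leading order the operator identity $F(1-F)^{-1} = (1-F)^{-1} - 1$ on $\mrm{range}(Q^{(0)})$ simplifies the inner product exactly to $\mcl{D}(pf^2) = \psi$, matching \eqref{mu_3 expanded}. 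The stability \eqref{stability general cubic} then follows from the Pythagorean decomposition
\[
\avg{f^4} \,=\, \norm{pf^2}_2^2 \,=\, \sigma^2 + \norm{Q^{(0)}(pf^2)}_2^2\,,
\]
combined with $\psi \gtrsim \norm{Q^{(0)}(pf^2)}_2^2$ from \eqref{def of mcl-D} and $\avg{f^4}\sim 1$ (as $f\sim 1$ by Proposition~\ref{prp:Estimates when solution is bounded}), yielding $\psi + \sigma^2 \gtrsim 1$ and hence, since $\abs{\sigma}\lesssim 1$, $\abs{\mu_3} + \abs{\mu_2}\sim 1$ for $\eps_\ast$ sufficiently small.

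The main technical obstacle lies in identifying $\mu_2$: using the convention $\avg{u,w} = \int\overline u\,w$ rewrites $\mu_2 = \int b^2\,\nE^{-\cI\am}\,Fb$, and three distinct $\cI\alpha$-contributions arise independently from expanding $b^2$, $\nE^{-\cI\am}$ and $Fb$. Showing that these combine to the clean coefficient $\cI(3\psi - \sigma^2)\alpha$ requires non-trivial identities on $\mrm{range}(Q^{(0)})$ — most notably $2\avg{Q^{(0)}(pf^2),(1-F)^{-1}Q^{(0)}(pf^2)} = \psi + \norm{Q^{(0)}(pf^2)}_2^2$ and $\norm{Q^{(0)}(pf^2)}_2^2 = \avg{f^4} - \sigma^2$ — together with careful tracking of the $\eta/\alpha$-dependence in the spectral radius $\norm{F}_{\Lp{2}\to\Lp{2}}$. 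Once this bookkeeping is in place, every other claim in the proposition follows directly from Lemma~\ref{lmm:Expansion of B in bad direction}.
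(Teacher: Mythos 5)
Your proposal follows the paper's proof essentially verbatim: the same $P$/$Q$ decomposition of the quadratic equation for $u$, the same self-consistent bound giving $r = Rd + \Ord_{\!\BB}(\abs{\Theta}^2+\norm{d}^2)$, the same identification $\mu_1=-\beta\avg{b^2}$, $\mu_2=\avg{\overline b,\mcl{A}(b,b)}$ and $\mu_3$ via back-substitution of the $\Theta^2$-part of $r$ into the mixed bilinear term, the same expansion in $\alpha,\eta$ using Lemma~\ref{lmm:Expansion of B in bad direction}, and the same stability argument $\psi+\sigma^2\gtrsim\norm{pf^2}_2^2\sim 1$. The auxiliary identities you flag for $\mu_2$ (Pythagoras with $P^{(0)}(pf^2)=\sigma f$ and the $(1-F)^{-1}$-identity on $\mrm{range}(Q^{(0)})$, modulo $\Ord(\eta/\alpha)$ corrections) are exactly the ones the paper uses, so the argument is correct and not genuinely different.
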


Note that from \eqref{defs of Theta and r} and \eqref{def of P} we see that $ \Theta $ is just the component of $ u $ in the one-dimensional subspace spanned by $ b $, i.e, $ Pu = \Theta\2b $.
From \eqref{defs of u, Theta, and r} and \eqref{P is bounded as a map from L2 to BB} we read that $ \abs{\Theta} \leq C_1\1\eps_\ast $ is a small parameter along with $ \alpha $ and $ \eta $. Therefore we needed to expand $ \mu_1 $ to a higher order than $ \mu_2 $, which is in turn expanded to a higher order than $ \mu_3 $ in the variables $ \alpha $ and $ \eta $ in \eqref{mu_k's expanded}.

\begin{Proof}
The proof is split into two separate parts. First, we derive formulas for the $ \mu_k$'s in terms of $ B,\beta$ and $ b $ (cf. \eqref{mu_k's half-expanded} below). 
Second, we use the formulas \eqref{B, beta and b expanded in alpha and eta} from Lemma~\ref{lmm:Expansion of B in bad direction} to expand $ \mu_k$'s further in $ \alpha $ and $ \eta $.

First, we write the equation \eqref{eq. for u} in the form 
\bels{eq. for u abstracted}{
Bu = \mcl{A}(u,u) + \abs{m}\1(1+\nE^{-\cI\1\am}u\1)\1d
\,,
}
where $ \am = \am(z) := \arg\,m(z)$, and  the symmetric bilinear map $ \mcl{A} : \BB^2 \to \BB $, is defined by
\[ 
\mcl{A}_x(h,w) := \tsfrac{1}{2}\2\nE^{-\cI \am_x}\bigl(\1 h_x\2(Fw)_x + (Fh)_x\1w_x \bigr)
\,.
\]
Clearly, $ \norm{\mcl{A}(h,w)} \lesssim \norm{h}\norm{w}$, since $ \norm{F} \leq \norm{m}^2 \lesssim 1  $.
Applying $ Q $ on \eqref{eq. for u abstracted} gives
\bels{formula for r - 1}{
r = B^{-1}Q\mcl{A}(u,u) +  B^{-1}Q\bigl[\2\abs{m}\1(1+\nE^{-\cI\1\am}u\1)\1d\,\bigr]
\,.
}
From Lemma~\ref{lmm:Expansion of B in bad direction} we know that $ \norm{QB^{-1}Q} \lesssim 1 $, and hence the boundedness of $ \mcl{A} $ implies:
\[
\norm{B^{-1}Q\mcl{A}(u,u)} 
\,\lesssim\, 
\norm{u}^2 \,\lesssim\,\abs{\Theta}^2 + \norm{r}^2\,.
\]
From the boundedness of the projections \eqref{P is bounded as a map from L2 to BB} 
\[ 
\norm{r} 
\,=\, 
\norm{Qu} 
\,\lesssim\, 
\norm{u} 
\,\leq\, \frac{\norm{g-m} }{\inf_x \abs{m_x}}
\,\lesssim\, 
\eps_\ast 
\,,
\]
where in the second to last inequality we have used $ \abs{m} \sim 1$.
Plugging this back into \eqref{formula for r - 1}, we find
\[
\norm{r}\,\leq\, C_0\1(\2\abs{\Theta}^2+\eps_\ast\2\norm{r}+\norm{d})
\,,
\]
for some $ C_0 \sim 1 $. Now we require $\eps_\ast$ to be so small that $ 2\1C_0\1\eps_\ast \leq 1 $, and get
\bels{BB-bound of r}{
\norm{r} \,\lesssim\, \abs{\Theta}^2 + \norm{d}
\,.
}
Applying this on the right hand side of $ u = \Theta\2b + r $ yields a uniform bound on $ u $,
\bels{BB-bound of u}{
\norm{u} \,\lesssim\, \abs{\Theta} + \norm{d}
\,.
}
Using the bilinearity and the symmetry of $ \mcl{A} $ we decompose $ r $ into three parts
\bels{r decomposition}{
r \;=\; 
B^{-1}Q\mcl{A}(b,b)\,\Theta^2
\,+\, R\1d \,+\, \wti{r}
\,,
}
where we have identified the operator $ R $ from \eqref{r to leading order}, and introduced the subleading order part,
\bels{}{
\wti{r} \;&:=\; 2\1B^{-1}Q\mcl{A}(b,r)\2\Theta + B^{-1}Q\mcl{A}(r,r) + B^{-1}Q(\1\abs{m}\1\nE^{-\cI \am}u\1d\2) 
\\
&\,=\; \Ord_\BB\Bigl( \2\abs{\Theta}^3 + \abs{\Theta}\norm{d}+\norm{d}^2\Bigr)
\,.
}
Applying the last estimate in \eqref{r decomposition} yields \eqref{r to leading order}. 
We know that $ B^{-1}Q $ is bounded as an operator on $ \BB $ from \eqref{Bounds on inverse of B}. 
A direct calculation using \eqref{def of P} shows that also its $ \Lp{2}$-Hilbert-space adjoint satisfies a similar bound, $ \norm{(B^{-1}Q)^\ast} \lesssim 1 $. 
From this and $ \norm{m} \lesssim 1 $ the bound \eqref{bound on R and its adjoint on BB} follows. 

From \eqref{def of P} we see that applying $\avg{\2\overline{b}\1,\genarg} $ to \eqref{eq. for u abstracted} corresponds to projecting onto the $ b $-direction
\bels{abstract cubic appears}{
&\beta\1\avg{\1b^2}\2\Theta 
\\
&\!=\; \avg{\2\overline{b}\1,\2\mcl{A}(b,b)}\2\Theta^2 \,+\,2\2 \avg{\2\overline{b},\2\mcl{A}(b,r)}\2\Theta \,+\,\avg{\2\overline{b}\1,\2\mcl{A}(r,r)} 
\,+\,
\avgb{\1\overline{b}\1,\2\abs{m}\1(1+\nE^{-\cI\1\am}u\1)\1d\2}\msp{-10} 
\\
&\!=\;
\avg{\2b\2\mcl{A}(b,b)}\2\Theta^2 
\,+\, 
2\2\avgb{\1b\2\mcl{A}(\1b\1,B^{-1}\msp{-1}Q\mcl{A}(b,b))}\2\Theta^3 
\,+\,
\avg{\2b\1\abs{m}\1d\2} 
\,+\,
\kappa(u,d\1) 
\,,
}
where the cubic term corresponds to the part $ B^{-1}Q\mcl{A}(b,b)\2\Theta^2 $ of $ r $ in \eqref{r decomposition}, while the other parts of $ \avg{\1\overline{b},\2\mcl{A}(b,r)}\2\Theta$, have been absorbed into the remainder term, alongside other small terms:
\bels{kappa(u,d) explicitly}{
\kappa(u,d) 
\,&:=\, 
2\2\avgb{\2b\2\mcl{A}(\1b,R\1d\1+\wti{r}\2)\1}\2\Theta 
\,+\, 
\avg{\1b\2\mcl{A}(r,r)}  
\,+\, 
\avgb{\1b\1\abs{m}\1\nE^{-\cI\1\am}u\1d\2} 
\\
&\,=\,
\avg{\1e,\2d\2}
\2\Theta 
\,+\,\Ord\bigl(\abs{\Theta}^4+\norm{d}^2\bigr)
\,,
}
where in the second line we have defined $ e \in \BB $ in \eqref{kappa bounds} such that
\[
\avg{\1e,w} \,:=\,2\2 \avg{\2b\2\mcl{A}(\1b,Rw)} + \avg{\1b^2\1\abs{m}\1\nE^{-\cI\1\am}w\1}
\,,
\qquad\forall\,w \in\Lp{2}\,.
\]
For the error estimate in \eqref{kappa(u,d) explicitly} we have also used \eqref{BB-bound of r}, \eqref{BB-bound of u}, and $ \norm{b} \sim 1 $. This completes the proof of \eqref{kappa bounds}.

From the definitions of $\mcl{A}$, $B$, $b$ and $\beta$, it follows
\bels{}{
\mcl{A}(b,b) \;&=\; \nE^{-\cI \am}b\2Fb 
\;=\; \nE^{-\cI \am}b\2(\nE^{-\cI 2\am}-B)\1b 
\;=\; (\1\nE^{-\cI 3\am}-\beta\1\nE^{-\cI \am})\2b^{\12}
\\
2\1\mcl{A}(\1b,w) 
\;&=\; 
\nE^{-\cI \am}\bigl(\1b\2Fw- w\2(\nE^{-\cI 2\am}-\beta)\1b\2\bigr)  
\;=\; b\,\nE^{-\cI \am}(\1\nE^{-\cI 2\am}+F-\beta)\1w\,.
}
Using these formulas in \eqref{abstract cubic appears} we see that the cubic \eqref{general cubic} holds with the  coefficients, 
\begin{subequations}
\label{mu_k's half-expanded}
\begin{align}
\label{mu_3 half-expanded}
\mu_3 \;&=\; 
\avgB{b^{\12}\2\nE^{-\cI \am}(\1\nE^{-\cI 2\am}+F-\beta)\2B^{-1}Q\bigl[\2b^{\12}\nE^{-\cI \am}(\nE^{-\cI 2\am}-\beta\1)\bigr]}
\\
\label{mu_2 half-expanded}
\mu_2 \;&=\;
\avgb{\2(\1\nE^{-\cI 3\am}-\beta\1\nE^{-\cI \am})\1b^{\13}\1}
\\
\label{mu_1 half-expanded}
\mu_1 \;&=\; - \beta\2\avg{\2b^{\12}}
\end{align}
\end{subequations}
that are determined by $ S $ and $z$ alone. 

The final expressions \eqref{mu_k's expanded} follow from these formulas by expanding $ B,\beta $ and $ b $, w.r.t. the small parameters $ \alpha $ and $ \eta $ using the expansions \eqref{B, beta and b expanded in alpha and eta}.
Let us write 
\[ 
w := (1-F)^{-1}\Pob^{(0)}(\1pf^2) 
\,,
\] 
so that $ b = f + (\cI\12\1w)\1\alpha + \Ord_\BB(\alpha^2+\eta\1)$, and $ \avg{f,w} = 0 $. 
Using \eqref{sin a = alpha f + eta t} and \eqref{expansion of cos a} we also obtain an useful representation $ \nE^{-\cI \am} = p -\cI\1f\1\alpha + \Ord_\BB(\alpha^2+\eta) $. 

First we expand the coefficient $ \mu_1 $. Using $ \avg{f^2} = 1 $ and $ \avg{f,w} = 0 $ we obtain $\avg{\1b^2} = 1 + \Ord(\alpha^2+\eta) $. Hence, only the expansion of $ \beta $ contributes at the level of desired accuracy to $ \mu_1 $,
\bea{
\mu_1 \,
&=\,  -\beta\1\avg{\1b^2} 
\;=\;
-\beta \,+\,\Ord(\1\alpha^3+\eta\1)
\\
&=\;
-\avg{f\abs{m}\1}\1\frac{\eta}{\alpha} + \cI\12\1\sigma\1\alpha - 2\2(\psi-\sigma^2)\2\alpha^2
\,+\,\Ord(\alpha^3+\eta\1)
\,.
}

Now we expand the second coefficient, $ \mu_2 $. Let us first write 
\bels{mu_2 expansion: the first step}{
\mu_2 \,&=\, 
\avgb{\2(\1\nE^{-\cI 3\am}-\beta\1\nE^{-\cI \am})\1b^{\13}\1}
\,=\, 
\avgb{(\nE^{-\cI \am}b)^3} - \beta\1\avgb{\nE^{-\cI\1\am}b^3}
\,.
}
Using the expansions we see that $  \nE^{-\cI\1\am}b = pf + \cI\1(2\1p\1w-f^2)\1\alpha + \Ord_\BB(\alpha^2+\eta) $, and thus, taking this to the third power, we find $ (\nE^{-\cI \am}b)^3 = pf^3 + \cI\13\1(2\1pf^2w-f^4) +\Ord_\BB(\alpha^2+\eta)  $. Consequently,  
\bels{mu_2 expansion: part A}{
\avgb{(\nE^{-\cI \am}b)^3} 
\;&=\;
\avg{\1pf^3} + \cI\13\1\bigl[\12\1\avg{pf^2w}-\avg{f^4}\1\bigr]\2\alpha + \Ord(\alpha^2+\eta) 
\\
&=\:\sigma + \cI\13\1(\1\psi - \sigma^2)\1\alpha + \Ord(\1\alpha^2+\eta)
\,.
}
In order to obtain expressions in terms of  $ \sigma $ and $ \psi = \mcl{D}(pf^2) $, where the bilinear positive form  $ \mcl{D} $ is defined in \eqref{def of mcl-D}, we have used 
\[
2\2\avgb{pf^2w} \;=\; 
(1+\norm{F}_{\Lp{2} \to \Lp{2}})\avgb{Q^{(0)}(pf^2),(1-F)^{-1}Q^{(0)}(\1pf^2)}+ \Ord(\eta/\alpha)\,,
\]
as well as the following consequence of $ P^{(0)}(pf^2) = \sigma\1f $ and $ \norm{f}_2 = 1 $:
\bels{}{
\avg{f^4} 
\;&=\; \norm{\1pf^2}_2^2 
\;=\; \norm{P^{(0)}(pf^2)}_2^2 + \norm{Q^{(0)}(pf^2)}_2^2 
\\
&=\; \sigma^2 + \avgb{Q^{(0)}(pf^2),Q^{(0)}(pf^2)}
\,.
}
The expansion of the last term of \eqref{mu_2 expansion: the first step} is easy since only $ \beta $ has to be expanded beyond the leading order. Indeed, directly from \eqref{beta expanded} we obtain
\bea{
\beta\2\avgb{\nE^{-\cI\1\am}b^3} \,&=\,
\Bigl(\avg{f\abs{m}}\frac{\eta}{\alpha} - \cI\12\1\sigma\1\alpha +\Ord(\alpha^2+\eta)\Bigr)
\Bigl(\avg{\1pf^3}+\Ord(\alpha+\eta)\Bigr) 
\\
&=\, -\cI\12\1\sigma^2\alpha +\avg{f\abs{m}}\frac{\eta}{\alpha}\2\sigma + \Ord\bigl(  \alpha^2+\eta\bigr)
.
}
Plugging this together with \eqref{mu_2 expansion: part A} into \eqref{mu_2 expansion: the first step} yields the desired expansion of $ \mu_2 $.

Finally, $ \mu_3$, is expanded. By the definitions and the identity \eqref{F and alpha} for $\norm{F}_{\Lp{2} \to \Lp{2}}$ we have
\[
\nE^{-\cI 2\am}+F-\beta 
\;=\;
2 - \avg{\1f\1\abs{m}}\frac{\eta}{\alpha}- B + \Ord_{\BB\to\BB}(\alpha) 
\;=\;
1+\norm{F}_{\Lp{2} \to \Lp{2}} -B + \Ord_{\BB\to\BB}(\alpha) 
\,.
\]
Recalling $ \norm{B^{-1}Q}  \lesssim 1 $ and $\eta \lesssim \alpha$, we thus obtain
\bels{int step}{
(\1\nE^{-\cI 2\am}+F-\beta)\2B^{-1}Q 
\;=\;
(1+\norm{F}_{\Lp{2} \to \Lp{2}})\2B^{-1}Q - Q + \Ord_{\BB\to\BB}(\alpha) 
\,.
}
Directly from the definition \eqref{def of P} of $ P = 1 - Q $, we see that $ Q =Q^{(0)} + \Ord_{\BB \to \BB}(\alpha) $. Thus
\[
BQ \,=\, (1-F)Q^{(0)} + \Ord_{\BB\to\BB}(\alpha)\,.
\] 
Using the general identity, $ (A+D)^{-1} = A^{-1} \!-A^{-1}D(A+D)^{-1} $, with  $ A := (1-F)Q^{(0)} $ and $ A+D := BQ $, yields
\bels{B^-1Q in terms of res of (1-F)Q^0}{
B^{-1}Q
\,=\,(1-F)^{-1}Q^{(0)} +\,  \Ord_{\BB\to\BB}(\alpha)
\,,
}
since $ B^{-1}Q $ and $ (1-F)^{-1}Q^{(0)} $ are both $ \Ord_{\!\BB\to\BB}(1)$.
By applying \eqref{B^-1Q in terms of res of (1-F)Q^0} in \eqref{int step} we get
\[
(\1\nE^{-\cI 2\am}+F-\beta)\2(QBQ)^{\msp{-1}-1} 
=\; 
Q^{(0)}\bigl[\2(1+\norm{F}_{\Lp{2} \to \Lp{2}})\2(1-F)^{-1} - 1\bigr]\1Q^{(0)} +\, \Ord_{\BB\to\BB}(\alpha) \,.
\]
Using this in the first formula of $ \mu_3 $ below yields
\bea{
&\mu_3 \,=\; 
\avgB{b^{\12}\2\nE^{-\cI \am}(\1\nE^{-\cI 2\am}+F-\beta)\2B^{-1}Q\bigl(\2b^{\12}\nE^{-\cI \am}(\nE^{-\cI 2\am}-\beta\1)\bigr)} 
\\
&
\!=\;
\Bigl(1-  \avg{\1f\1\abs{m}}\frac{\eta}{\alpha} \Bigr)
\avgB{Q^{(0)}(\1pf^2),\bigl[\2(1+\norm{F}_{\Lp{2} \to \Lp{2}})\2(1-F)^{-1} - 1\bigr]\1Q^{(0)}(pf^2)} \,+\Ord(\alpha)  
\,,
}  
which equals the second expression \eqref{mu_3 expanded} because the first term above is $ \mcl{D}(\1pf^2) $. 

Finally, we show that $ \abs{\mu_2} +\abs{\mu_3} \sim 1 $. From the expansion of $\mu_2$, we get
\[
|\mu_2|\,=\,\norm{F}_{\Lp{2} \to \Lp{2}}\2|\sigma| +\Ord(\alpha)\,\gtrsim\,|\sigma| +\Ord(\alpha) \,.
\]
Similarly, we estimate from below $|\mu_3|\gtrsim \psi +\Ord(\alpha)$. Therefore, we find that
\[
|\mu_3|+|\mu_2|^2\,\gtrsim\,\psi +\sigma^2+\Ord(\alpha) \,.
\]
We will now show that $\psi +\sigma^2 \gtrsim 1$, which implies $|\mu_2|^2 +|\mu_3|\gtrsim 1$, provided the upper bound $\eps_\ast $ of $\alpha $ is small enough. Indeed, from the lower bound \eqref{def of mcl-D} on the quadratic form $\mcl{D}$, $\mrm{Gap}(F)\sim 1$ and the identity $ \abs{\sigma} = \abs{\avg{f,pf^2}} = \norm{P^{(0)}(pf^2)}_2 $ we conclude that
\bels{lower bound on psi plus sigma2}{
\psi +\sigma^2\,\geq \, \frac{\mrm{Gap}(F)}{2}\2\norm{Q^{(0)}(\1pf^2)}_2^2+\norm{P^{(0)}(\1pf^2)}_2^2\;\gtrsim \, \norm{\1pf^2}_2^2\,.
}
Since $ \inf_x f_x \sim 1 $ and $ \abs{p} = 1 $ it follows that $ \norm{\1pf^2}_2 \sim 1 $.
\end{Proof}

\chapterl{Behavior of generating density where it is small}

In this chapter  we prove  Theorem~\ref{thr:Shape of generating density near its small values}. 
We will assume that $ S $ satisfies {\bf A1-3} and $ \nnorm{m}_\R \leq \Phi < \infty$. The model parameters are thus the same ones, \eqref{Model parameters for the shape analysis}, as in the previous chapter.  
In particular, we have $ v_x \sim \avg{v} $, and thus the support of the components of the generating densities satisfy $ \supp v_x = \supp v $ (cf. Definition~\ref{def:Extended generating density}). 
As we are interested in the generating density $ \Im\,m|_\R $ we will consider $ m $ and all the related quantities as functions on $ \R $ instead of $ \Cp $ or $ \eCp $ in this chapter.

Consider the  domain 
\bels{def of DDe}{
\DDe \,:=\, \setb{\2\tau \in \supp v: \avg{\1v(\tau)\1}\leq \eps\2} 
\,,
\qquad \eps > 0\,.
}
Theorem~\ref{thr:Shape of generating density near its small values} amounts to showing that for some sufficiently small $ \eps \sim 1 $, 
\begin{subequations}
\label{form of shape expansion}
\bels{wht-v expansion}{
v(\tau) 
\,=\; 
\wht{v}(\tau) \2+\2 \Ord_{\msp{-1}\BB}\msp{-1}\bigl(\2\wht{v}(\tau)^2\bigr)
\,,\qquad
\tau \in \DDe
\,, 
}
holds, where the leading order part factorizes,
\bels{wht-v factorizes}{
\wht{v}_x(\tau) \,=\, v_x(\tau_0) \,+\, h_x(\tau_0)\,\Psi(\tau\msp{-2}-\msp{-1}\tau_0\1;\tau_0\1)
\,,\qquad
(x,\tau) \in \Sx\times \R
\,,
}
\end{subequations}
around any expansion point $ \tau_0 $ from the set of local minima,
\bels{def of MMe}{
\MMe \,&:= 
\setb{\tau_0 \in \DDe: \tau_0 \text{ is a local minimum of }\tau \mapsto \avg{\1v(\tau)\1}}
\,,
}
and  $ h_x(\tau_0) \sim 1 $ and $ \Psi(\omega;\tau_0) \ge 0 $.
We show that the function $ \Psi(\omega;\tau_0) $ determining the shape of $\omega\to \avg{v(\tau_0+\omega)} $ is {\bf universal} in the sense that it depends on $ \tau_0 \in \MMe $ only through a single scalar parameter (cf. \eqref{cases for shapes}).

Let $\tau_0$ denote one of the minima $\tau_k$. We consider $ m(\tau_0+\omega) $ as the solution of the perturbed QVE \eqref{perturbed QVE - 2nd time} at $ z = \tau_0 $ with the scalar perturbation
\bels{d_x = omega}{
d_x(\omega) \,:=\, \omega
\,,\qquad\forall\,x \in \Sx
\,,
}  
and apply Proposition~\ref{prp:General cubic equation}.
The leading order behavior of $ m(\tau_0+\omega) $ is determined by  expressing
\bels{def of u(omega;tau)}{
u(\omega;\tau_0) \,:=\, \frac{m(\tau_0+\omega)\1-\1m(\tau_0)}{\abs{m(\tau_0)}}
\,,
}
as a sum of its projections,
\bels{defs of Theta(omega;tau) and r(omega;tau)}{
\Theta(\omega;\tau_0)\1b(\tau_0) \,:=\, P(\tau_0)\1u(\omega;\tau_0)
\qquad\text{and}\qquad 
r(\omega;\tau_0) \,:=\, Q(\tau_0)\1u(\omega;\tau_0) 
\,,
}
where $P=P(\tau_0)$ is defined in \eqref{def of P} and $Q(\tau_0)=1-P(\tau_0)$. %
 The coefficient $ \Theta(\omega;\tau_0) $ is then computed as a root of the cubic equation \eqref{general cubic} corresponding to the scalar perturbation  \eqref{d_x = omega}; its imaginary part
will give $\Psi(\omega, \tau_0)$.  Finally, the part $ r(\omega;\tau_0) $ is shown to be much smaller than $ \Theta(\omega;\tau_0) $ so that it can be considered as an error term.
The next lemma collects necessary information needed to carry out this analysis rigorously.  
This lemma has appeared as Proposition~6.2 in \cite{AEK1cpam} in the simpler case when the generating density vanishes at the expansion point, i.e.,  $v(\tau_0)=0$.

\begin{lemma}[Cubic for shape analysis]
\label{lmm:Cubic for shape analysis}
There are two constants $ \eps_\ast\1,\delta\2 \sim 1 $, such that if 
\bels{}{
\tau_0 \in \supp v
\qquad\text{and}\qquad
\avg{v(\tau_0)} \leq \eps_\ast
\,,
}
holds for some fixed {\bf base point} $ \tau_0 \in \supp v $, then
\bels{def of Theta(omega) at tau}{
\Theta(\omega) \1=\2 \Theta(\omega;\tau_0) \,=\, 
\avgbb{
\frac{b(\tau_0)}{\avg{\1b(\tau_0)^2}}\,\frac{m(\tau_0+\omega)\1-\1m(\tau_0)}{\abs{m(\tau_0)}}\2
}
\,,
}
satisfies the perturbed cubic equation
\bels{cubic for d=omega}{
\mu_3\Theta(\omega)^3 \!+ \mu_2\Theta(\omega)^2 \!+ \mu_1 \Theta(\omega) \1+\1 \Xi(\omega)\2\omega \,=\, 0 
\,,
\qquad\abs{\omega} \leq \delta\,.
}
The coefficients $ \mu_k = \mu_k(\tau_0) \in \C $ are independent of $ \omega $ and have expansions in $ \alpha  $
\begin{subequations}
\label{coefficients of cubic when z=tau_0}
\begin{align}
\label{mu_3 for shape}
\mu_3 \,&:=\, \psi\1+\1 \kappa_3\1\alpha
\\
\label{mu_2 for shape}
\mu_2 \,&:=\, \sigma + \cI\2(\13\1\psi -\1\sigma^2)\2\alpha\1+\kappa_2\1\alpha^2
\\
\label{mu_1 for shape}
\mu_1 \,&:=\, \cI\12\1\sigma\2\alpha - 2\1(\1\psi\1-\1\sigma^2)\2\alpha^2+\kappa_1\alpha^3
\,,
\end{align}
\end{subequations}
and $ \Xi(\omega) = \Xi(\omega;\tau_0) \in \C $ is close to a real constant:
\bels{def of Xi(omega)}{
\Xi(\omega) \,&:=\,\avg{f\1\abs{m}\1}\,(\21+\kappa_0\1\alpha+\nu(\omega)\1)
\,.
}
The scalars $\alpha = \avg{f,v/\abs{m}}  $, $ \sigma = \avg{f,\1p\1f^{\12}\1} $ and $ \psi = \mcl{D}(\1p\1f^{\12}) $ are defined in \eqref{def of alpha(z)}, \eqref{defs of sigma and psi} and \eqref{def of mcl-D}, respectively. They are uniformly $ 1/3$-H\"older continuous functions of $ \tau_0 $ on the connected components of the set $ \setb{\tau:\avg{v(\tau)}\leq \eps_\ast,\1|\tau|\leq 2\1\Sigma} $.
The cubic \eqref{cubic for d=omega} is {\bf stable} (cf. \eqref{stability general cubic}) in the sense that 
\bels{stability of shape cubic}{
\abs{\mu_3} \2+\2\abs{\mu_2}
\,\sim\,
\psi \,+\, \sigma^2
\,\sim\,
1
\,.
}
Both the rest term $ r(\omega) = r(\omega;\tau_0) $ (cf. \eqref{defs of Theta(omega;tau) and r(omega;tau)}) and $\Theta(\omega) $ are differentiable as functions of $ \omega $  on the domain $ \sett{\2\omega: \avg{v(\tau_0\msp{-2}+\omega)}> 0\2} $, and they satisfy:
\begin{subequations}
\label{a priori-bounds for Theta and r}
\begin{align}
\label{a priori for Theta(omega)}
\abs{\1\Theta(\omega)}\;&\lesssim\, \min\setbb{\msp{-2}\frac{\abs{\1\omega}}{\2\alpha^2\!}\,,\,\abs{\1\omega}^{1/3}\msp{-5}}  
\\
\label{a priori for r(omega)}
\norm{\1r(\omega)} \,&\lesssim\;\, \abs{\1\Theta(\omega)}^2 +\, \abs{\1\omega}  
\,.
\end{align}
\end{subequations}
The constants $ \kappa_j = \kappa_j(\tau_0) \in \C $, $ j=0,1,2,3 $, and $ \nu(\omega) = \nu(\omega;\tau_0) \in \C $ in \eqref{coefficients of cubic when z=tau_0} and \eqref{def of Xi(omega)} satisfy
\begin{subequations}
\label{bounds on perturbations of the cubic}
\begin{align}
\label{bounds for kappa_j's}
&\abs{\kappa_0}\1,\dots,\abs{\kappa_3} \,\lesssim\, 1
\\
\label{a priori for nu(omega)}
\abs{\1\nu(\omega)} 
\;\lesssim\; &\abs{\1\Theta(\omega)} + \abs{\1\omega} 
\;\lesssim\; \abs{\1\omega}^{1/3}
\,,
\end{align}
\end{subequations}
and $ \nu(\omega) $ is $ 1/3 $-H\"older continuous in $ \omega $.

The leading behavior of $ m $ on $ [\tau_0-\delta,\tau_0+\delta] $ is determined by $ \Theta(\omega;\tau_0) $:
\begin{subequations}
\label{Theta provides leading order change of m}
\begin{align}
\notag
&m_x(\tau_0+\omega) 
\\
\label{m bounded by Theta and b with Theta2-error}
&=\;m_x(\tau_0) \,+\,\abs{\1m_x(\tau_0)}\2b_x(\tau_0)\,\Theta(\omega;\tau_0) \,+\, \Ord\Bigl(\1\Theta(\omega;\tau_0)^2+\abs{\omega}\2\Bigr)
\\
\label{m bounded by Theta and f with X13-error}
&=\; m_x(\tau_0) \,+\,\abs{\1m_x(\tau_0)}f_x(\tau_0)\,\Theta(\omega;\tau_0) \,+\, \Ord\Bigl(\alpha(\tau_0)\1\abs{\omega}^{1/3}\!+\abs{\omega}^{2/3}\2\Bigr)
\,.
\end{align}
\end{subequations}
All comparison relations hold w.r.t. the model parameters \eqref{Model parameters for the shape analysis}.  
\end{lemma}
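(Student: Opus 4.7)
My plan is to derive Lemma~\ref{lmm:Cubic for shape analysis} as a direct specialization of Proposition~\ref{prp:General cubic equation} to a real base point $z = \tau_0$ and a scalar perturbation $d_x = \omega$, followed by a rearrangement of the constant term and the remainder $\kappa(u,d)$ into the stated form $\Xi(\omega)\omega$, and finally by extracting the claimed a priori bounds and representations of $m$.

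First, since $\tau_0 \in \R$ we have $\eta = \Im\,\tau_0 = 0$, so the ratio $\eta/\alpha$ vanishes in the expansions \eqref{mu_k's expanded}. Applying Proposition~\ref{prp:General cubic equation} at $z = \tau_0$ with perturbation $d_x = \omega$ therefore yields
\[
\mu_3\Theta(\omega)^3 + \mu_2\Theta(\omega)^2 + \mu_1\Theta(\omega) + \omega\,\avg{\abs{m}\,\overline{b}} \,=\, \kappa(u,d),
\]
with $\mu_3 = \psi + O(\alpha)$, $\mu_2 = \sigma + \cI(3\psi - \sigma^2)\alpha + O(\alpha^2)$, $\mu_1 = 2\cI\sigma\alpha - 2(\psi - \sigma^2)\alpha^2 + O(\alpha^3)$. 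These immediately give \eqref{mu_3 for shape}--\eqref{mu_1 for shape} after identifying $\kappa_3,\kappa_2,\kappa_1$ with the explicit $O(\cdot)$ coefficients provided by Proposition~\ref{prp:General cubic equation}. The stability $\abs{\mu_2} + \abs{\mu_3} \sim 1$ is exactly \eqref{stability general cubic}, where the comparison $\psi + \sigma^2 \sim 1$ was already established in \eqref{lower bound on psi plus sigma2}.

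Next, I would absorb the constant term and the remainder $\kappa(u,d)$ into a single coefficient $\Xi(\omega)$. Using the expansion \eqref{b expanded} I would write $\avg{\abs{m}\,\overline{b}} = \avg{f\abs{m}} + \kappa_0\alpha$ for some $\kappa_0 = O(1)$. Then, defining $\Xi(\omega)$ by the identity $\Xi(\omega)\omega = \omega\avg{\abs{m}\,\overline{b}} - \kappa(u,d)$, the bound \eqref{kappa bounds} with $\|d\| = \abs{\omega}$ gives
\[
\abs{\kappa(u,d)} \,\lesssim\, \abs{\Theta(\omega)}^4 + \omega^2 + \abs{\Theta(\omega)}\,\abs{\omega}\,\abs{\avg{e}},
\]
so that $\nu(\omega) := -\kappa(u,d)/(\omega\avg{f\abs{m}}) - \kappa_0\alpha/\avg{f\abs{m}} \cdot 0$ (i.e., the $\omega$-dependent part) obeys $\abs{\nu(\omega)} \lesssim \abs{\Theta(\omega)}^3\abs{\omega}^{-1} + \abs{\omega} + \abs{\Theta(\omega)}$. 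After establishing the a priori bound $\abs{\Theta(\omega)} \lesssim \abs{\omega}^{1/3}$ below, this collapses to \eqref{a priori for nu(omega)}. The $1/3$-Hölder regularity of $\nu(\omega)$ follows because $m(\tau_0+\omega)$, and hence all composite quantities entering $\kappa(u,d)$, are $1/3$-Hölder continuous by Proposition~\ref{prp:Holder regularity in z and extension to real line}.

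For the a priori bounds \eqref{a priori-bounds for Theta and r}, the estimate $\|r(\omega)\| \lesssim \abs{\Theta(\omega)}^2 + \abs{\omega}$ is immediate from \eqref{r to leading order} with $d_x = \omega$. The bound $\abs{\Theta(\omega)} \lesssim \abs{\omega}^{1/3}$ follows from the uniform $1/3$-Hölder continuity of $m$ (Proposition~\ref{prp:Holder regularity in z and extension to real line}(ii)) together with $\abs{m}, \abs{b} \sim 1$ and the formula \eqref{def of Theta(omega) at tau}. The complementary bound $\abs{\Theta(\omega)} \lesssim \abs{\omega}/\alpha^2$ follows by observing that $\Theta(\omega)b + r = u$ with the full estimate $\|u\| \lesssim \|B^{-1}\|\,\|d\| \lesssim \alpha^{-2}\abs{\omega}$ from \eqref{Bounds on inverse of B} applied to \eqref{exact formula for h}, valid in the regime where this is less than $\epsilon_*$.

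Finally, differentiability of $\Theta(\omega)$ and $r(\omega)$ in $\omega$ on the open set $\{\omega : \avg{v(\tau_0+\omega)} > 0\}$ follows from Proposition~\ref{prp:Analyticity}, because $\avg{v}>0$ together with $\|m\| \leq \Phi$ implies $\|B^{-1}\| \lesssim \avg{v}^{-2} < \infty$ via \eqref{B-inv norm bound on BB at E-line}. The representations \eqref{m bounded by Theta and b with Theta2-error}--\eqref{m bounded by Theta and f with X13-error} follow from $u = \Theta b + r$, the rest-bound \eqref{a priori for r(omega)}, and the identity $b(\tau_0) = f(\tau_0) + O(\alpha)$ from \eqref{b expanded} combined with $\abs{\Theta(\omega)} \lesssim \abs{\omega}^{1/3}$ to bound $\alpha \cdot \Theta$. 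The hardest step conceptually is the clean identification of the correction $\nu(\omega)$—I expect to have to iterate the a priori bound for $\Theta(\omega)$ once through the formula for $\kappa(u,d)$ to close the circular dependence of $\nu(\omega)$ on $\Theta(\omega)$.
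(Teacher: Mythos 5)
Your overall route is the paper's: specialize Proposition~\ref{prp:General cubic equation} to $z=\tau_0$ and the constant perturbation $d_x=\omega$, repackage the constant term and the remainder $\kappa(u,d)$ into $\Xi(\omega)\,\omega$, and read off the a priori bounds and the representations \eqref{Theta provides leading order change of m}; most of that bookkeeping ($\kappa_0$, $\nu(\omega)$, \eqref{a priori for r(omega)}, the $b=f+\Ord_\BB(\alpha)$ replacement) is done correctly. The first genuine gap is your claim that the ratio $\eta/\alpha$ in \eqref{mu_k's expanded} "vanishes since $\tau_0\in\R$". In Proposition~\ref{prp:General cubic equation} this ratio is, for real $z$, \emph{defined} as the limit $\lim_{\eta\downarrow0}\eta/\alpha(\tau_0+\cI\eta)$, and at the relevant base points ($\tau_0\in\supp v$ with $v(\tau_0)=0$, i.e.\ edges and cusps) both numerator and denominator tend to zero, so the limit is not automatically zero; indeed, by \eqref{F and alpha} the limit equals $(1-\norm{F(\tau_0)}_{\Lp{2}\to\Lp{2}})/\avg{f\abs{m}}$, which is in general strictly positive \emph{off} the support. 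Showing that it vanishes is exactly where the hypothesis $\tau_0\in\supp v$ enters: one must prove $\norm{F(\tau_0)}_{\Lp{2}\to\Lp{2}}=1$ on $\supp v$, by passing to the limit in \eqref{v equation} so that $v/\abs{m}$ (or a limit of such vectors) is an eigenvector of $F$ with eigenvalue $1$ (cf.\ \eqref{norm of F on supp v is at least one}). Without this step the stated coefficients \eqref{coefficients of cubic when z=tau_0} and \eqref{def of Xi(omega)} do not follow — $\mu_1$ would acquire an extra nonvanishing real term $-\avg{f\abs{m}}\,\eta/\alpha$, which would change the whole shape analysis.

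The second gap is your derivation of $\abs{\Theta(\omega)}\lesssim\abs{\omega}/\alpha^2$. Going through the fixed-point identity \eqref{exact formula for h} with $\norm{B^{-1}}\lesssim\alpha^{-2}$ requires absorbing the quadratic term, i.e.\ an a priori smallness $\norm{m(\tau_0+\omega)-m(\tau_0)}\lesssim\alpha^2$ (your condition "less than $\eps_\ast$" is not the right threshold). The H\"older bound only gives $\norm{m(\tau_0+\omega)-m(\tau_0)}\lesssim\abs{\omega}^{1/3}$, so a continuity bootstrap starting from $\omega=0$ closes only for $\abs{\omega}\lesssim\alpha^4$, whereas the claimed bound is needed, and is strictly stronger than $\abs{\omega}^{1/3}$, throughout $\abs{\omega}\lesssim\alpha^3$. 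The paper avoids this altogether by using the \emph{linear} identity $\partial_z m=\abs{m}\,B^{-1}\abs{m}$, i.e.\ the derivative bound of Corollary~\ref{crl:Bound on derivative}, which needs no smallness assumption, and integrating it over the segment from $\tau_0$ to $\tau_0+\omega$, where $\avg{v(\tau_0+t)}\gtrsim\alpha$ by $1/3$-H\"older continuity as long as $\abs{\omega}\leq c\,\alpha^3$ (for larger $\abs{\omega}$ the $\abs{\omega}^{1/3}$ branch of the minimum already dominates). Replace your bootstrap by this derivative argument; the rest of your proposal then goes through essentially as in the paper.
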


The expansion \eqref{expansion of v around tau_0} will be obtained by studying the imaginary parts of \eqref{Theta provides leading order change of m}. The factorization \eqref{wht-v factorizes} corresponds to the factorization of the second terms on the right hand side of \eqref{Theta provides leading order change of m}. In particular, $ \Psi(\omega;\tau_k) = \Im\,\Theta(\omega;\tau_k) $. The universality of the function $  \Psi(\omega;\tau_k) $ corresponds to $ \Theta(\omega) $ being close to the solution of the {\bf ideal cubic} obtained from \eqref{cubic for d=omega} and by setting $ \kappa_1=\kappa_2=\kappa_3 = 0 $ and $ \kappa_0 = \nu(\omega) = 0 $ in \eqref{coefficients of cubic when z=tau_0} and \eqref{def of Xi(omega)}, respectively.

\begin{Proof}[Proof of Lemma~\ref{lmm:Cubic for shape analysis}]
The present lemma is an application of Proposition~\ref{prp:General cubic equation} in the case where $ z = \tau_0 \in \supp v $ and the perturbation is a real number, \eqref{d_x = omega}.
Then the solution to \eqref{perturbed QVE - 2nd time} is $ g = m(\tau_0+\omega) $.
As for the assumptions of Proposition~\ref{prp:General cubic equation}, we need to verify the second inequality of \eqref{conditions for general cubic equation}, i.e., 
\[
\norm{m(\tau_0+\omega)-m(\tau_0)} \,\leq\, \eps_\ast
\,,\qquad
\abs{\omega} \leq \delta
\,.
\]
This follows from the uniform $1/3$-H\"older continuity of the solution of the QVE (cf. Theorem~\ref{thr:Regularity of generating density}) provided we choose $ \delta \sim \eps_\ast^3 $ sufficiently small. By Theorem~\ref{thr:Regularity of generating density} the solution $ m $ is also smooth on the set where $ \alpha > 0 $. 
By Lemma~\ref{lmm:Expansion of B in bad direction} and \eqref{P is bounded as a map from L2 to BB} the projectors $ P $  and $ Q $ are uniformly bounded on the connected components of the set where $ \alpha \leq \eps_\ast $. This boundedness extends to the real line. Since $ \abs{m} \sim 1$, the functions $ u(\omega) $ and $ r(\omega) $  have the same regularity in $ \omega $ as $ m(\tau) $ has in $ \tau $.
In particular, \eqref{a priori for Theta(omega)} follows this way (cf. Corollary~\ref{crl:Bound on derivative}) using $ \alpha=\alpha(\tau_0) \sim v(\tau_0) $.
Lemma~\ref{lmm:Expansion of B in bad direction} implies the H\"older regularity of $ \alpha,\sigma, \psi $. The estimate \eqref{stability of shape cubic} follows from \eqref{stability general cubic} provided $ \eps_\ast $ is sufficiently small. 
The a priori bound \eqref{a priori for r(omega)} for $ r $ follows from the analogous general estimate \eqref{r to leading order}.

The formulas \eqref{coefficients of cubic when z=tau_0} for the coefficients $ \mu_k $ follow from the general formulas \eqref{mu_k's expanded} by letting $ \eta = \Im\,z $ go to zero.  
The only non-trivial part is to establish 
\bels{eta/alpha goes to zero}{ 
\qquad
\lim_{\eta \to 0}\, \frac{\eta}{\alpha(\tau_0+\cI\1\eta)} \,=\, 0\,,\qquad
\forall\,\tau_0 \in \supp v
\,.
}
Since $ m(z) \in \BB $ is continuous in $ z $, $ F(z) $ is also continuous as an operator on $ \Lp{2} $.
Thus taking the limit $ \Im\,z \to 0 $ of the identity \eqref{v equation} yields
\[
\frac{v}{\abs{m}} \,=\, F \frac{v}{\abs{m}} 
\,,
\]
since $ \abs{m} \sim 1 $.
If $ \Re\,z = \tau_0 $, with $ v(\tau_0) \neq 0 $, then the vector $ v(\tau_0)/\abs{m(\tau_0)} \in \Lp{2}$ is non-zero, and thus an eigenvector of $ F $ corresponding to the eigenvalue $ 1 $. In particular, we get
\bels{norm of F on supp v is at least one}{
\norm{F(\tau_0)}_{\Lp{2}\to\Lp{2}} \,=\, 1\,,
\qquad\tau_0 \in \supp v
\,.
} 
If $ \tau_0 \in \supp v $ is such that $ v(\tau_0) = 0 $ then \eqref{norm of F on supp v is at least one} follows from a limiting argument $ \tau \to \tau_0 $, with $ v(\tau) \neq 0 $, and the continuity of $ F $.
Comparing \eqref{norm of F on supp v is at least one} with \eqref{F and alpha} implies \eqref{eta/alpha goes to zero}.

The cubic equation \eqref{cubic for d=omega} in $\Theta$ is a rewriting of \eqref{general cubic}.
In particular, we have
\bels{identification of kappa_0 and nu(omega)}{
1+\kappa_0\1\alpha+\nu(\omega)
\;=\;
\frac{\Xi(\omega)}{\avg{\1\abs{m}\1f\1}}
\;=\;
1 \,+\,
\frac{\1\avg{\1\abs{m}\1(\1b\1-f)\1} }{\avg{\1\abs{m}\1f\1}}
+
\frac{1}{\avg{\1\abs{m}\1f\1}}\,
\frac{\kappa(u(\omega),\omega)}{\omega}
\,,
}
where $ \kappa(u,d) $ is from \eqref{general cubic}.
We set the $ \omega$-independent term $ \kappa_0\1\alpha $ equal to the second term on the right hand side of \eqref{identification of kappa_0 and nu(omega)}. We set $ \nu(\omega) $ equal to the last term in \eqref{identification of kappa_0 and nu(omega)}.
Clearly, $ \abs{\kappa_0} \lesssim 1 $ because $ b = f + \Ord_\BB(\alpha) $ and $ \abs{m},f \sim 1 $.
The bound \eqref{kappa bounds} and the H\"older continuity of $ \Theta $ yield
\[
\absbb{\frac{\kappa(u(\omega),\omega)}{\omega}} \,\lesssim\, \frac{\abs{\Theta(\omega)}^4 + \abs{\omega}\abs{\Theta(\omega)} + \abs{\omega}^2}{\abs{\1\omega}} 
\,\lesssim\,
\abs{\Theta(\omega)} + \abs{\omega}
\,\lesssim\,\abs{\omega}^{1/3}
\,.
\]
This proves \eqref{a priori for nu(omega)}.
The expansions \eqref{Theta provides leading order change of m} follow by expressing $ m(\tau_0+\omega) $ in terms of $ \Theta(\omega;\tau_0) $ and $ r(\omega;\tau_0) $, and approximating the latter with \eqref{r to leading order}.
\end{Proof}

The following ratio,
\bels{def of Pi}{
\Pi(\tau) \,:=\, \frac{\abs{\1\sigma(\tau)}}{\avg{\1v(\tau)}^2\msp{-7}}
\;\,,
}
will play a key role in the classification of the points in $ \DDe $ when $ \eps > 0 $ is small. 
Indeed, the next result shows that if $ \Pi $ is sufficiently large, then $ v $ grows at least like a square root in the direction $ \sign \sigma $.

\NLemma{Monotonicity}{
There exist thresholds $ \eps_\ast\1,\,\Pi_\ast \sim 1 $, such that 
\bels{minimum growth for v}{
(\1\sign \sigma(\tau))\1
\partial_\tau v(\tau)
\,&\gtrsim\, 
\frac{\abs{\1\sigma(\tau)}}{\sigma(\tau)^2+v(\tau)^2}\frac{\Ind\sett{\1\Pi(\tau)\ge\Pi_\ast}}{\avg{v(\tau)}}
\,,\qquad
\tau \in \DD_{\eps_\ast}
\,.
}
}
\begin{Proof} 
By Lemma~\ref{lmm:Cubic for shape analysis} both $ \Theta(\omega;\tau) $ and $ r(\omega;\tau) $ are differentiable functions in $ \omega $, and thus,
\bels{dif of m in terms of dif of Theta and r}{
\partial_\tau m(\tau)
\,=\, \abs{m(\tau)}\2b(\tau)\,\partial_\omega\Theta(0;\tau) \,+\, \abs{m(\tau)}\2\partial_\omega r(0;\tau)
\,.
}
Let us drop the fixed argument $ \tau $ to simplify notations.  
Taking imaginary parts of \eqref{dif of m in terms of dif of Theta and r} yields
\bels{partial_tau v}{
\partial_\tau v 
\,=\, \Im\,\partial_\tau m
\;&=\, 
\abs{m}\,\Im \bigl[\2b\; \partial_\omega\msp{-1}\Theta(0)\1 \bigr] \,+\, \abs{m}\,\Im\,\partial_\omega r(0)
\,.
}
By dividing \eqref{a priori for r(omega)} by $ \omega $, and using \eqref{a priori for Theta(omega)}, we see that
\[
\absB{\frac{\1r_x(\omega)}{\omega}} 
\;\lesssim \, 
1 \,+\, \absB{\frac{\Theta(\omega)^2}{\omega}}
\,\lesssim\, 
1 + \frac{\abs{\1\omega}}{\,\alpha^{\14}\msp{-6}}
\;,
\qquad \forall\,x \in \Sx\,.
\]
Letting $ \omega \to 0 $, and recalling $ r(0) = 0 $, we see that the last term in \eqref{partial_tau v} is uniformly bounded,
\bels{partial_omega r is O(1)}{
\norm{\2\Im\,\partial_\omega r (0)\1} \,\lesssim\, 1
\,. 
}

We will now show that $ \Im[\2 b\,\partial_\omega\Theta\2] $ dominates the second term in \eqref{dif of m in terms of dif of Theta and r}, provided $ \alpha $ is sufficiently small and $ \abs{\sigma}/\alpha^2 \sim \Pi $ is sufficiently large.
To this end we first rewrite the cubic \eqref{cubic for d=omega},
\bels{derivative from the cubic}{
\biggl(1 + \frac{\mu_2}{\mu_1}\Theta(\omega) + \frac{\mu_3}{\mu_1}\Theta(\omega)^2\biggr)\2\frac{\Theta(\omega)}{\omega} 
\;=\;
-\frac{\Xi(\omega)}{\mu_1}
\,.
}
From the definition \eqref{mu_1 for shape} we obtain  
\[
\abs{\mu_1} \,\sim\, \alpha\,\absb{\2\sigma +\Ord(\alpha^2)} + \alpha^2\absb{\psi-\sigma^2 +\Ord(\alpha)}
\,,
\]
by distinguishing the cases $2\1\sigma^2 \leq \psi$ and $2\1\sigma^2 > \psi$, and using \eqref{stability of shape cubic}. 
Applying \eqref{a priori for nu(omega)} to estimate $ \Xi(\omega) $ we see that the right hand side of \eqref{derivative from the cubic} satisfies
\bels{}{
\frac{\Xi(\omega)}{\mu_1} \,=\,\frac{\avg{f\abs{m}\1}}{2}\2\frac{\,1 \,+\Ord(\2\alpha+\abs{\omega}^{1/3}\1)\!}{\,\cI\1\alpha\1\sigma-\1\alpha^2\1(\psi-\sigma^2)+\Ord(\alpha^3)}
\,.
}
From \eqref{a priori for Theta(omega)} we see that $ \Theta(\omega) \to 0 $ as $ \omega \to 0 $. 
Hence taking the limit $ \omega \to 0 $ in \eqref{derivative from the cubic} and recalling $ \abs{\mu_2},\abs{\mu_3} \lesssim 1 $, yields
\bels{dif of Theta}{
\partial_\omega\msp{-1}\Theta(0) \,=\,
\frac{\dif\Theta}{\dif \omega}\biggl|_{\omega=0}
=\;
\frac{\avg{f\abs{m}\1}}{2}\,
\frac{\alpha^2(\psi-\sigma^2)+\cI\2\alpha\2\sigma +\Ord(\alpha^3+\abs{\sigma}\alpha^2)}{\,\alpha^2\abs{\1\sigma +\Ord(\alpha^2)\1}^2+\alpha^4\1\abs{\psi-\sigma^2+\Ord(\alpha)}^2}
\,.
}
Using $b = f +\Ord_\BB(\alpha)$ and $ \avg{\1f\abs{m}\1} \sim 1 $, we conclude from \eqref{dif of Theta} that
\bels{Im b partial_omega Theta}{
(\sign \sigma)\, \Im \bigl[\2b\; \partial_\omega\msp{-1}\Theta(0)\1 \bigr] 
\;&\sim\;
\frac{
\,\abs{\sigma} \,+\Ord_\BB(\1 \alpha^2\!+\abs{\sigma}\alpha\1)\msp{-10} 
}{
\abs{\1\sigma +\Ord(\alpha^2)}^2 + \alpha^2\1\abs{\psi-\sigma^2+\Ord(\alpha)}^2}
\,\frac{1}{\alpha}
\,.
}
By definitions $ \abs{\sigma}/\alpha^2 \sim \Pi \ge \Pi_\ast $. Hence, if $ \Pi_\ast \sim 1 $ is sufficiently large, then the factor multiplying $ 1/\alpha $ on the right hand side of \eqref{Im b partial_omega Theta} scales like $ \min\setb{\abs{\sigma}^{-1},\alpha^{-2}\abs{\sigma}} $. 
Here we used again \eqref{stability of shape cubic}. 
Using \eqref{partial_tau v}, \eqref{partial_omega r is O(1)}, and $ \alpha \sim \avg{v} $ from \eqref{Im b partial_omega Theta} we obtain
\[
(\sign \sigma)\,\partial_\tau v  \,\sim \,  
 \min\setbb{\!\frac{1}{\abs{\1\sigma}}\1,\frac{\abs{\1\sigma}}{\avg{v}^2\!}\!}\frac{1}{\avg{v}}
\2+\2 
\Ord_{\!\BB}(\11\1)  
\,.
\]
By taking $ \Pi_\ast \sim 1 $ sufficiently large and $ \eps_\ast \sim 1 $ sufficiently small the term $\Ord_{\!\BB}(1) $ can be ignored and \eqref{minimum growth for v} follows.
\end{Proof}

\section{Expansion around non-zero minima of generating density}

Lemma~\ref{lmm:Monotonicity} shows that if $\tau_0 \in \DD_{\eps_\ast} $ is a non-zero minimum of $ \tau \mapsto \avg{v(\tau)} $, i.e., $ \avg{v(\tau_0)} > 0 $, then $ \partial_\tau \avg{v(\tau_0)} = 0 $, and hence $ \Pi(\tau_0) <\Pi_\ast $.
Now we show that any point $ \tau_0 $ satisfying $ \Pi(\tau_0) <\Pi_\ast $ is an approximate minimum of $ \avg{v} $, and its shape is described by the universal shape function $ \Psi_{\mrm{min}} : [\10,\infty) \to [\10,\infty) $ introduced in Definition~\ref{def:Shape functions}.

\begin{proposition}[Non-zero local minimum]
\label{prp:Non-zero local minimum}
If $ \tau_0 \in \DDe $ satisfies
\bels{}{
\Pi(\tau_0) \2\leq\, \Pi_\ast
\,,
}
where $ \Pi_\ast \sim 1 $ is from Lemma~\ref{lmm:Monotonicity} \emph{(}in particular if $ \tau_0 $ is a non-zero local minimum of $ \avg{v} $\emph{)}, then
\bels{smooth cusp:main result}{
v_x(\tau_0+\omega)\,-\,v_x(\tau_0) \;=\; 
h_x\avg{v}
\,\Psi_{\msp{-2}\mrm{min}}\msp{-2}\biggl(\msp{-1}\Gamma\frac{\omega}{\avg{v}^3\!}\biggr)\,
+\, 
\Ord\biggl(\min\setbb{\!
\frac{\abs{\omega}}{\avg{v}}\2,\abs{\omega}^{2/3}\!
}
\biggr)
}
for some $ \omega$-independent constants $ h_x = h_x(\tau_0) \sim 1 $ and $ \Gamma = \Gamma(\tau_0) \sim 1 $. 
Here $ \avg{v} = \avg{v(\tau_0)}$, $ \sigma = \sigma(\tau_0) $, etc. are evaluated at $\tau_0$.
\end{proposition}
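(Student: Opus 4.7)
I apply Lemma~\ref{lmm:Cubic for shape analysis} with base point $\tau_0$. The hypothesis $\Pi(\tau_0) \leq \Pi_\ast$ together with $\alpha(\tau_0)\sim \avg{v(\tau_0)}$ gives $\abs{\sigma}\lesssim \alpha^2$; using also the stability bound \eqref{stability of shape cubic}, which forces $\psi \sim 1$ in this regime, the cubic coefficients in \eqref{coefficients of cubic when z=tau_0} simplify to
\bels{non-zero min:coefs}{
\mu_3 \,=\, \psi + \Ord(\alpha)\,,
\quad
\mu_2 \,=\, 3\1\cI\1\psi\1\alpha + \Ord(\alpha^2)\,,
\quad
\mu_1 \,=\, -\12\1\psi\2\alpha^2 + \Ord(\alpha^3)\,,
}
and the forcing coefficient is $\Xi(\omega) = \avg{f\1\abs{m}\1}\1(1+\Ord(\alpha+\abs{\omega}^{1/3}))$ by \eqref{def of Xi(omega)} and \eqref{a priori for nu(omega)}. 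The structure \eqref{non-zero min:coefs} is exactly that of a polynomial in $\Theta$ shifted by $-\cI\1\alpha$, which motivates the substitution $\Theta(\omega) = \alpha\1(\zeta-\cI)$.

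Indeed a direct calculation, already used in the proof of Lemma~\ref{lmm:Expansion of B in bad direction}, shows $(\zeta-\cI)^3 + 3\1\cI\1(\zeta-\cI)^2 - 2\1(\zeta-\cI) = \zeta^3+\zeta$, so that \eqref{cubic for d=omega} becomes, after division by $\psi\2\alpha^3$,
\bels{non-zero min:ideal}{
\zeta^3 + \zeta \,+\, 2\1\mu \,=\, \Ord\bigl(\2\alpha\2(\11+\abs{\zeta}\2)^3 \,+\, \abs{\omega}^{1+1/3}\alpha^{-3}\bigr)\,,
\qquad
\mu \,:=\, \frac{\avg{f\1\abs{m}\1}\,\omega}{2\1\psi\2\alpha^3}\,.
}
The rescaling $\zeta = \eta/\sqrt{3}$, $\lambda = 3\sqrt{3}\,\mu$ yields the ideal depressed cubic $\eta^{\13}+3\1\eta+2\1\lambda = 0$, whose discriminant equals $-108\1(1+\lambda^2)<0$; it has one real root and two complex-conjugate roots. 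Since $\Theta(0)=0$ corresponds to $\zeta=\cI$, i.e.\ $\eta = \cI\sqrt{3}$ at $\lambda=0$, continuity selects the root with $\Im\,\eta>0$, namely Cardano's expression $\eta = \nE^{2\pi\cI/3}\1s\1-\1\nE^{-2\pi\cI/3}\1t$, with
\[
s \,:=\, (\sqrt{1+\lambda^2}-\lambda)^{1/3}\,,
\qquad
t \,:=\, (\sqrt{1+\lambda^2}+\lambda)^{1/3}\,,
\qquad
s\,t\,=\,1\,.
\]
Hence $\Im\,\zeta = \tsfrac{1}{2}(s+t)$. Using $s^3 + t^3 = 2\sqrt{1+\lambda^2} = (s+t)(s^2+t^2-s\1t)$ and $s^2+t^2 = (\sqrt{1+\lambda^2}+\lambda)^{2/3}+(\sqrt{1+\lambda^2}-\lambda)^{2/3}$, an algebraic manipulation identifies $\tsfrac{1}{2}(s+t) - 1 = \Psi_{\msp{-2}\mrm{min}}(\lambda)$. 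Consequently the ideal-cubic root satisfies $\Im\,\Theta = \alpha\,\Psi_{\msp{-2}\mrm{min}}(\lambda)$.

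For the perturbation analysis I use that the ideal cubic has only simple roots with a discriminant bounded below by $1$, so the implicit function theorem controls the difference between the actual root of \eqref{cubic for d=omega} and the ideal one in terms of the error on the right of \eqref{non-zero min:ideal}. Combined with the a priori bound $\abs{\Theta(\omega)}\lesssim \min\{\abs{\omega}/\alpha^2,\abs{\omega}^{1/3}\}$ from \eqref{a priori for Theta(omega)}, this produces $\abs{\Im\,\Theta(\omega) - \alpha\,\Psi_{\msp{-2}\mrm{min}}(\lambda)} \lesssim \min\{\abs{\omega}/\avg{v},\abs{\omega}^{2/3}\}$ by distinguishing the regime $\abs{\omega}\lesssim \avg{v}^3$ (where $\lambda = \Ord(1)$ and the $\alpha\1(1+\abs{\zeta})^3$ term dominates) from $\abs{\omega}\gg \avg{v}^3$ (where the cubic-root asymptotic $\Psi_{\msp{-2}\mrm{min}}(\lambda)\sim\lambda^{1/3}$ takes over). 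Finally, taking imaginary parts in \eqref{m bounded by Theta and f with X13-error} and absorbing the constants $\alpha/\avg{v}\sim 1$ and $\avg{f\abs{m}\1}/\psi\sim 1$ into $h_x := \abs{m_x(\tau_0)}\1f_x(\tau_0)\1\alpha/\avg{v}$ and $\Gamma := (3\sqrt{3}/2)\1\avg{f\1\abs{m}\1}\1\avg{v}^{\13}/(\psi\1\alpha^3)$ produces \eqref{smooth cusp:main result}.

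\emph{Main obstacle.} The delicate step is the perturbation argument in the last paragraph: the $\Ord(\alpha)$ corrections to the ideal cubic must be shown to translate into an error of the specific asymmetric form $\min\{\abs{\omega}/\avg{v},\abs{\omega}^{2/3}\}$, uniformly across the crossover scale $\abs{\omega}\sim\avg{v}^3$ where the root passes from a quadratic regime ($\Psi_{\msp{-2}\mrm{min}}(\lambda)\sim\lambda^2$) into the cubic-root tail. Everything else is either algebraic manipulation or a direct application of Lemma~\ref{lmm:Cubic for shape analysis}.
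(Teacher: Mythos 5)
Your reduction is sound and in fact runs parallel to the paper's own proof: the substitution $\Theta=\alpha\1(\zeta-\cI\1)$ is exactly the normal coordinate change \eqref{min: normal coordinates} with the $\gamma$-corrections stripped off, the branch you select is the root $\wht{\Omega}_+$ of Lemma~\ref{lmm:Roots of reduced cubic with positive linear coefficient}, and your Cardano algebra (including $st=1$, $\tsfrac12(s+t)-1=\Psi_{\mrm{min}}(\lambda)$ and the value of $\Gamma$) is correct. The genuine gap is in the error propagation on the scale $\abs{\omega}\lesssim\avg{v}^3$. The additive error you write for the reduced equation, $\Ord\bigl(\alpha\1(1+\abs{\zeta})^3+\abs{\omega}^{4/3}\alpha^{-3}\bigr)$, does not vanish as $\omega\to0$: at $\abs{\lambda}\lesssim1$ it is of size $\alpha$, so stability of simple roots only yields $\abs{\zeta(\omega)-\wht{\zeta}_+(\lambda)}\lesssim\alpha$, i.e.\ an error $\Ord(\avg{v}^2)$ in $\Im\,\Theta$, while \eqref{smooth cusp:main result} demands $\Ord(\abs{\omega}/\avg{v})$, which is far smaller when $\abs{\omega}\ll\avg{v}^3$. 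The a priori bound \eqref{a priori for Theta(omega)} cannot repair this, since $\abs{\Theta}\lesssim\abs{\omega}/\avg{v}^2$ is weaker than the target by a factor $1/\avg{v}$; and this regime is not vacuous --- there the content of the proposition is precisely that $\partial_\omega\Im\,\Theta(0)=\Ord(\avg{v}^{-1})$ even though $\partial_\omega\Theta(0)\sim\avg{v}^{-2}$, a cancellation a crude modulus bound on the equation error cannot see.

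The missing idea is the anchoring at $\omega=0$: every correction in the cubic carries a factor of $\Theta=\alpha\1(\zeta-\cI\1)$ (the $\Ord(\alpha)$ coefficient errors multiply powers of $\Theta$) or of $\omega$ (the $\nu$- and $\kappa_0$-errors multiply the forcing), so the equation error is really $\lesssim\alpha\min\{\abs{\lambda},\abs{\lambda}^{4/3}\}$, vanishing at $\lambda=0$; only with this refinement does root stability give $\abs{\2\Im\,\Theta-\alpha\1\Psi_{\mrm{min}}(\lambda)}\lesssim\alpha^2\min\{\abs{\lambda},\abs{\lambda}^{2/3}\}$, which is the claimed error. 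This is exactly the role of the paper's extra step: it expresses $\Theta(\omega)$ as the difference of root values $\wht{\Omega}_+(\gamma_6+\Lambda_0(\lambda))-\wht{\Omega}_+(\gamma_6)$ using $\Theta(0)=0$, and only then compares with the ideal difference $\wht{\Omega}_+(\lambda)-\wht{\Omega}_+(0)$ via Lemma~\ref{lmm:Stability of roots - pos} and the mean value theorem in \eqref{Omega-dif approximation}--\eqref{Omega dif done}. For the same reason the final conversion must use \eqref{m bounded by Theta and b with Theta2-error}, whose error $\Ord(\Theta^2+\abs{\omega})$ is admissible, rather than \eqref{m bounded by Theta and f with X13-error} as you do: its error $\Ord(\alpha\abs{\omega}^{1/3})$ already exceeds $\abs{\omega}/\avg{v}$ when $\abs{\omega}\ll\avg{v}^3$. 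With these two refinements (and the trivial reduction to $\abs{\omega}\leq\delta$, plus a word on why the nearest-root label cannot switch, both as in the paper) your argument closes and essentially reproduces the paper's proof.
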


Using \eqref{def of Psi_min} we see that the first term on the right hand side of \eqref{smooth cusp:main result} satisfies
\bels{min-expansion:scaling}{
\avg{v}
\,\Psi_{\msp{-2}\mrm{min}}\msp{-2}\biggl(\msp{-1}\Gamma\frac{\omega}{\avg{v}^3\!}\biggr)
\;\sim\; 
\min\setbb{
\frac{\abs{\omega}^2\!}{\avg{v}^5\!}\,,\abs{\omega}^{1/3}\!
}
\,,
\qquad \omega \in \R\,.
}
Comparing this with the last term of \eqref{smooth cusp:main result} we see that the first term dominates the error on the right hand side of \eqref{smooth cusp:main result} provided $ \avg{v}^4 \lesssim \abs{\omega} \lesssim 1 $. Applying the lemma at two distinct base points hence yields the following property of the non-zero minima.

\NCorollary{Location of non-zero minima}{
Suppose two points $ \tau_1,\tau_2 \in \DDe $ satisfy the hypotheses of Proposition~\ref{prp:Non-zero local minimum}. Then, either
\bels{dist between minima}{
\abs{\tau_1-\tau_2} \,\gtrsim\, 1\,,
\qquad\text{or}\qquad
\abs{\tau_1-\tau_2} \,\lesssim\, \min\setb{\avg{v(\tau_1)},\avg{v(\tau_2)}}^4
\,.
}
}
\begin{Proof}
Suppose the points $ \tau_1 $ and $ \tau_2 $ qualify as the base points for Proposition~\ref{prp:Non-zero local minimum}. Then the  corresponding expansions \eqref{smooth cusp:main result} are compatible only if the base points satisfy the dichotomy \eqref{dist between minima}. For the second bound in \eqref{dist between minima} we use \eqref{min-expansion:scaling}.
\end{Proof}

We will use the standard convention on complex powers.

\NDefinition{Complex powers}{
We define complex powers $ \zeta \mapsto \zeta^\gamma $, $ \gamma \in \C $, on $ \C\backslash (-\infty,0) $, by setting $ \zeta^{\1\gamma} := \exp(\2\gamma \log \zeta\2) $, where $ \log : \C\backslash (-\infty,0) \to \C $ is a continuous branch of the complex logarithm with $ \log 1 = 0 $. We denote by $ \arg :  \C\backslash(-\infty,0) \to (-\pi,\pi) $, the corresponding angle function.
}

\begin{Proof}[Proof of Proposition~\ref{prp:Non-zero local minimum}]
Without loss of generality it suffices to prove \eqref{smooth cusp:main result} in the case  $ \abs{\omega} \leq \delta $ for some sufficiently small constant $ \delta \sim 1 $. 
Indeed, when $ \abs{\omega} \gtrsim 1 $ the expansion \eqref{smooth cusp:main result} becomes trivial since the last term is $ \Ord(1) $ and therefore dominates all the other terms, including $ \abs{v_x(\tau)} \leq \nnorm{m}_\R \sim 1 $.
Similarly, we may restrict ourselves to the setting where the quantity
\bels{min:def of chi}{
\chi \,:=\,  \alpha + \frac{\abs{\sigma}}{\alpha}
\,,
}
satisfies $ \chi \leq \chi_\ast $, for some sufficiently small threshold $ \chi_\ast \sim 1 $.
In particular, we assume that $ \chi_\ast  $ is so small that $ \chi \leq \chi_\ast$ implies $ \avg{v} \leq \eps_\ast $.

Let us denote by $ \gamma_k \in \C $,  $ k=0,1,2,... $,   generic $ \omega$-independent numbers, satisfying
\bels{generic gamma_k}{
\abs{\gamma_k} \,\lesssim\,\chi
\,.
}
Since $ \Pi \sim \abs{\sigma}/\alpha^2 $ and $ \Pi\leq \Pi_\ast $ we have $ \abs{\sigma} \leq \Pi_\ast\chi_\ast^2 $. 
From \eqref{stability of shape cubic} it hence follows that $ \psi \sim 1 $ for sufficiently small $ \chi_\ast \sim 1  $. 
Thus the cubic \eqref{cubic for d=omega} takes the form
\bels{cubic for min}{
\Theta(\omega)^{\13} \,+\,  \cI\13\1\alpha\1(1+ \gamma_3)\1\Theta(\omega)^{\12}
\,-\,
2\1\alpha^2(1+ \gamma_2)\1\Theta(\omega)& 
\,
\\
+\; 
(\11+\gamma_0+ (1+\gamma_1)\nu(\omega)\1)\,
\frac{\avg{f\1\abs{m}\1}}{\psi}\,
\omega\msp{7}&
\;=\;
0\,.
}
Using the following \emph{normal coordinates},
\bels{min: normal coordinates}{
\qquad\lambda \,&:=\, \Gamma\,\frac{\omega}{\2\alpha^{\13}\msp{-4}}
\\
\Omega(\lambda) \,&:=\,\sqrt{3\,}\,\biggl[
(1+ \gamma_4)\,\frac{1}{\alpha}\Theta\Bigl(\frac{\,\alpha^3\msp{-4}}{\Gamma}\lambda\Bigr)\,+\,\cI \,+\, \gamma_5
\biggr]\,,
}
where $ \Gamma := (\sqrt{27}/2)\avg{\1\abs{m}f}/\psi \sim 1 $,  \eqref{cubic for min} reduces to
\bels{min: normal form}{
\Omega(\lambda)^{\13} + 3\2\Omega(\lambda) \2+\2 2\1\Lambda(\lambda)
\,&=\, 0
\,.
} 
Here the constant term $ \Lambda :\R \to \C $ is given by
\bels{min:Lambda and mu}{
\Lambda(\lambda)\,&:=\,(\21+ \gamma_6+ (1+\gamma_7) \mu(\lambda))\2\lambda \,+\,  \gamma_8
\\
\mu(\lambda) \,&:=\, \nu\Bigl(\frac{\,\alpha^3\msp{-4}}{\Gamma}\lambda\Bigr)
\,.
}

The following lemma presents Cardano's solution for the reduced cubic \eqref{min: normal form} in a form that is convenient for our analysis. 
We omit the proof of this well know result.

\begin{lemma}[Roots of reduced cubic with positive linear coefficient]
\label{lmm:Roots of reduced cubic with positive linear coefficient}
The following holds:
\bels{}{
\Omega^{\13}+\,3\2\Omega \,+\, 2\1\zeta \;=\; (\1\Omega-\wht{\Omega}_+(\zeta))(\1\Omega-\wht{\Omega}_0(\zeta))(\1\Omega-\wht{\Omega}_-(\zeta))
\,,\qquad
\forall\1\zeta \in \C
\,,
}
where the three root functions $ \wht{\Omega}_a : \C \to \C $, $ a =0,\pm $, are given by
\begin{subequations}
\label{solutions to positive reduced cube}
\bels{defs of wth-Omega_a for min}{
\wht{\Omega}_0\,&:= 
-\22\2\Phi_{\mrm{odd}}
\\
\wht{\Omega}_\pm \,&:=\, 
\Phi_{\mrm{odd}}
\,\pm\,
\cI\2\sqrt{3\2}\,\Phi_{\mrm{even}}
\,,
}
with $ \Phi_{\mrm{even}} $ and $ \Phi_{\mrm{odd}} $ denoting the even and odd parts of the function $ \Phi : \C \to \C $,
\bels{def of Phi}{
\Phi(\zeta) \,:=\, \bigl(\sqrt{\21+\zeta^{\12}\,} \,+\,\zeta\,\bigr)^{\msp{-2}1/3}
\,,
}
\end{subequations}
respectively.
The roots \eqref{solutions to positive reduced cube} are analytic and distinct on the set,
\bels{def of wht-C}{
\wht{\C} \,:=\, \C \backslash \sett{\,\cI\2\xi: \xi \in \R, \abs{\xi} > 1}
\,.
} 
Indeed, if $ \wht{\Omega}_a(\zeta) = \wht{\Omega}_b(\zeta) $, for $ a\neq b$, then $ \zeta = \pm\1\cI \2$.
\end{lemma}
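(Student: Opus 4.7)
\noindent\emph{Proof proposal.} The plan is to recognize the statement as Cardano's formula applied to the depressed cubic $X^3 + 3X + 2\zeta$, and to verify that the specific branches chosen in \eqref{def of Phi}--\eqref{defs of wth-Omega_a for min} produce analytic, distinct roots exactly on $\wht{\C}$. The work splits into (a) a branch-cut analysis for $\Phi$, (b) a direct algebraic verification that each $\wht{\Omega}_a$ is a root, (c) distinctness via the discriminant, and (d) a pointwise check at $\zeta = \pm\cI$.

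\medskip
\noindent\emph{Analyticity.} First I would show that $1+\zeta^2$ lies in $\C\setminus(-\infty,0]$ for $\zeta \in \wht{\C}\setminus\{\pm\cI\}$: the preimage of the negative real axis under $\zeta\mapsto 1+\zeta^2$ consists precisely of the two rays $\{\cI\1\xi : \abs{\xi}>1\}$, and $1+\zeta^2 = 0$ only at $\zeta = \pm\cI$. Hence the principal square root makes $\sqrt{1+\zeta^2}$ analytic on $\wht{\C}$. Next, using the identity $(\sqrt{1+\zeta^2}+\zeta)(\sqrt{1+\zeta^2}-\zeta) = 1$, I would show that $\sqrt{1+\zeta^2}+\zeta$ never enters $(-\infty,0]$: if $\sqrt{1+\zeta^2}+\zeta = r\leq 0$, squaring the relation $\sqrt{1+\zeta^2} = r-\zeta$ forces $\zeta = (r^2-1)/(2r)\in\R$, after which the required sign of $r-\zeta$ to be the principal root leads to the contradiction $r^2 \leq -1$. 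Consequently the principal cube root is applicable and $\Phi$, together with all three $\wht{\Omega}_a$, is analytic on $\wht{\C}$.

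\medskip
\noindent\emph{Verification of the roots.} Setting $u = \Phi(\zeta)$, the product identity above gives $\Phi(-\zeta) = 1/u$, so $\Phi_{\mrm{odd}} = (u-1/u)/2$ and $\Phi_{\mrm{even}} = (u+1/u)/2$. Thus $\wht{\Omega}_0 = 1/u - u$, and cubing yields
\[
\wht{\Omega}_0^{\12} = (1/u-u)^3 = (1/u^3 - u^3) + 3(u - 1/u) = -2\zeta - 3\2\wht{\Omega}_0,
\]
where I used $u^3 - 1/u^3 = 2\zeta$. For the other two roots, rewriting $(1\pm\cI\sqrt{3})/2 = \nE^{\pm\cI\pi/3}$ gives $\wht{\Omega}_\pm = u\2\nE^{\pm\cI\pi/3} - u^{-1}\2\nE^{\mp\cI\pi/3}$. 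Setting $A := u\2\nE^{\pm\cI\pi/3}$, $B := -u^{-1}\2\nE^{\mp\cI\pi/3}$ one has $AB = -1$ and $A^3 + B^3 = -u^3 + 1/u^3 = -2\zeta$, so $(A+B)^3 = -2\zeta - 3(A+B)$, verifying $\wht{\Omega}_\pm^{\12} + 3\2\wht{\Omega}_\pm + 2\zeta = 0$. Since $\wht{\Omega}_0 + \wht{\Omega}_+ + \wht{\Omega}_- = -2\1\Phi_{\mrm{odd}} + 2\1\Phi_{\mrm{odd}} = 0$ matches the vanishing quadratic coefficient via Vieta, the three roots factor the polynomial.

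\medskip
\noindent\emph{Distinctness and boundary check.} The discriminant of $X^3 + pX + q$ equals $-4p^3 - 27q^2$, which for $p=3$, $q = 2\zeta$ reduces to $-108\2(1+\zeta^2)$. This vanishes on $\wht{\C}$ only at $\zeta = \pm\cI$, so the three roots are distinct elsewhere. At $\zeta = \cI$, $\Phi(\cI) = \nE^{\cI\pi/6}$ and $\Phi(-\cI) = \nE^{-\cI\pi/6}$, giving $\Phi_{\mrm{odd}}(\cI) = \cI/2$, $\Phi_{\mrm{even}}(\cI) = \sqrt{3}/2$, whence $\wht{\Omega}_0(\cI) = -\cI = \wht{\Omega}_-(\cI)$ and $\wht{\Omega}_+(\cI) = 2\cI$, which one checks solves $X^3+3X+2\cI=0$; the case $\zeta = -\cI$ follows from the symmetry $\Phi(-\zeta) = 1/\Phi(\zeta)$. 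The hardest point is really the branch-cut verification in step (a), since one must rule out that the cube root introduces an additional branch cut off the imaginary axis; everything else is routine symmetric-function algebra.
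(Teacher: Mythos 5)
Your proposal is correct, and in fact the paper offers no proof to compare against: it explicitly omits this argument as the well-known Cardano formula, and your verification (principal-branch analysis of $\Phi$, the identity $\Phi(-\zeta)=1/\Phi(\zeta)$, the symmetric-function algebra, and the discriminant $-108\,(1+\zeta^{\12})$) is exactly that standard computation. Two cosmetic points: in your two displayed verifications the exponent on $\wht{\Omega}_0$ and $\wht{\Omega}_\pm$ should of course be a cube, as your computation of $(1/u-u)^3$ and $(A+B)^3$ shows; and the analyticity conclusion should be stated on $\wht{\C}\setminus\{\pm\cI\}$, since $\sqrt{1+\zeta^{\12}}$, and hence $\Phi$, has branch points at $\zeta=\pm\cI$ where two roots collide --- this is consistent with the lemma's final sentence and with the blow-up of the derivative bound for $\Phi$ that the paper uses in its appendix stability lemmas.
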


Since $ \Omega(\lambda) $, defined in \eqref{min: normal coordinates}, solves the cubic \eqref{min: normal form}, there exists $ A :\R \to \sett{0,\pm} $, such that  
\bels{min:Omega in terms of A and wht-Omega_a}{
\Omega(\lambda) = \wht{\Omega}_{A(\lambda)}(\Lambda(\lambda))
\,,\qquad\lambda \in \R
\,.
} 
In the normal coordinates the restriction $ \abs{\omega}\leq \delta $ becomes $ \abs{\lambda} \leq \lambda_\ast $, where
\bels{min:expansion range in normal coords}{
\abs{\lambda} \leq \lambda_\ast := \Gamma\frac{\delta}{\alpha^3\!}
\,.
}
Nevertheless, for sufficiently small $ \delta \sim 1 $ the function $ \Lambda $ in \eqref{min:Lambda and mu} is a small perturbation of the identity function. Indeed, from \eqref{min:Lambda and mu} and the bound \eqref{a priori for nu(omega)} on $ \nu $, we get
\bels{min:mu a priori bound}{
\abs{\mu(\lambda)} 
\,&\lesssim\, 
\absB{\Theta\Bigl(\frac{\1\alpha^3\!}{\Gamma}\lambda\1\Bigr)} + \alpha^3\abs{\lambda}
\\
&\lesssim\,
\alpha\1\abs{\lambda}^{\11/3}
\,\lesssim\,
\delta^{\11/3},
\qquad\text{when}\quad \abs{\lambda} \leq \lambda_\ast
\,.
}
Hence, if the thresholds $ \delta, \chi_\ast \lesssim 1 $  are sufficiently small, then
\bels{min:Lambda mapsto good set}{
\Lambda(\lambda) \in \mathbb{G}\,,
\quad\text{and}\quad
\abs{\Lambda(\lambda)} \,\sim\, \abs{\lambda} 
\,,\qquad
\abs{\lambda} \leq \lambda_\ast\,, 
}
where 
\bels{min: def of good set mathbb-G}{
\mathbb{G} \,:=\, \setB{\zeta \in \C:\dist\bigl(\1\zeta,\cI\1(-\infty,-1)\cup \cI\1(+1,+\infty)\1\bigr) \ge 1/2}
\,.
}
By Lemma~\ref{lmm:Roots of reduced cubic with positive linear coefficient} the root functions have uniformly bounded derivatives on this subset of $ \wht{\C} $.

The following lemma which is proven in Appendix \ref{sec:Cubic roots and associated auxiliary functions} is used for replacing $ \Lambda(\lambda) $ by $\lambda $ in \eqref{min:Omega in terms of A and wht-Omega_a}.

\begin{lemma}[Stability of roots]
\label{lmm:Stability of roots - pos} 
There exist positive constants $ c_1,C_1 $ such that if $ \zeta \in \mathbb{G} $ and $ \xi \in \C $ satisfy 
\bels{min stability: assumption}{
\abs{\1\xi\1} \,\leq\, c_1\2(\11+\abs{\zeta}\1)
\,,
}
then the roots \eqref{solutions to positive reduced cube} are stable in the sense that 
\bels{min:stability of roots}{
\absb{\2\wht{\Omega}_a(\zeta+\xi)-\2\wht{\Omega}_a(\zeta)} 
\;\leq\;
\frac{\!C_1\1\abs{\1\xi\1}}{\21\2+\abs{\zeta}^{2/3}\msp{-6}} 
\;\,,\qquad a=0\1,\2\pm
\,.
}
\end{lemma}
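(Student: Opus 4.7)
The plan is to treat each $\wht{\Omega}_a$ as a holomorphic function on the open set $\wht{\C}$ (which is guaranteed by Lemma~\ref{lmm:Roots of reduced cubic with positive linear coefficient}), reduce the claim to a pointwise derivative bound $|\wht{\Omega}_a'(\zeta)| \lesssim (1+|\zeta|)^{-2/3}$ valid on $\mathbb{G}$, and then integrate along the straight segment from $\zeta$ to $\zeta+\xi$. The hypothesis $|\xi| \leq c_1(1+|\zeta|)$ will only be used to guarantee that this segment stays inside a mildly enlarged copy of $\mathbb{G}$ on which the same derivative bound persists.

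First I would exploit the algebraic identity
$$\Phi(\zeta)\,\Phi(-\zeta) \,=\, \bigl(\sqrt{1+\zeta^2}+\zeta\bigr)^{1/3}\bigl(\sqrt{1+\zeta^2}-\zeta\bigr)^{1/3} \,=\, 1$$
so that $\Phi_{\mrm{odd}}(\zeta) = \tfrac{1}{2}\bigl(\Phi(\zeta)-\Phi(\zeta)^{-1}\bigr)$ and $\Phi_{\mrm{even}}(\zeta) = \tfrac{1}{2}\bigl(\Phi(\zeta)+\Phi(\zeta)^{-1}\bigr)$. A direct differentiation yields $\Phi'(\zeta)= \Phi(\zeta)/(3\sqrt{1+\zeta^2})$, so each derivative $\wht{\Omega}_a'(\zeta)$ is a fixed linear combination of $\Phi(\zeta)/\sqrt{1+\zeta^2}$ and $\Phi(\zeta)^{-1}/\sqrt{1+\zeta^2}$. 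Next I would establish the size bounds $|\sqrt{1+\zeta^2}| \sim 1+|\zeta|$ and $|\Phi(\zeta)|^{\pm 1}\lesssim (1+|\zeta|)^{1/3}$ uniformly on $\mathbb{G}$. The first follows from $|1+\zeta^2| = |\zeta-\cI||\zeta+\cI|$ combined with the fact that every $\zeta\in\mathbb{G}$ is at distance at least $1/2$ from both $\pm\cI$ (since $\pm\cI$ lie in the closure of the excluded cut). The second is immediate from $|\Phi(\zeta)|^3 \leq |\sqrt{1+\zeta^2}|+|\zeta| \lesssim 1+|\zeta|$ and $|\Phi(\zeta)|^{-3}=|\Phi(-\zeta)|^3\lesssim 1+|\zeta|$. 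Combining the three estimates gives the key inequality $|\wht{\Omega}_a'(\zeta)|\lesssim (1+|\zeta|)^{-2/3}$ on $\mathbb{G}$.

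Finally, I would fix $c_1>0$ small enough that whenever $\zeta\in\mathbb{G}$ and $\xi$ satisfies \eqref{min stability: assumption}, the segment $[\zeta,\zeta+\xi]$ lies inside the enlarged set $\mathbb{G}':=\{\,\omega\in\C : \dist(\omega,\cI(-\infty,-1)\cup\cI(1,\infty))\geq 1/4\,\}$ and satisfies $1+|\zeta+t\xi|\sim 1+|\zeta|$ for $t\in[0,1]$; the same derivative bound as above then holds on $\mathbb{G}'$. The conclusion \eqref{min:stability of roots} follows from the fundamental theorem of calculus,
$$\wht{\Omega}_a(\zeta+\xi)-\wht{\Omega}_a(\zeta) \,=\, \xi\int_0^1 \wht{\Omega}_a'(\zeta+t\xi)\,\dif t,$$
and the uniform derivative estimate along the segment. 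The main (and only) technical point is the geometric verification that this segment remains inside $\mathbb{G}'$: for $|\zeta|$ large this is automatic from $|\xi|\leq c_1|\zeta|$ since the excluded set sits on the imaginary axis and $\zeta$ is already a fixed distance from it relative to its size; for bounded $|\zeta|$ it reduces to the compactness of $\mathbb{G}\cap\{|\zeta|\leq R\}$ together with a small enough choice of $c_1$, since the segment then has length at most a constant and must be kept a fixed distance away from the two near-singular points $\pm\cI$.
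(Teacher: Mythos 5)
Your overall strategy coincides with the paper's own proof of this lemma: the paper also reduces the claim to a derivative bound for the building block $\Phi$ (its bound \eqref{bound on derivative of Phi}, which away from $\pm\1\cI$ amounts to $\abs{\wht{\Omega}_a'(\zeta)}\lesssim(1+\abs{\zeta})^{-2/3}$) and then applies the mean value theorem along the segment from $\zeta$ to $\zeta+\xi$, the hypothesis \eqref{min stability: assumption} being used only to keep that segment inside a $1/4$-enlargement of $\mathbb{G}$. Your derivation of the derivative bound, via $\Phi(\zeta)\1\Phi(-\zeta)=1$ and $\Phi'(\zeta)=\Phi(\zeta)/(3\sqrt{1+\zeta^{\12}})$ together with $\abs{\sqrt{1+\zeta^{\12}}}\sim 1+\abs{\zeta}$ on $\mathbb{G}$, is correct and in fact tidier than the computation sketched in the paper's appendix.

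The gap is in the final geometric step, which is the only place the hypothesis is used. Membership in $\mathbb{G}$ gives only the \emph{absolute} clearance $\dist(\zeta,\cI\1(\pm1,\pm\infty))\ge 1/2$, not a clearance proportional to $\abs{\zeta}$: the point $\zeta=\tfrac12+\cI R$ lies in $\mathbb{G}$ for every $R>1$ yet is at distance exactly $1/2$ from the cut. Hence, for any fixed $c_1>0$ and $R$ large, $\xi=-\tfrac12$ satisfies \eqref{min stability: assumption} while $\zeta+\xi=\cI R$ lies on the cut (and $\xi=-\tfrac12-\eps$ lands just across it), so the segment leaves your $\mathbb{G}'$ no matter how small $c_1$ is; your claim that the containment is ``automatic for large $\abs{\zeta}$'' is exactly what fails. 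Worse, the estimate \eqref{min:stability of roots} itself cannot hold there: since $\zeta=\pm\1\cI$ are simple branch points of the cubic (the discriminant is $-108\1(1+\zeta^2)$), crossing the cut at height $R$ permutes two of the three branches, which are pairwise separated by $\sim R^{1/3}$, incompatible with the claimed bound $\lesssim \abs{\xi}\,(1+\abs{\zeta}^{2/3})^{-1}\sim R^{-2/3}$. To be fair, the paper's own proof asserts the same containment (with $c_1=1/12$) without justification, so this defect is shared by the lemma as stated; in its actual applications in the proof of Proposition~\ref{prp:Non-zero local minimum} the arguments $\zeta$ lie in a thin cone around the real axis (plus an $\Ord(1)$ ball), where $\dist(\zeta,\cI\1(\pm1,\pm\infty))\gtrsim 1+\abs{\zeta}$, and under that additional clearance your segment argument (and the paper's) is complete. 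A correct self-contained version of your proof should therefore either add this clearance as a hypothesis or verify it for the specific arguments to which the lemma is applied, rather than for all of $\mathbb{G}$.
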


From \eqref{min:Lambda mapsto good set} we see that $ \Lambda(\lambda) \neq \pm\2\cI $, and hence the roots do not coincide. Moreover, we know from Lemma~\ref{lmm:Cubic for shape analysis} and \eqref{min: normal coordinates}:
\smallskip
\begin{enumerate}
\item[{\bf SP-1}] The function $ \lambda \mapsto \Omega(\lambda) $ is continuous.
\end{enumerate}
\smallskip
This simple fact will be the first of the four selection principles ({\bf SP}) used for determining the correct roots of the cubic \eqref{cubic for d=omega} in the following (cf. Lemma~\ref{lmm:Selection principles}).
Since the roots $ \wht{\Omega}_a|_{\mathbb{G}} $ are also continuous by Lemma~\ref{lmm:Roots of reduced cubic with positive linear coefficient}, we conclude that the labelling function $ A $ in \eqref{min:Omega in terms of A and wht-Omega_a} stays constant on the interval $ [-\lambda_\ast,\lambda_\ast] $.
In order to determine this constant,  $ a := A(\lambda) $, we use the second selection principle:
\smallskip
\begin{enumerate}
\item[{\bf SP-2}] The initial value $ \Omega(0) $ is consistent with $ \Theta(0) = 0\2$.
\end{enumerate}
\smallskip
Plugging $ \Theta(0) = 0 $ into \eqref{min: normal coordinates} yields
\bels{min:Omega(0)}{
\Omega(0) 
\,=\, 
\cI\sqrt{3\2}\2(\11+ \gamma_5) 
\,=\,  
\cI\sqrt{3\2} +  \Ord\Bigl(\alpha+\frac{\abs{\sigma}}{\alpha}\Bigr)\,.
}
On the other hand, using Lemma~\ref{lmm:Stability of roots - pos} and \eqref{min:Lambda and mu} we get 
\bels{min:wht-Omega(0)}{
\wht{\Omega}_a(\Lambda(0)) \,=\, \wht{\Omega}_a( \gamma_8) \,=\,  \wht{\Omega}_a(0) + \Ord\Bigl(\alpha+\frac{\abs{\sigma}}{\alpha}\Bigr)
\,,
}
where 
\[
\wht{\Omega}_0(0) \,=\, 0
\qquad
\text{and}\qquad
\wht{\Omega}_{\pm}(0) \,=\, \pm\1 \cI\sqrt{3}
\,.
\]
Comparing this with \eqref{min:Omega(0)} and \eqref{min:wht-Omega(0)}, we see that for sufficiently small $ \alpha + \abs{\sigma}/\alpha \lesssim \chi_\ast $, only the the choice $ A(0) = + $ satisfies {\bf SP-2}.  

As the last step we derive the expansion \eqref{smooth cusp:main result} using the formula 
\bels{v difference in terms of Theta}{
v_x(\tau_0+\omega) - v_x(\tau_0) 
\,&=\,
\abs{m_x}\2f_x 
\,\Im\,\Theta(\omega)
\,+\, \Ord\Bigl(\2\alpha\2\abs{\Theta(\omega)} + \abs{\Theta(\omega)}^2\!+\abs{\omega}\2\Bigr)
\,,
}
which follows by taking the imaginary part of \eqref{m bounded by Theta and b with Theta2-error}. We also used $ b_x = (1+\Ord(\alpha))\1f_x $ and $ f_x \sim 1 $ here.
Let us express $ \Theta $ in terms of the normal coordinates using \eqref{min: normal coordinates}
\bels{min:Theta solved}{
\Theta(\omega)
\;&=\, 
\frac{\alpha}{1+ \gamma_4}\Biggl[\,
 \frac{\wht{\Omega}_+\msp{-1}(\Lambda(\lambda))}{\!\sqrt{3\1}}\,-\,\cI
\2-\2
  \gamma_5
\Biggr]
\,.
}
Here, $ \omega $ and $ \lambda $ are related by \eqref{min: normal coordinates}. Since $ \Theta(0) = 0 $, and $ \Lambda(0) =  \gamma_8 $ (cf. \eqref{min:Lambda and mu}), we get
\[
\cI+ \gamma_5 = \frac{\wht{\Omega}_+\msp{-1}( \gamma_8)}{\!\sqrt{3\2}}
\,.
\]
Using this identity and
\[
\Lambda(\lambda) =  \gamma_8 + \Lambda_0(\lambda)
\qquad\text{with}\qquad
\Lambda_0(\lambda) := (\11+ \gamma_6+ (1+\gamma_7)\mu(\lambda)\1)\1\lambda
\,,
\]
we rewrite the formula \eqref{min:Theta solved} as
\bels{Theta as Omega-dif}{
\Theta(\omega) \;=\;
(\11+\Ord(\chi)\1)\,
\frac{\alpha}{\!\sqrt{3\2}}
\biggl[
\2\wht{\Omega}_+\msp{-1}(\1 \gamma_8+\Lambda_0(\lambda)\1) -\,\wht{\Omega}_+\msp{-1}( \gamma_8)
\biggr]
\,.
}
From \eqref{min:Lambda mapsto good set} we know that the arguments of $ \wht{\Omega}_+ $ in \eqref{Theta as Omega-dif} are in $ \mathbb{G} $. 
Using the uniform boundedness of the derivatives of $ \Omega|_{\mathbb{G}} $, and the bound $ \abs{\Phi(\zeta)} \lesssim 1+ \abs{\zeta}^{1/3} $, we get 
\bels{root difference}{
\absb{\1\wht{\Omega}_+\msp{-1}(\1 \gamma_8+\Lambda_0(\lambda)\1) -\,\wht{\Omega}_+\msp{-1}( \gamma_8)\1} \,\lesssim\, \min\setb{\abs{\lambda},\abs{\lambda}^{1/3}}
\,,
\qquad\abs{\lambda} \leq \lambda_\ast\,.
}
By using \eqref{root difference} in \eqref{min:mu a priori bound} and \eqref{Theta as Omega-dif} we estimate the sizes of both $ \mu(\lambda) $ and $ \Theta(\omega) $,
\bels{min:bound on mu and Theta}{
\abs{\1\mu(\lambda)} 
\,+\,
\absB{\Theta\Bigl(\frac{\1\alpha^3\!}{\Gamma}\lambda\1\Bigr)}
\;\lesssim\; 
\alpha \min\setb{\abs{\lambda},\abs{\lambda}^{1/3}}\,,
\qquad\abs{\lambda} \leq \lambda_\ast
\,.
}

In order to extract the exact leading order terms, we express the difference on the right hand side of \eqref{Theta as Omega-dif} using the mean value theorem
\bels{Omega-dif approximation}{
\wht{\Omega}_+\msp{-1}(\1 \gamma_8+\Lambda_0(\lambda)) -\,\wht{\Omega}_+\msp{-1}( \gamma_8) 
\;&=\;\,
\wht{\Omega}_+\msp{-1}(\Lambda_0(\lambda))-\,\wht{\Omega}_+\msp{-1}(0) 
\\
&\;+\, \gamma_8\2
\frac{\partial}{\partial\zeta}\Bigl[\2\wht{\Omega}_+\msp{-1}(\1\zeta+\Lambda_0(\lambda))-\,\wht{\Omega}_+\msp{-1}(\zeta)\Bigr]_{\zeta\,=\,\gamma}
\,,
}
where $ \gamma\in\mathbb{G}$ is some point on the line segment connecting $ 0 $ and $ \gamma_8 $. 
Using \eqref{min:bound on mu and Theta} and Lemma~\ref{lmm:Stability of roots - pos} on the first term on the right hand side of \eqref{Omega-dif approximation} shows 
\bels{Omega(Lambda0)-Omega(0)}{
\wht{\Omega}_+\msp{-1}(\Lambda_0(\lambda))-\wht{\Omega}_+\msp{-1}(0) 
\;=\; \wht{\Omega}_+\msp{-1}(\lambda)-\wht{\Omega}_+\msp{-1}(0) \,+\, \Ord\Bigl(\2\chi\2\min\setb{\abs{\lambda},\abs{\lambda}^{2/3}}\Bigr) 
\,.
}
From an explicit calculation we get $ \abs{\partial_\zeta\wht{\Omega}_+\msp{-1}(\zeta)} \lesssim 1 $, for $\zeta \in \mathbb{G} $. Thus
\[
\absbb{\frac{\partial}{\partial\zeta}\Bigl[\2\wht{\Omega}_+\msp{-1}(\1\zeta+\Lambda_0(\lambda))-\,\wht{\Omega}_+\msp{-1}(\zeta)\Bigr]_{\zeta\,=\,\gamma}
} 
\;\lesssim\; 
\min\setb{\,\abs{\lambda}\1,\11\1}
\,.
\]
Plugging this and \eqref{Omega(Lambda0)-Omega(0)} into \eqref{Omega-dif approximation} yields
\bels{Omega dif done}{
\wht{\Omega}_+\msp{-1}(\1 \gamma_8+\Lambda_0(\lambda)) -\,\wht{\Omega}_+\msp{-1}( \gamma_8) 
\,&=\;
\wht{\Omega}_+\msp{-1}(\lambda)
-\wht{\Omega}_+\msp{-1}(0)
\,+\, \Ord\Bigl(\,\chi\,\min\setb{\2\abs{\lambda},\abs{\lambda}^{2/3}}\Bigr)
\,.
}
Via \eqref{Theta as Omega-dif} we use this to represent the leading order term in \eqref{v difference in terms of Theta}.
By approximating all the other terms in  \eqref{v difference in terms of Theta} with \eqref{min:bound on mu and Theta} we obtain
\bels{min: accurate v difference}{
\msp{-10}
&v_x(\tau_0+\omega) - v_x(\tau_0) 
\\
&=\,
\abs{m}_x\2f_x\,\alpha\,
\frac{\Im\bigl[\2\wht{\Omega}_+\msp{-1}(\lambda)-\wht{\Omega}_+\msp{-1}(0)\1\bigr]}{\sqrt{3}}
\,+\,
\Ord\biggl(
\bigl(\alpha^2\msp{-2}+\abs{\sigma}\1\bigr)\min\setb{\abs{\lambda},\abs{\lambda}^{2/3}}
\biggr)\,.
}
Using the formulas \eqref{solutions to positive reduced cube} and \eqref{def of Phi}, we identify the universal shape function from \eqref{def of Psi_min},
\[ 
\Psi_{\mrm{min}}(\lambda) \,=\, \frac{\Im\bigl[\2\wht{\Omega}_+\msp{-1}(\lambda)-\wht{\Omega}_+\msp{-1}(0)\1\bigr]}{\sqrt{3}}
\,.
\]
Denoting $ h_x := (\alpha/\avg{v})\1f_x $ and writing $ \lambda $ in terms of $ \omega $ in \eqref{min: accurate v difference} the expansion \eqref{smooth cusp:main result} follows.
\end{Proof}

\section{Expansions around minima where generating density vanishes}

Together with Proposition~\ref{prp:Non-zero local minimum} the next result covers the behavior of $ v|_{\DDe} $ around its minima for sufficiently small $ \eps \sim 1 $.
For each $ \tau_0 \in \partial \supp v $, satisfying $ \sigma(\tau_0) \neq 0 $, we associate the {\bf gap length},
\bels{def of Delta(tau0)}{
\Delta(\tau_0) \,:=\, 
\inf \setb{\2\xi\in (\10\1,2\1\Sigma\2]:\avgb{\1v(\tau_0-\sign \sigma(\tau_0)\1\xi\1)}> 0\2}
\,,
} 
with the convention $ \Delta(\tau_0) := 2\1\Sigma $ in case the infimum does not exist.
We will see below that if $ \tau_0 \in \partial \supp v $, then $ \sigma(\tau_0) \neq 0 $ and $ \sign \sigma(\tau_0) $  is indeed the direction in which  the set $ \supp v $ continues from $ \tau_0 $. 
Because $ \supp v \subset [-\Sigma\1,\Sigma\2] $ the number $ \Delta(\tau_0) $ thus defines the length of the actual gap in $ \supp v $ starting at $ \tau_0 $, with the convention that the gap length is $ 2\1\Sigma $ for the extreme edges. 

Recall the definition \eqref{def of Psi_edge} of the universal edge shape function $ \Psi_{\!\mrm{edge}} : [\10\1,\infty) \to [\10\1,\infty) $.
\begin{proposition}[Vanishing local minimum]
\label{prp:Vanishing local minimum}
Suppose $ \tau_0 \in \supp v $ with $ v(\tau_0) = 0 $. Depending on the value of $\sigma = \sigma(\tau_0) $ either of the following holds:
\begin{itemize}
\titem{i} 
If $ \sigma(\tau_0) \neq 0 $, 
then $ \tau_0 \in \partial \supp v $ and $ \supp v $ continues in the direction $ \sign \sigma $, such that 
for $ (\1\sign \sigma)\,\omega \ge 0 $,
\bels{edge expansion}{
\msp{-10}
v_x(\tau_0+\omega) 
\,=\;\; &h_x\,
\Delta^{\!1/3}\2\Psi_{\!\mrm{edge}}\msp{-1}\biggl(\msp{-2}\frac{\2\abs{\1\omega}\2}{\Delta}\msp{-2}\biggr)
\,+\,
\Ord\biggl(
\min\setbb{\!\frac{\abs{\1\omega}}{\;\Delta^{\!1/3}\msp{-10}}\msp{10},\2\abs{\1\omega}^{2/3}\!}
\biggr)
\,,
}
where $ h_x = h_x(\tau_0) \sim 1 $, and $ \Delta = \Delta(\tau_0)  $ is the length of the gap in $ \supp v $ in the direction $ - \sign \sigma $ from $ \tau_0 $ (cf. \eqref{def of Delta(tau0)}). 
Furthermore, the gap length satisfies
\bels{}{
\Delta(\tau_0)  \,\sim\, \abs{\1\sigma(\tau_0)}^3
\,, 
}
while the shapes in the $x$-direction match at the opposite edges of the gap in the sense that $ h(\tau_1) = h(\tau_0) + \Ord_\BB(\2\Delta^{\!1/3}) $, for $ \tau_1 = \tau_0 -\sign \sigma(\tau_0)\,\Delta$.
\titem{ii} 
If $ \sigma(\tau_0) = 0 $, then $  \dist(\1\tau_0\1,\partial \supp v\1) \sim 1 $, and for some  $ h_x = h_x(\tau_0) \sim 1 $:
\bels{cusp expansion - 1st time}{
v_x(\tau_0+\omega\1) \,=\, 
h_x\,\abs{\1\omega}^{1/3} \!+\, \Ord\bigl(\1\abs{\omega}^{2/3}\bigr)
\,.
}
\end{itemize}
\end{proposition}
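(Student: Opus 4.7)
\emph{Proof plan.} Since $v(\tau_0)=0$ we have $\alpha(\tau_0)=0$, and the perturbed cubic from Lemma~\ref{lmm:Cubic for shape analysis} collapses to the depressed form
\[
\psi\,\Theta(\omega)^3 \,+\, \sigma\,\Theta(\omega)^2 \,+\, \avg{f\,\abs{m}}\,(1+\nu(\omega))\,\omega \;=\; 0,
\qquad |\omega|\leq\delta,
\]
with $\psi,\avg{f|m|}\sim 1$ and $|\nu(\omega)|\lesssim |\omega|^{1/3}$. By \eqref{a priori for Theta(omega)} the unknown $\Theta(\omega)$ is continuous in $\omega$ and $\Theta(0)=0$, while the leading behaviour of $v$ is read off from the imaginary part of $\Theta$ via \eqref{m bounded by Theta and b with Theta2-error}. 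Hence the proposition reduces to a careful analysis of the three complex roots of this cubic together with a selection argument determining which root $\Theta(\omega)$ follows as $\omega$ varies.

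For case~(i), with $\sigma\neq 0$, I rescale by $\Theta=|\sigma|\psi^{-1}\widetilde\Theta$, $\omega=|\sigma|^3\psi^{-2}\avg{f|m|}^{-1}\widetilde\omega$, turning the leading cubic into a universal normal form $\widetilde\Theta^3+(\sign\sigma)\widetilde\Theta^2+\widetilde\omega=0$ with small $\nu$-type perturbations. Applying Cardano's formula gives three analytic branches on the $\widetilde\omega$-plane cut along the segment where the discriminant $4\widetilde\omega+(27\widetilde\omega-4)\widetilde\omega=0$ changes sign; an elementary computation shows that exactly one branch has positive imaginary part, and only on the half-line $\sign(\sigma)\widetilde\omega>\Delta_0$ where $\Delta_0\sim 1$ is the explicit discriminant threshold. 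Translating this back yields $\Delta(\tau_0)\sim|\sigma|^3$ and an expansion of $\Im\Theta$ whose shape coincides, after matching prefactors, with $\Delta^{1/3}\Psi_{\!\mrm{edge}}(|\omega|/\Delta)$. As in the proof of Proposition~\ref{prp:Non-zero local minimum}, I employ the selection principles \textbf{SP-1} (continuity of $\Theta$), \textbf{SP-2} ($\Theta(0)=0$), plus the new principle that $\Im\Theta\geq 0$ since $v\geq 0$, to single out this branch uniquely; the perturbative terms $\nu(\omega),\kappa_i$ are absorbed using the analogue of Lemma~\ref{lmm:Stability of roots - pos} for this normalized cubic.

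Next I address two global issues. First, the fact that $\tau_0\in\partial\supp v$ with $\supp v$ continuing in direction $\sign\sigma$ follows because the selected branch has $\Im\Theta=0$ for $\sign(\sigma)\omega\in[-\Delta,0]$ and $\Im\Theta>0$ for $\sign(\sigma)\omega\in(0,1]$. Second, for the matching of the $h$-profiles at the two edges of a gap of length $\Delta$, I apply the same expansion at the opposite edge $\tau_1=\tau_0-\sign(\sigma)\Delta$; since $\sigma(\tau_1)=-\sigma(\tau_0)+O(\Delta^{1/3})$ by the $1/3$-H\"older continuity from Lemma~\ref{lmm:Cubic for shape analysis}, and since the solution $m$ itself is $1/3$-H\"older, the leading coefficient $h_x(\tau_0)=\abs{m_x(\tau_0)}f_x(\tau_0)\cdot(\text{explicit rational function of }\sigma,\psi)$ differs from $h_x(\tau_1)$ only by $O(\Delta^{1/3})$.

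For case~(ii), $\sigma(\tau_0)=0$, the leading cubic is purely $\psi\Theta^3=-\avg{f|m|}\omega$, whose three roots lie at angles $\pi/3,\pi,-\pi/3$ from the real axis. Selection principles \textbf{SP-1, SP-2} together with $\Im\Theta\geq 0$ single out the branch $\Theta(\omega)=(\avg{f|m|}/\psi)^{1/3}|\omega|^{1/3}e^{i\,\mathrm{sign}(\omega)\pi/3}+o(|\omega|^{1/3})$, producing \eqref{cusp expansion - 1st time}. To upgrade this to $\dist(\tau_0,\partial\supp v)\sim 1$, I combine the resulting lower bound $v_x(\tau_0+\omega)\gtrsim|\omega|^{1/3}$ on $|\omega|\leq\delta$ with the $1/3$-H\"older continuity of $\sigma$: any edge in $\DD_{\eps_\ast}$ must have $|\sigma|\sim 1$ (case~(i)), so the nearest zero of $\avg{v}$ cannot lie within $O(1)$ of $\tau_0$. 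The main obstacle throughout is this global selection/matching step in case~(i): the cubic has three branches, the naive local expansion only fixes $\Theta$ in an asymptotically small neighbourhood, and the $O(1)$-neighbourhood statement requires propagating the root choice across scales $|\omega|$ ranging from $\Delta$ up to order one, which is precisely where the new matching-at-neighbouring-singularities argument highlighted in the introduction must be invoked.
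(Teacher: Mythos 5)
Your outline follows the same route as the paper (normal form of the cubic at a point with $\alpha=0$, Cardano branches, selection principles, root stability, Hölder matching at the two edges), but two steps that you treat as routine are in fact the core of the argument and are missing. First, the branch selection: continuity, $\Theta(0)=0$ and $\Im\,\Theta\ge 0$ do \emph{not} single out the correct root. In the cusp case the three roots of $\psi\,\Theta^3=-\avg{f\abs{m}}\,\omega$ for $\omega>0$ have imaginary parts proportional to $+1,\,0,\,-1$, so the all-real branch also passes your three criteria, and the error term $\Ord(\abs{\omega}^{2/3})$ in the relation between $v$ and $\Im\,\Theta$ is too large to contradict $\tau_0\in\supp v$ directly. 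The same ambiguity occurs inside and just beyond the gap in case (i), where all three branches of the reduced cubic are (nearly) real. The paper excludes the real branch with the additional principle {\bf SP-4} of Lemma~\ref{lmm:Selection principles}, whose proof (Lemma~\ref{lmm:Growth condition}) is not a property of the cubic at all: it uses the Stieltjes-transform monotonicity of $\Re\,m$ on intervals where $v$ vanishes, together with the square-root growth bootstrap built from Lemma~\ref{lmm:Monotonicity} and Proposition~\ref{prp:Non-zero local minimum}. Without an ingredient of this kind your selection argument fails, and with it also your claim that the chosen branch has $\Im\,\Theta=0$ exactly on $[-\Delta,0]$ (the cubic with the perturbation $\nu$, only bounded by \eqref{a priori for nu(omega)}, can never give exact vanishing, only $\Im\,\Theta=\Ord(\abs{\sigma}^{1/2})$ there).

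Second, the gap length. The expansion \eqref{edge expansion} is stated with the \emph{actual} gap length $\Delta(\tau_0)$ of \eqref{def of Delta(tau0)} inside $\Psi_{\!\mrm{edge}}$, and in the square-root regime $\abs{\omega}\lesssim\Delta$ the error term only tolerates a \emph{relative} error $\Ord(\Delta^{1/3})=\Ord(\abs{\sigma})$ in $\Delta$; reading the gap off the discriminant of the unperturbed normal form, as you propose, only yields $\Delta\sim\abs{\sigma}^3$ up to constants, which is not enough. One needs $\Delta=\bigl(1+\Ord(\sigma)\bigr)\1\wht{\Delta}$ as in Lemma~\ref{lmm:Size of small gap}: the upper bound comes from positivity of $\Im\,\Omega$ just beyond $\lambda_3$, but the lower bound requires applying the edge expansion (Lemma~\ref{lmm:Edge shape}) at the \emph{opposite} base point $\tau_0+\Delta$ to get square-root growth of $\Im\,\Omega$ there — precisely the matching-across-singularities argument you acknowledge in your last sentence but never carry out. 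Related to this, continuity alone cannot propagate the root choice through the window $\lambda\in[\lambda_2,\lambda_3]$ around the branch-collision point, where the perturbed constant term is within $\Ord(\abs{\sigma})$ of the critical value; the paper bridges it with the Hölder/connectedness argument in the proof of Lemma~\ref{lmm:Choice of roots}, and after that replaces $\wht{\Delta}$ by $\Delta$ via \eqref{Psi_edge: replace Delta by ell}. Finally, a minor point: your justification of $\dist(\tau_0,\partial\supp v)\sim 1$ in case (ii) via ``any edge has $\abs{\sigma}\sim 1$'' is false (edges bordering short gaps have small $\sigma$); the correct and simpler route is that the lower bound $v\gtrsim\abs{\omega}^{1/3}$ from \eqref{cusp expansion - 1st time} already forces a full neighborhood of $\tau_0$ to lie in $\supp v$ — but that expansion itself rests on the selection step criticized above.
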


From the explicit formula \eqref{def of Psi_edge} one sees that the leading order term in \eqref{edge expansion} satisfies
\bels{scaling of scaled Psi_edge}{
\Delta^{\!1/3}\,\Psi_{\!\mrm{edge}}\msp{-2}\biggl(\msp{-2}\frac{\2\omega\2}{\Delta}\msp{-2}\biggr)
\;\sim\;
\begin{cases}
\displaystyle
\frac{\2\omega^{1/2}\!}{\Delta^{\!1/6}\msp{-5}}\quad &\text{when }0\leq \omega \lesssim \Delta\,;
\\
\,\omega^{1/3}
&\text{when }\omega \gtrsim \Delta\,.
\end{cases}
}
In particular, if an edge $ \tau_0 $ is separated by a gap of length $ \Delta(\tau_0) \sim 1 $ from the opposite edge of the gap, then $ v $ grows like a square root.

Proposition~\ref{prp:Vanishing local minimum} is proven at the end of Subsection~\ref{ssec:Two nearby edges} by combining various auxiliary results which we prove in the following two sections.
What is common with these intermediate results is that the underlying cubic \eqref{cubic for d=omega} is always of the form
\bels{cubic when v=0}{
\psi\2\Theta(\omega)^3 + \sigma\2\Theta(\omega)^2 + (1+\nu(\omega))\1
\avg{\1\abs{m}\1f}\,
\omega \,=\, 0
\,,\qquad
\psi+\abs{\sigma}^2 \sim 1
\,,
}
since $ \alpha(\tau_0) = v(\tau_0) = 0 $ at the base point  $ \tau_0 $. In order to analyze \eqref{cubic when v=0} we bring it to a {\bf normal form} by an affine transformation. 
This corresponds to expressing the variables $ \omega $ and $ \Theta $ in terms of {\bf normal variables} $ \Omega $ and $ \lambda $, such that
\bels{Omega as affine transformation of Theta}{
\Omega(\lambda) \,=\,
\kappa\,\Theta(\1\Gamma\lambda) + \Omega_0
\,,
}
with some $ \lambda$-independent parameters $ \kappa = \kappa(\tau_0),\Gamma=\Gamma(\tau_0) > 0 $, and $ \Omega_0  \in \C $. 
These parameters will be defined on a case by case basis. 
We remark, that in the proof of Proposition~\ref{prp:Non-zero local minimum} the coordinate transformations \eqref{min: normal coordinates} were of the form \eqref{Omega as affine transformation of Theta}.

In the following, the variable  $ \Omega(\lambda) $ will be identified with roots of various cubic polynomials that depend on the type of base points $ \tau_0 $, similarly to \eqref{min:Omega in terms of A and wht-Omega_a} above.
In order to choose the correct roots we use the following {\bf selection principles}.

\NLemma{Selection principles}{
If $ v(\tau_0) = 0 $ at the base point $ \tau_0 \in \supp v $ of the expansion \eqref{Omega as affine transformation of Theta}, then $ \Omega(\lambda) = \Omega(\lambda;\tau_0) $ defined in \eqref{Omega as affine transformation of Theta} has the properties:
\begin{enumerate}
\item[{\bf SP-1}] $ \lambda \mapsto \Omega(\lambda) $ is continuous;
\item[{\bf SP-2}] $ \Omega(0) = \Omega_0 $;
\item[{\bf SP-3}]$ \Im\bigl[\1\Omega(\lambda)-\Omega(0)\bigr] \ge 0 $, $\forall\1\lambda \in \R$;
\item[{\bf SP-4}] If the imaginary part of $ \Omega $ grows slower than a  square root in a direction $ \theta \in \sett{\pm 1\1}$, 
\[ 
\lim_{\xi \1\to\1 0_+\!} 
\xi^{-1/2}\2\Im\,\Omega(\1\theta\1\xi\1) = 0
\,, 
\]
then $ \Omega|_I $ is real and non-decreasing on an interval $ I := \sett{\1\theta\1\xi: 0<\xi < \Delta\2} $, with some $ \Delta > 0 $.
\end{enumerate}
}
For the proof, by combining \eqref{def of Theta(omega) at tau}, \eqref{b expanded} and \eqref{Omega as affine transformation of Theta} we see that
\bels{abstract Omega from m}{
\Omega(\lambda) \,=\, \kappa\,\avgB{\frac{f}{\abs{m}}\2,\2 m(\tau_0+\Gamma\1\lambda)-m(\tau_0)}+\2\Omega_0
\,,
}
where $ \kappa,\Gamma > 0 $ and $ \Omega_0 \in \C $ are from \eqref{Omega as affine transformation of Theta}.
Thus the first three selection principles follow trivially from the corresponding properties $ \Im\, m(\tau_0) = 0 $ and $ \Im\,m(\tau) \ge 0 $ of $ m $.
The property {\bf SP-4} follows from \eqref{abstract Omega from m} and the next result. 

\NLemma{Growth condition}{
Suppose $ v(\tau_0)=0 $ and that $ \avg{v} $ grows slower than any square-root in a direction $ \theta \in \sett{\pm} $, i.e.,
\bels{less than sqrt-growth in direction theta}{
\liminf_{\xi\1\to\1 0_+\!} \frac{\avg{v(\tau_0+\theta\1\xi)}}{\xi^{1/2}\!} \,=\, 0
\,.
}
Then $ \avg{v}$ actually vanishes, $ \Im\,\avg{m}|_I = 0 $, while $ \Re\,\avg{m} $ is non-decreasing on some interval $ I = \sett{\1 \tau_0+ \theta\2\xi:0\leq \xi\leq \Delta\1}$, for some $ \Delta > 0 $.

If the $ \liminf $ in \eqref{less than sqrt-growth in direction theta} is non-zero, then either $ \theta = \sign \sigma(\tau_0) $ or $ \sigma(\tau_0) = 0 $.
}

\begin{Proof}
We will prove below that if $ v(\tau_0) = 0 $, and 
\bels{tau0 is edge}{
\inf \setb{\1\xi>0:\avg{v(\tau_0+\theta\1\xi)}> 0} \;=\, 0
}
for some direction $ \theta \in \{\pm1\}$, then 
\bels{square root growth}{
\liminf_{\xi\1\to\1 0_+\!} \frac{\avg{v(\tau_0+\theta\1\xi)}}{\xi^{1/2}\!} \,>\,0
\,. 
}

Assuming this implication, the lemma follows easily: If \eqref{less than sqrt-growth in direction theta} holds, then \eqref{tau0 is edge} is not true, i.e., there is a non-trivial interval $ I = \sett{\theta\1\xi:0\leq \xi\leq \Delta\1}$, $ \Delta > 0 $, such that $ v|_I = 0 $. As the negative of a Hilbert-transform of $ v_x $ (cf. \eqref{m as stieltjes transform}), the function $ \tau \mapsto \Re\,m_x(\tau) $, is non-decreasing on  $ I $. This proves the first part of the lemma.  
We will now prove that \eqref{tau0 is edge} implies \eqref{square root growth}. The key idea is to use Lemma~\ref{lmm:Monotonicity} to prove that $ \avg{v} $ grows at least like a square root. However, first we use Proposition~\ref{prp:Non-zero local minimum} to argue that the indicator function on the right hand side of \eqref{minimum growth for v} is non-zero in a non-trivial neighborhood of $ \tau_0 $. 
To this end, assume $ 0 < \avg{v(\tau)} \leq \eps $ and $ \Pi(\tau) < \Pi_\ast $. If $ \eps,\delta > 0 $ are sufficiently small, then Proposition~\ref{prp:Non-zero local minimum} can be applied with $ \tau $ as the base point.
In particular, \eqref{smooth cusp:main result} and \eqref{min-expansion:scaling} imply 
\bels{min expansion scaling}{
\avg{v(\tau+\omega)} \,\sim\, \avg{v(\tau)} + \abs{\omega}^{1/3} 
\,>\,0
\,,
\qquad \abs{\omega}\leq \delta
\,.
} 
Suppose $ \tau_0 $ satisfies \eqref{tau0 is edge}. Since $ v(\tau_0) = 0 $ the lower bound in \eqref{min expansion scaling}, applied to $ \omega = \tau_0-\tau $, implies $ \abs{\tau-\tau_0} > \delta $.   
As $ \tau $ was arbitrary we conclude  $ \Pi(\tau) \ge \Pi_\ast $ for every $ \tau $ in the set
\[
I := \setb{\tau \in \R: \abs{\tau-\tau_0}\leq \delta\,,\;  0 < \avg{v(\tau)}\leq \eps}\,.
\]

Applying Lemma~\ref{lmm:Monotonicity} on $ I $, recalling the upper bound on $ \abs{\partial_z m} $ from Corollary~\ref{crl:Bound on derivative}, yields
\bels{derivative on I}{ 
\avg{v}^{-1} \lesssim\,
(\sign \sigma)\,\partial_\tau \avg{v} \,\lesssim\, \avg{v}^{-2}\,,\quad\text{on}\quad I
\,.
}
Since $ v $ is analytic when non-zero, and  $ \dist(\tau_0,I) = 0 $ by \eqref{tau0 is edge}, we conclude that $ I $ equals the interval with end points $ \tau_0 $ and $ \tau_1 := \tau_0 + \theta \delta $. Here we set $ \delta \lesssim \eps^3 $ so small that the $ 1/3$-H\"older continuity of $  m $ guarantees $ \avg{v} \leq \eps $ on $ I $.
Moreover, $ \sign \sigma(\tau) $ must equal the constant $ \theta $ for every $ \tau \in I $: If $ \sigma $ changed its sign at some point $ \tau_\ast \in I $ this would violate $ \Pi(\tau_\ast) \ge \Pi_\ast $ as $ \avg{v} $ is a continuous function.

Integrating \eqref{derivative on I} from $ \tau_0$ to $ \tau_1 $ we see that $ \avg{v(\tau_0+\theta\1\xi)}^2 \gtrsim \xi $ for any $ \xi \leq \abs{\tau_1-\tau_0} $. This proves the limit \eqref{square root growth}, and hence the first part of the lemma. 
The second part of the lemma follows from \eqref{derivative on I}.
\end{Proof}

\subsection{Simple edge and sharp cusp}

When $ \abs{\sigma} > 0 $ and $ \abs{\omega} $ is sufficiently small compared to $\abs{\sigma} $ the cubic term $ \psi\,\Theta(\omega)^3 $ in \eqref{cubic when v=0} can be ignored. In this regime the following simple expansion holds showing the square root behavior of $ v$ near an edge of its support.
 
\NLemma{Simple edge}{
If $ \tau_0 \in \supp v $ satisfies $ v(\tau_0) = 0 $ and $ \sigma = \sigma(\tau_0) \neq 0 $, then 
\bels{simple edge: expansion}{
v_x(\tau_0+\omega)
\,=\, 
\begin{cases}
\displaystyle
h'_x\,
\absB{\frac{\omega}{\,\sigma}}^{1/2} \msp{-10}+ 
\Ord\Bigl(\frac{\omega}{\sigma^2\msp{-2}}\Bigr) &\text{if }\quad 0 \leq (\sign \sigma)\2 \omega \leq c_\ast\abs{\sigma}^3\,;
\\
\displaystyle
\;0 &\text{if } -\1c_\ast\abs{\sigma}^3 \leq (\sign \sigma)\2 \omega \leq 0\,;
\end{cases}
}
for some sufficiently small $ c_\ast \sim 1 $. Here $ h' = h'(\tau_0) \in \BB $ satisfies $ h'_x \sim 1 $.
}

This result already shows that $ \supp v $ continues in the direction $ \sign \sigma(\tau_0) $ and in the opposite direction there is a gap of length $\Delta(\tau_0) \gtrsim \abs{\sigma(\tau_0)}^3 $ in the set $ \supp v $. We will see later (cf. Lemma~\ref{lmm:Size of small gap}) that for small $ \abs{\sigma(\tau_0)} $ there is an asymptotically sharp correspondence between $ \Delta(\tau_0) $ and $\abs{\sigma(\tau_0)}^3 $, as $ \Delta(\tau_0)  $ becomes very small.  
\begin{Proof}
Treating the cubic term $ \psi\2\Theta^3 $ in \eqref{cubic when v=0} as a perturbation, \eqref{cubic when v=0} takes the form
\bels{normal form for regular edge}{
\Omega(\lambda)^2 + 
\Lambda(\lambda)
\,=\, 0
\,,  
}
in the normal coordinates,
\bels{simple edge: normal coordinates}{
\lambda \,&:=\, 
\frac{\omega}{\sigma}
\\
\Omega(\lambda) \,&:=\, 
\frac{\Theta(\sigma\1\lambda\1)}{\msp{-6}\sqrt{\!\avg{\1\abs{m}\1f\1}\2}}
\,,
}
where $ \Lambda :\R \to \C $ is a multiplicative perturbation of $ \lambda $:
\bels{}{
\Lambda(\lambda) \,&:=\, (\11+\mu(\lambda))\2 \lambda 
\\
1 \,+\, \mu(\lambda)
\,&:=\, 
\frac{1\,+\,\nu(\sigma\1\lambda\1)}{1+(\psi/\sigma)\2\Theta(\sigma\1\lambda\1)}
\,.
} 
Let $ \lambda_\ast = c_\ast\abs{\sigma}^2 $, with some $ c_\ast \sim 1 $, so that the constraint $ \abs{\omega} \leq c_\ast \abs{\sigma}^3$ in \eqref{simple edge: expansion} translates into $ \abs{\lambda} \leq \lambda_\ast $.

Using the a priori bounds \eqref{a priori for Theta(omega)} and \eqref{a priori for nu(omega)} for $ \Theta $ and $ \nu $ yields
\bels{simple edge: a priori bound for mu}{
\abs{\1\mu(\lambda)} 
\,\lesssim\,
\Bigl(\21+\frac{\psi}{\abs{\sigma}}\Bigr)
\absb{\1\Theta(\sigma\1\lambda\1)} + \abs{\sigma}\2\abs{\lambda}
\,\lesssim\, 
c_\ast^{1/3}
\,.
} 
Hence, for sufficiently small $ c_\ast \sim 1 $ we get $ \abs{\mu(\lambda)} < 1 $, provided $ \abs{\lambda} \leq \lambda_\ast $.

Let us define two root functions $ \wht{\Omega}_a : \C \to \C $, $ a =\pm$, such that 
\bels{regular edge:normal form}{
\wht{\Omega}_a(\zeta)^2 +\zeta \,=\, 0\,,
}
by setting
\bels{reg edge: defs of wht-Omega_a}{
\wht{\Omega}_\pm(\zeta) \,:=\,
\,\pm
\begin{cases}
\cI\2\zeta^{\11/2} &\text{if } \Re\,\zeta \ge 0\,;
\\
-(-\zeta)^{\11/2} &\text{if } \Re\,\zeta < 0\,.
\end{cases}
}
Note that we use the same symbol $ \wht{\Omega}_a $ for the roots as in \eqref{solutions to positive reduced cube} for different functions. In each expansion $ \wht{\Omega}_a $ will denote the root function of the appropriate normal form of the cubic.

Comparing \eqref{normal form for regular edge} and \eqref{regular edge:normal form} we see that there exists a \emph{labelling function} $ A : \R \to \sett{\pm} $, such that 
\[
\Omega(\lambda) \1=\2 \wht{\Omega}_{A(\lambda)}(\Lambda(\lambda)) 
\,,
\]
for every $ \lambda \in \R $.
The function $ A|_{[-\lambda_\ast,\2\lambda_\ast]} $ will now be determined using the selection principles {\bf SP-1} and {\bf SP-3}.

The restrictions of the root functions onto the half spaces $ \Re\,\zeta > 0 $ and  $ \Re\,\zeta < 0 $ are continuous (analytic) and distinct, i.e., $ \wht{\Omega}_+(\zeta) \neq \wht{\Omega}_-(\zeta) $ for $ \zeta \neq 0 $. 
Since $ \Omega : \R \to \C $ is also continuous by {\bf SP-1},  $ A(\lambda) $ may change its value at some point $ \lambda = \lambda_0 $ only if $ \Lambda(\lambda_0) = 0 $.
Since $ \abs{\mu(\lambda)} < 1 $ for $ \abs{\lambda} \leq \lambda_\ast $ we conclude that $ \Lambda(\lambda) = 0 $ only for $ \lambda = 0 $. Thus, there exist two labels $ a_+,a_- \in \sett{\pm} $, such that
\bels{reg edge: A constant}{
A(\lambda) \,=\, a_\pm\qquad
\forall\,\lambda \in  \pm\2(\20,\lambda_\ast\1]
\,.
}
 
Let us first consider the case $ \lambda \ge 0 $, and show that $ a_+ = + $. Indeed, the choice $ a_+ = - $ is ruled out, since
\bels{reg edge:imaginary part}{
\Im\,\wht{\Omega}_-(\Lambda(\lambda)) 
\,&=\, 
\Im\Bigl[\,-\2\cI\2(\11+\mu(\lambda)\1)^{1/2}\2\lambda^{1/2}\Bigr]
\,=\,
-\1\lambda^{1/2} + \Ord\Bigl(\mu(\lambda)\2\lambda^{1/2}\Bigr)
}
is negative for sufficiently small $ c_\ast \sim 1 $ in \eqref{simple edge: a priori bound for mu}, and this violates the selection principle {\bf SP-3}. 

By definitions, 
\[
\abs{\Theta(\1\sigma\1\lambda)} 
\,\sim\, 
\abs{\2\wht{\Omega}_+(\lambda)} 
\,\lesssim\, 
\abs{\1\Lambda(\lambda)}^{1/2} \sim \abs{\lambda}^{1/2}
\,.
\]
Using $ \psi/\abs{\sigma} \lesssim \abs{\sigma}^{-1} $, with $ \abs{\sigma} \gtrsim 1 $, we write \eqref{simple edge: a priori bound for mu} in the form $ \abs{\mu(\lambda)} \lesssim  \abs{\sigma}^{-1}\abs{\lambda}^{1/2} $. Similarly, as \eqref{reg edge:imaginary part} we obtain
\[
\Omega(\lambda) 
\,=\, 
\wht{\Omega}_+(\lambda) + \Ord\Bigl(\mu(\lambda)\2\lambda^{1/2}\Bigr)
\,=\,
\cI\2\lambda^{1/2} +\, \Ord\Bigl(\frac{\lambda}{\sigma}\Bigr)
\,,
\qquad \lambda \in [\20\1, \lambda_\ast]
\,.
\]
Inverting \eqref{simple edge: normal coordinates} we obtain 
\bels{simple expansion for Im Theta}{
\Im\,\Theta(\omega) \,=\, \avg{\1\abs{m}\2f\1}^{\11/2}\absB{\frac{\omega}{\,\sigma}}^{1/2} \!+\,\Ord\Bigl(\frac{\omega}{\sigma^2\msp{-2}}\Bigr)\,,\qquad \sign \sigma = \sign \omega\,.
}
Taking the imaginary part of \eqref{m bounded by Theta and b with Theta2-error} and using \eqref{simple expansion for Im Theta} yields the first line of \eqref{simple edge: expansion}, with $ h'_x = \abs{m_x}\1f_x/\avg{\abs{m}\1f\1}^{1/2} $. Since $ \abs{m_x},f_x \sim 1 $, we also have $ h'_x \sim 1 $.

In order to prove the second line of \eqref{simple edge: expansion} we show that the gap length (cf. \eqref{def of Delta(tau0)}) satisfies
\bels{reg edge: gap length lower bound}{
\Delta(\tau_0)\,\gtrsim\, \abs{\1\sigma(\tau_0)}^3\,.
}
At the opposite edge of the gap $ \tau_1 := \tau_0-\sign \sigma(\tau_0)\2\Delta(\tau_0) $, the density $ \avg{v} $ increases, by definition, in the opposite direction than at $ \tau_0 $. 
By Lemma~\ref{lmm:Growth condition} the average generating density $\avg{v}$ increases at least like a square root function and either $ \sign \sigma(\tau_1) = - \sign \sigma(\tau_0) $ or $ \sigma(\tau_1) = 0 $.
Since $ \sigma $ is $1/3$-H\"older continuous, $ \sigma $ can not change arbitrarily fast. Namely, we have $ \Delta(\tau_0) \gtrsim \abs{\1\sigma(\tau_0)}^3 $, and this proves \eqref{reg edge: gap length lower bound}.
\end{Proof}

Although not necessary for the proof of the present lemma, it can be shown that  $ a_- := - $  using the selection principle {\bf SP-4}. The same reasoning will be used in the proofs of the next two lemmas (cf. \eqref{cusp:imaginary part} and discussion after that).

Next we consider the marginal case where the term $ \sigma\,\Theta(\omega)^2 $ is absent in the cubic \eqref{cubic when v=0}. In this case $ \avg{v} $ has a cubic root cusp shape around the base point. 

\NLemma{Vanishing quadratic term}{
If $\tau_0 \in \supp v $ is such that $ v(\tau_0) = \sigma(\tau_0) = 0 $, then
\bels{Vanishing quadratic term:expansion}{
v_x(\tau_0+\omega\1) \,=\, 
h_x\2\abs{\1\omega}^{1/3} \!+ \Ord\Bigl(\1\abs{\omega}^{2/3}\Bigr)
\,,
}
where $ h = h(\tau_0) \in \BB $ satisfies $ h_x \sim 1 $.
}

Contrasting this with Lemma~\ref{lmm:Simple edge} shows that $ \sigma(\tau_0) \neq 0 $ for $ \tau_0 \in \partial \supp v $. In particular, the gap length $ \Delta(\tau_0) $ is always well defined for  $ \tau_0 \in \partial \supp v $ (cf. \eqref{def of Delta(tau0)}).

\begin{Proof}
First we note that it suffices to prove \eqref{Vanishing quadratic term:expansion} only for $ \abs{\omega} \leq \delta $, where $ \delta \sim 1$ can be chosen to be sufficiently small.
When $ \abs{\omega} > \delta $ the last term may dominate the first term on the right hand side of \eqref{Vanishing quadratic term:expansion}, and thus we have nothing prove.
Since $ \sigma = 0 $, the quadratic term is missing in \eqref{cubic when v=0}, and thus the  cubic reduces to 
\bels{cusp: normal form}{
\Omega(\omega)^3 + \Lambda(\omega) = 0\,, 
}
using the normal coordinates 
\bels{normal coordinates:cusp}{
\lambda \,&:=\, \omega
\\
\Omega(\lambda) &:= \Bigl(\frac{\psi}{\avg{\1\abs{m}f}}\Bigr)^{\!1/3} \Theta(\lambda)
\,.
}
Here, $ \Lambda : \R \to \C $ is a perturbation of the identity function:
\bels{}{ 
\Lambda(\lambda) &:= (1+\nu(\lambda)\1)\2\lambda
\,.
}
Note that $ \psi \sim 1 $ because of \eqref{stability of shape cubic}. 

Let us define three root functions $ \wht{\Omega}_a : \C \to \C $, $ a=0,\pm $, satisfying 
\[
\wht{\Omega}_a(\zeta)^3 +\zeta = 0\,,
\]
by the explicit formulas
\bels{}{
\wht{\Omega}_0(\zeta) \,&:=\, -\, p_3(\zeta)
\\
\wht{\Omega}_\pm(\zeta) \,&:=\, \frac{-\11\pm \cI\2\sqrt{3}}{2}\,p_3(\zeta)
\,,
}
where $ p_3 : \C \to \C $ is a (non-standard) branch of the complex cubic root,
\bels{}{
p_3(\zeta)\,&:=\, 
\begin{cases}
\;\zeta^{\11/3}\quad&\text{when }\Re\,\zeta > 0\,;
\\
-\1(-\1\zeta\1)^{1/3} &\text{when }\Re\,\zeta < 0\,. 
\end{cases}
}

From \eqref{cusp: normal form} we see that there exists a labelling $ A : \R \to \sett{0,\pm} $, such that 
\bels{cusp: Omega in terms of wht-Omega and A}{
\Omega(\lambda) \,=\, \wht{\Omega}_{A(\omega)}(\Lambda(\lambda)) 
\,.
}

Similarly as before, we conclude that $ \Omega $ and the roots are continuous (cf. {\bf SP-1}) on $ \R $ and on the half-spaces $ \sett{\zeta \in\C:\pm\Re\2\zeta>0}$, respectively.
This implies that $ A(\lambda_0-0) \neq A(\lambda_0+0) $ if and only if $ \Lambda(\lambda_0)= 0 $. 
From the a priori estimate $ \abs{\nu(\lambda)} \lesssim \abs{\lambda}^{1/3}$ (cf. \eqref{a priori for nu(omega)}) we see that there exists $ \delta \sim 1 $ such that $ \Lambda(\lambda) \neq 0 $, for $ 0 < \abs{\lambda} \leq \delta $. Hence, we conclude 
\bels{}{
A(\lambda) = a_\pm\,,\qquad\forall\,\lambda \in \pm\1(\10\1,\delta\2]
\,.
}

The choices $ a_+ = - $ and $ a_- = + $ are excluded by the selection principle {\bf SP-3}:
Similarly as in \eqref{reg edge:imaginary part}, we get
\bels{cusp:imaginary part}{
\pm\,(\sign \lambda)\,\Im\,\wht{\Omega}_\pm(\Lambda(\lambda)) 
\,&=\,
\frac{\msp{-6}\sqrt{3\2}\2}{2}\abs{\lambda}^{1/3} + \Ord\Bigl(\mu(\lambda)\2\lambda^{1/3}\Bigr)
\,\ge\,
\abs{\lambda}^{1/3} - C\1\abs{\lambda}^{2/3}
\,.
}
From this it follows that $ \Im\,\wht{\Omega}_-(\Lambda(\lambda)) < 0 $ for small $\abs{\lambda} > 0 $. Thus {\bf SP-3} implies $ a_\pm \neq \mp $.

We will now exclude the choices $ a_\pm = 0 $. Similarly as \eqref{cusp:imaginary part} we use \eqref{a priori for nu(omega)} to get 
\bels{cusp: ready for SP-4}{
\Re\,\wht{\Omega}_0(\Lambda(\lambda)) \,&\leq\, - \lambda^{1/3} + C\lambda^{2/3}
\\ 
\Im\,\wht{\Omega}_0(\Lambda(\lambda)) 
\,&\lesssim\, 
\abs{\1\nu(\lambda)}\2\abs{\lambda}^{1/3} \lesssim \abs{\lambda}^{2/3} 
\,,
}
for $ \lambda \ge 0 $. 
If $ a_+ = 0 $, then these two bounds together would violate {\bf SP-4}.
The choice $ a_- =0 $ is excluded similarly. Thus we are left with the unique choices $ a_+ =+$ and $ a_- = - $. 

The expansion \eqref{Vanishing quadratic term:expansion} is obtained similarly as in the proof of Lemma~\ref{lmm:Simple edge}. 
First, we use \eqref{normal coordinates:cusp} and  \eqref{cusp:imaginary part} to solve for $ \Im\, \Theta(\omega) $. Then we take the imaginary part of \eqref{m bounded by Theta and f with X13-error} to express $ v_x(\tau_0+\omega) $ in terms of $ \Im\,\Theta(\omega) $. We identify 
\[
h_x \,:=\, \frac{\msp{-6}\sqrt{3\2}\2}{2}\Bigl(\frac{\avg{\1\abs{m}\1f\1}}{\psi}\Bigr)^{\!1/3}\2\abs{m_x}f_x\,, 
\]
in the expansion \eqref{Vanishing quadratic term:expansion}. From $ \psi,\abs{m},f \sim 1 $ it follows that $h_x \sim 1 $.
\end{Proof}

\subsectionl{Two nearby edges}

In this section we consider the generic case of the cubic \eqref{cubic when v=0} where neither the cubic nor the quadratic term can be neglected. 
First, we remark that Lemma~\ref{lmm:Simple edge} becomes ineffective as $ \abs{\sigma} $ approaches zero since the cubic term of 
\bels{generic edge cubic}{
\psi\2\Theta(\omega)^3 + \sigma\2\Theta(\omega)^2 + (1+\nu(\omega))\1
\avg{\abs{m}\1f\1}\2
\omega \,=\, 0
\,,\qquad
\psi,\sigma \neq 0
\,,
}
was treated as a perturbation of a quadratic equation along with $ \nu(\omega) $ in the proof. 
Thus we need to consider the case where $ \abs{\sigma} $ is small.
Indeed, we will assume that $ \abs{\sigma} \leq \sigma_\ast $, where  $\sigma_\ast \sim 1 $ is a threshold parameter that will be adjusted so that the analysis of the cubic \eqref{generic edge cubic} simplifies sufficiently.
In particular, we will choose $ \sigma_\ast $ 
so small that the number $ \wht{\Delta} = \wht{\Delta}(\tau_0) >  0 $ defined by 
\bels{def of wht-Delta}{
\wht{\Delta} 
\,:=\, 
\frac{4}{27\1\avg{\1\abs{m}\1f\1}}\,
\frac{\abs{\1\sigma}^3}{\psi^{\12}} 
\,,
}
satisfies
\bels{Delta upper bound in sigma}{
\wht{\Delta} \,\sim\, \abs{\sigma}^3
\,,\qquad
\text{provided}\quad\abs{\sigma} \leq \sigma_\ast
\,.
}
Note that the existence of $ \sigma_\ast \sim 1 $ such that \eqref{Delta upper bound in sigma} holds follows from  $ f_x,\abs{m_x} \sim 1 $ and the stability of the cubic \eqref{stability of shape cubic}. Indeed,  \eqref{stability of shape cubic} shows that $ \psi \sim 1 $ when $ \abs{\sigma} \leq \sigma_\ast $ for some small enough $ \sigma_\ast \sim 1 $.
We will see below (cf. Lemma~\ref{lmm:Size of small gap}) that $ \wht{\Delta}(\tau_0) $ approximates the gap length $ \Delta(\tau_0) $ when the latter is small.

Introducing the normal coordinates,
\bels{generic edge: def of lambda and Omega}{
\lambda \2&:=\, 
2\2\frac{\omega}{\wht{\Delta}}
\\
\Omega(\lambda) \;&:=\; 
3\,\frac{\psi}{\2\abs{\sigma}}\2
\Theta\Bigl(\frac{\wht{\Delta}}{2}\2\lambda\Bigr)
\,+\, \sign \sigma\,,
}
the generic cubic \eqref{generic edge cubic} reduces to
\bels{negative reduced cubic}{
\Omega(\lambda)^{\13} - \13\,\Omega(\lambda) \2+\1 2\2\Lambda(\lambda) \,=\, 0
\,,
}
with the constant term
\begin{align}
\label{small gap: def of Lambda}
\Lambda(\lambda) \,&:=\; 
\sign \sigma \,+\,(1+\mu(\lambda))\1\lambda\,,
\\
\label{small gap: def of mu}
\mu(\lambda)
\,&:=\,\nu\Bigl(\frac{\wht{\Delta}}{2}\lambda\Bigr) 
\,.
\end{align}
Here, $ \Lambda(\lambda) $ is considered as a perturbation of $ \sign \sigma + \lambda $. Indeed, from \eqref{a priori for nu(omega)} and \eqref{small gap: def of mu} we see that $ \abs{\mu(\lambda)} \lesssim \delta^{\11/3} $. 

The left hand side of equation \eqref{negative reduced cubic} is a cubic polynomial of $ \Omega(\lambda) $ with a constant term $ \Lambda(\lambda) $. 
It is very similar to \eqref{min: normal form} but with an opposite sign in the linear term. Cardano's formula in this case read as follows.

\NLemma{Roots of reduced cubic with negative linear coefficient}{
For any $ \zeta \in \C $, 
\bels{reduced negative cubic factorized}{
\Omega^{\13}-\,3\2\Omega \,+\, 2\2\zeta \;=\; (\1\Omega-\wht{\Omega}_+(\zeta))(\1\Omega-\wht{\Omega}_0(\zeta))(\1\Omega-\wht{\Omega}_-(\zeta))
\,,
}
where the three root functions $ \wht{\Omega}_\varpi : \C \to \C $, $ \varpi =0,\pm$, have the form
\begin{subequations}
\label{roots for small gap}
\bels{def of root functions for small gap}{
\wht{\Omega}_0 \,&:= -\1(\1\Phi_{+}+\1\Phi_{-})
\\
\wht{\Omega}_\pm \,&:=\, 
\frac{1}{2}
(\1\Phi_{+}+\1\Phi_{-})
\,\pm\,
\cI\2\frac{\!\sqrt{3\2}}{2\!}
(\1\Phi_{+}-\1\Phi_{-})
\,.
}
The auxiliary functions $ \Phi_{\pm} : \C \to \C $, are defined by (recall Definition~\ref{def:Complex powers})
\bels{def of Phi_pm}{
\Phi_{\pm}(\zeta) \,:=\, 
\begin{cases}
\bigl(\1\zeta \pm \sqrt{\zeta^{\12}-1\2}\,\bigr)^{\!1/3}\quad&\text{if}\quad\Re\,\zeta \ge 1\,,
\\
\bigl(\1\zeta \pm \cI\1\sqrt{1-\zeta^{\12}}\,\bigr)^{\!1/3}
&\text{if}\quad\!\abs{\1\Re\, \zeta \1} < 1\,,
\\
-\bigl(-\1\zeta \mp \sqrt{\zeta^{\12}-1\2}\,\bigr)^{\!1/3} 
&\text{if}\quad\Re\, \zeta \leq -1\,.	
\end{cases}
} 
\end{subequations}

On the simply connected complex domains
\bels{defs of C_a}{
\wht{\C}_0   \,:=\, \setb{\zeta \in \C : \abs{\Re\,\zeta} < 1}
\,,\qquad\text{and}\qquad
\wht{\C}_\pm \,:=\, \setb{\zeta \in \C : \pm\1\Re\,\zeta > 1}
\,,
}
the respective restrictions of $ \wht{\Omega}_a $ are analytic and distinct. Indeed, if  $ \wht{\Omega}_a(\zeta) = \wht{\Omega}_b(\zeta) $ holds for some $ a \neq b $ and $\zeta \in \C $, then $ \zeta = \pm\1 1 $.
}

This lemma is analogue of Lemma~\ref{lmm:Roots of reduced cubic with positive linear coefficient} but for \eqref{negative reduced cubic} instead of \eqref{min: normal form}. As before the meaning of the symbols $ \wht{\Omega}_a $, $ \lambda $, etc., is changed accordingly. 

Comparing \eqref{negative reduced cubic} and \eqref{reduced negative cubic factorized} we see that there exists a function $ A : \R \to \sett{\10,\pm} $ such that
\bels{Omega in terms of labelling function and the roots}{
\Omega(\lambda) = \wht{\Omega}_{A(\lambda)}(\Lambda(\lambda))
\,.
}
We will determine the values of $ A $ inside the following three intervals
\bels{def of I_k's}{
I_1 &:= -\1(\sign \sigma)\1[-\lambda_1,\20\2)
\\
I_2 &:= -\1(\sign \sigma)\1(\20\1,\lambda_2\1]
\\
I_3 &:= -\1(\sign\sigma)\1[\2\lambda_3,\lambda_1]
\,,
}
which are defined by their boundary points,
\bels{def of lambda_k's}{
\lambda_1 \,:=\, 2\2\frac{\delta}{\wht{\Delta}}\,,
\qquad
\lambda_2 \,:=\, 
2-\varrho\2\abs{\sigma}
\,,
\qquad
\lambda_3 \,:=\,
2+\varrho\2\abs{\sigma}  
\,,
}
for some $ \varrho \sim 1 $. The shape of the imaginary parts of the roots $\widehat \Omega_a$ on the intervals $I_1$, $I_2$ and $I_3$ is shown in Figure \ref{Fig:Cubic roots}. 
The number $ \lambda_1 $ is the expansion range $ \delta $ in the normal coordinates. From \eqref{Delta upper bound in sigma} it follows that
\bels{lambda_1 comparison to delta/abs-sigma3}{
\qquad 
c_1\frac{\delta}{\2\abs{\sigma}^3\!} 
\,\leq\, 
\lambda_1 
\,\leq\, 
C_1\frac{\delta}{\2\abs{\sigma}^3\!}
\,,\qquad\text{provided}\quad
\abs{\sigma} \leq \sigma_\ast\,.
}
\begin{figure}[h]
	\centering
	\hspace{-0.04\textwidth}
	\includegraphics[width=1.02\textwidth]{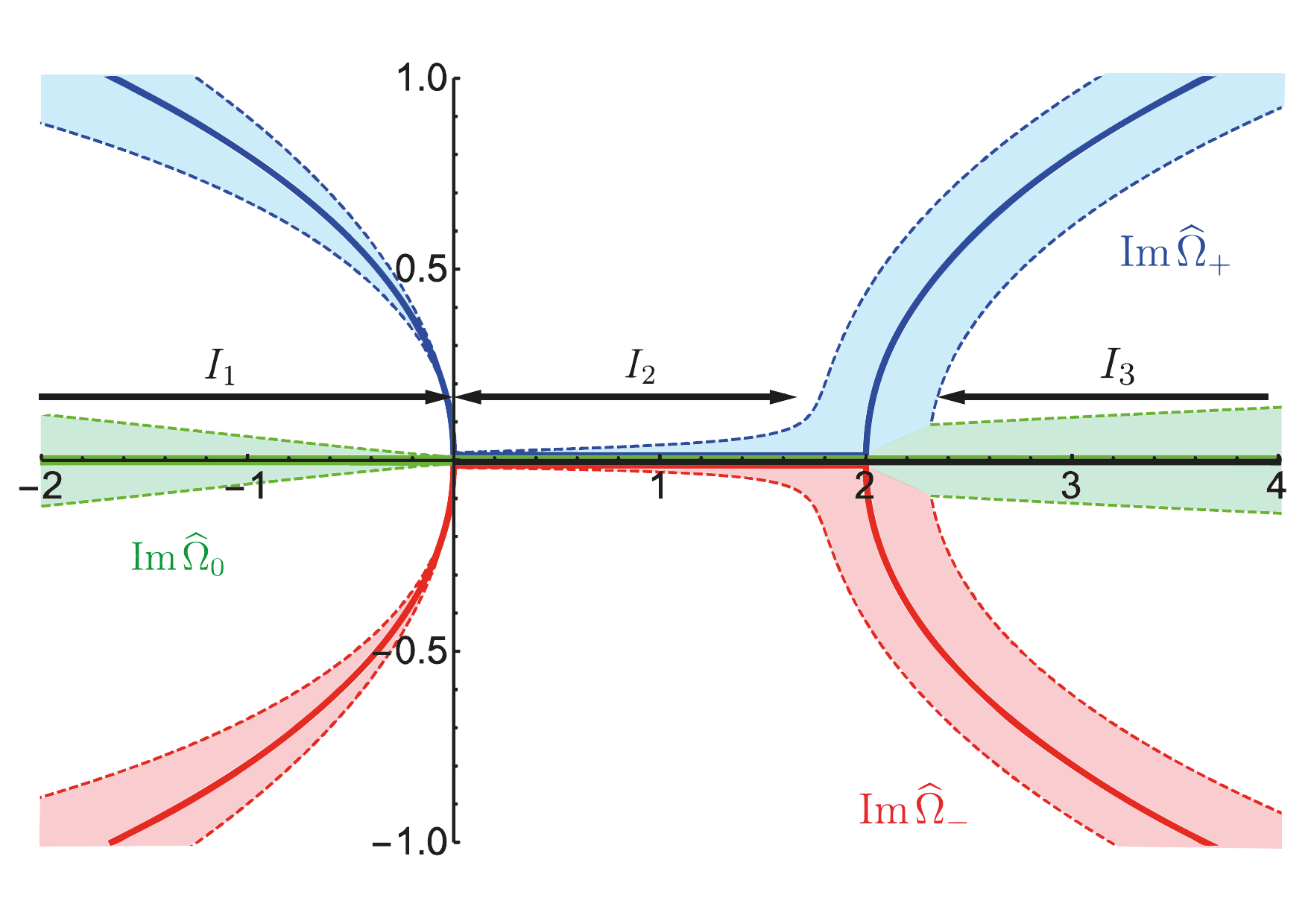}
	\caption{Imaginary parts of the three branches of the roots of the cubic equation. The true solution remains within the allowed error margin indicated by the dashed lines.}
\label{Fig:Cubic roots}
\end{figure}
The points $\lambda_2 $ and $ \lambda_3 $ will act as a lower and an upper bound for the size of the gap in $\supp v $ associated to the edge $ \tau_0 $, respectively. 
Given any $ \delta,\varrho \sim 1 $ 
we can choose $ \sigma_\ast \sim 1 $ so small that
\bels{good endpoints}{
\lambda_1 \,\ge\, 4\,,\quad\text{and}\quad 
1 \,\leq\, \lambda_2 \,<\, 2 \,<\, \lambda_3 \,\leq\, 3
\,,\qquad\text{provided}\quad
\abs{\sigma} \leq \sigma_\ast
\,.
}
In particular, the intervals \eqref{def of I_k's} are disjoint and non-trivial for a triple $ (\delta,\varrho,\sigma_\ast) $ chosen this way. 
The  value $ A(\lambda) $ can be uniquely determined using the selection principles if $ \lambda $ lies inside one of the intervals \eqref{def of I_k's}.

\begin{lemma}[Choice of roots]
\label{lmm:Choice of roots}
There exist $ \delta,\varrho,\sigma_\ast \sim 1 $, such that \eqref{good endpoints} holds, and if
\[ 
\abs{\sigma} 
\,\leq\, \sigma_\ast
\,,
\] 
then the restrictions of $ \Omega $ on the intervals $ I_k := I_k(\delta,\varrho,\sigma,\wht{\Delta}) $, defined in \eqref{def of I_k's}, satisfy:
\bels{Omega|_I_k's determined}{
\Omega|_{I_1} &= \wht{\Omega}_+ \msp{-2}\circ \Lambda|_{I_1}
\\
\Omega|_{I_2} &=  \wht{\Omega}_+  \msp{-2}\circ \Lambda|_{I_2}
\\
\Omega|_{I_3} &= \wht{\Omega}_+  \msp{-2}\circ \Lambda|_{I_3}
\,.
}
Moreover, we have
\bels{upper bound on gap}{ 
\Im\,\Omega(-\sign\sigma \,\lambda_3) \,>\, 0
\,.
}
\end{lemma}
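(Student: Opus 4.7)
The plan is to determine $\Omega$ interval by interval, using continuity of $\Omega$ together with the selection principles of Lemma~\ref{lmm:Selection principles}, anchored at the merger $\Omega(0)=\Lambda(0)=\sign\sigma$ where two of the three roots of the reduced cubic \eqref{negative reduced cubic} coincide. First, I would fix $\delta,\varrho,\sigma_\ast\sim 1$ so that \eqref{good endpoints} holds; this is immediate from \eqref{Delta upper bound in sigma} and \eqref{lambda_1 comparison to delta/abs-sigma3} upon choosing $\delta$ small relative to $\varrho$ and then $\sigma_\ast$ small relative to $\delta$. Using the a priori bound $\abs{\nu(\omega)}\lesssim\abs{\omega}^{1/3}$ from \eqref{a priori for nu(omega)} to control $\mu(\lambda)$ in \eqref{small gap: def of mu}, together with the $\varrho\abs{\sigma}$-cushion built into $\lambda_2,\lambda_3$, one checks that $\Lambda(I_k)$ stays inside a single simply connected domain $\wht{\C}_{a_k}$, with $a_2=0$ and $a_1,a_3\in\{\pm\}$ depending on $\sign\sigma$. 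Since the three roots $\wht{\Omega}_{\pm,0}$ are analytic and distinct on each $\wht{\C}_a$ by Lemma~\ref{lmm:Roots of reduced cubic with negative linear coefficient}, the continuous labelling $A\colon I_k\to\{0,\pm\}$ in $\Omega=\wht{\Omega}_A\circ\Lambda$ is forced to be constant on each $I_k$.

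To pin down the constant value of $A$ on each interval I would argue as follows. On $I_1$ the point $\tau_0+\omega$ lies in the component of $\supp v$ adjacent to $\tau_0$ by Lemma~\ref{lmm:Simple edge}, hence $\Im\Omega>0$; combining \textbf{SP-3} with the branch-point matching at $\lambda=0$ selects the root stated in \eqref{Omega|_I_k's determined}. On $I_2$ the density vanishes and $\Omega$ is real; \textbf{SP-4} applied with $\theta=-\sign\sigma$ (via Lemma~\ref{lmm:Growth condition}) fixes the monotonicity of $\lambda\mapsto\Omega(\lambda)$, which together with $\Omega(0)=\sign\sigma$ isolates one of the three real roots of the cubic on $\wht{\C}_0$. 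On $I_3$ the interval sits past the far edge of the gap, whose location is trapped inside $(-\lambda_3,-\lambda_2)$ by the gap-length estimate from Lemma~\ref{lmm:Simple edge} refined so that $\Delta\approx\wht{\Delta}$; there $v>0$ so $\Im\Omega>0$, and \textbf{SP-3} again forces the choice $\wht{\Omega}_+$. The positivity \eqref{upper bound on gap} then follows by evaluating \eqref{def of root functions for small gap} at $\Lambda(-\sign\sigma\,\lambda_3)$: on the relevant side of the critical point the factor $\Phi_+-\Phi_-$ has a definite non-zero sign.

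The hard part is twofold. First, trapping the far edge of the gap inside $(-\lambda_3,-\lambda_2)$ requires the two-sided bound $\abs{\Delta-\wht{\Delta}}\lesssim\abs{\sigma}\wht{\Delta}$, which must be bootstrapped with the root identification itself: any significantly larger or smaller gap would force $\Omega$ on $I_2$ or $I_3$ onto a branch inconsistent with \textbf{SP-3} or \textbf{SP-4}, so the admissible range of $\Delta$ and the labelling $A$ are determined together. Second, the intermediate region $(-\lambda_3,-\lambda_2)$ is not covered by any $I_k$, so one must verify that the real root of $\wht{\C}_0$ selected on $I_2$ analytically continues through the square-root branch point at $\Lambda=-\sign\sigma$ into the complex root of the neighbouring $\wht{\C}_\mp$ selected on $I_3$. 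This consistency is encoded in the explicit $\Phi_\pm$-formulas \eqref{def of Phi_pm} and is what finally justifies the common label $\wht{\Omega}_+$ on the two disconnected components $I_1$ and $I_3$ even though they lie in different simply connected domains.
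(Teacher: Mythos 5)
Your skeleton (constancy of the labelling on each $I_k$ because $\Lambda(I_k)$ stays in a single component $\wht{\C}_a$, then selection via {\bf SP-1}--{\bf SP-4}) matches the paper, and your treatment of $I_1$ is essentially the paper's ({\bf SP-2} rules out the $0$-branch since $\wht{\Omega}_0(\Lambda(0))\approx 2\neq \Omega(0)$, {\bf SP-3} rules out the branch with negative imaginary part). But on $I_2$ and $I_3$ your argument is circular: you assume that the generating density vanishes on all of $I_2$ and that the far edge of the gap is trapped in $(\lambda_2,\lambda_3)$, i.e.\ $\Delta\approx\wht{\Delta}$ up to a relative error $O(\abs{\sigma})$. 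At this stage the only available information is Lemma~\ref{lmm:Simple edge}, which gives $\Delta\gtrsim \abs{\sigma}^3$ with a possibly very small constant $c_\ast$ --- in normal coordinates the gap could a priori end anywhere in $[c_\ast,\infty)$, in particular well inside $I_2$. The sharp statement $\Delta=(1+\Ord(\sigma))\wht{\Delta}$ is Lemma~\ref{lmm:Size of small gap}, which in the paper is proved \emph{after} and \emph{using} the present lemma (via Lemma~\ref{lmm:Edge shape}). Your ``bootstrap'' remark acknowledges the issue but does not carry it out, and carrying it out is exactly the missing content: the paper avoids it by selecting $a_2$ with no gap information at all ({\bf SP-2} excludes one branch; for the $0$-branch one derives from the explicit formulas the bounds \eqref{small gap: ready to apply SP-4}, showing its imaginary part is $O(\abs{\sigma}^{1/2}\lambda^{2/3})$ while its real part decreases, so {\bf SP-4} is violated), and by selecting $a_3$ through a trapping argument on the intermediate interval $[\lambda_2,\lambda_3]$: the $1/2$-H\"older bound \eqref{1/2-Holder for the variety} forces $\Omega([\lambda_2,\lambda_3])$ into $O(\abs{\sigma}^{1/2})$-neighbourhoods of $\{1,-2\}$, and continuity from $I_2$ keeps it near $1$, which kills the $0$-branch on $I_3$.

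The second gap is the positivity \eqref{upper bound on gap} and the final $\pm$ decision on $I_3$. Your claim that ``$\Phi_+-\Phi_-$ has a definite non-zero sign on the relevant side'' ignores the competition that makes the argument quantitative: $\Lambda(\lambda_3)$ sits only a distance $\sim\varrho\abs{\sigma}$ past the branch point, so $\abs{\Im\,\wht{\Omega}_\pm}$ there is of order $(\varrho\abs{\sigma})^{1/2}$, while the perturbation $\mu(\lambda)\lambda$ (of size $O(\abs{\sigma})$ by \eqref{1st mu bound for label selection}) can shift the imaginary part by $O(\abs{\sigma}^{1/2})$ via the same $1/2$-H\"older continuity. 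One must therefore choose $\varrho\sim1$ \emph{large} so that $\pm\,\Im\,\wht{\Omega}_\pm(\Lambda(\lambda_3))\ge (c\,\varrho^{1/2}-C)\abs{\sigma}^{1/2}>0$ (this is \eqref{pos Im-part at lambda_3}); only then does {\bf SP-3} force $a_3=+$ and yield \eqref{upper bound on gap}. This $\varrho$-large step, and the resulting interplay between the choices of $\delta$, $\varrho$ and $\sigma_\ast$, is absent from your proposal; likewise, framing the $I_2\to I_3$ consistency as ``analytic continuation through the branch point'' does not work as stated, because $\Lambda$ only passes \emph{near} the branch point (it carries the complex error $\mu\lambda$), so a quantitative continuity/trapping argument, not analytic continuation of a fixed branch, is what is actually needed.
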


The proof of the following simple result is given in Appendix~\ref{sec:Cubic roots and associated auxiliary functions}.

\begin{lemma}[Stability of roots]
\label{lmm:Stability of roots - neg}
On the connected components of $ \wht{\C} $ the roots \eqref{def of root functions for small gap} are stable, i.e., 
\bels{Perturbation of roots:weak bound}{
\absb{\2\wht{\Omega}_a(\zeta)-\wht{\Omega}_a(\xi)} 
\,\lesssim\,
\min\setB{\2\abs{\1\zeta\2-\2\xi\1}^{1/2}\!,\abs{\1\zeta\2-\2\xi\1}^{1/3}}
\,,\quad
(\zeta,\xi) \in \wht{\C}_-^2 \cup \wht{\C}_0^2 \cup \wht{\C}_+^2\,,\msp{-10}
}
holds for $a =-,0,+ $.

In particular, suppose $ \zeta$ and $ \xi $ are of the following special form
\bea{
\xi  \,&=\, -\theta+\lambda 
\\
\zeta \,&=\,  -\theta+(1+\mu'\1)\1\lambda
\,,
}
where $ \theta = \pm 1 $,  $ \lambda \in \R $ and $ \mu' \in \C $. 
Suppose also that $ \abs{\lambda-2\1\theta}\ge 6\2\kappa $, and $ \abs{\mu'} \leq \kappa$, for some $ \kappa \in (0,1/2)$. Then for each $a =-,0,+ $ the function $\wht{\Omega}_a$ satisfies
\bels{Perturbation of roots: away from opposite critical point}{
\absb{\2\wht{\Omega}_a(\zeta)-\wht{\Omega}_a(\xi)} \,\lesssim\, \frac{\min\setb{\abs{\lambda}^{1/2}\!,\2\abs{\lambda}^{1/3}}}{\kappa^{\11/2}\!}\,\abs{\1\mu'}
\,.
}
\end{lemma}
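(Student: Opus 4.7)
The plan is to reduce both estimates to a derivative bound for the explicit root functions $\wht{\Omega}_a$ from \eqref{def of root functions for small gap}, exploiting that each domain of analyticity $\wht{\C}_a$ is convex (a vertical strip for $a=0$, an open half-plane for $a=\pm$). The starting point is the computation $\Phi_\pm'(\zeta) = \pm\1\Phi_\pm(\zeta)/(3\sqrt{\zeta^2-1})$, with the branch of $\sqrt{\zeta^2-1}$ chosen consistently with \eqref{def of Phi_pm}. Together with the trivial bound $\abs{\Phi_\pm(\zeta)} \lesssim 1 + \abs{\zeta}^{1/3}$, this yields
\begin{equation*}
\abs{\wht{\Omega}_a'(\zeta)} \,\lesssim\, \frac{1+\abs{\zeta}^{1/3}}{\abs{\zeta-1}^{1/2}\abs{\zeta+1}^{1/2}}, \qquad \zeta \in \wht{\C}_a,
\end{equation*}
and in particular $\abs{\wht{\Omega}_a'(\zeta)} \lesssim \abs{\zeta}^{-2/3}$ for $\abs{\zeta} \geq 2$.

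For the weak bound \eqref{Perturbation of roots:weak bound} I would fix $\zeta,\xi$ in the same component, parametrise the segment $z(t) = \xi + t(\zeta-\xi)$, $t \in [0,1]$, and integrate along it. By convexity the segment stays in $\wht{\C}_a$. The elementary estimate $\abs{\zeta-\xi}\int_0^1 \abs{z(t)\mp 1}^{-1/2}\1 dt \lesssim \abs{\zeta-\xi}^{1/2}$, uniform in how close the segment comes to $\pm 1$ (obtained via the change of variable $s = \abs{\zeta-\xi}(t-t_\ast)$ centred at the closest approach), converts the integrable singularity of $\wht{\Omega}_a'$ at $\pm 1$ into $\abs{\wht{\Omega}_a(\zeta)-\wht{\Omega}_a(\xi)} \lesssim \abs{\zeta-\xi}^{1/2}$, which dominates the minimum when $\abs{\zeta-\xi} \leq 1$. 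For $\abs{\zeta-\xi} > 1$ the triangle inequality combined with the global growth $\abs{\wht{\Omega}_a(z)} \lesssim 1 + \abs{z}^{1/3}$ gives $\abs{\wht{\Omega}_a(\zeta) - \wht{\Omega}_a(\xi)} \lesssim 1 + \abs{\zeta}^{1/3} + \abs{\xi}^{1/3} \lesssim \abs{\zeta-\xi}^{1/3}$, after using $\abs{\zeta},\abs{\xi} \lesssim \abs{\zeta-\xi} + \min\{\abs{\zeta},\abs{\xi}\}$ and the subadditivity of $t \mapsto t^{1/3}$.

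For the refined bound \eqref{Perturbation of roots: away from opposite critical point} I would parametrise the segment $z(t) = -\theta + (1+t\mu')\lambda$, $t \in [0,1]$, and bound $\abs{\wht{\Omega}_a(\zeta) - \wht{\Omega}_a(\xi)} \leq \abs{\mu'\lambda}\,\sup_{t \in [0,1]} \abs{\wht{\Omega}_a'(z(t))}$. The distance from the segment to the branch point $-\theta$ is $\abs{(1+t\mu')\lambda} \sim \abs{\lambda}$ since $\abs{\mu'} \leq \kappa \leq 1/2$, while the distance to the opposite branch point $\theta$ satisfies $\abs{(1+t\mu')\lambda-2\theta} \geq \abs{\lambda-2\theta} - \kappa\abs{\lambda}$; separating the cases $\abs{\lambda} \leq 4$ and $\abs{\lambda} > 4$ and using $\abs{\lambda-2\theta} \geq 6\kappa$ one finds this distance is $\gtrsim \kappa$ in the first case and $\gtrsim \abs{\lambda}$ in the second. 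Feeding these lower bounds into the derivative estimate and multiplying by the segment length $\abs{\mu'\lambda}$ yields $\kappa^{-1/2}\abs{\mu'}\abs{\lambda}^{1/2}$ when $\abs{\lambda} \lesssim 1$ and $\abs{\mu'}\abs{\lambda}^{1/3}$ when $\abs{\lambda} \gg 1$; the unified form \eqref{Perturbation of roots: away from opposite critical point} follows by taking the minimum, since $\kappa^{-1/2} \geq 1$.

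The only delicate point is the matching of the two regimes in the refined bound: the square-root singularity of $\wht{\Omega}_a'$ at $\pm 1$ forces the $\kappa^{-1/2}$ prefactor when $\lambda$ is of order one, whereas the $\abs{\zeta}^{-2/3}$ decay of $\wht{\Omega}_a'$ at infinity is essential to prevent the right-hand side from growing linearly in $\abs{\lambda}$ when $\abs{\lambda}$ is large. Everything else reduces to routine estimation on the explicit formulas \eqref{roots for small gap}.
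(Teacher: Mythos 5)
Your overall strategy --- the explicit derivative identity $\Phi_\pm'(\zeta)=\pm\,\Phi_\pm(\zeta)/(3\sqrt{\zeta^2-1})$, the resulting bound $\abs{\wht{\Omega}_a'(\zeta)}\lesssim (1+\abs{\zeta}^{1/3})\,\abs{\zeta-1}^{-1/2}\abs{\zeta+1}^{-1/2}$ with $\abs{\zeta}^{-2/3}$ decay at infinity, and integration along the straight segment using the convexity of $\wht{\C}_-,\wht{\C}_0,\wht{\C}_+$ --- is essentially the route the paper takes (there the roots are written as bounded linear combinations of a generic $\Phi$, the derivative bound is asserted, and the mean value theorem is applied along the segment with the same lower bounds on the distances to the two branch points). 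Your treatment of \eqref{Perturbation of roots: away from opposite critical point} is correct up to minor bookkeeping: the cleanest uniform lower bound on the distance to the opposite branch point is $\abs{(1+t\mu')\lambda-2\theta}\ge(1-\kappa)\abs{\lambda-2\theta}-2\kappa\ge\kappa$, obtained via $\abs{\lambda}\le\abs{\lambda-2\theta}+2$; your claim that this distance is $\gtrsim\abs{\lambda}$ already for $\abs{\lambda}>4$ fails just above the threshold (it holds only from, say, $\abs{\lambda}\ge 8$ on), but the bound $\gtrsim\kappa$ there suffices. You should also record that the segment $z(t)=-\theta+(1+t\mu')\lambda$ never crosses the lines $\Re\,z=\pm1$ (this follows from $\abs{1+t\1\Re\,\mu'}\ge 1/2$ together with $\abs{\lambda-2\theta}\ge6\kappa$), since $\wht{\Omega}_a$ is analytic only on the individual components and the labels are permuted across these lines; without this the mean-value step for a fixed label $a$ is not justified.

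There is, however, one step that fails as written: in the weak bound \eqref{Perturbation of roots:weak bound}, for $\abs{\zeta-\xi}>1$ you argue $\abs{\wht{\Omega}_a(\zeta)-\wht{\Omega}_a(\xi)}\lesssim 1+\abs{\zeta}^{1/3}+\abs{\xi}^{1/3}\lesssim\abs{\zeta-\xi}^{1/3}$. The last inequality is false: take $\zeta,\xi\in\wht{\C}_+$ with $\abs{\zeta},\abs{\xi}\sim R$ large and $\abs{\zeta-\xi}=2$; the middle expression is of order $R^{1/3}$ while the right-hand side is of order one. Subadditivity only gives $\abs{\zeta}^{1/3}\le\abs{\zeta-\xi}^{1/3}+\abs{\xi}^{1/3}$ and leaves the uncontrolled $\abs{\xi}^{1/3}$. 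The statement itself is true, and the repair stays inside your framework: integrate your derivative bound along the segment in this regime as well. The portion of the segment inside a fixed ball around $\sett{\pm1}$ has length $O(1)$, so by the square-root-singularity estimate you already used it contributes $O(1)\le\abs{\zeta-\xi}^{1/3}$; on the remaining portion $\abs{\wht{\Omega}_a'(z)}\lesssim\abs{z}^{-2/3}$ and $\abs{z(s)}\gtrsim 1+\abs{s-s_*}$ in the arclength parameter, so it contributes $\lesssim\int_0^{\abs{\zeta-\xi}}(1+s)^{-2/3}\,\dif s\lesssim\abs{\zeta-\xi}^{1/3}$. With that replacement the proof is complete and matches the paper's argument in substance (the paper asserts the global $1/2$-H\"older bound for $\Phi$ as straightforward and is silent on the $1/3$ part, so your integrated version is, if anything, more explicit).
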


Using Lemma~\ref{lmm:Stability of roots - neg} we may treat $ \Lambda(\lambda) $ as a perturbation of $ \sign \sigma + \lambda $ by a small error term $ \lambda\2\mu(\lambda)$. 
By expressing the a priori bounds \eqref{a priori for nu(omega)} for $ \nu(\omega) $ in the normal coordinates \eqref{generic edge: def of lambda and Omega}, and recalling that $ \abs{\lambda} \leq \lambda_1 $ is equivalent to $ \abs{\omega} \leq \delta $, we obtain estimates for this error term,
\begin{subequations}
\label{mu bounds for label selection}
\begin{align}
\label{1st mu bound for label selection}
\qquad
\abs{\mu(\lambda)} 
\,&\leq\, 
C_2\abs{\sigma}\abs{\lambda}^{1/3} 
\\
\label{2nd mu bound for label selection}
&\leq\, 
C_3\2\delta^{\11/3}
\,,\qquad\text{provided}\quad
\abs{\sigma} \leq \sigma_\ast\,,\quad\abs{\lambda} \leq \lambda_1
\,.
\end{align}
In the following we will assume that $ \delta \leq (2\1C_3)^{-3} \sim 1 $, so that 
\bels{abs-mu leq 1/2}{
\sup_{\lambda \1:\1 \abs{\lambda}\leq \lambda_1}\abs{\1\mu(\lambda)} \,\leq\, \frac{1}{2}
\,,\qquad\text{provided}\quad
\abs{\sigma} \leq \sigma_\ast
\,.
} 
The a priori bound in the middle of \eqref{a priori for nu(omega)} also yields  the third estimate of $ \mu $ in terms of $ \Omega $ and $ \lambda $. Indeed, inverting \eqref{generic edge: def of lambda and Omega} and using $ \Omega(0) = \sign \sigma = 1 $ (also from \eqref{generic edge: def of lambda and Omega}), we get
\bels{mu bound for edge shape}{
\abs{\1\mu(\lambda)} \,&\lesssim\,
\abs{\sigma}
\abs{\1\Omega(\lambda)-\2\Omega(0)} 
\,+\, \abs{\sigma}^3\abs{\lambda}
\,,\qquad\text{provided}\quad
\abs{\sigma} \leq \sigma_0\,.
}
\end{subequations}
For the sake of convenience, we will restrict our analysis to the case $ \sign \sigma = -1 $. The opposite case is handled similarly.

We will use the notations $ \varphi(\tau+0) $ and $  \varphi(\tau-0) $, for the right and the left limits 
$ \lim_{\xi\downarrow \tau} \varphi(\xi) $ 
and 
$ \lim_{\xi \uparrow \tau} \varphi(\xi) $, 
respectively. 

\begin{Proof}[Proof of Lemma~\ref{lmm:Choice of roots}]
Let us assume  $ \sign \sigma = - 1 $. We will consider $ \delta \sim 1 $ and $ \varrho \sim 1 $ as free parameters which can be adjusted to be as small and large as we need, respectively. 
Given $ \delta \sim 1 $ and $ \varrho \sim 1 $ the threshold $ \sigma_\ast \sim 1 $ is then chosen so small that \eqref{good endpoints} holds.

First we show that $ A(\lambda) $ is constant on each $ I_k $, i.e., there are three labels $ a_k \in \sett{0,\pm}  $ such that 
\bels{A = a_k on I_k}{ 
A(\lambda) \,=\, a_k\,,\qquad\forall\,\lambda \in I_k
\,,\qquad k=1,2,3\,.
}
In order to prove this  we first recall that the root functions $ \zeta \mapsto \wht{\Omega}_a(\zeta)$, $ a,b=0,\pm $, are continuous on the domains $ {\wht{\C}_b} $, $ b=0,\pm$, and that they may coincide only at points $ \Re\,\zeta = \pm 1 $ (Indeed, the roots  coincide only at the two points $ \zeta = \pm 1 $.).
From Lemma~\ref{lmm:Cubic for shape analysis} and {\bf SP-1} we see that $  \Lambda,\Omega : \R \to \C $ are continuous. 
Hence, \eqref{A = a_k on I_k} will follow from
\bels{Lambda(I_k) subset wht-Cp_a}{
\Lambda(I_1) \subset \wht{\C}_-\,,
\qquad
\Lambda(I_2) \subset \wht{\C}_0\,,
\qquad 
\Lambda(I_3) \subset \wht{\C}_+
\,,
}
since $\abs{\1\Re\,\zeta\1} \neq 1 $ for $ \zeta \in \cup_a \wht{\C}_a $ (cf. \eqref{defs of C_a}). 

From \eqref{small gap: def of Lambda} and \eqref{abs-mu leq 1/2} we get
\bels{Re Lambda(I_1)}{
\Re\,\Lambda(\lambda) 
\,=\, 
-1 - (\11+ \Re\,\mu(\lambda))\abs{\lambda} 
\,\leq\,
-1 - \frac{1}{2}\abs{\lambda}
\,<\,-1
\,,\qquad
\lambda \in I_1\,,
}
and thus $ \Lambda(I_1) \subset \wht{\C}_- $.
Similarly, we get the first estimate below:
\bels{Re Lambda(I_2)}{
-1 + \frac{1}{2}\abs{\lambda} 
\,\leq\,
\Re\,\Lambda(\lambda) 
\,&\leq\,
-1 + (\11+C_2\abs{\sigma}\abs{\lambda}^{1/3})\2\abs{\lambda}
\\
&\leq\,
1 - (\1\varrho-2^{\14/3}C_2\1)\abs{\sigma}
\,,\qquad\lambda \in I_2
\,.
}
For the second inequality we have used \eqref{1st mu bound for label selection}, while for the last inequality we have estimated $ \lambda \leq \lambda_2 = 2 - \varrho\abs{\sigma} $. Taking $ \varrho $ sufficiently large yields $ \Lambda(I_2) \subset \wht{\C}_0 $.

In order to show $ \Lambda(I_3) \subset \wht{\C}_+ $ we split $ I_3 = [\lambda_3,\lambda_1] $ into two parts, $ [\lambda_3,4] $ and $ (4,\lambda_1] $ (note that $[\lambda_3,4] \subset I_3 $ by \eqref{good endpoints}).
In the first part we estimate similarly as in \eqref{Re Lambda(I_2)} to get
\bels{Re Lambda(I_3 small)}{
\Re\,\Lambda(\lambda) 
\,&\ge\, 
-1 + (1-C_2\abs{\sigma}\lambda^{1/3})\2\lambda
\\
&\ge\,
1 + (\1\varrho-4^{\14/3}C_2\1)\abs{\sigma}
\,,\qquad
\lambda_3 \leq \lambda \leq 4
\,.
}
Taking $ \varrho \sim 1 $ large enough the right most expression is larger than $ 1  $. If $ \lambda_1 > 4 $, we use the rough bound \eqref{abs-mu leq 1/2} similarly as in \eqref{Re Lambda(I_1)} to obtain
\[
\Re\,\Lambda(\lambda) 
\,=\,
-1 - (\11+ \Re\,\mu(\lambda))\2\lambda
\,\ge\,-1 +\frac{\lambda}{2} 
\,>\, 1
\,,\qquad
4 < \lambda \leq \lambda_1
\,.
\]
Together with \eqref{Re Lambda(I_3 small)} this shows that $ \Lambda(I_3) \subset \wht{\C}_+ $.

Next, we will determine the three values $ a_k $ using the four selection principles of Lemma~\ref{lmm:Selection principles}.

\medskip
\noindent{\scshape Choice of $ a_1$}:
The initial condition, i.e.,  {\bf SP-2}, must be satisfied,
\[ 
\wht{\Omega}_{a_1}(-1-0) \,=\,\wht{\Omega}_{a_1}(\Lambda(0-0))\,=\, \Omega(0) \,=\, -1\,. 
\]
This excludes the choice $ a_1 = 0 $ since $ \wht{\Omega}_0(-1-0) = 2 $.
The choice $ a_1 = - $ is excluded using $ 1/2$-H\"older continuity \eqref{Perturbation of roots:weak bound} of the roots \eqref{def of root functions for small gap} inside the domain $ \wht{\C}_- $, and \eqref{2nd mu bound for label selection}:
\bels{Im wht-Omega_- upper bound}{
\Im\,\wht{\Omega}_-(\Lambda(-\xi)) \,&=\,
\Im\Bigl[
\2\wht{\Omega}_-(-1-\xi) + \Ord\bigl(\1\abs{\1\mu(-\xi)\2\xi\1}^{1/2}\1\bigr)
\Bigr]
\\
&\leq\,-\1c\,\xi^{1/2} 
\,,
\qquad\qquad 0\leq \xi \leq 1
\,.
}
For the last bound we have used \eqref{1st mu bound for label selection} and the bound
\bels{PM Im wht-OmegaPM SQRT-growth}{
\pm\2\Im\,\wht{\Omega}_\pm(\21+\xi\1) 
\,=\, 
\pm\2\Im\,\wht{\Omega}_\pm(-1-\xi\1)
\,\ge\, 
c_3\2\xi^{\11/2}
\,,
\qquad 0\leq \xi \leq 1
\,,
}
which follows from the explicit formulas \eqref{def of root functions for small gap}.
Since \eqref{Im wht-Omega_- upper bound} violates {\bf SP-3} we are left with only one choice: $ a_1 = + $. 

\medskip
\noindent{\scshape Choice of $ a_2$}:
Since $  \wht{\Omega}_-(-1+0) = 2 $, while $ \Omega(0) = -1 $, we exclude the choice $  a_2 = - $ using {\bf SP-2}. 
Moreover, from the explicit formulas of the roots \eqref{def of root functions for small gap} it is easy to see that $ \Im\,\wht{\Omega}_a|_{(-1,1)} = 0 $ for each of the three roots $ a =\pm,0 $.
Similarly as in \eqref{Im wht-Omega_- upper bound} we estimate for small enough $ \lambda >0 $ the real and imaginary part of $\wht{\Omega}_0 \circ \Lambda$ by
\bels{small gap: ready to apply SP-4}{
\Re\,\wht{\Omega}_0(\Lambda(\lambda)) &\leq -1-c\2\lambda^{1/2} + 
C\abs{\sigma}^{1/2}\lambda^{2/3}
\\
\absb{\Im\,\wht{\Omega}_0(\Lambda(\lambda))} 
\,&=\, 
\absb{\,0 \,+\,  \Ord\bigl(\1\abs{\1\mu(\lambda)\1\lambda}^{1/2}\1\bigr)}  
\,\lesssim\, 
\abs{\sigma}^{1/2}\lambda^{2/3}
\,.
}
If $ a_2 = 0 $, then \eqref{small gap: ready to apply SP-4} would violate {\bf SP-4} for small $\lambda > 0 $. We are left with only one choice: $  a_2 = + $.

\medskip
\noindent{\scshape Choice of $ a_3$}:
Using the formulas \eqref{def of root functions for small gap} we get
\[ 
\setb{\2\wht{\Omega}_0(1\pm0)\2,\,\wht{\Omega}_+(1\pm0)\2,\,\wht{\Omega}_-(1\pm0)} \;=\, \sett{\11,-2}\,. 
\]
Thus, the $1/2$-H\"older regularity \eqref{Perturbation of roots:weak bound} of the roots (outside the branch cuts) implies
\bels{1/2-Holder for the variety}{
\dist\bigl(\2\wht{\Omega}_a(\zeta),\sett{1,-2}\bigr) \,\lesssim\, \abs{\1\zeta-1\2}^{1/2},
\qquad
\zeta \in \C\,,\;a =0,\pm
\,.
}
We will apply this estimate for
\[
\zeta = \Lambda(\lambda) = 1 + \Ord\bigl(\2\abs{\1\lambda-2}+\abs{\sigma}\2\bigr)
\,,\qquad
\lambda \in [\lambda_2,\lambda_3]\,.
\]
Using \eqref{1st mu bound for label selection} to estimate $ \mu(\lambda) $, and recalling that $ \abs{\lambda-2} \lesssim \abs{\sigma} $, for $ \lambda \in [\lambda_2,\lambda_3] $, \eqref{Omega in terms of labelling function and the roots} and  \eqref{1/2-Holder for the variety} yield
\bels{Omega between lambda_2 and lambda_3}{
\dist\bigl(\2\Omega(\lambda),\sett{\11,-2}\1\bigr) 
\,\leq\,
\max_a \dist\bigl(\,\wht{\Omega}_a(\Lambda(\lambda))\2,\sett{\11,-2}\1\bigr) 
\,\lesssim\;
\abs{\sigma}^{1/2}
\,,\qquad
\lambda \in [\lambda_2,\lambda_3] 
\,.
}
In particular, taking $ \sigma_\ast \sim 1 $ sufficiently small \eqref{Omega between lambda_2 and lambda_3} implies for every $\abs{\sigma} \leq \sigma_\ast $,
\[ 
\Omega([\lambda_2,\lambda_3]) \,\subset\, \mathbb{B}(1,1) \cup \mathbb{B}(-2,1) 
\,,
\]
where $ \mathbb{B}(\zeta,\rho) \subset \C $ is a complex ball of radius $\rho $ centered at $ \zeta $.      
Since $ a_2 = - $ and $\wht{\Omega}_-(1-0) = 1 $  we see that $ \Omega(\lambda_2-0) \in  \mathbb{B}(1,1) $. The continuity of $ \Omega $ (cf. {\bf SP-1}) thus implies
\[ 
\Omega([\lambda_2,\lambda_3]) \,\subset\,  \mathbb{B}(1,1) 
\,.
\] 
In particular, $ \abs{\2\Omega(\lambda_3)-1} \leq 1 $, while $ \abs{\2\wht{\Omega}_0(\Lambda(\lambda_3))-1\1} \ge 2 $, since 
$ \wht{\Omega}_0(1+0) =2 $ and $ \Lambda(\lambda_3) \in \wht{\C}_+ $ is close to $ 1 $.  
This shows that $ a_3 \neq 0 $. 

In order to choose $ a_3 $ among $ \pm $ we use \eqref{Perturbation of roots:weak bound} and the symmetry $ \Im\,\wht{\Omega}_- = -\1\Im\,\wht{\Omega}_+ $ to get
\bels{Im at the opposite edge}{
\pm\,\Im\,\wht{\Omega}_\pm(\Lambda(\lambda)) 
\,&\ge\, 
\Im\,\wht{\Omega}_+(-1+\lambda) - C\1\abs{\1\lambda\2\mu(\lambda)\1}^{1/2}
\,,\qquad\lambda \in I_3
\,.
}
Since $ \lambda_3 = 2+\varrho\1\abs{\sigma} \leq 4 $ combining \eqref{PM Im wht-OmegaPM SQRT-growth} and \eqref{1st mu bound for label selection} yields
\bels{pos Im-part at lambda_3}{
\pm\,\Im\,\wht{\Omega}_\pm(\Lambda(\lambda_3)) 
\,\ge\,
c\2(\1\lambda_3-2\1)^{1/2} \!- C\abs{\sigma}^{1/2}
\,=\,(c\2\varrho^{\11/2}\!-C\1)\abs{\sigma}^{1/2}
\,.
}
Taking $ \varrho \sim 1 $ sufficiently large, the last lower bound becomes positive. Thus, the choice: $ a_3 = - $ is excluded by {\bf SP-3}. We are left with only one choice $ a_3 = + $. The estimate \eqref{upper bound on gap} follows from \eqref{pos Im-part at lambda_3}.
\end{Proof}

For the rest of the analysis we always assume that the triple $ (\delta,\varrho,\sigma_\ast) $ is from Lemma~\ref{lmm:Choice of roots}. Next we determine the shape of the general edge when the associated gap in $ \supp v $ is small. 

\begin{lemma}[Edge shape]
\label{lmm:Edge shape}
Let $ \tau_0 \in \partial \supp v $ and suppose $\abs{\sigma(\tau_0)} \leq \sigma_\ast $, where $ \sigma_\ast \sim 1 $ is from Lemma~\ref{lmm:Choice of roots}.
Then $ \sigma = \sigma(\tau_0) \neq 0 $, and $ \supp v $ continues in the direction $ \sign \sigma $ such that
\bels{Omega at the close edge}{
\qquad \absb{\2\Omega(\lambda)\1-\,\wht{\Omega}_+(\11\msp{-2}+\msp{-2}\abs{\lambda}\1)}
\,\lesssim\,
\abs{\sigma} \min\setb{\abs{\lambda}\2,\abs{\lambda}^{2/3}}
\,,
\qquad \sign \lambda = \sign \sigma
\,.
}
In particular, 
\bels{Im-Omega at the close edge}{
\Im\,\Omega(\lambda) 
\,=\,  2\1 \sqrt{3}\,\Psi_{\!\mrm{edge}}\Bigl(\frac{\abs{\lambda}}{2}\Bigr) 
\,+\,
\Ord\Bigl(\abs{\sigma} \min\setb{\abs{\lambda}\2,\abs{\lambda}^{2/3}}\Bigr)
\,,
\qquad \sign \lambda = \sign \sigma
\,,
}
where the function $ \Psi_{\!\mrm{edge}} : [\10,\infty) \to [\10,\infty) $, defined in \eqref{def of Psi_edge}, satisfies
\bels{PsiEdge from wht-OmegaPlus}{
  2\1 \sqrt{3}\,\Psi_{\!\mrm{edge}}(\lambda) 
\,=\, \Im\,\wht{\Omega}_+\msp{-1}(\11+2\1\lambda\1) 
\,,
\qquad
\lambda \ge 0
\,.
}
\end{lemma}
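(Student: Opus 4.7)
The approach is to identify $\Omega$ with the analytic root branch $\wht{\Omega}_+$ supplied by Lemma~\ref{lmm:Choice of roots} and then quantify how far $\Lambda(\lambda)$ deviates from its ideal value via the sharp stability bound \eqref{Perturbation of roots: away from opposite critical point}, bootstrapping the perturbation parameter $\mu(\lambda)$. First, $\sigma(\tau_0)\neq 0$: otherwise Lemma~\ref{lmm:Vanishing quadratic term} would force $\dist(\tau_0,\partial\supp v)\sim 1$, contradicting $\tau_0\in\partial\supp v$. Since $|\sigma|\leq\sigma_\ast$, Lemma~\ref{lmm:Choice of roots} applies and delivers $\Omega|_{I_1}=\wht{\Omega}_+\msp{-2}\circ\Lambda|_{I_1}$; combined with \eqref{PM Im wht-OmegaPM SQRT-growth}, this gives $\Im\,\Omega(\lambda)>0$ on the open interval, so unwinding the normal coordinates \eqref{generic edge: def of lambda and Omega} shows $v_x(\tau_0+\omega)>0$ for $0<\sign\sigma\cdot\omega\leq\delta$, establishing that $\supp v$ continues in the direction $\sign\sigma$.

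For the quantitative bound I specialize to $\sign\sigma=-1$; the opposite sign is symmetric through the reflection identity $\wht{\Omega}_+(-1-y)=-\overline{\wht{\Omega}_+(1+y)}$, which is immediate from \eqref{def of root functions for small gap} and in particular matches the imaginary parts. Under this convention $\Lambda(\lambda)=-1+(1+\mu(\lambda))\lambda$ fits exactly the shape required by \eqref{Perturbation of roots: away from opposite critical point} with $\theta=1$ and $\mu'=\mu(\lambda)$. For $\lambda\in I_1$ one has $|\lambda-2\theta|\geq 2$, so $\kappa\sim 1$ is admissible, and combined with \eqref{abs-mu leq 1/2} this yields
\[
\bigl|\,\wht{\Omega}_+(\Lambda(\lambda))-\wht{\Omega}_+(-1+\lambda)\,\bigr|\,\lesssim\,\min\bigl\{|\lambda|^{1/2},|\lambda|^{1/3}\bigr\}\,|\mu(\lambda)|.
\]

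The main technical step is a bootstrap on $\mu(\lambda)$ from the a priori bound \eqref{1st mu bound for label selection}. Since $\Lambda(\lambda),-1\in\overline{\wht{\mathbb{C}}_-}$, the weak bound \eqref{Perturbation of roots:weak bound} gives the preliminary estimate $|\Omega(\lambda)-\Omega(0)|\lesssim\min\{|\lambda|^{1/2},|\lambda|^{1/3}\}$, where $\Omega(0)=\wht{\Omega}_+(-1)=-1$. Inserting this into the refined inequality \eqref{mu bound for edge shape} produces
\[
|\mu(\lambda)|\,\lesssim\,|\sigma|\min\bigl\{|\lambda|^{1/2},|\lambda|^{1/3}\bigr\}+|\sigma|^3|\lambda|,
\]
so the stability estimate above upgrades to
\[
\bigl|\,\Omega(\lambda)-\wht{\Omega}_+(-1-|\lambda|)\,\bigr|\,\lesssim\,|\sigma|\min\bigl\{|\lambda|,|\lambda|^{2/3}\bigr\}+|\sigma|^3\min\bigl\{|\lambda|^{3/2},|\lambda|^{4/3}\bigr\}.
\]
The cross term is absorbed by the principal error by a short power count: for $|\lambda|\leq 1$ one has $|\sigma|^3|\lambda|^{3/2}\leq|\sigma|^3|\lambda|\lesssim|\sigma||\lambda|$, while for $1\leq|\lambda|\leq\lambda_1\lesssim|\sigma|^{-3}$ (using \eqref{lambda_1 comparison to delta/abs-sigma3} together with \eqref{Delta upper bound in sigma}) the inequality $|\sigma|^2|\lambda|^{2/3}\lesssim 1$ gives $|\sigma|^3|\lambda|^{4/3}\lesssim|\sigma||\lambda|^{2/3}$. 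This establishes \eqref{Omega at the close edge}, and \eqref{Im-Omega at the close edge} follows by taking imaginary parts and invoking \eqref{PsiEdge from wht-OmegaPlus} together with the imaginary-part symmetry $\Im\,\wht{\Omega}_+(-1-|\lambda|)=\Im\,\wht{\Omega}_+(1+|\lambda|)$ from the reflection identity.

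I expect the delicate step to be the bootstrap itself and the verification that the residual $|\sigma|^3$-term is genuinely subordinate across the entire expansion range up to $|\lambda|\lesssim|\sigma|^{-3}$; once this power counting is in place the rest reduces to the already-established regularity of the cubic roots from Lemma~\ref{lmm:Stability of roots - neg} and the explicit formula \eqref{PsiEdge from wht-OmegaPlus}.
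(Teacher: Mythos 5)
Your proposal follows essentially the same route as the paper's own proof: the branch identification $\Omega|_{I_1}=\wht{\Omega}_+\msp{-2}\circ\Lambda|_{I_1}$ from Lemma~\ref{lmm:Choice of roots}, the refined stability bound \eqref{Perturbation of roots: away from opposite critical point}, and the bootstrap of $\mu(\lambda)$ through \eqref{mu bound for edge shape} combined with the weak H\"older bound \eqref{Perturbation of roots:weak bound}, followed by taking imaginary parts; your explicit power count absorbing the $\abs{\sigma}^3$-cross term is exactly the step the paper leaves implicit. The only items the paper adds, each a one-liner, are the regime $\abs{\lambda}\gtrsim\delta/\abs{\sigma}^3$ outside $I_1$ (trivial because both $\Omega(\lambda)$ and $\wht{\Omega}_+(1+\abs{\lambda})$ are $\Ord(\abs{\lambda}^{1/3})$ while the allowed error is of at least that size there) and the check that \eqref{PsiEdge from wht-OmegaPlus} agrees with \eqref{def of Psi_edge}; note also that the paper takes $\sigma>0$, under which \eqref{Omega at the close edge} holds literally, whereas your $\sigma<0$ normalization yields the reflected bound against $-\,\overline{\wht{\Omega}_+(1+\abs{\lambda})}$ with the same imaginary part, which is the identical content modulo the symmetry you state.
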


We remark that from \eqref{def of Psi_edge} one obtains:
\bels{scaling of Psi_edge}{
\Psi_{\!\mrm{edge}}(\lambda) 
\,&\sim\, 
\min\setb{\lambda^{1/2}\!,\lambda^{1/3}} 
\,,
\qquad\lambda\ge 0\,.
}

\begin{Proof}[Proof of Lemma~\ref{lmm:Edge shape}]
The bound $ \sigma \neq 0 $ follows from Lemma~\ref{lmm:Vanishing quadratic term}. The statement concerning the direction of $ \supp v $ follows from Lemma~\ref{lmm:Simple edge}. 
Without loss of generality  we assume $ \sigma > 0 $. Let $ \delta\1,\sigma_\ast\sim 1  $ be from Lemma~\ref{lmm:Choice of roots}.  
The relation \eqref{Omega at the close edge} is trivial when $ \abs{\lambda} \gtrsim \delta/\abs{\sigma}^3 $ since $ \Omega(\lambda) $ and $ \wht{\Omega}_+(1+\lambda) $ are both $ \Ord(\lambda^{1/3}) $ by \eqref{a priori for Theta(omega)} and \eqref{roots for small gap}, respectively. Thus, we consider only the case $ \lambda \in I_1 = (\10\1,\lambda_1] $.
Using \eqref{Omega|_I_k's determined} and the stability estimate \eqref{Perturbation of roots: away from opposite critical point}, with $ \rho = 1 $, we get
\bels{accurate edge estimate}{
\Omega(\lambda) \,&=\; \wht{\Omega}_+(1+\lambda+\mu(\lambda)\1\lambda\1)
\\
&=\;\wht{\Omega}_+(1+\lambda) \,+\, \Ord\Bigl(\2\mu(\lambda) \min\setb{\lambda^{1/2}\!,\1 \lambda^{1/3}}\Bigr)
\,,
\qquad \lambda \in I_1 = (\20\1,\lambda_1\1]
\,.
}
From \eqref{mu bound for edge shape} we obtain 
\bels{accurate mu(lambda) estimate}{
\abs{\1\mu(\lambda)} 
\,&\lesssim\,
\abs{\1\sigma}\,\absb{\1\wht{\Omega}_+(\11+(1+\mu(\lambda))\1\lambda\1)-\2\wht{\Omega}_+(\11+0)} \,+\, \abs{\sigma}^3\2\lambda
\,.
}
The stability estimate \eqref{Perturbation of roots:weak bound} then yields
\bels{Omega-1 size estimate}{
&\absb{\,\wht{\Omega}_+(\11+(1+\mu(\lambda))\1\lambda\1)-\2\wht{\Omega}_+(\11+0)\2} 
\\
&\;\lesssim\,  
\min\setB{\absb{(1+\mu(\lambda))\1\lambda\1}^{1/2},\absb{(1+\mu(\lambda))\1\lambda\1}^{1/3}}
\\
&\;\lesssim\, \min\setb{\lambda^{1/2}\!,\1 \lambda^{1/3}}
\,,
}
where we have used  \eqref{abs-mu leq 1/2}   to 
obtain $ \abs{(1+\mu(\lambda))\1\lambda\1} \sim \lambda $. 
Plugging \eqref{Omega-1 size estimate}  into \eqref{accurate mu(lambda) estimate} and using the resulting bound in \eqref{accurate edge estimate} to estimate $ \mu(\lambda) $ yields \eqref{Omega at the close edge}.
The formula \eqref{Im-Omega at the close edge} follows by taking the imaginary part of \eqref{Omega at the close edge} and using \eqref{PsiEdge from wht-OmegaPlus}. 
In order to see that \eqref{PsiEdge from wht-OmegaPlus} is equivalent to our original definition \eqref{def of Psi_edge} of $ \Psi_{\!\mrm{edge}}(\lambda) $ we rewrite the right hand side of \eqref{PsiEdge from wht-OmegaPlus} using \eqref{def of root functions for small gap} and \eqref{def of Phi_pm}.
\end{Proof}

We know now already from Lemma~\ref{lmm:Choice of roots} that $ \Im\,\Omega $ is small in $ I_2 $ since $ a_2 = - $ and $ \Im\,\wht{\Omega}_-(-1+\lambda) = 0 $, $ \lambda \in I_2 $. The next result shows that actually $\Im\,\Omega|_{I_2} = 0 $ which bounds the size of the gap $\Delta(\tau_0) $ from below.

\begin{lemma}[Size of small gap]
\label{lmm:Size of small gap}
Suppose $ \tau_0 \in \partial \supp v $. Then the gap length $ \Delta(\tau_0) $ (cf. \eqref{def of Delta(tau0)}) is approximated by $ \wht{\Delta}(\tau_0) $ for small $ \abs{\sigma(\tau_0)} $, such that
\bels{ell/Delta ratio}{
\frac{\Delta(\tau_0)}{\wht{\Delta}(\tau_0)}
\;=\; 
1\,+\,\Ord\bigl(\sigma(\tau_0)\bigr)
\,.
}
In general $ \Delta(\tau_0) \sim \abs{\sigma(\tau_0)}^3 \lesssim \wht{\Delta}(\tau_0) $. 
\end{lemma}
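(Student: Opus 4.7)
The crude general bound $\Delta(\tau_0)\sim|\sigma(\tau_0)|^3\lesssim\wht\Delta(\tau_0)$ is the easy part: the lower bound $\Delta(\tau_0)\gtrsim|\sigma(\tau_0)|^3$ is \eqref{reg edge: gap length lower bound} from Lemma~\ref{lmm:Simple edge}, while the upper bound $\Delta(\tau_0)\lesssim|\sigma(\tau_0)|^3$ is trivial for $|\sigma(\tau_0)|\gtrsim 1$ (since $\Delta\leq 2\Sigma\sim 1$) and will be obtained below for small $|\sigma(\tau_0)|$ as a by-product of the sharp estimate. Throughout, normalize $\sign\sigma(\tau_0)=-1$ so the gap opens in the direction of increasing $\omega$, and work in the normal coordinates \eqref{generic edge: def of lambda and Omega}. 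The plan is to extract the sharp length $\Delta=\wht\Delta(1+O(\sigma))$ by pinning down the unique $\lambda_\ast$ at which the physical branch transitions from the real root $\wht\Omega_-$ on the gap to the complex root $\wht\Omega_+$ on the support.

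For the upper bound I would use \eqref{upper bound on gap} of Lemma~\ref{lmm:Choice of roots}: at $\lambda=\lambda_3$ we have $\Im\Omega(\lambda_3)\gtrsim|\sigma|^{1/2}$, which via \eqref{generic edge: def of lambda and Omega} gives $\Im\Theta(\omega_3)\gtrsim|\sigma|^{3/2}$ with $\omega_3=\lambda_3\wht\Delta/2$. Plugging this into \eqref{m bounded by Theta and b with Theta2-error} and using $b_x(\tau_0)=f_x(\tau_0)$ (because $\alpha(\tau_0)=0$) yields $v_x(\tau_0+\omega_3)\gtrsim|\sigma|^{3/2}-O(|\sigma|^2)>0$ for $|\sigma|$ small, so $\tau_0+\omega_3\in\supp v$ and therefore $\Delta\leq\omega_3=\wht\Delta(1+\varrho|\sigma|/2)$.

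For the lower bound the plan is to extend the labelling analysis of Lemma~\ref{lmm:Choice of roots} past $\lambda_2$ up to the first point $\lambda_\ast\in(\lambda_2,\lambda_3)$ where $\Lambda(\lambda_\ast)=1$. Such $\lambda_\ast$ exists by the intermediate value theorem (since $\Lambda(\lambda_2)<1<\Lambda(\lambda_3)$ by \eqref{Re Lambda(I_2)}--\eqref{Re Lambda(I_3 small)}), and solving $(1+\mu(\lambda_\ast))\lambda_\ast=2$ together with \eqref{1st mu bound for label selection} gives $\lambda_\ast=2+O(|\sigma|)$. On $(0,\lambda_\ast)$ we have $\Lambda\in(-1,1)$ by the choice of $\lambda_\ast$, so the branches $\wht\Omega_{-},\wht\Omega_0,\wht\Omega_+$ are real-analytic and pairwise distinct; by continuity of $\Omega$ (selection principle \textbf{SP-1}) the labelling $A$ cannot change on this interval, and since $A=-$ on $I_2$ by Lemma~\ref{lmm:Choice of roots} we conclude $\Omega=\wht\Omega_-\circ\Lambda$ is real on the whole of $(0,\lambda_\ast)$. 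Translating back, $\Theta(\omega)$ is real for $\omega\in(0,\lambda_\ast\wht\Delta/2)$.

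The main obstacle, and the part requiring the most care, is to pass from $\Im\Theta\equiv 0$ on $(0,\lambda_\ast\wht\Delta/2)$ to the conclusion $v_x\equiv 0$ there, since \eqref{m bounded by Theta and b with Theta2-error} only provides the a priori bound $\avg{v(\tau_0+\omega)}=O(|\Theta|^2+\omega)=O(|\sigma|^2)$. I would argue by contradiction: if $\Delta<\lambda_\ast\wht\Delta/2$, then $\tau_1:=\tau_0+\Delta$ is an opposite edge with $v(\tau_1)=0$ and $\sigma(\tau_1)\neq 0$ (Lemma~\ref{lmm:Vanishing quadratic term}), and Lemma~\ref{lmm:Simple edge} applied at $\tau_1$ forces $\avg{v(\tau_1+\omega')}\gtrsim(\omega'/|\sigma(\tau_1)|)^{1/2}$ for small $\omega'>0$. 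Matching this with the a priori bound $\avg{v(\tau_1+\omega')}\lesssim|\sigma(\tau_0)|^2$ valid throughout $(0,\lambda_\ast\wht\Delta/2-\Delta)$, and using the $1/3$-Hölder continuity of $\sigma$ across a gap of width $\Delta\lesssim|\sigma(\tau_0)|^3$ to bound $|\sigma(\tau_1)|\lesssim|\sigma(\tau_0)|$, one obtains $\lambda_\ast\wht\Delta/2-\Delta\lesssim|\sigma(\tau_0)|^5=O(|\sigma|)\wht\Delta$, which gives $\Delta\geq\wht\Delta(1-O(|\sigma|))$. Together with the upper bound, this yields the sharp \eqref{ell/Delta ratio}.
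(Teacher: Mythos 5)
Your upper bound is fine and matches the paper's (both rest on \eqref{upper bound on gap}), and your overall plan for the lower bound — play the smallness of $\Im\,\Omega$ on $I_2$ against square-root growth at the opposite edge $\tau_1=\tau_0+\Delta$ — is in spirit the paper's argument. But the pivotal step of your lower bound is wrong: you claim that, since $A=-$ on $(0,\lambda_\ast)$, the function $\Omega=\wht{\Omega}_-\!\circ\Lambda$ is \emph{real} there, hence $\Im\,\Theta\equiv 0$. This does not follow, because $\Lambda(\lambda)=\sign\sigma+(1+\mu(\lambda))\lambda$ is not real: $\mu(\lambda)=\nu(\wht{\Delta}\lambda/2)$ is complex (cf. \eqref{small gap: def of mu}, \eqref{identification of kappa_0 and nu(omega)}), and its imaginary part encodes exactly the quantity you are trying to show vanishes — indeed, since $b(\tau_0)=f(\tau_0)$ and $m(\tau_0)$ is real at an edge, \eqref{def of Theta(omega) at tau} shows $\Im\,\Theta(\omega)$ is a strictly positive weighted average of $v(\tau_0+\omega)$, so asserting $\Im\,\Theta\equiv0$ is the same as asserting $v\equiv 0$ on that interval, i.e.\ the claim to be proved; any non-circular argument via the branch selection only yields the paper's bound $\Im\,\Omega(\lambda)\leq \Im\,\wht{\Omega}_-(-1+\lambda)+C\abs{\lambda\,\mu(\lambda)}^{1/2}\leq C_1\abs{\sigma}^{1/2}$ on $I_2$ (this is \eqref{ell lower bound}, obtained from $A=-$ together with the $1/2$-Hölder stability \eqref{Perturbation of roots:weak bound}), not exact reality. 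The same confusion shows in your "main obstacle" paragraph: passing from $\Im\,\Theta\equiv 0$ to $v\equiv 0$ is immediate from the definition of $\Theta$, so if your reality claim were available the contradiction step would be superfluous; conversely, without it your claimed a priori bound $\avg{v(\tau_1+\omega')}\lesssim\abs{\sigma(\tau_0)}^2$ has no source. Also, your intermediate-value argument for a point with $\Lambda(\lambda_\ast)=1$ only controls $\Re\,\Lambda$, for the same reason.

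The good news is that the remainder of your scheme survives once the false input is replaced by the correct one. From \eqref{ell lower bound} you get $\avg{v(\tau_0+\omega)}\lesssim\abs{\sigma}\,\Im\,\Omega\lesssim\abs{\sigma}^{3/2}$ for $\omega$ up to $\lambda_2\wht{\Delta}/2$, and matching this against the square-root growth of $v$ just to the right of $\tau_1$ still forces $\lambda_2\wht{\Delta}/2-\Delta\lesssim\abs{\sigma}^{4}=\Ord(\sigma)\,\wht{\Delta}$, which is all you need. This is exactly how the paper closes the argument, except that it phrases the matching as the growth estimate \eqref{Im Omega growth}, proved by applying the edge-shape Lemma~\ref{lmm:Edge shape} at the base point $\tau_0+\Delta$ together with the comparison $\wht{\Delta}(\tau_0+\Delta)\lesssim\wht{\Delta}(\tau_0)$ (see \eqref{comparability of Delta's}, via $1/3$-Hölder continuity of $\sigma$ and $\psi\sim1$). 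If you insist on using Lemma~\ref{lmm:Simple edge} at $\tau_1$ instead, you must additionally check that its window of validity, $\omega'\leq c_\ast\abs{\sigma(\tau_1)}^3$, has length comparable to $\wht{\Delta}(\tau_0)$, i.e.\ you need $\abs{\sigma(\tau_1)}\gtrsim\abs{\sigma(\tau_0)}$ and not only the Hölder upper bound $\abs{\sigma(\tau_1)}\lesssim\abs{\sigma(\tau_0)}$; this can be extracted from $\Delta\gtrsim\abs{\sigma(\tau_0)}^3$ together with the (already proven) upper-bound half of the present lemma applied at $\tau_1$, but it is an extra step your sketch omits, and it is the reason the paper works with Lemma~\ref{lmm:Edge shape} rather than Lemma~\ref{lmm:Simple edge} at the opposite edge.
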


\begin{Proof}
Let $ (\delta,\varrho,\sigma_\ast)  $ be from Lemma~\ref{lmm:Choice of roots}. 
If $\sigma = \sigma(\tau_0) $ satisfies $ \abs{\sigma} \ge \sigma_\ast $, then $ \Delta = \Delta(\tau_0) \gtrsim \abs{\sigma}^3 $ by the second line of \eqref{simple edge: expansion}. On the other hand, $ \Delta \leq 2 \Sigma$ and $ \abs{\sigma} \lesssim 1 $ by definitions \eqref{def of Delta(tau0)} and \eqref{defs of sigma and psi}, respectively. Thus, we find $ \Delta \sim \abs{\sigma}^3 $. Since $ \psi = \psi(\tau_0) \lesssim 1 $ we see from \eqref{def of wht-Delta} that $ \wht{\Delta} = \wht{\Delta}(\tau_0)\gtrsim \abs{\sigma}^3 $. Thus, the lemma  holds for $\abs{\sigma} \ge \sigma_\ast $.
Therefore from now on we will assume  $ 0< \abs{\sigma} \leq \sigma_\ast $ ($ \sigma \neq 0 $ by Lemma~\ref{lmm:Edge shape}). Moreover, it suffices to consider only the case $ \sigma < 0 $ without loss of generality. 

Let us define the gap length $ \lambda_0 = \lambda_0(\tau_0) $ in the normal coordinates as
\bels{def of lambda_0}{
\lambda_0 \,:=\, \inf\setb{\lambda>0:\Im\,\Omega(\lambda) > 0 }
\,.
} 
Comparing this with \eqref{def of Delta(tau0)} shows
\bels{lambda_0 in terms of ell}{
\lambda_0 \,=\, 2\2\frac{\Delta}{\wht{\Delta}} 
\,.
}
From \eqref{upper bound on gap} we already see that $ \lambda_0 \leq \lambda_3 $, which is equivalent to
\bels{upper bound for the gap}{
\Delta \,\leq\, (\11 \2+\2 \frac{\varrho}{2}\2\abs{\sigma})\1\wht{\Delta}
\,.
} 
Since $ \varrho \sim 1 $ the estimate \eqref{ell/Delta ratio} hence follows if we prove the lower bound,
\bels{ell bounded from below}{
\Delta \,\ge\, (\11-C\abs{\sigma})\2\wht{\Delta} 
\,.
}
Using the representation \eqref{Omega in terms of labelling function and the roots} and the perturbation bound \eqref{Perturbation of roots:weak bound} we get
\bels{ell lower bound}{
\Im\,\Omega(\lambda) 
\;&=\; 
\Im\,\wht{\Omega}_-(-1+\lambda) + \Ord\bigl(\2\abs{\1\lambda\1\mu(\lambda)}^{1/2}\bigr)
\\
&\leq\;
0 \,+\,  C_1\1\abs{\sigma}^{1/2}
\,,\qquad
\forall\,\lambda \in I_2
\,.
}
We will show that $ \lambda \mapsto \Im\,\Omega(\lambda) $, grows at least like a square root function on the domain $ \sett{\lambda: \Im\,\Omega(\lambda) \leq c\1\eps} $. 
More precisely,
we will show that if $ \lambda_0 \leq 2  $, then
\bels{Im Omega growth}{
\Im\,\Omega(\lambda_0+\xi\1) \,\gtrsim\; \xi^{\11/2}
\,,
\qquad
0\leq \xi \leq 1
\,.
} 
Assuming that \eqref{Im Omega growth} is known, the estimate \eqref{ell bounded from below} follows from \eqref{ell lower bound} and \eqref{Im Omega growth}. 
Indeed, if $ \lambda_0 \ge \lambda_2 = 2 -\varrho\abs{\sigma} $ then  \eqref{ell bounded from below}  is immediate as $ \varrho \sim 1 $. On the other hand, if $ \lambda_0 < \lambda_2 $, then 
\[
c_0\1(\lambda_2-\lambda_0)^{1/2} 
\,\leq\, 
\Im\,\Omega(\lambda_2) 
\,\leq\, 
C_1\abs{\sigma}^{1/2}
 \,.
\]
 by \eqref{Im Omega growth} and \eqref{ell lower bound}. 
Solving this for $ \lambda_0 $ yields 
\[
\lambda_0 \,\ge\, \lambda_2-(C_1/c_0)^2\abs{\sigma} \,\ge\,2-C\2\abs{\sigma}
\,,
\]
where $ \lambda_2 =2 -\varrho\abs{\sigma} $ with $ \varrho \sim 1 $ (cf. \eqref{def of lambda_k's}) has been used to get the last estimate.
Using \eqref{lambda_0 in terms of ell} we see that this equals \eqref{ell bounded from below}. Together with \eqref{upper bound for the gap} this proves \eqref{ell/Delta ratio}.

In order to prove the growth estimate \eqref{Im Omega growth}, we express it in the original coordinates $ (\omega,v(\tau_0+\omega)) $ using \eqref{generic edge: def of lambda and Omega}, \eqref{def of Theta(omega) at tau}, $ v(\tau_0+\Delta\1) = 0 $, and $ f,\abs{m} \sim 1 $ (note that $ b = f $ since $ v(\tau_0) = 0 $):
\bels{tau_0 basepoint}{
v(\tau_0+\Delta+\wti{\omega}\1) 
\,\gtrsim\,
\min\setB{\bigl(\21+\wht{\Delta}\msp{-1}(\tau_0)^{-1/6}\bigr)\2\wti{\omega}^{1/2},\2\wti{\omega}^{1/3}}
\,,\qquad
0\leq \wti{\omega} \leq \delta\,.
} 
Applying Lemma~\ref{lmm:Edge shape} with $  \tau_0+\Delta $ as the base point yields
\bels{tau_1 basepoint}{
v(\tau_0+\Delta+\wti{\omega}\1) 
\,\sim\, \min\setB{\bigl(\21+\wht{\Delta}\msp{-1}(\tau_0\msp{-2}+\Delta\2)^{-1/6}\bigr)\2\wti{\omega}^{1/2},\2\wti{\omega}^{1/3}}
\,,\quad
0\leq \wti{\omega} \leq \delta
\,.
} 
The relation \eqref{tau_1 basepoint} implies \eqref{tau_0 basepoint}, provided we show 
\bels{comparability of Delta's}{
\wht{\Delta}(\tau_0+\Delta) \,\lesssim\, \wht{\Delta}(\tau_0)
\,,
\quad\text{for}\quad \Delta \lesssim \wht{\Delta}(\tau_0)
\,.
}

From the definition \eqref{def of wht-Delta} we get
\bels{Delta at tau_0+ell}{
\wht{\Delta}(\tau_0+\Delta\1) 
\,&\sim\,  \frac{\abs{\1\sigma(\tau_0+\Delta\1)}^3}{\psi(\tau_0+\1\Delta\1)^2}\,.
}
Using the upper bound \eqref{upper bound for the gap} and \eqref{Delta upper bound in sigma} we see that 
\[
\Delta \,\lesssim\, \wht{\Delta}(\tau_0) \,\sim\, \abs{\sigma(\tau_0)}^3
\,, 
\] 
for sufficiently small $ \sigma_\ast \sim 1 $.
Since $ \sigma(\tau) $ is $1/3$-H\"older continuous in $ \tau $, we get
\bels{sigma at tau_0+ell}{
\abs{\1\sigma(\tau_0+\Delta\1)} 
\,\leq\, \abs{\1\sigma(\tau_0)} \2+\2 C\1\Delta^{\!1/3} 
\lesssim\, \abs{\1\sigma(\tau_0)}
\,.
}
From the stability of the cubic \eqref{stability of shape cubic} it follows that for small  enough $ \sigma_\ast \sim 1\, $ we have
\[
\psi(\tau_0\!+\Delta\1) \,\sim\, \psi(\tau_0)\,\sim\, 1
\,.
\]
Plugging this together with \eqref{sigma at tau_0+ell} into \eqref{Delta at tau_0+ell} yields \eqref{comparability of Delta's}.
\end{Proof}

We have now covered all the parameter regimes of $ \sigma $ and $ \psi $ satisfying \eqref{stability of shape cubic}. Combining the preceding lemmas yields the expansion around general base points $ \tau_0 $ where $ v(\tau_0) = 0 $. 
We will need the following representation of the edge shape function \eqref{def of Psi_edge} below:
\bels{Psi_Edge Puiseux-series}{
\Psi_{\!\mrm{edge}}(\lambda) \,=\, \frac{\2\lambda^{1/2}\msp{-10}}{ 3 }\,(\21+\wti{\Psi}(\lambda)) 
\,,
\qquad
\lambda \ge 0
\,,
}
where the smooth function $ \wti{\Psi} : [\10,\infty)\to \R $ has uniformly bounded derivatives, and $ \wti{\Psi}(0) = 0 $.

\begin{Proof}[Proof of Proposition~\ref{prp:Vanishing local minimum}]
Let $ \tau_0 \in \supp v $ satisfy $ v(\tau_0) = 0 $.
If $ \sigma(\tau_0) = 0 $, then the expansion \eqref{cusp expansion - 1st time} follows directly from Lemma~\ref{lmm:Vanishing quadratic term}. 

In the case $ 0 < \abs{\sigma(\tau_0)} \leq \sigma_\ast $  \eqref{Im-Omega at the close edge} in Lemma~\ref{lmm:Edge shape} yields \eqref{edge expansion} with $ \wht{\Delta} = \wht{\Delta}(\tau_0) $ in place of $ \Delta = \Delta(\tau_0) $.
Here, the threshold $ \sigma_\ast \sim 1 $ is fixed by Lemma~\ref{lmm:Choice of roots}.
We will show that replacing $ \wht{\Delta}  $ with $ \Delta $ in \eqref{edge expansion} yields an error that is so small that it can be absorbed into the sub-leading order correction of  \eqref{edge expansion}. 
Since the smooth auxiliary function $ \wti{\Psi} $ in the representation \eqref{Psi_Edge Puiseux-series} of $ \Psi_{\mrm{edge}} $ has uniformly bounded derivatives, we get for every $ 0 \leq \lambda \lesssim 1 $,
\bels{Psi_edge: replace Delta by ell}{
\Psi_{\!\mrm{edge}}(\1(1+\epsilon\1)\1\lambda\1) \,=\, (\11+\epsilon)^{1/2} 
\Psi_{\!\mrm{edge}}(\lambda)
\,+\,
\Ord\bigl(\,\epsilon\,\min\setb{\lambda^{3/2}\!,\2\lambda^{1/3}}\bigr)
\,,
\quad\lambda \ge 0\,,
} 
provided the size $ \abs{\epsilon} \lesssim 1 $ of $ \epsilon \in \R $ is sufficiently small.
On the other hand, if $ \abs{\lambda} \gtrsim 1 $ then \eqref{Psi_edge: replace Delta by ell} follows from  \eqref{Perturbation of roots: away from opposite critical point} of Lemma~\ref{lmm:Stability of roots - neg}.
Now by  Lemma~\ref{lmm:Size of small gap} we have $ \wht{\Delta} = (1+\abs{\sigma}\kappa\1)\2\Delta $, where $ \Delta = \Delta(\tau_0) $ and the constant $\kappa \in \R $ is independent of $ \lambda $, and can be assumed to satisfy $ \abs{\kappa} \leq 1/2 $ (otherwise we reduce $ \sigma_\ast \sim 1 $). 
Thus applying \eqref{Psi_edge: replace Delta by ell} with $  \epsilon = \abs{\sigma}\1\kappa = \Ord(\2\Delta^{\!1/3})$,  yields
\[
\abs{\sigma}\,\Psi_{\!\mrm{edge}}\biggl(\frac{\omega}{\wht{\Delta}}\biggr) 
\,=\,
\frac{(\21+\abs{\sigma}\1\kappa\1)^{1/2}\abs{\sigma}}{\Delta^{\!1/3}\!}
\;\Delta^{\!1/3}\2\Psi_{\!\mrm{edge}}\biggl(\frac{\omega}{\Delta}\biggr)
\,+\,
\Ord\biggl(\,\min\setbb{\frac{\2\abs{\omega}^{3/2}\msp{-20}}{\Delta^{\!5/6}\msp{-20}}\msp{15},\,\abs{\omega}^{\11/3}\!}\biggr)
\,,
\]
for $ \omega \ge 0 $. 
Here, the error on the right hand side is of smaller size than the subleading order term in the expansion \eqref{edge expansion}. 

From \eqref{Theta provides leading order change of m} we identify the formula for $ h_x $, in the case  $ 0< \abs{\sigma} \leq \sigma_\ast $:
\bels{identification of h_x}{
h_x \,:=\, 
\begin{cases}
\frac{ 2(\11+\abs{\sigma}\1\kappa\2)^{1/2}\!}{\sqrt{3}\2\psi}\frac{\abs{\sigma}}{\,\Delta^{\!1/3}\msp{-10}}\msp{8}\abs{m_x}\1f_x 
\quad
&\text{when}\quad0 < \abs{\sigma} \leq \sigma_\ast\,;
\\
 \frac{3\2\Delta^{\!1/6}\!}{\sqrt{\abs{\sigma}}}\,  h'_x
&\text{when}\quad\abs{\sigma} >\sigma_\ast\,.
\end{cases}
} 
For $ \abs{\sigma} \leq \sigma_\ast $ we used \eqref{Im-Omega at the close edge}.
In the case $ \abs{\sigma} > \sigma_\ast $, the function $ h'_x $ is from \eqref{simple edge: expansion}, and the function $ h $ is defined such that  
\bels{}{
h'_x\,
\absB{\frac{\omega}{\,\sigma}}^{1/2} 
\!=\, 
h_x\,
\Delta^{\!1/3}\2\Psi_{\!\mrm{edge}}\biggl(\frac{\omega}{\Delta}\biggr)
\,+\,
\Ord\biggl( \frac{\abs{\omega}^{3/2}}{\,\Delta^{\!7/6}}\biggr)
\,.
}
Here, the second term originates from the representation \eqref{Psi_Edge Puiseux-series} of $ \Psi_{\!\mrm{edge}} $. 
This proves \eqref{edge expansion}.

Finally, suppose $ \tau_0 $ and $ \tau_1 $ are the opposite edges of $ \supp v $, separated by a small gap of length $ \Delta \lesssim \sigma_\ast^3 $, between them. 
Now, $ f(\tau) $, $ \abs{m(\tau)} $ and $ \psi(\tau) $ are $1/3$-H\"older continuous in $ \tau $, and satisfy $ f,\1\abs{m},\1\psi \sim 1 $. 
Thus, the terms constituting $ h_x $ in the case $ \abs{\sigma} \leq \sigma_\ast $ in \eqref{identification of h_x}  satisfy 
\bels{f, abs-m, psi compared at opposite edges}{ 
\frac{f_x(\tau_1)}{f_x(\tau_0)} \,=\, 1+\Ord(\2\Delta^{1/3})
\,,
\quad
\frac{\abs{m_x(\tau_1)}}{\abs{m_x(\tau_0)}} \,=\, 1+\Ord(\2\Delta^{1/3})
\,,
\quad
\frac{\psi(\tau_1)}{\psi(\tau_0)} \,=\, 1+\Ord(\2\Delta^{1/3})\,.
}
Of course, $ \Delta = \Delta(\tau_0) = \Delta(\tau_1) $.   Moreover, by Lemma~\ref{lmm:Size of small gap},
\bels{Delta comparison at opposite edges}{
\frac{\wht{\Delta}(\tau_1)}{\wht{\Delta}(\tau_0)} \,=\, 1\1+\Ord(\1\Delta^{1/3})\,.
}
Using \eqref{def of wht-Delta} we express $ \abs{\sigma} $ in terms of $ \wht{\Delta}, f,\abs{m},\psi $, and hence \eqref{f, abs-m, psi compared at opposite edges} and \eqref{Delta comparison at opposite edges} imply
\bels{abs-sigma compared at opposite edges}{
\frac{\abs{\sigma(\tau_1)}}{\abs{\sigma(\tau_0)}} \,=\, 1+\Ord(\2\Delta^{1/3})
\,.
}
Thus, combining \eqref{f, abs-m, psi compared at opposite edges}, \eqref{Delta comparison at opposite edges}, and \eqref{abs-sigma compared at opposite edges}, we see from \eqref{identification of h_x}  that $ h(\tau_1) = h(\tau_0) + \Ord_\BB(\2\Delta^{1/3})$. This proves the last remaining claim of the proposition.
\end{Proof}

\section{Proofs of Theorems~2.6 and 2.11}

Pick $ \eps > 0 $, and recall the definitions \eqref{def of DDe} and \eqref{def of MMe} of $ \DDe $ and $ \MMe $, respectively.
In the following we split $ \MMe $ into two parts:
\bels{}{
\MM^{(1)} &:= \partial \supp v
\\
\MMe^{(2)} &:= \MMe\backslash \partial \supp v
\,.
}

\begin{Proof}[Proof of Theorem~\ref{thr:Shape of generating density near its small values}]
Combining Proposition~\ref{prp:Non-zero local minimum} and Proposition~\ref{prp:Vanishing local minimum} shows  that there are constants $ \eps_\ast ,\delta_1,\delta_2 \sim 1$ such that the following hold:
\begin{itemize}
\item[1.] 
If $ \tau_0 \in \MM^{(1)} $, then $ \sigma(\tau_0) \neq 0  $ and 
$v_x(\tau_0 + \omega\1) \,\ge\, c_1\abs{\omega}^{\11/2} $, for $ 0 \leq \sign \sigma(\tau_0)\,\omega \leq \delta_1  $.
\item[2.] 
If $ \tau_0 \in \MM_{\eps_\ast}^{(2)} $, then $ v_x(\tau_0 + \omega\1) \,\ge\, c_2\,\bigl(\,v_x(\tau_0) + \abs{\omega}^{\11/3}\,\bigr) $, for $ -\1\delta_2 \leq \omega\leq \delta_2$. 
\end{itemize}
In the case 1 each connected component of $ \supp v $ must be at least of length $2\1\delta_1 \sim 1 $. This implies \eqref{defining property of of alpha_i and beta_i}. 
In particular, by combining \eqref{bound on supp v} and \eqref{operator norms of S are comparable} 
we see that $ \supp v $ is contained in an interval of length $ 2\1\Sigma $, and therefore the number of the connected components $ K' $ satisfies $ K' \sim 1 $. 

In order to prove \eqref{avg-v < eps contained in nn} and \eqref{expansion of v around tau_0} we may assume that $ \eps \leq \eps_\ast $ and $ \abs{\omega} \leq \delta $ for some $ \eps_\ast,\delta \sim 1 $. 
Indeed, \eqref{avg-v < eps contained in nn} becomes trivial when $ C\eps^3 \ge 2\1\Sigma  $. Similarly, if $ \avg{v(\tau_0)} + \abs{\omega} \gtrsim 1 $, then $ \avg{v(\tau_0)} + \Psi(\omega) \sim 1 $ and thus the $ \Ord(\,\cdots) $-term in \eqref{expansion of v around tau_0} is $ \Ord(1) $. Since $ v \leq \nnorm{m}_\R \sim 1 $, the expansion \eqref{expansion of v around tau_0} is hence trivial.

Obviously the bounds in the cases 1. and 2. continue to hold if we reduce the parameters $ \eps_\ast,\delta_1,\delta_2 $.
We choose $ \eps_\ast \sim 1 $ so small that $ (\eps_\ast/c_1)^2 \leq \delta_1 $ and $  (\eps_\ast/c_2)^3 \leq \delta_2 $. 
Let us define the expansion radius around $ \tau_0 \in \MMe $ for every $ \eps \leq \eps_\ast $
\bels{}{
\delta_\eps(\tau_0) := 
\begin{cases}
(\eps/c_1)^2\quad&\text{if }\tau_0 \in \MM^{(1)}
\\
(\eps/c_2)^3\quad&\text{if }\tau_0 \in \MMe^{(2)}\,,
\end{cases}
}
and the corresponding expansion domains
\bels{}{
I_\eps(\tau_0) := 
\begin{cases}
\setb{\tau_0+\sign \sigma(\tau_0)\2\xi: 0\leq \xi \leq \delta_\eps(\tau_0)}
\quad&\text{if }\tau_0 \in \MM^{(1)}
\\
\bigl[\1\tau_0-\delta_\eps(\tau_0)\1,\2\tau_0 +\delta_\eps(\tau_0)\bigr]
&\text{if }\tau_0 \in \MMe^{(2)}\,.
\end{cases}
}
If $ \tau \in I_\eps(\tau_0) $ for some $ \tau_0 \in \MMe $ then either $ v_x(\tau) \ge c_1 \abs{\tau-\tau_0}^{1/2} $ or $ v_x(\tau) \ge c_2 \abs{\tau-\tau_0}^{1/3} $ depending on whether $\tau_0 $ is an edge or not. 
In particular, it follows that  
\bels{avg-v above eps on I-edge}{
\avg{v(\tau)} \,\ge\,\eps\,,
\qquad
\forall\,\tau \in  \partial I_\eps(\tau_0) \backslash \partial\supp v
\,.
}
This implies that each connected component of $ \DDe $ is contained in the expansion domain $ I_\eps(\tau_0) $ of some $ \tau_0 \in \MMe $, i.e., 
\bels{DD included into MM-environments}{
\DDe
\;\subset 
\bigcup_{\tau_0 \ins \MMe} I_\eps(\tau_0)
\,.
}
In order to see this formally let $ \tau \in \DDe\backslash \MMe $ be arbitrary, and define $ \tau_0 \in \MMe $ as the nearest point of $ \MMe $ from $ \tau $, in the direction,
\[
\theta := - \sign \partial_\tau \avg{v(\tau)}\,,
\] 
where $ \avg{v} $ decreases. In other words, we set
\bels{def of nearest point tau0}{
\tau_0 &:= \tau + \theta\2\xi_0\,,
\qquad\text{where}\qquad
\xi_0 := \inf \setb{\xi > 0 : \tau+\theta\2\xi \in \MMe}
\,.
}
From \eqref{def of nearest point tau0} it follows that if $ \tau_0 \in \partial \supp v $, then $ \supp v $ continues in the direction $ \sign (\tau-\tau_0) = -\theta $ from $ \tau_0 $.
We show that $ \abs{\tau-\tau_0} \leq \delta_\eps(\tau_0) $.
To this end, suppose $ \abs{\tau-\tau_0} > \delta_\eps(\tau_0)$, and define
\bels{tau_1 from tau and tau_0}{ 
\tau_1 \,:=\, \tau_0 \,+\, \sign (\tau-\tau_0)\,\delta_\eps(\tau_0)
\,,
}
as the point between $ \tau $ and $ \tau_0 $ exactly at the distance $ \delta_\eps(\tau_0) $ away from $ \tau_0 $. Now, $\tau_1 \notin \partial \supp v $ as otherwise $ \tau_0 $ would not be the nearest point of $ \MMe $ (cf. \eqref{def of nearest point tau0}).
On the other hand, by definition we have $ \tau_1 \in \partial I(\tau_0) $. Thus, the estimate \eqref{avg-v above eps on I-edge} with $ \tau_1 $ in place of $ \tau_0 $ yields
\[
\avg{v(\tau_1)} \,\ge\, \eps \,\ge\, \avg{v(\tau)}
\,.
\]
Since $ \avg{v} $ is continuously differentiable on the set where $ \avg{v} > 0 $ and $ (\tau_1-\tau)\,\partial_\tau \avg{v(\tau)} $ $ < 0 $ by \eqref{def of nearest point tau0} and \eqref{tau_1 from tau and tau_0}, we conclude that $ \avg{v} $  has a local minimum at some point $ \tau_2 \in \MMe $ lying between $ \tau $ and $ \tau_1 $. But this contradicts \eqref{def of nearest point tau0}. 
As $ \tau \in \DDe\backslash \MMe$  was arbitrary  \eqref{DD included into MM-environments} follows.

From Corollary~\ref{crl:Location of non-zero minima} we know that for every  $ \tau_1,\tau_2 \in \MMe^{(2)} $, either
\bels{dist between minima - 2}{
\abs{\tau_1-\tau_2} \,\ge\, c_3
\qquad\text{or}\qquad
\abs{\tau_1-\tau_2} \,\leq\, C_3\1\eps^4
\,,
}
holds.  
Let $ \sett{\gamma_k} $ be a maximal subset of $ \MMe^{(2)} $ such that its elements are separated at least by a distance $ c_3 $. Then the set $ \MM :=  \partial \supp v \cup \sett{\gamma_k} $ has the properties stated in the theorem. In particular, 
\[
\DDe \;\subset
\bigcup_{\tau_0\ins\partial \supp v} \msp{-15}I_\eps(\tau_0)
\;\cup\,
\bigcup_k\,
\bigl[\1\gamma_k-C\eps^3,\2\gamma_k+C\eps^3\1\bigr]
\,,
\]
since $ \MMe^{(2)} +[-C\eps^3,\2C\eps^3\1] \,\subset\, \cup_k [\2\gamma_k-2\1C\eps^3,\2\gamma_k+2\1C\eps^3\1] $ for sufficiently small $ \eps \sim1 $.
This completes the proof of Theorem~\ref{thr:Shape of generating density near its small values}.  
\end{Proof}

Next we show that the support of a bounded generating density is a single interval provided the rows of $ S $ can not be split into two well separated subsets. We measure this separation using the following quantity
\bels{def of xi_S(kappa)}{
\quad \xi_S(\kappa) \,:=\, 
\sup \Biggl\{\,
\inf_{\substack{x\1\in\1 A \\ y \1\notin\1 A}}\Bigl(\, \abs{\1a_x-a_y}+ \norm{S_x-S_y}_1\Bigr) \,:\,\kappa \leq \Px(A)\leq 1-\kappa
,\, A \subset \Sx\Biggr\}
\,
}
 for $\kappa \ge 0 $.

\begin{lemma}[Generating density supported on single interval]
\label{lmm:Generating density supported on single interval}
Assume $ S $ satisfies {\bf A1-3} and $ \nnorm{m}_\R \leq \Phi $ for some  $ \Phi < \infty $.
Considering $ \Phi $ as an additional model parameter, there exist $ \xi_\ast,\kappa_\ast \sim 1 $, such that under the assumption,
\bels{xi_S(kappa_ast) leq xi_ast}{ 
\xi_S(\kappa_\ast) \,\leq\, \xi_\ast
\,,
} 
the conclusions of Theorem~\ref{thr:Generating density supported on single interval} hold.
\end{lemma}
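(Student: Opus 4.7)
The plan is to invoke Theorem~\ref{thr:Shape of generating density near its small values} to decompose $\supp v=\bigcup_{i=1}^{K'}[\alpha_i,\beta_i]$ into $K'\sim 1$ components each of length $\sim 1$, and then argue by contradiction that the separation hypothesis \eqref{xi_S(kappa_ast) leq xi_ast} forces $K'=1$. Once $K'=1$ is in hand, the remaining assertions \eqref{single interval} follow by specialising Theorem~\ref{thr:Shape of generating density near its small values} to a single component: the endpoints $\alpha=\alpha_1$ and $\beta=\beta_1$ are edges whose ``adjacent gap'' has length $1$ by the convention in \eqref{left edge}--\eqref{right edge}, so \eqref{scaling of scaled Psi_edge} yields the square-root edge expansions \eqref{single interval: edges}; the interior lower bound \eqref{single interval: bulk} follows by combining those expansions with the uniform $1/3$-H\"older continuity of $v$ from Theorem~\ref{thr:Regularity of generating density}(iii); and the uniform $1/2$-H\"older continuity of $v$ then comes from the square-root edge behaviour together with the analytic derivative bound of Theorem~\ref{thr:Regularity of generating density}(ii) inside the support.

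Suppose now for contradiction that $K'\geq 2$ and fix a gap $(\beta_i,\alpha_{i+1})$ together with an interior point $\tau_0$. Because $\tau_0\notin \supp v$ one has $v(\tau_0)=0$, so $m(\tau_0)$ is real-valued with $|m_x(\tau_0)|\sim 1$ by Proposition~\ref{prp:Estimates when solution is bounded}(i); since $|m_x(\tau)|\gtrsim 1$ throughout the closed gap, the sign $p_x:=\sign m_x(\tau)$ is constant in $\tau\in[\beta_i,\alpha_{i+1}]$ for each $x$. This induces the partition
\begin{equation*}
A_\pm\;:=\;\{x\in\Sx\;:\;\pm\, m_x(\tau_0)>0\}.
\end{equation*}
Proposition~\ref{prp:Vanishing local minimum}(i) gives $\sigma(\beta_i)<0<\sigma(\alpha_{i+1})$, and since $\sigma(\tau)=\avg{p\,f(\tau)^3}$ with $f_x(\tau)\sim 1$, the sign flip forces both $\pi(A_+)$ and $\pi(A_-)$ to exceed some $\kappa_\ast\sim 1$ that depends only on the model parameters: if, say, $\pi(A_+)<\kappa_\ast$ were too small, one would have $\sigma(\alpha_{i+1})\leq -c+C\kappa_\ast<0$, contradicting positivity at $\alpha_{i+1}$.

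Apply \eqref{xi_S(kappa_ast) leq xi_ast} to $A=A_+$ to produce a pair $(x,y)\in A_+\times A_-$ with $\norm{S_x-S_y}_1\leq \xi_\ast$. Componentwise subtraction of the QVE yields
\begin{equation*}
\frac{1}{m_x(\tau)}-\frac{1}{m_y(\tau)}\;=\;(a_y-a_x)\,+\,\avg{\,S_y-S_x,\,m(\tau)\,},\qquad \tau\in\eCp,
\end{equation*}
in which the $(a_y-a_x)$ term is $\tau$-independent. Evaluating at $\tau=\beta_i$ and $\tau=\alpha_{i+1}$ (both real since $v=0$ there) and subtracting annihilates the $a$-dependence, leaving
\begin{equation*}
\Bigl[\tfrac{1}{m_x}-\tfrac{1}{m_y}\Bigr](\alpha_{i+1})-\Bigl[\tfrac{1}{m_x}-\tfrac{1}{m_y}\Bigr](\beta_i)\;=\;\avg{\,S_y-S_x,\,m(\alpha_{i+1})-m(\beta_i)\,},
\end{equation*}
whose modulus is at most $\xi_\ast\,\norm{m(\alpha_{i+1})-m(\beta_i)}\lesssim \xi_\ast$.

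The main obstacle is to produce a matching lower bound of order $1$ for the left-hand side, uniformly over close-row pairs $(x,y)\in A_+\times A_-$; the naive estimate $|1/m_x-1/m_y|\gtrsim 1$ available at each single edge does not survive the subtraction because both edge values have the same sign. To force an $\Ord(1)$ variation across the gap I would exploit the derivative formula \eqref{equation for derivative of v}, which on the gap reduces to $\partial_\tau m=|m|(1-F)^{-1}|m|$ with $\norm{(1-F)^{-1}}\lesssim 1$ outside $\supp v$ by the spectral gap \eqref{scaling of the gap for bounded m} of $F$. The lower bound $\pi(A_\pm)\geq \kappa_\ast$ from step 2 then decouples the evolution of $m_x$ for $x\in A_+$ from that of $m_y$ for $y\in A_-$ at order one, yielding the required definite change of $\frac{1}{m_x}-\frac{1}{m_y}$ between the two edges of the gap. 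Choosing $\xi_\ast\sim 1$ strictly below the resulting lower bound then contradicts the displayed identity above and completes the reduction to $K'=1$.
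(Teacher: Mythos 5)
Your contradiction argument collapses at the step you yourself flag as the ``main obstacle'', and it cannot be repaired along the route you propose. After subtracting the QVE componentwise, the double difference $\bigl[\tfrac{1}{m_x}-\tfrac{1}{m_y}\bigr](\alpha_{i+1})-\bigl[\tfrac{1}{m_x}-\tfrac{1}{m_y}\bigr](\beta_i)$ is \emph{identically} equal to $\avg{\,S_y-S_x,\,m(\alpha_{i+1})-m(\beta_i)\,}$, so its modulus is automatically $\lesssim \norm{S_x-S_y}_1$; there is no independent order-one lower bound to be extracted from $\partial_\tau m=|m|(1-F)^{-1}|m|$ or from $\Px(A_\pm)\gtrsim 1$, because the quantity you want to bound from below \emph{is} the small quantity on the right. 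Concretely, if $S_x=S_y$ but $a_x\neq a_y$ (e.g.\ the deformed Wigner example of Section~\ref{sec:Effects of non-constant function a}, where a gap opens and the two $a$-classes have opposite signs of $\Re\,m$ across it), the pair $(x,y)$ can lie in $A_+\times A_-$ while your double difference vanishes identically. The paper's proof avoids this trap by \emph{not} subtracting at two points: at any single $\tau$ in the low-density region, the opposite signs of $\Re\,m_x$ and $\Re\,m_y$ together with $|m|\sim 1$ already give $\absb{\tfrac{1}{m_x(\tau)}-\tfrac{1}{m_y(\tau)}}\gtrsim 1$ (this is the chain \eqref{S_x-S_y estimate for gap}), and via the QVE this is controlled by $\abs{a_x-a_y}+\Phi\norm{S_x-S_y}_1$; the $a$-difference is simply carried along (it is present in the hypothesis \eqref{xi_S(0) leq xi_ast} of Theorem~\ref{thr:Generating density supported on single interval}) rather than eliminated --- your attempt to cancel it is precisely what destroys the lower bound.

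The reduction ``once $K'=1$ the remaining assertions follow'' is also incomplete. Theorem~\ref{thr:Shape of generating density near its small values} with $K'=1$ still permits interior cusps ($\avg{v(\gamma_k)}=0$) and small nonzero local minima inside a connected support, so connectedness alone does not give the interior lower bound \eqref{single interval: bulk}, nor the uniform $1/2$-H\"older continuity. The paper therefore treats every internal low-density component of $\setb{\tau:\sup_x v_x(\tau)\leq\eps_\ast}$ at once --- whether it contains a gap, a cusp, or a nonzero minimum --- by producing a point $\tau_0$ in it with $\abs{\sigma(\tau_0)}\lesssim\eps_\ast^2$ (via Lemma~\ref{lmm:Monotonicity}, the sign change of $\sigma$ between two edges, or Proposition~\ref{prp:Vanishing local minimum}), deducing $\Px(A)\sim 1$ and $\norm{S_x-S_y}_1\sim 1$, and hence that under \eqref{xi_S(kappa_ast) leq xi_ast} no such internal component exists; this yields $\abs{\sigma}+\avg{v}\sim 1$ everywhere, which gives both \eqref{single interval: bulk} and, through Corollary~\ref{crl:Bound on derivative}, the $1/2$-H\"older continuity. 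Your substitutes do not work: the $1/3$-H\"older bound of Theorem~\ref{thr:Regularity of generating density}(iii) only limits oscillation and cannot produce a lower bound on $v$, and the derivative bound of Theorem~\ref{thr:Regularity of generating density}(ii), of size $\avg{v}^{-3}$, blows up like $\omega^{-3/2}$ near a square-root edge, which is far too weak for uniform $1/2$-H\"older continuity.
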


In Chapter \ref{chp:Examples} we present very simple examples of $ S $ which do not satisfy \eqref{xi_S(kappa_ast) leq xi_ast} and the associated generating density  $ v $ is shown to have a non-connected support. 

\begin{Proof}[Proof of Theorem~\ref{thr:Generating density supported on single interval}]
Let $ \xi_\ast,\kappa_\ast \sim 1 $ be from Lemma~\ref{lmm:Generating density supported on single interval}. Note that \eqref{xi_S(0) leq xi_ast} is equivalent to $ \xi_S(0) \leq \xi_\ast $, and $ \xi_S(\kappa') \leq \xi_S(\kappa) $, whenever $ \kappa' > \kappa $. Thus \eqref{xi_S(0) leq xi_ast} implies $ \xi_S(\kappa_\ast) \leq \xi_S(0) \leq \xi_\ast $, and hence the theorem follows from the lemma.
\end{Proof}

\begin{Proof}[Proof of Lemma~\ref{lmm:Generating density supported on single interval}] 
Since $ \nnorm{m}_\R \leq \Phi $  Theorem~\ref{thr:Shape of generating density near its small values} yields the expansion  \eqref{single interval: edges1} and  \eqref{single interval: edges2}  around the extreme edges  $ \alpha := \inf \supp\,v $ and $ \beta := \sup \supp\, v $, respectively.
In particular, there exists $ \delta_1 \sim 1 $ such that
\bels{v_x ext edge expansion}{
v_x(\alpha+\omega)\,\ge\,c_1\abs{\omega}^{1/2}
\quad\text{and}\quad v_x(\beta-\omega)  \,\ge\,c_1\abs{\omega}^{1/2}
\,,  
\quad\text{for}\quad
\omega \in[\10\1,\delta_1]
\,.
}
Let us write 
\[
m_x(\tau) \,=\, p_x(\tau)\1 u_x(\tau) + \cI\2v_x(\tau)
\]
where $ p_x = \sign \Re\,m_x \in \sett{-1,+1}$ and $ u_x := \abs{\1\Re\,m_x}\1, v_x = \Im\,m_x \ge 0  $.
By combining the uniform bound $ \nnorm{m}_\R \leq \Phi $ with \eqref{unif bound of m} we see that $ \abs{m_x} \sim 1 $. 
In particular, there exists $ \eps_\ast \sim 1 $ such that 
\bels{u or v is above eps}{
\max\sett{u_x,v_x} \,\ge\, 2\1\eps_\ast
\,.
} 
Since $ m_x(\tau)$ is continuous in $ \tau $, the constraint \eqref{u or v is above eps} means that  $ \Re\,m_x(\tau) $ can not be zero on the domain
\[
\mathbb{K} \,:=\,
\setB{\tau\in [-\Sigma,\2\Sigma\2]:\sup_x v_x(\tau) \leq \eps_\ast}
\,.
\]
If $ I $ is a connected component of $ \mathbb{K} $, then there is $ p_x^I \in \sett{-1,+1}$, $ x \in \Sx $, such that
\[
p(\tau) \,=\, p^I\,,\qquad\forall\2\tau \in I
\,.
\] 
Using \eqref{v_x ext edge expansion} we choose $ \eps_\ast \sim 1 $ to be so small that $ v_x(\alpha+\delta_1) $ and $ v_x(\alpha-\delta_1)$ are both larger than $ \eps_\ast $. It follows that $ \supp v $ is not contained in $ \KK $. Furthermore, we choose $ \eps_\ast $ so small that Lemma~\ref{lmm:Monotonicity} applies, i.e., $ v_x > 0 $ grows monotonically in $ \KK $ when $ \Pi \ge \Pi_\ast $.

We will prove the lemma by showing that if some connected component $ I $ of $ \KK $ satisfies,
\bels{def of cc I}{ 
I = [\tau_1,\tau_2] \subset  \mathbb{K}
\,, 
\qquad\text{where}\quad
\alpha+\delta_1 \leq \tau_1 < \tau_2 \leq \beta-\delta_1 
\,,
}
then the set
\bels{def of set A}{
A = A^I := \setb{x \in \Sx : p^I_x =+1}
}
satisfies
\begin{subequations}
\label{xi_s(kappa_ast) ge xi_ast}
\begin{align}
\label{Px(A) sim 1}
\Px(A) \,&\sim\, 1
\\
\label{L1-norm of S_x-S_y is sim 1} 
 \abs{a_x-a_y} +\norm{S_x-S_y}_1 \,&\sim\, 1\,,\quad 
x \in A\,,\; y \notin A
\,.
\end{align}
\end{subequations}
The estimates \eqref{xi_s(kappa_ast) ge xi_ast} imply $ \xi_S(\kappa_\ast) \ge \xi_\ast $, with $ \kappa_\ast = \Px(A) $ and $ \xi_\ast \sim 1 $. In other words, under the assumption \eqref{xi_S(kappa_ast) leq xi_ast} each connected component of $\mathbb{K} $ contains either $ \alpha $ or $ \beta $. Together with  \eqref{single interval: edges1} and \eqref{single interval: edges2}  this proves the remaining estimate \eqref{single interval: bulk} of the lemma, and the $ \supp v $ is a single interval.

In order to prove \eqref{Px(A) sim 1} we will show below that there is a point $ \tau_0 \in I $ such that 
\bels{sigma(tau_0) lesssim eps^2}{
\abs{\1\sigma(\tau_0)} \,\leq\,C_0\1\eps_\ast^2
\,,
}
where $ \sigma := \avg{\1p\1f^3} $ was defined in \eqref{defs of sigma and psi}. 
Let $ f_- := \inf_x f_x $ and $ f_+ := \sup_x f_x $. As $ m $ is uniformly bounded, Proposition~\ref{prp:Estimates when solution is bounded} shows that $ f_\pm \sim 1 $. Hence, \eqref{sigma(tau_0) lesssim eps^2} yields bounds on the size of  $ A $,
\bea{
\Px(A)\2f_+^3 - (1-\Px(A))\2f_-^3 \,&\ge\, \sigma(\tau_0) \,\ge\,-\2C_0\1\eps_\ast^2
\\
\Px(A)\2f_-^3 - (1-\Px(A))\2f_+^3 \,&\leq\, \sigma(\tau_0) \,\leq\,+\2C_0\1\eps_\ast^2
\,.
}
Solving for $ \Px(A) $, we obtain
\[
\frac{f_-^3-C_0\eps_\ast^2}{f_+^3+f_-^3} 
\,\leq\, 
\Px(A)
\,\leq\, 
\frac{f_+^3+C_0\eps_\ast^2}{f_+^3+f_-^3}
\,.
\]
By making $ \eps_\ast \sim 1 $ sufficiently small this yields \eqref{Px(A) sim 1}.

We now show that there exists $ \tau_0 \in I $ satisfying \eqref{sigma(tau_0) lesssim eps^2}. 
To this end we remark that at least one (actually exactly one) of the following three alternatives holds true:
\begin{enumerate}
\item[(a)] 
The interval $ I $ contains a non-zero local minimum $ \tau_0 $ of $ \avg{v}\, $.
\item[(b)] The interval $ I $ contains a left and right edge $ \tau_- \in  \partial \supp v $ and $ \tau_+\in \partial \supp v\, $.
\item[(c)] The average generating density $ \avg{v} $  has a cusp at $ \tau_0 \in I \cap (\supp v \backslash \partial \supp v) $ such that $ v(\tau_0) = \sigma(\tau_0) = 0\, $.
\end{enumerate} 
In the case (a), since $ m $ is smooth on the set where $ \avg{v} > 0 $, Lemma~\ref{lmm:Monotonicity} implies $ \Pi(\tau_0) < \Pi_\ast $, and thus \eqref{sigma(tau_0) lesssim eps^2} holds for $ C_0 \ge \Pi_\ast $.
In the case (b) we know that $ \pm\2\sigma(\tau_\pm) > 0 $ by Proposition~\ref{prp:Vanishing local minimum}. Since $ \sigma(\tau) $ is continuous (cf. Lemma~\ref{lmm:Cubic for shape analysis}), there hence exists $ \tau_0 \in (\tau_-,\tau_+) \subset I $ such that $ \sigma(\tau_0) = 0 $. Finally, in the case (c) we have $ \sigma(\tau_0) = 0 $ by Proposition~\ref{prp:Vanishing local minimum}.

Now we prove \eqref{L1-norm of S_x-S_y is sim 1}. Since $ v_x \leq u_x\leq \abs{m_x} \leq \Phi $ on $ I $, and  $m $ solves the QVE, we obtain for every $ x \in A $, $ y \notin A $ and $ \tau \in I $
\bels{S_x-S_y estimate for gap}{
\frac{1}{\Phi}
\,&\leq\, 
\frac{1}{u_x\!} + \frac{1}{u_y\!}
\,\leq\,
2\2\frac{u_x+u_y}{\abs{m_xm_y}}
\,\leq\,
2\frac{\abs{(u_x+u_y) +\cI\2(v_x-v_y)}}{\abs{m_x}\abs{m_y}} 
\,=\,
2\,\absB{\frac{1}{m_x\!}-\frac{1}{m_y\!}}
\\
&=\,
2\,\abs{\1 a_x-a_y +\avg{\1S_x-S_y,\1m\1}\1}
\,\leq\,
2\, (\abs{a_x-a_y}+\Phi\norm{S_x-S_y}_1)
\,.
} 
Here, the definition \eqref{def of set A} of $ A $ is used in the first  equality  while $ u_x \ge v_x $ was used in the second estimate. 
The bound \eqref{S_x-S_y estimate for gap}  implies  \eqref{L1-norm of S_x-S_y is sim 1}.

We have shown that $ \abs{\sigma} + \avg{v} \sim 1 $. By using this in Corollary~\ref{crl:Bound on derivative} we see that  $ v(\tau) $ is uniformly $ 1/2$-H\"older continuous everywhere.
\end{Proof}

\chapterl{Stability around small minima of generating density}

The next result will imply the  statement (ii) in Theorem~\ref{thr:Stability}. 
Since it plays a central role in the proof of local laws (cf. Chapter~\ref{chp:Local laws for large random matrices}) for random matrices in \cite{AEK2}, we state it here in the form that does not require any knowledge of the preceding expansions and the associated cubic analysis. 
In fact, together with our main results, Theorem~\ref{thr:Regularity of generating density} and Theorem~\ref{thr:Shape of generating density near its small values}, the next proposition is the only information we use in \cite{AEK2} concerning the stability of the QVE. 

\begin{proposition}[Cubic perturbation bound around critical points]
\label{prp:Cubic perturbation bound around critical points}
Assume $ S $ satisfies  {\bf A1-3}, $ \nnorm{m}_\R \leq \Phi $, for some $ \Phi < \infty $, 
and $ g,d \in \BB $ satisfy the perturbed QVE \eqref{perturbed QVE - 1st time} at some fixed $ z \in \eCp$.
There exists $ \eps_\ast \sim 1 $ such that if 
\bels{conditions for cubic perturbation analysis}{
\avg{\2\Im\,m(z)\1} \,\leq\, \eps_\ast
\,,\qquad\text{and }\qquad
\norm{\1g-m(z)} \,\leq\, \eps_\ast
\,,
}
then there is a function $ s:\eCp \to \BB $ depending only on $ S $ and $ a $, and satisfying
\bels{s-continuity}{ 
\norm{s(z_1)} \,\lesssim\, 1
\,,\qquad
\norm{s(z_1)-s(z_2)} \,\lesssim\, \abs{z_1-z_2}^{1/3}
\,,\qquad\forall\, z_1,z_2 \in \eCp 
\,,
}
such that the modulus of the complex variable %
\bels{def of Theta for d}{
\Theta \,=\, \avgb{s(z)\2,g-m(z)}
}
bounds the difference $ g-m(z) $, in the following senses:
\begin{subequations}
\label{critical perturbation-bounds}
\begin{align}
\label{critical perturbation-bound: sup}
\norm{g-m(z)} \;&\lesssim\; \abs{\Theta} \,+\, \norm{d}
\\
\label{critical perturbation-bound: w-average}
\abs{\avg{w,g-m(z)}} \;&\lesssim\; \norm{w}\2\abs{\Theta}\,+\,  \norm{w}\norm{d}^2\,+\,\abs{\avg{T(z)w,d\2}}
\,,\quad\forall\,w \in \BB\,.
\end{align}
\end{subequations}
Here the linear operator $ T(z) : \BB\to \BB $ depends only on $ S $ and $ a $, in addition to $ z $, and satisfies $ \norm{T(z)} \lesssim 1 $.
Moreover, $ \Theta $ satisfies a cubic inequality 
\bels{perturbations: cubic scaling relation}{
\absb{\2\abs{\Theta}^3 + \pi_2\1\abs{\Theta}^2 + \pi_1\1\abs{\Theta}\2} 
\;\lesssim\; 
\norm{d\1}^2 + \abs{\avg{\2t^{(1)}(z),d\2}}+ \abs{\avg{\2t^{(2)}(z),d\2}}
\,,
}
where $t^{(k)} : \overline{\Cp} \to \BB $, $ k=1,2 $, 
depend on $ S$, $ a $, and $ z $ only, and satisfy $ \norm{\1t^{(k)}(z)} \lesssim 1 $. 
The coefficients, $\pi_1$ and $\pi_2$, may depend on $S$, $z$, $ a $, as well as on $ g $. They satisfy the estimates,
\begin{subequations}
\label{scaling of pi's}
\begin{align}
\label{scaling of pi_1}
\abs{\1\pi_1} \;&\sim\; \avg{\2\Im\,m(z)}^2 \,+\, \abs{\sigma(z)}\2\avg{\2\Im\,m(z)} \,+ \frac{\Im \,z}{\avg{\2\Im\, m(z)}}  
\\
\label{scaling of pi_2}
\abs{\1\pi_2} \;&\sim\; \avg{\2\Im\,m(z)} \,+\, \abs{\sigma(z)}
\,,
\end{align}
\end{subequations}
where the $ 1/3$-H\"older continuous function $ \sigma:\eCp \to [\10,\infty) $ is determined by $ S $ and $ a $, and has the following properties: 
Let $ \MM = \sett{\alpha_i} \2\cup\2 \sett{\beta_j} \2\cup\2 \sett{\gamma_k} $  be the set \eqref{def of MM} of minima from Theorem~\ref{thr:Shape of generating density near its small values}, and suppose $ \tau_0 \in \mathbb{M} $ satisfies $\abs{z-\tau_0} = \dist(z,\mathbb{M}) $.
\begin{subequations}
\label{def of abs-sigma at MMe}
If $ \tau_0 \in \partial \supp v = \sett{\alpha_i} \cup \sett{\beta_j} $, then
\bels{abs-sigma: edge}{
\abs{\sigma(\alpha_i)} \,\sim\,\abs{\sigma(\beta_{i-1})}\,\sim\,(\alpha_i-\beta_{i-1})^{1/3}
}
with the convention $\beta_0 = \alpha_1 -1 $ and $ \alpha_{K'+1} = \beta_{K'}+1 $.
If $ \tau_0 \notin \partial\supp v = \sett{\gamma_k}$, then
\bels{abs-sigma: not edge}{
\abs{\sigma(\gamma_k)} \;\lesssim\;
\avg{\2\Im\,m(\gamma_k)}^{\12}
\,.
}
\end{subequations}
All the comparison relations depend only on the model parameters $\rho$, $L$, $ \norm{a}$, $\norm{S}_{\Lp{2}\to\BB}$ and $\Phi $.
\end{proposition}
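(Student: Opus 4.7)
The plan is to apply Proposition~\ref{prp:General cubic equation} directly, since the hypotheses of the present proposition are tailored to match those of that proposition once the implicit constants are adjusted. The condition $\avg{\2\Im\,m(z)\1}\leq\eps_\ast$ combined with $v_x(z)\sim\avg{v(z)}$ and $f_x,\abs{m_x}\sim 1$ from Proposition~\ref{prp:Estimates when solution is bounded} gives $\alpha(z)=\avg{f\1,\1v/\abs{m}}\sim\avg{v(z)}\leq \eps_\ast'$ for a suitably small $\eps_\ast'\sim 1$. Setting $u:=(g-m)/\abs{m}$, Proposition~\ref{prp:General cubic equation} supplies the decomposition $u=\Theta\1 b+r$ with $r=Rd+\Ord_{\BB}(\2\abs{\Theta}^2+\norm{d}^2)$, together with the cubic \eqref{general cubic} whose coefficients $\mu_1,\mu_2,\mu_3$ are expanded in \eqref{mu_k's expanded}. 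From the defining relation $\Theta=\avg{\2\overline{b},u\1}/\avg{b^2}$ we identify $s(z)=\overline{b(z)}/(\abs{m(z)}\,\overline{\avg{b(z)^2}})$; the bounds and $1/3$-H\"older regularity \eqref{s-continuity} then follow from $\abs{m},b\sim 1$, $\avg{b^2}=1+\Ord(\alpha)$ (Lemma~\ref{lmm:Expansion of B in bad direction}), and the $1/3$-H\"older continuity of $m$ and $b$.

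The two perturbation bounds \eqref{critical perturbation-bounds} drop out of the decomposition. For \eqref{critical perturbation-bound: sup}, since $g-m=\abs{m}\1(\Theta\2b+r)$ with $\abs{m},\abs{b}\sim 1$ and $\norm{r}\lesssim\abs{\Theta}^2+\norm{d}$, one obtains $\norm{g-m}\lesssim\abs{\Theta}+\norm{d}$ after absorbing $\abs{\Theta}^2\leq\eps_\ast\abs{\Theta}$. For \eqref{critical perturbation-bound: w-average}, pairing with $w\in\BB$ and substituting $r=Rd+\Ord_{\BB}(\abs{\Theta}^2+\norm{d}^2)$ yields
\[
\avg{w,g-m}=\Theta\,\avg{w\abs{m}b}+\avg{T(z)w,d\2}+\Ord(\2\norm{w}(\abs{\Theta}^2+\norm{d}^2)),
\]
where $T(z)w:=R(z)^{\ast}(\abs{m(z)}w)$ has $\norm{T(z)}\lesssim 1$ by \eqref{bound on R and its adjoint on BB} and $\abs{m}\sim 1$; the $\abs{\Theta}^2$ error is again absorbed into $\norm{w}\abs{\Theta}$.

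The cubic inequality \eqref{perturbations: cubic scaling relation} is obtained by taking moduli in \eqref{general cubic} and controlling $\kappa$ through \eqref{kappa bounds}:
\[
\abs{\mu_3\Theta^3+\mu_2\Theta^2+\mu_1\Theta}\,\leq\,\abs{\avg{\abs{m}\overline{b},d\2}}+C\1\abs{\Theta}^4+C\1\norm{d}^2+C\1\abs{\Theta}\abs{\avg{e,d}}.
\]
Writing $\Theta=\abs{\Theta}\1\nE^{\cI\arg\Theta}$ and pulling out a global phase $\nE^{\cI\arg\Theta}$, the left side becomes $\abs{\mu_3\abs{\Theta}^3\nE^{2\cI\arg\Theta}+\mu_2\abs{\Theta}^2\nE^{\cI\arg\Theta}+\mu_1\abs{\Theta}}$. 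By the stability \eqref{stability general cubic} one has $\max\{\abs{\mu_2},\abs{\mu_3}\}\sim 1$, so depending on whether $\psi\sim 1$ or $\abs{\sigma}\sim 1$, we normalize by $\mu_3\nE^{2\cI\arg\Theta}$ or $\mu_2\nE^{\cI\arg\Theta}$ respectively, placing the LHS into the form $\abs{\abs{\Theta}^3+\pi_2\abs{\Theta}^2+\pi_1\abs{\Theta}}$ with complex $\pi_1,\pi_2$ whose moduli are $\abs{\mu_k}/\max\{\abs{\mu_2},\abs{\mu_3}\}$. The $\abs{\Theta}^4$ term absorbs into $\abs{\Theta}^3$ via $\abs{\Theta}\leq\eps_\ast$; the cross-term $\abs{\Theta}\abs{\avg{e,d}}$ is split through Young's inequality as $\abs{\Theta}\abs{\avg{e,d}}\leq \delta\2\abs{\Theta}^2+\delta^{-1}\abs{\avg{e,d}}^2\leq\delta\2\abs{\Theta}^2+\delta^{-1}\norm{e}_1^2\norm{d}^2$, allowing a harmless $\Ord(\1)$ shift of $\pi_2$ (permitted since $\pi_k$ may depend on $g$) and contributing to $\norm{d}^2$ on the right. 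One identifies $t^{(1)}(z):=\abs{m(z)}\overline{b(z)}$ and $t^{(2)}(z):=e(z)$, both uniformly bounded by Lemma~\ref{lmm:Expansion of B in bad direction} and \eqref{kappa bounds}.

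The scalings \eqref{scaling of pi's} come from splitting $\mu_k$ into real and imaginary parts in \eqref{mu_k's expanded}: one finds $\abs{\mu_2}^2\sim\sigma^2+\alpha^2$ and $\abs{\mu_1}^2\sim(\eta/\alpha)^2+\sigma^2\alpha^2+\alpha^4$, where the absence of cancellation is guaranteed by $\psi+\sigma^2\gtrsim 1$ together with the different parities in $\alpha$ of real and imaginary parts; dividing by $\max\{\abs{\mu_2},\abs{\mu_3}\}\sim 1$ preserves these scalings. The bounds \eqref{def of abs-sigma at MMe} follow from the shape analysis of Chapter~\ref{chp:Behavior of generating density where it is small}: at edges $\tau_0\in\partial\supp v$, Lemma~\ref{lmm:Size of small gap} gives $\abs{\sigma(\tau_0)}^3\sim\wht{\Delta}(\tau_0)\sim\Delta(\tau_0)$ in the small-gap regime, while for widely separated edges the lower bound $\abs{\sigma(\tau_0)}\gtrsim 1\sim\Delta(\tau_0)^{1/3}$ follows from Lemma~\ref{lmm:Simple edge}; at internal non-zero minima $\gamma_k$, the condition $\partial_\tau\avg{v(\gamma_k)}=0$ combined with Lemma~\ref{lmm:Monotonicity} forces $\Pi(\gamma_k)=\abs{\sigma}/\avg{v}^2\leq\Pi_\ast$, i.e., $\abs{\sigma(\gamma_k)}\lesssim\avg{v(\gamma_k)}^2$; and at cusps, Proposition~\ref{prp:Vanishing local minimum}(ii) directly gives $\sigma(\gamma_k)=0$. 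The $1/3$-H\"older continuity of $\sigma$ (Lemma~\ref{lmm:Expansion of B in bad direction}) transfers these bounds from $\tau_0$ to $z$, since $|z-\tau_0|^{1/3}$ is smaller than the dominant scales appearing on the right of \eqref{def of abs-sigma at MMe}. The main obstacle will be in the third paragraph: uniformly tracking the phase normalization through both stability regimes and absorbing the $\abs{\Theta}\abs{\avg{e,d}}$ cross-term without destroying the two-sided scaling of $|\pi_1|$ when $\eta/\alpha$, $|\sigma|\alpha$, or $\alpha^2$ are near the same magnitude as $\norm{d}$.
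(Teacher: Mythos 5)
Your overall route is the same as the paper's: apply Proposition~\ref{prp:General cubic equation} with $u=(g-m)/\abs{m}$, read off $s$ from the projector $P$, get \eqref{critical perturbation-bounds} from $r=Rd+\Ord_\BB(\abs{\Theta}^2+\norm{d}^2)$ with $T=R^\ast(\abs{m}\,\genarg)$, take $t^{(1)}=\abs{m}\,\overline{b}$, $t^{(2)}=e$, obtain the scalings of $\pi_1,\pi_2$ from \eqref{mu_k's expanded} with the case split $2\sigma^2\leq\psi$ versus $2\sigma^2>\psi$, and deduce \eqref{def of abs-sigma at MMe} from the shape analysis (Lemmas~\ref{lmm:Size of small gap}, \ref{lmm:Monotonicity} and the cusp results). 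Most of this is fine.

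The genuine problem is your treatment of the cross-term $\abs{\Theta}\,\abs{\avg{e,d}}$ in the third paragraph. Splitting it by Young's inequality as $\delta\abs{\Theta}^2+\delta^{-1}\abs{\avg{e,d}}^2$ and then "absorbing" the $\delta\abs{\Theta}^2$ piece by an $\Ord(1)$ shift of $\pi_2$ fails on two counts. First, a shift of $\pi_2$ by a fixed constant $\delta$ is not harmless: \eqref{scaling of pi_2} is a two-sided comparison, $\abs{\pi_2}\sim\avg{\Im\,m}+\abs{\sigma}$, and near a cusp the right-hand side is arbitrarily small, so any $g$-dependent redefinition of $\pi_2$ must stay within $\Ord(\avg{\Im\,m}+\abs{\sigma})$ there; choosing $\delta$ small and adapted to $\alpha,\sigma$ instead produces $\delta^{-1}\norm{d}^2$ on the right, which is not of the admissible form in \eqref{perturbations: cubic scaling relation} (the right-hand side must carry uniform constants). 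Second, even ignoring the scaling, moving an additive error from the right-hand side of the inequality into a coefficient inside the absolute value on the left is not a legitimate operation: from $\abs{\mathrm{cubic}(\pi_2)}\leq \mathrm{RHS}+\delta\abs{\Theta}^2$ one cannot in general produce a nearby $\pi_2'$ with $\abs{\mathrm{cubic}(\pi_2')}\lesssim \mathrm{RHS}$, since the phases are not under control. The whole detour is unnecessary: since $\abs{\Theta}\lesssim\eps_\ast$, one simply bounds $\abs{\Theta}\,\abs{\avg{e,d}}\lesssim\abs{\avg{e,d}}=\abs{\avg{t^{(2)},d}}$, which is already an allowed term on the right of \eqref{perturbations: cubic scaling relation} — this is exactly what the paper does.

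A smaller presentational gap in the same paragraph: in the regime $\abs{\sigma}\sim1$, dividing by $\mu_2\,\nE^{\cI\arg\Theta}$ does not yield the monic form $\abs{\abs{\Theta}^3+\pi_2\abs{\Theta}^2+\pi_1\abs{\Theta}}$, because the coefficient of $\abs{\Theta}^3$ becomes $\tilde\mu_3/\tilde\mu_2\neq1$. The correct device (used in the paper) is to keep the cubic intact and define $\pi_2:=\tilde\mu_2+(\tilde\mu_3-1)\abs{\Theta}$, $\pi_1:=\tilde\mu_1$ in that regime, i.e.\ transfer the discrepancy as an $\Ord(\abs{\Theta})$ contribution to $\pi_2$ — which is harmless precisely because in this regime $\abs{\pi_2}\sim\abs{\sigma}\sim1$ — and to divide by $\tilde\mu_3$ only in the complementary regime $\abs{\mu_2}<c_0$, where $\abs{\tilde\mu_3}\sim1$ by \eqref{stability general cubic}. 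With these two corrections your argument coincides with the paper's proof; the remaining parts (the scaling of $\abs{\mu_1},\abs{\mu_2}$ with the sign/parity discussion and the derivation of \eqref{def of abs-sigma at MMe}) are correct, if tersely justified.
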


We remark here that the coefficients $ \pi_k $ do depend on $ g $ in addition to $ S $ and $ a $, in contrast to the coefficients $ \mu_k $ in Proposition~\ref{prp:General cubic equation}. The important point is that the right hands sides of the comparison relations \eqref{scaling of pi_1} and \eqref{scaling of pi_2} are still independent of $ g $.   
This result is geared towards problems where $ d $ and $ g $ are \emph{random}. 
Such problems arise when the resolvent method, as described in Chapter \ref{chp:Local laws for large random matrices}, is used to 
study the local spectral statistics of \emph{Wigner-type} random matrices.
The continuity and size estimates \eqref{s-continuity}, \eqref{abs-sigma: edge} and \eqref{abs-sigma: not edge} will be used to extend high probability bounds for each individual $z$ to all $ z $ in a compact set of $ \Cp $ similarly as in the proof of Theorem~\ref{thr:Entrywise local law from AEK2}.
The various auxiliary quantities, such as $ s, \pi^{(k)} $, $ T $, etc., appearing in the proposition will be explicitly given in the proof, but their specific form is irrelevant for the applications, and hence we omitted them in the statement.

\begin{Proof}[Proof of Proposition~\ref{prp:Cubic perturbation bound around critical points}] 
Since $ z $ is fixed we write $ m = m(z) $, etc. 
By choosing $\eps_\ast \sim 1 $ small enough we ensure that both Lemma~\ref{lmm:Expansion of B in bad direction} and Proposition~\ref{prp:General cubic equation} are applicable. 
We choose $ s$  such that  $ \Theta $ becomes the component of $ u = (g-m)/\abs{m} $ in the direction $ b $ exactly as in Proposition~\ref{prp:General cubic equation}. Hence using the explicit formula \eqref{def of P} for the projector $ P $ we read off from $ \Theta\,b = Pu $, that
\bels{explicit form of s in the def of Theta}{
s := \frac{1}{\avg{\1b^{\12}}\!}\1\frac{\overline{b}}{\1\abs{m}}
\,.
} 
From Lemma~\ref{lmm:Expansion of B in bad direction} and Proposition~\ref{prp:Holder regularity in z and extension to real line} we see that this function has the properties \eqref{s-continuity}.

The first bound \eqref{critical perturbation-bound: sup} follows by using \eqref{r to leading order} and \eqref{bound on R and its adjoint on BB} in the definition \eqref{def of u} of $ u $. More precisely, we have
\[
\norm{g-m} 
\,\leq\, 
\norm{m}\norm{u} 
\,\leq\, 
\norm{m}\bigl(\2\abs{\Theta}\norm{b} + \norm{r}\bigr)
\,\lesssim\,
 \abs{\Theta} + \norm{d}
\,,
\] 
where  $ \norm{m} \sim 1 $, $ b = f + \Ord_\BB(\alpha) $, $ r = Rd + \Ord_\BB(\abs{\Theta}^2+\abs{d}^2) $, and $ \norm{R},\norm{f} \lesssim 1 $, have been used.

In order to derive \eqref{critical perturbation-bound: w-average} we first write
\bels{w-weighted g-m}{
\avg{\1w,g-m\1} \;=\; \avg{\1\abs{m}\1w,u\2} \;=\; \avg{\1\abs{m}\1w,b\1}\2\Theta + \avg{\1\abs{m}\1w,r\2}
\,.
}
Clearly, $ \abs{\avg{\1\abs{m}\1w,b\1}} \lesssim \norm{w} $. Moreover, using \eqref{r to leading order} we obtain
\bea{
\avg{\1\abs{m}\1w,r\2} \;&=\; \avgB{\2\abs{m}\1w,R\1d + \Ord_\BB(\2\abs{\Theta}^2\!+\norm{d}^2\1)\2} 
\\
&=\; 
\avg{R^\ast\msp{-2}(\abs{m}\1w),d\,} \,+\, \Ord\Bigl(\2\norm{m}\norm{w} \bigl(\2\abs{\Theta}^2+\norm{d}^2\bigr)\2\Bigr)
\,.
}
Plugging this into \eqref{w-weighted g-m}, and setting  $ T := R^\ast(\abs{m}\genarg) $, we recognize  \eqref{critical perturbation-bound: w-average}. The bound \eqref{bound on R and its adjoint on BB} yields $ \norm{T} \lesssim 1 $.

As a next step we show that \eqref{perturbations: cubic scaling relation} and \eqref{scaling of pi's} constitute just a simplified version of the cubic equation presented in Proposition~\ref{prp:General cubic equation}. 
Combining \eqref{general cubic} and \eqref{kappa bounds} we get
\bels{cubic bound with Theta^4-term absorbed into mu_3}{
\absb{\,\wti{\mu}_3\abs{\Theta}^3 \!+ \wti{\mu}_2\abs{\Theta}^2 \!+ \wti{\mu}_1\abs{\Theta}\,} \;\lesssim\; \abs{\avg{\abs{m}\1\overline{b},d\2}} +\norm{d}^2 + \abs{\avg{e,d\1}} 
\,,
}
where $ \wti{\mu}_1 := (\Theta/\abs{\Theta})\2\mu_1 $, $ \wti{\mu}_2 := (\Theta/\abs{\Theta})^2\mu_2 $ and $ \wti{\mu}_3 = (\Theta/\abs{\Theta})^3\mu_3 + \Ord(\abs{\Theta}) $. The last term in the definition of $ \wti{\mu}_3 $ accounts for the absorption of the $ \Ord(\abs{\Theta}^4) $-sized part of $ \kappa(u,d) $ in \eqref{general cubic}.  
Moreover, we have estimated the $ \Ord(\2\abs{\Theta}\2 \abs{\avg{e,d}}\2)$-sized part of $ \kappa $ by a larger $ \Ord(\abs{\avg{e,d}}) $ term.  
Recall that $ \abs{\Theta} \lesssim \eps_\ast $ from \eqref{conditions for cubic perturbation analysis}. Hence taking $ \eps_\ast \sim 1 $ small enough, the stability of the cubic (cf. \eqref{stability general cubic}) implies that there is $ c_0 \sim 1 $ so that  $ \abs{\wti{\mu}_2} + \abs{\wti{\mu}_3} = \abs{\mu_2} + \abs{\mu_3} + \Ord(\abs{\Theta}) \ge 2\1c_0 $ applies.
Hence the coefficients
\bels{}{
\pi_2 \,&:=\, \bigl(\,\wti{\mu}_2 +(\wti{\mu}_3-1)\1\abs{\Theta}\2\bigr)\,\Ind\setb{\abs{\mu_2} \ge c_0} \;+\; \frac{\wti{\mu}_2}{\wti{\mu}_3}\2 \Ind\setb{\abs{\mu_2} < c_0}
\\
\pi_1 \,&:=\, \wti{\mu}_1\Ind\setb{\abs{\mu_2} \ge c_0} \;+\; \frac{\wti{\mu}_1}{\wti{\mu}_3}\2 \Ind\setb{\abs{\mu_2} < c_0}
\,,
}
scale just like $ \mu_2 $ and $ \mu_1 $ in size, i.e., $ \abs{\pi_2}\sim \abs{\mu_2}$ and $ \abs{\pi_1}\sim\abs{\mu_1} $, provided $\eps_\ast$ and thus $|\Theta|$ is sufficiently small. Moreover, by construction the bound \eqref{cubic bound with Theta^4-term absorbed into mu_3} is equivalent to \eqref{perturbations: cubic scaling relation} once we set $ t^{(1)} :=  \abs{m}\1\bar{b} $ and $ t^{(2)} := e $. 

Let us first derive the scaling relation \eqref{scaling of pi_1} for $ \pi_1 $.
Using $ \sigma \in \R $, we obtain from \eqref{mu_1 expanded}:
\bels{abs-pi_1 scaling A}{
\abs{\1\pi_1} \;&\sim\;
\abs{\1\mu_1} \;=\; \absB{-\2\avg{f\1\abs{m}\1}\frac{\eta}{\alpha}\,+\,
\cI\12\1\sigma\2\alpha \,-\, 2\1(\1\psi\1-\1\sigma^2)\2\alpha^2 
\,+\,\Ord\bigl(\alpha^3 \!+ \eta\,\bigr)}
\\
&\sim\; 
\absB{\frac{\avg{f\1\abs{m}}}{2}\frac{\eta}{\alpha}+(\psi-\sigma^2)\2\alpha^2\,+\,\Ord\bigl(\alpha^3 \!+ \eta\,\bigr)} 
\,+\,
\absB{\2\sigma\1\alpha 
\,+\,
\Ord\bigl(\alpha^3 \!+ \eta\,\bigr)}
\,.
}
We will now use the stability of the cubic, $\psi + \sigma^2 \gtrsim 1$ (cf. \eqref{stability general cubic}). We treat two regimes separately. 

First let us assume that $2\1\sigma^2\leq \psi $. In that case $\psi \sim 1$, and we find
\bels{abs-pi_1 scaling B}{
\abs{\1\pi_1}
\,\sim\, 
\frac{\eta}{\alpha}+ \alpha^2 + \abs{\sigma}\1\alpha +\Ord\bigl(\alpha^3 \!+ \eta\,\bigr)
\,\sim\,
\frac{\eta}{\alpha}+ \alpha^2 + \abs{\sigma}\1\alpha
\,.
}
In order to get the first comparison relation we have used the fact that $ \psi - \sigma^2 \sim \psi \sim 1 $ and $ \avg{f\abs{m}} \sim 1 $ and hence the first two terms on the right hand side of the last line in \eqref{abs-pi_1 scaling A} can not cancel each other.
The second comparison in \eqref{abs-pi_1 scaling B} holds provided $ \eps_\ast \sim 1 $ is sufficiently small, recalling $ \alpha \sim \avg{v} \leq \eps_\ast $ (cf. \eqref{Phi:basic bounds}, so that the error can be absorbed into the term $\eta/\alpha+\alpha^2 $.

Now we treat the situation when $2\1\sigma^2> \psi $. In this case $\abs{\sigma}\sim 1$, and thus for small enough $ \eps_\ast $, we have
\bels{pi_1 scaling}{
\abs{\1\pi_1}
\;&\sim\; 
\absB{\1\frac{\eta}{\alpha}+\Ord(\alpha^2\!+\eta)\2}+\alpha
\;=\; 
\frac{\eta}{\alpha}+\alpha+\Ord(\alpha^2\!+\eta)
\;\sim\; 
\frac{\eta}{\alpha}+\alpha
\\
&\sim\; \frac{\eta}{\alpha}+|\sigma|\2\alpha+\alpha^2
\,.
}
Here, the first two terms in the last line of \eqref{abs-pi_1 scaling A} may cancel each other but in that case both of the terms are $ \Ord(\alpha^2) $ and hence the size of $ \abs{\1\pi_1} $ is given by the term $ \abs{\sigma}\alpha \sim \alpha $.

The scaling behavior \eqref{scaling of pi_2} of $ \pi_2 $ follows from \eqref{mu_2 expanded} using $ \norm{F}_{\Lp{2}\to\Lp{2}} = 1-\avg{f\1\abs{m}\1}\2\eta/\alpha \sim 1 $ (cf. \eqref{F and alpha} and \eqref{scaling of L2-norm of F in abs-z}) and the stability of the cubic, 
\bels{pi_2 scaling}{ 
\abs{\1\pi_2} 
\,\sim\, 
\abs{\1\mu_2} 
\,\sim\, 
\abs{\sigma} + \abs{\13\2\psi-\sigma^2}\2\alpha 
\,\sim\, \abs{\sigma} + \alpha
\,.
}

The formula \eqref{scaling of pi_1} now follows from \eqref{pi_1 scaling} and \eqref{pi_2 scaling} by using $ \alpha \sim \avg{\2\Im\,m\1} $.  The quantity $ \sigma = \sigma(z) $ was proven to be $ 1/3$-H\"older continuous already in Lemma~\ref{lmm:Expansion of B in bad direction}. 
In order to obtain the relation \eqref{abs-sigma: edge} we use \eqref{Delta upper bound in sigma} and Lemma~\ref{lmm:Size of small gap} to get
\[
\abs{\sigma(\tau_0)} 
\,\sim\, 
\wht{\Delta}(\tau_0)^{1/3} 
\,\sim\, 
\Delta(\tau_0)^{1/3}
,
\]
for $ \tau_0 \in \partial \supp v $ such that $ \abs{\sigma(\tau_0)} \leq \sigma_\ast $. On the other hand, if $ \abs{\sigma(\tau_0)} \ge \sigma_\ast $, where the threshold parameter $\sigma_\ast \sim 1 $ is from \eqref{Delta upper bound in sigma}, then also $ \Delta(\tau_0) \sim 1 $.
This proves \eqref{abs-sigma: edge}.

In order to obtain \eqref{abs-sigma: not edge} we consider the cases $ v(\gamma_k) = 0 $ and $ v(\gamma_k) > 0 $ separately. If $ v(\gamma_k) = 0 $ then Lemma~\ref{lmm:Vanishing quadratic term} shows that $ \sigma(\gamma_k) = 0 $. If $ v(\gamma_k) > 0 $ then $ \partial_\tau \avg{v(\gamma)}|_{\tau=\gamma_k} = 0 $. Lemma~\ref{lmm:Monotonicity} thus yields $ \abs{\sigma(\gamma_k)} \leq \Pi_\ast \avg{v(\gamma_k)}^2 $. Since $ \Pi_\ast \sim 1 $ this finishes the proof of \eqref{abs-sigma: not edge}.
\end{Proof}

Combining our two results concerning general perturbations, Lemma~\ref{lmm:Stability when m and inv-B bounded} and Proposition~\ref{prp:Cubic perturbation bound around critical points}, with scaling behavior of $ m(z) $ as described by Theorem~\ref{thr:Shape of generating density near its small values}, we now prove Theorem~\ref{thr:Stability}.

\begin{Proof}[Proof of Theorem~\ref{thr:Stability}] 
Recall the definition \eqref{def of B and a} of operator $ B $. 
We will show below that 
\bels{general inv-B bound}{
\norm{B(z)^{-1}} 
\,\lesssim\, \frac{1}{\2\varrho(z)^2+\varpi(z)^{\12/3}\msp{-3}}
\msp{8},\qquad
\abs{z} \leq 2\2\Sigma
\,,
}
where $ \varrho = \varrho(z) $ and $ \varpi=\varpi(z) $ are defined in \eqref{defs of varpi, rho, and delta}. 
Given \eqref{general inv-B bound} the assertion (i) of the theorem follows by applying Lemma~\ref{lmm:Stability when m and inv-B bounded} with $ \Phi $ introduced in the theorem and $ \Psi := (\varrho+\varpi^{\11/3})^{-2} \lesssim \eps^{-2} $, where the constant $ \eps \in (0,1)$ is from \eqref{condition for being away from critical points}.  
If $ \varrho \ge \eps_\ast $ or $ \varpi \ge \eps_\ast $ for some $ \eps_\ast \sim 1 $, then (ii) follows similarly from Lemma~\ref{lmm:Stability when m and inv-B bounded} with $ \Psi \sim 1 $.
Therefore, in order to prove (ii) it suffices to assume that $ \varrho,\varpi \leq \eps_\ast $ for some sufficiently small threshold $ \eps_\ast \sim 1 $. 

We will take $ \eps_\ast $ so small that Proposition~\ref{prp:Cubic perturbation bound around critical points} is applicable, and thus the cubic equation \eqref{perturbations: cubic scaling relation} can be written in the form 
\bels{cubic for delta}{
\absb{\,\abs{\Theta}^3 + \pi_2\abs{\Theta}^2 + \pi_1\abs{\Theta}\,} 
\;\lesssim\; 
\delta
\,,
}
with $ \delta = \delta(z,d) \leq \norm{d} $ given in \eqref{def of delta} of Theorem~\ref{thr:Stability}.
Combining the definition \eqref{def of Theta for d} of $ \Theta $ with the a priori bound \eqref{refined perturbation-bounds: a priori estimate for g-m} for the difference $ g-m $, we obtain
\bels{a priori bound for Theta and g-m}{
\abs{\Theta} \leq \norm{s} \norm{g-m} \,\lesssim\, \lambda\,(\1\varpi^{\22/3} + \rho\,)
\,.
}
For the last step we used also \eqref{s-continuity}.
We will now show that if \eqref{a priori bound for Theta and g-m} holds for sufficiently small $ \lambda \sim 1 $, then the linear term of the cubic \eqref{cubic for delta} dominates in the sense that
\bels{1-st order dominates the cubic}{
\abs{\1\pi_1} \,\ge\, 3\2\abs{\1\pi_2}\abs{\Theta}\,, 
\qquad\text{and}\qquad
\abs{\1\pi_1} \,\ge\, 3\2\abs{\Theta}^2 
\,.
}

Let us first establish \eqref{1-st order dominates the cubic} when $ \tau = \Re\,z \in \supp v $. 
From \eqref{a priori bound for Theta and g-m} and \eqref{scaling of pi's} we get
\begin{align}
\label{supp a priori for Theta}
\abs{\1\Theta\1} \;&\lesssim\, \lambda\,(\varrho  + \eta^{2/3})
\\
\label{supp bound for pi_1}
\abs{\1\pi_1} \2&\gtrsim\, (\1\abs{\sigma}+\alpha\1)\1\alpha
\\
\label{supp bound for pi_2}
\abs{\1\pi_2} \2&\sim\, \abs{\sigma} + \alpha
\,.
\end{align}
Here we have used the general property $ v_x \sim \avg{v} \sim \alpha $ that always holds when $ \nnorm{m}_\R \lesssim \Phi $. Since $ \tau \in \supp v $ we have $\varpi = \eta $ in \eqref{supp a priori for Theta}. Let us show that 
\bels{varrho +eta^2/3 lesssim alpha}{ 
\varrho \2+\2 \eta^{\12/3} \2\lesssim\, \alpha 
\,.
}
To this end, let $ \tau_0 = \tau_0(z) \in \MM_{\eps_\ast} $ be such that
\bels{ii: def of tau_0}{
\abs{\tau-\tau_0} \,=\, \dist(\tau,\MM_{\eps_\ast})
}
holds. 
If $ \tau_0 \notin \partial \supp v $, then (d) of Corollary~\ref{crl:Scaling relations} yields \eqref{varrho +eta^2/3 lesssim alpha} immediately (take $ \omega := \tau-\tau_0 $ in the corollary).
If on the other hand $ \tau_0 \in \partial \supp v $, then (a) of Corollary~\ref{crl:Scaling relations} yields
\[
\varrho \2+\2 \eta^{\12/3}
\,\lesssim\, 
\frac{\omega^{1/2}}{(\2\Delta+\omega)^{1/6}} + \eta^{\12/3}
\,\lesssim\, 
\frac{(\omega+\eta)^{1/2}}{(\2\Delta+\omega+\eta)^{1/6}}
\,\sim\,\alpha
\,,
\]
where $ \Delta = \Delta(\tau_0) $ is the gap length \eqref{def of Delta(tau0)} associated to the point $ \tau_0 \in \partial \supp v $ satisfying \eqref{ii: def of tau_0}.

Combining \eqref{varrho +eta^2/3 lesssim alpha} and \eqref{supp a priori for Theta} we get
$ \abs{\Theta} \lesssim \lambda\,\alpha $. 
Using this bound together with \eqref{supp bound for pi_1} and \eqref{supp bound for pi_2} we obtain  \eqref{1-st order dominates the cubic} for sufficiently small $ \lambda \sim 1 $.

Next we prove \eqref{1-st order dominates the cubic} when $ \tau \notin \supp v $, i.e., $ \varrho = 0 $. In this  case \eqref{a priori bound for Theta and g-m} and \eqref{scaling of pi's} yield
\begin{align}
\label{not supp a priori for Theta}
\abs{\Theta} \,&\lesssim\, \lambda\,\varpi^{2/3}
\\
\label{not supp bound for pi_1}
\abs{\1\pi_1} \,&\gtrsim\, \eta/\alpha
\\
\label{not supp bound for pi_2}
\abs{\1\pi_2} \,&\lesssim\, 1\,.
\end{align}
By combining the parts (b) and (c) of Corollary~\ref{crl:Scaling relations} we get
\bels{alpha scaling outside supp v}{
\alpha 
\,\sim\,
\frac{\eta}{(\Delta+\eta)^{1/6}\varpi^{1/2}} 
\,\lesssim\,
\eta\,\varpi^{-2/3}
\,,
}
where $ \Delta = \Delta(\tau_0) $ is the gap length \eqref{def of Delta(tau0)} associated to the point $ \tau_0 \in \partial \supp v $.
For the last bound in \eqref{alpha scaling outside supp v} we used $ \varpi \sim \omega + \eta \leq\, \Delta+\eta $.
Plugging \eqref{alpha scaling outside supp v} into \eqref{not supp bound for pi_1} we get 
\bels{not supp: scaling of pi_1}{
\abs{\1\pi_1} \,\gtrsim\, \varpi^{2/3}
\,.
}
Using this together with \eqref{not supp a priori for Theta} and \eqref{not supp bound for pi_2} we obtain \eqref{1-st order dominates the cubic} also when $ \tau \notin \supp v $.

The estimates \eqref{1-st order dominates the cubic} imply
\[
\abs{\Theta}^3 \,\lesssim\, \absb{\pi_1\Theta} \,\sim\, \absb{\,\abs{\Theta}^3 + \pi_2\abs{\Theta}^2 + \pi_1\abs{\Theta}\,}
\,.
\]
Using \eqref{cubic for delta} we hence get $ \abs{\Theta}^3 \lesssim \abs{\1\pi_1\Theta\1}  \lesssim \delta $,
from which it follows that
\bels{refined: Theta bound - A}{
\abs{\Theta} \,\lesssim\, \min\setbb{\msp{-2}\frac{\delta}{\abs{\1\pi_1}}\2,\1\delta^{\11/3}\msp{-2}}
\,.
}
If $ \tau \notin \supp v $ we have $ \varrho = 0 $ and thus \eqref{not supp: scaling of pi_1} can be written as 
\bels{pi_1 general lower bound}{
\abs{\1\pi_1} \,\gtrsim\, \varrho^{\12}+\varpi^{2/3}
\,.
}
This estimate holds also when $ \tau \in \supp v $. If the point $ \tau_0 = \tau_0(\tau) \in \MM_{\eps_\ast} $ satisfying \eqref{ii: def of tau_0} is not an edge of $ \supp v $, then \eqref{pi_1 general lower bound} follows immediately from (d) of Corollary~\ref{crl:Scaling relations} and from $ \abs{\pi_1} \gtrsim \alpha^2 $ from \eqref{supp bound for pi_1}. 
In order to get \eqref{pi_1 general lower bound} when  $ \tau \in \supp v $ and  $ \tau_0 \in \partial \supp v $ we set $\omega = \abs{\tau-\tau_0} $ and consider the cases $ \omega +\eta > c_0\2\Delta $ and $ \omega +\eta \leq c_0\2 \Delta $ for some small $ c_0 \sim 1  $ separately.
If $ \omega + \eta > c_0\2\Delta $, then we get
\bels{alpha^2 bounded from below}{
\alpha^2 
\,\sim\,
(\omega+\eta)^{2/3} 
\,\sim\, 
\omega^{\12/3} + \eta^{2/3} 
\,\sim\, 
\varrho^{\12} + \eta^{2/3}
\,,
}
using part (a) of Corollary~\ref{crl:Scaling relations} in both the first and the last estimate.
On the other hand, if $ \omega +\eta \leq c_0\2\Delta $ for sufficiently small $ c_0 \sim 1 $, then
\bels{abs-sigma like ell^1/3}{
\abs{\sigma} 
\,=\, 
\abs{\sigma(z)} 
\,\ge\, 
\abs{\sigma(\tau_0)} -C\abs{\tau_0-z}^{1/3} 
\,\gtrsim\, 
\Delta^{\!1/3} -C\1(\omega+\eta)^{1/3}
\,\ge \frac{1}{2}\2\Delta^{\!1/3}
\,,
}
where we have used $ 1/3$-H\"older continuity of $ \sigma $ and the relation \eqref{abs-sigma: edge} from Proposition~\ref{prp:Cubic perturbation bound around critical points}. For the last bound we have used $ \abs{\tau_0-z} \sim \omega +\eta $ as well.
Therefore, we have
\bels{abs-sigma alpha bounded from below}{
\abs{\sigma}\1\alpha 
\,\sim\, 
\Delta^{1/6}(\omega+\eta)^{1/2} 
\,\gtrsim\, 
\omega^{2/3} + \eta^{2/3} 
\,\gtrsim\, 
\varrho^2 +\eta^{2/3}
\,.
}
Here, we have used (a) of Corollary~\ref{crl:Scaling relations} twice.
Combining \eqref{alpha^2 bounded from below} and \eqref{abs-sigma alpha bounded from below} we get
\bels{abs-sigma alpha + alpha^2 bound from below on supp}{
\abs{\sigma}\1\alpha+\alpha^2
\,\gtrsim\, 
\varrho^{\12}+\varpi^{2/3}
\,,\qquad
\tau \in \supp v\,.
}
Using this in \eqref{supp bound for pi_1} yields \eqref{pi_1 general lower bound}  when $ \tau_0 \in \partial \supp v $.

By combining \eqref{refined: Theta bound - A} and \eqref{pi_1 general lower bound} we obtain 
\bels{Theta bounded by Upsilon}{ 
\abs{\Theta} \,\lesssim\, \Upsilon 
\,,
}
with $ \Upsilon = \Upsilon(z,d) $ defined in \eqref{refined stability: def of Upsilon}. 
The estimates \eqref{refined perturbation-bounds} now follow from \eqref{critical perturbation-bounds} using \eqref{Theta bounded by Upsilon}.

We still need to prove \eqref{general inv-B bound}. 
If $ \tau \in \supp v $, then \eqref{m BB-bounded: inv-B from L2 to BB} of Lemma~\ref{lmm:Bounds on B-inverse} shows
\[
\norm{B^{-1}} \,\lesssim\, \frac{1}{(\1\abs{\sigma}\1+\alpha)\2\alpha\2}
\,.
\]
Using \eqref{abs-sigma alpha + alpha^2 bound from below on supp} we get \eqref{general inv-B bound} when $ \tau \in \supp v $.
In the remaining case $ \tau\notin \supp v $ \eqref{general inv-B bound} reduces to
\bels{goal:inv-B norm outside supp v}{
\norm{B^{-1}} \,\lesssim\, \varpi^{\1-2/3}
\,.
}
In order to prove this we use \eqref{m BB-bounded: inv-B from L2 to BB} to get the first bound below:
\bels{inv-B norm outside supp v}{
\norm{B^{-1}} 
\,\leq\, 
1+\norm{B^{-1}}_{\Lp{2}\to\Lp{2}} 
\,\leq 
1\, + \frac{1}{1-\norm{F}_{\Lp{2}\to\Lp{2}}} 
\,\lesssim\, 
1 + \frac{\alpha}{\eta}
\,.
}
For the second estimate we have used the definition \eqref{def of B and a} of $ B $ and the identity \eqref{F and alpha}. Finally, for the third inequality we used $ \avg{f\1\abs{m}} \sim 1 $ to estimate $ 1-\norm{F}_{\Lp{2}\to\Lp{2}} \gtrsim \eta/\alpha $.
Using \eqref{alpha scaling outside supp v} in \eqref{inv-B norm outside supp v} yields \eqref{goal:inv-B norm outside supp v}.
This completes the proof of \eqref{general inv-B bound}. 
\end{Proof}

\chapterl{Examples}

In this chapter we present some simple examples that illustrate the need of various assumption made on $ a $ and $ S $.
Recall that the assumptions {\bf A1-3} where introduced in the beginning of Chapter~\ref{chp:Set-up and main results}, and they are used extensively throughout this paper.
Our main results are formulated under the additional assumption that $ m $ is bounded in $ \BB $.
Verifying this uniform boundedness
was treated as a separate problem in Chapter~\ref{chp:Uniform bounds}, and for this purpose the additional assumptions {\bf B1} and {\bf B2} along with the auxiliary function $ \Gamma $ were introduced.
In particular, the non-effective uniform bounds of Theorem~\ref{thr:Qualitative uniform bounds} were replaced by the corresponding quantitative results in the form of Theorem~\ref{thr:Quantitative uniform bounds when a = 0} and Theorem~\ref{thr:Quantitative uniform bound for general a}, which rely on {\bf B1-2} and assumptions on $ \Gamma $.

In the following sections we will demonstrate how the properties  {\bf A3} and {\bf B1} and the function $\Gamma $ are used to effectively rule out certain 'bad' behaviors of $ m $, by considering simple examples. 
Before going into details  let us shortly comment the remaining assumptions {\bf A1}, {\bf A2} and {\bf B2}, which we will not address any further.
The assumption {\bf A1} is structural in nature. 
It reflects the applications we have in mind, e.g., random matrix theory as explained in Chapter~\ref{chp:Local laws for large random matrices} and Section 3 of \cite{AEK1cpam}. 
On the other hand, for a full analysis of Laplace-like operator on rooted trees (cf. Chapter~\ref{chp:Introduction}) the assumption of symmetry of $ S $ should be lifted.
The smoothing assumption {\bf A2} was made for technical reasons. 
It is appropriate for the random matrix theory as it generalizes the upper bound \eqref{mean-field property} appearing in the definition of Wigner-type random matrices.
The property {\bf B2} on the other hand is a practical condition for easily obtaining an effective $ \Lp{2}$-bound on the solution $ m $ when $ a \neq 0 $ (cf. Remark~\ref{rmk:Positive diagonal and 1/2-Holder regularity}).

Besides demonstrating how the solution $ m $ can become unbounded, and how to exclude such blow-ups with the right assumptions, we also provide three other kinds of examples in this chapter.
First, in Section~\ref{sec:Effects of non-constant function a} we show that although generally playing a secondary role to $ S $ in our analysis, the non-constant function $ a $ can also affect the behavior of $ m $ significantly.
Second, in Section~\ref{sec:Discretization and reduction of the QVE} we explain how to switch between different representations of a given QVE, and possibly reduce the dimensionality of the problem.
Third, in Section~\ref{sec:Simple example that exhibits all universal shapes} we provide a very simple two parameter family of operators $ S $, for which the corresponding solution of the QVE with $ a=0 $, exhausts all the different local shapes of the generating density, described by our main result, Theorem~\ref{thr:Shape of generating density near its small values}.

Most of the examples here are represented in the special setting where $ \Sx $ is the unit interval and $ \Px $ is the restriction of the Lebesgue measure to this interval, i.e., 
\bels{std continuous QVE}{
(\1\Sx,\Borel,\Px\1) \,:=\, \bigl(\1[\10,1\1]\1,\1\Borel([\10,1\1])\1,\dif x\1\bigr)
\,,
}
with $ \Borel([\10,1\1])$ denoting the standard Borel $ \sigma $-algebra. 
Together, with the discrete case \eqref{RM setup} this is the most common setup for the QVE. 
An example, where a more complicated setup is natural is \cite{AZdep} (cf. also Subsection~3.4).

\section{The band operator, lack of self-averaging, and property {\bf A3}}
\label{sec:The band operator and lack of self-averaging and property A3}

The uniform primitivity assumption, {\bf A3}, was made to exclude choices of $S$ that lead to an essentially decoupled system. Without sufficient coupling of the components $m_x$ in the QVE the components of the imaginary part of the solution are not necessarily comparable in size, i.e., $v_x \sim v_y$, may not hold (cf. \eqref{v compared to avg-v} of Lemma \ref{lmm:Constraints on solution}).
No universal growth behavior at the edge of the support of the generating density, as described by Theorem~\ref{thr:Shape of generating density near its small values}, can be expected in this case, since the support of $v_x$ may not even be independent of $x$.

The simplest such situation is if the components may be partitioned into two subsets $ I $ and $ I^\cmpl = \Sx\backslash I$, that are \emph{completely} decoupled in the sense that $ S $ leaves invariant the families of functions which are supported either on $ I $ or $ I^\cmpl $. In this case the QVE decouples into two independent QVEs. These independent QVEs can then be  analyzed separately using the theory developed here.
Assumption {\bf A3} also excludes a situation, where the functions supported on $I$ are mapped to the function supported on the complement of $I$, and vice versa. This case has an instability at the origin $\tau=0$ (cf. Lemma~\ref{lmm:Scalable symmetric matrices} and Theorem~\ref{thr:Scalability and full indecomposability} in the discrete setup) and requires a special treatment of the lowest lying eigenvalue of $ S $ (cf. \cite{AltEK}).

Another example, illustrating why {\bf A3} is needed, is the case where $ a = 0 $ and the integral kernel of $ S $ is supported on a small band along the diagonal:
\[
S_{x y}\,=\, \eps^{-1}\1\xi(x+y)\2\Ind\setb{|x-y|\leq \eps/2}\,.
\]
Here, $\xi:\R\to (0,\infty)$ is some smooth function and $ \eps > 0 $ is a constant. 
For any fixed $ \eps $ the operator $ S $ satisfies {\bf A1-3} and {\bf B1}. Also, the conditions {\bf B2} and $ \Gamma(\infty) = \infty $ (cf. \eqref{def of Gamma(infty)}) hold for the corresponding QVE.
As $\eps$ approaches zero, however, the constant $ L $ from assumption {\bf A3} (among other model parameters such as $\norm{S}_{\Lp{2}\to \BB}$ from {\bf A2}) diverge. 
In the limit, $S$ becomes a multiplication operator and the QVE decouples completely, 
\[
-\2\frac{1}{m_x(z)}\,=\, z + \xi(x)\2m_x(z)
\,.
\]
The solution becomes trivial
\[
m_x(z) \,:=\, 
\xi(x)^{-1/2}m_{\mrm{sc}}\bigl(\2\xi(x)^{-1/2}z\2\bigr)
\,,
\]
where $ m_{\mrm{sc}} : \Cp \to \Cp  $ is the Stieltjes transform of Wigner's semi-circle law \eqref{SC}. 
In particular, the support of the component $v_x$ of the generating density depends on $ x $.

\section{Divergences in $ \BB $, outliers, and  function $ \Gamma $}
\label{sec:Divergences for special x-values: Outlier rows}

The purpose of this section is to illustrate the role of the auxiliary function $ \Gamma $, generated by the pair $ (a,S)$ through \eqref{def of Gamma}, in proving bounds for $ m $ in $ \BB $.
We present two simple families of QVEs for which the solutions $ m $ are uniformly bounded in $ \Lp{2} $, but for which the corresponding $ \Gamma $'s become increasingly ineffective in converting these bounds into $ \BB$-bounds for some members of these families.
In both cases a few exceptional row functions, $ S_x = (\1y \mapsto S_{xy})$, cause divergencies in the corresponding components, $ m_x $, of the solution. 
In the first example, the QVE can be solved explicitly and thus the divergence can be read off from the solution formula. The second example is a bit more involved. 
It illustrates a somewhat counterintuitive phenomenon of divergencies that may arise if one smoothens out discontinuities of the integral kernel of $ S $ on small scales.

\subsection{Simplest example of blow-up in $\BB$:}
Let $ a = 0 $. Consider the $2 \times 2$ - block constant integral operator $ S $ with the  kernel
\bels{simplest generic S}{
S_{xy}
\,=\,\lambda\2\Ind\sett{\1x\leq \delta,\, y>\delta\2}
+ \lambda\2\Ind\sett{\1 y\leq \delta,\, x>\delta\2} 
+ \Ind\sett{\1x> \delta,\, y>\delta\2}
\,,
}
parametrized by two positive constants $\lambda$ and $\delta $. 
For any fixed values of $\lambda>0 $ and $ \delta_\ast \in (\10\1,1/2) $, the properties {\bf A1-3} and {\bf B1} hold uniformly for every $ \delta \leq \delta_\ast $. 
In particular, the solutions are  uniformly bounded in $ \Lp{2} $ for $ \delta \leq \delta_\ast $, since the part (i) of Theorem~\ref{thr:Quantitative uniform bounds when a = 0} yields a uniform bound when $ \abs{z} \leq \eps $, for some $ \eps \sim 1 $, while \eqref{L2 bound on m when a=0} guarantees the $ \Lp{2}$-boundedness in the remaining domain $ \abs{z} > \eps $.
In fact, the solution for any parameter values has the structure
\bels{structure of 2 x 2 solution}{
m_x(z)\,=\,\mu(z)\2\Ind\sett{\1x\leq \delta\2} + \nu(z)\2\Ind\sett{\1x>\delta\2}
\,, 
}
where the two functions $\mu, \nu: \eCp  \to \eCp $ satisfy the coupled equations
\bels{equations for mu and nu}{
-\frac{1}{\1\mu(z)}\,=\, z + (1-\delta)\2\lambda\2\nu(z)\,,\qquad 
-\frac{1}{\1\nu(z)}\,=\,z+\lambda\2\delta\2\mu(z)+(1-\delta)\2\nu(z)\,.
}
\FloatBarrier
\begin{figure}[h]
	\centering
	\hspace{-0.025\textwidth}
	\includegraphics[width=1.01\textwidth]{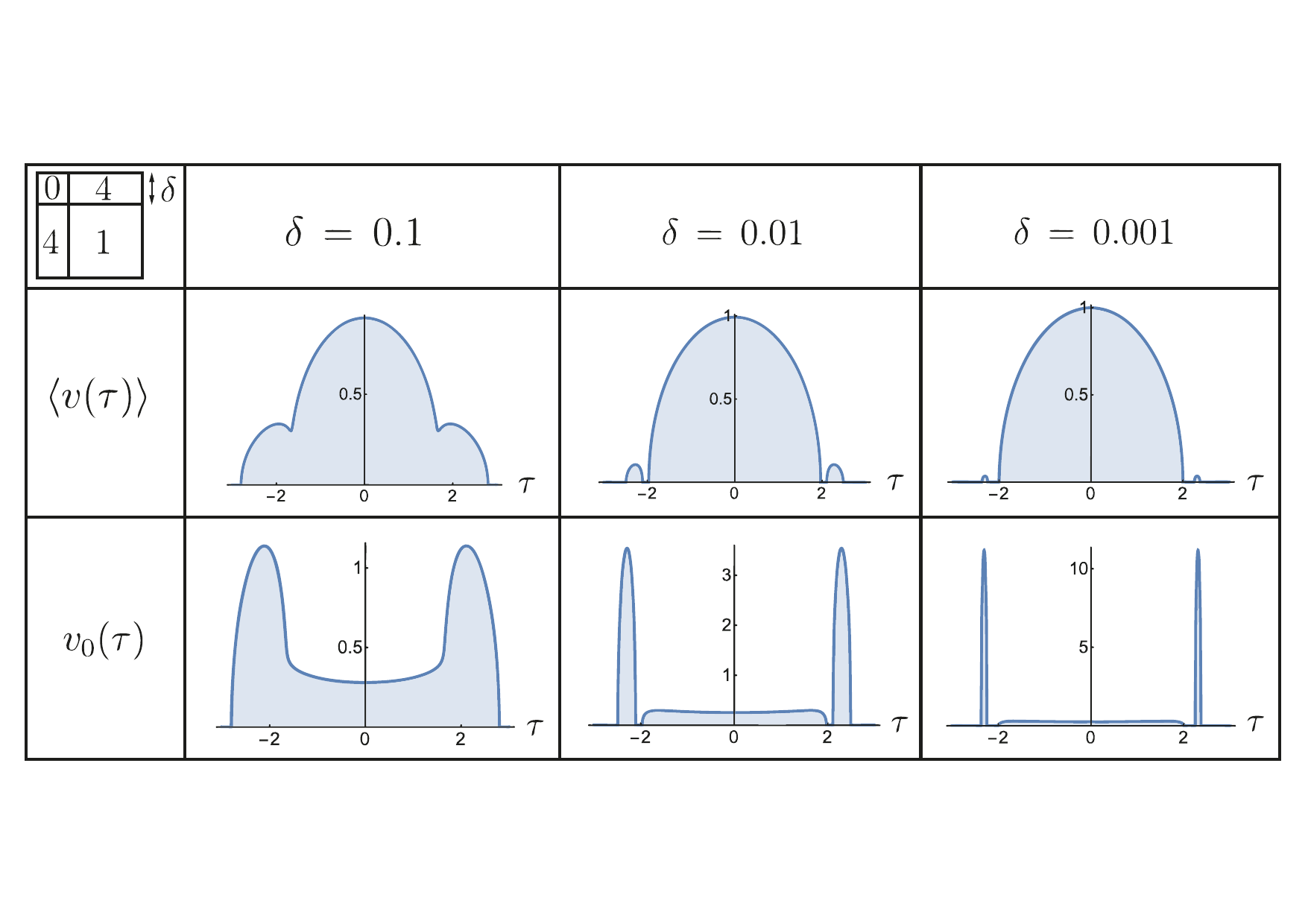}
	\caption{As $\delta$ decreases the average generating density remains bounded, but the $0$-th component of the generating density blows up at $\pm\tau_0$.}
	\label{Fig:SingleRowBlowUp}
\end{figure}
Let us consider a fixed $\lambda>2$. 
Then, as we take the limit $\delta \downarrow 0$ the strictly increasing function $ \Gamma $ generated by $ S $ through \eqref{def of Gamma}, satisfies
\bels{example: Gamma for model 1}{
\Gamma(\tau) \,\leq\, \sqrt{\11+\delta\1\tau^{\12}}
\,,\qquad
\tau \in (0,\infty)
\,.
}
This means that the uniform bound \eqref{uniform bound outside z=0} becomes ineffective as $ \Gamma^{-1}(\Lambda) \to \infty $ for any fixed $ \Lambda \in(1,\infty) $ as $ \delta \downarrow 0 $.  
Indeed, the row functions $ S_x $ indexed by a small set of rows $ x \in [0,\delta] $ differ from the row functions indexed by $ x \ge \delta $, and this leads to a blow-up in the components $m_x(z)$ with  $x \in [\10,\1\delta\1]$ at a specific value of $z$. 
More precisely, we find
\bels{the simplest BB-blow-up solved}{
\abs{\1\mu(\pm\tau_0\1)\1} \,\sim\, \frac{1\!}{\!\sqrt{\msp{-1}\delta\2}}
\,,
\qquad\text{at}\quad
\tau_0\,:=\, \frac{2\1\lambda}{\sqrt{\lambda^2-(\lambda-2)^2}}\,.
}
While the $\BB$-norm of $m$ diverges as $\delta$ approaches zero, the $\Lp{2}$-norm stays finite, because the divergent components contribute less and less. 
The situation is illustrated in Figure~\ref{Fig:SingleRowBlowUp}.

The integral kernel \eqref{simplest generic S} makes sense even for $\delta=0$. In this case we get for the generating measure the formulas,
\bea{
v_0(\dif\tau)
\,&=\, 
\frac{\lambda \2\sqrt{4-\tau^2}}{2\2\lambda^2-2\2 \tau^2(\lambda-1)}\,
\Ind\sett{\2\tau \in[-2,2\1]\2}\,\dif \tau
\,+\, 
\frac{\pi(\lambda-2)}{2(\lambda-1)}
\bigl(
\1\delta_{-\tau_0}(\dif \tau) + \delta_{\tau_0}(\dif \tau)\1
\bigr)
\,,
\\
v_x(\dif\tau)
\,&=\,
\frac{1}{2}\sqrt{4-\tau^2}\,
\Ind\sett{\2\tau \in[-2,2\2]\2}\,\dif \tau
\,,
\qquad\qquad x \in(\10\1,1\1]
\,.
}
The non-zero value that $v_0$ assigns to $\tau_0$ and $-\tau_0$ reflects the divergence of $m$ in the uniform norm at these points. 

In the context of random matrix theory the operator $S $ with small values of the parameter $\delta$ corresponds to the variance matrix (cf. Definition~\ref{def:Wigner-type random matrix}) of a perturbation of a Wigner matrix. The part of the generating density, which is supported around $\tau_0$ corresponds to a small collection of eigenvalues away from the bulk of the spectrum of the random matrix. These \emph{outliers} will induce a divergence in some elements of the resolvent \eqref{def of brm-G} of this matrix. This divergence is what we see as the divergence of $\mu$ in \eqref{the simplest BB-blow-up solved}. 

\subsection{Example of blow-up in $ \BB $ due to smoothing:}
We present a second example of a different nature, in which the bounds of Proposition~\ref{prp:Converting L2-estimates to uniform bounds} for converting $ \Lp{2}$-estimates of $ m(z) $ into uniform bounds become ineffective. 
The smoothing of discontinuities in $ S $ may cause blow-ups in the solution of the QVE (cf. Figure~\ref{Fig:QVEWithSlope}). This is somewhat surprising, since by conventional wisdom, smoother data implies smoother solutions. The key point here is that the smoothing procedure creates a few row functions that are far away from all the other row functions. The following choice of operator demonstrates this mechanism:
\[
S_{x y}^{(\eps)}\,=\, \frac{1}{2}(\1r_x\1s_y + r_y\1s_x)
\,.
\]
Here the two continuous functions $r,s: [\10,1\1] \to (0,1]$, are given by
\bea{
r_x \,&=\, \bigl(\21+\eps^{-1}(x-\delta)\1\bigr)\2\Ind\sett{\2\delta-\eps<x\leq \delta\2} 
\,+\, 
\Ind\sett{\1x>\delta\2}
\,,
\\
s_x \,&=\,
2\1\lambda\2\Ind\sett{\1x \leq \delta\2} 
\,+\, 
\bigl(\22\1\lambda-\eps^{-1}\bigl(\22\1\lambda-1)(x-\delta\1)\1\bigr)\2\Ind\sett{\2\delta<x\leq \delta+\eps\2}
\\
&\msp{28}+
\Ind\sett{\1x>\delta+\eps\2}
\,,
}
respectively. 
The parameters $\lambda>0$, $\delta \in (0,1)$ are considered fixed, while $ \eps \in (0,\delta) $ is varied.
The continuous kernel $S^{(\eps)}$ represents a smoothed out version of the $2 \times 2$-block operator $ S^{(0)} = S $  from \eqref{simplest generic S}.

In this case, $ \Gamma(\infty) = \lim_{\tau\to\infty}\Gamma(\tau) = \infty $ holds for each operator $S^{(\eps)}$,  $\eps>0$, as well as for the limiting operator $S^{(0)}$. However, the estimates \eqref{abs-m_x bounded using Gamma_x and L2-bound} and \eqref{simplified L2 to BB conversion}  become ineffective for proving uniform bounds, since for any fixed $ \tau < \infty $ the value $ \Gamma(\tau) $  becomes too small in the limit $ \eps \to 0 $. 
This is due to the distance that some row functions $ S_x^{(\eps)} $, with $ \abs{x-\delta}\leq \eps $, 
have from all the other row functions. 

Let $m=m^{(\eps)}$ denote the solution of the QVE corresponding to $S^{(\eps)}$. We will now show that, even though $m^{(0)}$ is uniformly bounded, the $\BB$-norm of $m^{(\eps)}$ diverges as $\eps$ approaches zero for certain parameters $\lambda$ and $\delta$. 

The solution $m=m^{(\eps)}$ has the form
\[
m_x(z)\,=\, -\2\frac{1}{z+ \varphi(z) \2 r_x + \psi(z)\2 s_x }\,.
\]
Here, the two functions $\varphi^{(\eps)}=\varphi=\avg{s,m},\psi^{(\eps)}=\psi=\avg{r,m}: \Cp \to \Cp$ satisfy the coupled equations
\bels{phi psi coupled equations}{
\varphi(z)\,&=\, -\int_{[\10,1]} \frac{s_x\2\dif x}{z+ \varphi(z) \2 r_x + \psi(z)\2 s_x}
\\
\psi(z) \,&=\, -\int_{[\10,1]}\frac{r_x\2\dif x}{z+ \varphi(z) \2 r_x + \psi(z)\2 s_x}\,.
}

In the parameter regime $\lambda\geq 10$ and $\delta \leq 1/10$ the support of the generating density of $m^{(0)}$ consists of three disjoint intervals,
\[
\supp v^{(0)}\,=\, 
\supp \varphi^{(0)}\,=\, \supp \psi^{(0)}\,=\, [-\beta_1,-\alpha_1]\cup[-\alpha_0,\alpha_0]\cup[\alpha_1, \beta_1]\,.
\]
\begin{wrapfigure}{r}{0.68\textwidth}
	\centering
	\vspace{-5pt}
	\includegraphics[width=0.68\textwidth]{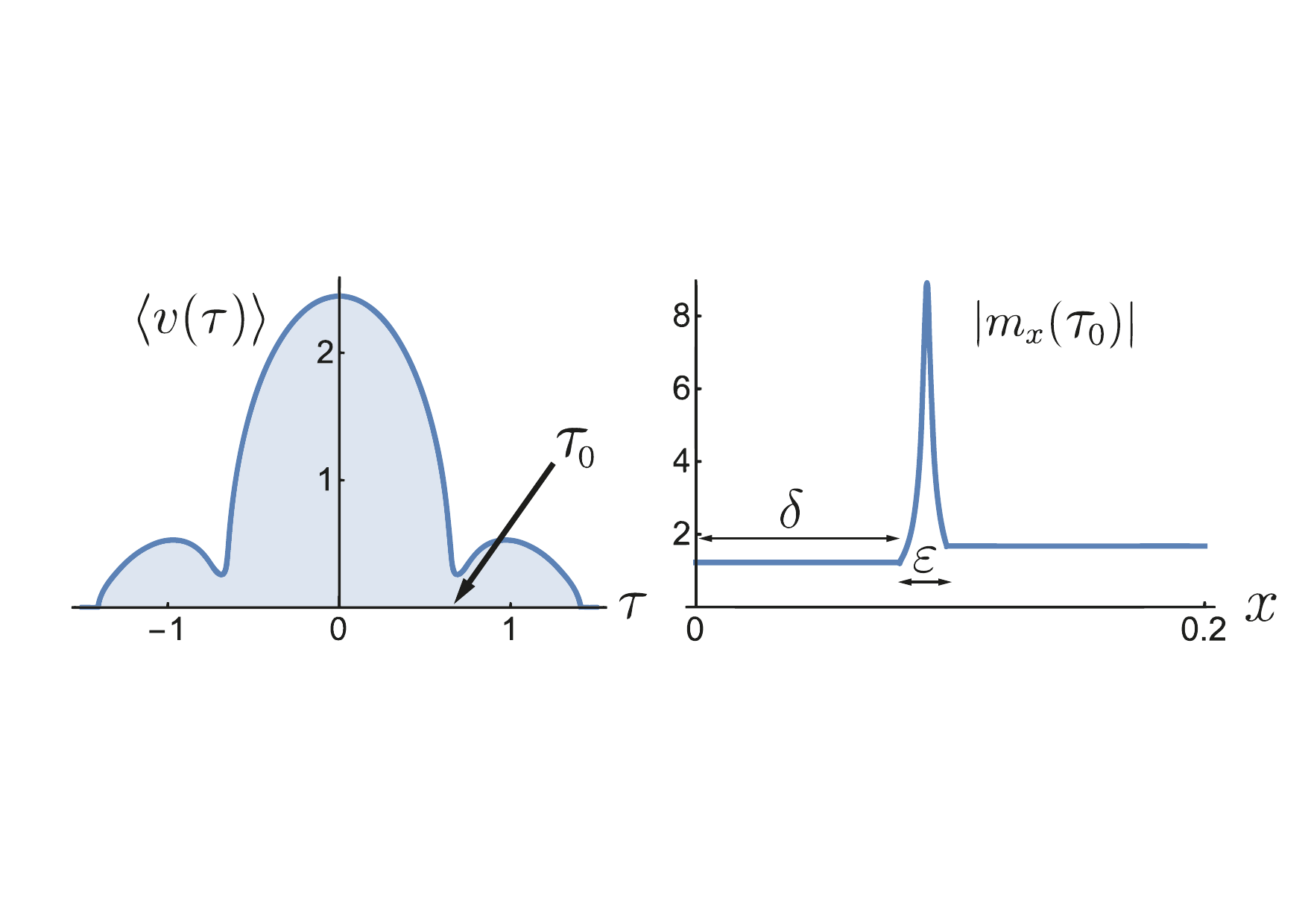}
	\caption{As $\eps$ decreases the average generating density remains bounded. The absolute value of the solution as a function of $x$ at a fixed value $\tau_0$ inside the gap of the limiting generating density has a blow up.}
	\label{Fig:QVEWithSlope}
	\vspace{-7pt}
\end{wrapfigure}
Inside the gap $(\alpha_0,\alpha_1)$ the norm $\norm{m^{(\eps)}}$ diverges as $\eps \downarrow 0$. This can be seen indirectly, by utilizing Theorem~\ref{thr:Shape of generating density near its small values}. We will now sketch an argument, which shows that assuming a uniform bound on $m$ leads to a contradiction. Suppose there were an $\eps$-independent bound on the uniform norm. Then a local version of Theorem~\ref{thr:Shape of generating density near its small values}  would be applicable and the generating density $v^{(\eps)}$ of $m^{(\eps)}$ could approach zero only in the specific ways described in that theorem. Instead, the average generating density $\avg{v^{(\eps)}}$ takes small non-zero values along the whole interval $(\alpha_0,\alpha_1)$, as we explain below. This contradicts the assertion of the theorem. 

In fact, a stability analysis of the two equations \eqref{phi psi coupled equations} for $\varphi^{(\eps)}$ and $\psi^{(\eps)}$ shows that they are uniformly Lipshitz-continuous in $\eps$. In particular,
for $\tau$ well inside the interval $(\alpha_0,\alpha_1)$ we have
\[
\Im\2 \varphi^{(\eps)}(\tau)+\Im\2\psi^{(\eps)}(\tau) \,\leq\, C\2\eps\,.
\]
Thus, the average generating density takes small values here as well, $\avg{\1v^{(\eps)}(\tau)\1}\leq C\1\eps$.
On the other hand, $\Im\2\varphi$ and $\Im \2\psi$ do not vanish on $(\alpha_0,\alpha_1)$. 
Their supports coincide with the support of the generating density, $v^{(\eps)}$. 
By Theorem~\ref{thr:Generating density supported on single interval} this support is a single interval for all $\eps>0$ and by the continuity of $\varphi$ and $\psi$ in $\eps$, every point $\tau \in(- \beta_1,-\alpha_1) \cup(\alpha_1, \beta_1)$ is contained in this interval in the limit $\eps\downarrow 0$.

This example demonstrates that certain features of the solution of the QVE cannot be expected to be stable under smoothing of the corresponding operator $S$. Among these features are gaps in the support of the generating density, as well as the universal shapes described by Theorem~\ref{thr:Shape of generating density near its small values}.

\section{Blow-up at $z=0$ when $a=0$ and assumption {\bf B1}}
\label{sec:Blow-up at z=0 when a=0 and assumption B1}

In the case $ a = 0 $, the point $ z = 0 $ plays a special role in the QVE. It is the only place where $ m(z) $ may become unbounded even in the $ \Lp{1}$-sense (cf. \eqref{BB-bound of v is bounded by avg-v}). 
In this section we give two simple examples which exhibit  different types of blow-ups at $ z = 0 $. 
Moreover, we motivate the assumption {\bf B1} by showing that it corresponds to a necessary condition for the solution to remain bounded in a stable way at $z = 0 $ when the dimension of $ \Sx $ is finite.

Suppose $ a = 0 $. The assumption {\bf B1} is designed to prevent divergencies in the solution at the origin of the complex plane. These divergencies are caused by the structure of small values of the kernel $S_{x y}$. 
In Section~\ref{sec:Uniform bound around z=0 when a=0} we saw that at $ z = 0 $ the QVE reduces to
\bels{DAD problem}{
v_x \int_\Sx S_{x y}v_y\1\Px(\dif y) \,=\, 1\,, \qquad x \in \Sx
\,,
}
where $v_x=\Im \2m_x(0)$. 
Thus the boundedness of $ m(z) $ for small $ \abs{z} $ is related to the solvability of \eqref{DAD problem}.
There is an extensive literature on \eqref{DAD problem} that dates back at least to \cite{S1964}.

In the discrete setup, with $ \Sx := \sett{1,\dots,N} $ and $ \Px(\sett{i}) := N^{-1}$, the solvability of \eqref{DAD problem} is equivalent to the scalability (cf. Definition~\ref{def:Square matrices with non-negative entries}) of the matrix $ \brm{S} = (s_{ij})_{i,j=1}^N $, with non-negative entries $ s_{ij} := N^{-1}S_{ij} $.
We refer to Appendix~\ref{sec:Scalability of matrices with non-negative entries} for a discussion of various issues related to scalability.
Theorem~\ref{thr:Scalability and full indecomposability} below shows that the discrete QVE has a unique bounded solution if and only if the matrix $ \brm{S} $ is fully indecomposable.
This bound may deteriorate in $ N $. 
However, if $ S $ is block fully indecomposable (the property {\bf B1}), then the bound on the solution depends only on the number of blocks (cf. \eqref{upper bound on wti-J on local v-avgs} and Lemma~\ref{lmm:Uniform bound on discrete minimizer}).

Let us go back to the continuum setting. If assumption {\bf B1} is violated, the generating measure may have a singularity at $z=0$. In fact, there are two types of divergencies that may occur. Either the generating density exists in a neighborhood of $\tau=0$ and has a singularity at the origin, or the generating measure has a delta-component at the origin. Both cases can be illustrated using the $2 \times 2$-block operator with the integral kernel \eqref{simplest generic S}.

The latter case occurs if the kernel $S_{xy} $ contains a rectangular zero-block whose circumference is larger than $2$. For $ S $  from \eqref{simplest generic S} this means that $\delta>1/2$. Expanding the corresponding QVE for small values of $z$ reveals
\[
v_x(\dif \tau)\,=\, \pi\frac{\2\delta-1}{\delta}\,\Ind\sett{x \leq \delta}\2\delta_0(\dif \tau)\,+\,\Ord(1)\1\dif \tau\,.
\]
The components of the generating measure with $x \in [\10,\delta\2]$ assign a non-zero value to the origin. 

The case of a singular, but existing generating density can be seen from the same example, \eqref{simplest generic S}, with the choice $\delta=1/2$. From an expansion of the QVE at small values of $z$ we find for the generating density:
\[
v_x(\tau)\,=\,(2\1\lambda)^{-2/3}\sqrt{3\2}\2|\tau|^{-1/3}\Ind\setb{2\1x \leq 1}\;+\;\Ord(\11\1)
\,.
\]

The blow-up at $z=0$ has a simple interpretation in the context of random matrix theory. It corresponds to an accumulation of eigenvalues at zero. If the generating density assigns a non-zero value to the origin, a random matrix with the corresponding $S$ as its variance matrix (cf. Definition~\ref{def:Wigner-type random matrix}) will have a kernel, whose dimension is a finite fraction of the size $N$ of the matrix.

Assumption {\bf B1} excludes the above examples. 
In general, it ensures that a discretized version, of dimension $ K $, of the original continuous problem \eqref{DAD problem} has a unique bounded and stable solution by the part (i) of Theorem~\ref{thr:Scalability and full indecomposability}.
The bounded discrete solution is then used in Section~\ref{sec:Uniform bound around z=0 when a=0} to argue that also the continuous problem has a bounded solution by using a variational formulation \eqref{DAD problem}.

\section{Effects of non-constant function $ a $}
\label{sec:Effects of non-constant function a}

For most of our analysis the function $ a \in \BB $ has played a secondary role. 
However, even for the simplest operator $ S $ the addition of a non-constant $ a $ to the QVE without $ a $ can alter the solution significantly.  
Indeed, let us consider the simplest case $ S_{xy} =1 $, so that {\bf A1-3} hold trivially. Since $ \avg{w,Sw} = \avg{w}^2 $, for any $ w \in \Lp{1} $, $ S $ satisfies also {\bf B2}, and thus Lemma~\ref{lmm:Quantitative L2-bound} yields a uniform $ \Lp{2}$-bound $ \sup_{z\ins \Cp} \norm{m(z)}_2 \lesssim 1 $. 
Since $ (Sm(z))_x = \avg{\1m(z)} $ for any $x $, we obtain a closed scalar integral equation for the average of $ m(z) $
\bels{averaged QVE for deformed Wigner}{
\avg{\1m(z)}
\,= \int_\Sx \frac{\Px(\dif x)}{\2z\1+\1a_x+\avg{\1m(z)}\2}
\,,
}
by integrating the QVE.
If $ a $ is piecewise $ 1/2$-H\"older regular in the sense of \eqref{def of PW-1/2-Holder}, then Theorem~\ref{thr:Quantitative uniform bound for general a} yields a uniform bound $ \nnorm{m}_\R \lesssim 1 $ (see Remark~\ref{rmk:Positive diagonal and 1/2-Holder regularity}). In particular, Theorem~\ref{thr:Shape of generating density near its small values} applies. 

In the random matrix context \eqref{averaged QVE for deformed Wigner} determines the asymptotic density of states of a \emph{deformed Wigner matrix}, 
\bels{deformed Wigner matrix}{
\brm{H} \,=\, \brm{A} + \brm{W}
\,,
}
where $ \brm{W} $ is an $ N $-dimensional Wigner matrix, and $ \brm{A} $ is a self-adjoint non-random matrix satisfying $ \Spec(\brm{A}) = \sett{a_i:1 \leq i \leq N} $, in the limit $ N \to \infty $ (cf. \cite{PasturDefWig}).

In the special case, that $ N $ is an even integer and $ \brm{A} $ has only two eigenvalues $ \pm\1\alpha $, both of degeneracy $ N/2 $, i.e., 
\bels{2-value a}{
a_k \,:= 
\begin{cases}
-\alpha \quad&\text{when } 1 \leq k \leq N/2
\\
+\alpha  &\text{when }N/2+1 \leq k \leq N\,,
\end{cases}
}
the equation \eqref{averaged QVE for deformed Wigner} can be reduced to a single cubic polynomial for $ \avg{m(z)} $. 
In \cite{BH} this matrix model \eqref{deformed Wigner matrix} was analyzed and the authors demonstrated that the asymptotic density of states may exhibit a cubic root cusp for some values of the parameter $ \alpha $.
The cubic root singularity seems natural in the special case \eqref{2-value a} as $ \avg{m(z)} $ satisfies a cubic polynomial.  
If the range of $ a $ contains $ p \in \N $ distinct values, then \eqref{averaged QVE for deformed Wigner} can be reduced to a polynomial of degree $ p+1 $. 
Our results, however, show that in spite of this arbitrary high degree, the worst possible singularity is cubic, and the possible shapes of the density of states are described by Theorem~\ref{thr:Shape of generating density near its small values}, as long as $ a $ is sufficiently regular.   


\sectionl{Discretization and reduction of the QVE}

By choosing $ \Sx := \sett{1,\dots,N} $ and $ \Px(\sett{i}) := N^{-1}$ for some $ N \in \N $ the QVE \eqref{QVE} takes the form 
\bels{discrete QVE}{
-\,\frac{1}{m_i\!}
\;=\,
z  +a_i +\frac{1}{N}\sum_{j=1}^N S_{ i j} m_j\, , \qquad  i =1,\dots, N
\,,
}
and hence this discrete vector equation is covered by our analysis. 
Alternatively, we may treat \eqref{discrete QVE} in the continuous setup \eqref{std continuous QVE} by defining a function $ a: [0,1] \to \R $ and the integral kernel of $ S $ on $[0,1]^2 $ by
\bels{discrete QVE as continuous one}{
a(x)  := \sum_{i=1}^N a_i\2 \chi_i(x)
\,,\qquad\text{and}\qquad
S(x,y):= \sum_{i,j=1}^NS_{ij}\2\chi_i(x)\2\chi_j(y)
\,,
}
respectively, with the auxiliary functions $ \chi_i : [0,1] \to \sett{0,1} $, $ i=1,\dots,N$, given by
\[
\chi_i(x) \,:= \Ind\setb{N\1x \in [\1i-1,i\1)}
\,.
\]
In order to distinguish between discrete and continuous quantities we have adapted in this section a special convention by writing the continuous variable $ x $ in the parenthesis and not as a subscript.
Since the continuous QVE conserves the block structure, and both the discrete and continuous QVEs have unique solutions $ \brm{m} = (m_i)_{i=1}^N$ and $ m = (x \mapsto m(x)) $, respectively, by Theorem~\ref{thr:Existence and uniqueness}, we conclude that these solutions are related by 
\bels{relation of cont and disc m}{
m(z;x) \,= \sum_{i=1}^N m_i(z)\2\chi_i(x)
\,.
}

This re-interpretation of a discrete QVE as a continuous one is convenient when comparing different discrete QVEs of non-matching dimensions $ N $. 
For example, the convergence of a sequence of QVEs generated by a smooth function $ \alpha : [\10\1,1\1] \to \R $ and a symmetric smooth function $ \sigma:[\10\1,1\1]^2\to [\10\1,\infty) $, through 
\[
\qquad
a_i \,:=\, \alpha\Bigl(\frac{i}{N}\Bigr)\,,
\quad\text{and}\quad
S_{ij} \,:=\, 
\sigma\Bigl(\frac{i}{N},\frac{j}{N}\Bigr)\,,\qquad i,j =1, \dots, N
\,,
\]
can be handled this way.
Indeed, if $ \brm{m} $ solves the discrete QVE then the functions $ m $ defined through the right hand side of \eqref{relation of cont and disc m} converge to the solution of the continuous QVE with $ a(x) = \alpha(x) $ and $ S(x,y):=\sigma(x,y) $ as $ N \to \infty $.

In particular, if the continuum operator satisfies {\bf A3} and {\bf B2}, or merely {\bf B1} in the case $ \alpha = 0 $ (all other assumptions are automatic in this case), then the convergence of the generating densities is uniform and the support of the generating density is a single interval for large enough $N$. 
This is a consequence of the stability result, Theorem~\ref{thr:Stability}, more precisely of Remark \ref{rmk:Perturbations on a and S} following it and of the fact that the limiting operator $ S $ is block fully indecomposable, and the knowledge about the shape of the generating density from Theorem~\ref{thr:Shape of generating density near its small values} and Theorem~\ref{thr:Generating density supported on single interval}.

We also have the following straightforward dimensional reduction. Suppose there exists a partition $ \mcl{I} $ of the first $ N $ integers, and numbers $ (\wht{S}_{IJ})_{I,J\in\mcl{I}} $ and $ (\wht{a}_I)_{I \in \mcl{I}} $, indexed by the parts, such that for every $ I,J \in \mcl{I} $ and $ i \in I $,
\[
\sum_{j \in J} S_{ij} = \abs{J}\wht{S}_{IJ}\,,
\quad\text{and}\quad
a_i = \wht{a}_I
\,.
\]   
Then $ m(z) $ is piecewise constant on the parts of $ \mcl{I} $, i.e., there exist numbers $ \wht{m}(z) = (\wht{m}_I(z))_{I\in\mcl{I}} $, such that $ m_i(z) = \wht{m}_I(z) $, for every $ i \in  I$.
The numbers $ \wht{m}(z) $ solve the $ \abs{\mcl{I}}$-dimensional reduced QVE,
\[
-\frac{1}{\wht{m}_I(z)} \,=\, z \1+\2 \wht{a}_I + \sum_{J\in\mcl{I}}
\frac{\abs{J}}{N}
\wht{S}_{\msp{-1}I\msp{-1}J}\2\wht{m}_J(z)%
\,.
\]
Here the right hand side can be written in the standard form \eqref{QVE} by identifying $ \Sx = \mcl{I} $ and $ \wht{\Px}(J) = \abs{J}/N $. 
In the special case where the matrix $ \brm{S} = (S_{ij})_{i,j=1}^N $ has constant row sums, $ N^{-1}\sum_j S_{ij} = 1 $, and $ \brm{a} = 0 $, the reduced QVE is one-dimensional, and is solved by the Stieltjes transform of the Wigner semicircle law \eqref{SC}

The dimension reduction argument generalizes trivially to more abstract setups. Indeed, we have already used such a reduction in Section~\ref{sec:Divergences for special x-values: Outlier rows}, where we reduced the analysis of the infinite dimensional QVE, with an integral kernel $ S_{xy} $ defined in \eqref{simplest generic S}, to  the study of the two-dimensional QVE \eqref{equations for mu and nu}.

\sectionl{Simple example that exhibits all universal shapes}

We will now discuss how all possible shapes of the generating density from Theorem~\ref{thr:Shape of generating density near its small values} can be seen in the simple example of the $2\times 2$-block operator $S $, defined in \eqref{simplest generic S},  by choosing the parameters $\lambda$ and $\delta$ appropriately. 
For the choice of parameters $\lambda>2$ and $\delta = \delta_c(\lambda)$ with
\[
\delta_c(\lambda)\,:=\, \frac{(\lambda-2)^3}{2 \2\lambda^3 - 3\2 \lambda^2+ 15 \2\lambda -7}\,,
\]
the generating density exists everywhere and its support is a single interval. 
\begin{figure}[h]
	\centering
	\includegraphics[width=\textwidth]{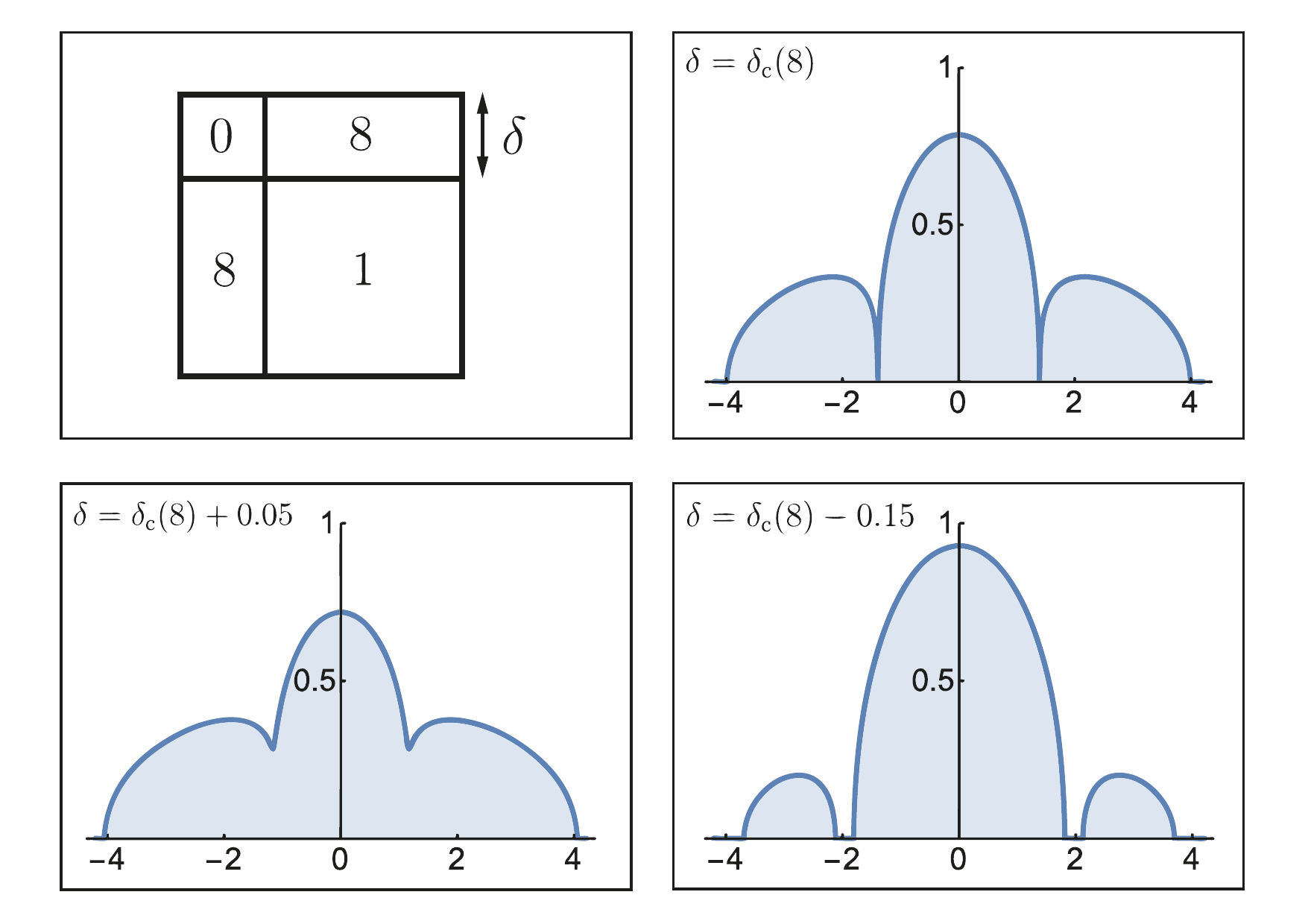}
	\caption{Decreasing $ \delta $ from its critical value $ \delta_{\rm c} $ opens a gap in the support of the average generating density. Increasing delta lifts the cubic cusp singularity.}
	\label{Fig:PerturbationAroundCriticalDelta}
\end{figure}
In the interior of this interval the generating density has exactly two zeros at some values $\tau_c$ and $-\tau_c$.
The shape of the generating density at these zeros in the interior of its own support is a cubic cusp, represented by the shape function $ \lim_{\rho \downarrow 0}\rho\2\Psi_{\rm min}(\omega/\rho^3) = 2^{2/3}\abs{\omega}^{1/3} $ (cf. Definition~\ref{def:Shape functions}).
If we increase $\delta$ above $\delta_c(\lambda)$, then the zeros of the generating density disappear. The support is a single interval with local minima close to $\tau_c$ and $-\tau_c$ and the shape around these minima is described by $ \rho\1\Psi_{\mathrm{min}}(\genarg/\rho^3)$ for some small positive $\rho$. Finally, if we decrease $\delta$ slightly below $\delta_c(\lambda)$ a gap opens in the support. Now the support of the generating density consists of three disjoint intervals and the shape of the generating density at the two neighboring edges is described by $\Delta^{\!1/3}\Psi_{\mathrm{edge}}(\genarg/\Delta)$, where $ \Delta \ll 1 $ is the size of the gap.
The different choices of $ \delta $ are illustrated in Figure \ref{Fig:PerturbationAroundCriticalDelta}.

\appendix
\chapter{Appendix}

The following simple comparison relations are used in the proof of Proposition~\ref{prp:Cubic perturbation bound around critical points} when $\Im\,z \neq 0 $ and $ \Re\,z $ is close to a local minimum of the generating density.

\begin{corollary}[Scaling relations]
\label{crl:Scaling relations}
Suppose the assumptions of Theorem~\ref{thr:Shape of generating density near its small values} are satisfied. There exists a positive threshold $\eps \sim 1$ such that for the set of local minima $ \mathbb{M}$, defined in \eqref{def of MM}, and any $\eta \in (\10\1,\eps\1]$, the average generating density has the following growth behavior close to the points in $\mathbb{M}$:
\begin{subequations}
\begin{enumerate}
\titem{a} {\bf Support around an edge:} 
At the edges $\alpha_i, \beta_{i-1}$ with $i =2, \dots, K'$, 
\bea{
\avgb{\2\Im\2m(\alpha_i+\omega + \cI\2\eta)}
\,&\sim\,
\avgb{\2\Im\2m(\beta_{i-1}-\omega + \cI\2\eta)}
\\
&\sim\, 
\frac{(\omega+\eta)^{1/2}}{(\alpha_i-\beta_{i-1}+\omega+\eta)^{1/6}}
\,,\qquad \omega \in [\10,\eps\1]\,.
}
\titem{b} 
{\bf Inside a gap:} 
Between two neighboring edges $\beta_{i-1} $ and $\alpha_i$ with $i =2, \dots, K'$,
\bea{
\avgb{\2\Im\2m(\tau + \cI\2\eta)}
\,\sim\;\, &\frac{\eta}{\2(\1\alpha_i-\beta_{i-1}+\eta)^{1/6}}
\\
&\times\,\biggl(
\frac{1}{(\tau-\beta_{i-1}+\eta)^{1/2}\!}\,
+
\frac{1}{(\alpha_i-\tau+\eta)^{1/2}\!}
\,\biggr)
\,,\quad
\tau \in [\1\beta_{i-1},\alpha_i]
\,.\msp{-30}
}
\titem{c} 
{\bf Support around an extreme edge:} Around the extreme points $ \alpha_1 $ and $ \beta_{K'} $ of $ \supp v $:
\bea{
\avgb{\2\Im\2m(\alpha_1+\omega + \cI\2\eta)}
\,&\sim\,
\avgb{\2\Im \2 m(\beta_{K'}-\omega + \cI\2\eta)}
\\
&\sim\,
\begin{cases}
\displaystyle
(\omega+\eta\1)^{1/2}\,,\quad	
&\omega \in [\10,\eps\1]\,;
\\
\displaystyle
\frac{\eta}{(\1\abs{\omega}+\eta)^{1/2}\!}\;,	
&\omega \in [-\eps,0\2]
\,.
\end{cases}
}
\titem{d} {\bf Close to a local minimum:} 
In a neighborhood of the local minima $\{\gamma_k\}$ in the interior of the support of the generating density,
\begin{equation*}
\avgb{\2\Im\2m(\gamma_k+\omega + \cI\2\eta)}
\,\sim\, 
\avg{\1v(\gamma_k)}+ (\2|\omega|+\eta\2)^{1/3}\,,\qquad \omega \in [-\eps,\2\eps\1]\,.
\end{equation*}
\end{enumerate}
\end{subequations}
All constants hidden behind the comparison relations depend on the parameters $\rho$, $L$, $\norm{S}_{\Lp{2}\to\BB}$ and $\Phi $. 
\end{corollary}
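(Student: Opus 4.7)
The plan is to reduce all four estimates to the known shape of the real-line generating density and then apply the Poisson integral representation. Recall that under the assumptions of Theorem~\ref{thr:Shape of generating density near its small values} we have $\nnorm{m}_\R\le\Phi$, so the density $v_x(\tau)=\Im\,m_x(\tau)$ exists and is $1/3$-H\"older continuous on $\R$ (Theorem~\ref{thr:Regularity of generating density}); moreover, $v_x(\tau)\sim\langle v(\tau)\rangle$ for every $x\in\Sx$. In particular,
\[
\avgb{\Im\,m(\tau+\cI\2\eta)}\;=\;\frac{1}{\pi}\int_\R\frac{\eta\,\avg{v(\tau')}\,\dif\tau'}{(\tau-\tau')^2+\eta^2}\,,
\]
so it suffices to estimate this harmonic extension of the averaged boundary density $\avg{v}$ whose shape is explicitly described on $\R$ by the expansions \eqref{expansion of v around tau_0}--\eqref{cases for shapes}.

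I would first record the following two elementary Poisson estimates, valid for any $\delta>0$, $s\in\R$, $\eta\in(0,1]$: the Poisson integral of the bounded function $\omega'\mapsto (\omega')^{1/2}_+\,\chi_{[0,\delta]}(\omega')$ at $s+\cI\2\eta$ is comparable to $(s+\eta)^{1/2}_+$ plus error of order $\eta/\sqrt{|s-\delta|+\eta}$ when $s\ge 0$, and is comparable to $\eta/\sqrt{|s|+\eta}$ when $s\le 0$; analogously for the cubic root $|\omega'|^{1/3}$, the smoothed profile is comparable to $(|s|+\eta)^{1/3}$ on $|s|\le \delta$. These are standard computations: split the integral into $|\omega'-s|\le\eta$ (where the integrand is $\sim\eta^{-1}$ and the mass is $\sim\eta$ times the boundary value at scale $s+\eta$) and $|\omega'-s|>\eta$ (where the Poisson kernel is $\sim\eta/(\omega'-s)^2$). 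Combining these with the scaling \eqref{scaling of Psi_edge} of $\Psi_{\msp{-2}\mrm{edge}}$ and with $\rho\,\Psi_{\msp{-2}\mrm{min}}(\omega/\rho^{3})\sim\min\{\omega^{2}/\rho^{5},\,|\omega|^{1/3}\}$ gives the building blocks for each case.

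With these in hand I would treat the four cases separately. For (a), at an internal left edge $\alpha_i$, the real-line shape is $\langle v(\alpha_i+\omega')\rangle\sim\Delta^{1/3}\Psi_{\msp{-2}\mrm{edge}}(\omega'/\Delta)$ for $\omega'\ge 0$ (with $\Delta:=\alpha_i-\beta_{i-1}$) and $\langle v\rangle=0$ on $[\beta_{i-1},\alpha_i]$; using $\Delta^{1/3}\Psi_{\msp{-2}\mrm{edge}}(\omega'/\Delta)\sim (\omega')^{1/2}/(\Delta+\omega')^{1/6}$ and the Poisson estimate for $t^{1/2}_+$ yields the claim with the small shift $\omega\mapsto\omega+\eta$ absorbed inside both factors. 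The symmetric version handles $\beta_{i-1}-\omega$. Case (c) is identical, the only difference being that $\Delta\sim 1$ on the "outer" side (so the denominator $\Delta^{1/6}$ is of order one and disappears) and that the formula on $[-\eps,0]$ is precisely the second Poisson estimate above for the square root outside its support. Case (b) follows from the same building block applied simultaneously at both endpoints of the gap: the contributions from the two edges add (mass in the gap itself is zero), producing $\eta(\Delta+\eta)^{-1/6}[(\tau-\beta_{i-1}+\eta)^{-1/2}+(\alpha_i-\tau+\eta)^{-1/2}]$; the replacement $\Delta^{1/6}\to(\Delta+\eta)^{1/6}$ is harmless since $\eta\le\eps\sim 1$. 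Finally, (d) follows from the expansion \eqref{minimum} combined with the Poisson estimate for $|t|^{1/3}$: the constant part $\langle v(\gamma_k)\rangle$ is preserved by the Poisson kernel, while the variable part $\rho_k\Psi_{\msp{-2}\mrm{min}}(\omega/\rho_k^{3})$ produces $(|\omega|+\eta)^{1/3}$ up to universal constants.

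The main obstacle is confirming that the local shape expansions \eqref{expansion of v around tau_0}, which Theorem~\ref{thr:Shape of generating density near its small values} only guarantees inside the order-one neighborhood of each expansion point, can be pieced together across the entire real line in a way that keeps the Poisson integrals well controlled. Two ingredients handle this: first, the separation bounds \eqref{defining property of of alpha_i and beta_i} and \eqref{eps-disjointness of DD} ensure that the distance from any small-$\langle v\rangle$ zone to every non-neighboring edge or minimum is of order one, so the contribution of far points to the Poisson integral is at most $\Ord(\eta)$ and fits inside all error terms claimed above; second, where two expansion windows overlap, the uniform $1/3$-H\"older bound from Theorem~\ref{thr:Regularity of generating density} together with $\langle v\rangle\lesssim 1$ shows that the two representations agree up to an error of the same order as the subleading $\Ord(v(\tau_0)^2+\Psi(\omega)^2)$ remainder, which is dominated by the main terms in the Poisson smoothing. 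This uniform gluing is the only delicate point; once it is set up, each of (a)--(d) reduces to the two standard one-sided Poisson computations mentioned above.
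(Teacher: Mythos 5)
Your proposal is correct and follows essentially the same route as the paper's own proof: both deduce the real-line profile of $\avg{v}$ near each point of $\MM$ from Theorem~\ref{thr:Shape of generating density near its small values} (using $\Psi_{\msp{-2}\mrm{edge}}(\lambda)\sim\min\{\lambda^{1/2},\lambda^{1/3}\}$ and the analogous bound for $\Psi_{\msp{-2}\mrm{min}}$) and then estimate the Poisson/Stieltjes integral of that profile, with far-away contributions and the opposite side of a gap absorbed by the same comparison. The only cosmetic difference is that you isolate the two one-sided Poisson computations as explicit lemmas, which the paper instead asserts inline.
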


\begin{Proof}
The results follow by combining Theorem~\ref{thr:Shape of generating density near its small values} and the Stieltjes transform representation of the solution of QVE. 
We start with the claim about the growth behavior around the points $\{\gamma_k\}$. 
By the description of the shape of the generating density in Theorem~\ref{thr:Shape of generating density near its small values} and because of $\Psi_{\!\mrm{min}}(\lambda)\sim \min\{\lambda^2,\abs{\lambda}^{1/3}\}$ (cf. \eqref{def of Psi_min}), we have for small enough $\eps\sim 1$:
\[
\avg{v(\gamma_k +\omega)}
\,\sim\, 
\rho_k+\min\setb{\omega^2\!/\rho_k^5\1,|\tau|^{1/3}}
\,\sim\,
\rho_k+|\omega|^{1/3}
\,, 
\qquad 
\omega \in [-2\2\eps,2\2\eps]
\,.
\]
The constant $\rho_k$ is comparable to $\avg{v(\gamma_k)}$ by \eqref{minimum}. Thus, we find 
\[
\avgb{\2\Im\2m(\gamma_k +\omega + \cI\2\eta)}
=
\frac{1}{\pi}\int_{-\infty}^\infty \frac{\eta\2\avg{v(\tau)}\,\dif \tau}{\eta^2 + (\gamma_k +\omega-\tau)^2}
\sim \avg{v(\gamma_k)} +\int_{-2\2\eps}^{2\2\eps}\frac{\eta\2|\tau|^{1/3} \2\dif \tau}{\eta^2+ (\omega-\tau)^2}
\,, 
\]
for $ \omega \in [-\eps,\eps] $. The claim follows because the last integral is comparable to $(|\omega|+\eta)^{1/3}$ for any $\eps\sim 1$.

Let us now consider the case, in which an edge is close by. We treat only the case of a right edge, i.e., the vicinity of $\beta_i$ for $i=1 ,\dots, K'$. For the left edge the argument is the same. Here, Theorem~\ref{thr:Shape of generating density near its small values} and $\Psi_{\!\mrm{edge}}(\lambda) \sim \min\{\lambda^{1/2},\lambda^{1/3}\}$ (cf. \eqref{def of Psi_edge}) imply for small enough $\eps\sim 1$:
\[
\avg{v(\beta_i-\omega)}\,\sim\, \min\{\1\Delta^{-1/6}\omega^{1/2},\omega^{1/3}\}\,,
\qquad \omega \in [\10\1,2\2\eps\1]\,.
\]
The positive constant $\1\Delta$ is comparable to the gap size, $\1\Delta\sim\alpha_{i+1}-\beta_i$, if $\beta_i$ is not the rightmost edge, i.e., $i \neq K'$. In case $i=K'$, we have $\1\Delta\sim 1$. Let us set $\widetilde{\eps}:= \eps$ in case $i=K'$, and 
$ \widetilde{\eps}:= \min\{\eps,(\alpha_{i+1}-\beta_i)/2\}$ otherwise.
Then we find
\bea{
\avgb{\2\Im\2m(\beta_i +\omega +\cI\2\eta)}
\,&=\,
\frac{1}{\pi} 
\int_{-\infty}^\infty\frac{\eta\2\avg{v(\tau)}\,\dif \tau}{\eta^2 + (\beta_i +\omega-\tau)^2}
\\
&\sim\,
\eta\int_{0}^{2\2\eps}\frac{\min\{\1\Delta^{-1/6}\tau^{1/2},\tau^{1/3}\}}{\eta^2+ (\omega+\tau)^2}\,\dif \tau
\,,
\qquad \omega \in \bigl[-\eps,\wti{\eps}\,\bigr]
\,.
}
The contribution to the integral in the middle, coming from the other side $\alpha_{i+1}$ of the gap $(\beta_i, \alpha_{i+1})$, is not larger than the last expression, because the growth of the average generating density is the same on both sides of the gap. For the last integral we find
\begin{equation*}
\begin{split}
\eta\int_{0}^{2\2\eps}\frac{\min\{\1\Delta^{-1/6}\tau^{1/2},\tau^{1/3}\}}{\eta^2+ (\omega+\tau)^2}\,\dif \tau
\,&\sim\,
\begin{cases}
\displaystyle
\frac{\eta}{(\1\Delta+\eta)^{1/6}(\omega+\eta)^{1/2}\!}\;,\quad &\omega \in \bigl[\10,\wti{\eps}\2\bigr]\,;
\\
\displaystyle
\frac{(|\omega|+\eta)^{1/2}}{(\1\Delta+|\omega|+\eta)^{1/6}\!}\;
&\omega \in\bigl[-\1\eps,0\1\bigr]\,.
\end{cases}
\end{split}
\end{equation*}
This holds for any $\eps \sim 1$ and thus the claim of the lemma follows. 
\end{Proof}

\section{Proofs of auxiliary results in Chapter \ref{chp:Existence and uniqueness}}
\label{sec:Proofs of auxiliary results in Chapter:Existence and uniqueness}

\begin{Proof}[Proof of Lemma~\ref{lmm:Subcontraction}]
Recall that $ T $ is a generic bounded symmetric operator on $ \Lp{2} = \Lp{2}(\Sx;\C) $ that preserves non-negative functions. Moreover, the following is assumed:
\bels{}{
\exists\,h \in \Lp{2}
\quad\text{s.t.}\quad
\norm{h}_2 = 1
\,,\quad
Th \leq h 
\,,\quad\text{and}\quad
\eps := \inf_{x\ins\Sx} h_x > 0
\,.
}
We show that $ \norm{T}_{\Lp{2}\to\Lp{2}}\leq 1 $. Let us derive a contradiction by assuming  $ \norm{T}_{\Lp{2}\to\Lp{2}} > 1 $. 
We have
\bels{T^nh leq h}{
T^nh\leq h 
\,,\qquad \forall\,n \in \N\,.
}
Indeed, $ Th \leq h $ is true by definition, and \eqref{T^nh leq h} follows by induction.
 
Now, the property $ \norm{T}_{\Lp{2}\to\Lp{2}} > 1 $ would imply
\[
\exists\,u \in \BB\quad\text{s.t.}\quad
\norm{u}_2 = 1
\,,
\quad
u\geq 0\,,
\quad\text{and}\quad
\avg{u,Tu} > 1
\,.
\]
Since $ T $ is positive,  $ \avg{u,Tu} \leq \avg{\abs{u},T\abs{u}} $, so we may assume $ u \ge 0 $. Moreover, by standard density arguments we may assume $ \norm{u} < \infty $ as well.

Since $ \avg{u,Tu} >1 $, we obtain, by inserting $ u$-projections between the $ T$'s:
\bels{T^nu goes to infinite}{
\avg{u,T^nu} \,\ge\, \avg{u,Tu}\avg{u,T^{n-1}u}\,\ge\,\cdots\,\ge\, \avg{u,Tu}^n\to \infty \qquad\text{as } n\to \infty
\,.
}
The contradiction follows now by combining \eqref{T^nh leq h} and \eqref{T^nu goes to infinite}:
\bels{contradiction is ready}{
\avg{\1h,u} \,\ge\, \avg{T^nh,u} \,=\, \avg{\1h,T^nu} \,\ge\, \avg{\1h,u}\avg{u,T^nu}
\,.
}
The left hand side is less than $ \norm{h}_2\norm{u}_2 = 1 $. On the other hand, since $ h \geq \eps $, $ u \geq 0 $ and $\norm{u}_2 =1$ we have $ \avg{\1h,u} > 0 $. Thus \eqref{T^nu goes to infinite} implies that the right side of \eqref{contradiction is ready} approaches infinity as $ n $ grows.
\end{Proof}

\section{Proofs of auxiliary results in Chapter~5}
\label{sec:Proofs of auxiliary results in Chapter:Properties of solution} 

\begin{Proof}[Proof of Lemma~\ref{lmm:Spectral gap for positive bounded operators}]
\label{Proof of lmm:Spectral gap for positive bounded operators}
First we note that $ h $ is bounded away from zero by
\bels{}{
h \,=\, Th \,\geq\, \eps \int_\Sx \Px(\dif x)\, h_x \,.
}
Let $u$ be orthogonal to $h$ in $\Lp{2} $. Then we compute
\bea{
\avgb{u,(1\pm\1T)u}\;&=\;\frac{1}{2}\int \Px(\dif x)\int\Px(\dif y)\; T_{x y}
\left(
u_x\,\sqrt{\frac{h_y}{h_x}}\,\pm\, u_y\,\sqrt{\frac{h_x}{h_y}}
\;\right)^2
\\
&\geq\; 
\frac{\eps}{2\Phi^2} \int \Px(\dif x)\int\Px(\dif y)\; h_x\, h_y \Biggl(u_x^2\; \frac{h_y}{h_x}\,+\,u_y^2\;\frac{h_x}{h_y}\,\pm\, 2 \,u_x\, u_y\Biggr)
\\
&=\;
\frac{\eps}{\Phi^2} \int \Px(\dif x) \; u_x^2
\;,
}
where in the inequality we used $T_{x y}\geq \eps\geq \eps\2h_x h_y/\Phi^2$ for almost all $x,y \in \Sx$. Now we read off the following two estimates:
\[
\int_\Sx\Px(\dif x)\,u_x \, (Tu)_x
\,\leq\, \left(1-\frac{\eps}{\Phi^2\!}\,\right)\norm{u}_2^2,
\qquad
\int_\Sx\Px(\dif x)\,u_x \, (Tu)_x
\,\geq\,-\left(1-\frac{\eps}{\Phi^2\!}\,\right)\norm{u}_2^2
\,.
\]
This shows the gap in the spectrum of the operator $T$.
\end{Proof}

\begin{proof}[Proof of Lemma~\ref{lmm:Norm of B^-1-type operators on L2}]
In order to prove the claim \eqref{Norm of B^-1-type operators on L2} we will show
\bels{Norm of B^-1-type operators on L2 - with w}{
\norm{(U-T)w}_2
\,\ge\, 
c\,\theta\2\mathrm{Gap}(T)\norm{w}_2 
\,,\qquad 
\theta \2:=\2 
\abs{\21 - \norm{T}_2\avg{\1h,Uh\1}\1}
\,,
}
for all $w \in \Lp{2}$ and for some numerical constant $c>0$. 
To this end, let us fix $w$ with $\norm{w}_2=1$. 
We decompose $ w $ according to the spectral projections of $T$,
\bels{decomposition of w}{
w \,=\, \avg{\1h,w}\2 h + Pw
\,, 
}
where $P$ is the projection onto the orthogonal complement of $t$. During this proof we will omit the lower index $2$ of all norms, since every calculation is in $ \Lp{2} $.
We will show the claim in three separate regimes: 
\begin{itemize}
\item[(i)] 
$ 16\2\norm{Pw}^2 \2\geq \2 \theta$,
\item[(ii)] 
$ 16\2\norm{Pw}^2 \2< \2  \theta$ 
and $\theta\2\geq\2 \norm{PUh}^2$,
\item[(iii)] 
$ 16\2\norm{Pw}^2 \2< \2 \theta$ 
and $ \theta\2<\2 \norm{PUh}^2$.
\end{itemize}

In the regime (i) the triangle inequality yields
\[
\norm{(U-T)w}
\,\ge\, \norm{w}- \norm{Tw} 
\,=\, 1- 
\sqrt{
\abs{\avg{\1h,w}}^2\2\norm{T}^2 + \norm{\1TPw}^2}
.
\]
We use the simple inequality, $1-\sqrt{1-\tau\2}\geq \tau/2$, valid for every $\tau  \in [\10,1\1]$, and find
\begin{equation}
\begin{split}
2\2\norm{(U-T)w}\,
&\ge\, 
1 - \abs{\avg{\1h,w}}^2\1\norm{T}^2 - \norm{TP\1w}^2
\\
&\geq\, 
1 \,- \abs{\avg{\1h,w}}^2\1\norm{T}^2 \,-\, (\1\norm{T}-\mathrm{Gap}(T))^2\norm{Pw}^2
\\
&=\, \,1\,- \norm{T}^2 \,+\, \bigl(\12\1 \norm{T}  - \mathrm{Gap}(T)\1\bigr)\2\mathrm{Gap}(T)\2\norm{Pw}^2
\,.
\end{split}
\end{equation}
The definition of the first regime implies the desired bound \eqref{Norm of B^-1-type operators on L2 - with w}.

In the regime (ii) we project the left hand side of \eqref{Norm of B^-1-type operators on L2 - with w} onto the $h$-direction,
\begin{equation}
\label{regime ii step 1}
\norm{(U-T)w}\,=\, \norm{(\11-U^*T)w}\,\ge\, \abs{\avg{\1h,(\11-U^*T)w}}
\,.
\end{equation} 
Using the decomposition \eqref{decomposition of w} of $ w $ and the orthogonality of $ h $ and $ Pw $, we estimate further:
\bels{regime ii step 2}{
\abs{\avg{\1h,(1-U^*T)w}}
\,&\ge\,
\abs{\avg{\1h,w}}\2\abs{\21-\norm{T}\avg{\1h,U^*t\2}} - \abs{\avg{\1h,U^*TPw}}
\\
\,&\ge\,
\abs{\avg{\1h,w}}\1 \theta\2-\2\norm{PUh}\norm{Pw}
\,.
}
Since $ \theta \leq 2$ and by the definition of the regime (ii) we have $\abs{\avg{\1h,w}}^2=1-\norm{Pw}^2 \geq 1-\theta/16\geq 7/8$ and $\norm{PUh}\norm{Pw}\leq  \theta/4$. Thus, we can combine \eqref{regime ii step 1} and \eqref{regime ii step 2} to 
\[
\norm{(U-T)\1w}\,\geq\, \frac{\theta}{2}
\,.
\]

Finally, we treat the regime (iii). Here, we project the left hand side of \eqref{Norm of B^-1-type operators on L2 - with w} onto the orthogonal complement of $h$ and get
\begin{equation}
\label{regime iii step 1}
\norm{(U-T)w}
\,\ge\, 
\norm{P(U-T)w}
\,\ge\, 
\abs{\avg{\1h,w}}\1\norm{PUh} -\norm{P(U-T)Pw}
\,,
\end{equation}
where we inserted the decomposition \eqref{decomposition of w} again. 
In this regime we still have $\abs{\avg{\1h,w}}^2\geq 7/8$, and we continue with
\begin{equation}
\label{regime iii step 2}
\abs{\avg{\1h,w}}\norm{PUh} -\norm{P(U-T)Pw}
\,\ge\, 
\frac{3}{4}\norm{PUh}- 2\norm{Pw}
\,\ge\, 
\frac{\2\theta^{1/2}\msp{-10}}{2}.
\end{equation}
In the last inequality we used the definition of the regime (iii). 
Combining \eqref{regime iii step 1} with \eqref{regime iii step 2} yields
\[
\norm{(U-T)w}\,\ge\, \frac{\theta}{4}
\,,
\]
after using $ \norm{h}=1$ in \eqref{Norm of B^-1-type operators on L2 - with w} to estimate $ \theta \leq 2 $. 
\end{proof}

\sectionl{Scalability of matrices with non-negative entries}

In this appendix we provide some background material for Sections~\ref{sec:Uniform bound around z=0 when a=0} and \ref{sec:Blow-up at z=0 when a=0 and assumption B1}.
We start by introducing some standard terminology related to matrices with non-negative entries. First, let us denote $ [\1k,\1l\2] := \sett{\1k,k+1,\dots,\1l\2} $, for any integers $ k\leq l$. We use the shorthand $ [n] := [\11\1,n\1] $, and denote the $ \abs{I} \times \abs{J} $-submatrix 
\[ 
\brm{A}(I,J) \,:=\, (\1a_{ij})_{i\in I,j\in J}
\,,
\] for any non-empty sets $ I, J \subset [n] $. 
The set of all permutations of $ [n] $ is denoted by $ S(n) $, and we say that $ \brm{P} = (p_{ij})_{i,j=1}^n $ is a permutation matrix, if its entries are determined by some permutation $ \sigma \in S(n) $ through $ p_{ij} = \delta_{\sigma(i),j} $.

\begin{definition}
\label{def:Square matrices with non-negative entries}
Let $ \brm{A} = (a_{ij})_{i,j=1}^n $ be a square matrix with non-negative entries, $ a_{ij} \ge 0 $. Then:
\begin{itemize}
\item[(i)] $ \brm{A} $ is {\bf scalable} if there exist two diagonal matrices $ \brm{D} $ and $ \brm{D}' $ with  positive entries, such that the scaled matrix $ \brm{D}\brm{A}\brm{D}' $ is doubly stochastic. 
\item[(ii)] 
$ \brm{A} $ is {\bf uniquely scalable} if it is scalable and the pair of diagonal matrices $ (\brm{D},\brm{D}')$ is unique up to a scalar multiple.
\item[(iii)] 
$ \brm{A} $ has {\bf total support} if there exists a set of permutations $ T \subset S(n)$, such that 
\bels{total support defined}{
\qquad
a_{ij}  = 0 
\quad\text{if and only if}\quad
\sum_{\sigma \in T} \delta_{\sigma(i),j} = 0 
\,,\qquad
\forall\2i,j=1,\dots,n
\,.
}
\item[(iv)] 
$ \brm{A} $ is {\bf decomposable}
if it is not fully indecomposable, i.e., there exist two non-empty subsets $ I, J \subset [n] $ such that 
\bels{def of non-FID}{
\brm{A}(I,J) = \brm{0} 
\qquad\text{and}\qquad
\abs{I} +\abs{J} \ge n
\,.
}
\end{itemize}
\end{definition}
We remark that all these four properties of $ \brm{A} $ are invariant under the transformations $ \brm{A} \mapsto \brm{P}\brm{A}\brm{Q} $, where $\brm{P} $ and $ \brm{Q} $ are arbitrary permutation matrices. 
The defining condition \eqref{total support defined} for matrices $ \brm{A} $ with total support means that $ \brm{A}$ shares its zero entries with some doubly stochastic matrix. This fact follows from Birkhoff-von Neumann theorem which asserts that the doubly stochastic matrices are exactly the convex combinations of permutation matrices.

Besides the elementary properties stated in Proposition~\ref{prp:Properties of FID matrices} the fully indecomposable (FID) matrices are also building blocks for matrices with total support. Indeed, Theorem~4.2.8 of \cite{Brualdi91} asserts:
\begin{theorem}
\label{thr:PTSQ  = dsum of FID matrices}
If $ \brm{A} $ has  total support then there exist two permutation matrices $ \brm{P} $ and $ \brm{Q} $ such that $ \brm{P}\brm{A}\brm{Q} $ is a direct sum of FID matrices.
\end{theorem}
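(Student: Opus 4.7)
The plan is to reduce the matrix to a form with a strictly positive diagonal and then iterate a block splitting argument driven by the defining property of total support. The main obstacle is showing that a zero off-diagonal block $\brm{A}(I,I^c)=0$ forces the mirror block $\brm{A}(I^c,I)=0$ as well; without this, one would only get block-triangularity rather than a direct sum.

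First, I would exploit total support to arrange a positive diagonal. Total support, via \eqref{total support defined}, guarantees in particular the existence of at least one permutation $\sigma_0 \in S(n)$ with $a_{i,\sigma_0(i)}>0$ for every $i$. Letting $\brm{Q}_0$ denote the permutation matrix whose $(k,j)$ entry is $\delta_{k,\sigma_0(j)}$, the matrix $\brm{B} := \brm{A}\brm{Q}_0$ satisfies $B_{ii}=a_{i,\sigma_0(i)}>0$ for all $i$. Permuting columns is a bijection on the set of permutations witnessing total support, so $\brm{B}$ still has total support. Hence, without loss of generality, I may assume from the start that $\brm{A}$ has total support and $a_{ii}>0$ for every $i \in [n]$.

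Next I would run a splitting step. If $\brm{A}$ is FID there is nothing to do. Otherwise, by \eqref{def of non-FID} there exist nonempty $I,J\subset[n]$ with $\brm{A}(I,J)=\brm{0}$ and $|I|+|J|\ge n$. Positivity of the diagonal forces $I\cap J=\emptyset$: any $i\in I\cap J$ would contradict $a_{ii}=0$. Thus $|I|+|J|\le n$, which together with the hypothesis gives $|I|+|J|=n$ and $J=I^c$. The crucial step is to upgrade $\brm{A}(I,I^c)=\brm{0}$ to $\brm{A}(I^c,I)=\brm{0}$. Suppose $a_{ij}>0$ for some $i\in I^c$ and $j\in I$. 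Total support supplies a permutation $\tau$ with $\tau(i)=j$ and $a_{k,\tau(k)}>0$ for all $k$. Restricted to $I$, the map $\tau$ cannot land entirely inside $I$: if it did, then $\tau$ would also bijectively map $I^c$ to $I^c$, contradicting $\tau(i)=j\in I$ with $i\in I^c$. Hence there exists $i'\in I$ with $\tau(i')\in I^c$, giving $a_{i',\tau(i')}>0$ with $(i',\tau(i'))\in I\times I^c$. This contradicts $\brm{A}(I,I^c)=\brm{0}$, so $\brm{A}(I^c,I)=\brm{0}$ as claimed.

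Finally I would iterate. Once $\brm{A}(I,I^c)=\brm{A}(I^c,I)=\brm{0}$, choosing a permutation matrix $\brm{P}_1$ that lists the indices of $I$ first and those of $I^c$ afterwards (and the same for columns), the matrix $\brm{P}_1\brm{A}\brm{P}_1^T$ is the direct sum of $\brm{A}(I,I)$ and $\brm{A}(I^c,I^c)$. Both summands inherit a positive diagonal. They also inherit total support: any permutation $\sigma$ of $[n]$ with $a_{k,\sigma(k)}>0$ for all $k$ must send $I$ into $I$ and $I^c$ into $I^c$ by the just-proven block structure, so the witnesses of total support for $\brm{A}$ restrict to witnesses on each block. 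One may therefore apply the same splitting step to each summand. The procedure strictly decreases the block sizes, hence terminates after at most $n$ rounds in a decomposition in which every block is FID. Composing all the row permutations used during the process into a single $\brm{P}$ and all the column permutations (including the initial $\brm{Q}_0$) into a single $\brm{Q}$ yields the desired $\brm{P}\brm{A}\brm{Q}$ as a direct sum of FID matrices.
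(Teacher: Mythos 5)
Your proof is correct; the more important observation is that the paper itself offers no proof of this statement to compare against --- it is quoted verbatim as Theorem~4.2.8 of \cite{Brualdi91}. Your argument is a sound self-contained substitute and follows the classical route: use a positive diagonal (guaranteed by \eqref{total support defined} as soon as the witnessing set $T$ is nonempty, i.e.\ $\brm{A}\neq\brm{0}$) to normalize by a column permutation so that all diagonal entries are positive; then, whenever the matrix is decomposable, the positive diagonal forces the zero block onto complementary index sets $I\times I^{c}$, and total support forces the mirror block $I^{c}\times I$ to vanish as well, since any permutation carrying a positive diagonal must map $I$ onto $I$ once $\brm{A}(I,I^{c})=\brm{0}$; finally iterate on the diagonal blocks, which inherit both the positive diagonal and total support. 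Each of these steps checks out, and the crucial mirror-block step is exactly the mechanism the paper uses elsewhere in its own Appendix, namely in the proof of Lemma~\ref{lmm:Scalable symmetric matrices}, where the fact that every witnessing permutation satisfies $\sigma(I_4)=I_2$ is what produces the symmetric zero blocks in the irreducible symmetric case; so your proof is fully in the spirit of the paper's toolkit. Two minor caveats worth recording: your opening claim that total support supplies a full positive diagonal needs $T\neq\emptyset$, which under the paper's literal definition excludes only $\brm{A}=\brm{0}$ (a degenerate case the theorem is not meant to cover, and for which the conclusion would in fact fail since a $1\times1$ zero matrix is not FID); and the terminal $1\times1$ blocks of your recursion are FID precisely because they carry a positive diagonal entry, which your construction does guarantee, so the induction terminates as claimed.
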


Consider the QVE with $ a = 0 $ at $ z = 0 $ in the discrete setup $ (\Sx,\Px) = (\1[n]\1,n^{-1}\abs{\genarg}) $.
From \eqref{DAD problem for S} we read off that this QVE has a unique solution of the form $ m(0) = \cI\2v $ provided the matrix $ \brm{S} $, with entries $ s_{ij} := n^{-1}S_{ij} $, is scalable such that  $ \brm{V}\brm{S}\brm{V} $ is doubly stochastic for the diagonal matrix $ \brm{V} = \diag(v_1,\dots,v_n) $. 
This observation together with the equivalence of (i) and (iii) in the following theorem shows that in the discrete setup the assumption {\bf B1} from Chapter~\ref{chp:Uniform bounds}, with the trivial blocks $ K = n $, is actually optimal in the part (i) of Theorem~\ref{thr:Quantitative uniform bounds when a = 0}.

\begin{theorem}[Scalability and full indecomposability]
\label{thr:Scalability and full indecomposability}
For a symmetric irreducible matrix $ \brm{A} $ with non-negative entries the following are equivalent:
\begin{itemize}
\titem{i} $ \brm{A} $ is uniquely scalable, with $ \brm{D}' = \brm{D} $ in Definition~\ref{def:Square matrices with non-negative entries};
\titem{ii} Every sufficiently small perturbation of $ \brm{A} $ is scalable, i.e., there exists a constant $ \eps > 0 $ such that any symmetric matrix $ \brm{A}' $, with non-negative entries, satisfying $ \max_{i,j}\abs{a_{ij}-a'_{ij}} \leq \eps $, is scalable; 
\titem{iii} $ \brm{A} $ is fully indecomposable. 
\end{itemize}
\end{theorem}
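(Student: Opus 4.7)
The plan is to organize the three equivalences around Sinkhorn's scalability theorem, cited above as Theorem~\ref{thr:General scalability}, which characterizes unique scalability of non-negative matrices by full indecomposability. For the equivalence (i)$\Leftrightarrow$(iii) I would invoke this result directly and then carry out a short symmetry reduction: if $\brm{A}$ is symmetric and $\brm{D}\brm{A}\brm{D}'$ is doubly stochastic, then transposing yields $\brm{D}'\brm{A}\brm{D}$ also doubly stochastic, so the uniqueness (up to a positive scalar) forces $\brm{D}'=c\2\brm{D}$, and absorbing the scalar into a common rescaling lets me take $\brm{D}'=\brm{D}$.

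For (iii)$\Rightarrow$(ii) I would use the structural stability of the FID property under perturbations that preserve positive entries. Choosing $\eps<\min\{a_{ij}:a_{ij}>0\}$, any symmetric non-negative $\brm{A}'$ within $\eps$ of $\brm{A}$ satisfies $a'_{ij}>0$ whenever $a_{ij}>0$, so the zero pattern of $\brm{A}'$ is contained in that of $\brm{A}$. Every zero submatrix of $\brm{A}'$ is therefore a zero submatrix of $\brm{A}$, so $\brm{A}'$ remains FID, and unique scalability then follows from (iii)$\Rightarrow$(i).

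The hard part will be (ii)$\Rightarrow$(iii), which I would prove by contraposition. Assuming $\brm{A}$ is not FID, pick non-empty $I,J\subset [n]$ with $\brm{A}(I,J)=\brm{0}$ and $|I|+|J|\geq n$. The fundamental counting estimate is that if $\brm{B}=\brm{D}\brm{A}\brm{D}'$ is doubly stochastic then $|I|=\sum_{i\in I,\2j\in J^c}B_{ij}\leq |J^c|$, i.e.\ $|I|+|J|\leq n$. Hence when $|I|+|J|>n$ the matrix $\brm{A}$ itself is non-scalable and (ii) fails trivially. The delicate case is the boundary $|I|+|J|=n$, where the same counting is tight and yields the additional necessary condition $\brm{B}(I^c,J^c)=\brm{0}$ for any scaling $\brm{B}$. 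Symmetry of $\brm{A}$ gives $\brm{A}(J,I)=\brm{0}$ as well, and the identity $|I\cap J|=|I^c\cap J^c|=n-|I\cup J|$ forces a dichotomy: either $I=J^c$, in which case $\brm{A}$ is block diagonal up to permutation, contradicting irreducibility; or $I\cap J$ and $I^c\cap J^c$ are both non-empty. In the second subcase, either $\brm{A}(I^c,J^c)\neq\brm{0}$ already, so $\brm{A}$ is non-scalable by the necessary condition, or $\brm{A}(I^c,J^c)=\brm{0}$, in which case I would construct the arbitrarily small symmetric non-negative perturbation $\brm{A}'=\brm{A}+\eps\,\brm{E}_{ii}$ with $i\in I^c\cap J^c$. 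This perturbation still has $\brm{A}'(I,J)=\brm{0}$ but violates the balance condition $\brm{A}'(I^c,J^c)=\brm{0}$, and is therefore non-scalable. Producing such perturbations for all sufficiently small $\eps$ completes the contrapositive, the whole argument hinging on the tight counting identity on the boundary rectangle together with the symmetric perturbation exploiting irreducibility to rule out the degenerate configuration $I=J^c$.
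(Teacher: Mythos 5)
Your implications (iii)$\Rightarrow$(ii) and (ii)$\Rightarrow$(iii) are correct, and the latter takes a genuinely different, more elementary route than the paper: where the paper passes through total support, the structural bipartite representation of Lemma~\ref{lmm:Scalable symmetric matrices} and a permutation-counting argument, you use the mass-counting bound $|I|=\sum_{i\in I,\,j\notin J}B_{ij}\leq |J^{\,\cmpl}|$ valid for any doubly stochastic scaling $\brm{B}$ of a matrix with $\brm{A}(I,J)=\brm{0}$, its tightness consequence $\brm{A}(I^{\cmpl},J^{\cmpl})=\brm{0}$ in the boundary case $|I|+|J|=n$, irreducibility to exclude the configuration $I=J^{\cmpl}$, and a diagonal perturbation at some $i\in I^{\cmpl}\cap J^{\cmpl}$. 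Each of these steps checks out, and this avoids Birkhoff--von Neumann and the direct-sum decomposition altogether.

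The genuine gap is in the direction (i)$\Rightarrow$(iii). Your symmetry reduction deduces $\brm{D}'=c\,\brm{D}$ from ``the uniqueness (up to a positive scalar)'' of the scaling \emph{pair}; but that uniqueness is exactly part (ii) of Theorem~\ref{thr:General scalability} and is available only when $\brm{A}$ is already FID, so the argument is circular precisely where you need it: it proves (iii)$\Rightarrow$(i), not (i)$\Rightarrow$(iii). Since your other implications are (iii)$\Rightarrow$(ii) and (ii)$\Rightarrow$(iii), nothing in the proposal rules out the scenario that the paper explicitly flags: a symmetric irreducible matrix that is not FID (hence not uniquely scalable over general pairs $(\brm{D},\brm{D}')$, or possibly admitting several essentially different pairs) but is nevertheless uniquely scalable within the restricted class $\brm{D}'=\brm{D}$, so the three statements are not shown to be equivalent. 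The repair is the paper's argument: for a symmetric irreducible \emph{scalable} $\brm{A}$, use the uniqueness of the doubly stochastic matrix $\brm{B}=\brm{D}\brm{A}\brm{D}'$ itself (the last assertion of Theorem~\ref{thr:General scalability}, valid for every scalable matrix, no FID needed) to get $\brm{D}'\brm{A}\brm{D}=\brm{D}\brm{A}\brm{D}'$, hence $d'_i/d_i=d'_j/d_j$ whenever $a_{ij}>0$, and then an irreducibility chain to conclude $\brm{D}'=\rho\,\brm{D}$ for \emph{every} scaling pair. This converts uniqueness in the symmetric class into uniqueness for general pairs, after which Sinkhorn's part (ii) yields FID; alternatively one can exhibit a one-parameter family of symmetric scalings of the bipartite form from Lemma~\ref{lmm:Scalable symmetric matrices} when $\brm{A}$ is scalable but not FID.
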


The proof of Theorem~\ref{thr:Scalability and full indecomposability} relies on the following fundamental result. 

\begin{theorem}[\cite{sinkhorn1967}]
\label{thr:General scalability}
A square matrix $ \brm{A} $ with non-negative entries is 
\begin{itemize}
\titem{i}
scalable if and only if it has a total support;
\titem{ii}
uniquely scalable if and only if it is fully indecomposable. 
\end{itemize}

Moreover, if  $ \brm{A} $ is scalable, then the doubly stochastic matrix $ \brm{D}\brm{A}\brm{D}' $, from Definition~\ref{def:Square matrices with non-negative entries}, is unique.
\end{theorem}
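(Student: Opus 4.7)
The plan is to establish (i) first (existence and characterization of scalings) and then deduce (ii) from (i) combined with structural properties of FID matrices. Existence will come from a variational argument; uniqueness and the FID characterization will both follow from rigidity of the constraint $u_i+v_j=0$ on the support of $\brm{A}$. For (i), the direction ``scalable $\Rightarrow$ total support'' is immediate from the Birkhoff--von Neumann theorem: if $\brm{D}\brm{A}\brm{D}'$ is doubly stochastic it decomposes as a convex combination of permutation matrices whose supports, together with the fact that positive diagonal scalings preserve the zero pattern of $\brm{A}$, exhibit a set $T$ of permutations witnessing total support of $\brm{A}$. For the converse, I would minimize the smooth convex functional
\[
J(x,y)\,:=\,\sum_{i,j}a_{ij}\,\nE^{x_i+y_j}\,-\,\sum_i x_i\,-\,\sum_j y_j
\]
on $\R^n\times\R^n$ modulo the one-dimensional symmetry $(x,y)\mapsto(x+c,y-c)$. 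A critical point produces the scaling $\brm{D}=\diag(\nE^{x_i})$, $\brm{D}'=\diag(\nE^{y_j})$ directly from $\partial_{x_i}J=\partial_{y_j}J=0$. Existence of a minimizer reduces to coercivity of $J$ modulo the symmetry: along any descent direction $(u,v)$ with $\sum_i u_i+\sum_j v_j>0$, total support supplies a permutation $\sigma\in T$ with $a_{i\sigma(i)}>0$ for every $i$, so $\sum_i(u_i+v_{\sigma(i)})=\sum_i u_i+\sum_j v_j>0$ forces some exponential in $J$ to blow up, while the case $\sum_i u_i+\sum_j v_j\leq 0$ is controlled by the linear part of $J$. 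Standard compactness then delivers a minimizer.

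For the moreover-statement (uniqueness of $\brm{B}:=\brm{D}\brm{A}\brm{D}'$) and the FID case needed for (ii), suppose $\brm{D}_1\brm{A}\brm{D}_1'$ and $\brm{D}_2\brm{A}\brm{D}_2'$ are both doubly stochastic, and set $u_i:=\log(D_{2,ii}/D_{1,ii})$, $v_j:=\log(D'_{2,jj}/D'_{1,jj})$. Strict convexity of $J$ on the quotient, or equivalently a direct comparison of row and column sums of $\brm{B}_1,\brm{B}_2$, forces $u_i+v_j=0$ whenever $a_{ij}>0$. If $\brm{A}$ is FID, partition rows and columns by level sets $I_k:=\sett{i:u_i=\alpha_k}$ and $J_\ell:=\sett{j:-v_j=\beta_\ell}$; the constraint annihilates $\brm{A}$ on every block $I_k\times J_\ell$ with $\alpha_k\neq\beta_\ell$, and the doubly stochastic property of $\brm{B}_2$ forces $|I_k|=|J_k|$ for matched levels. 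A counting argument then produces a rectangular zero block of combined dimension exactly $n$, contradicting full indecomposability unless $u$ and $-v$ are each constant, whence $(\brm{D},\brm{D}')$ is unique up to a global scalar. For general totally supported $\brm{A}$, Theorem~\ref{thr:PTSQ  = dsum of FID matrices} decomposes $\brm{A}$ (after row and column permutations) into a direct sum of FID blocks on its support, and the FID uniqueness applied block by block yields uniqueness of $\brm{B}$, even though the individual $(\brm{D},\brm{D}')$ may be independently rescaled within each block.

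Part (ii) is then short. For FID $\Rightarrow$ uniquely scalable, note that FID implies total support by part (iii) of Proposition~\ref{prp:Properties of FID matrices}, so FID is scalable by (i) and uniquely so by the previous paragraph. Conversely, if $\brm{A}$ is uniquely scalable but decomposable, Theorem~\ref{thr:PTSQ  = dsum of FID matrices} exhibits at least two FID blocks, and independent scalar rescalings of each block produce essentially distinct pairs $(\brm{D},\brm{D}')$ not related by a global scalar, a contradiction. The principal obstacle in the whole argument is the coercivity step for $J$ in the proof of (i): one must translate the combinatorial content of total support into a quantitative blow-up estimate ruling out escape of minimizing sequences along directions that initially appear cost-neutral, and the perfect fit between this analytic requirement and the combinatorics of total support is precisely what makes the latter both necessary and sufficient for scalability.
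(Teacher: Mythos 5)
The paper never proves this theorem: it is imported wholesale from Sinkhorn and Knopp \cite{sinkhorn1967}, whose original argument runs through iterated row/column normalization and max/min ratio estimates, so there is no internal proof to compare against. Your variational (log-barrier) route is a legitimate alternative, and the uniqueness half is essentially sound: two scalings give two critical, hence global minimum, points of the convex functional $J$, affineness of $J$ on the segment between them forces $u_i+v_j=0$ on the support of $\brm{A}$, the level-set counting argument correctly yields uniqueness of $(\brm{D},\brm{D}')$ up to a scalar in the FID case, and the block-rescaling argument for the converse of (ii) is fine. (Two small remarks there: once you know $u_i+v_j=0$ on $\supp \brm{A}$ you get $\brm{D}_1\brm{A}\brm{D}_1'=\brm{D}_2\brm{A}\brm{D}_2'$ immediately, so the detour through the FID block decomposition for uniqueness of $\brm{B}$ is unnecessary; and ``FID $\Rightarrow$ total support'' does not follow from part (iii) of Proposition~\ref{prp:Properties of FID matrices} — positivity of $\brm{Z}^{K-1}$ gives paths, not a positive diagonal through each positive entry; the standard route is Frobenius--K\"onig, using that FID forbids any $p\times q$ zero submatrix with $p+q\ge n$.)

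The genuine gap is in the existence step of (i), exactly at the point you yourself call ``the principal obstacle'' and then do not close. Coercivity of $J$ modulo the one-dimensional symmetry $(x,y)\mapsto(x+c,y-c)$ is false for matrices with total support that are not fully indecomposable: already for $\brm{A}=\brm{I}$ with $n=2$, the direction $u=(1,0)$, $v=(-1,0)$ is transverse to that symmetry, yet $J(tu,tv)$ is constant in $t$. Your dichotomy misses precisely the boundary case $\sum_i u_i+\sum_j v_j=0$ with $u_i+v_j\le 0$ on $\supp\brm{A}$, where neither the exponential nor the linear part blows up, so a minimizing sequence could a priori escape. The repair uses total support once more: summing $u_i+v_{\sigma(i)}$ over any $\sigma\in T$ shows each term vanishes, and since every positive entry $(i,j)$ lies on some $\sigma'\in T$, in fact $u_i+v_j=0$ on all of $\supp\brm{A}$; hence such directions form the lineality space $V=\{(u,v):u_i+v_j=0 \text{ on } \supp\brm{A}\}$ of $J$, along which $J$ is exactly constant. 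Quotienting by $V$ (not merely by the one-dimensional symmetry), the recession function of the induced convex function is strictly positive in every nonzero direction, so the minimum is attained; alternatively, first decompose $\brm{A}$ into FID blocks via the direct-sum theorem for totally supported matrices and minimize blockwise, since for an FID block $V$ is one-dimensional. The same enlargement of the symmetry is what makes your phrase ``strict convexity of $J$ on the quotient'' literally correct in the uniqueness step. With these repairs the proposal is a complete proof; as written, the existence claim in (i) is not established.
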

  
For the proof of Theorem~\ref{thr:Scalability and full indecomposability} we need also the following representation.

\begin{lemma}[Scalable symmetric matrices]
\label{lmm:Scalable symmetric matrices}
Suppose $ \brm{A} = (a_{ij})_{i,j=1}^n $ is an irreducible symmetric matrix with non-negative entries. 
If $ \brm{A} $ has a total support but is not fully indecomposable, then $ n $ is even, and there exists an $ n/2$-dimensional square matrix $ \brm{B} $, and a permutation matrix $ \brm{P} $, such that 
\bels{Representation for TS-FID matrix}{
\brm{A} \,=\,
\brm{P}
\mat{
\brm{0}\, & \brm{B}\, \\
\2\brm{B}^{\!\trans}\! & \brm{0}\,
}
\brm{P}^{-1}.
}  
\end{lemma}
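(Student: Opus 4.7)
The strategy is to apply Theorem~\ref{thr:PTSQ = dsum of FID matrices} to extract the coarse block structure forced by the total support hypothesis, then use symmetry and irreducibility to collapse that structure into the desired anti-diagonal form.

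First I would apply Theorem~\ref{thr:PTSQ = dsum of FID matrices} to obtain permutation matrices $\brm{P}_1, \brm{Q}_1$ such that $\brm{P}_1 \brm{A} \brm{Q}_1 = \brm{F}_1 \oplus \cdots \oplus \brm{F}_k$ is a direct sum of FID blocks. Equivalently, there exist two partitions $[n] = R_1 \sqcup \cdots \sqcup R_k$ (row labels) and $[n] = C_1 \sqcup \cdots \sqcup C_k$ (column labels) with $\abs{R_s} = \abs{C_s}$, such that $a_{ij} \neq 0$ forces the row $i$ and column $j$ to lie in matching components $R_s, C_s$. Define $s_R(i)$ and $s_C(i)$ as the row/column component indices of $i \in [n]$.

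The next step uses symmetry to promote these two partitions into a single partition. The key claim is: if $s_R(i) = s_R(i')$, then $s_C(i) = s_C(i')$. To see this, follow any path in the bipartite graph connecting $i_R$ to $i'_R$ through alternating rows and columns with nonzero entries; at each step, symmetry $a_{\cdot\cdot} = a_{\cdot\cdot}^{\trans}$ transfers the equality of component indices from one side to the other. This yields a well-defined map $\rho$ on $\sett{1,\dots,k\msp{1}}$ with $\rho(s_R(i)) = s_C(i)$, and the same argument applied to the swapped roles gives $\rho^{-1}$, so $\rho$ is a bijection. Consequently $R_s = C_{\rho(s)}$ as subsets of $[n]$, and the nonzero entries of $\brm{A}$ sit inside the blocks $R_s \times R_{\pi(s)}$ with $\pi := \rho^{-1}$. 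Applying symmetry once more: if $a_{ij} \neq 0$ with $i \in R_s$, then $j \in R_{\pi(s)}$, and also by symmetry $j \in R_{s'}$ with $i \in R_{\pi(s')}$, forcing $\pi(\pi(s)) = s$, so $\pi$ is an involution.

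Finally I would invoke irreducibility of $\brm{A}$. Because nonzero entries in row $R_s$ reach only rows in $R_{\pi(s)}$, and from there only back to $R_s$, the connected components of the (undirected) graph of $\brm{A}$ are exactly the $\pi$-orbits $\sett{R_s, R_{\pi(s)}}$. Irreducibility thus forces exactly one such orbit. If $\pi$ had a fixed point ($k=1$), then $\brm{A} = \brm{F}_1$ up to permutation and so would be FID, contradicting the hypothesis. Therefore $k = 2$ and $\pi$ swaps $R_1$ and $R_2$; then $\abs{R_1} = \abs{R_2} = n/2$ (so $n$ is even), the diagonal blocks $\brm{A}(R_s, R_s)$ vanish, and the lower-left block is the transpose of the upper-right block by symmetry. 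Choosing $\brm{P}$ as the permutation that maps $R_1$ onto $\sett{1,\dots,n/2}$ and $R_2$ onto $\sett{n/2+1,\dots,n}$ gives the representation \eqref{Representation for TS-FID matrix}.

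The main obstacle is the symmetry argument in the middle paragraph: one must carefully track how the equalities of row- and column-component indices propagate along alternating bipartite paths, and verify that the resulting map $\rho$ is not only well-defined but also a bijection whose inverse is again an involution. Everything else is essentially bookkeeping once this structural fact is in place.
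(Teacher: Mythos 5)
Your argument is essentially correct, but it takes a genuinely different route from the paper. You invoke the decomposition of a matrix with total support into a direct sum of FID blocks (Theorem~\ref{thr:PTSQ  = dsum of FID matrices}) and then use symmetry to identify the row partition with the column partition via a bijection $\rho$, which the involution property, irreducibility and the non-FID hypothesis force to be a single two-cycle. The paper never leaves the definitions: it starts from a zero block $\brm{A}(I,J)=\brm{0}$ with $\abs{I}+\abs{J}\ge n$ witnessing decomposability, relabels so the matrix splits over four index intervals, and then plays total support (a permutation matrix cannot contain the resulting $\abs{I_2}\times(n-\abs{I_4})$ zero block unless $\abs{I_2}\le\abs{I_4}$) against the inequality $\abs{I}+\abs{J}\ge n$ (which gives $\abs{I_2}\ge\abs{I_4}$) to make the off-diagonal block square; the permutations in the total-support representation then annihilate the remaining blocks, and irreducibility removes the leftover diagonal pieces. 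Your route buys a cleaner global picture (an involution acting on FID blocks) at the price of importing the stronger structure theorem; the paper's route is more hands-on and needs only the definition of total support.

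One step you assert without justification: the existence of an alternating path in the bipartite graph joining two row indices $i,i'$ of the same FID block, which is what "follow any path" presupposes. This is precisely the statement that the bipartite graph of an FID matrix is connected, and it is not free: two rows of an FID matrix need not share a common nonzero column (e.g.\ the $4\times4$ matrix with rows $(1,1,0,0)$, $(0,0,1,1)$, $(1,1,1,1)$, $(1,1,1,1)$ is FID), so length-two paths do not always suffice and connectivity must be argued. The fact is true and short to prove: a disconnection $R_s=R'\sqcup R''$, $C_s=C'\sqcup C''$ with $\brm{F}_s(R',C'')=\brm{0}$ and $\brm{F}_s(R'',C')=\brm{0}$ forces $\abs{R'}+\abs{C''}\ge \abs{R_s}$ or $\abs{R''}+\abs{C'}\ge \abs{R_s}$, which (after disposing of the degenerate cases where one of the four sets is empty) contradicts full indecomposability; alternatively, combine parts (ii) and (iii) of Proposition~\ref{prp:Properties of FID matrices} to reduce to a block with positive diagonal, where primitivity gives the bipartite path directly. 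With that small lemma inserted, your propagation argument and the remaining bookkeeping (well-definedness and bijectivity of $\rho$, the involution property, $k=2$ by irreducibility, and the final conjugation) go through.
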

\begin{Proof}[Proof of Lemma~\ref{lmm:Scalable symmetric matrices}]
Since $ \brm{A} $ is not FID there exists by Definition~\ref{def:Square matrices with non-negative entries} two non-empty subsets $ I, J \subset [n] $, such that \eqref{def of non-FID} holds.
Let us relabel the indices so that $ I = [1,n_2] $, $ J = [n_1,n_3] $, for some $ 1 \leq n_1 \leq n_2 \leq n_3 \leq n $. 
The relabelling corresponds to the conjugation by the permutation matrix $ \brm{P} $ in \eqref{Representation for TS-FID matrix}.  
By definition \eqref{def of non-FID} of $  I $ and $ J $ we have
\bels{}{
\brm{P}^{-1}\!\brm{A}\brm{P} \,=\, \mat{
\,\brm{A}_{11}\msp{-10} & \brm{0} & \brm{0} & \brm{A}_{14}
\\
\brm{0} & \brm{0} & \brm{0} & \brm{A}_{24}
\\
\brm{0} & \brm{0} & \brm{A}_{33}\msp{-10} & \brm{A}_{34}
\\
\,\brm{A}_{14}^{\!\trans}\msp{-10} & \brm{A}_{24}^{\!\trans}\msp{-10} & \brm{A}_{34}^{\!\trans}\msp{-10} & \brm{A}_{44}
}
\,,
}
where the blocks correspond to the four intervals $ I_1 = [1,n_1] $, $ I_2 = [n_1+1,n_2] $, $ I_3 = [n_2+1,n_3] $, and $ I_4 = [n_3+1,n_4] $, respectively. In the case, $ n_{k+1} = n_k $ the interval $ I_k $ is interpreted to be empty.

Now we show that $ \abs{I_2} \leq \abs{I_4} $.
Indeed, $ \brm{P}^{-1}\!\brm{A}\brm{P} $ has a zero block of size $ \abs{I_2} \times (n-\abs{I_4}) $.
No permutation matrix can have such a zero block if $ \abs{I_2} > \abs{I_4} $. As $ \brm{A} $, and thus also $ \brm{P}^{-1}\!\brm{A}\brm{P} $, has total support, the defining property \eqref{total support defined} could not hold for $ \brm{A} $ if $ \abs{I_2} > \abs{I_4} $ were true.

By definitions, $\abs{I} = \abs{I_1}+\abs{I_2} $ and $ \abs{J} = \abs{I_2}+\abs{I_3} $, and by assumption $ \abs{I} + \abs{J} \ge n $. 
Since $ n = \abs{I_1}+\abs{I_2}+\abs{I_3}+\abs{I_4} $, we conclude $ \abs{I_2} \ge \abs{I_4} $.
Since $ \abs{I_2} = \abs{I_4} $ the submatrix $ \brm{A}_{24} $ is square.
This implies that $ \sigma(I_4) = I_2 $ for the permutations $ \sigma \in T $ in the representation \eqref{total support defined}. 
This is equivalent to $ \sigma(I_1\cup I_2 \cup I_3) = I_1 \cup I_3 \cup I_4 $, and thus $ \brm{A}_{14} = \brm{0} $, $ \brm{A}_{34} = \brm{0} $, and $\brm{A}_{44} = \brm{0} $. 

But now we see that $ I_1 $ and $ I_3 $ must be empty intervals, 
otherwise $ \brm{A}_{11} $ would be an independent block of $ \brm{A} $, and thus  $ \brm{A} $ would not be irreducible.
Since $ I_1 = I_3 = \emptyset $,  we conclude $ I = J $. But this leaves us with the representation \eqref{Representation for TS-FID matrix} with $ \brm{B} := \brm{A}_{24} $.
\end{Proof}

\begin{Proof}[Proof of Theorem~\ref{thr:Scalability and full indecomposability}]
The equivalence of (i) and (iii) almost follows from the part (ii) of Theorem~\ref{thr:General scalability}. 
We are only left to exclude the possibility that $ \brm{A} $ is not FID since it is not uniquely scalable for general pairs $ (\brm{D},\brm{D}')$, but is actually uniquely scalable in the more restricted class of 'diagonal solutions' for which $\brm{D}'= \brm{D}$ holds.

To this end we show that if a symmetric and irreducible matrix $ \brm{A} $ with non-negative entries is scalable, then we may always choose $ \brm{D}' = \brm{D} $. 
First we recall that the doubly stochastic matrix $ \brm{B} := \brm{D}\brm{A}\brm{D}' $ is unique according to Theorem~\ref{thr:General scalability}.
Since $ \brm{A} $ is symmetric, $ \brm{D}'\!\brm{A}\brm{D} = \brm{B}^\trans $ is also doubly stochastic. 
By Theorem~\ref{thr:General scalability} we hence have $ \brm{D}'\!\brm{A}\brm{D} \,=\, \brm{D}\brm{A}\brm{D}' $.
We may write this in terms of the ratios $ \rho_i = d'_{ii}/d_{ii} $, as
\bels{cond for rho_i=rho_j}{
\rho_i = \rho_j\,,
\quad\text{whenever}\quad
a_{ij} > 0
\,. 
}
Pick any $ i\neq j $.
Since, $ \brm{A}$  is irreducible, there exists a sequence $ (k_s)_{s=0}^\ell $, $ \ell \leq n $, of indices such that $ k_0 = i $, $ k_\ell = j $, and $ a_{k_{s-1}k_s} > 0 $ for every $ s =1,\dots,\ell$, thus $ \rho_i = \rho_j $ by \eqref{cond for rho_i=rho_j}. 
We conclude $ \brm{D}' = \rho\2\brm{D} $, and thus we may choose $ \brm{D}' = \brm{D} $ by further scaling by a scalar. 

In order to prove the implication (iii) $ \implies $ (ii), choose $ 2\1\eps $ to be equal to the smallest non-zero entry of $ \brm{A} $. It follows that the $\eps $-perturbation $ \brm{A}' $ in (ii)  has a smaller set of entries equal to zero than $ \brm{A} $. 
Thus with this choice of $ \eps $ the zero set of the perturbation $ \brm{A}' $ may only decrease. 
By Definition~\ref{def:Full indecomposability}  $ \brm{A}' $ is thus also FID, and by Theorem~\ref{thr:General scalability} $ \brm{A}' $ is scalable. 

In order to prove the last implication (ii) $ \implies $ (iii), we assume that $ \brm{A} $ is not FID, and derive a contradiction by showing that the perturbed matrix,
\bels{A perturbed in i,j entry}{
\qquad 
\brm{A}' := \brm{A} + \eps\2\brm{\Delta}^{\!(ij)}
\,,
\qquad
(\brm{\Delta}^{\!(ij)})_{kl} := \Ind\sett{\2\sett{k,l}=\sett{i,j}\2}
\,,
}
does not have  total support for all choices of $ (i,j) $, regardless of how small $\eps > 0 $ is chosen.
We start by using Lemma~\ref{lmm:Scalable symmetric matrices} to write $ \brm{A} $ in the form 
\bels{TS representation}{
\brm{A} \,=\,
\mat{
\brm{0}\, & \brm{B}\, \\
\brm{B}^{\!\trans\!} & \brm{0}
}
\,.
}
Here we have also relabelled the indices such that $ \brm{P} = \brm{I} $ in \eqref{Representation for TS-FID matrix}. 
Suppose that we turn one of the zero entries in the first $ n/2 \times n/2 $ diagonal block non-zero, i.e., consider a perturbation \eqref{A perturbed in i,j entry}, for some $ i,j \leq n/2 $. 
We will show that there does not exist a subset $ T' $ of permutations $ S(n) $ such that the representation \eqref{total support defined}, with $ T $ replaced by $ T' $, holds for $ \brm{A}' $.
Indeed, suppose that there is such a set of permutations $ T' $. Since $ a_{ij} > 0 $ there must exist $ \sigma \in T' $ such that $ \sigma(i) = j $. 
This implies that 
\[
[\1n/2\1]\2\backslash\2 \sigma(\1[\1n/2\1]\1) \,=\, \sett{k}
\,,
\]
for some $ k \leq n/2 $. Since $ \sigma $ is a surjection on $ \sett{1,\dots,n} $ there must exist $ l \ge n/2+1$ such that $\sigma(l) = k $. In other words, there exists an entry $ (l,k) $ in the second diagonal block, $ l,k \ge n/2 $, such that $ a'_{lk} = a_{lk} > 0 $. Since this contradicts  \eqref{TS representation} and \eqref{A perturbed in i,j entry}, we conclude that $ \brm{A}' $ does not have total support.
\end{Proof}

\section{Variational bounds when $ \Re\,z = 0$}
\label{ssec:Variational bounds when Re z = 0}

\begin{Proof}[Proof of Lemma~\ref{lmm:Characterization as minimizer}]
Applying Jensen's inequality on the definition \eqref{def of functional J_eta} of $ J_\eta $ yields,
\[
J_\eta(w) 
\,\ge\, \avg{\1w, Sw} \2-\22\2\log\, \avg{w} +2\2\eta \2 \avg{w}
\,.
\]
The lower bound shows that the functional $J_\eta$ is indeed well defined and takes values in $(-\infty,+\infty] $.
Evaluating $J_\eta$ on a constant function shows that it is not identically $+\infty$.

Next we show that $J_{\eta}$  has a unique minimizer on the space $ \Lp{1}_+ $ (cf. definition \eqref{def of Lp_+}) of positive integrable functions. 
As the first step, we show that we can restrict our attention to functions, which satisfy the upper bound $w \leq 1/\eta$. 
To this end, pick $ w  \in \Lp{1}_+ $, such that the set $ \sett{x : w_x \ge \eta^{-1}}$ has positive $ \Px $-measure, and define the one parameter family of $ \Lp{1}_+$-functions
\[
w(\tau) \,:=\, 
w \,-\, \tau\,(w-\eta^{-1})_+\,,
\qquad 0 \leq \tau \leq 1\,,
\]
where $ \phi_+ := \max\sett{0,\phi} $, $ \phi \in \R $.
It follows that $ w(\tau) \leq w(0) = w $ and $ J_\eta(w(\tau))<\infty $ for every $\tau\in[0,1] $.
We will show that
\bels{}{
J_\eta\big(\min(w, \eta^{-1})\big)\,=\,J_\eta(w(1)) \,<\, J_\eta(w)\,.
}
For this we compute
\bels{derivative of J}{
\frac{\dif}{\dif \tau}
J_\eta(w(\tau))
\;=\,
-\12\,\avgbb{ \Big(Sw(\tau) + \eta \2- \frac{1}{w(\tau)\!}\,\Big)\big(w-\eta^{-1}\big)_{\msp{-3}+}}
\,.
}
Since $w\geq 0$ and therefore $Sw\geq 0$, the integrand is positive on the set of $x$ where $w_x>1/\eta$. Thus, the derivative 
\eqref{derivative of J} is strictly positive for $\tau \in [0,1)$. We conclude that the minimizer must be bounded from above by $\eta^{-1}$.

Now we use a similar argument to see that we may further restrict the search of the minimizer to functions which satisfy also the lower bound $w \geq \eta/(1+\eta^2)$. 
To this end, fix  $ w \in \Lp{1}_+ $ satisfying $ J_\eta(w)<\infty$ and $ \norm{w}_\infty \leq \eta^{-1} $. 
Suppose $ w <  \eta/(1+\eta^2) $, on some set of positive $ \Px$-measure, and set
\[
w(\tau) \,:=\,
w \,+\, \Bigl(\frac{\eta}{1\2+\2\eta^2}-w\Bigr)_+\,\tau
\,,
\]
so that $ w = w(0) \leq w(\tau) $, and $ J_\eta(w(\tau)) < \infty$, for every $\tau\in[0,1]$. 
Differentiation yields,
\[
\frac{\dif}{\dif \tau} \, J_\eta(w(\tau))
\,\leq \,
2\,\avgbb{\msp{-2}\Big(\,\frac{1}{\eta} \,+\, \eta \,- \frac{1}{w(\tau)}\,\Big)\Bigl(\frac{\eta}{\,1+\eta^2\!}- w\Bigr)_{\msp{-4}+}}
\,,
\]
where the term $ \eta^{-1} $ originates from  $ \norm{Sw(\tau)} \leq \norm{S}\norm{w(\tau)} \leq \eta^{-1} $.
Since $ \eta^{-1}+\eta = (\eta/(1+\eta^2))^{-1} $, and $ w < \eta/(1+\eta^2) $ on a positive set of positive measure, we again conclude that $J_\eta(w(1)) < J_\eta(w) $.

Consider now a sequence $(w^{(n)})_{n\in\N} $ in $ \Lp{1}_+ $ that satisfies
\[
\lim_{n \to \infty} J_\eta (w^{(n)}) \,=\, \inf_w J_\eta(w)
\qquad\text{and}\qquad
\frac{\eta}{1+\eta^{\12}} \,\leq\, w^{(n)} \,\leq\, \frac{1}{\eta}
\,.
\]
Obviously, $ w^{(n)} $ also constitutes  a bounded sequence of $\Lp{2}_+ $. Consequently, there is a  subsequence, denoted again by $ (w^{(n)})_{n\in\N} $, that converges weakly to an element $ w^\star $ of $ \Lp{2}_+ $. 
This weak limit also satisfies
\bels{w^star uniform bounds}{
\frac{\eta}{1+\eta^2} \,\leq \, w^\star_x \,\leq\, \frac{1}{\eta} 
\,,\qquad \forall\,x \in \Sx\,.
}
In order to conclude that $ w^\star $ is indeed a minimizer of $J_\eta$ we will show that $ J_\eta $ is weakly continuous in $ \Lp{2}_+ $ at all points $w^\star$ satisfying the bounds \eqref{w^star uniform bounds}. 
To this end, we consider the three term constituting $ J_\eta $ separately.
Evidently the averaging $ u \mapsto \avg{u}$ is weakly continuous. 
For the quadratic form we first compute for any sequence $ w^{(n)} $ converging weakly to $w^\star$:
\bels{quadratic S continuity bound}{
\absb{ \avg{w^{(n)}\!,\1Sw^{(n)}} - \avg{w^\star\,Sw^\star} }
\,\leq\, 
\bigl(\1\norm{w^{(n)}}_2+\norm{w^\star}_2\bigr)\,\norm{S(w^{(n)}-w^\star)}_2
\,.
}
Since the $\Lp{2} $-norm is lower-semicontinuous and $\norm{w^\star}_2\leq \norm{w^\star} \leq \eta^{-1}$, we infer
\[
\limsup_{n\to \infty}\absb{ \avg{w^{(n)}\!,\1Sw^{(n)}} - \avg{w^\star\,Sw^\star} }
\,\leq\, 
\frac{2}{\eta}\,
\limsup_{n\to \infty}\,\normb{S(w^{(n)}-w^\star)}_2
\,.
\]
Using the $ L^2$-function, $ S_x: \Sx \to [\10\1,\infty\1), y  \mapsto S_{xy} $, we obtain: 
\bea{
\norm{S(w^{(n)}-w^\star)}_2^2 \,&=\,\int_\Sx\! \Px(\dif x)\, \absB{\int_\Sx\!\Px(\dif y)\,S_{xy}\2(w^{(n)}-w^\star)_y}^2
\\
&=\; 
\int_\Sx \!\Px(\dif x)\, \absb{\avg{S_x(w^{(n)}-w^\star)}}^2
\,.
}
Here the weak convergence of $ w^{(n)} $ to  $ w^\star $ implies $ h^{(n)}_x := \abs{\avg{S_x(w^{(n)}-w^\star)}}^2 \to 0 $ for each  $ x $ separately. 
The uniform bound $ \abs{h^{(n)}_x} \leq \norm{S_x}_2^2\norm{w^{(n)}-w^\star}_2^2 \leq 2(\norm{w^{(n)}}_2^2-\norm{w^\star}_2^2)\norm{S}_{\Lp{2}\to\BB}^2\, $, and the dominated convergence then yield:
\[
\int_\Sx \!\Px(\dif x)\, \absb{\avg{S_x(w^{(n)}-w^\star)}}^2 \;=\; \int_\Sx\!\Px(\dif x)\,h^{(n)}_x \;\to\;0\,,\qquad
\text{as }n \to \infty\,.
\]
Hence the last term of \eqref{quadratic S continuity bound} converges to zero as $ n $ goes to infinity, and we have shown that the quadratic form is indeed weakly continuous at 
$w^\star$. 

Finally, we show that also the logarithmic term is weakly continuous at $w^\star$. Applying Jensen's inequality yields
\bea{
\absb{\la\2 \log w^{(n)} \ra -\la\2 \log w^\star \ra} 
\,=\, 
\absB{\avgB{\log\2\Bigl(\frac{w^{(n)}\!}{w^\star}\Bigr)}}
\,\leq\,
\absB{\,\log \,\avgB{\frac{w^{(n)}\!}{w^\star}}}
\,,
}
where the last average converges to $ 1 $ by the assumed weak convergence of $w^{(n)}$ to $w^\star$ and since $1/w^\star \in \Lp{2} $ by the lower bound in \eqref{w^star uniform bounds}. 

We have proven the existence of a positive minimizer $ w^\star \in \Lp{1} $ that satisfies \eqref{w^star uniform bounds}.
In order to see that $ w^\star_x = v_x(\cI \1\eta) $ for a.e. $ x \in \Sx$ we evaluate a  derivative of $ J_\eta(w^\star+\tau h)|_{\tau=0} $ for an arbitrary $ h \in \BB $. This derivative must vanish by the definition of $ w^\star $, and therefore
\bels{DAD problem for w^star}{
(Sw^\star)_x + \eta \2-\frac{1}{w_x^\star\!} \,=\, 0
\,,\qquad\text{for $ \Px $-a.e. }x \in \Sx
\,.
}
Since $ Sw $, with $ w \in \Lp{2}  $, is insensitive to changing the values of $ w_x $, for $ x \in I $, whenever $ I \subseteq \Sx $ is of measure zero, we may modify $ w^\star $ on the zero measure set where the equation of \eqref{DAD problem for w^star} is not satisfied, so that the equality holds everywhere. 
Since \eqref{DAD problem for w^star} equals QVE at $ z = \cI\1\eta $ Theorem~\ref{thr:Existence and uniqueness} implies that \eqref{DAD problem for w^star} has  $ v(\cI\1\eta) $ as the unique solution. 
We conclude that $ w^\star_x = v_x(\cI\1\eta)$ for a.e. $ x \in \Sx $.
\end{Proof}
\begin{Proof}[Proof of Lemma~\ref{lmm:Uniform bound on discrete minimizer}]
\label{Proof of Uniform bound on discrete minimizer}
Since $ \brm{Z} $ is FID, the exists by the part (ii) of Proposition~\ref{prp:Properties of FID matrices} a permutation $ \sigma $ of the first $ K $ integers, such that 
\[
\wti{\brm{Z}} = (\wti{Z}_{ij})_{i,j=1}^K\,,\qquad
\wti{Z}_{ij} :=Z_{i\sigma(j)}
\,,
\]
has a positive main diagonal, i.e., $ \wti{Z}_{ii} = 1 $ for every $ i $.
Let us define the convex function $ \Lambda : (0,\infty) \to \R $, by
\[
\Lambda(\tau) 
\,:=\, 
\frac{\varphi}{K}\,\tau + \log \frac{1}{\tau} 
\,,
\]
where $ \varphi > 0 $ and $ K \in \N $ are from {\bf B2}. Clearly, $ \lim_{\tau \to \infty} \Lambda(\tau) = \infty $ and $ \lim_{\tau \to 0} \Lambda(\tau) = \infty $. In particular, 
\bels{Lambda lower bound}{
\Lambda(\tau) \,\ge\, \Lambda_- 
\,,
}
where $ \abs{\Lambda_-} \lesssim 1 $, since $ \varphi $ and $ K $ are considered as model parameters, 

Using $ \wti{Z}_{ii} = 1 $ and $ w_i\wti{Z}_{ij}w_{\sigma(j)} \ge 0 $ in the definition \eqref{def of discretized minimizer} of $ \wti{J}(\brm{w}) $, we obtain
\bels{Lower bound wti-J}{
&\sum_i
\Lambda(w_iw_{\sigma(i)})
\\
&\leq\;
\sum_i
\Bigl(\,\frac{\varphi}{K}\, w_i\wti{Z}_{ii} w_{\sigma(i)}-\log\bigl[\,w_i \2 w_{\sigma(i)}\bigr]\Bigr)
\,+\,
\frac{\varphi}{K}\sum_{i\neq j}w_i\wti{Z}_{ij} w_{\sigma(j)}
\,=\,
\wti{J}(\brm{w}) 
\,.
}
Combining the assumption $ \wti{J}(\brm{w}) \leq \Psi $ with the lower bounds \eqref{Lambda lower bound} of $ \Lambda$ yields 
\bels{bounds for w_i-w_pi(i) pairs}{
w_k\1w_{\sigma(k)}
\;\sim\; 1
\,,\qquad 1 \leq k \leq K
\,.
}
Using \eqref{Lambda lower bound} together with \eqref{Lower bound wti-J} and the hypothesis of the lemma, $ \wti{J}(\brm{w}) \leq \Psi $, we obtain an estimate for the off-diagonal terms as well:
\bels{off-diagonal wti-Tw wti-w sum upper bound}{
\frac{\varphi}{K}\sum_{i\neq j}w_i\wti{Z}_{ij}w_{\sigma(j)} 
\;\leq\; 
\wti{J}(\brm{w})
-
\sum_i \Lambda(w_iw_{\sigma(i)})
\;\leq\; \Psi + K\abs{\Lambda_-}\,.
}
Since we consider $ (\varphi,K,\Psi) $ as model parameters, the bounds \eqref{bounds for w_i-w_pi(i) pairs} and \eqref{off-diagonal wti-Tw wti-w sum upper bound} together yield
\bels{upper bound on M_ij}{
M_{ij} := w_i\wti{Z}_{ij}w_{\sigma(j)} \,\lesssim\,1
\,.
}
This would imply the claim of the lemma, $ \max_i w_i \lesssim 1 $, provided we would have $ \wti{Z}_{ij} \gtrsim 1 $ for all $ i,j $. 
To overcome this limitation we compute the $ (K-1)$-th power of the matrix $ \brm{M} $ formed by the components \eqref{upper bound on M_ij}. This way we get to use the FID property of $ \brm{Z} $: 
\bels{lower bound for M_ij}{
(\1\brm{M}^{K-1})_{ij}
&=\;
\sum_{i_1,\dots,i_{K-2}} \msp{-10}
w_i \wti{Z}_{ii_1}w_{\sigma(i_1)}w_{i_1}\wti{Z}_{i_1i_2}w_{\sigma(i_2)}w_{i_2}
\\
&\msp{200}\times\;\wti{Z}_{i_2i_3}w_{\sigma(i_3)}\, \cdots\, w_{i_{K-2}}\wti{Z}_{i_{K-2}j}w_{\sigma(j)}
\\
&\ge\;
\Bigl(\min_k w_kw_{\sigma(k)}\Bigr)^{\!K-2} (\1\wti{\brm{Z}}^{K-1})_{ij} \;w_i\1w_{\sigma(j)}
\,.
}
Since  $ \brm{Z} $ is FID also $ \wti{\brm{Z}} $ is FID, and therefore $ \min_{i,j=1}^K(\2\wti{\brm{Z}}^{K-1})_{ij} \ge 1 $ (cf. the statements (i) and (iii) of Proposition~\ref{prp:Properties of FID matrices}). 
Moreover, by \eqref{bounds for w_i-w_pi(i) pairs} we have
$ \min_k w_kw_{\sigma(k)}
\sim\, 1$.
Thus choosing $ j = \sigma^{-1}(i) $, so that $ w_iw_{\sigma(j)} = w_i^2 $, \eqref{lower bound for M_ij} yields 
\[
w_i^2 \,\lesssim\, (\brm{M}^{K-1})_{\1i\2\sigma^{-1}(i)}
\,.
\]
This is $ \Ord(1) $ by \eqref{upper bound on M_ij}, and the proof is thus completed.
\end{Proof}


\section{H\"older continuity of Stieltjes transform}
\label{ssec:Holder continuity of Stieltjes transform}

In the proof of Proposition~\ref{prp:Holder regularity in z and extension to real line} we used the following quantitative bound which states that the H\"older regularity is preserved under Stieltjes transforms. 

\begin{lemma}[Stieltjes transform conserves H\"older regularity]
\label{lmm:Stieltjes transform inherits regularity}
Let $ \gamma \in (0,1) $. 
Consider an integrable, uniformly $ \gamma $-H\"older-continuous function $ \nu : \R \to \C $, 
\bels{assumption:Holder-continuity of nu}{
\abs{\1\nu(\tau_1)-\nu(\tau_2)}\,\leq\, C_0\2\abs{\1\tau_1-\tau_2}^\gamma
\,, 
\qquad \tau_1,\tau_2 \in \R
\,,
}
where $ C_0 < \infty $.
Then the Stieltjes transform $\Xi : \Cp \to \Cp $ of $ \nu $,
\[
\Xi(\zeta)\,:=\,\int_\R \frac{\,\nu(\tau)\1\dif \tau}{\1\tau\1-\1\zeta\1}
\,,\qquad
\zeta \in \Cp
\,,
\]
is also uniformly H\"older continuous with the same H\"older exponent, i.e., 
\bels{difference of S-transforms}{
\quad
\abs{\2\Xi(\zeta_1)-\Xi(\zeta_2)}
\,\leq\,
\frac{18\2C_0}{\2\gamma\2(1\msp{-1}-\gamma)}\,
\abs{\1\zeta_1-\zeta_2}^\gamma
\,, 
\qquad 
\zeta_1,\,\zeta_2 \in \Cp\,. 
}
\end{lemma}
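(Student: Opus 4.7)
My plan rests on the algebraic identity
$\frac{1}{\tau-\zeta_1}-\frac{1}{\tau-\zeta_2} = \frac{\zeta_1-\zeta_2}{(\tau-\zeta_1)(\tau-\zeta_2)}$
together with the principal-value cancellation
$\lim_{R\to\infty}\int_{-R}^{R}\bigl[\tfrac{1}{\tau-\zeta_1}-\tfrac{1}{\tau-\zeta_2}\bigr]d\tau = 0$, easily checked from the explicit logarithmic primitive (both $\zeta_j$ lie in the same half-plane). First I would translate the integration variable by $-\Re\zeta_2$ to reduce WLOG to $\zeta_2 = \cI\eta_2$ and $\zeta_1 = \mu + \cI\eta_1$ with $|\mu|\leq\delta := |\zeta_1-\zeta_2|$. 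Integrability combined with uniform Hölder continuity forces $\nu(\tau)\to 0$ as $|\tau|\to\infty$, which lets me subtract $\nu(0)$ inside the integrand to obtain the absolutely convergent representation
\[
\Xi(\zeta_1)-\Xi(\zeta_2) \,=\, \int_\R [\nu(\tau)-\nu(0)]\Bigl(\frac{1}{\tau-\zeta_1}-\frac{1}{\tau-\zeta_2}\Bigr)d\tau.
\]

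Next I would split the integration into a far region $\{|\tau|>2\delta\}$ and a near region $\{|\tau|\leq 2\delta\}$. The far region is routine: the factored kernel combined with $|\tau-\zeta_2|\geq|\tau|$ and $|\tau-\zeta_1|\geq|\tau|/2$ (valid since $|\mu|\leq\delta\leq|\tau|/2$) yields the pointwise bound $\bigl|\tfrac{1}{\tau-\zeta_1}-\tfrac{1}{\tau-\zeta_2}\bigr|\leq 2\delta/\tau^2$, so Hölder continuity gives the contribution $4C_0\delta\int_{2\delta}^\infty \tau^{\gamma-2}d\tau \lesssim C_0\delta^\gamma/(1-\gamma)$. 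For the near region I would retain the factored form $\delta/[(\tau-\zeta_1)(\tau-\zeta_2)]$ rather than using the triangle inequality $\bigl|\tfrac{1}{\tau-\zeta_1}-\tfrac{1}{\tau-\zeta_2}\bigr|\leq\tfrac{1}{|\tau-\zeta_1|}+\tfrac{1}{|\tau-\zeta_2|}$, because the individual single-pole integrals generically carry logarithmic factors $\log(\delta/\eta_j)$ that must cancel between the two poles. Using the lower bound $|\tau-\zeta_j|\geq(|\tau-\Re\zeta_j|+\eta_j)/\sqrt 2$ derived from $|\tau-\zeta_j|^2 = (\tau-\Re\zeta_j)^2+\eta_j^2$ together with $|\nu(\tau)-\nu(0)|\leq C_0|\tau|^\gamma$ reduces the near-region estimate to controlling
\[
\delta\int_{|\tau|\leq 2\delta}\frac{|\tau|^\gamma\,d\tau}{(|\tau-\mu|+\eta_1)(|\tau|+\eta_2)}
\]
uniformly in $(\mu,\eta_1,\eta_2)$ subject to $\mu^2+(\eta_1-\eta_2)^2=\delta^2$.

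The hard part is this uniform estimate: a case analysis based on the relative sizes of the parameters is needed, and in each case an $\int_0^\delta s^{\gamma-1}ds$-type integral produces the $1/\gamma$ factor while the constraint $\mu^2+(\eta_1-\eta_2)^2=\delta^2$ prevents any genuine $\log(\delta/\eta_j)$ divergence from surviving. Adding the two contributions yields $|\Xi(\zeta_1)-\Xi(\zeta_2)|\lesssim C_0\delta^\gamma[\gamma^{-1}+(1-\gamma)^{-1}]\lesssim C_0\delta^\gamma/[\gamma(1-\gamma)]$, and a careful bookkeeping of the explicit numerical constants in the near and far estimates yields the coefficient $18$. What makes the proof delicate is keeping the cancellation between the two poles visible throughout the near-region analysis, without losing it to a suboptimal triangle-inequality bound.
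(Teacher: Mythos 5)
Your overall architecture (the identity $\int_{-R}^{R}\bigl[(\tau-\zeta_1)^{-1}-(\tau-\zeta_2)^{-1}\bigr]\dif\tau\to0$, boundedness of $\nu$, subtraction of a constant, near/far splitting with the factored kernel in the far region) is fine up to the near-region step, but the near-region estimate you defer as ``the hard part'' is not merely hard --- it is false as stated, so there is a genuine gap. Take $\zeta_2=\cI\1\eta$ and $\zeta_1=\delta+\cI\1\eta$ with $\eta\ll\delta$; the constraint $\mu^2+(\eta_1-\eta_2)^2=\delta^2$ is satisfied with $\mu=\delta$, $\eta_1=\eta_2=\eta$, and yet, restricting to $\tau\in[\delta/2,3\delta/2]$,
\[
\delta\int_{|\tau|\le2\delta}\frac{|\tau|^\gamma\,\dif\tau}{(|\tau-\mu|+\eta_1)(|\tau|+\eta_2)}
\;\ge\;(\delta/2)^\gamma\log\Bigl(1+\frac{\delta}{2\1\eta}\Bigr)\;\longrightarrow\;\infty
\qquad\text{as }\eta\downarrow0\,.
\]
The reason is that subtracting $\nu(0)=\nu(\Re\,\zeta_2)$ regularizes only the pole at $\zeta_2$: near the pole at $\zeta_1$, whose real part $\mu$ may be comparable to $\delta$, the H\"older bound only gives $|\nu(\tau)-\nu(0)|\approx C_0\1\delta^\gamma$ with no smallness, and once you pass to absolute values (which your lower bound $|\tau-\zeta_j|\ge(|\tau-\Re\,\zeta_j|+\eta_j)/\sqrt{2}$ does) the single-pole factor produces exactly the $\log(\delta/\eta_1)$ you hoped to avoid. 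The saving cancellation in this regime is not ``between the two poles''; it is the principal-value (odd-symmetry) cancellation of the single kernel $1/(\tau-\zeta_1)$ against the locally constant value $\nu(\Re\,\zeta_1)$, which the $\nu(0)$-subtraction destroys whenever the displacement $\zeta_1-\zeta_2$ is mainly horizontal. (When $\mu=0$, i.e.\ purely vertical displacement, your near-region bound does hold with the $1/\gamma$ factor, which is why the flaw only shows up in the horizontal regime.)

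To close the gap you must subtract the value of $\nu$ at the real part of each pole separately, which is what the paper's proof does: it reduces to purely horizontal and purely vertical displacements, writes $\Xi(\omega+\cI\1\eta)=\int_\R\frac{\nu(\tau)-\nu(\omega)}{\tau-\omega-\cI\1\eta}\,\dif\tau+\cI\1\pi\,\nu(\omega)$, and in the horizontal case splits the difference into the H\"older-small term $\cI\1\pi\1[\nu(\omega_2)-\nu(\omega_1)]$, two near-pole integrals whose numerators vanish at the respective poles (the terms $I_1,I_2$), a bare single-pole integral whose real part vanishes by symmetry and whose imaginary part is at most $\pi$ (the term $I_3$), and a far-region term with the factored kernel (the term $I_4$); the vertical case is then a short direct computation. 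Your far-region and vertical-displacement estimates can be kept, but the horizontal near-region argument must be rebuilt along these lines --- e.g.\ by adding and subtracting $\nu(\mu)$ against the kernel $(\tau-\zeta_1)^{-1}$ on the near region --- before the stated constant, or any constant of the form $C\1C_0/[\gamma(1-\gamma)]$, can legitimately be claimed.
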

A similar result can be read off from the estimates of Section 22 of \cite{Mu}. We provide the proof here for the convenience of the reader.

\begin{Proof}
The $ \Lp{1}(\R)$-integrability of $ \nu $ is only needed to guarantee that the Stieltjes transform is well defined on $ \Cp $.
We start by writing $\Xi$ in the form
\bels{StieltjesHoelderContinuityEqA}{
\Xi(\1\omega+\cI\1\eta\1)
\,=\,
\int_\R\frac{\2\nu(\tau)-\nu(\omega)}{\tau-\omega-\cI\1\eta}\,\dif \tau\,+\,\cI \pi\2 \nu(\omega)
\,,
\qquad \omega \in \R\,,\;\eta > 0\,.
}
We divide the proof into two steps. First we show that \eqref{difference of S-transforms} holds in the special case $ \Im\,\zeta_2=\Im\,\zeta_1 $. 
As the second step we show that \eqref{difference of S-transforms} also holds when $ \Re\,\zeta_2=\Re\,\zeta_1 $.
Together these steps imply \eqref{difference of S-transforms} for general $\zeta_1,\zeta_2 $.

Suppose that $ \zeta_k = \omega_k + \cI\1\eta $, for some $ \omega_1,\omega_2 \in \R $ and $ \eta > 0 $.
Using \eqref{StieltjesHoelderContinuityEqA} we write the difference of the Stieltjes transforms in the form
\bels{Re-part:xi-decomposition}{
\Xi(\omega_2+\cI\eta)-\Xi(\omega_1+\cI\1\eta)\,=\,\cI\1 \pi\1 \bigl[\2\nu(\omega_2)-\nu(\omega_1)\2\bigr]  
+\2I_1 +I_2 +I_3 +I_4
\,, 
}
where the integrals have been split into the following four parts:
\bea{
I_k\,&:=\, 
(-1)^k\!\int_\R
\frac{\nu(\tau)-\nu(\omega_k)}{\tau-\omega_k-\cI\1\eta}\,
\Ind\setB{\,\abs{\1\tau-\omega_1} \leq \abs{\1\omega_2-\omega_1}\,}
\,\dif\tau
\,,
\qquad k=1,2\,.
\\
I_3\,&:=\, (\1\nu(\omega_1)-\nu(\omega_2)\1)\int_\R
\frac{1}{\tau-\omega_1-\cI\1\eta}\,
\Ind\setB{\,\abs{\1\tau-\omega_1}> \abs{\1\omega_2-\omega_1}\,}\2\dif \tau
\,,
\\
I_4\,&:=\!
\int_\R (\1\nu(\tau)-\nu(\omega_2)\1)
\biggl(
\frac{1}{\2\tau-\omega_2-\cI\1\eta\1}-\frac{1}{\2\tau-\omega_1-\cI\1\eta\1}
\biggr)
\Ind\setB{\,\abs{\1\tau-\omega_1}> \abs{\1\omega_2-\omega_1}\,}\,\dif \tau
\,.
}
In the regime $ \abs{\tau-\omega_1} > \abs{\1\omega_2-\omega_1} $ we have added and subtracted an integral of $  \nu(\omega_2)\2(\tau-\omega_1-\cI\1\eta)^{-1} $ over $ \tau \in \R $.

The first term on the right hand side of \eqref{Re-part:xi-decomposition} is less than $ \pi\1C_0 \abs{\1\omega_2-\omega_1}^\gamma $ by the hypothesis \eqref{assumption:Holder-continuity of nu}. We will show that $ \abs{\1I_k} \leq C_k \abs{\1\omega_2-\omega_1}^\gamma $, where the constants $ C_k $ sum to something less than the corresponding constant on the right hand side of \eqref{difference of S-transforms}.

Using the $ \gamma$-H\"older continuity \eqref{assumption:Holder-continuity of nu} of $ \nu $, bringing absolute values inside the integrals, and ignoring $ \eta's $, it is easy to see that 
\bels{I_1 and I_2}{
\abs{\1I_1}
\,\leq\, \frac{2\1C_0}{\gamma}\abs{\1\omega_2-\omega_1}^\gamma\,,
\qquad\text{and}\qquad
\abs{\1I_2}
\,\leq\, 
\frac{4\1C_0}{\gamma}\abs{\1\omega_2-\omega_1}^\gamma
\,.
} 

Due to \eqref{assumption:Holder-continuity of nu}, for $ I_3 $ we only need to bound the size of the integral. The real part of the integral vanishes due to the symmetry. The imaginary part of the integral is bounded by $ \int_\R \eta\,(\1\eta^2+\lambda^2)^{-1}\dif \lambda = \pi $, and thus
\bels{I_3}{
\abs{\1I_3} \,\leq\,\pi\1C_0\abs{\1\omega_2-\omega_1}^\gamma
\,.
}

In order to estimate $ I_4 $ we bring absolute values inside the integral, ignore $ \eta $'s
\[
\absbb{\1\frac{1}{\2\tau-\omega_2-\cI\1\eta\1}\,-\,\frac{1}{\2\tau-\omega_1-\cI\1\eta\1}\1} 
\,\leq\,\frac{\abs{\1\omega_1-\omega_2}}{\abs{\tau-\omega_1}\abs{\tau-\omega_2}}
\,,
\]
and use the H\"older continuity \eqref{assumption:Holder-continuity of nu} of $ \nu $. This yields the first bound below:
\bels{I_4}{
\msp{-8}\abs{\1I_4}
\,\leq\, 
C_0\!\int_\R \!\frac{\,\abs{\2\omega_2-\omega_1}\,\Ind\setb{\2\abs{\1\tau-\omega_1}\msp{-1}>\msp{-1} \abs{\1\omega_2-\omega_1}\2}}{\1\abs{\1\tau-\1\omega_1}\1\abs{\2\tau-\omega_1-(\1\omega_2-\1\omega_1)\1}^{1\msp{-1}-\gamma}\msp{-8}}\,\dif \tau
\,\leq
\frac{2\1C_0}{\2\gamma\2(1-\gamma)}\2\abs{\1\omega_2-\omega_1}^\gamma
\!.\msp{-22}
}
Plugging this with \eqref{I_1 and I_2} and \eqref{I_3} into \eqref{Re-part:xi-decomposition} yields 
\bels{Stieltjes: Re parts}{
\abs{\2\Xi(\omega_2+\cI\eta)-\Xi(\omega_1+\cI\1\eta)} \,\leq\, \frac{15\1C_0}{\2\gamma\2(1-\gamma)}
\abs{\1\omega_1-\omega_2}^\gamma
\,.
}

Now it remains to prove \eqref{difference of S-transforms} in the special case, where $ \zeta_k = \omega + \cI\1\eta_k $, for some $ \omega \in \R $ and $ \eta_1,\eta_2 > 0 $. Using again the representation \eqref{StieltjesHoelderContinuityEqA} we obtain
\bea{
\Xi(\1\omega+\cI\1\eta_2)\1-\2\Xi(\omega+\cI\1\eta_1)
\,&=\,
\int_\R (\1\nu(\tau)-\nu(\omega)\1)
\biggl(
\frac{1}{\2\tau-\omega-\cI\1\eta_2}-\frac{1}{\2\tau-\omega-\cI\1\eta_1}
\biggr)\,\dif \tau
\\
&=\,
\cI
\int_\R \frac{\!(\1\eta_2-\eta_1)\2(\1\nu(\tau)-\nu(\omega)\1)\,\dif \tau}{(\1\tau-\omega-\cI\1\eta_2)\2(\1\tau-\omega-\cI\1\eta_1)\2}
\,.
}
Pulling the absolute values inside the integral yields
\bea{
\absb{\2\Xi(\1\omega+\cI\1\eta_2)\1-\2\Xi(\omega+\cI\1\eta_1)\1} 
\,&\leq\,
C_0\!\int_\R 
\frac{\abs{\1\eta_2-\eta_1}\,\dif\tau}{\abs{\1\tau-\omega\1}^{1-\gamma} \2\frac{1}{\!\sqrt{2\2}\1}\bigl(\2\abs{\tau-\omega}+\abs{\eta_2-\eta_1}\1\bigr)\,}
\\
&\leq\;
\frac{\!\sqrt{8\2}C_0}{\1\gamma\2(1-\gamma)}\,\abs{\1\eta_2-\eta_1}^\gamma
\,.
}
Adding this to \eqref{Stieltjes: Re parts} yields \eqref{difference of S-transforms}.
\end{Proof}

\sectionl{Cubic roots and associated auxiliary functions}

\begin{Proof}[Proof of Lemma~\ref{lmm:Stability of roots - pos} and Lemma~\ref{lmm:Stability of roots - neg}]
Let $ p_k :\C \to \C $, $ k \in \N $, denote any branch of the inverse of $ \zeta \mapsto \zeta^k $ so that $ p_k(\zeta)^k = \zeta $. We remark that if  $ p_k $ is the standard complex power function  (cf. Definition~\ref{def:Complex powers}) then the conventional notation $ \zeta^{1/k} $ is used instead of $ p_k(\zeta)$. 
  
The special functions $ \Phi $ and $ \Phi_\pm $ appearing in Lemma~\ref{lmm:Roots of reduced cubic with positive linear coefficient} and Lemma~\ref{lmm:Roots of reduced cubic with negative linear coefficient}, respectively, can be stated in terms of the single function
\bels{def of generic Phi}{
\Phi(\zeta) \,:=\, p_3(\,p_2(\11\msp{-1}+\msp{-1}\zeta^{\12}\1)+\zeta\,)
\,,
}
by rotating $ \zeta $ and $ \Phi $ and choosing the functions $ p_2 $ and $p_3 $ appropriately.  
For example, if $ \abs{\Re\,\zeta} < 1$, i.e., $ \zeta \in \wht{\C}_0 $ (cf. \eqref{defs of C_a}), then $ \Phi(\pm\1\cI\1\zeta\1)^3 = \pm\1\cI\1\Phi_{\mp}(\zeta) $, with the standard definition of the complex powers.
In order to treat both the lemmas in the unified way, we hence consider the generic function \eqref{def of generic Phi} that is analytic on a simple connected open set $ D $  of $ \C $ such that $ \pm\1\cI \notin D $.

Straightforward estimates show that 
\bels{1/2-Holder for generic Phi}{
\abs{\1\Phi(\zeta)-\Phi(\xi)} \,\leq\,C_1\abs{\zeta-\xi}^{1/2}
}
and
\bels{bound on derivative of Phi}{
\abs{\1\partial_\zeta\Phi(\zeta)} 
\,\leq\, 
C_3
\begin{cases}
\abs{\zeta-\cI\1}^{\1-1/2} \!+ \abs{\zeta+\cI\1}^{\1-1/2}
\quad&\text{when }\abs{\zeta}\leq 2
\\
\,\abs{\1\zeta\1}^{\1-2/3}
&\text{when }\abs{\zeta}>2\,.
\end{cases}
} 
The roots $ \wht{\Omega}_a(\zeta) $ defined in both \eqref{solutions to positive reduced cube} and \eqref{roots for small gap} are of the form:
\bels{roots as linear combinations of Phi_k's}{
\Omega(\zeta) \,=\, \alpha_1\Phi^{(1)}\msp{-1}(\omega_1\zeta\1) + \alpha_2\Phi^{(2)}\msp{-1}(\omega_2\zeta\1)
\,.
}
Here $ \Phi^{(1)} $ and $ \Phi^{(2)} $ satisfy \eqref{def of generic Phi} but with different choices of branches and branch cuts for the square and the cubic roots. The coefficients $ \alpha_1,\alpha_2,\omega_1,\omega_2 \in \C $ satisfy $ \abs{\alpha_k} \leq 2 $ and $ \abs{\omega_k} = 1 $ for $ k=1,2$.

The perturbation results of Lemma~\ref{lmm:Stability of roots - pos} and Lemma~\ref{lmm:Stability of roots - neg} now follow from \eqref{bound on derivative of Phi} and the mean value theorem:
\bels{error bound for generic Phi}{
\abs{\Phi(\zeta+\gamma)-\Phi(\zeta)} \,\leq\,\abs{\gamma\1}\sup_{0\1\leq\1 \rho\1\leq\1 1} \abs{\1\partial_\zeta\Phi(\zeta+\rho\1\gamma\1)}
\,.
}
Indeed, Lemma  \ref{lmm:Stability of roots - pos} follows directly by choosing $ D = \setb{\zeta \in \C : \dist(\zeta,\mathbb{G}) \leq 1/4}$ with $ \mathbb{G} $ defined in \eqref{min: def of good set mathbb-G}, and $ \gamma := \xi $. 
Since $ \zeta \in \mathbb{G} \subset D $ the condition \eqref{min stability: assumption} for $ c_1 = 1/12  $ guarantees that $ \zeta+\xi \in D $.
As $ \dist(\1\pm\1\cI\1,D) = 1/4 $ the estimate \eqref{min:stability of roots} follows using \eqref{bound on derivative of Phi} in \eqref{error bound for generic Phi}.

In order to prove \eqref{Perturbation of roots: away from opposite critical point} we consider the case $ \zeta = \cI\2(-\1\theta+\lambda\1) $ and $ \gamma = \cI\2\mu'\1\lambda $, where $ \theta =\pm\11$,  $ \abs{\lambda-2\1\theta}\ge 6\2\kappa $, and $ \abs{\mu'} \leq \kappa$, for some $ \kappa \in (0,1/2) $.
We need to bound the distance between the argument $ \zeta + \rho\1\gamma $, of the derivative in \eqref{error bound for generic Phi} to the singular points $ \pm \1\cI $ from below. 
Assume $ \theta = 1 $ w.l.o.g. Then the distance of $ \zeta + \rho\1\gamma $ from $ -\cI $ is bounded from below by 
\[
\absb{\zeta + \rho\1\gamma +\1\cI\2} \,\ge\,\abs{\lambda}/2\,,
\]  
since $ \abs{\rho\1\mu'} \leq \kappa \leq 1/2 $. Similarly, we bound the distance between $ \zeta + \rho\1\gamma $ and $+\cI $ from below
\bea{
\absb{\zeta + \rho\1\gamma -\1\cI\2} 
\,&=\, 
\absb{2\rho\1\mu' + (1+\rho\1\mu')(\lambda-2)} 
\,\ge\, 
\absb{(1+\rho\1\mu')(\lambda-2)} - 2\1\rho\1\abs{\mu'} 
\\
&\ge\,
\kappa + \abs{\lambda-2}/2
\,,
}
where for the last estimate we have used the assumption  $ \abs{\lambda-2\1\theta} = \abs{\lambda-2} \ge 6\1\kappa $.
These bounds apply for arbitrary $ 0\leq \rho \leq 1 $. 
Hence they can be applied to estimate the derivative in \eqref{error bound for generic Phi} using \eqref{bound on derivative of Phi}. This way we get
\[
\absb{\1\Phi^{(k)}(\zeta+\gamma)-\Phi^{(k)}(\zeta)} \,\leq\, C_4\kappa^{-1/2}\min\setb{\abs{\lambda}^{1/2}\!,\2\abs{\lambda}^{1/3}}\,\abs{\1\mu'}
\,.
\]
Applying this in \eqref{roots as linear combinations of Phi_k's} yields  \eqref{Perturbation of roots: away from opposite critical point}.
\end{Proof}


\begin{thebibliography}{EKYY13b}

\bibitem[AEK]{AltEK}
Johannes Alt, L\'aszl\'o Erd{\H o}s, and Torben Kr\"uger, \emph{{Local law for
  Gram matrices}}, arXiv:1606.07353.

\bibitem[AEK16a]{AEK3}
Oskari Ajanki, L\'aszl\'o Erd{\H o}s, and Torben Kr\"uger, \emph{{Local
  Spectral Statistics of Gaussian Matrices with Correlated Entries}}, Journal
  of Statistical Physics \textbf{163} (2016), no.~2, 280--302.

\bibitem[AEK16b]{AEK2}
\bysame, \emph{{Universality for general Wigner-type matrices}}, Probab. Theory
  Related Fields (2016).

\bibitem[AEK17a]{AEK1cpam}
\bysame, \emph{Singularities of solutions to quadratic vector equations on the
  complex upper half-plane}, Comm. Pure Appl. Math. \textbf{70} (2017), no.~9,
  1672--1705.

\bibitem[AEK17b]{alt2017}
Johannes Alt, L{\'a}szl{\'o} Erd{\H{o}}s, and Torben Kr{\" u}ger, \emph{Local
  law for random gram matrices}, Electron. J. Probab. \textbf{22} (2017), 41
  pp.

\bibitem[AEK18a]{alt2018}
\bysame, \emph{Local inhomogeneous circular law}, Ann. Appl. Probab.
  \textbf{28} (2018), no.~1, 148--203.

\bibitem[AEK18b]{arXiv:1804.07752}
\bysame, \emph{{The Dyson equation with linear self-energy: spectral bands,
  edges and cusps}}, arXiv:1804.07752 (2018).

\bibitem[AEK19a]{Ajanki2019}
Oskari~H. Ajanki, L{\'a}szl{\'o} Erd{\H{o}}s, and Torben Kr{\"u}ger,
  \emph{Stability of the matrix dyson equation and random matrices with
  correlations}, Prob. Theor. Rel. Fields \textbf{173} (2019), no.~1, 293--373.

\bibitem[AEK19b]{arXiv:1907.13631}
Johannes Alt, L{\'a}szl{\'o} Erd{\H{o}}s, and Torben Kr{\" u}ger,
  \emph{{Spectral radius of random matrices with independent entries}},
  arXiv:1907.13631 (2019).

\bibitem[AEKN19]{alt2019}
Johannes Alt, L{\'a}szl{\'o} Erd{\H{o}}s, Torben Kr{\" u}ger, and Yuriy Nemish,
  \emph{Location of the spectrum of kronecker random matrices}, Ann. Inst. H.
  Poincaré Probab. Statist. \textbf{55} (2019), no.~2, 661--696.

\bibitem[AEKS18]{2018arXiv180407744A}
Johannes {Alt}, L{\'a}szl{\'o} {Erd{\H{o}}s}, Torben {Kr{\"u}ger}, and Dominik
  {Schr{\"o}der}, \emph{{Correlated Random Matrices: Band Rigidity and Edge
  Universality}}, arXiv:1804.07744 (2018).

\bibitem[Alt17]{Gramalt2017}
Johannes Alt, \emph{Singularities of the density of states of random gram
  matrices}, Electron. Commun. Probab. \textbf{22} (2017), 13 pp.

\bibitem[AZ05]{AZind}
Greg Anderson and Ofer Zeitouni, \emph{{A CLT for a band matrix model}},
  Probab. Theory Related Fields \textbf{134} (2005), no.~2, 283--338.

\bibitem[AZ08]{AZdep}
\bysame, \emph{{A Law of Large Numbers for Finite-Range Dependent Random
  Matrices}}, Comm. Pure Appl. Math. \textbf{61} (2008), no.~8, 1118--1154.

\bibitem[Ber73]{Ber}
Felix~A. Berezin, \emph{{Some remarks on Wigner distribution}}, Theoret. Math.
  Phys. \textbf{3} (1973), no.~17, 1163--1175.

\bibitem[BH98]{BH}
\'Edouard Br\'ezin and Shinobu Hikami, \emph{{Universal singularity at the
  closure of a gap in a random matrix theory}}, Phys. Rev. E \textbf{57}
  (1998), no.~4, 4140--4149.

\bibitem[BLN94]{Borwein1994}
Jonathan~M. Borwein, Adrian~S. Lewis, and Roger~D. Nussbaum, \emph{{Entropy
  Minimization, DAD Problems, and Doubly Stochastic Kernels}}, J. Funct. Anal.
  \textbf{123} (1994), no.~2, 264--307.

\bibitem[BR91]{Brualdi91}
Richard~A. Brualdi and Herbert~J. Ryser, \emph{{Combinatorial Matrix Theory}},
  Encyclopedia of Mathematics and its Applications, vol.~39, Cambridge
  University Press, 1991.

\bibitem[BR97]{BR97book}
Ravindra~B. Bapat and T.~E.~S. Raghavan, \emph{{Nonnegative Matrices and
  Applications}}, Encyclopedia of mathematics and its applications, vol.~64,
  Cambridge University Press, 1997.

\bibitem[CEKS19]{cipolloni2019}
Giorgio Cipolloni, L{\'a}szl{\'o} Erd{\H{o}}s, Torben Kr{\" u}ger, and Dominik
  {Schr{\"o}der}, \emph{Cusp universality for random matrices, ii: The real
  symmetric case}, Pure Appl. Anal. \textbf{1} (2019), no.~4, 615--707.

\bibitem[Col12]{ColemanBanachODE}
Rodney Coleman, \emph{{Calculus on Normed Vector Spaces}}, Universitext,
  Springer New York, New York, NY, 2012.

\bibitem[EH70]{EarleHamilton70}
Clifford~J. Earle and Richard~S. Hamilton, \emph{{A Fixed Point Theorem for
  Holomorphic Mappings}}, Proc. Sympos. Pure Math. \textbf{XVI} (1970), 61--65.

\bibitem[EKN18]{2018arXiv:1804.11340}
L{\'a}szl{\'o} Erd{\H{o}}s, Torben Kr{\" u}ger, and Nemish, \emph{{Local laws
  for polynomials of Wigner matrices}}, arXiv:1804.11340 (2018).

\bibitem[EKR18]{doi:10.1137/17M1143125}
L{\'a}szl{\'o} Erd{\H{o}}s, Torben Kr{\" u}ger, and David Renfrew, \emph{Power
  law decay for systems of randomly coupled differential equations}, SIAM
  Journal on Mathematical Analysis \textbf{50} (2018), no.~3, 3271--3290.

\bibitem[EKR19]{arXiv:1908.05178}
\bysame, \emph{{Randomly coupled differential equations with correlations}},
  arXiv:1908.05178 (2019).

\bibitem[EKS18]{arXiv:1809.03971}
L{\'a}szl{\'o} Erd{\H{o}}s, Torben Kr{\" u}ger, and Dominik {Schr{\"o}der},
  \emph{{Cusp Universality for Random Matrices I: Local Law and the Complex
  Hermitian Case}}, arXiv:1809.03971 (2018).

\bibitem[EKS19]{erdos_kruger_schroder_2019}
L{\'a}szl{\'o} Erd{\H{o}}s, Torben Kr{\" u}ger, and Dominik Schr\"oder,
  \emph{Random matrices with slow correlation decay}, Forum of Mathematics,
  Sigma \textbf{7} (2019), e8.

\bibitem[EKYY13a]{EKYY13}
Laszlo Erdos, Antti Knowles, Horng-Tzer Yau, and Jun Yin, \emph{{Delocalization
  and Diffusion Profile for Random Band Matrices}}, Communications in
  Mathematical Physics \textbf{323} (2013), no.~1, 367--416.

\bibitem[EKYY13b]{EKYY}
L\'aszl\'o Erd{\H o}s, Antti Knowles, Horng-Tzer Yau, and Jun Yin, \emph{{The
  local semicircle law for a general class of random matrices}}, Electron. J.
  Probab. \textbf{18} (2013), no.~0, 1--58.

\bibitem[EYY11a]{EYY}
L\'aszl\'o Erd{\H o}s, Horng-Tzer Yau, and Jun Yin, \emph{{Bulk universality
  for generalized Wigner matrices}}, Probab. Theory Related Fields \textbf{154}
  (2011), no.~1-2, 341--407.

\bibitem[EYY11b]{EYYber}
\bysame, \emph{{Universality for generalized Wigner matrices with Bernoulli
  distribution}}, J. Comb. \textbf{2} (2011), no.~1, 15--82.

\bibitem[EYY12]{EYYrigi}
\bysame, \emph{{Rigidity of eigenvalues of generalized Wigner matrices}}, Adv.
  Math. \textbf{229} (2012), no.~3, 1435--1515.

\bibitem[FHS07]{FHS2006}
Richard Froese, David Hasler, and Wolfgang Spitzer, \emph{{Absolutely
  Continuous Spectrum for the Anderson Model on a Tree: A Geometric Proof of
  Klein's Theorem}}, Comm. Math. Phys. \textbf{269} (2007), no.~1, 239--257.

\bibitem[Gar07]{Garnett-BA2007}
John Garnett, \emph{{Bounded Analytic Functions}}, Grad. Texts in Math., vol.
  236, Springer, New York, 2007.

\bibitem[Gir01]{Girko-book}
Vyacheslav~L. Girko, \emph{{Theory of stochastic canonical equations. Vol. I}},
  Mathematics and its Applications, vol. 535, Kluwer Academic Publishers,
  Dordrecht, 2001.

\bibitem[Gui02]{Guionnet-GaussBand}
Alice Guionnet, \emph{{Large deviations upper bounds and central limit theorems
  for non-commutative functionals of Gaussian large random matrices}}, Annales
  de l'IHP Probabilit{\'e}s et statistiques \textbf{38} (2002), 341--384.

\bibitem[HFS07]{Helton2007-OSE}
J.~William Helton, Reza~Rashidi Far, and Roland Speicher,
  \emph{{Operator-valued Semicircular Elements: Solving A Quadratic Matrix
  Equation with Positivity Constraints}}, Int. Math. Res. Notices \textbf{2007}
  (2007).

\bibitem[HP57]{HillePhillips}
Einar Hille and Ralph~S. Phillips, \emph{{Functional Analysis and
  Semi-Groups}}, American Mathematical Society Colloquium Publications,
  vol.~31, Amer. Math. Soc., 1957.

\bibitem[Kat12]{Kato-PT}
Tosio Kato, \emph{{Perturbation Theory for Linear Operators}}, 2 ed., Classics
  in Mathematics, Springer Science {\&} Business Media, Berlin, Heidelberg,
  2012.

\bibitem[KLW]{KLW}
Matthias Keller, Daniel Lenz, and Simone Warzel, \emph{{An invitation to trees
  of finite cone type: random and deterministic operators}}, arXiv:1403.4426.

\bibitem[KLW13]{KLW2}
\bysame, \emph{{On the spectral theory of trees with finite cone type}}, Israel
  J. Math. \textbf{194} (2013), no.~1, 107--135.

\bibitem[KP94]{KhorunzhyPastur94}
A.~M. Khorunzhy and Leonid~A. Pastur, \emph{{On the eigenvalue distribution of
  the deformed Wigner ensemble of random matrices}}, Spectral operator theory
  and related topics, Adv. Soviet Math., 19, Amer. Math. Soc., Providence, RI,
  1994, pp.~97--127.

\bibitem[Mus08]{Mu}
Nikoloz Muskhelishvili, \emph{{Singular Integral Equations: Boundary Problems
  of Function Theory and Their Application to Mathematical Physics}}, Courier
  Dover Publications, 2008.

\bibitem[Pas72]{PasturDefWig}
Leonid~A. Pastur, \emph{{On the Spectrum of Random Matrices}}, Theor. Math.
  Phys. \textbf{10} (1972), no.~1, 67--74.

\bibitem[PS11]{PasturShcerbinaAMSbook}
Leonid~A. Pastur and Mariya Shcherbina, \emph{{Eigenvalue Distribution of Large
  Random Matrices}}, Mathematical Surveys and Monographs, vol. 171, Amer. Math.
  Soc., 2011.

\bibitem[Sad12]{Sadel-Tree}
Christian Sadel, \emph{{Absolutely Continuous Spectrum for Random
  Schr{\"o}dinger Operators on Tree-Strips of Finite Cone Type}}, Ann. Henri
  Poincar{\'e} \textbf{14} (2012), no.~4, 737--773.

\bibitem[Shl96]{ShlyakhtenkoGBM}
Dimitri Shlyakhtenko, \emph{{Random Gaussian band matrices and freeness with
  amalgamation}}, Int. Math. Res. Notices (1996), no.~20, 1013--1015.

\bibitem[Sin64]{S1964}
Richard Sinkhorn, \emph{A relationship between arbitrary positive matrices and
  doubly stochastic matrices}, Ann. Math. Statist. \textbf{35} (1964),
  876--879.

\bibitem[SK67]{sinkhorn1967}
Richard Sinkhorn and Paul Knopp, \emph{Concerning nonnegative matrices and
  doubly stochastic matrices.}, Pacific J. Math. \textbf{21} (1967), no.~2,
  343--348.

\bibitem[Weg79]{WegnerNorb}
Franz~J. Wegner, \emph{{Disordered system with $ n $ orbitals per site: $ n
  =\infty $ limit}}, Phys. Rev. B \textbf{19} (1979).

\end{thebibliography}

\providecommand{\bysame}{\leavevmode\hbox to3em{\hrulefill}\thinspace}
\providecommand{\MR}{\relax\ifhmode\unskip\space\fi MR }
\providecommand{\MRhref}[2]{%
  \href{http://www.ams.org/mathscinet-getitem?mr=#1}{#2}
}
\providecommand{\href}[2]{#2}

\end{document}